\pdfoutput=1
\documentclass{amsart} 
\newif\ifpersonal
\newif\ifpersonalsection
\usepackage{amsmath,amsthm,amssymb,mathrsfs,mathtools,bm,centernot} 
\usepackage{microtype,lmodern} 
\usepackage{enumitem,comment,braket,xspace,tikz-cd,adjustbox} 
\usepackage[utf8]{inputenc} 
\usepackage[T1]{fontenc} 
\usepackage[centering,vscale=0.7,hscale=0.7]{geometry}
\usepackage[hidelinks,linktoc=all]{hyperref}
\usepackage[capitalize]{cleveref}
\allowdisplaybreaks

\numberwithin{equation}{section}
\theoremstyle{plain}
\newtheorem{theorem}[equation]{Theorem}
\newtheorem{lemma}[equation]{Lemma}
\newtheorem*{lemma*}{Lemma}

\newtheorem{claim}[equation]{Claim}
\newtheorem*{claim*}{Claim}

\newtheorem{proposition}[equation]{Proposition}
\newtheorem*{proposition*}{Proposition}
\newtheorem{corollary}[equation]{Corollary}
\theoremstyle{definition}
\newtheorem{definition}[equation]{Definition}
\newtheorem{definition-theorem}[equation]{Definition-Theorem}
\newtheorem{definition-lemma}[equation]{Definition-Lemma}
\newtheorem{construction}[equation]{Construction}
\newtheorem{assumption}[equation]{Assumption}
\newtheorem{notation}[equation]{Notation}
\newtheorem{example}[equation]{Example}
\newtheorem{remark}[equation]{Remark}


\ifpersonal
\newcommand{\todo}[1]{\textcolor{red}{(Todo: #1)}}
\newcommand{\personal}[1]{\textcolor[rgb]{0,0,1}{(Personal: #1)}}
\newcommand{\discussion}[1]{\textcolor{violet}{(Discussion: #1)}}
\else
\newcommand{\todo}[1]{\ignorespaces}
\newcommand{\personal}[1]{\ignorespaces}
\newcommand{\discussion}[1]{\ignorespaces}
\fi

\providecommand{\abs}[1]{\lvert#1\rvert}
\providecommand{\norm}[1]{\lVert#1\rVert}

\newcommand{\bbA}{\mathbb A}
\newcommand{\bbB}{\mathbb B}
\newcommand{\bbC}{\mathbb C}
\newcommand{\bbD}{\mathbb D}

\newcommand{\bbG}{\mathbb G}

\newcommand{\bbN}{\mathbb N}
\newcommand{\bbP}{\mathbb P}
\newcommand{\bbQ}{\mathbb Q}
\newcommand{\bbR}{\mathbb R}
\newcommand{\bbT}{\mathbb T}

\newcommand{\bbZ}{\mathbb Z}

\newcommand{\fC}{\mathfrak C}
\newcommand{\fD}{\mathfrak D}

\newcommand{\fF}{\mathfrak F}
\newcommand{\fG}{\mathfrak G}
\newcommand{\fH}{\mathfrak H}

\newcommand{\fS}{\mathfrak S}
\newcommand{\fT}{\mathfrak T}
\newcommand{\fU}{\mathfrak U}

\newcommand{\fX}{\mathfrak X}
\newcommand{\fY}{\mathfrak Y}
\newcommand{\fZ}{\mathfrak Z}

\newcommand{\cA}{\mathcal A}

\newcommand{\cC}{\mathcal C}

\newcommand{\cE}{\mathcal E}

\newcommand{\cH}{\mathcal H}

\newcommand{\cK}{\mathcal K}

\newcommand{\cM}{\mathcal M}
\newcommand{\cN}{\mathcal N}
\newcommand{\cO}{\mathcal O}

\newcommand{\cS}{\mathcal S}
\newcommand{\cT}{\mathcal T}
\newcommand{\cU}{\mathcal U}
\newcommand{\cV}{\mathcal V}
\newcommand{\cW}{\mathcal W}
\newcommand{\cX}{\mathcal X}

\newcommand{\cZ}{\mathcal Z}

\newcommand{\bP}{\mathbf P}

\newcommand{\sC}{\mathscr C}

\makeatletter
\let\save@mathaccent\mathaccent
\newcommand*\if@single[3]{%
	\setbox0\hbox{${\mathaccent"0362{#1}}^H$}%
	\setbox2\hbox{${\mathaccent"0362{\kern0pt#1}}^H$}%
	\ifdim\ht0=\ht2 #3\else #2\fi
}
\newcommand*\rel@kern[1]{\kern#1\dimexpr\macc@kerna}
\newcommand*\widebar[1]{\@ifnextchar^{{\wide@bar{#1}{0}}}{\wide@bar{#1}{1}}}
\newcommand*\wide@bar[2]{\if@single{#1}{\wide@bar@{#1}{#2}{1}}{\wide@bar@{#1}{#2}{2}}}
\newcommand*\wide@bar@[3]{%
	\begingroup
	\def\mathaccent##1##2{%
		\let\mathaccent\save@mathaccent
		\if#32 \let\macc@nucleus\first@char \fi
		\setbox\z@\hbox{$\macc@style{\macc@nucleus}_{}$}%
		\setbox\tw@\hbox{$\macc@style{\macc@nucleus}{}_{}$}%
		\dimen@\wd\tw@
		\advance\dimen@-\wd\z@
		\divide\dimen@ 3
		\@tempdima\wd\tw@
		\advance\@tempdima-\scriptspace
		\divide\@tempdima 10
		\advance\dimen@-\@tempdima
		\ifdim\dimen@>\z@ \dimen@0pt\fi
		\rel@kern{0.6}\kern-\dimen@
		\if#31
		\overline{\rel@kern{-0.6}\kern\dimen@\macc@nucleus\rel@kern{0.4}\kern\dimen@}%
		\advance\dimen@0.4\dimexpr\macc@kerna
		\let\final@kern#2%
		\ifdim\dimen@<\z@ \let\final@kern1\fi
		\if\final@kern1 \kern-\dimen@\fi
		\else
		\overline{\rel@kern{-0.6}\kern\dimen@#1}%
		\fi
	}%
	\macc@depth\@ne
	\let\math@bgroup\@empty \let\math@egroup\macc@set@skewchar
	\mathsurround\z@ \frozen@everymath{\mathgroup\macc@group\relax}%
	\macc@set@skewchar\relax
	\let\mathaccentV\macc@nested@a
	\if#31
	\macc@nested@a\relax111{#1}%
	\else
	\def\gobble@till@marker##1\endmarker{}%
	\futurelet\first@char\gobble@till@marker#1\endmarker
	\ifcat\noexpand\first@char A\else
	\def\first@char{}%
	\fi
	\macc@nested@a\relax111{\first@char}%
	\fi
	\endgroup
}
\makeatother

\newcommand{\oB}{\widebar B}
\newcommand{\oC}{\widebar C}

\newcommand{\oF}{\widebar F}

\newcommand{\oI}{\widebar I}
\newcommand{\oJ}{\widebar J}

\newcommand{\oM}{\widebar M}
\newcommand{\oP}{\widebar P}

\newcommand{\oS}{\widebar S}

\newcommand{\oU}{\widebar U}

\newcommand{\oX}{\widebar X}
\newcommand{\oY}{\widebar Y}
\newcommand{\oZ}{\widebar Z}

\newcommand{\oGamma}{\widebar\Gamma}
\newcommand{\oLambda}{\widebar\Lambda}

\newcommand{\oPsi}{\widebar\Psi}
\newcommand{\oSigma}{\widebar\Sigma}

\newcommand{\odelta}{\widebar\delta}

\newcommand{\otheta}{\widebar\theta}

\newcommand{\hC}{\widehat C}
\newcommand{\hD}{\widehat D}

\newcommand{\hF}{\widehat F}

\newcommand{\hL}{\widehat L}

\newcommand{\hR}{\widehat R}
\newcommand{\hS}{\widehat S}

\newcommand{\hY}{\widehat Y}

\newcommand{\hGamma}{\widehat\Gamma}

\newcommand{\tA}{\widetilde A}

\newcommand{\tD}{\widetilde D}
\newcommand{\tE}{\widetilde E}

\newcommand{\tH}{\widetilde H}

\newcommand{\tQ}{\widetilde Q}
\newcommand{\tR}{\widetilde R}
\newcommand{\tS}{\widetilde S}

\newcommand{\tV}{\widetilde V}
\newcommand{\tW}{\widetilde W}

\newcommand{\tY}{\widetilde Y}

\newcommand{\tbeta}{\widetilde\beta}
\newcommand{\tgamma}{\widetilde\gamma}
\newcommand{\tdelta}{\widetilde\delta}

\newcommand{\tchi}{\widetilde\chi}

%
%

\newcommand{\hooklongrightarrow}{\lhook\joinrel\longrightarrow}
\newcommand{\hooklongleftarrow}{\longleftarrow\joinrel\rhook}
\newcommand{\longto}{\longrightarrow}


\newcommand{\hh}{\widehat h}
\newcommand{\hv}{\widehat v}

\newcommand{\hgamma}{\widehat\gamma}

\newcommand{\tf}{\widetilde f}

\newcommand{\tk}{\widetilde k}

\newcommand{\tw}{\widetilde w}

\newcommand{\ob}{\widebar b}

\newcommand{\of}{\widebar f}

\newcommand{\oh}{\widebar h}
\newcommand{\ol}{\bar l}

\newcommand{\op}{\widebar p}

\newcommand{\os}{\widebar s}

\newcommand{\ov}{\widebar v}
\newcommand{\ow}{\widebar w}

\newcommand{\obbD}{\widebar{\bbD}}

\newcommand{\obbB}{\widebar{\bbB}}

\newcommand{\ocM}{\widebar{\cM}}

\newcommand{\obP}{\widebar{\bP}}
\newcommand{\ocX}{\widebar{\cX}}

\newcommand{\tcK}{\widetilde{\cK}}

\newcommand{\tcS}{\widetilde{\cS}}

\newcommand{\tcV}{\widetilde\cV}


\newcommand{\ff}{\mathfrak f}
\newcommand{\fm}{\mathfrak m}

\newcommand{\fw}{\mathfrak w}
\newcommand {\fd} {\mathfrak{d}}

\newcommand{\rt}{\mathrm{t}}
\DeclareFontFamily{U}{BOONDOX-calo}{\skewchar\font=45 }
\DeclareFontShape{U}{BOONDOX-calo}{m}{n}{<-> s*[1.05] BOONDOX-r-calo}{}
\DeclareFontShape{U}{BOONDOX-calo}{b}{n}{<-> s*[1.05] BOONDOX-b-calo}{}
\DeclareMathAlphabet{\mathcalboondox}{U}{BOONDOX-calo}{m}{n}
\newcommand{\cf}{\mathcalboondox f}

\newcommand{\Cut}{\mathrm{Cut}}
\newcommand{\DC}{\mathrm{DC}}
\newcommand{\Glue}{\mathrm{Glue}}
\newcommand{\GHKK}{\mathrm{GHKK}}
\newcommand{\ISk}{\mathrm{ISk}}
\newcommand{\NT}{\mathrm{NT}}
\newcommand{\Pairs}{\mathrm{Pairs}}
\newcommand{\SH}{\mathrm{SH}}
\newcommand{\SP}{\mathrm{SP}}
\newcommand{\RSP}{\mathrm{RSP}}
\newcommand{\TC}{\mathrm{TC}}

\newcommand{\an}{\mathrm{an}}

\newcommand{\bir}{\mathrm{bir}}
\newcommand{\dom}{\mathrm{dom}}
\newcommand{\ess}{\mathrm{ess}}

\newcommand{\ev}{\mathrm{ev}}

\newcommand{\gp}{\mathrm{gp}}
\newcommand{\id}{\mathrm{id}}
\newcommand{\idt}{\mathrm{idt}}

\newcommand{\init}{\mathrm{init}}

\newcommand{\irr}{\mathrm{irr}}

\newcommand{\marked}{\mathrm{marked}}

\newcommand{\npr}{\mathrm{npr}}
\newcommand{\pr}{\mathrm{pr}}

\newcommand{\sd}{\mathrm{sd}}

\newcommand{\sing}{\mathrm{sing}}
\newcommand{\st}{\mathrm{st}}
\newcommand{\sm}{\mathrm{sm}}
\newcommand{\tail}{\mathrm{tail}}
\newcommand{\tr}{\mathrm{tr}}

\newcommand{\trop}{\mathrm{trop}}

\newcommand{\prin}{\mathrm{prin}}
\newcommand{\ver}{\mathrm{ver}}
\newcommand{\hor}{\mathrm{hor}}


\newcommand{\Gm}{\bbG_\mathrm{m}}

\newcommand{\Ih}{I^\mathrm{h}}
\newcommand{\Iv}{I^\mathrm{v}}
\newcommand{\Zaffine}{$\bbZ$-affine\xspace}
\newcommand{\inv}{^{-1}}
\newcommand{\kanal}{$k$-analytic\xspace}
\newcommand{\kc}{k^\circ}
\newcommand{\kcc}{k^{\circ\circ}}

\newcommand{\llp}{(\!(}

\newcommand{\rrp}{)\!)}

\DeclareMathOperator{\Bl}{Bl}

\DeclareMathOperator{\CH}{CH}

\DeclareMathOperator{\Coeff}{Coeff}

\DeclareMathOperator{\Cont}{Cont}

\DeclareMathOperator{\Frac}{Frac}
\DeclareMathOperator{\Hom}{Hom}

\DeclareMathOperator{\NE}{NE}
\DeclareMathOperator{\Nef}{Nef}

\DeclareMathOperator{\Proj}{Proj}
\DeclareMathOperator{\Sk}{Sk}
\DeclareMathOperator{\oSk}{\overline{Sk}}

\DeclareMathOperator{\Spec}{Spec}
\DeclareMathOperator{\Spf}{Spf}
\DeclareMathOperator{\Sp}{Sp}
\DeclareMathOperator{\Star}{Star}
\DeclareMathOperator{\Trop}{Trop}
\DeclareMathOperator{\TV}{TV}
\DeclareMathOperator{\Wall}{Wall}
\DeclareMathOperator{\can}{can}

\DeclareMathOperator{\degtwig}{degtwig}

\DeclareMathOperator{\length}{length}
\DeclareMathOperator*{\ord}{ord}
\DeclareMathOperator{\rank}{rank}
\DeclareMathOperator{\supp}{supp}
\DeclareMathOperator{\Trace}{Trace}

\DeclareMathOperator{\val}{val}

\renewenvironment{abstract}{%
	\quotation
	\small
	\textbf{\textit{\abstractname.}} 
}{\endquotation}

\begin{document}
	
\title[The Frobenius structure theorem]
{The Frobenius structure theorem for affine log Calabi-Yau varieties containing a torus}
\author{Sean Keel}
\address{Sean Keel, Department of Mathematics, 1 University Station C1200, Austin, TX 78712-0257, USA}
\email{keel@math.utexas.edu}
\author{Tony Yue YU}
\address{Tony Yue YU, Department of Mathematics M/C 253-37, California Institute of Technology, 1200 E California Blvd, Pasadena, CA 91125, USA}
\email{yuyuetony@gmail.com}
\date{January 10, 2022}
\subjclass[2010]{Primary 14J33; Secondary 14G22, 14N35, 14J32, 14T05, 13F60}
\keywords{Frobenius structure, mirror symmetry, log Calabi-Yau, skeletal curve, non-archimedean geometry, rigid analytic geometry, cluster algebra, scattering diagram, wall-crossing, broken lines}

\maketitle

\begin{abstract}
Let $U$ be an affine log Calabi-Yau variety containing an open algebraic torus.
We show that the naive counts of rational curves in $U$ uniquely determine a commutative associative algebra equipped with a compatible multilinear form.
This proves a variant of the Frobenius structure conjecture by Gross-Hacking-Keel in mirror symmetry, and the spectrum of this algebra is supposed to give the hypothetical mirror family.
Although the statement of our theorem involves only elementary algebraic geometry, our proof employs Berkovich non-archimedean analytic methods.
We construct the structure constants of the algebra via counting non-archimedean analytic disks in the analytification of $U$.
We establish various properties of the counting, notably deformation invariance, symmetry, gluing formula and convexity.
In the special case when $U$ is a Fock-Goncharov skew-symmetric X-cluster variety, we prove that our algebra generalizes, and gives a direct geometric construction of, the mirror algebra of Gross-Hacking-Keel-Kontsevich.
The comparison is proved via a canonical scattering diagram constructed from counts of infinitesimal non-archimedean analytic cylinders, without using the Kontsevich-Soibelman algorithm.
Several combinatorial conjectures of GHKK, as well as the positivity in the Laurent phenomenon, follow readily from the geometric description.
\end{abstract}


\setcounter{tocdepth}{1} 
\tableofcontents

\section{Introduction and statements of main results} \label{sec:introduction}

We begin with a brief summary of our main results, followed by precise definitions and statements.

Let $U$ be a smooth affine log Calabi-Yau variety containing an open algebraic torus, and $U\subset Y$ any snc compactification.
We show that the naive counts of rational curves in $U$ uniquely determine a commutative associative algebra $A$ equipped with a compatible multilinear form.
This is directly inspired by a very similar conjecture of Gross-Hacking-Keel in mirror symmetry, known as the Frobenius structure conjecture, see \cref{rem:after_main_theorem}.
The mirror algebra $A$ is a free $\bbZ[\NE(Y)]$-module, with basis $\Sk(U,\bbZ)$, the integer points in the essential skeleton of $U$.
We construct the algebra structure on $A$ by giving non-negative integer structure constants as naive counts of non-archimedean analytic disks, see \cref{def:structure_constant}.
The counts are naive in the sense that they are simply cardinalities of finite sets, without use of virtual fundamental classes.
We prove moreover that the spectrum of this algebra is the total space of a family whose generic fiber is an affine log Calabi-Yau variety of the same dimension as $U$, with at worst log canonical singularities.

We build a canonical scattering diagram directly by counting infinitesimal non-archimedean cylinders, without using the Kontsevich-Soibelman algorithm, see \cref{sec:intro:scattering}.
In the special case where $U$ is a Fock-Goncharov skew symmetric $X$-cluster variety, we prove that our (geometrically defined) scattering diagram agrees with the (combinatorially constructed) scattering diagram of \cite{Gross_Canonical_bases}, see \cref{thm:scattering_diagram_comparison}.
As a consequence, we show that our algebra generalizes, and in particular gives a new and direct geometric construction of, the mirror algebra of Gross-Hacking-Keel-Kontsevich \cite{Gross_Canonical_bases}, (which under additional assumptions is isomorphic to the associated Fomin-Zelevinski upper cluster algebra), see \cref{thm:X-cluster_intro}.
We refer to \cref{sec:intro:scattering} for the implications of this comparison theorem, including the broken-line convexity conjecture, the positivity of the coefficients of scattering functions (thus the positivity in the Laurent phenomenon for cluster algebras), and the independence of the mirror algebra on the cluster structure.

The \emph{heart} of our work is a simple definition of counts associated to trees in the essential skeleton $\Sk(U)$ of $U$, building on the ideas of \cite{Yu_Enumeration_of_holomorphic_cylinders_I}, as naive counts of non-archimedean analytic curves with the given intrinsic skeleton, independent of any choice of non-archimedean SYZ fibration, see \cref{sec:intro:naive_counts}.
We establish various properties of such counts, notably deformation invariance (\cref{thm:deformation_invariance_truncated}), symmetry (\cref{thm:forgetting_interior_marked_points}), gluing formula (\cref{thm:gluing_concatenate}) and convexity (\cref{thm:Ftrop_structure_constants}).

\bigskip
Now we give precise statements of our results. 
\medskip

Let $k$ be any field of characteristic zero.
Let $U$ be a connected affine smooth  log Calabi-Yau $k$-variety, with volume form $\omega$, and $k(U)$ the field of rational functions on $U$.
Let
\begin{multline*}
\Sk(U,\bbZ)\coloneqq\{0\}\sqcup\\
\big\{m\nu\ \big|\ m\in\bbN_{>0},\ \nu\text{ is a divisorial valuation on }k(U)\text{ where }\omega\text{ has a pole}\big\}.
\end{multline*}

Fix a projective snc compactification $U\subset Y$ with complement $D\coloneqq Y\setminus U$.

\begin{definition}[see \cref{fig:Frobenius_form}]\label{def:counting_Frobenius}
Given $n\ge 2$, $\bP\coloneqq(P_1,\dots,P_n)$ with $P_j \in \Sk(U,\bbZ)$, and a curve class $\beta\in\NE(Y)\subset N_1(Y,\bbZ)$, we define a number $\eta(\bP,\beta)$ counting rational curves in $U$ as follows:

Let $B\coloneqq\set{j | P_j\neq 0}$.
Write $P_j=m_j\nu_j$ for $j\in B$.
Modifying the compactification $U\subset Y$ by a blowup $b\colon (\tY,\tD)\to (Y,D)$, we can assume each $\nu_j$ has divisorial center $D_j\subset\tD$.
Let $H(\bP,\beta)$ be the space of maps
\[f\colon[\bbP^1,(p_1,\dots,p_n,s)]\longrightarrow\tY\]
such that
\begin{enumerate}
	\item $p_1,\dots,p_n,s$ are distinguished marked points of $\bbP^1$,
	\item for each $j\in B$, $f(p_j)$ lies in the open stratum $D_j^\circ$,
	\item $f\inv(\tD)=\sum_{j\in B} m_j p_j$,
	\item $(b\circ f)_*[\bbP^1]=\beta$.
\end{enumerate}
One can show that the map
\[\Phi\coloneqq(\dom,\ev_s)\colon H(\bP,\beta)\longrightarrow \cM_{0,n+1}\times U\]
taking modulus of domain and evaluation at $s$ is finite étale over a Zariski dense open of the target (see \cref{prop:smoothness_all}). 
We define $\eta(\bP,\beta)$ to be the degree of the finite étale map above.
The fiber of $\Phi$ over the generic point, and in particular the degree $\eta(\bP,\beta)$, is independent of the blowup $b$.
Note the counts here are naive counts (as opposed to counts defined using virtual fundamental classes).
\begin{figure}[!ht]
	\centering
	\setlength{\unitlength}{0.35\textwidth}
	\begin{picture} (1,1)
		\put(0,0){\includegraphics[width=\unitlength]{images/Frobenius_form}}
		\put(0.09,0.8){$D_1$}
		\put(0.84,0.77){$D_2$}
		\put(0.23,0.07){$D_3$}
		\put(0.38,0.43){$s$}
	\end{picture}
	\caption{}
	\label{fig:Frobenius_form}
\end{figure}
\end{definition}

Next we assemble the numbers $\eta(\bP,\beta)$ into generating series in the following way:
let
\begin{align*}
	R &\coloneqq \bbZ[\NE(Y)] \coloneqq \bigoplus_{\beta \in \NE(Y)} \bbZ \cdot z^{\beta},\quad \text{the monoid ring of $\NE(Y)$ over $\bbZ$},\\
	A &\coloneqq R^{(\Sk(U,\bbZ))} \coloneqq\bigoplus_{P\in\Sk(U,\bbZ)} R\cdot\theta_P,\quad \text{the free $R$-module with basis $\Sk(U,\bbZ)$}.
\end{align*}
Define $\Trace\colon A \to R$ taking coefficient of $\theta_0$.
For $n\ge 2$, let $\braket{,\dots,}_n\colon A^n \to R$ be the $R$-multilinear map with
\[\braket{\theta_{P_1},\dots,\theta_{P_n}}_n = \sum_{\beta \in \NE(Y)} \eta(P_1,\dots,P_n,\beta)z^{\beta},\]
called the \emph{Frobenius multilinear form}.
The affineness of $U$ implies that the sum above is finite (see \cref{lem:finitely_many_beta}).

\begin{theorem}[Frobenius structure theorem] \label{thm:main}
	Assume $U$ contains an open split algebraic torus.
	The following hold:
	\begin{enumerate}
		\item \label{thm:main:non-degeneracy} For every $n\ge 2$, the $R$-multilinear map $\braket{,\dots,}_n\colon A^n\to R$ is non-degenerate, i.e.\ the induced map $A\to \Hom_R(A^{\otimes(n-1)},R)$ is injective.
		\item \label{thm:main:algebra}There exists a unique finitely generated commutative associative $R$-algebra structure on $A$ such that $\theta_0 = 1$
		and 
		\[\braket{a_1,\dots,a_n}_n = \Trace(a_1 a_2\cdots a_n)\quad\text{for every }n\ge 2.\]
				\item \label{thm:main:family} The restriction of the family
		\[\cX\coloneqq\Spec(A)\longto\Spec(R)\]
		over $\bbQ\supset\bbZ$
		is a flat family of affine varieties of same dimension as $U$, each fiber is Gorenstein, semi-log-canonical and $K$-trivial.
		The generic fiber is log canonical and log Calabi-Yau.
	\end{enumerate}
\end{theorem}

\begin{remark} \label{rem:after_main_theorem}
	The statements (1-3) are proved respectively in Sections \ref{sec:non-degeneracy}, \ref{sec:associativity}-\ref{sec:torus_action} and \ref{sec:geometry_of_the_mirror_family}.
	By the non-degeneracy, the algebra $A$ is canonically associated to $U\subset Y$, independent of any choice of torus.
	Moreover, we can remove the dependence on the compactification $Y$ by setting all curves classes to 0: we set $A_U \coloneqq A \otimes_{R} \bbZ$, where $R \to \bbZ$ sends every $z^\gamma$ to $1$, see \cref{rem:A_U}.
	This corresponds to taking the fiber of $\cX$ over the identity point of $T^{N_1(Y)}\coloneqq \Spec(\bbZ[N_1(Y)])\subset\Spec R$.
	
	Throughout the paper we will refer to $A$, as well as its variants, as \emph{mirror algebras}, because we expect $\cX$ to be the mirror to $U$ in the sense of homological mirror symmetry; and in particular, $A$ should be isomorphic to the symplectic cohomology ring $\SH^0(U)$.
	We do not address any symplectic aspects of homological mirror symmetry here.
	Our non-archimedean enumerative approach can be seen as a study of the A-side of homological mirror symmetry from an algebro-geometric viewpoint.
	
	The assumption that $U$ is affine and contains an open torus allows us deeper understandings of the geometry of the mirror variety $\cX$.
	Without the affineness assumption, we lose the convexity and finiteness in \cref{sec:convexity}, and the mirror algebra $A$ will only be formal.
	Moreover, the affineness is necessary for the positivity of structure disk classes (see \cref{prop:structure_disk_interior_divisor}).
	The assumption that $U$ contains an open torus is twofold:
	first, it allows a degeneration of the mirror variety to a toric variety, crucial for the proof of \cref{thm:main}(\ref{thm:main:non-degeneracy}) and (\ref{thm:main:family});
	second, it greatly simplifies the enumerative part of the theory, making it self-contained, see \cref{sec:intro:naive_counts}.
	Removing this assumption would result in a more sophisticated enumerative theory, which is currently under development (see \cite{Porta_Yu_Non-archimedean_quantum_K-invariants}, see also \cite{Abramovich_Punctured_logarithmic_maps} for an intimately related logarithmic approach).
	Note the assumption that $U$ contains an open torus is always satisfied in dimension two (i.e.\ for log Calabi-Yau surfaces) by the classification of surfaces, but not always in higher dimensions.
	We conjecture that the theorem should still hold without this assumption.
	
	\cref{thm:main} is directly inspired by \cite[Conjecture 0.8]{Gross_Mirror_symmetry_for_log_Calabi-Yau_surfaces_I_v1}, where the multilinear pairing $\braket{,\dots,}$ is defined via log Gromov-Witten invariants instead of naive counts of rational curves.
	It is a separate interesting question, whether these log Gromov-Witten invariants are equal to our counts.
	Gross and Siebert are working on the reconstruction problem of mirror symmetry in greater generality, including compact Calabi-Yau manifolds, using the theory of punctured log curves (see \cite{Gross_Intrinsic_mirror_symmetry,Gross_The_canonical_wall_structure}).
	It is not evident whether their mirror algebra (restricted to our context) will be isomorphic to ours, and by the non-degeneracy, this will in fact be a consequence of the previous comparison question.
	
	We prove a bit more than statement (\ref{thm:main:family}):
	we compactify each fiber of $\cX$ to an anti-canonical semi-log-canonical pair, see \cref{prop:geometry_of_fiber}.
	We expect that every fiber of the restriction $\cX_\bbQ|_{T^{N_1(Y)}}$ is normal with canonical singularities; together with Paul Hacking, we prove this in dimension two in \cite{Hacking_Secondary_fan}.
		Statements (\ref{thm:main:non-degeneracy}-\ref{thm:main:algebra}) are proven independently, for X-cluster varieties by Travis Mandel \cite{Mandel_Theta_bases} using a completely different argument.
\end{remark}

\subsection{Structure Constants}  \label{sec:intro:structure_constants}

Given $P_1,\dots,P_n\in\Sk(U,\bbZ)$, we write the product in the mirror algebra $A$ as
\begin{equation} \label{eq:multiplication}
	\theta_{P_1}\cdots\theta_{P_n}=\sum_{Q\in\Sk(U,\bbZ)}\ \sum_{\gamma\in\NE(Y)} \chi(P_1,\dots,P_n,Q,\gamma) z^\gamma\theta_Q.
\end{equation}
Now we give the precise description of the structure constants $\chi(P_1,\dots,P_n,Q,\gamma)$ as counts of non-archimedean analytic disks, building on ideas from
\cite{Yu_Enumeration_of_holomorphic_cylinders_I,Yu_Enumeration_of_holomorphic_cylinders_II}.
We would like to illustrate the simplicity, both conceptually and in technical detail, because we feel that this simple direct geometric construction of the mirror algebra is our most important contribution.

Recall that for an algebraic variety $X$ over any non-archimedean field $K$, we have an associated $K$-analytic space $X^\an$ constructed by Berkovich \cite{Berkovich_Spectral_theory}.
As a set, $X^\an$ consists of pairs $(\xi,\nu)$ where $\xi\in X$ is a scheme-theoretic point and $\nu$ is an absolute value on the residue field $\kappa(\xi)$ extending the given one on $K$.

Here we equip our base field $k$ with the trivial absolute value, i.e.\ $\abs{x}=1$ for all $x\in k\setminus 0$ and $\abs{0}=0$.
Note $\Sk(U,\bbZ)$ is naturally a subset of $U^\an$.
By our assumption, $U$ contains an open split algebraic torus, which we denote by $T_M$, $M$ being the co-character lattice.
This induces a canonical identification $\Sk(U,\bbZ)\simeq\Sk(T_M,\bbZ)\simeq M$.

Let $P_1,\dots,P_n,Q \in \Sk(U,\bbZ)$ and $\gamma\in\NE(Y)$.
Set $Z\coloneqq -Q \in M\simeq\Sk(U,\bbZ)$.
Let $\delta\in\NE(Y)$ be the class of the closure of any general translation of the one-parameter subgroup in $T_M$ given by $Q\in M$.
Let $\bP_Z\coloneqq(P_1,\dots,P_n,Z)$ and $\beta\coloneqq\gamma+\delta$.
We have a moduli space $H(\bP_Z,\beta)$, and a map
\[\Phi\coloneqq(\dom,\ev_s)\colon H(\bP_Z,\beta)\longrightarrow\cM_{0,n+2}\times U\]
as in \cref{def:counting_Frobenius}, where the $n+2$ marked points are labeled as $p_1,\dots,p_n,z,s$.

Let $\mu\in\cM_{0,n+2}^\an$ be the valuation on the generic point of $M_{0,n+2}$ given by the divisor in $\ocM_{0,n+2}$ parameterizing nodal curves where $\{p_1,\dots,p_n\}$ and $\{z,s\}$ are separated by a node.
The points $\mu\in\cM_{0,n+2}^\an$ and $Q\in U^\an$ give a discrete valuation on the generic point of $\cM_{0,n+2}\times U$, and hence a point $\tQ\in(\cM_{0,n+2}\times U)^\an$.
Since $\tQ$ is contained in any Zariski open of $(\cM_{0,n+2}\times U)^\an$, and $\Phi$ is étale over a Zariski dense open of $\cM_{0,n+2}\times U$ (see \cref{prop:smoothness_all}), the fiber $(\Phi^\an)\inv(\tQ)$ is finite (over the complete residue field $\cH(\tQ)$).

A map
\[f\colon[C,(p_1,\dots,p_n,z,s)]\longrightarrow\tY^\an\]
is said to satisfy the \emph{toric tail condition}\footnote{For readers less familiar with the Berkovich geometry of curves, we refer to \cref{rem:toric_tail_condition_algebro-geometric_interpretation} for an algebro-geometric interpretation of the toric tail condition.} if the following holds:
let $\Gamma$ be the convex hull of all the marked points in $C$, $r\colon C\to\Gamma$ the canonical retraction, and $\bbT\coloneqq r\inv([s,z])\subset C$; then we have $f(\bbT\setminus z)\subset T_M^\an$.
Let $F\subset(\Phi^\an)\inv(\tQ)$ denote the subspace satisfying the toric tail condition.

\begin{definition} \label{def:structure_constant}
	We define the structure constant $\chi(P_1,\dots,P_n,Q,\gamma)$ to be the length of $F$ as 0-dimensional $\cH(\tQ)$-variety; in other words, the cardinality of the underlying set after passing to an algebraic closure.
\end{definition}

\begin{theorem} \label{thm:structure_constants}
	The sums in the multiplication rule \eqref{eq:multiplication} are finite, and give the finitely generated commutative associative $R$-algebra structure on $A$ in \cref{thm:main}(\ref{thm:main:algebra}).
	In particular, we have $\chi(P_1,\dots,P_n,0,\gamma)=\eta(P_1,\dots,P_n,\gamma)$.
\end{theorem}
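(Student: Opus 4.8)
The plan is to verify that \eqref{eq:multiplication}, with the structure constants $\chi$ of \cref{def:structure_constant}, endows $A$ with a commutative, associative, unital, finitely generated $R$-algebra structure satisfying the trace identity $\braket{a_1,\dots,a_n}_n=\Trace(a_1\cdots a_n)$. Granting this, \cref{thm:main}(\ref{thm:main:non-degeneracy}) shows there is at most one such structure on $A$, so it is the one asserted in \cref{thm:main}(\ref{thm:main:algebra}); and, since $\Trace$ extracts the $\theta_0$-coefficient while $\braket{\theta_{P_1},\dots,\theta_{P_n}}_n=\sum_\gamma\eta(P_1,\dots,P_n,\gamma)z^\gamma$, the trace identity contains the final assertion $\chi(P_1,\dots,P_n,0,\gamma)=\eta(P_1,\dots,P_n,\gamma)$ as its $Q=0$ case. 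So the work splits into: finiteness of the sums in \eqref{eq:multiplication}, commutativity, the unit axiom, associativity, finite generation, and the trace identity; associativity is the crux.

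Fix $P_1,\dots,P_n$. For each fixed $Q$, the affineness of $U$ bounds the contributing classes $\beta=\gamma+\delta$, so as in \cref{lem:finitely_many_beta} only finitely many $\gamma$ contribute; and by the convexity theorem \cref{thm:Ftrop_structure_constants}, read through the identification $\Sk(U,\bbZ)\simeq M$ coming from the torus $T_M\subset U$, one has $\chi(P_1,\dots,P_n,Q,\gamma)=0$ unless $Q$ lies in a fixed bounded subset of $M$ determined by $P_1,\dots,P_n$. Hence both sums in \eqref{eq:multiplication} are finite and the formula (in particular its $n=2$ instance) defines an $R$-multilinear, resp.\ $R$-bilinear, operation; moreover the same convexity bound lets one write every $\theta_Q$ as a polynomial over $R$ in finitely many $\theta_P$ by induction on the size of $Q$, so $A$ is finitely generated over $R$. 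Commutativity is immediate: in \cref{def:structure_constant} the points $p_1,\dots,p_n$ and their decorations enter symmetrically — only $s$ and $z$ figure in $\tQ$ and in the toric tail condition — so $\chi(P_1,\dots,P_n,Q,\gamma)$ is symmetric in $(P_1,\dots,P_n)$. For the unit axiom, when some input, say $P_1$, equals $0$, the marked point $p_1$ maps to the interior of $U$; forgetting it using \cref{thm:forgetting_interior_marked_points} identifies $F\subset(\Phi^{\an})^{-1}(\tQ)$ with the corresponding fibre for the remaining inputs, and for $n=2$ this yields $\chi(0,P,Q,\gamma)=1$ when $(Q,\gamma)=(P,0)$ and $0$ otherwise, i.e.\ $\theta_0\theta_P=\theta_P$.

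For the trace identity, take $Q=0$, so $Z=0$, $\delta=0$, $\beta=\gamma$, and $z$ becomes an interior marked point; then $\tQ$ is the point of $(\cM_{0,n+2}\times U)^{\an}$ given by $\mu\in\cM_{0,n+2}^{\an}$ and the trivial valuation on $U$. Over $\mu$ the generic nodal curve has $z$ and $s$ on a bubble separated from $\{p_1,\dots,p_n\}$, so the segment $[s,z]$ retracts into that bubble and the toric tail condition is automatic; forgetting $z$ along $\cM_{0,n+2}\to\cM_{0,n+1}$ and using \cref{thm:forgetting_interior_marked_points} identifies $F=(\Phi^{\an})^{-1}(\tQ)$, compatibly with lengths, with the generic fibre of the finite étale map $\Phi\colon H(\bP,\gamma)\to\cM_{0,n+1}\times U$ of \cref{def:counting_Frobenius}. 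Hence $\chi(P_1,\dots,P_n,0,\gamma)=\eta(P_1,\dots,P_n,\gamma)$, and therefore $\braket{\theta_{P_1},\dots,\theta_{P_n}}_n=\Trace(\theta_{P_1}\cdots\theta_{P_n})$ for all $n\ge 2$.

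Associativity is the heart of the matter. It follows from the gluing/concatenation formula \cref{thm:gluing_concatenate}, which identifies the $n$-ary product of \eqref{eq:multiplication} with any of its iterated binary reductions; concretely,
\[\theta_{P_1}\cdot(\theta_{P_2}\cdots\theta_{P_n})=\theta_{P_1}\cdots\theta_{P_n}=(\theta_{P_1}\cdots\theta_{P_{n-1}})\cdot\theta_{P_n},\]
whence (already from the case $n=3$) $(\theta_{P_1}\theta_{P_2})\theta_{P_3}=\theta_{P_1}(\theta_{P_2}\theta_{P_3})$. In structure-constant terms this is a decomposition of the $n$-ary $\chi$ into a sum over an intermediate index $Q'\in M$ and a splitting $\gamma=\gamma_1+\gamma_2$ of products of lower structure constants, proved by degenerating the $(n+2)$-pointed domain so as to push a node separating $\{p_1,z,s\}$ (say) from $\{p_2,\dots,p_n\}$: the deformation invariance \cref{thm:deformation_invariance_truncated}, applied to truncations in $\NE(Y)$ (legitimate by the finiteness above), reduces the count over $\tQ$ to this degenerate locus, where a contributing non-archimedean analytic disk breaks into two disks meeting at a gluing point whose class in $M$ is $Q'$, and conversely two such disks with matching $Q'$ glue back uniquely. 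Since every $\Phi^{\an}$ occurring is finite étale over a Zariski-dense open (\cref{prop:smoothness_all}), this correspondence respects lengths, which gives the formula. The point I expect to be the main obstacle is verifying that the toric tail condition on a glued disk holds exactly when it holds on the piece carrying $z$ and $s$, the other piece being unconstrained, so that $F$ on the left matches — stratum by stratum in $Q'$ and in the class splitting — the products of the $F$'s on the two sides; establishing this compatibility of the toric tail with concatenation inside the Berkovich geometry of the degenerating family of curves is where the real work lies.
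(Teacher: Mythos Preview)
Your overall plan is sound, and the pieces on commutativity, finiteness of the sums, and the identification of associativity as the crux match the paper's architecture. But there is one genuine gap and two places where your reasoning is confused about what actually has to be shown.

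\textbf{Finite generation.} Your sentence ``the same convexity bound lets one write every $\theta_Q$ as a polynomial over $R$ in finitely many $\theta_P$ by induction on the size of $Q$'' does not work. Convexity (\cref{thm:Ftrop_structure_constants}) bounds $F^\trop(Q)$ above by $\sum F^\trop(P_i)$; to run an induction you would need to express $\theta_Q$ as a product of smaller $\theta_P$'s plus strictly smaller remainder. Modulo the maximal monomial ideal $\fm\subset R$ this is fine: $A/\fm A$ is the Stanley--Reisner ring (\cref{prop:algebra_modulo_m}), so $\theta_Q$ is a monomial in the ray generators. But the correction term lies in $\fm A$, i.e.\ is a sum of $z^\gamma\theta_{Q'}$ with $\gamma\neq 0$ and $F^\trop(Q')\le F^\trop(Q)$ possibly with equality, so there is no well-founded descent. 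The paper's proof (\cref{sec:torus_action}) is substantially different: it uses the $T^D$-grading from \cref{thm:torus_action}, shows each graded piece of the filtration $A_{\le n}$ is a finitely generated abelian group (\cref{lem:finiteness_fixed_weight}), and then uses a Krull-intersection argument (\cref{lem:Krull_intersection}, \cref{lem:finite_generation}) to lift generators from $A/\fm A$ to $A$. This requires an auxiliary ampleness hypothesis, achieved by passing to a toric blowup.

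\textbf{Trace identity.} Your claim that ``over $\mu$ the generic nodal curve has $z$ and $s$ on a bubble'' misreads what $\tQ$ is. The point $\mu$ is a divisorial valuation on the \emph{generic} point of $\cM_{0,n+2}$, so curves in $(\Phi^\an)^{-1}(\tQ)$ have smooth domain; there is no bubble to retract into. The paper instead first proves (\cref{lem:structure_constants}, using \cref{prop:toric_tail_cc_skeletal}) that the toric-tail locus is a union of connected components over all of $V_M\times V_Q$, so $\chi$ is the degree at \emph{any} point of $V_M\times V_Q$; since $Q=0$ gives $V_Q=M_\bbR$, it then picks $b\notin\Wall_A$ (\cref{lem:structure_constants_trace}), where \cref{lem:skeletal_curve_contraction} forces the spine to be constant along the $z$- and $s$-legs and \cref{lem:toric_tail_equiv} makes the tail condition vacuous.

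\textbf{Associativity.} You have the right endpoint (the gluing formula \cref{thm:gluing_concatenate}) but the paper does not degenerate the Berkovich domain directly. It first rewrites $\chi$ as a finite sum of spine counts $N(S,\gamma)$ (\cref{lem:structure_constants_decomposition}), then uses the stretch lemma (\cref{lem:stretch}) to choose the edge length $\lambda$ so that cutting a spine at the internal vertex $u$ lands $h(u)$ in $V_R$ for the correct $R\in M$; this is what makes the intermediate tail condition become automatic and turns the cut piece back into a structure constant. The Cut/Glue bijection and \cref{thm:gluing_concatenate} are then applied at the level of transverse spines, with \cref{lem:associativity_decomposition} packaging the three sides. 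Your ``main obstacle'' (toric-tail compatibility under gluing) is thus handled combinatorially via these tropical intermediaries, not by a direct argument on the glued analytic disks.
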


\begin{remark}[Heuristics behind the structure constants, see \cref{fig:structure_constants}] \label{rem:heuristics_structure_constants}
	Suppose $Q\neq 0$.
	Heuristically, the structure constant $\chi(P_1,\dots,P_n,Q,\gamma)$ counts analytic maps
	\[g\colon[\bbD,(p_1,\dots,p_n)]\longrightarrow\tY^\an\]
	such that
	\begin{enumerate}
		\item $\bbD$ is a closed unit disk over a non-archimedean field extension $k'/k$, and $p_1,\dots,p_n$ are distinguished $k'$-points on $\bbD$,
		\item $g(p_j)\in D_j^\circ$ for all $j$ such that $P_j\neq 0$, and $g\inv(\tD)=\sum m_j p_j$,
		\item $g(\partial\bbD)=Q\in U^\an$,
		\item \label{rem:heuristics_structure_constants:derivative} The derivative of $g$ at $\partial\bbD$ is equal to $Q$ (see \cref{rem:tangent_space}),
		\item $(b\circ g)_*(\bbD)=\gamma$, in a limiting sense (see \cref{def:curve_class}).
	\end{enumerate}
\begin{figure}[!ht]
	\centering
	\setlength{\unitlength}{0.4\textwidth}
	\begin{picture} (1,1)
		\put(0,0){\includegraphics[page=1,width=\unitlength]{images/structure_constants}}
		\put(0.5,0.94){$D_1$}
		\put(0.84,0.77){$D_2$}
		\put(0.52,0.47){$0$}
		\put(0.57,0.55){$Q$}
	\end{picture}
	\caption{This is only a heuristic picture.
	It does not represent the underlying topological space of a non-archimedean curve, in particular, the actual non-archimedean boundary consists of a single point rather than a circle.}
	\label{fig:structure_constants}
\end{figure}
	Unfortunately, the space of all such analytic maps is $\infty$-dimensional.
	In order to extract a finite counting number, we use a variant of the strategy in \cite{Yu_Enumeration_of_holomorphic_cylinders_I}, by imposing a regularity condition on the boundary of our disks (see \cref{fig:structure_constants_tail}):
	we ask that the map $g$ can be analytically continued at the boundary $\partial\bbD$ to a map $f$ from a closed rational curve $C$ to $\tY^\an$, such that
	\begin{enumerate}[label=(\roman*), ref=\roman*]
		\item The tail $\bbT\coloneqq C\setminus\bbD^\circ$ intersects the divisor $D_Z$ at one point $z$ with multiplicity $m_Z$, where $Z=m_Z\nu_Z$ and $\nu_Z$ has divisorial center $D_Z\subset\tD$;
		\item The punctured tail $\bbT\setminus z$ maps into the torus $T_M^\an$.
	\end{enumerate}
	\begin{figure}[!ht]
	\centering
	\setlength{\unitlength}{0.4\textwidth}
	\begin{picture} (1,1)
		\put(0,0){\includegraphics[page=2,width=\unitlength]{images/structure_constants}}
		\put(0.5,0.94){$D_1$}
		\put(0.84,0.77){$D_2$}
		\put(0.52,0.47){$0$}
		\put(0.57,0.55){$Q$}
		\put(0.22,0.08){$D_Z$}
	\end{picture}
	\caption{}
	\label{fig:structure_constants_tail}
	\end{figure}
	Hence, the problem of counting analytic disks can be translated into counting special types of closed rational curves, which is exactly the content of \cref{def:structure_constant}.
\end{remark}

\begin{remark} \label{rem:retraction}
	Any pluricanonical form $\omega$ on $X$ gives a piecewise linear \emph{skeleton} $\Sk(\omega)\subset X^\an$ (defined by Temkin \cite{Temkin_Metrization_of_differential_pluriforms}, generalizing Kontsevich-Soibelman \cite{Kontsevich_Homological_mirror_symmetry} and Mustata-Nicaise \cite{Mustata_Weight_functions}.)
	We define the \emph{essential skeleton} $\Sk(X)\coloneqq\bigcup\Sk(\omega)\subset X^\an$, union over all nonzero log pluricanonical forms $\omega$ on $X$ (see \cref{def:essential_skeleton}).
	In our context, we have $\Sk(U,\bbZ)\subset\Sk(U)=\Sk(\omega)\subset U^\an$, where $\omega$ is the log volume form on $U$, unique up to scaling.
	When the compactification $U\subset Y$ is minimal, we obtain a natural retraction map $U^\an\to\Sk(U)$, an instance of non-archimedean SYZ fibration (see \cite{Nicaise_Xu_Yu_The_non-archimedean_SYZ_fibration}).
	The retraction induces an integral affine structure on $\Sk(U)$ outside codimension two.
	Note that while the embedding $\Sk(U)\subset U^\an$ is intrinsic to $U$, the retraction $U^\an\to\Sk(U)$, as well as the resulting integral affine structure, depend on the choice of a minimal compactification $U\subset Y$.
	Moreover, snc minimal compaction often does not exist, and the presence of singularities generates technical complications.
	Both the Kontsevich-Soibelman program \cite{Kontsevich_Affine_structures} and the Gross-Siebert program \cite{Gross_From_real_affine_geometry} are based on the integral affine structure above, while our approach is not.
	One key technology we develop in this paper that enables us to work independent of any retraction $U^\an\to\Sk(U)$ is called \emph{skeletal curves}, which we describe below in \cref{sec:intro:skeletal_curves}.
\end{remark}

\begin{remark} \label{rem:structure_constants_vary_point}
	Due to the choice of the specific point $\tQ\in(\cM_{0,n+2}\times U)^\an$, the curves in $F$ responsible for structure constants, though highly generic in the algebraic sense, are in fact very special, i.e.\ non-transverse, from the tropical viewpoint.
	This is convenient for giving a quick definition of structure constants, but impractical for the purpose of proving any properties as in \cref{thm:structure_constants}.
	In order to perturb the curves in $F$ into more tropically transverse positions, we will allow the point $\tQ$ to vary over a subset $V_\cM\times V_Q\subset\Sk(\cM_{0,n+2}\times U)\subset(\cM_{0,n+2}\times U)^\an$.
	We will show that the subspace $F'$ of $(\Phi^\an)\inv(V_\cM\times V_Q)$ satisfying the toric tail condition is a union of connected components.
	As $\Phi^\an$ is finite étale over a neighborhood of $V_\cM\times V_Q$, we deduce that the degree of $\Phi^\an|_{F'}$ is well-defined, and gives the structure constant $\chi(P_1,\dots,P_n,Q,\gamma)$ (see \cref{lem:structure_constants}).
\end{remark}

\begin{remark} \label{rem:toric_tail_condition_algebro-geometric_interpretation}
	For readers less familiar with the Berkovich geometry of curves, here we give an algebro-geometric interpretation of the toric tail condition.
	Let $E\coloneqq\overline{U\setminus T_M}\subset Y$, $Z\coloneqq f\inv(E^\an)\subset C$ and $\Sigma\subset C$ the union of the marked points.
	Say $f$ is defined over a non-archimedean field extension $k'/k$.
	Up to a finite extension of $k'$, we can find a semistable model $\cC$ of $C$ over the ring of integers $(k')^\circ$ such that $Z\cup\Sigma$ extends to the smooth locus of the special fiber $\cC_s$.
	Let $\Gamma_\cC$ be the dual graph (which is a tree) of $\cC_s$.
	Let $v_{p_1},\dots,v_{p_n},v_z,v_s$ be the vertices of $\Gamma_\cC$ corresponding to all the marked points of $C$; they are not necessarily different from each other.
	Let $\Gamma_\Sigma\subset\Gamma_\cC$ be the convex hull of these vertices, and $\rho\colon\Gamma_\cC\to\Gamma_\Sigma$ the unique retraction.
	Let $\cC_s^\bbT\subset\cC_s$ be the union of irreducible components corresponding to $\rho\inv([v_s,v_z])$.
	Then the toric tail condition for $f$ is equivalent to $\cC_s^\bbT\cap\oZ=\emptyset$ (see \cref{lem:toric_tail_equiv}(\ref{lem:toric_tail_equiv:E})).
\end{remark}

\subsection{Skeletal Curves} \label{sec:intro:skeletal_curves}

The curves responsible for structure constants enjoy a special property:
the essential skeleton of the curve maps into the essential skeleton of $U$.
Such curves play an important role in various stages of our theory.
Below is the main theorem concerning such curves:

For this theorem, it is not necessary to assume $k$ has trivial valuation, $U$ is affine or contains an algebraic torus.
Let $D^\ess\subset D$ denote the union of irreducible components of $D$ where $\omega$ has a pole.
Let $\oSk(U)$ denote the closure of $\Sk(U)$ in $Y^\an$.

\begin{theorem}[see \cref{thm:f_in_skeleton}] \label{thm:intro_skeletal_curve}
	Let $k \subset k'$ be a non-archimedean field extension, $C$ a proper rational nodal \kanal curve, and $f\colon C_{k'} \to Y_{k'}^{\an}$ a $k'$-analytic map such that $f^{-1}(D) = f^{-1}(D^{\ess,\sm}) = \sum m_j p_j$ for $m_j\in\bbN_{>0}$ and $p_j \in C^{\sm}(k)$ (where $^\sm$ indicates the smooth locus).
		Consider the composition
	\[f_Y \colon C_{k'} \xrightarrow{\ f\ } Y_{k'}^\an \longrightarrow Y^\an.\]
	Let $\Gamma(k)\subset C_{k'}$ be the convex hull of $C(k)$ in $C_{k'}$.
	Assume $f_Y(x)\in\Sk(U)$ for some $x\in C(k)\subset C_{k'}$.
	Then the image $f_Y(\Gamma(k))$ lies in $\oSk(U)$; in particular, $f_Y$ maps the convex hull of all $p_j$ in $C_{k'}$ into $\oSk(U)$.
\end{theorem}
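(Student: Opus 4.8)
The plan is to extract from the log volume form $\omega$, via Temkin's metrization of log‑pluricanonical forms, a nonnegative ``weight function'' $\mathrm{wt}_\omega$ on $Y^\an$ whose zero locus is exactly $\oSk(U)$, and then to prove that the composite $h:=\mathrm{wt}_\omega\circ f_Y$ is a \emph{harmonic} function on the compact curve $C_{k'}^\an$; a harmonic function on a compact Berkovich curve is constant, and since $h(x)=\mathrm{wt}_\omega(f_Y(x))=0$ by hypothesis, $h\equiv 0$, so $f_Y$ maps all of $C_{k'}^\an$ — in particular $\Gamma(k)$ — into $\{\mathrm{wt}_\omega=0\}=\oSk(U)$. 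As preliminary reductions, note that replacing $(Y,D)$ by a further blow‑up changes neither $U$, nor $\omega$, nor $\Sk(U)$, nor $\oSk(U)$, and preserves all hypotheses on $f$; so I may assume $(Y,D)$ is an snc pair in which $D^\ess$ is the reduced support of the polar part of $\omega$, with convenient toroidal charts. I may also enlarge $k'$ to be algebraically closed and complete.

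\textbf{The weight function.} Temkin's metric on $\omega_Y(\log D)$ gives $\mathrm{wt}_\omega:=-\log\|\omega\|$ (after the usual device of adjoining a parameter, so that this is genuinely non‑constant over the trivially valued $k$), with the following properties, which are the logarithmic, compact counterpart of the Kontsevich--Soibelman/Mustaţă--Nicaise picture for the log Calabi--Yau $(U,\omega)$: (i) $\mathrm{wt}_\omega\ge 0$, and $\mathrm{wt}_\omega(y)=0$ if and only if $y\in\oSk(U)$; (ii) $\mathrm{wt}_\omega$ is plurisubharmonic on $Y^\an$ away from a neighbourhood of the ``bad'' locus $D\setminus D^{\ess,\sm}$, i.e.\ its curvature current there is $\ge 0$; (iii) the divisor of $\omega$ inside $\omega_Y(\log D)$ equals $\sum_i(1-a_i)D_i$ (where $\operatorname{div}_Y(\omega)=-\sum_i a_iD_i$, $a_i\le 1$), which is effective and supported on the non‑essential components, and likewise the class $c_1(\omega_Y(\log D))=K_Y+D$ is represented by an effective divisor supported on $D\setminus D^\ess$.

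\textbf{Harmonicity.} Let $h:=-\log\|f^\ast\omega\|=\mathrm{wt}_\omega\circ f_Y$, where $f^\ast\omega$ is a rational section of the line bundle $f^\ast\omega_Y(\log D)$ on $C_{k'}^\an$. Then, invoking $f^{-1}(D)=f^{-1}(D^{\ess,\sm})=\sum m_jp_j$ in three places: (a) $f$ avoids the non‑essential components, so $\operatorname{div}(f^\ast\omega)=f^\ast\operatorname{div}(\omega)=0$ by (iii), whence $\mathrm{dd}^c h$ equals the pulled‑back curvature; (b) $f_Y(C_{k'}^\an)$, being compact and disjoint from the closed set $(D\setminus D^{\ess,\sm})^\an$, stays at positive distance from it, so by (ii) the pulled‑back curvature is $\ge 0$, i.e.\ $\mathrm{dd}^c h\ge 0$; (c) the total mass $\int_{C_{k'}^\an}\mathrm{dd}^c h=\deg_C f^\ast\omega_Y(\log D)=f_\ast[C]\cdot(K_Y+D)=0$, again by (iii) since $f(C)$ meets no non‑essential component. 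A nonnegative measure of total mass $0$ vanishes, so $h$ is harmonic on the compact curve $C_{k'}^\an$, hence constant; as $h(x)=0$, we get $h\equiv 0$ and therefore $f_Y(C_{k'}^\an)\subseteq\{\mathrm{wt}_\omega=0\}=\oSk(U)$, which contains the asserted $f_Y(\Gamma(k))\subseteq\oSk(U)$. (The marked points $p_j$, where $f_Y$ meets $D^\ess$ but $\omega$ is a \emph{unit} in $\omega_Y(\log D)$, cause no trouble: $h$ remains finite and harmonic there.)

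\textbf{Main obstacle.} The entire difficulty is in establishing the properties of $\mathrm{wt}_\omega$: the identification $\{\mathrm{wt}_\omega=0\}=\oSk(U)$ \emph{including} the limit points lying on $D$, together with the correct normalization making $\mathrm{wt}_\omega$ non‑trivial over a trivially valued base; and the plurisubharmonicity / curvature nonnegativity of Temkin's metric away from the non‑essential and singular part of the boundary. It is precisely at these points that the hypothesis $f^{-1}(D)=f^{-1}(D^{\ess,\sm})$ is essential — it kills $\operatorname{div}(f^\ast\omega)$, forces the degree computation in (c) to vanish, and keeps $f(C)$ out of the region where the metric could be negatively curved. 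By comparison, the reductions, the degree/intersection bookkeeping, and the ``harmonic $\Rightarrow$ constant'' step on a compact Berkovich curve are routine.
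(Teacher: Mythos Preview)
Your argument, if it went through, would prove the stronger statement $f_Y(C_{k'}^{\an})\subset\oSk(U)$, and this is false. Take $U=\mathbb G_m^2$, $Y=(\mathbb P^1)^2$, $k$ trivially valued, $C=\mathbb P^1_k$ with marked points $0,\infty$, and $H\simeq\mathbb G_m^2$ the moduli of maps $f_{(a,b)}(z)=(az,bz)$. Choose $f$ to be a monomial point of $H^{\an}$ with $|a|=r\neq 1$, $|b|=s$; then $k'=\mathcal H(f)$ and the hypothesis of the theorem holds (e.g.\ $f_Y(1)\in\Sk(U)$). Now take the $k'$-point $c=1/a\in C_{k'}$: one computes $f(c)=(1,b/a)$, and the induced seminorm on $k(z_1,z_2)$ kills $z_1-1$, so $f_Y(c)$ lies over the curve $\{z_1=1\}$, which is neither a birational point of $U$ nor a boundary stratum. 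Hence $f_Y(c)\notin\oSk(U)$. (Of course $1/a\notin\Gamma(k)$, so this does not contradict the theorem.)

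The specific failure in your chain is the Poincar\'e--Lelong step. Temkin's K\"ahler seminorm $\|\omega\|$ is \emph{not} a continuous metric on the line bundle $\omega_Y(\log D)$ in the sense required for $\mathrm{dd}^c(-\log\|f^*\omega\|)=f^*c_1-\delta_{\mathrm{div}(f^*\omega)}$. It is only a \emph{seminorm}, and it vanishes identically at every point $y\in U^{\an}$ whose completed residue field has $\dim_{\mathcal H(y)}\Omega_{\mathcal H(y)/k}<d$ (all non-Abhyankar points, in particular all points lying over a proper $k$-subvariety). In the example above $\|\omega\|(f_Y(1/a))=0$, so $h(1/a)=+\infty$; the function $h$ is neither finite nor does its Laplacian see only $\mathrm{div}(f^*\omega)$. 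Your ``main obstacle'' (ii) is therefore not just unproven but false as stated: there is no curvature current on $Y^{\an}$ making $-\log\|\omega\|$ globally quasi-psh, because the $+\infty$-locus is far larger than any divisor.

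For contrast, the paper's proof does not attempt any potential-theoretic argument on $C_{k'}$. Instead it works on the moduli space $H$ of such maps: it constructs a log volume form $\omega_H$ on $H^{\sm}$ and proves the identity $\ev_x^*(\omega)|_{H^{\sm}}=\omega_H$ for \emph{every} $x\in C(k)\setminus B$ (\cref{lem:H1form}). Since the right-hand side is independent of $x$, and since $\ev_x$ is \'etale, one gets $\ev_x^{-1}(\Sk(U))=\Sk(\omega_H)$ independently of $x$; so $f_Y(x)\in\Sk(U)$ for one $x$ forces it for all $x\in C(k)$. The convex-hull statement then follows by freely adding further $k$-marked points. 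The key input is this transport-of-volume-form identity on $H$, not any harmonicity on the individual curve.
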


\begin{definition} \label{def:skeletal_curve}
	We call such $f_Y\colon C_{k'} \to Y^\an$ a \emph{skeletal curve}.
\end{definition}

Following \cref{rem:retraction}, we note that a skeletal curve have a canonical \emph{spine} independent of any retraction $U^\an\to\Sk(U)$.
Given any $Q\subset C(k)$ containing all $p_j$, let $\Gamma(Q)\subset C_{k'}$ be the convex hull.
We define the \emph{spine} of $f$ (with respect to $Q$) to be the restriction
\[\Gamma(Q)\xrightarrow{\ f_Y\ }\oSk(U).\]


The main source of skeletal curves in this paper comes from the following construction:

Fix $J$ a finite set of cardinality $n\ge 3$, and $\bP\coloneqq(P_j)_{j\in J}$ with $P_j\in\Sk(U,\bbZ)$.
Let $B\coloneqq\set{j|P_j\neq0}$, $I\coloneqq\set{j|P_j=0}$.
For each $j\in B$, write $P_j=m_j\nu_j$, and assume each $\nu_j$ has divisorial center $D_j\subset D$.
Let $\cM(U,\bP,\beta)$ denote the moduli stack of $n$-pointed rational stable maps $f\colon[C,(p_j)_{j\in J}]\to Y$ of class $\beta$ such that for each $j\in B$, $p_j$ maps to the open stratum $D_j^\circ$, and $f\inv(D)=\sum_{j\in B}m_j p_j$.
For $i \in I$ let 
\begin{equation} \label{eq:Phi_M0n1}
\Phi_i\coloneqq(\st,\ev_i)\colon \cM(U,\bP,\beta)^\an \longrightarrow \ocM_{0,n}^\an \times U^\an
\end{equation}
be the map taking stabilization of domain and evaluation at $p_i$.

\begin{proposition}[see Lemmas \ref{lem:source_of_skeletal_curve}, \ref{lem:restriction_to_skeleton}] \label{prop:intro_source_of_skeletal_curve}
	The preimage
	\[\ISk \coloneqq \Phi_i\inv\big(\oSk(\cM_{0,n})\times\Sk(U)\big)\]
	consists of skeletal curves.
	On a neighborhood of $\ISk$, $\Phi_i$ is étale and representable (i.e.\ non-stacky); $\Phi_i|_\ISk$ is set-theoretically finite.
	In particular, the structure constant $\chi(P_1,\dots,P_n,Q,\gamma)$ in \cref{def:structure_constant} is a naive count of skeletal curves.
\end{proposition}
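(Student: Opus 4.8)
The plan is to prove the three bulleted assertions in turn and then read off the statement about structure constants; the technical heart will be the claim that $\Phi_i$ is étale and representable near $\ISk$. First I would show that every point of $\ISk$ is a skeletal curve by a direct application of \cref{thm:intro_skeletal_curve}. Let $x\in\ISk$ correspond to a stable map $f\colon[C,(p_j)_{j\in J}]\to Y$ over $k'\coloneqq\cH(x)$, and apply \cref{thm:intro_skeletal_curve} with $k'$ in the role of the ground field. Its hypotheses hold: the equality $f\inv(D)=\sum_{j\in B}m_jp_j$ is part of the definition of $\cM(U,\bP,\beta)$, and it upgrades to $f\inv(D)=f\inv(D^{\ess,\sm})=\sum_{j\in B}m_jp_j$ because for $j\in B$ the center $D_j$ of $\nu_j$ is a component of $D$ along which $\omega$ has a pole, so $D_j\subset D^{\ess}$, while $p_j$ is carried into the open stratum $D_j^\circ\subset D^{\ess,\sm}$ (the reverse inclusion of preimages being automatic since $D^{\ess,\sm}\subset D$). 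Finally, membership in $\ISk$ forces $\ev_i(f)=f_Y(p_i)\in\Sk(U)$, so the remaining hypothesis is met with the $k'$-point $x\coloneqq p_i$. \cref{thm:intro_skeletal_curve} then gives $f_Y(\Gamma)\subset\oSk(U)$ for $\Gamma$ the convex hull of the marked points, i.e.\ $f_Y$ is skeletal in the sense of \cref{def:skeletal_curve}.

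Next I would establish that $\Phi_i$ is étale and representable on a neighborhood of $\ISk$. The main input is \cref{prop:smoothness_all} (in the form needed for $\cM(U,\bP,\beta)$), which I expect to give that near $\ISk$ the source is smooth of the expected dimension $\dim\cM_{0,n}+\dim U$ — the prescribed contact orders along $D$ together with the fixed class $\beta$ cut the naive dimension down to $\dim U+n-3$. To conclude étaleness it suffices to check that $\Phi_i$ is unramified along $\ISk$: a first-order deformation computation, exploiting the tropical balancing satisfied by a skeletal curve, should show that fixing the modulus of the domain together with the single value $\ev_i$ annihilates all infinitesimal deformations, so that $\Phi_i$, being an unramified map of equidimensional smooth analytic spaces, is étale on an open neighborhood. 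Representability amounts to the absence of nontrivial automorphisms for curves near $\ISk$: an automorphism of such a stable map fixes the $n\ge 3$ marked points, and on each component of the domain it is then pinned down either by the $\ge 3$ special points that component carries or by the non-constant map to $Y$ with its prescribed contact orders along $D$; the skeletal/tropical structure excludes an unconstrained $\bbP^1$ that could carry an automorphism. For the generic points actually used below — monomial valuations centered at the generic point — one also has the shortcut that they lie over the Zariski-dense open of \cref{prop:smoothness_all}.

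By the previous step the fibers of $\Phi_i|_{\ISk}$ are discrete; they are finite because $\cM(U,\bP,\beta)$ is a locally closed substack of the proper stack $\ocM_{0,n}(Y,\beta)$, hence of finite type, and the constraint of mapping into $\oSk(\cM_{0,n})$ confines the combinatorial type of the domain to finitely many possibilities, so a discrete fiber sits inside a quasi-compact family and is finite. For the last assertion I would apply the previous steps with $J$ of cardinality $n+2$ to the tuple $(P_1,\dots,P_n,Z,0)$ and $i=s$, in the situation of \cref{def:structure_constant}: since the point $\tQ$ has $\st$-coordinate $\mu\in\oSk(\cM_{0,n+2})$ and $\ev_s$-coordinate $Q\in\Sk(U,\bbZ)\subset\Sk(U)$, and its domain is a smooth $\bbP^1$ matching $H(\bP_Z,\beta)$, the locus $F$ of \cref{def:structure_constant} — equivalently the locus $F'$ of \cref{rem:structure_constants_vary_point} — is contained in $\ISk$. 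Hence its elements are skeletal curves, it is finite, and by the étaleness of $\Phi^\an$ near it the degree computing $\chi(P_1,\dots,P_n,Q,\gamma)$ is the cardinality of a finite set of skeletal curves.

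I expect the real obstacle to be the étale-and-representable step: upgrading the smoothness, unramifiedness, and automorphism analysis from the Zariski-dense-open setting of \cref{prop:smoothness_all} to a full neighborhood of $\ISk$ — including the degenerate domain configurations lying over the lower faces of $\oSk(\cM_{0,n})\times\Sk(U)$ — which is where the tropical-correspondence and balancing machinery is genuinely needed.
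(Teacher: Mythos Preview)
Your first paragraph is essentially right in spirit, though the setup of \cref{thm:intro_skeletal_curve} requires the domain curve to be defined over the small field $k$, which is not the case for a general analytic point of the moduli space; the paper handles this by working through \cref{prop:skeleton_of_product} to pass from membership in the product skeleton to the fiberwise skeletal condition (\cref{lem:source_of_skeletal_curve}).

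The genuine gap is in the étaleness, representability, and finiteness steps. The key observation you are missing---and which you half-state but then set aside as a ``shortcut'' applying only to the specific point $\tQ$---is that \emph{every} point of $\oSk(\cM_{0,n})\times\Sk(U)$ is a monomial point, hence lies in any Zariski dense open subset (\cref{lem:essential_skeleton_semistable_pair}). This holds throughout $\ISk$, including over the ``degenerate domain configurations lying over the lower faces'' that you flag as the obstacle. The paper exploits this as follows. For étaleness and representability, one shows first (\cref{lem:restrict_to_skeleton}(\ref{lem:restriction_to_skeleton:sm})) that every skeletal curve already lies in $\cM^\sm(U^\an,\bP,\beta)$: since $\ev_i(f)\in\Sk(U)$ lies in any Zariski dense open of $U$, the fiberwise \cref{lem:stable_domain} and \cref{lem:Msm_big} apply. Étaleness is then immediate from the triviality of $f^*(T_Y(-\log D))$ via \cref{lem:Msm_smooth}, and representability is automatic because $\cM^\sm\subset\cM^\sd$ is a variety. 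For set-theoretic finiteness one works stratum by stratum: apply \cref{prop:smoothness_all} with $Z=\overline{S}$ for each stratum $S\subset\ocM_{0,n}$ to get a Zariski dense open $O\subset\overline{S}\times Y$ over which $\Phi_i$ is finite étale, and observe $\Sk(S\times U)\subset O$ again by \cref{lem:essential_skeleton_semistable_pair}.

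Your proposed deformation-theoretic unramifiedness argument and direct automorphism analysis are therefore unnecessary, and as sketched (``tropical balancing should annihilate infinitesimal deformations''; ``the skeletal structure excludes an unconstrained $\bbP^1$'') they are not developed enough to stand alone. Your finiteness argument is also incomplete: étaleness gives discrete fibers, but discreteness together with finite type of the source does not yield finiteness without a properness input that you have not supplied.
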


\subsection{Naive counts of spines in $\Sk(U)$} \label{sec:intro:naive_counts} 

\cref{prop:intro_source_of_skeletal_curve} suggests a simple definition of counts associated to piecewise affine trees in $\Sk(U)$.
The study of properties of such counts is the main technical foundation of this paper.

Note that $\Sk(U)$ has an intrinsic conical piecewise $\bbZ$-linear structure (see \cref{sec:naive_counts}).
So we can consider a piecewise \Zaffine map $h$ from a stable nodal metric tree $\Gamma$ to $\oSk(U)$.
Assume $\Gamma$ has $n$ 1-valent vertices $(v_j)_{j\in J}$, all of them infinite, and $h\inv(\partial\oSk(U))\subset\bigcup v_j$.
We call $S\coloneqq(\Gamma,h)$ an \emph{extended spine} in $\Sk(U)$.
For each $j$, let $P_j\in\Sk(U,\bbZ)$ be derivative of $h$ at $v_j$ (pointing outwards).
Let $\bP\coloneqq(P_1,\dots,P_n)$.

Let $B\coloneqq\set{j|P_j\neq0}$ and $I\coloneqq\set{j|P_j=0}$.
Assume $\abs{B}\ge 2$, $\abs{I}\ge 1$ and fix $i\in I$.
Let $\Phi_i$ be as in \eqref{eq:Phi_M0n1}.
The closure $\oSk(\cM_{0,n})\subset\ocM_{0,n}^\an$ can be identified with the space of stable extended nodal metric trees with $n$ legs (see \cref{prop:skeleton_of_M0n}).
Thus $\Gamma$ gives a point $\Gamma\in\oSk(\cM_{0,n})\subset\ocM_{0,n}^\an$.
By \cref{prop:intro_source_of_skeletal_curve}, $\Phi_i\inv(\Gamma,h(v_i))$ is finite, and consists of skeletal curves.
Let $F_i(S,\beta)$ be the subspace of $\Phi_i\inv(\Gamma,h(v_i))$ consisting of maps whose spine is equal to $S$.
We define $N_i(S,\beta) \coloneqq \length(F_i(S,\beta))$, counting analytic curves in $U^\an$ of spine $S$ and class $\beta$ (evaluating at $i$).

Following the same heuristics in \cref{rem:heuristics_structure_constants}, we can also count analytic curves associated to a non-extended spine $S=(\Gamma,h)$ in $\Sk(U)$, i.e.\ we allow some 1-valent vertices of $\Gamma$ to be finite vertices.
We extend each finite leg to an infinite leg via the identification $\Sk(U)\simeq M_\bbR$ and obtain an extended spine $\hS$.
The extension of spine induces also an extension of curve class from any $\gamma\in\NE(Y)$ to $\hgamma\in\NE(Y)$.
We apply the paragraph above to $\hS$ and $\hgamma$, and obtain $F_i(\hS,\hgamma)$.
Let $F_i(S,\gamma)\subset F_i(\hS,\hgamma)$ be the subspace consisting of stable maps satisfying the \emph{toric tail condition} for each leg extension, as in \cref{sec:intro:structure_constants}.

\begin{definition}[see \cref{sec:naive_counts} for details]
	We define $N_i(S,\gamma)\coloneqq\length(F_i(S,\gamma))$, the count of analytic curves (with boundaries) in $U^\an$ of spine $S$ and class $\gamma$ (evaluating at $i$).
\end{definition}

The counts above enjoy very nice properties when the spine $S$ is sufficiently general, more precisely, when it is transverse with respect to walls in $\Sk(U)$ (see \cref{def:transverse}).
The fundamental property is deformation invariance, without which the counts are almost useless.

\begin{theorem}[Deformation invariance, see \cref{thm:deformation_invariance_truncated}]
	The count $N_i(S,\gamma)$ is deformation invariant among transverse spines.
\end{theorem}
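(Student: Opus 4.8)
The plan is to reduce the statement to a local computation near each wall that a transverse spine can cross during a generic one-parameter deformation, and to show the count $N_i(S,\gamma)$ is unchanged across such a crossing. First I would set up the deformation: given two transverse spines $S_0, S_1$ in the same combinatorial type (i.e., inducing the same $\bP$ and compatible curve classes), connect them by a path $S_t$, $t\in[0,1]$, inside the space of piecewise \Zaffine trees mapping to $\oSk(U)$ with the prescribed asymptotics. By general position I would arrange that for all but finitely many $t$ the spine $S_t$ is transverse, and that at the finitely many exceptional parameters $t_1 < \dots < t_N$ the spine degenerates in a codimension-one way — either a vertex of $\Gamma$ crosses a wall of the scattering diagram in $\Sk(U)$, or two vertices of $\Gamma$ collide (a change of the combinatorial type of the tree), or an edge of $h(\Gamma)$ becomes tangent to a wall. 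On each open interval $(t_{k},t_{k+1})$, the family $F_i(S_t,\gamma)$ sits inside $(\Phi_i^\an)^{-1}$ of a connected subset over which $\Phi_i$ is finite étale (by \cref{prop:intro_source_of_skeletal_curve}) and $F_i$ is cut out as a union of connected components by the spine condition and the toric tail conditions; hence $\length(F_i(S_t,\gamma))$ is locally constant there. So everything comes down to the behavior at each $t_k$.

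Next I would analyze the three types of degeneration. The collision of two marked/interior vertices and the edge-tangency cases are, in spirit, the standard "the stabilization map extends across the boundary of $\ocM_{0,n}$" phenomenon: the skeletal curve acquires (or resolves) a contracted component or a node, and the count is preserved because the étale-local structure of $\cM(U,\bP,\beta)^\an$ over $\ocM_{0,n}^\an \times U^\an$ guarantees that the fiber cardinality (with multiplicity = length) is continuous across the corresponding boundary divisor of $\ocM_{0,n}$; one checks that the toric tail conditions, being open-and-closed on the relevant locus, match up on the two sides. The genuinely new case is a vertex of the spine crossing a wall $\fd \subset \Sk(U)$. Here a skeletal curve contributing to $F_i(S_t,\gamma)$ degenerates, as $t\to t_k$, to a curve whose spine has a vertex on $\fd$; on the other side this vertex resolves the other way, and the number of analytic curves with the new spine differs a priori. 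The claim to prove is that the generating functions of $N_i$ over curve classes $\gamma$, glued according to the combinatorics of the two spines, agree — this is exactly a local wall-crossing identity, and it will be proved using the gluing formula \cref{thm:gluing_concatenate} together with the fact that the "infinitesimal" pieces attached along $\fd$ are governed by the canonical scattering diagram built in \cref{sec:intro:scattering}. In effect, the contributions on the two sides differ by inserting, respectively removing, scattering-wall pieces that are forced to cancel because both assemblies compute the same étale-local degree of $\Phi_i$ near the wall point $\tQ$.

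The key steps, in order: (1) construct the generic path $S_t$ and prove the three-type classification of degenerations via a dimension count on the space of non-transverse spines of the given combinatorial type; (2) establish local constancy of $\length(F_i(S_t,\gamma))$ on each transversality interval, using \cref{prop:intro_source_of_skeletal_curve} and the open-closed nature of the spine and toric-tail conditions; (3) handle the two "$\ocM_{0,n}$-boundary" degenerations by continuity of fiber length across boundary divisors plus matching of the toric tail conditions; (4) handle the wall-crossing degeneration by the gluing formula \cref{thm:gluing_concatenate} and compatibility with the scattering diagram, showing the assembled counts agree; (5) conclude by chaining the equalities across $t_1,\dots,t_N$. I expect step (4) — the wall-crossing identity — to be the main obstacle: it requires both the correct bookkeeping of curve classes under spine modification (so that the $\gamma$-indexed generating functions, not just individual numbers, are compared) and a precise gluing/deformation argument identifying the degenerated curve with a concatenation along the wall. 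Steps (1)–(3) are essentially transversality and the étale-continuity of lengths, which should be routine given the machinery already in place; the convexity result \cref{thm:Ftrop_structure_constants} and the symmetry result \cref{thm:forgetting_interior_marked_points} may be needed as auxiliary inputs to control how the class $\gamma$ and the extra interior marked point behave under the deformation.
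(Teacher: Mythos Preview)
Your proposal has a circularity problem in step (4): you plan to handle wall-crossings using the gluing formula \cref{thm:gluing_concatenate} and the scattering diagram, but both are proved in the paper \emph{using} deformation invariance --- the proof of \cref{thm:gluing_concatenate} explicitly invokes \cref{thm:deformation_invariance_truncated}, and the wall-crossing transformations of \cref{sec:intro:scattering} rest on it as well. The auxiliary tools you mention, \cref{thm:forgetting_interior_marked_points} and \cref{thm:Ftrop_structure_constants}, are likewise logically downstream. So step (4) cannot be carried out as written.

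More importantly, step (4) is unnecessary, and your step (2) hides the real work. The theorem asserts constancy only among \emph{transverse} spines, so the path $S_t$ may be taken entirely inside the transverse locus --- there are no wall-crossing times $t_k$ to analyze. What remains is your step (2): why is $N_i(S_t,\gamma)$ locally constant there? You assert that $F_i(S_t,\gamma)$ is a union of connected components of an \'etale fiber, but \'etaleness alone does not give constant degree over a noncompact base; one needs \emph{finite} \'etale, i.e.\ properness. This is what the paper actually proves: over a small neighborhood $V_S \subset \SP^\tr(M_\bbR,\bP)$, the key step is the punctured-disk lemma (\cref{lem:MSbeta_punctured_disk}), showing that any one-parameter degeneration of curves in $\cM^\sm(U^\an,\bP,\beta)_{V_S}$ inside the \emph{compact} moduli space $\ocM(Y^\an,\bP,\beta)$ has limit still in $\cM(U^\an,\bP,\beta)$ --- no component escapes to $D$. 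The argument uses the affineness of $U$ (via the maximum modulus principle) and the transversality of $S$ to control tropicalizations near the boundary marked points. Properness plus \'etaleness then gives finite \'etale over a Zariski open, hence constant degree; the truncated case follows once the toric tail condition is shown to be open and closed (\cref{prop:toric_tail_cc}). The whole argument stays in a single neighborhood of transverse spines and never crosses a wall.
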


Other important properties include:
\begin{enumerate}
	\item The symmetry property: the count $N_i(S,\gamma)$ is independent of the choice of $i\in I$, see \cref{thm:forgetting_interior_marked_points}, a generalization of \cite[Theorem 6.3]{Yu_Enumeration_of_holomorphic_cylinders_I} with a fundamentally different proof.
	\item The gluing formula: a product formula when we glue spines at finite vertices, see \cref{thm:gluing_concatenate}, a variant of \cite[Theorem 1.2]{Yu_Enumeration_of_holomorphic_cylinders_II}.
	\item Tail condition with varying torus: the count $N_i(S,\gamma)$ is independent of the choice of torus $T_M\subset Y$, see \cref{thm:count_independent}.
	\item Convexity, positivity and finiteness, see \cref{sec:convexity}.
\end{enumerate}

\subsection{Scattering diagram and comparison with GHKK} \label{sec:intro:scattering}

Both the Kontsevich-Soibelman program \cite{Kontsevich_Affine_structures} and the Gross-Siebert program \cite{Gross_From_real_affine_geometry} for the reconstruction of mirror varieties rely on a combinatorial algorithmic construction of scattering diagram (aka wall-crossing structure, see \cite{Kontsevich_Wall-crossing_structures}).
Our construction of the mirror algebra by counting non-archimedean analytic disks as in \cref{sec:intro:structure_constants} completely bypasses any use of scattering diagram.
Nevertheless, our geometric approach also allows us to give a direct construction of the scattering diagram by counting infinitesimal analytic cylinders, without the step-by-step Kontsevich-Soibelman algorithm\footnote{A different direct construction of scattering diagram was proposed by Gross and Siebert in \cite[\S 2.4]{Gross_Intrinsic_mirror_symmetry} via punctured log curves.}, see \cref{sec:scattering}.
Here is a brief summary.

Fix $T_M\subset U\subset Y$ as before, let $N\coloneqq\Hom(M,\bbZ)$.
Given any $n\in N\setminus 0$, $x\in n^\perp$ generic, $v,w\in M_{\braket{n,\cdot}>0}$ and $\alpha\in\NE(Y)$, let $V_{x,v,w}$ be the infinitesimal spine with two ends of outward derivatives $v$ and $-w$ respectively, and bending at $x$ (see \cref{fig:infinitesimal_spine}).
We obtain an associated count of analytic curves $N(V_{x,v,w},\alpha)$ as in \cref{sec:intro:naive_counts}.
\begin{figure}[!ht]
	\centering
	\setlength{\unitlength}{0.3\textwidth}
	\begin{picture} (1,1)
	\put(0,0){\includegraphics[width=\unitlength]{images/infinitesimal_spine}}
	\put(0.62,0.02){$n^\perp$}
	\put(0.49,0.43){$x$}
	\put(0.3,0.52){$v$}
	\put(0.65,0.52){$w$}
	\end{picture}
	\caption{}
	\label{fig:infinitesimal_spine}
\end{figure}
\begin{definition}[Wall-crossing transformation, see \cref{def:wall-crossing_transformation}] \label{def:wall-crossing_intro}
	For any $x\in n^\perp\subset M_\bbR$ generic and $v\in M_{\braket{n,\cdot}>0}$, we define
	\[\Psi_{x,n}(z^v)\coloneqq\sum_{\substack{w\in M_{\braket{n,\cdot}>0}\\ \alpha\in\NE(Y)}} N(V_{x,v,w},\alpha) z^\alpha z^w.\]
\end{definition}

\begin{theorem}[see \cref{thm:wall-crossing_homomorphism}, \cref{prop:wall-crossing_function}]
	Fix a strictly convex toric monoid $Q\supset\NE(Y)$, let $\hR$ be the completion of $\bbZ[Q\oplus M]$ with respect to the maximal monomial ideal.
	The wall-crossing transformation $\Psi_{x,n}$ extends to an automorphism of $\Frac{\hR}$.
	Moreover, there exists $f_x\in\hR$ such that for any $v\in M$, we have
	\[\Psi_{x,n}(z^v)=z^v\cdot f_x^{\braket{n,v}}.\]
\end{theorem}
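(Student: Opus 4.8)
The two assertions are linked: the automorphism statement falls out once the product formula is established, since $f_x$ will be a unit in $\hR$. I would proceed in three steps: (a) show $\Psi_{x,n}$ is multiplicative on the monomials $z^v$ with $v\in M_{\braket{n,\cdot}>0}$; (b) upgrade this to the product formula $\Psi_{x,n}(z^v)=z^vf_x^{\braket{n,v}}$; (c) verify the convergence and invertibility needed to extend $\Psi_{x,n}$ to $\Frac\hR$. The two engines are the gluing formula (\cref{thm:gluing_concatenate}) and the deformation invariance of the naive counts among transverse spines (\cref{thm:deformation_invariance_truncated}).

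\emph{Step (a): multiplicativity.} For $v_1,v_2\in M_{\braket{n,\cdot}>0}$ the goal is $\Psi_{x,n}(z^{v_1})\Psi_{x,n}(z^{v_2})=\Psi_{x,n}(z^{v_1+v_2})$; concretely, for each $w$ and $\alpha$ one must identify $N(V_{x,v_1+v_2,w},\alpha)$ with the convolution
\[\sum_{\substack{w_1+w_2=w\\ \alpha_1+\alpha_2=\alpha}}N(V_{x,v_1,w_1},\alpha_1)\,N(V_{x,v_2,w_2},\alpha_2).\]
By deformation invariance (\cref{thm:deformation_invariance_truncated}) the bend $V_{x,v_1+v_2,w}$ may be replaced by a spine carrying a trivalent vertex $y$ placed on the positive side of $n^\perp$, inside the wall-free chamber adjacent to $x$, with incoming legs of outward derivatives $v_1$ and $v_2$ and a third edge running into the bend at $x$; the vertex $y$ is invisible to the count because its chamber carries no walls. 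Pulling $y$ across $n^\perp$ deforms this into a spine in which $v_1$ and $v_2$ cross the wall separately at nearby generic points $x_1,x_2$ and the pieces reconnect beyond it. Deformation invariance identifies the counts before and after, and the gluing formula \cref{thm:gluing_concatenate}, applied at the reconnection vertices, produces precisely the convolution above (with $x_i$ in place of $x$, which is harmless because the wall-crossing data is locally constant along $n^\perp$ — deformation invariance once more, sliding $x_1,x_2$ back to $x$ within one chamber).

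\emph{Steps (b) and (c): product formula and automorphism.} The only contribution to $\Psi_{x,n}(z^v)$ in $\fm$-adic degree $0$ is the straight cylinder along the one-parameter subgroup in direction $v$, counted once, so $N(V_{x,v,w},0)=\delta_{v,w}$; hence for a primitive $v_0$ with $\braket{n,v_0}=1$ the series $f_x\coloneqq z^{-v_0}\Psi_{x,n}(z^{v_0})$ lies in $1+\fm$, and the convexity and finiteness results of \cref{sec:convexity} show it lies in $\hR$, so it is a unit of $\hR$. By step (a) the assignment $g(v)\coloneqq z^{-v}\Psi_{x,n}(z^v)$ is a homomorphism from the semigroup $M_{\braket{n,\cdot}>0}$ to $1+\fm$, and writing any $v$ with $\braket{n,v}=c$ as a sum of $c$ vectors each of $n$-value $1$ reduces the product formula to the single identity $g(v_0)=g(v_0')$ whenever $\braket{n,v_0}=\braket{n,v_0'}=1$. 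I would deduce this direction-independence from \cref{thm:gluing_concatenate} and deformation invariance — gluing onto a bend of $v_0$ a straight auxiliary leg in the wall-parallel direction $u=v_0'-v_0\in n^\perp\cap M$ and sliding the gluing vertex across $n^\perp$ to convert it into a bend of $v_0'$ — assisted by the independence of $N_i$ on the ambient torus $T_M$ (\cref{thm:count_independent}), which annihilates translations by $n^\perp$. Granting it, $g(v)=f_x^{\braket{n,v}}$ on $M_{\braket{n,\cdot}>0}$; since that semigroup generates $M$ as a group, multiplicativity forces $\Psi_{x,n}(z^v)=z^vf_x^{\braket{n,v}}$ for every $v\in M$ (for non-primitive $n$ one runs the argument with its primitive multiple and adjusts $f_x$). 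The map $v\mapsto z^vf_x^{\braket{n,v}}$ is then a continuous $\bbZ[Q]$-algebra endomorphism of $\hR$ with inverse $v\mapsto z^vf_x^{-\braket{n,v}}$, so an automorphism of $\hR$, a fortiori of $\Frac\hR$.

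\emph{Main obstacle.} The crux is step (a): making the degeneration precise enough to invoke \cref{thm:gluing_concatenate} — producing a one-parameter family of spines from the configuration with $y$ before the wall to the one in which the legs cross separately that stays transverse, or else tracking the finitely many walls it meets — is where the tropical bookkeeping is delicate. The direction-independence used in step (b) is equally subtle: the emissions absorbed by a line of direction $v$ must depend only on $\braket{n,v}$, and this really uses the gluing and deformation structure, not a naive symmetry, since rotating the leg direction is not permitted (legs are pinned to integral contact orders with boundary divisors).
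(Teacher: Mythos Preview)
Your step~(a) has the right geometric picture, but you are reaching for the wrong deformation lemma. The ordinary deformation invariance (\cref{thm:deformation_invariance_truncated}) requires transversality throughout, and as you yourself flag, the moment the trivalent vertex sits on $n^\perp$ the spine is not transverse. The paper resolves this with a different, purpose-built result: \cref{prop:deformation_invariance_skeletal}, which shows that the \emph{sum} of counts over all spines with a fixed domain and evaluation point is constant along a path, provided that whenever a finite leg endpoint lands on a wall cell $\fd$, the leg direction lies in the span of $\fd$. The paper sets up a Y-shaped tree with legs of derivatives $m_1,m_2,-e$ and an auxiliary internal evaluation leg, then moves the evaluation point along a short path $\gamma$ crossing $n^\perp$; \cref{prop:deformation_invariance_skeletal} applied to $\{\Gamma\}\times\operatorname{Im}\gamma$ gives constancy of the total count, and a direct combinatorial check (there is one spine for $t<0$, and spines for $t>0$ are indexed by splittings $b=b_1+b_2$) together with gluing identifies the two sides as the coefficients of $\Psi_{x,n}(z^{m_1+m_2})$ and $\Psi_{x,n}(z^{m_1})\Psi_{x,n}(z^{m_2})$.

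Your step~(b) is far more involved than necessary, and your step~(c) overclaims. For~(b): the paper \emph{defines} $\Psi_{x,n}(z^m)=z^m$ for $m\in n^\perp$ and proves multiplicativity on all of $M_{\braket{n,\cdot}\ge 0}$ (allowing one factor in $n^\perp$). Direction-independence is then one line: if $\braket{n,v_1}=\braket{n,v_2}=1$ and $m=v_1-v_2\in n^\perp$, then $z^{v_1}f_{x,n,v_1}=\Psi_{x,n}(z^{v_2+m})=z^m\Psi_{x,n}(z^{v_2})=z^{v_1}f_{x,n,v_2}$. No geometric argument, no gluing of auxiliary legs, and certainly no appeal to \cref{thm:count_independent} (which concerns independence of the torus $T_M$, not of the leg direction). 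For~(c): your claim that $f_x\in 1+\fm$, hence a unit of $\hR$, is not established here and the paper explicitly disclaims it (see the remark following \cref{def:wall-crossing_transformation_extension}); the invertibility of $f_x$ is only obtained later, in \cref{prop:invertible}, after setting curve classes to zero and under \cref{ass:bend_in_P}. In the generality of the present statement one only gets an automorphism of $\Frac(\hR)$, via the formula $z^v\mapsto z^v f_x^{\braket{n,v}}$.
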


We call
\[\fD\coloneqq\Set{(x,f_x) | x\in n^\perp\subset M_\bbR\text{ generic for some } n\in N\setminus 0},\]
the \emph{scattering diagram} associated to $U\subset Y$ with respect to $T_M$.
We then prove that $\fD$ has finite polyhedral finite-order approximations (see \cref{prop:wall_decomposition}), and that $\fD$ is theta function consistent (see \cref{prop:theta_function_consistent}).

In \cref{sec:setting_the_curve_classes_to_0}, under additional assumptions, we set all the curve classes to 0 in $\fD$ and obtain a scattering diagram $\fD_U$ independent of the compactification $U\subset Y$.
We show that $\fD_U$ is consistent in the sense of Kontsevich-Soibelman (see \cref{prop:KS_consistent}).
This paves the way for the comparison in the case of cluster varieties with the work of Gross-Hacking-Keel-Kontsevich \cite{Gross_Canonical_bases}, see \cref{sec:cluster_case}.

We prove in \cref{thm:scattering_diagram_comparison} that the (combinatorially defined) scattering diagram $\fD_\GHKK$ of \cite[Theorem 1.12]{Gross_Canonical_bases} is equivalent to our (geometrically defined) scattering diagram $\fD_U$.
From this we deduce the comparison theorems for both A-cluster variety and X-cluster variety, see \cref{thm:comparison_with_GHKK_A-type} and \cref{cor:comparison_with_GHKK_X-type}.
For simplicity, here we only state the latter:

\begin{theorem}[see \cref{cor:comparison_with_GHKK_X-type}] \label{thm:X-cluster_intro}
	Let $\cX$ be a Fock-Goncharov skew-symmetric X-cluster variety (possibly with frozen variables), such that $U \coloneqq \Spec(H^0(X,\cO_\cX))$ is smooth and the canonical map $\cX \to U$ is an open immersion, (e.g.\ double Bruhat cells in semisimple complex Lie groups).
	Then $U$ is a smooth affine log Calabi-Yau variety containing an open split algebraic torus, so our \cref{thm:main} applies.
	Let $\cX^\vee$ be the Fock-Goncharov dual, and let $\can(\cX^\vee)$ be as in \cite[Theorem 0.3]{Gross_Canonical_bases}.
	Let $A_U$ be our mirror algebra as in \cref{rem:after_main_theorem}.
	The following hold:
	\begin{enumerate}
		\item \label{thm:X-cluster_intro:comparison} The (combinatorially defined) structure constants of \cite[Theorem 0.3(1)]{Gross_Canonical_bases} are equal to our (geometrically defined) structure constants.
		Hence they give $\can(\cX^{\vee})$ an algebra structure, equal to our mirror algebra $A_U$.
		\item \label{thm:X-cluster_intro:independence} The mirror algebra $\can(\cX^{\vee}) \simeq A_U$, together with its theta function basis, is independent of the cluster structure; it is canonically determined by the variety $U$.
	\end{enumerate}
\end{theorem}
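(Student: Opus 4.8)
The plan is to deduce the theorem from the scattering-diagram comparison \cref{thm:scattering_diagram_comparison}, using that both mirror algebras are recovered from their respective scattering diagrams by the same broken-line recipe. First I would check the hypotheses of \cref{thm:main}: that $U$ is a smooth affine log Calabi-Yau variety containing an open split algebraic torus. For a Fock-Goncharov skew-symmetric X-cluster variety $\cX$ (frozen variables allowed) this is standard --- $\cX$ is glued from algebraic tori by mutations, carries a canonical nowhere-vanishing volume form, and contains each cluster torus $T_M$ as an open subset; under the stated assumptions the volume form extends to a log volume form on $U \coloneqq \Spec H^0(\cX,\cO_\cX)$, so $U$ is log Calabi-Yau with $T_M \subset U$ the required torus. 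Hence \cref{thm:main} and \cref{thm:structure_constants} apply, producing the mirror algebra $A$ and, after setting curve classes to zero, the algebra $A_U$ of \cref{rem:after_main_theorem}; one also verifies that the extra hypotheses of \cref{sec:setting_the_curve_classes_to_0}, needed for the compactification-independent scattering diagram $\fD_U$, hold in the cluster setting.

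Next I would compare structure constants. By the convexity result \cref{thm:Ftrop_structure_constants} together with theta-function consistency \cref{prop:theta_function_consistent}, the structure constants $\chi(P_1,\dots,P_n,Q,\gamma)$ of $A$ --- hence, after setting curve classes to zero, those of $A_U$ --- are computed as counts of broken lines in our scattering diagram, so $A_U$ is the broken-line algebra of $\fD_U$ with theta basis indexed by the intrinsic set $\Sk(U,\bbZ)\simeq M$. On the other side, \cite{Gross_Canonical_bases} equips $\can(\cX^\vee)$ with its algebra structure and theta basis by the very same broken-line counts, taken in the combinatorial scattering diagram $\fD_\GHKK$ of \cite[Theorem 1.12]{Gross_Canonical_bases}. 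By \cref{thm:scattering_diagram_comparison}, $\fD_U$ and $\fD_\GHKK$ are equivalent; since both are consistent (\cref{prop:theta_function_consistent}, \cref{prop:KS_consistent}), broken-line counts --- hence theta functions and all structure constants --- depend only on the equivalence class. After matching conventions (the sign $Z=-Q$, the Fock-Goncharov dual $\cX^\vee$, the treatment of frozen directions), the two sets of structure constants coincide, which is statement (\ref{thm:X-cluster_intro:comparison}); this is the X-type form (\cref{cor:comparison_with_GHKK_X-type}), parallel to the A-type statement \cref{thm:comparison_with_GHKK_A-type}.

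For statement (\ref{thm:X-cluster_intro:independence}), observe that $\fD_U$ and $A_U$ are manifestly intrinsic to $U$: they are defined by counting non-archimedean analytic curves and infinitesimal cylinders in $U^\an$, with theta basis indexed by $\Sk(U,\bbZ)$, and no cluster chart is used. A different cluster structure on the same $U$ supplies a possibly different torus $T_M\subset U$, but \cref{thm:count_independent} shows the relevant counts are independent of the choice of torus, so $\fD_U$, $A_U$ and its theta basis are unchanged; combined with (\ref{thm:X-cluster_intro:comparison}), $\can(\cX^\vee)\simeq A_U$ is canonically determined by $U$.

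The main obstacle lies in the facts quoted rather than in the deduction: \cref{thm:scattering_diagram_comparison} demands identifying the elementary wall-crossing automorphisms $\Psi_{x,n}$ --- defined here by counting infinitesimal analytic cylinders near a single boundary divisor --- with the combinatorial $A_1$-type wall functions of \cite{Gross_Canonical_bases}, after which consistency forces the two diagrams to be equivalent by the uniqueness of consistent scattering diagrams with prescribed incoming walls; and \cref{thm:Ftrop_structure_constants} is what upgrades ``$A_U$ is an algebra with structure constants $\chi$'' to ``$A_U$ is the broken-line algebra of $\fD_U$''. Within the present proof the only delicate point is the careful bookkeeping of conventions --- signs, frozen variables, the affinization hypothesis $U=\Spec H^0(\cX,\cO_\cX)$, and Fock-Goncharov duality --- required to align \cite[Theorem 0.3]{Gross_Canonical_bases} with our formalism.
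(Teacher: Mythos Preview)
Your overall strategy---reduce to a scattering-diagram comparison, then match broken-line counts---is the right idea, and it is exactly what the paper does for the A-type statement \cref{thm:comparison_with_GHKK_A-type}. But you treat the X-type corollary as a ``parallel'' direct application of \cref{thm:scattering_diagram_comparison}, and this is where there is a genuine gap.

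The scattering-diagram comparison \cref{thm:scattering_diagram_comparison} is stated and proved in the setup of \cref{nota:cluster_data} and \cref{ass:cluster_algebra}: an A-cluster variety with unimodular skew-symmetric form. The paper explicitly remarks that unimodularity fails in general but ``does hold in the principal coefficient case'', and that the more general results will be ``deduced from the principal coefficient case''. An X-cluster variety $\cX$ does not fit this setup directly, so you cannot simply invoke \cref{thm:scattering_diagram_comparison} for it. The paper's actual proof of \cref{cor:comparison_with_GHKK_X-type} instead passes to the $T_N$-principal bundle $\cA_{\prin}\to\cX$: one checks that $\cA_{\prin}$ and $V\coloneqq\Spec H^0(\cA_{\prin},\cO)$ agree outside codimension two and that $V$ is the total space of a $T_N$-bundle over $U$, so \cref{thm:comparison_with_GHKK_A-type} applies to $\cA_{\prin}$, and the X-type statement follows by taking $T_N$-invariants. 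This reduction is the missing step in your argument.

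Two smaller points. First, your invocation of \cref{thm:Ftrop_structure_constants} to identify $A_U$ with the broken-line algebra is off target: convexity gives finiteness, but the actual bridge from structure constants to sums over spines is \cref{lem:structure_constants_decomposition} together with the gluing and deformation-invariance results (this is what underlies the proof of \cref{thm:comparison_with_GHKK_A-type}). Second, your argument for part~(\ref{thm:X-cluster_intro:independence}) via \cref{thm:count_independent} is essentially correct; the paper phrases it through \cref{prop:structure_constants_varying_tori}, which is the same content.
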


The comparison theorems have two-fold implications.
First they give a concrete combinatorial way of computing the abstract non-archimedean curve counting in the case of cluster varieties.
Conversely, we obtain geometric interpretations of various combinatorial constructions and answer several conjectures in \cite{Gross_Canonical_bases}:

(1) As our naive counts are always nonnegative integers, we obtain a much more conceptual proof of the positivity of the structure constants, and of the coefficients of the scattering functions $f_x$, which then implies the positivity in the Laurent phenomenon for cluster algebras, see Theorems 1.13 and 4.10 in loc.\ cit..

(2) The (combinatorially defined) broken lines in \cite[\S 3]{Gross_Canonical_bases} are simply the spines of the analytic curves contributing to the local theta functions $\theta_{x,m}$ in \cref{def:local_theta_funcation}.
Then the broken-line convexity conjecture of \cite[Conjecture 8.12]{Gross_Canonical_bases} follows directly from our general convexity lemma, \cref{lem:convexity}.

(3) Thanks to (2), we obtain an algebra structure on $\can(\cX^\vee)$ as in \cref{thm:X-cluster_intro}(\ref{thm:X-cluster_intro:comparison}); while in \cite{Gross_Canonical_bases}, there is only an algebra structure on a vector subspace $\operatorname{mid}(\cX^\vee)\subset\can(\cX^\vee)$, and the algebra structure on $\can(\cX^\vee)$ is obtained under an additional EGM assumption, see \cite[Theorems 0.12(1), 0.17]{Gross_Canonical_bases}.
This is also a step in the direction of the full Fock-Goncharov conjecture of \cite[Conjecture 0.10]{Gross_Canonical_bases}.

(4) \cref{thm:X-cluster_intro}(\ref{thm:X-cluster_intro:independence}) was conjectured in \cite[Remark 0.16]{Gross_Canonical_bases}.
It is shown in \cite{Zhou_Cluster_structures} that a given variety can have more than one cluster structure.

(5)	While the counts $N(V_{x,v,w},\alpha)$ in \cref{def:wall-crossing_intro} are canonically associated to $U\subset Y$, the resulting scattering diagram depends on the choice of $T_M\subset U$.
This gives a geometric explanation of the ad-hoc appearing formula for change of scattering diagram under mutation (see \cite[Definition 1.22]{Gross_Canonical_bases} and \cite[3.21, 3.30]{Gross_Mirror_symmetry_for_log_Calabi-Yau_surfaces_I_v1}).

\subsection{Outline of the paper} \label{sec:outline}

In \cref{sec:log_CY}, we set up the basic objects for the whole paper: log Calabi-Yau varieties, skeletons, toric models and tropicalizations.

In \cref{sec:smoothness}, we set up the basic moduli spaces of stable maps, and prove various smoothness properties.
The smoothness will allow us to obtain positive integral enumerative invariants, bypassing virtual fundamental classes.
Furthermore, the smoothness is also key to the skeletal curves and the deformation invariance for wall-crossing, see Sections \ref{sec:skeletal_curves} and \ref{sec:toric_tail_condition}.

Analytic stable maps in $U$ give rise to piecewise-linear trees in $\bbR^n$ via tropicalization.
In \cref{sec:tropical}, we introduce various combinatorial notions regarding such trees, namely twigs, walls, spines and tropical curves.
The goal is to capture enough combinatorial features from the tropicalization of analytic stable maps, so that we can deduce statements at the combinatorial level, which will then help establish statements at the analytic level.
Two main combinatorial statements are rigidity and transversality, shown in \cref{sec:rigidity_and_transversality}.

In \cref{sec:toric_case}, we describe the various moduli spaces introduced before in the toric case.

In \cref{sec:curve_classes}, we study the classes of analytic curves (with boundaries) mapping into our log Calabi-Yau variety.
We prove a positivity result and give a tropical formula for computing curve classes.

In \cref{sec:skeletal_curves}, we introduce skeletal curves, provide different characterizations, and apply skeletal curve theory to the moduli spaces in the previous sections.

In \cref{sec:naive_counts}, we give the details regarding the naive counts mentioned in \cref{sec:intro:naive_counts}.
Furthermore, we prove that for transverse spines, the naive count is independent of the choice of the marked point at which we evaluate.
We refer to this independence as the symmetry property.

In \cref{sec:deformation_invariance}, we prove the deformation invariance of naive counts associated to transverse extended spines.
The key ingredient is the properness of the spine map restricted to skeletal curves.
The deformation invariance is generalized to truncated (i.e.\ non-extended) spines in \cref{sec:toric_tail_condition}, where we study toric tail conditions in families.
For the associativity of the structure constants and the wall-crossing homomorphism, we furthermore establish a stronger form of deformation invariance involving also non-transverse spines.
This relies on skeletal curve theory developed in \cref{sec:skeletal_curves}.

In \cref{sec:gluing}, based on the deformation invariance, we prove a gluing formula for gluing spines, which roughly says that the count associated to a glued spine is the product of the counts associated to the spines before the gluing.
Similar idea is then used in \cref{sec:varying_torus} to show that the counts of transverse spines are independent of the choice of the torus embedding, hence so is our mirror algebra.

In \cref{sec:associativity}, we establish the associativity of the structure constants.
The first step is to interpret each structure constant as the degree of a finite étale map over a larger base, see \cref{rem:structure_constants_vary_point}.
This follows from the stronger form of deformation invariance.
Then we express the structure constants as sums of counts associated to truncated spines, and we vary the domain metric trees, so as to stretch the edges giving rise to different groupings of the marked points.
Next we cut at such edges and apply the gluing formula, see the beginning of \cref{sec:associativity} for more details.

Note that the associativity alone is not sufficient for obtaining an algebra structure: we must also show that the two sums in the multiplication rule \eqref{eq:multiplication} are finite sums.
This is proved in \cref{sec:convexity}, via the convexity properties of the structure disks.
In \cref{sec:torus_action}, we describe a natural equivariant torus action on the mirror algebra, and prove that the mirror algebra is finitely generated.
This concludes the proof of \cref{thm:main}(\ref{thm:main:algebra}).
In \cref{sec:change_of_snc_compactification}, we study the dependence of the mirror algebra on the snc compactification.

In \cref{sec:non-degeneracy}, we prove the non-degeneracy of the trace map, i.e.\ \cref{thm:main}(\ref{thm:main:non-degeneracy}).
The idea is to use a degeneration to the toric case, which relies on the positivity properties of the structure disks.
In \cref{sec:geometry_of_the_mirror_family}, we study the geometry of the fibers of the mirror family, and prove \cref{thm:main}(\ref{thm:main:family}).
We use the equivariant torus action and a fiberwise compactification.
The compactification relies on the convexity properties of the structure disks.

The final part of the paper aims towards cluster varieties.
In \cref{sec:scattering}, we give a direct geometric construction of a scattering diagram on the essential skeleton by counting infinitesimal analytic cylinders.
In \cref{sec:cluster_case}, we prove the comparison theorems with \cite{Gross_Canonical_bases} in the cluster case, via the scattering diagram. 

We illustrate several of our basic constructions in one simple running example, see \cref{ex:surface}, Figures \ref{fig:wall}, \ref{fig:spine}, and \cref{ex:mirror_algebra}.

\bigskip
\paragraph{\bfseries Conventions}

Frequently in the paper, when we have a map $f\colon X \to Y$ and subsets $S_1,\dots,S_n\subset Y$, we write $X_{S_1,\dots,S_n}\coloneqq f^{-1}(S_1\cap\dots\cap S_n)$.

For a topological space $X$, a stratification on $X$ is given by a partition of $X=\coprod_{i\in I} X_i$ into locally closed subsets $X_i$ and a partial ordering on $I$ such that for each $j\in I$ we have $\oX_j\subset\bigcup_{i\le j} X_i$ (see \cite[09Y1]{Stacks_project}).
We call each $X_i$ an (open) stratum, and each $\oX_i$ a closed stratum.

\bigskip
\paragraph{\bfseries Acknowledgments}
We benefited tremendously from profound, detailed technical discussions with Mark Gross, Paul Hacking, Johannes Nicaise and Maxim Kontsevich.
The beautiful Frobenius structure conjecture is due to Hacking, as is the idea of using degeneration to the toric case to prove non-degeneracy of the trace pairing.
We enjoyed fruitful conversations with M.\ Baker, V.\ Berkovich, M.\ Brown, A.\ Chambert-Loir, F.\ Charles, A.\ Corti, A.\ Durcos, W.\ Gubler, E.\ Mazzon, M.\ Porta, J.\ Rabinoff, D.\ Ranganathan, M.\ Robalo, B.\ Siebert, Y.\ Soibelman, M.\ Temkin and J.\ Xie.
Keel would like to especially thank B.\ Conrad and S.\ Payne for detailed email tutorials on rigid geometry.
Keel was supported by NSF grant DMS-1561632.
T.Y.\ Yu was supported by the Clay Mathematics Institute as Clay Research Fellow.
Much of the research was carried out during the authors' trips to IHES and IAS.

\section{Log Calabi-Yau pairs} \label{sec:log_CY} 

In this section we set up the basic objects for the whole paper: log Calabi-Yau varieties, essential skeletons (\cref{lem:essential_skeleton}), toric models (\cref{lem:toric_model}) and tropicalizations (\cref{nota:Et}).
We also provide a running example (\cref{ex:surface}).

Fix $k_0$ a field of characteristic zero, equipped with the trivial valuation.
Let $k_0\subset k$ be any non-archimedean field extension.
We say that a variety (or a divisor, a function, etc.) is \emph{constant over $k$} if it is isomorphic to the pullback of something over $k_0$.
We introduce this terminology because it will help simplify notations while we frequently make base field extensions.

Now assume $k$ has discrete (possibly trivial) valuation.
Let $U$ be a $d$-dimensional connected smooth affine log Calabi-Yau $k$-variety, constant over $k$, containing a Zariski open split algebraic torus $T_M$ with cocharacter lattice $M$.
Here the log Calabi-Yau condition is equivalent to the condition that the standard volume form on $T_M$ extends to a volume form $\omega$ on $U$ without zeros or poles.

For any snc compactification $U\subset Y$, constant over $k$, let $D\coloneqq Y\setminus U$, and let $\{D_i\}_{i\in I_D}$ denote the set of irreducible components of $D$.
Let $D^\ess$ be the union of irreducible components of $D$ where $\omega$ has a pole.

\begin{definition} \label{def:SigmaYD}
	Let
	\begin{align*}
	\oSigma_{(Y,D)}&\coloneqq\bigg\{\sum_{i\in I_D} a_i \braket{D_i}\ \bigg|\ \bigcap_{a_i>0} D_i\neq\emptyset\bigg\}\subset[0,+\infty]^{I_D},\\
	\Sigma_{(Y,D)}&\coloneqq\oSigma_{(Y,D)}\cap[0,+\infty)^{I_D},
	\end{align*}
	regarded as (extended) simplicial cone complexes.
	Let $\Sigma_{(Y,D)}^\ess\subset\Sigma_{(Y,D)}$ be the sub cone complex spanned by components of $D^\ess$.
\end{definition}

For any $k$-variety $X$, we have the Berkovich analytification $X^\an$, endowed with a canonical morphism of locally ringed spaces $\iota\colon X^\an\to X$ (see \cite[\S 3]{Berkovich_Spectral_theory}).
When $X$ is integral, let $\eta_X$ be the generic point of $X$, and let $X^\bir\coloneqq\iota\inv(\eta_X)\subset X^\an$, the subset of \emph{birational points}.
Moreover, if $X$ is constant over $k$, i.e.\ it is isomorphic to the pullback of a $k_0$-variety $X_0$, let $X^\bir(\bbZ)\subset X^\bir$ be the subset consisting of valuations on $k(X)$ taking integer values on $k_0(X_0)$.

We have canonical embeddings
\[\begin{tikzcd}
\Sigma_{(Y,D)} \rar[hook] \arrow[d,phantom, "\rotatebox{-90}{$\subset$}"] & U^\bir \arrow[r,phantom,"\subset"] &[-1em] U^\an \arrow[d,phantom, "\rotatebox{-90}{$\subset$}"] \\
\oSigma_{(Y,D)} \arrow[rr, hook] & & Y^\an,
\end{tikzcd}\]
and canonical strong deformation retractions from $Y^\an$ to $\oSigma_{(Y,D)}$, and from $U^\an$ to $\Sigma_{(Y,D)}$ (see \cite{Thuillier_Geometrie_toroidale,Gubler_Skeletons_and_tropicalizations}).

The volume form $\omega$ induces an upper semicontinuous function $\norm{\omega}\colon U^\an\to\bbR_{\ge 0}$ via Temkin's Kähler seminorm (see \cite[\S 8]{Temkin_Metrization_of_differential_pluriforms}).
We denote by $\Sk(U)\subset U^\an$ the maximum locus of $\norm{\omega}$, called the \emph{essential skeleton} of $U$.
Let $\Sk(U,\bbZ)\coloneqq\Sk(U)\cap U^\bir(\bbZ)$.

The following lemma gives a concrete description of the essential skeletons.

\begin{lemma} \label{lem:essential_skeleton}
     \begin{enumerate}[leftmargin=*]
        \item \label{lem:essential_skeleton:comparison} The embedding $\Sigma_{(Y,D)}\hookrightarrow U^\an$ induces a homeomorphism $\Sigma_{(Y,D)}^\ess\simeq\Sk(U)$ preserving the integer points.
        \item \label{lem:essential_skeleton:toric} We have $\Sk(T_M,\bbZ)\simeq M$ and $\Sk(T_M)\simeq M_\bbR$.
        \item \label{lem:essential_skeleton:birational} Let $f\colon V \dasharrow W$ be any birational map of log Calabi-Yau varieties such that $f^*(\omega_W) = \omega_V$;         then it induces $V^\bir\simeq U^\bir$ identifying $\Sk(V) \simeq \Sk(U)$ preserving the integer points. 
        \item \label{lem:essential_skeleton:identifications} We have canonical identifications $\Sigma_{(Y,D)}^\ess\simeq\Sk(U)\simeq\Sk(T_M)\simeq M_\bbR$, preserving the integer points.
\end{enumerate} 
\end{lemma}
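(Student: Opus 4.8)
The plan is to prove the four statements essentially in the order given, each one feeding the next, with the bulk of the substance concentrated in parts (1) and (3). First I would address \eqref{lem:essential_skeleton:comparison}. The embedding $\Sigma_{(Y,D)}\hookrightarrow U^\an$ is the Thuillier/Gubler skeleton of the snc pair $(Y,D)$, and the retraction $U^\an\to\Sigma_{(Y,D)}$ is a strong deformation retraction. The key computation is the identification of $\Sk(U)$, the maximum locus of $\norm{\omega}$, with a subcomplex of $\Sigma_{(Y,D)}$. Here I would use Temkin's formula for the Kähler seminorm on a pluricanonical form in terms of log discrepancies: on the monomial valuation $\nu_{\mathbf a}=\sum a_i\langle D_i\rangle$ associated to a point of $\Sigma_{(Y,D)}$, one has $\log\norm{\omega}(\nu_{\mathbf a}) = -\sum_i a_i\cdot(\text{order of pole of }\omega\text{ along }D_i)$ up to the standard affine normalization (this is exactly the weight-function computation of Mustaţă–Nicaise and Temkin for log Calabi-Yau pairs; since $\omega$ has neither zeros nor poles on $U$, the discrepancy term vanishes and only the pole orders along the $D_i$ enter). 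Since $\omega$ has a pole precisely along the components of $D^\ess$ and is regular (order zero) along the other components, $\norm{\omega}$ is maximized exactly where all coordinates $a_i$ with $D_i\not\subset D^\ess$ vanish, i.e.\ on $\Sigma_{(Y,D)}^\ess$. Outside $\Sk(U)$ the retraction strictly decreases $\norm{\omega}$, so the maximum locus really is the closed subcomplex, and the homeomorphism preserves the integral structure because monomial valuations with integer exponents correspond to $U^\bir(\bbZ)$-points. I expect this weight-function computation, and in particular carefully pinning down the normalization of $\norm{\omega}$ so that the maximum locus comes out to be the $D^\ess$-subcomplex rather than something shifted, to be the main technical obstacle.

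For \eqref{lem:essential_skeleton:toric} I would specialize: take $Y$ to be any smooth projective toric compactification of $T_M$, so $D$ is the toric boundary, $\omega$ is the standard invariant volume form with a simple pole along every toric boundary divisor, hence $D^\ess = D$ and $\Sigma_{(Y,D)}^\ess = \Sigma_{(Y,D)}$ is just the (complete) fan, whose support is $M_\bbR$. Part (1) then gives $\Sk(T_M)\simeq M_\bbR$ and $\Sk(T_M,\bbZ)\simeq M$, using that $T_M^\bir(\bbZ)$ in this toric setting is identified with the monomial valuations, i.e.\ with $M$ (equivalently, with $N^{\vee}$-valued points of the fan); the integral structure matches because the fan's lattice is $M$ (with the paper's convention that $M$ is the cocharacter lattice).

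For \eqref{lem:essential_skeleton:birational}: a birational map $f\colon V\dashrightarrow W$ with $f^*\omega_W=\omega_V$ induces an isomorphism of function fields $k(W)\simeq k(V)$, hence a bijection $V^\bir\simeq W^\bir$ on birational points; I need that this bijection carries $\Sk(V)$ to $\Sk(W)$ preserving integer points. The point is that $\norm{\omega}$ on a birational point $\nu$ depends only on $\nu$ as a valuation of the function field together with the pluricanonical form, not on the particular model — this is the birational invariance of Temkin's Kähler seminorm — and since $f^*\omega_W=\omega_V$, the function $\norm{\omega_V}$ on $V^\bir$ agrees with $\norm{\omega_W}$ on $W^\bir$ under the identification. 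Therefore their maximum loci correspond. The integer structure is preserved because $V^\bir(\bbZ)$ is defined purely in terms of valuations taking integer values on $k_0$-rational functions, which is intrinsic to the function field with its $k_0$-structure. (One should note both $V$ and $W$ are assumed constant over $k$ so that $V^\bir(\bbZ)$ makes sense.) Finally, \eqref{lem:essential_skeleton:identifications} is just the concatenation: apply \eqref{lem:essential_skeleton:comparison} to $U\subset Y$ to get $\Sigma_{(Y,D)}^\ess\simeq\Sk(U)$; apply \eqref{lem:essential_skeleton:birational} to the open immersion $T_M\hookrightarrow U$ (which is birational and preserves the volume form by the definition of the log Calabi-Yau structure on $U$) to get $\Sk(U)\simeq\Sk(T_M)$; and apply \eqref{lem:essential_skeleton:toric} to get $\Sk(T_M)\simeq M_\bbR$ — all compatibly with integer points, giving the chain $\Sigma_{(Y,D)}^\ess\simeq\Sk(U)\simeq\Sk(T_M)\simeq M_\bbR$.
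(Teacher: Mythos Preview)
Your proposal is correct and follows essentially the same approach as the paper. The paper's proof is quite terse: for (\ref{lem:essential_skeleton:comparison}) it simply says ``local computations on standard normal crossing models'' and forward-references a later lemma (\cref{lem:explicit_weight}) which carries out exactly the weight-function computation you describe; for (\ref{lem:essential_skeleton:toric}) it specializes to a smooth toric compactification just as you do; for (\ref{lem:essential_skeleton:birational}) it calls the statement ``tautological because the essential skeleton consists of only birational points and the definition of $\norm{\omega}$ is local,'' which is your birational-invariance argument in compressed form; and (\ref{lem:essential_skeleton:identifications}) is concatenation. The ``main technical obstacle'' you flag---pinning down the normalization so that the maximum locus is exactly $\Sigma_{(Y,D)}^{\ess}$---is precisely what the paper isolates and postpones to \cref{lem:explicit_weight}, so your instinct there is accurate.
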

\begin{proof}
	(\ref{lem:essential_skeleton:comparison}) follows from local computations on standard normal crossing models (see \cref{lem:explicit_weight} for a more general statement).
	(\ref{lem:essential_skeleton:toric}) follows from (\ref{lem:essential_skeleton:comparison}) by taking $(Y,D)$ to be any smooth toric compactification of $T_M$.
	(\ref{lem:essential_skeleton:birational}) is tautological because the essential skeleton consists of only birational points and the definition of $\norm{\omega}$ is local.
	(\ref{lem:essential_skeleton:identifications}) follows from the previous ones.
\end{proof}

\begin{remark} \label{rem:tangent_space}
	The homeomorphism $\Sigma_{(Y,D)}^\ess\simeq\Sk(U)$ in \cref{lem:essential_skeleton}(\ref{lem:essential_skeleton:comparison}) induces an integral simplicial cone complex structure on $\Sk(U)$.
	By the weak factorization theorem (see \cite{Abramovich_Torification_and_factorization}), any two snc compactifications of $U$ are related by a zigzag of simple blowups.
	Therefore, $\Sk(U)$ has an intrinsic conical piecewise $\bbZ$-linear structure\footnote{We omit its formal definition, because it will not play any role in our proofs.}.
	For any point $b\in\Sk(U)\setminus 0$, this structure does not give a well-defined tangent space at $b$.
	Nevertheless, any multiple of the tangent vector at $b$ in the direction of $\overrightarrow{0b}$ is well-defined.
	\end{remark}

\begin{assumption} \label{ass:strata}
	\begin{enumerate}[leftmargin=*]
		\item \label{ass:strata:E} We assume $E\coloneqq\overline{U\setminus T_M}$ contains no strata of $D$.
		\item We assume the identification $\Sigma_{(Y,D)}^\ess\simeq M_\bbR$ gives a smooth toric fan $\Sigma_\rt$ in $M_\bbR$.
	\end{enumerate}
	Note that both assumptions can be achieved by a toric blowup of $(Y,D)$.
		We will first construct the mirror algebra under these assumptions, and then extend the construction to the general case, see \cref{rem:any_compactification}.
	\end{assumption}

\begin{notation} \label{nota:toric_model}
	Let $(Y_\rt,D_\rt)$ be the toric variety associated to the fan $\Sigma_\rt$ in $M_\bbR$.
	We have $\Sigma_{(Y_\rt,D_\rt)}\simeq\Sigma_\rt$ as simplicial cone complexes.
	Denote $\oM_\bbR\coloneqq\oSigma_\rt\coloneqq\oSigma_{(Y_\rt,D_\rt)}$, the canonical embedding
	\[\iota_\rt\colon\oM_\bbR\hookrightarrow Y_\rt^\an,\]
	and the canonical retraction
	\[\tau_\rt\colon Y_\rt^\an\to\oM_\bbR.\]
\end{notation}

Ideally, we want $(Y,D)$ to be a blowup of the toric variety $(Y_\rt,D_\rt)$ along some subvariety of the boundary $D_\rt$.
For maximal flexibility, we will only work with a birational map $\pi\colon Y\dasharrow Y_\rt$ preserving the torus instead of a birational morphism, and we call $\pi$ a \emph{toric model} for $(Y,D)$.
This is necessary for the application to cluster varieties in \cref{sec:cluster_case}, but creates some technical complications regarding the indeterminate locus, which can be ignored for first time reading.

\begin{lemma} \label{lem:toric_model}
	Let $Y^\idt\subset Y$ be the indeterminate locus of the birational map 
	$$
	\pi\colon Y \supset T_M \hookrightarrow Y_\rt.
	$$
	Let $W \subset Y \setminus Y^{\idt}$ be the isomorphism locus. 
	Then
	\begin{enumerate}
		\item \label{lem:toric_model:codim} $E \subset Y$ is pure codimension one, so $Y^{\idt}$ contains no generic point of $E$.
		\item \label{lem:toric_model:W} $W\subset Y$ contains the generic point of every stratum of $D^\ess$,
		$W \subset Y_\rt$ contains the generic point of every stratum of $D_\rt$, and $\pi$ induces a bijection between those generic points.
		\item \label{lem:toric_model:intersection} $E \cap W = \emptyset$, $W \cap U = T_M$.
		\item  \label{lem:toric_model:fiber} The fibers of $\pi\colon E\setminus Y^\idt \to Y_{\rt}$ are positive dimensional.
		\item \label{lem:toric_model:image} $\overline{\pi(E\setminus Y^\idt)}$ is contained in $D_\rt$, and does not contain the generic point of any stratum of $D_\rt$.
	\end{enumerate} 
\end{lemma}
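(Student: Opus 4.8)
\emph{Strategy and preliminaries.}
I will prove the five assertions in the order (1), (2), (3), (5), (4); part (2) supplies the local picture that (3) and (5) exploit, and (4) is a dimension count from (1) and (5). As preliminaries: by \cref{lem:essential_skeleton}(\ref{lem:essential_skeleton:identifications}) the identification in \cref{ass:strata}(2) has support all of $M_\bbR$, so $\Sigma_\rt$ is a complete fan and $Y_\rt$ is proper; hence, $Y$ being normal, the indeterminacy locus $Y^\idt$ has codimension $\ge 2$ in $Y$. Since $T_M$ is an affine open of the normal variety $U$, the closed set $U\setminus T_M$ is pure of codimension one in $U$, so its closure $E$ is pure of codimension one in $Y$ and is therefore disjoint from $Y^\idt$ at its generic points; this is (1). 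Note also $E\cap T_M=\emptyset$, since $E\cap U=U\setminus T_M$. Finally I record an observation used repeatedly: \emph{if $V\subseteq Y$ is a nonempty open contained in the domain of $\pi$ with $\pi(V)\subseteq T_M$, then $V\subseteq T_M$} — indeed $\pi|_V$ and the inclusion $V\hookrightarrow Y$ are two morphisms to the separated scheme $Y$ that agree on the dense open $V\cap T_M$, hence coincide.

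\emph{Part (2), the geometric core.}
Let $Z_\sigma$ be a stratum of $D^\ess$ with generic point $\eta_\sigma$; near $\eta_\sigma$ it is cut out by the components $D_{i_1},\dots,D_{i_r}\subseteq D^\ess$ through it, with local equations $t_1,\dots,t_r$ a regular system of parameters, and by \cref{ass:strata}(1) no component of $E$ passes through $\eta_\sigma$. In $\Sigma_{(Y,D)}^\ess\simeq M_\bbR$ the ray of $D_{i_b}$ is a primitive $v_{i_b}\in M$, and by \cref{lem:essential_skeleton} this means $\ord_{D_{i_b}}(z^m)=\braket{m,v_{i_b}}$ for every character $z^m$. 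By \cref{ass:strata}(2) the cone $\sigma_\rt=\sum_b\bbR_{\ge 0}v_{i_b}$ is smooth, so I extend $v_{i_1},\dots,v_{i_r}$ to a basis of $M$ and take the dual basis $m_1,\dots,m_d$ of $N$; the $z^{m_a}$ are coordinates on the affine toric chart of $Y_\rt$ around the generic point $\eta_{\sigma_\rt}$ of the orbit of $\sigma_\rt$. Since the divisor of $z^{m_a}$ is supported on $D\cup E$ and $\eta_\sigma\notin E$, only $D_{i_1},\dots,D_{i_r}$ contribute to it near $\eta_\sigma$, with multiplicities $\braket{m_a,v_{i_b}}=\delta_{ab}$; hence $z^{m_a}=(\text{unit})\,t_a$ in $\mathcal O_{Y,\eta_\sigma}$ for $a\le r$, and $z^{m_a}\in\mathcal O_{Y,\eta_\sigma}^\times$ for $a>r$. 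Consequently $\pi$ is a morphism on a neighborhood of $\eta_\sigma$ with $\pi(\eta_\sigma)=\eta_{\sigma_\rt}$, and the fiber of $\pi$ over $\eta_{\sigma_\rt}$ through $\eta_\sigma$ is zero-dimensional (it is $\kappa(\eta_\sigma)$), so $\pi$ is quasi-finite at $\eta_\sigma$; being birational with $Y$ normal, Zariski's main theorem makes $\pi$ an open immersion on a neighborhood of $\eta_\sigma$. Thus $\eta_\sigma\in W$ and $\eta_{\sigma_\rt}\in\pi(W)$; and since the cones of $\Sigma_{(Y,D)}^\ess$ biject with the strata of $D^\ess$ and, via \cref{ass:strata}(2), with the cones of $\Sigma_\rt$, i.e. the strata of $D_\rt$, this gives the asserted bijection of generic points.

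\emph{Parts (3) and (5).}
For (3): if $y\in E\cap W$, then by the observation $\pi$ cannot carry a neighborhood of $y$ into $T_M$, so $\pi(y)\in D_\rt$; a component $E_k\ni y$ (of codimension one, by (1)) is then carried by the open immersion $\pi$ onto a prime divisor $\Delta$ of $Y_\rt$ which — by the observation applied inside that chart together with $E\cap T_M=\emptyset$ — lies in $D_\rt$, hence is a toric boundary divisor. But then $\ord_{E_k}=\ord_\Delta$ as divisorial valuations of $k(Y)=k(Y_\rt)$, which is impossible because $\ord_{E_k}(\omega)=0$ (the volume form is regular and nonvanishing at $\eta_{E_k}\in U$) while $\ord_\Delta(\omega)=-1$. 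So $E\cap W=\emptyset$, and together with $U\setminus T_M\subseteq E$ and $T_M\subseteq W$ this yields $W\cap U=T_M$. For (5): $\pi(\eta_{E_k})\in D_\rt$ for every component $E_k$, for otherwise the open set of points at which $\pi$ is defined and takes values in $T_M$ would be a nonempty open containing $\eta_{E_k}\notin T_M$, against the observation; taking closures, $\overline{\pi(E\setminus Y^\idt)}=\bigcup_k\overline{\{\pi(\eta_{E_k})\}}\subseteq D_\rt$. This set is disjoint from the open $\pi(W)$: if $\overline{\{\pi(\eta_{E_k})\}}$ met $\pi(W)$ it would contain its generic point $\pi(\eta_{E_k})$, but $\pi$ and its inverse $\pi^{-1}$ (an honest morphism on $\pi(W)$) satisfy $\pi^{-1}\circ\pi=\id$ wherever defined, so $\pi(\eta_{E_k})=\pi(w)$ with $w\in W$ would force $\eta_{E_k}=w\in E\cap W=\emptyset$. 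Since every generic point $\eta_{\sigma_\rt}$ of a stratum of $D_\rt$ lies in $\pi(W)$ by (2), none lies in $\overline{\pi(E\setminus Y^\idt)}$.

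\emph{Part (4) and the crux.}
Each $E_k$ has dimension $d-1$ by (1), while $\overline{\pi(E_k\setminus Y^\idt)}\subseteq D_\rt$ contains no stratum generic point by (5), hence lies strictly inside a component of $D_\rt$ and has dimension $\le d-2$; so the dominant morphism from $E_k\setminus Y^\idt$ onto its image closure has all nonempty fibers of dimension $\ge 1$, which is (4). The crux is part (2): choosing the monomials adapted to a stratum and upgrading the resulting quasi-finite birational map to an honest open immersion near $\eta_\sigma$; the one delicate external input is the compatibility — given by \cref{lem:essential_skeleton} — between the cone-complex identification $\Sigma_{(Y,D)}^\ess\simeq M_\bbR$ and the monomial-valuation formula $\ord_{D_i}(z^m)=\braket{m,v_i}$. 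Everything else is formal bookkeeping with birational maps, centers of divisorial valuations, and dimension counts.
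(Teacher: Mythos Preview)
Your proof is correct and rests on the same two pillars as the paper's: the behavior of the volume form $\omega$ on $D_\rt$ versus on $U$, and the local comparison of toric coordinates coming from the identification $\Sigma_{(Y,D)}^\ess\simeq\Sigma_\rt$. The organization differs in a few places worth noting. For (\ref{lem:toric_model:W}) the paper works only at $0$-strata: there the $d$ monomial coordinates are regular and form a system of parameters, so $\pi$ is visibly an isomorphism near $p$; since $W$ is open and every stratum closure contains a $0$-stratum, all stratum generic points lie in $W$. Your argument at a general stratum via Zariski's Main Theorem is equally valid but slightly heavier. For (\ref{lem:toric_model:intersection}) the paper argues directly: $\omega$ has a pole on all of $D_\rt=Y_\rt\setminus T_M$, hence on $W\setminus T_M$, but is regular on $U$, so $W\cap U=T_M$; then $E=\overline{U\setminus T_M}\subset W^c$. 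Your divisorial-valuation contradiction reaches the same conclusion by a more circuitous path. For (\ref{lem:toric_model:fiber}) the paper deduces it from (\ref{lem:toric_model:intersection}) alone (if a fiber were $0$-dimensional at some $x\in E\setminus Y^\idt$, ZMT would put $x\in W$), whereas you deduce it from (\ref{lem:toric_model:image}) via a dimension count; both are fine. In short, the paper's route is a bit more economical, but yours is entirely sound.
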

\begin{proof}
	The complement $T_M^c \subset U$ is pure codimension one (this is true for the complement of any affine Zariski open subset of a separated connected normal variety).
	This gives (\ref{lem:toric_model:codim}).

	Let $p\in D^\ess$ be a 0-stratum which is the intersection of $d$ components $D_1,\dots,D_d$ of $D^\ess$.
	Let $D_{\rt,1},\dots,D_{\rt,d}$ be the corresponding components of $D_\rt$ via the identification of simplicial cone complexes $\Sigma_{(Y,D)}^\ess\simeq\Sigma_\rt$, and let $p_\rt\coloneqq D_{\rt,0}\cap\dots\cap D_{\rt,d}$.
	Assume that $D_{\rt,1},\dots,D_{\rt,d}$, as Cartier divisors, are given respectively by functions $f_1,\dots,f_d$ on $T_M$.
	Then $f_i$ has simple zero along $D_i$ for $i=1,\dots,d$.
	Since $p\notin E$ by \cref{ass:strata}, we deduces that $f_1,\dots,f_d$ are regular on a neighborhood of $p$ in $Y$.
	Hence we have $p=p_\rt\in W$.
	Since every closed stratum of $D_\rt$ contains a 0-stratum, so does every closed stratum of $D^\ess$.
	Since $W$ is open, we deduce that $W\subset Y$ contains the generic point of every stratum of $D^\ess$, $W\subset Y_\rt$ contains the generic point of every stratum of $D_\rt$, and $\pi$ induces a bijection between those generic points.
	This gives (\ref{lem:toric_model:W}).
	
	We have  $T_M \subset W \subset Y_\rt$.
	The volume form $\omega$ has a pole on all of $D_\rt=Y_\rt \setminus T_M$, in particular on $W \setminus T_M$.
	It is regular (and nowhere vanishing) on $U$.
	Thus $W \cap U = T_M$.
	Since $W^c \subset Y$ is closed, $E = \overline{U \setminus T_M} \subset W^c$.
	This gives (\ref{lem:toric_model:intersection}).
	(\ref{lem:toric_model:fiber}) follows from (\ref{lem:toric_model:intersection}).
	(\ref{lem:toric_model:image}) follows from (\ref{lem:toric_model:intersection}) and (\ref{lem:toric_model:W}).
\end{proof}

We \emph{tropicalize} Y via the toric model $\pi$.

\begin{notation} \label{nota:Et}
	Let $E_\rt\coloneqq Y_\rt\setminus W$, $E_\rt^\trop\subset\partial\oM_\bbR$ the image of $E_\rt^\an$ under the retraction map $\tau_\rt\colon Y_\rt^\an\to\oM_\bbR$, and $\tau$ the composition
	\[(Y\setminus Y^\idt)^\an\xrightarrow{\ \pi^\an\ } Y_\rt^\an\xrightarrow{\ \tau_\rt\ }\oM_\bbR.\]
\end{notation}

\begin{example} \label{ex:surface}
	The following example can be helpful for understanding the constructions throughout the paper.
	We take the degree 5 del Pezzo surface $Y\simeq\ocM_{0,5}\simeq\Bl_4(\bbP^2)$, and $D=D_1+\dots+D_5\subset Y$ an anti-canonical cycle of $(-1)$-curves.
	Then $U\coloneqq Y\setminus D$ is a smooth affine log Calabi-Yau surface, equal to the spectrum of the $A_2$-cluster algebra.
	Under the standard action of the symmetric group $S_5$ on $Y\simeq\ocM_{0,5}$, there is a cyclic group $\mu_5\subset S_5$ which cyclically permutes the $D_i$.
	There are exactly 5 other $(-1)$-curves on $Y$, which we label as $E_1,\dots,E_5$ so that $E_i\cdot D_j=\delta_{ij}$.
	Let $\pi\colon Y\to Y_\rt$ be the blowdown of $E_1$ and $E_2$ (which are disjoint).
	Then $Y_\rt$ is a smooth toric surface.
	We draw the fan $\Sigma_\rt$ of $Y_\rt$ as in \cref{fig:example_fan}, and we assume that the images of $E_1$ and $E_2$ are on the divisors corresponding to the rays $\rho_1$ and $\rho_2$ respectively.
	The subset $E_\rt^\trop\subset\partial\oM_\bbR$ consists of two points which are the limits of the rays $\rho_1$ and $\rho_2$ at $\partial\oM_\bbR$.
	\begin{figure}[!ht]
		\centering
		\setlength{\unitlength}{0.25\textwidth}
		\begin{picture} (1,1)
			\put(0,0){\includegraphics[width=\unitlength]{images/fan}}
			\put(0.88,0.43){$\rho_1$}
			\put(0.52,0.91){$\rho_2$}
			\put(0.05,0.55){$\rho_3$}
			\put(0.05,0.18){$\rho_4$}
			\put(0.39,0.06){$\rho_5$}
		\end{picture}
		\caption{}
		\label{fig:example_fan}
	\end{figure}
	\end{example}

Below are two simple lemmas concerning the affineness of $U$ for later reference.

\begin{lemma} \label{lem:enough_global_functions}
	Let $x_1,\dots,x_n$ be generators of the algebra $H^0(U,\cO_U)$.
	The map
	\[\alpha\coloneqq(\abs{x_1},\dots,\abs{x_n}) \colon U^{\an} \to \bbR_{\geq 0}^n\]
	is a proper continuous map.
\end{lemma}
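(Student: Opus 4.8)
The plan is to factor $\alpha$ through the analytification of a closed embedding of $U$ into an affine space, and then to reduce properness to the compactness of Berkovich polydisks.

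Since $U$ is affine, $U=\Spec A$ with $A\coloneqq H^0(U,\cO_U)$, and a choice of generators $x_1,\dots,x_n$ of $A$ determines a closed immersion $j\colon U\hookrightarrow\bbA^n_k$. Analytification carries $j$ to a closed immersion $j^\an\colon U^\an\hookrightarrow(\bbA^n_k)^\an$, in particular a homeomorphism of $U^\an$ onto a closed subset; writing $x_1,\dots,x_n$ also for the coordinate functions on $\bbA^n_k$, the map $\alpha$ is by construction the composition of $j^\an$ with $p\coloneqq(\abs{x_1},\dots,\abs{x_n})\colon(\bbA^n_k)^\an\to\bbR_{\ge 0}^n$. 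Continuity of $\alpha$ is then immediate, since on any Berkovich space the function $z\mapsto\abs{g(z)}$ is continuous for every regular function $g$, so $p$ is continuous.

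For properness I would take a compact $K\subset\bbR_{\ge 0}^n$, enclose it in a closed cube $[0,R]^n$ with $R>0$ (possible since $K$ is bounded), and observe that $p\inv([0,R]^n)$ is precisely the closed Berkovich polydisk $\{z:\abs{x_i(z)}\le R\ \forall i\}$ of polyradius $(R,\dots,R)$ in $(\bbA^n_k)^\an$, which is a compact affinoid subset by Berkovich's foundational results \cite{Berkovich_Spectral_theory} (valid here whether the valuation on $k$ is trivial or discrete). Since $j^\an$ is a closed embedding, $\alpha\inv([0,R]^n)$ is closed in this compact polydisk, hence compact; and $\alpha\inv(K)$, being closed in $\alpha\inv([0,R]^n)$, is compact as well. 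As $\bbR_{\ge 0}^n$ is locally compact and Hausdorff, a continuous map with compact preimages of compacta is automatically closed, hence proper, which gives the claim.

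There is no serious obstacle in this argument; the only points requiring care are to cite the compactness of Berkovich polydisks in the present generality and to be explicit that ``proper'' here refers to properness of the underlying continuous map of topological spaces, for which the compact-preimage criterion over the locally compact Hausdorff target $\bbR_{\ge 0}^n$ suffices.
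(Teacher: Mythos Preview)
Your proof is correct and follows essentially the same approach as the paper: factor $\alpha$ through the closed immersion $U^\an\hookrightarrow(\bbA^n_k)^\an$ induced by the generators, and reduce to the properness of the coordinate-wise norm map $(\bbA^n_k)^\an\to\bbR_{\ge 0}^n$. The paper's proof is a terse two-sentence version of exactly this; you have simply unpacked the properness of the norm map via compactness of Berkovich polydisks.
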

\begin{proof}
	Since $U$ is affine, the generators $x_1,\dots,x_n$ give a closed immersion $U\hookrightarrow\bbA^n_k$.
	So the lemma follows from the properness of the coordinate-wise norm map $(\bbA^n_k)^\an\to\bbR^n_{\ge 0}$.
\end{proof}

\begin{lemma} \label{lem:ample_divisor}
	There is an ample divisor $F$ on $Y$ such that $-F|_U$ is effective.
\end{lemma}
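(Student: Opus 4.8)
The plan is to reduce the statement to the following assertion: \emph{there is an ample divisor $F$ on $Y$ whose support is contained in $D\coloneqq Y\setminus U$.} Granting this, since $\supp(F)\cap U=\emptyset$, the restriction $F|_U$ is the zero divisor on $U$; hence $-F|_U=0$ is effective and the lemma follows immediately. So the only point is to produce such an $F$; a general ample divisor on $Y$ will not do, because it typically has components meeting $U$, so the affineness of $U$ must be used in an essential way.

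To do this I would invoke the classical criterion of Goodman (J.~E.~Goodman, \emph{Affine open subsets of algebraic varieties and ample divisors}, Ann.\ of Math.\ \textbf{89} (1969), 160--183; see also Hartshorne, \emph{Ample subvarieties of algebraic varieties}, Ch.~II): for a projective variety $Y$ and a closed subset $D$ of pure codimension one, the complement $Y\setminus D$ is affine if and only if some effective divisor with support $D$ is ample --- equivalently, there exist positive integers $a_i$ ($i\in I_D$) such that $\sum_{i\in I_D}a_i D_i$ is ample. Since $U=Y\setminus D$ is affine by hypothesis and $Y$ is smooth projective, this produces the desired $F$, and we are done.

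If one wishes to avoid the citation, one can reprove what is needed directly, starting from the generators $x_1,\dots,x_n$ of $H^0(U,\cO_U)$ furnished by \cref{lem:enough_global_functions}: each $x_j$ is regular on $U$, hence has polar divisor supported on $D$, so one may fix an effective divisor $F$ with $\supp(F)=D$, every component of $D$ appearing with positive coefficient, such that $1,x_1,\dots,x_n\in H^0(Y,\cO_Y(F))$. These sections define a rational map $Y\dashrightarrow\bbP^n$ restricting on $U$ to the closed immersion $U\hookrightarrow\bbA^n$ of \cref{lem:enough_global_functions}. One then checks ampleness of a suitable such $F$ by the Nakai--Moishezon criterion: $F\cdot C>0$ is automatic for every complete curve $C\subset Y$ not contained in $D$ (there is none inside $U$ since $U$ is affine, and if $C$ meets $D$ without lying in one of its components then positivity follows from the positivity of the coefficients $a_i$), whereas for $C\subset D$ one must once more exploit the affineness of $U$. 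This last case --- positivity on curves lying inside $D$ --- is the single genuine difficulty; as verifying it essentially amounts to reproving Goodman's theorem, the cleanest course is to cite it. In short, the lemma is formal once one grants Goodman's ampleness criterion for affine complements, and no other obstacle arises.
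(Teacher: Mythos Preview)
Your approach is correct but takes a genuinely different and much heavier route than the paper. You aim for the stronger conclusion that $F$ can be chosen with support contained in $D$, so that $F|_U=0$; granting the smooth projective case of Goodman's criterion this works, and you correctly identify positivity on curves lying inside $D$ as the only nontrivial point.

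The paper does not attempt this at all. Its proof is three lines and entirely elementary: take any ample line bundle $L$ on $Y$; since $U$ is affine, every coherent sheaf on $U$ is globally generated, so choose a nonzero $s\in H^0(U,L^\vee|_U)$; view $s$ as a rational section of $L^\vee$ on $Y$, and let $F$ be the associated divisor in the class of $L$ (i.e.\ $F=-\operatorname{div}(s)$). Then $F$ is ample because $L$ is, and $-F|_U=\operatorname{div}(s)|_U\ge 0$ because $s$ is a regular section on $U$. No Goodman, no Nakai--Moishezon, no analysis of curves in $D$ --- only the trivial fact that line bundles on affine schemes have sections.

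What each buys: your version gives the cleaner $F|_U=0$, but the paper never uses this; indeed, when the paper later genuinely wants the boundary to support an ample divisor (\cref{lem:ample_boundary}), it passes to a blowup rather than appeal to Goodman for the given $Y$. The paper's argument gives exactly what the lemma states, at essentially zero cost.
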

\begin{proof}
	Let $L$ be any ample line bundle on $Y$.
	Since $U$ is affine, $L^\vee|_U$ is globally generated.
	Choose any nonzero section $s$ of $L^\vee|_U$, viewed as a rational section of $L^\vee$ on $Y$, and we take $-F$ to be the associated Cartier divisor.
\end{proof}

\section{Smoothness of the moduli spaces} \label{sec:smoothness}

In this section, we set up several basic moduli spaces of stable maps for this paper, and prove various smoothness properties.
The main result is \cref{prop:smoothness_all}.
The idea is that although the moduli spaces are generally singular, we have a big smooth locus inside, and we can reach the smooth locus by controlling the evaluation map.
The smoothness will allow us to obtain positive and integral enumerative invariants, bypassing the use of virtual fundamental classes.

Fix a finite set $J$ of cardinality $n\ge 3$.
Fix $\bP\coloneqq(P_j)_{j\in J}$  with $P_j \in \Sk(U,\bbZ)$.
Let
\begin{equation} \label{eq:BI_of_P}
B\coloneqq\set{j|P_j\neq 0}, \quad I\coloneqq\set{j|P_j= 0},
\end{equation}
where $B$ means \emph{boundary} and $I$ means \emph{interior}.
For each $j\in B$, write $P_j=m_j\nu_j$ with $m_j\in\bbN_{>0}$ and $\nu$ a divisorial valuation on $k(U)$.
We assume each $\nu_j$ has divisorial center $D_j\subset D$.

\begin{notation} \label{nota:moduli_spaces}
	Let $\beta\in\NE(Y)$ be a curve class.
	Let $\ocM(Y,\bP,\beta)$ denote the moduli stack of $n$-pointed rational stable maps $f\colon[C,(p_j)_{j\in J}]\to Y$ of class $\beta$ such that for each $j\in B$, $p_j$ maps to $D_j$ with multiplicity greater than or equal to $m_j$.
	For any $j\in J$, we denote
	\[\Phi_j\coloneqq\Phi_{p_j}\coloneqq(\st,\ev_{p_j})\colon\ocM(Y,\bP,\beta)\to\ocM_{0,n}\times Y\]
	taking stabilization of domain and evaluation at $p_j$.
	Let $\cM(U,\bP,\beta)\subset \ocM(Y,\bP,\beta)$ denote the substack where for each $j\in B$, $p_j$ maps to the open stratum $D_j^\circ$ and $f\inv(D)=\sum_{j\in B} m_j p_j$ scheme-theoretically.
	Let $\cM^\sd(U,\bP,\beta)\subset \cM(U,\bP,\beta)$ denote the open substack consisting of stable maps whose domain is a stable $n$-pointed curve.
	As we have only rational curves here, $\cM^\sd(U,\bP,\beta)$ is a variety.
\end{notation}

\begin{notation} \label{nota:Msm}
	Let $\cM^\sm(U,\bP,\beta)\subset\cM^\sd(U,\bP,\beta)$ denote the open subvariety consisting of stable maps $f\colon[C,(p_j)_{j\in J}]\to Y$ satisfying the following conditions:
	\begin{enumerate}
		\item \label{nota:Msm:trivial} The pullback $f^*(T_Y(-\log D))$ is a trivial vector bundle on $C$.
		\item \label{nota:Msm:idt} The image $f(C)\subset Y$ does not intersect $Y^\idt\cup(D^\ess\setminus W)$ (notation as in \cref{lem:toric_model}). 
		\item \label{nota:Msm:E} The pullback $f\inv(E)$ is a finite set of points without multiplicities, disjoint from the nodes and the marked points of $C$.
	\end{enumerate}
\end{notation}

The subvariety $\cM^\sm(U,\bP,\beta)$ consists of the nicest curves that we will work with.
The reason for imposing the first condition is explained in \cref{lem:Msm_smooth}.
We will show that all the conditions can be achieved as long as the evaluation map lands in a general locus of $Y$ (see \cref{lem:Msm_big}).

\begin{notation} \label{nota:moduli_spaces_analytic}
	Similarly, we define the analytic versions
	\[\cM^\sm(U^{\an},\bP,\beta)\subset\cM^\sd(U^\an,\bP,\beta)\subset\cM(U^{\an},\bP,\beta)\subset\ocM(Y^\an,\bP,\beta)\]
	as well as the maps $\Phi_j$ for the analytic moduli spaces. 
	By non-archimedean GAGA principle (see \cite[Theorem 8.7]{Yu_Gromov_compactness}), the analytic moduli spaces above are isomorphic to the analytifications of the respective algebraic moduli spaces.
\end{notation}

\begin{remark} \label{rem:compatible_curve_class}
	A curve class $\beta\in\NE(Y)$ is said to be compatible with $\bP$ if
	\begin{enumerate}		
		\item $\beta\cdot D_j=m_j$ for every $j\in B$,
		\item $\beta\cdot D'=0$ for any other irreducible component $D'$ of $D$.
	\end{enumerate}
	Note that if a curve class $\beta\in\NE(Y)$ is not compatible with $\bP$, then the moduli space $\cM(U,\bP,\beta)$ is empty.
\end{remark}

\begin{lemma} \label{lem:Msm_smooth}
	Let $\mu=[C,(p_j)_{j\in J}]\in\ocM_{0,n}$ be a closed point.
	Let $q\in C$ be any closed point not belonging to $\{p_i\}_{i\in B}$.
	Let $\cM^\sd(U,\bP,\beta)_\mu$ be fiber of the map $\dom\colon\cM^\sd(U,\bP,\beta)\to\ocM_{0,n}$ over $\mu$.
	Let $\nu=[f\colon C\to Y]$ be a closed point of $\cM^\sd(U,\bP,\beta)_\mu$.
	The following are equivalent:
	\begin{enumerate}
		\item \label{lem:Msm_smooth:pullback} The pullback $f^*(T_Y(-\log D))$ is a trivial vector bundle on $C$.
		\item \label{lem:Msm_smooth:surjective} The derivative $d\ev_q$ of the evaluation map $\ev_q\colon\cM^\sd(U,\bP,\beta)_\mu\to Y$ is surjective at $\nu$.
		\item \label{lem:Msm_smooth:smooth_fiberwise} The evaluation map $\ev_q\colon\cM^\sd(U,\bP,\beta)_\mu\to Y$ is smooth at $\nu$.
		\item \label{lem:Msm_smooth:smooth} For any $i\in I$, the map $\Phi_i\coloneqq(\dom,\ev_i)\colon\cM^\sd(U,\bP,\beta)\to\ocM_{0,n}\times Y$ is smooth at $\nu$.
	\end{enumerate}
	Moreover, under the equivalent conditions above, the following hold:
	\begin{enumerate}[label=(\roman*), ref=\roman*]
		\item \label{lem:Msm_smooth:etale} The maps $\ev_q$ and $\Phi_i$ above are in fact étale at $\nu$.
		\item \label{lem:Msm_smooth:boundary} For any $i\in B$, the maps
		\[\ev_i\colon\cM^\sd(U,\bP,\beta)_\mu\to D_i,\]
		\[\Phi_i^\partial=(\dom,\ev_i)\colon\cM^\sd(U,\bP,\beta)\to\ocM_{0,n}\times D_i\]
		are smooth at $\nu$.
	\end{enumerate}
\end{lemma}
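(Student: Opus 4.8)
The equivalences (1)–(4) are all consequences of deformation theory for stable maps with tangency conditions, so the plan is to work with the right obstruction space. First I would identify the fiberwise deformation theory: the relative tangent-obstruction theory of $\cM^\sd(U,\bP,\beta)_\mu \to \Spec k$ at $\nu = [f]$ is governed by $H^0(C, f^*T_Y(-\log D))$ and $H^1(C, f^*T_Y(-\log D))$ — the logarithmic tangent bundle appears precisely because condition (3) in \cref{nota:moduli_spaces} pins down $f^{-1}(D)$ scheme-theoretically with the prescribed multiplicities, and on a stable-domain curve there are no automorphisms or deformations of the pointed domain to worry about. Since $C$ is a tree of $\bbP^1$'s, $f^*T_Y(-\log D)$ splits as a direct sum of line bundles on each component; the bundle is trivial iff each summand has degree $0$ (globally nonnegative with vanishing $H^1$, plus the gluing/tree combinatorics forcing degree exactly $0$ on each component). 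Triviality then gives $H^1 = 0$, i.e.\ unobstructedness, and $H^0 \cong k^{\dim U}$ with the evaluation-at-$q$ map $H^0(C,f^*T_Y(-\log D)) \to T_{f(q)}Y$ (for $q$ not a marked point in $B$, so $f^*T_Y(-\log D)$ agrees with $f^*T_Y$ near $q$) being an isomorphism by a global-sections count on the trivial bundle.

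Then (1) $\Rightarrow$ (2): with the bundle trivial, $H^1 = 0$, so $\cM^\sd(U,\bP,\beta)_\mu$ is smooth at $\nu$ of dimension $\dim U$, and $d\ev_q$ is identified with the evaluation $H^0(C,f^*T_Y(-\log D)) \to T_{f(q)}Y$, which is an isomorphism, hence surjective. For (2) $\Rightarrow$ (1): surjectivity of $d\ev_q$ forces the evaluation on global sections of $f^*T_Y(-\log D)$ to be surjective onto a $\dim U$-dimensional space at a general point — run this at a general $q$ (or use that the obstruction map is controlled by the cokernel) to conclude the bundle is globally generated with the right Euler characteristic, which on a tree of rational curves forces triviality. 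For (2) $\Leftrightarrow$ (3): once the fiber is smooth at $\nu$ (which follows from either side via $H^1 = 0$), smoothness of $\ev_q$ at $\nu$ is equivalent to surjectivity of its differential — standard. For (3) $\Rightarrow$ (4): $\Phi_i = (\dom, \ev_i)$, and $\dom$ is smooth (indeed étale-locally trivial, as $\ocM_{0,n}$ is smooth and the fiber deformation theory is unobstructed once $H^1=0$); composing/fibering smoothness of $\ev_i$ on the fiber with smoothness of $\dom$ gives smoothness of $\Phi_i$. For (4) $\Rightarrow$ (2): restrict $\Phi_i$ to a fiber of $\dom$ over $\mu$ to recover surjectivity of $d\ev_i$, then move the point $i \in I$ to $q$ — since all interior marked points are unconstrained, the argument for one works for the generic point, and triviality is an open condition propagating from a general fiber; alternatively note $d\ev_q$ and $d\ev_i$ are both the evaluation map of the same trivial bundle at different points, hence simultaneously isomorphisms.

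For the "moreover" part: (i) étaleness is just the observation that $\dim \cM^\sd(U,\bP,\beta)_\mu = \dim U = \dim Y$, so the smooth surjection $\ev_q$ between equidimensional smooth things is étale; and $\dim \cM^\sd(U,\bP,\beta) = \dim \ocM_{0,n} + \dim U = \dim(\ocM_{0,n} \times Y)$, so $\Phi_i$ is likewise étale. For (ii), for $i \in B$ the marked point $p_i$ is constrained to lie on $D_i$, so the relevant evaluation is $\ev_i \colon \cM^\sd(U,\bP,\beta)_\mu \to D_i$; its differential is the composition of $H^0(C, f^*T_Y(-\log D)) \to (f^*T_Y(-\log D))|_{p_i}$ with the identification $(T_Y(-\log D))|_{p_i} \to T_{f(p_i)}D_i$ valid at a point of $D_i^\circ$ (the log tangent bundle restricted to a boundary divisor surjects onto — indeed, for a single smooth component, equals — the tangent bundle of that divisor), and triviality of the bundle makes this surjective; smoothness of $\Phi_i^\partial$ then follows as before by combining with smoothness of $\dom$.

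**Main obstacle.** The genuinely delicate point is the passage between the algebro-geometric statement "$f^*T_Y(-\log D)$ is trivial" and the infinitesimal statement "$d\ev_q$ is surjective", i.e.\ correctly setting up the tangency-constrained deformation theory so that $T_Y(-\log D)$ — rather than $T_Y$ or $T_Y(-\log D_{\text{some divisors}})$ — is exactly the obstruction bundle, accounting for the scheme-theoretic condition $f^{-1}(D) = \sum_{j \in B} m_j p_j$ and the fact that marked points in $B$ have prescribed images and multiplicities while those in $I$ are free. Getting the bookkeeping of these constraints right (and checking that on a nodal tree the gluing conditions don't introduce extra obstructions beyond $H^1$ of the globalized bundle) is where the real care is needed; the line-bundle-on-$\bbP^1$ computations and the equidimensionality counts for étaleness are then routine.
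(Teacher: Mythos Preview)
Your overall strategy matches the paper's, and most of your steps are correct. But there is one genuine gap, and it is not where you place the ``main obstacle.''

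You flag the setup of the deformation theory (that the tangent/obstruction spaces are $H^i(C,f^*T_Y(-\log D))$) as the delicate point. The paper takes this for granted in one line, and so can you; it is standard once the tangency condition $f^{-1}(D)=\sum m_jp_j$ is imposed.

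The real gap is in your (2) $\Rightarrow$ (1). You write that surjectivity of $d\ev_q$ gives global generation of $V=f^*T_Y(-\log D)$ at $q$, and then ``globally generated with the right Euler characteristic, which on a tree of rational curves forces triviality.'' This fails. On a two-component nodal rational curve $C=C_1\cup_n C_2$ with a line bundle restricting to $\cO(1)$ on $C_1$ and $\cO(-1)$ on $C_2$, one has $h^0=1$, $h^1=0$, total degree $0$, and the bundle is globally generated at any $q\in C_1\setminus\{n\}$ --- yet it is not trivial. So global generation at one point plus the correct Euler characteristic is insufficient. Your alternative ``run this at a general $q$'' is also unavailable: hypothesis (2) is only at one point.

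What the paper actually uses is the a priori fact that $V$ has degree zero on \emph{each irreducible component} of $C$. This is where the log Calabi--Yau hypothesis enters: since $f(C)\cap D\subset D^{\ess}$ and $c_1(T_Y(-\log D))=-(K_Y+D)$, one computes $\deg(V|_{C_i})=0$. With this in hand, the paper invokes the lemma (cited from earlier work) that a vector bundle of degree zero on every component of a nodal rational curve is trivial if and only if it is globally generated at a single point. That gives the clean equivalence (1) $\Leftrightarrow$ (2) directly; the remaining implications, the \'etaleness via dimension count, and part (ii) via the surjection $T_Y(-\log D)|_{D_i^\circ}\twoheadrightarrow T_{D_i^\circ}$ then go exactly as you outline.
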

\begin{proof}
	Let $V$ denote the vector bundle $f^*(T_Y(-\log D))$.
	The derivative $d\ev_q$ at the point $\nu$ is given by the map
	\[H^0(C,V)\longrightarrow f^*(T_Y)_q.\]
	Since $f(q)\notin D$, we have a natural isomorphism $f^*(T_Y)_q\xrightarrow{\sim} V_q$, and the composite map $H^0(C,V)\to f^*(T_Y)_q\xrightarrow{\sim}V_q$ is the restriction of sections of $V$ at $q$.
	Since $f(C)\cap D\subset D^\ess$,  the restriction of $V$ to any irreducible component of $C$ has degree zero.
	Since $C$ is a nodal rational curve, it follows that $V$ is trivial if and only if it is globally generated at the point $q$ (see \cite[Lemma 5.2]{Yu_Enumeration_of_holomorphic_cylinders_II}).
	This shows the equivalence between (\ref{lem:Msm_smooth:pullback}) and (\ref{lem:Msm_smooth:surjective}).
	
	Now assume (\ref{lem:Msm_smooth:pullback}) and (\ref{lem:Msm_smooth:surjective}).
	This implies that $H^1(C,V)=0$.
	So for both spaces $\cM^\sd(U,\bP,\beta)_\mu$ and $\cM^\sd(U,\bP,\beta)$, the dimension at $\nu$ is equal to the dimension of the Zariski tangent space at $\nu$ (see \cite[Chapter II Theorem 1.7]{Kollar_Rational_curves_on_algebraic_varieties} and \cite[Proposition 5.3]{Keel_Rational_curves}).
	Hence both spaces are smooth at $\nu$.
	Then (\ref{lem:Msm_smooth:surjective}) implies (\ref{lem:Msm_smooth:smooth_fiberwise}).
	Moreover, it implies that the derivative $d\Phi$ is surjective at $\nu$, hence we obtain  (\ref{lem:Msm_smooth:smooth}).
	
	The directions (\ref{lem:Msm_smooth:smooth})$\implies$(\ref{lem:Msm_smooth:smooth_fiberwise})$\implies$(\ref{lem:Msm_smooth:surjective}) are obvious.
	
	Now we assume the equivalent conditions in the lemma.
	By the Riemann-Roch formula, we compute that the dimension of $\cM^\sd(U,\bP,\beta)_\mu$ at $\nu$ is equal to
	\[h^0(C,V)=h^0(C,V)-h^1(C,V)=\rank V + \deg V = \dim Y +0=\dim Y.\]
	Since they are smooth at $\nu$, we deduce that they are étale at $\nu$.
	This shows (\ref{lem:Msm_smooth:etale}).
	
	For (\ref{lem:Msm_smooth:boundary}), note that the natural inclusion of sheaves
	\[T_Y(-\log D)\hookrightarrow T_Y\]
	induces by restriction a map
	\[T_Y(-\log D)|_{D_i^\circ}\to T_Y|_{D_i^\circ},\]
	whose image is $T_{D_i^\circ}$.
	Then $f^*(T_{D_i})_{p_i}$ is isomorphic to a quotient of $V_{p_i}$.
	The derivative $d\ev_i$ at the point $\nu$ is given by the map
	\[H^0(C,V)\to f^*(T_{D_i})_{p_i},\]
	which factors as
	\[H^0(C,V)\to V_{p_i}\twoheadrightarrow f^*(T_{D_i})_{p_i}.\]
	Since $V$ is a trivial vector bundle, the first arrow in the diagram above is also surjective.
	It follows that the derivative $d\ev_i$ is surjective at the point $\nu$, hence the derivative $d\Phi_i^\partial$ is also surjective at $\nu$, completing the proof of (\ref{lem:Msm_smooth:boundary}).
\end{proof}

\begin{lemma} \label{lem:beta_nef}
	If $\cM^\sd(U,\bP,\beta)$ is nonempty, then $\beta\in\NE(Y)$ is nef.
\end{lemma}
\begin{proof}
	Let $F$ be any effective divisor on $Y$.
	Let $\nu=[C,(p_j)_{j\in J}, f\colon C\to Y]$ be a closed point in $\cM^\sd(U,\bP,\beta)$.
	Using \cref{lem:Msm_smooth}(\ref{lem:Msm_smooth:etale}), by choosing $q$ to be any general closed point on each irreducible component of $C$, we see that there is a small deformation of $\nu$ such that no irreducible component of $C$ maps into $F$.
	Hence $\beta\cdot F = f_*[C]\cdot F \ge 0$.
\end{proof}

\begin{lemma} \label{lem:finitely_many_beta}
	Given $\bP=(P_j)_{j\in J}$, there are only finitely many $\beta\in\NE(Y)$ such that $\cM^\sd(U,\bP,\beta)$ is nonempty.
\end{lemma}
\begin{proof}
	Let $F$ be an ample divisor on $Y$ with $-F|_U$ effective (see \cref{lem:ample_divisor}).
	We decompose $F=\overline{F|_U} + F_D$, where $F_D$ is supported on $D$.
	For any $\beta$ such that $\cM^\sd(U,\bP,\beta)$ is nonempty, by \cref{lem:beta_nef}, we have
	\[F\cdot\beta=F_D\cdot\beta + \overline{F|_U}\cdot\beta\le F_D\cdot\beta.\]
	The right hand side is fixed by $\bP$.
	Since $F$ is ample, there are only finitely such $\beta$.	
\end{proof}

\begin{lemma} \label{lem:Msm_big}
	Let $G$ and $Z$ be two closed subvarieties of $Y$.
	Assume that $G$ is reduced and does not containing any irreducible component of $D^\ess$, and that $Z$ is of codimension at least 2.
	Fix $i\in I$.
	Let $\mu=[C,(p_j)_{j\in J}]\in\ocM_{0,n}$ be a closed point.
	Consider the evaluation map
	\[\ev_i\colon\cM^\sd(U,\bP,\beta)_\mu\to U.\]
	There is a Zariski dense open subset $V\subset Y$ such that any stable map $f\in\ev_i\inv(V)$ satisfies the following conditions:
	\begin{enumerate}
		\item \label{lem:Msm_big:trivial} The pullback $f^*(T_Y(-\log D))$ is a trivial vector bundle on $C$.
		\item \label{lem:Msm_big:Z} The image $f(C)\subset Y$ does not intersect $Z$.
		\item \label{lem:Msm_big:G} The pullback $f\inv(G)$ is a finite set of points without multiplicities, disjoint from the nodes and the marked points of $C$.
		In particular, the map $f\inv(G)\mapsto f$ is finite étale over $\ev_i\inv(V)$.
	\end{enumerate}
	As application, let $G\coloneqq E$ as in \cref{ass:strata}, and $Z\coloneqq Y^\idt\cup(D^\ess\setminus W)$ as in \cref{lem:toric_model}.
	We obtain a Zariski dense open subset $V\subset Y$ such that
	\[\ev_i\inv(V)\subset\cM^\sm(U,\bP,\beta)_\mu.\]
\end{lemma}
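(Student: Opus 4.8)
The plan is to take \cref{lem:Msm_smooth} as the engine and run a standard incidence--dimension argument, exploiting that the domain curve $C$ is fixed throughout the fibre $\cM^\sd(U,\bP,\beta)_\mu$. Write $\cN\subset\cM^\sd(U,\bP,\beta)_\mu$ for the locus where $f^*(T_Y(-\log D))$ is trivial; by \cref{lem:Msm_smooth} this is open, it coincides with the étale locus of $\ev_i$ over $Y$, and, being étale over the smooth variety $Y$, it is smooth, hence a disjoint union of its irreducible components. Moreover on $\cN$ the evaluation $\ev_x$ at any fixed point $x\in C$ outside $\{p_j\}_{j\in B}$ is étale onto $Y$, while $\ev_{p_j}$ is smooth onto $D_j$ for $j\in B$. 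First I would dispose of the complement: were some irreducible component $\cM'$ of the closed set $\cM^\sd(U,\bP,\beta)_\mu\setminus\cN$ to dominate $Y$ under $\ev_i$, then generic smoothness (we are in characteristic zero) would make $\ev_i$ a submersion at a general point of $\cM'$, forcing triviality of $f^*(T_Y(-\log D))$ there by \cref{lem:Msm_smooth} --- a contradiction. Hence $\ev_i$ sends $\cM^\sd(U,\bP,\beta)_\mu\setminus\cN$ into a proper closed $Z_0\subsetneq Y$, and I may build $V$ inside $Y\setminus Z_0$, working on each irreducible component of $\cN$ in turn.

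Fix such a component $\cN^{(a)}$; it is smooth of dimension $\dim Y$, and since the pointed curve $C$ is rigid the universal curve over $\cN^{(a)}$ is the product $\cN^{(a)}\times C$, with universal evaluation $\widetilde{\ev}\colon\cN^{(a)}\times C\to Y$ restricting to $\ev_x$ on each slice $\cN^{(a)}\times\{x\}$. Let $\Sigma_C\subset C$ be the finite set of nodes and boundary marked points, $C^\circ\coloneqq C\setminus\Sigma_C$, and write $G=G^{(1)}\cup G^{(\ge2)}$ for its pure codimension-one part and the union of the higher-codimension components. I would then bound the bad loci in $\cN^{(a)}$. For $S\subset Y$ of codimension $\ge2$ --- e.g.\ $S=Z$, $S=G^{(\ge2)}$, or $S=$ the singular locus of $G^{(1)}$ --- a fibre-dimension count for $\widetilde{\ev}^{-1}(S)\to C$, whose fibre over $x\notin\Sigma_C$ is $\ev_x^{-1}(S)$ of dimension $\le\dim S$ and over the finitely many $x\in\Sigma_C$ has dimension $\le\dim S+1$, gives $\dim\widetilde{\ev}^{-1}(S)\le\dim Y-1$, so, $C$ being proper, $\{f:f(C)\cap S\neq\emptyset\}$ is a closed subset of $\cN^{(a)}$ of dimension $\le\dim Y-1$. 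Similarly $\{f:f(C)\subset G\}$ and, for each node or boundary point $x_0$, $\{f:f(x_0)\in G\}$ lie inside $\ev_{x_0}^{-1}(G)$ --- respectively inside $\ev_{p_j}^{-1}(G\cap D_j)$, using $D_j\not\subset G$ --- hence have dimension $<\dim Y$. Finally, over $C^\circ$ the morphism $\widetilde{\ev}$ is smooth, so $\widetilde{\ev}^{-1}\big((G^{(1)})^{\sm}\big)\cap(\cN^{(a)}\times C^\circ)$ is smooth of dimension $\dim Y$, and its projection to $\cN^{(a)}$ --- a morphism of smooth varieties of equal dimension --- is étale over a dense open of $\cN^{(a)}$; i.e.\ $f$ meets $(G^{(1)})^{\sm}$ transversally for $f$ outside a closed subset of dimension $<\dim Y$.

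Then I would let $V$ be the complement in $Y$ of $Z_0$ together with the $\ev_i$-images of all the bad loci above; these images have dimension $<\dim Y$ since $\ev_i$ is étale, hence dimension-preserving, on each $\cN^{(a)}$, so $V$ is a dense open. For $f\in\ev_i^{-1}(V)$ one reads off the conclusions: $f\in\cN$ gives (1); $f(C)\cap Z=\emptyset$ gives (2); and $f(C)$ avoids $G^{(\ge2)}$, the singular locus of $G^{(1)}$ and $\Sigma_C$, does not lie in $G$, and meets $(G^{(1)})^{\sm}$ transversally, so $f^{-1}(G)=f^{-1}\big((G^{(1)})^{\sm}\big)\subset C^\circ$ is a reduced finite set disjoint from the nodes, which is (3); moreover, over $\ev_i^{-1}(V)$ the closed subscheme $\widetilde{\ev}^{-1}(G)$ coincides with $\widetilde{\ev}^{-1}\big((G^{(1)})^{\sm}\big)\cap(\cN\times C^\circ)$ and is finite (as $C$ is proper) and étale over the base, which is the ``finite étale'' assertion. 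For the stated application I would check the hypotheses: $G\coloneqq E$ is reduced and, by \cref{ass:strata:E}, contains no stratum of $D^\ess$, in particular no irreducible component of it; $Y^\idt$ has codimension $\ge2$ because $\pi$ is a rational map from the normal variety $Y$ to the proper variety $Y_\rt$; and $D^\ess\setminus W$ has codimension $\ge2$ since $W$ contains the generic point of every stratum of $D^\ess$ by \cref{lem:toric_model:W}. Then conditions (1)--(3) of the lemma for $G\coloneqq E$ and $Z\coloneqq Y^\idt\cup(D^\ess\setminus W)$ are precisely the defining conditions of $\cM^\sm(U,\bP,\beta)$ in \cref{nota:Msm}, so $\ev_i^{-1}(V)\subset\cM^\sm(U,\bP,\beta)_\mu$.

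The substance is already in \cref{lem:Msm_smooth}: it is what makes every incidence variety above of the expected dimension. The rest is bookkeeping --- controlling the finitely many exceptional fibres over $\Sigma_C$ and arranging that the $\ev_i$-images of the small bad loci fuse into one proper closed subset of $Y$. I expect the transversality step and the ``finite étale'' clause to demand the most care, although neither needs anything beyond generic smoothness in characteristic zero.
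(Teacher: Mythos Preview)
Your argument is correct and follows essentially the same route as the paper: both invoke \cref{lem:Msm_smooth} to show the non-trivial-bundle locus cannot dominate $Y$, then run incidence/dimension counts on the universal evaluation $\cN\times C\to Y$ together with generic smoothness to handle $Z$ and $G$. The only cosmetic differences are that the paper cites the Sard-type bound \cite[III~Prop.~10.6]{Hartshorne_Algebraic_geometry} rather than generic smoothness for the first step, and keeps the nodes of $C$ inside $C^\circ$ (using that fiberwise étale over $C^\circ$ implies étale) rather than treating them separately as you do.
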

\begin{proof}
	Let $L\subset\cM^\sd(U,\bP,\beta)_\mu$ be the locus where Condition (\ref{lem:Msm_big:trivial}) is not satisfied.
	By \cref{lem:Msm_smooth}, $L$ is also the locus where the derivative $d\ev_i$ is not surjective.
	So it follows from \cite[Chapter III Proposition 10.6]{Hartshorne_Algebraic_geometry} that the Zariski closure of $\ev_i(L)$ is of codimension at least one in $Y$.
		Then $V\coloneqq\overline{\ev_i(L)}^c\subset Y$ is a Zariski dense open subset, and any stable map $f\in\ev_i\inv(V)$ satisfies Condition (\ref{lem:Msm_big:trivial}).
	
	By \cref{lem:Msm_smooth}(\ref{lem:Msm_smooth:boundary}), for $j\in B$, the map \[\ev_j\colon\cM^\sd(U,\bP,\beta)_\mu\to D_j\]
	is étale over $\ev_i\inv(V)$.
	By assumption, $Z\cap D_j\subset D_j$ is of codimension at least 1.
	Therefore, up to shrinking $V$, we can require that for any map $f\in\ev_i\inv(V)$,  the preimage $f\inv(Z)$ does not contain any marked points in $B$.
	
	Consider the map
	\[\Psi\colon C\times\ev_i\inv(V)\to C\times Y\qquad (q,f)\mapsto (q,f(q)).\]
	Let $C^\circ\coloneqq C\setminus\{p_j\}_{j\in B}$.
	The restriction
	\[\Psi^\circ\colon C^\circ\times\ev_i\inv(V)\to C^\circ\times Y,\]
	viewed as a map of varieties over $C^\circ$, is fiberwise étale by \cref{lem:Msm_smooth}(\ref{lem:Msm_smooth:etale}).
	Note that \cref{lem:Msm_smooth}(\ref{lem:Msm_smooth:etale}) also shows that $\ev_i\inv(V)$ is smooth.
	So both the domain and the target of $\Psi^\circ$ are smooth over $C^\circ$, and we deduce that $\Psi^\circ$ is étale.
	Then the closure $K$ of $(\pr_Y\circ\Psi\circ\Psi\inv)(C^\circ\times Z)$ in $Y$ has codimension at least 1.
	Hence shrinking $V$ by intersecting with $K^c$, Condition (\ref{lem:Msm_big:Z}) is satisfied.
	
	By \cref{lem:Msm_smooth}(\ref{lem:Msm_smooth:boundary}), after shrinking $V$, we can require that for any $f\in\ev_i\inv(V)$,  $f\inv(G)$ does not contain any boundary marked points of $C$ (i.e.\ those indexed by $B$).
	Applying \cref{lem:Msm_smooth}(\ref{lem:Msm_smooth:etale}) to the nodes and the interior marked points of $C$ (i.e.\ those indexed by $I$), after further shrinking $V$, we can require that $f\inv(G)$ does not contain any nodes or internal marked points of $C$.
	So we have
	\[\Psi\inv(C\times G)\subset\Psi\inv[(C^\circ\setminus C^\sing)\times G].\]
	
	Note that the singular locus $G^\sing$ of $G$ has codimension at least 2 in $Y$.
	So the proof for Condition (\ref{lem:Msm_big:Z}) shows that after shrinking $V$, we can achieve $f(C)\cap G^\sing=\emptyset$ for any $f\in\ev_i\inv(V)$.
	Thus we have
	\[\Psi\inv(C\times G)\subset\Psi\inv[(C^\circ\setminus C^\sing)\times(G\setminus G^\sing)].\]
	So $\Psi\inv(C\times G)$ is smooth.
	Then by generic smoothness, $\Psi(C\times G)$ is étale over a Zariski dense open subset $W\subset\ev_i\inv(V)$.
	So replacing $V$ by $V\setminus\overline{\ev_i(W^c)}$, we see that $\Psi\inv(C\times G)$ is étale over $\ev_i\inv(V)$.
	It is in fact finite étale as $G$ is closed in $Y$.
	As we have shown above that $\Psi\inv(C\times G)$ is disjoint from the nodes and the marked points of $C$, this concludes the proof of Condition (\ref{lem:Msm_big:G}).
\end{proof}

In \cref{lem:Msm_big}, we managed to ``bring'' $\cM^\sd(U,\bP,\beta)_\mu$ into $\cM^\sm(U,\bP,\beta)_\mu$ by controlling the evaluation map.
It turns out that we can also ``bring'' $\cM(U,\bP,\beta)_\mu$ into $\cM^\sd(U,\bP,\beta)_\mu$ using the evaluation map, see \cref{lem:stable_domain}.
We need the following lemma describing the stabilization of domain curves in $\cM(U,\bP,\beta)$.

\begin{lemma} \label{lem:A1-curve}
	Assume $\abs{B}\ge 2$, $\abs{I}\ge 1$.
	Let $f\colon[C,(p_j)_{j\in J}]\to Y$ be a stable map in $\cM(U,\bP,\beta)$.
	Let $s\colon C\to C^\st$ be the stabilization of the pointed domain curve.
	Then the exceptional locus of $s$ is a disjoint union of irreducible components, each of which contains exactly two special points: a marked point $p_i$ with $i\in B$ and a node of $C$.
\end{lemma}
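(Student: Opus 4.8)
The plan is to argue directly about the exceptional components of $s$, using that $s$ depends only on the pointed curve $(C,(p_j)_{j\in J})$ while all geometric constraints come from $f$ being a stable map in $\cM(U,\bP,\beta)$. First I would recall that, $f$ being a genus-zero stable map, $C$ is a tree of $\bbP^1$'s, and the exceptional locus of $s$ is exactly the union of the irreducible components $C_v\subseteq C$ carrying at most two special points (nodes plus marked points). The crucial observation is that $f$ is non-constant on any such $C_v$: otherwise $f|_{C_v}$ would be constant on a component with fewer than three special points, violating the stable-map condition. Since $C_v$ is proper and $U$ is affine, a non-constant $f|_{C_v}$ cannot have image contained in $U$, so $f(C_v)$ meets $D=Y\setminus U$; as $f^{-1}(D)=\sum_{j\in B}m_jp_j$, the component $C_v$ must contain at least one marked point $p_i$ with $i\in B$.

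Next I would pin down the special points of $C_v$ exactly. Because $C$ is connected and carries $n\ge 3$ marked points (this is where the hypotheses $\abs B\ge 2$, $\abs I\ge 1$ enter, via $n\ge 3$, which is also what makes $C^{\st}$ meaningful), we have $C_v\ne C$, so $C_v$ meets the rest of $C$ and hence contains at least one node. Combining this with the bound of two special points and the marked point found above, $C_v$ contains exactly two special points: one node and one marked point $p_i$ with $i\in B$. In particular no exceptional component contains a marked point indexed in $I$, nor a second node, nor a second marked point.

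Finally, for the disjointness statement I would suppose that two exceptional components met at a node $q$. Then $q$ would be the unique node of each of them, so each attaches to the remainder of $C$ only through the other; connectedness of $C$ then forces $C$ to be the union of just those two components, hence to have only two marked points, contradicting $n\ge 3$. Thus the exceptional components are pairwise disjoint, and the exceptional locus of $s$ is their disjoint union, with each component of the asserted shape. I do not anticipate a genuine obstacle here; the one point to get right is to extract the marked point in $B$ from \emph{affineness of $U$} (a proper non-constant curve cannot lie inside $U$) rather than from numerical compatibility of $\beta$, and to keep in mind that $s$ ignores $f$, so every constraint on the exceptional components must be routed through the stable-map axioms together with $f^{-1}(D)=\sum_{j\in B}m_jp_j$.
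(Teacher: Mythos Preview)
Your proposal is correct and follows essentially the same argument as the paper: non-constancy on unstable components (stable map axiom), affineness of $U$ forcing $f(C_v)$ to meet $D$, hence a marked point from $B$, then the node count and the disjointness via the $n\ge 3$ contradiction. The only cosmetic difference is that the paper defers the identification of the exceptional locus with the set of $\le 2$-special-point components to the end (once disjointness and the ``one node, one $B$-marked point'' shape are established, contracting them all yields a stable curve), whereas you state this identification up front; your later analysis justifies it, so there is no gap.
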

\begin{proof}
	Let $E$ be an irreducible component of $C$ containing fewer than 3 special points.
	If such an $E$ exist, the curve $C$ is reducible.
	So $E$ contains at least one node of $C$.
	
	By definition of stable map, $f$ is not constant on $E$.
	Thus, since $U$ is affine, the image $f(E)$ meets the boundary $D$.
	By the definition of $\cM(U,\bP,\beta)$, the pullback $f\inv(D)$ is equal to $\sum_{j\in B} m_j p_j$.
	So $E$ contains one of $\{p_j\}_{j\in B}$.
	Since $E$ contains fewer than 3 special points, and it already contains one node of $C$, $E$ contains exactly one marked point $p_i, i\in B$ and one node of $C$.
	
	Now let $E_1$ and $E_2$ be two such components.
	If $E_1\cap E_2\neq\emptyset$, since each contains exactly one node of $C$, it follows that $C=E_1\cup E_2$.
	This is impossible as $\abs{I}\ge 1$.
	Therefore, by contracting all irreducible components of $C$ containing fewer than 3 special points, we obtain a stable pointed curve.
	So the exceptional locus of $s$ is the disjoint union of all irreducible components of $C$ containing fewer than 3 special points, completing the proof.
\end{proof}

\begin{lemma} \label{lem:stable_domain}
	Assume $\abs{B}\ge 2$, $\abs{I}\ge 1$.
	Fix $i\in I$.
	Let $\mu\in\ocM_{0,n}$ be a closed point.
	There is a Zariski dense open subset $V\subset Y$ such that
	\[\cM(U,\bP,\beta)_{\mu,V}\subset\cM^\sd(U,\bP,\beta)_\mu,\]
	where the subscript $V$ denotes the preimage of $V$ by the evaluation map $\ev_i$.
\end{lemma}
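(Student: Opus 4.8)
The plan is to stratify the ``bad locus'' $\cM(U,\bP,\beta)_\mu\setminus\cM^\sd(U,\bP,\beta)_\mu$ by the combinatorial type of the contracted part of the domain, and to show that $\ev_i$ maps each stratum into a proper closed subset of $Y$; since there are only finitely many strata, $V$ can be taken to be the complement of the union of these subsets (together with some further dense opens coming from \cref{lem:Msm_big}).

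First I would apply \cref{lem:A1-curve}. For $f\in\cM(U,\bP,\beta)_\mu$ with stabilization $s\colon C\to C^\st$, the exceptional locus of $s$ is a disjoint union of ``tails'', each a $\bbP^1$ meeting the rest of $C$ in one node and carrying one marked point $p_j$ with $j\in B$. Let $B'\subseteq B$ record which $p_j$ sit on tails; the domain is non-stable exactly when $B'\ne\emptyset$. Writing $C_0\subset C$ for the union of the non-contracted components, the restriction $f_0\coloneqq f|_{C_0}$ has domain the stable pointed curve $(C^\st,(p_j^\st)_{j\in J})\cong\mu$; it has some class $\beta_0$ with $\beta=\beta_0+\sum_{j\in B'}\beta_j$, where $\beta_j$ is the class of the $j$-th tail; one has $f_0^{-1}(D)=\sum_{j\in B\setminus B'}m_j p_j^\st$; and for $j\in B'$ the node of the $j$-th tail lies over $p_j^\st$ and maps into $U$. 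Hence $f_0\in\cM^\sd(U,\bP^0,\beta_0)_\mu$, where $\bP^0$ agrees with $\bP$ on $B\setminus B'$ and is $0$ on $B'$, so every index of $B'$ is interior for $\bP^0$; and since $i\in I$ the point $p_i$ lies on $C_0$, so $\ev_i(f)=\ev_i(f_0)$. There are finitely many possibilities for $(B',(\beta_j)_{j\in B'})$ --- $B'$ ranges over subsets of $B$, and each $\beta_j$ is effective, meets $D_j$ in $m_j$ and the other components of $D$ in $0$, and has bounded degree against an ample divisor as in \cref{lem:finitely_many_beta} --- so it suffices to bound, for each such pair, the $\ev_i$-image of the corresponding locus $\cM_{B',(\beta_j)}$.

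Fix such a pair and an index $j_0\in B'$. For $f\in\cM_{B',(\beta_j)}$, the $j_0$-th tail is a rational curve of class $\beta_{j_0}$ whose whole intersection with $D$ is an order-$m_{j_0}$ contact at a single point and which passes through $f_0(p_{j_0}^\st)$; let $\cS\subseteq U$ be the set of all such passage points. The space of maps $g\colon\bbP^1\to Y$ of class $\beta_{j_0}$ with two marked points $p,q$ and $g^{-1}(D)=m_{j_0}\,p$ has dimension (on its unobstructed locus) at most
\[(-K_Y\cdot\beta_{j_0})+\dim Y+2-m_{j_0}-3=d-1,\]
using $-K_Y\sim D$ (so $-K_Y\cdot\beta_{j_0}=m_{j_0}$), that the condition $g^{-1}(D)=m_{j_0}p$ cuts $m_{j_0}$ dimensions, and that $\mathrm{PGL}_2$ cuts $3$; evaluating at $q$ then shows $\dim\cS\le d-1<d$. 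Now $\ev_i(\cM_{B',(\beta_j)})\subseteq\ev_i\bigl(\ev_{j_0}^{-1}(\cS)\bigr)$ inside $\cM^\sd(U,\bP^0,\beta_0)_\mu$. On the open $\cM^\sm(U,\bP^0,\beta_0)_\mu$ the evaluation at the interior index $j_0$ is étale by \cref{lem:Msm_smooth}(\ref{lem:Msm_smooth:etale}), so $\ev_{j_0}^{-1}(\cS)\cap\cM^\sm(U,\bP^0,\beta_0)_\mu$ has dimension $\le d-1$ and its $\ev_i$-image has dimension $<d$. On the complement $\cM^\sd(U,\bP^0,\beta_0)_\mu\setminus\cM^\sm(U,\bP^0,\beta_0)_\mu$, the application of \cref{lem:Msm_big} with $G=E$ and $Z=Y^\idt\cup(D^\ess\setminus W)$, evaluating at $i$, yields a dense open $V_0\subset Y$ with $\ev_i^{-1}(V_0)\subset\cM^\sm(U,\bP^0,\beta_0)_\mu$, so the $\ev_i$-image of this complement misses $V_0$. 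Both pieces being non-dense, $\overline{\ev_i(\cM_{B',(\beta_j)})}$ is a proper closed subset; taking $V$ to be the intersection over all (finitely many) pairs $(B',(\beta_j))$ of the complements of these closures and of the corresponding $V_0$ gives a dense open with $\cM(U,\bP,\beta)_{\mu,V}\subset\cM^\sd(U,\bP,\beta)_\mu$.

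The step I expect to need the most care is the dimension bound for $\cS$: the honest dimension of $\mathrm{Mor}_{\beta_{j_0}}(\bbP^1,Y)$ can exceed the expected value $-K_Y\cdot\beta_{j_0}+\dim Y$ on the obstructed locus, and $\mathrm{PGL}_2$ need not act with finite stabilizers when $g$ is a multiple cover or a torus-monomial map, so the count above should be carried out on the unobstructed, multiplicity-one locus, with the remaining degenerate tails swept into the proper closed subset by a softer argument (bend-and-break to reduce $\beta_{j_0}$, or the boundedness of the family of rational curves of bounded degree on $Y$). Everything else is the bookkeeping of \cref{lem:A1-curve} together with the two smoothness inputs \cref{lem:Msm_smooth} and \cref{lem:Msm_big}.
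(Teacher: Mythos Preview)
Your overall architecture matches the paper's proof exactly: apply \cref{lem:A1-curve} to see that every unstable map $f$ has domain $C=C^\st\cup(\text{tails indexed by }B\setminus B')$, restrict to $C^\st$ to get $f_0\in\cM^\sd(U,\bP',\beta')_\mu$ with $\bP'$ obtained by zeroing out $B\setminus B'$, and then argue that the nodes (now interior marked points for $\bP'$) cannot land in the set swept out by tails once $\ev_i(f)$ is general. The paper does precisely this.

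The one substantive difference is the step you flag as delicate: bounding the locus $\cS\subset Y$ swept out by the tails. Your virtual dimension count $(-K_Y\cdot\beta_{j_0})+d+2-m_{j_0}-3=d-1$ is correct as a virtual computation, but as you note, the actual dimension of the moduli of such maps can exceed this on the obstructed locus, and the stabilizer/multiple-cover issues are real. The paper bypasses this entirely with an external input: by log Kodaira dimension (specifically \cite[Lemma 5.11]{Keel_Rational_curves}), there is a codimension-one subvariety $Z\subset Y$ containing the image of \emph{every} $\bbA^1$-curve of degree at most $L\cdot\beta$ (here an $\bbA^1$-curve means $g\colon\bbP^1\to Y$ with $g^{-1}(D)$ a single point set-theoretically). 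Since $U$ is affine log Calabi-Yau, its log Kodaira dimension is nonnegative, and this forbids a dominating family of $\bbA^1$-curves of bounded degree; this is exactly the content you would need to patch your argument. With $Z$ in hand, the paper runs your étaleness argument once: choose $V$ so that for every $(\bP',\beta')$ and every $j\notin B'$, any $f_0\in\cM^\sd(U,\bP',\beta')_{\mu,V}$ has $f_0(p_j)\notin Z$ (using \cref{lem:Msm_big} and \cref{lem:Msm_smooth}(\ref{lem:Msm_smooth:etale}) for the interior marked point $p_j$). Then the node $p'_{j_0}$ of any tail must map into $Z$ (the tail is an $\bbA^1$-curve) and also out of $Z$ (by the choice of $V$), a contradiction unless $B'=B$.

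So your proof becomes complete (and becomes the paper's proof) once you replace the Riemann--Roch dimension count for $\cS$ by the log Kodaira dimension bound on $\bbA^1$-curves. Your stratification by the individual tail classes $(\beta_j)_{j\in B'}$ is then unnecessary: only the class $\beta'=\beta-\sum\beta_j$ of $C^\st$ and the bound $\beta'\cdot L\le\beta\cdot L$ enter.
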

\begin{proof}
	We call a map $f\colon\bbP^1_k\to Y$ an \emph{$\bbA^1$-curve} if $f\inv(D)\subset\bbP^1_k$ is a single set-theoretic point.
	Fix any ample line bundle $L$ on $Y$.
	By log Kodaira dimension, there is a codimension one subvariety $Z\subset Y$ containing the image of every $\bbA^1$-curve whose degree with respect to $L$ is at most $L\cdot\beta$ (see \cite[Lemma 5.11]{Keel_Rational_curves}).
	
	Given any $B'\subset B$, for each $j\in J$, let
	\[P'_j\coloneqq\begin{cases}
	P_j &\text{ if }j\in B',\\
	0 &\text{ otherwise,}
	\end{cases}\]
	and denote $\bP'\coloneqq(P'_j)$.
	By \cref{lem:Msm_big} Condition (\ref{lem:Msm_big:trivial}) and \cref{lem:Msm_smooth}(\ref{lem:Msm_smooth:etale}), there exists a Zariski dense open subset $V\subset Y$ such that for any subset $B'\subset B$, any $\beta'\in\NE(Y)$ with $\beta'\cdot L\le\beta\cdot L$, any $j\notin B'$, and any stable map $f\in\cM^\sd(U,\bP',\beta')_{\mu,V}$, we have $f(p_j)\notin Z$.
	
	Now let $f\colon[C,(p_j)_{j\in J}]\to Y$ be a stable map in $\cM(U,\bP,\beta)_{\mu,V}$.
	Let $s\colon C\to C^\st$ be the stabilization of the pointed domain curve, and let $\cE$ be the exceptional locus.
	By \cref{lem:A1-curve}, there exists a subset $B'\subset B$ such that we can write $\cE=\bigsqcup_{j\in B\setminus B'} E_j$, where each $E_j$ is irreducible and contains exactly two special points: a node of $C$ and the marked point $p_j$;
	moreover, $C^\st$ can be identified with the closure of $C\setminus\cE$ in $C$.
	For each $j\in B\setminus B'$, let $p'_j\coloneqq E_j\cap C^\st$; we have $f(p'_j)\in Z$.
	For each $j\in B'$, let $p'_j\coloneqq p_j$.
	
	Let $\beta'$ be the curve class $f_*[C^\st]\in\NE(Y)$.
	Then the stable map
	\[f|_{C^\st}\colon[C^\st, (p'_j)]\to Y\]
	belongs to $\cM^\sd(U,\bP',\beta')_{\mu,V}$.
	By the choice of $V\subset Y$, for every $j\in B\setminus B'$, we have $f(p'_j)\notin Z$, which contradicts $f(p'_j)\in Z$ above.
	So the marked points $p'_j, j\in B\setminus B'$ cannot exist.
	In other words, we have $B=B'$, and the stabilization map $s\colon C\to C^\st$ is an isomorphism, completing the proof.
\end{proof}

\begin{lemma} \label{lem:smoothness_fiber}
	Given $i\in I$, there is a Zariski dense open subset $V\subset Y$ such that
	\[\cM(U,\bP,\beta)_{\mu,V}\subset\cM^\sm(U,\bP,\beta)_\mu,\]
	where the subscript $V$ denotes the preimage of $V$ by the evaluation map $\ev_i$.
\end{lemma}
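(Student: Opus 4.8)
The plan is to obtain the statement by simply combining the two preceding lemmas, which between them already control every condition that defines $\cM^\sm$. Recall that a stable map $f\colon[C,(p_j)_{j\in J}]\to Y$ belongs to $\cM^\sm(U,\bP,\beta)_\mu$ precisely when two requirements are met: first, it already belongs to $\cM^\sd(U,\bP,\beta)_\mu$, i.e.\ its pointed domain is a stable curve; and second, it satisfies the three conditions of \cref{nota:Msm}, namely triviality of $f^*(T_Y(-\log D))$ on $C$, the image $f(C)$ avoiding $Y^\idt\cup(D^\ess\setminus W)$, and $f\inv(E)$ being a reduced finite set of points disjoint from the nodes of $C$. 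The first requirement is handled by \cref{lem:stable_domain}, which provides a Zariski dense open $V'\subset Y$ with $\cM(U,\bP,\beta)_{\mu,V'}\subset\cM^\sd(U,\bP,\beta)_\mu$; this is where the hypotheses $\abs B\ge2$, $\abs I\ge1$ of that lemma are used. The second requirement is exactly the ``application'' already recorded at the end of \cref{lem:Msm_big}, taken with $G\coloneqq E$ and $Z\coloneqq Y^\idt\cup(D^\ess\setminus W)$, which provides a Zariski dense open $V''\subset Y$ with $\cM^\sd(U,\bP,\beta)_{\mu,V''}\subset\cM^\sm(U,\bP,\beta)_\mu$.

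Next I would set $V\coloneqq V'\cap V''$, again Zariski dense open since $Y$ is irreducible, and trace an arbitrary $f\in\cM(U,\bP,\beta)_{\mu,V}$ through the two inclusions: because $\ev_i(f)\in V'$, \cref{lem:stable_domain} places $f$ in $\cM^\sd(U,\bP,\beta)_\mu$; because moreover $\ev_i(f)\in V''$ and $f$ now has stable domain, \cref{lem:Msm_big} places $f$ in $\cM^\sm(U,\bP,\beta)_\mu$. This yields the desired inclusion $\cM(U,\bP,\beta)_{\mu,V}\subset\cM^\sm(U,\bP,\beta)_\mu$.

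The only thing to verify before invoking \cref{lem:Msm_big} is that the pair $(G,Z)=(E,\,Y^\idt\cup(D^\ess\setminus W))$ satisfies its hypotheses: $E$ is reduced and, by \cref{ass:strata}, contains no stratum of $D$, in particular no irreducible component of $D^\ess$; and $Y^\idt\cup(D^\ess\setminus W)$ has codimension at least $2$ in $Y$, since $Y^\idt$ is an indeterminacy locus while, by \cref{lem:toric_model}(\ref{lem:toric_model:W}), $W$ contains the generic point of every component of $D^\ess$, so $D^\ess\setminus W$ meets none of them. These checks are already performed inside \cref{lem:Msm_big}, so no fresh work is needed. Accordingly I do not expect a genuine obstacle in the present lemma: all the substance lies in the cited results — in \cref{lem:stable_domain} the analysis of contracted $\bbA^1$-tails (via \cref{lem:A1-curve} and log Kodaira dimension) and in \cref{lem:Msm_big} the generic-smoothness arguments — and this lemma is merely their formal combination.
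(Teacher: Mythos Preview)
Your proposal is correct and follows exactly the same approach as the paper, whose proof consists of the single sentence ``This is a combination of Lemmas \ref{lem:stable_domain} and \ref{lem:Msm_big}.'' You have simply supplied the routine details of that combination.
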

\begin{proof}
	This is a combination of Lemmas \ref{lem:stable_domain} and \ref{lem:Msm_big}.
\end{proof}

\begin{proposition} \label{prop:smoothness_all}
	Let $Z \subset \ocM_{0,n}$ be a Zariski closed subvariety (possibly $Z = \ocM_{0,n}$).
	Given $i\in I$, there is a Zariski dense open subset $O\subset Z\times Y$ such that the following hold:
	\begin{enumerate}
		\item \label{prop:smoothness_all:preimage} The preimage of $O$ by the map
		\[\Phi_i\colon\cM(U,\bP,\beta)\to\ocM_{0,n}\times Y\]
		is contained in $\cM^\sm(U,\bP,\beta)$.
		\item \label{prop:smoothness_all:etale} The map $\Phi_i$ is representable (i.e.\ non-stacky) and finite étale over $O$.
	\end{enumerate}  
\end{proposition}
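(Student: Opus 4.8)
The plan is to spread the fibrewise smoothness statements of Lemmas~\ref{lem:Msm_smooth}--\ref{lem:smoothness_fiber} out over a dense open of $Z\times Y$ by coupling them with the properness of the stable map compactification. Recall first that $\cM^\sm(U,\bP,\beta)$ is open in $\cM^\sd(U,\bP,\beta)$, hence a variety, and is open in the proper moduli stack $\ocM(Y,\bP,\beta)$ (if $\beta$ is not compatible with $\bP$ then $\cM(U,\bP,\beta)=\emptyset$ by \cref{rem:compatible_curve_class}, so $\cM^\sm=\emptyset$ and both \ref{prop:smoothness_all:preimage} and \ref{prop:smoothness_all:etale} hold with $O=Z\times Y$; so assume $\beta$ compatible). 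Restricting over $Z$, the map $\Phi_i=(\st,\ev_i)\colon\ocM(Y,\bP,\beta)_Z\to Z\times Y$ is proper, being a morphism of proper $k$-stacks, and by \cref{lem:Msm_smooth}(\ref{lem:Msm_smooth:etale}) it is étale along $\cM^\sm(U,\bP,\beta)$. It therefore suffices to produce a dense open $O\subset Z\times Y$ with
\[\Phi_i^{-1}(O)\cap\ocM(Y,\bP,\beta)\subseteq\cM^\sm(U,\bP,\beta).\]
Granting this, $\Phi_i$ carries the $\cM(U,\bP,\beta)$-preimage of $O$ into $\cM^\sm(U,\bP,\beta)$, which gives assertion~\ref{prop:smoothness_all:preimage} together with étaleness and representability (as $\cM^\sm$ is a scheme); and $\Phi_i^{-1}(O)\to O$ then coincides with the base change to $O$ of the proper map $\ocM(Y,\bP,\beta)_Z\to Z\times Y$, hence is proper as well as étale, so finite étale, giving assertion~\ref{prop:smoothness_all:etale}.

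To construct $O$, set $\cB\coloneqq\ocM(Y,\bP,\beta)_Z\setminus\cM^\sm(U,\bP,\beta)_Z$, a closed subset of $\ocM(Y,\bP,\beta)_Z$; by properness $T\coloneqq\Phi_i(\cB)$ is closed in $Z\times Y$, and it is enough to show $T\ne Z\times Y$, for then $O\coloneqq(Z\times Y)\setminus T$ works. Since a variety over $k$ has a Zariski-dense set of closed points, it suffices to exhibit a dense set of closed points $\mu\in Z$ with $\ev_i(\cB_\mu)\ne Y$, where $\cB_\mu=\ocM(Y,\bP,\beta)_\mu\setminus\cM^\sm(U,\bP,\beta)_\mu$ is the fibre of $\cB$ over $\mu$ under $\st$. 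I would decompose $\cB_\mu=\bigl(\cM(U,\bP,\beta)_\mu\setminus\cM^\sm(U,\bP,\beta)_\mu\bigr)\sqcup\bigl(\ocM(Y,\bP,\beta)_\mu\setminus\cM(U,\bP,\beta)_\mu\bigr)$. The first piece is dispatched immediately by \cref{lem:smoothness_fiber} (which builds on Lemmas~\ref{lem:stable_domain} and~\ref{lem:Msm_big}): there is a dense open $V_\mu\subset Y$, which we may take inside $U$, with $\cM(U,\bP,\beta)_{\mu,V_\mu}\subseteq\cM^\sm(U,\bP,\beta)_\mu$, so $\ev_i$ of this piece avoids $V_\mu$.

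The substantive part is the honest boundary $\ocM(Y,\bP,\beta)_\mu\setminus\cM(U,\bP,\beta)_\mu$: stable maps $f$ for which either some $p_j$, $j\in B$, maps to a deeper stratum $D_j\setminus D_j^\circ$, or $f^{-1}(D)\supsetneq\sum_j m_j p_j$ — the latter, since $\beta$ is compatible with $\bP$, forcing a component of the domain to be contracted into $D$. I would bound the $\ev_i$-image of this locus by a downward induction on $\beta\cdot L$ for a fixed ample $L$ on $Y$, using that only finitely many effective classes of bounded $L$-degree are relevant (as in \cref{lem:finitely_many_beta}): deleting a component contracted into $D$ produces a stable map of strictly smaller class, bundled with a bounded amount of data coming from $\ocM_{0,n}$, and the inductive hypothesis applied to the smaller class controls the evaluation; while the condition that $p_j$ lands in $D_j\setminus D_j^\circ$ is of positive codimension in $D_j$ and propagates to a positive-codimension condition in $Y$ along $\ev_i$ exactly as in the proof of \cref{lem:Msm_big}. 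In every case the $\ev_i$-image of the honest boundary, for $\mu$ outside a proper closed subset of $Z$, lies in a proper closed subset of $U$; shrinking $V_\mu$ accordingly gives $\ev_i(\cB_\mu)\cap V_\mu=\emptyset$, and hence the desired $O$.

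I expect this last step — bounding the image under $\ev_i$ of the honest boundary $\ocM(Y,\bP,\beta)\setminus\cM(U,\bP,\beta)$ — to be the main obstacle, because these degenerate maps (with a contracted component in $D$, an over-tangent point, or a marked point in a deeper boundary stratum) are precisely the configurations not covered by Lemmas~\ref{lem:Msm_smooth}--\ref{lem:smoothness_fiber}, and the induction on degree, though routine in spirit, must be set up carefully: one has to account for several simultaneously degenerate marked points and keep track of which contracted configurations actually arise in the limit.
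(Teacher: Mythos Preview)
You are proving a stronger statement than necessary, and that is precisely what creates the obstacle you identify at the end. Statement~(\ref{prop:smoothness_all:preimage}) concerns the preimage of $O$ under $\Phi_i\colon\cM(U,\bP,\beta)\to\ocM_{0,n}\times Y$, with domain $\cM(U,\bP,\beta)$ --- not the full compactification $\ocM(Y,\bP,\beta)$. The ``honest boundary'' $\ocM(Y,\bP,\beta)\setminus\cM(U,\bP,\beta)$ is simply not in the domain of the map under consideration, so there is no need to control it.

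The paper argues~(\ref{prop:smoothness_all:preimage}) directly by contradiction: if no such $O$ exists, then $\cM(U,\bP,\beta)_Z\setminus\cM^\sm(U,\bP,\beta)\to Z\times Y$ is dominant; by Chevalley its image contains a dense open $W\subset Z\times Y$, and for any closed point $\mu\in\pi_Z(W)$ the fibre map $\cM(U,\bP,\beta)_\mu\setminus\cM^\sm(U,\bP,\beta)_\mu\to\{\mu\}\times Y$ is then dominant, directly contradicting \cref{lem:smoothness_fiber}. This is exactly your ``first piece'', with no second piece needed.

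For~(\ref{prop:smoothness_all:etale}) you do not need properness of $\ocM(Y,\bP,\beta)$ either. Once $\Phi_i$ is representable and \'etale over $O$ (from~(\ref{prop:smoothness_all:preimage}) and \cref{lem:Msm_smooth}(\ref{lem:Msm_smooth:etale})), it is in particular quasi-finite of finite type between varieties. Shrinking $O$ further then makes it finite \'etale, by the standard fact that an \'etale morphism of finite type is finite over a dense open of the target (e.g.\ via Zariski's Main Theorem, or because the fibre cardinality is lower semicontinuous and bounded, so constant on a dense open, where the map is then finite).

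Your proposed induction on $\beta\cdot L$ to bound the image of the honest boundary may well succeed, but it is not carried out, and as you yourself note it would require care with several simultaneous degenerations. The point is that the whole difficulty is self-imposed and entirely avoidable: the paper's route bypasses it in two lines.
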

\begin{proof}
	Suppose (\ref{prop:smoothness_all:preimage}) fails.
	Then $\cM(U,\bP,\beta)_Z \setminus\cM^\sm(U,\bP,\beta) \to Z \times Y$ is dominant, and thus its image contains a dense open subset $W \subset Z \times Y$.
	So for any closed point $\mu\in \pi_Z(W) \subset Z$, $\cM(U,\bP,\beta)_{\mu}\setminus\cM^{\sm}(U,\bP,\beta) \to \mu \times Y$ dominant.
	This contradicts \cref{lem:smoothness_fiber}.
	
	Since $\cM^\sm(U,\bP,\beta)\subset\cM^\sd(U,\bP,\beta)$ are varieties, $\Phi_i$ is representable over $O$ by (\ref{prop:smoothness_all:preimage}).
	Moreover, by \cref{lem:Msm_smooth}(\ref{lem:Msm_smooth:etale}), we deduce that $\Phi_i$ is étale over $O$.
	Hence, up to further shrinking $O$, the map $\Phi_i$ becomes finite étale over $O$.
\end{proof}

\section{Trees, walls, spines and tropical curves} \label{sec:tropical}

Given any stable map $[C,(p_j)_{j\in J},f\colon C\to Y^\an]$ in $\cM^\sm(U^\an,\bP,\beta)$, if we take the convex hull of the marked points in $C$, and compose with the projection $\tau\colon(Y\setminus Y^\idt)^\an\to\oM_\bbR$ (see \cref{nota:Et}), we obtain a metric tree mapping into $M_\bbR$.
The study of such trees in $M_\bbR$ provides valuable information about analytic curves in $U^\an$.
While all we care are those trees that come from analytic stable maps, there are no general ways for characterizing them.
All we can do is to find a list of conditions that capture sufficiently many combinatorial features induced by the analytic curves, so that we can deduce statements first at the combinatorial level, which will then help establish statements at the analytic level.

Therefore, we introduce in this section several combinatorial notions: twigs, walls, spines and tropical curves.
In brief, tropicalization of analytic stable maps gives rise to tropical curves in $M_\bbR$, which enjoy the best combinatorial properties, e.g.\ everywhere balanced, see \cref{prop:tropicalization_of_stable_map}.
When we take the convex hull of the marked points in a tropical curve, we obtain a spine in $M_\bbR$, and the branches of the tropical curve off the spine are called twigs.
The definitions of twigs, spines and tropical curves (see Definitions \ref{def:twig}, \ref{def:spine} and \ref{def:tropical_curve} respectively) are designed to capture the combinatorial features from this construction.

By definition, twigs can only meet the boundary $\partial\oM_\bbR$ at the subset $E_\rt^\trop$, i.e.\ the tropicalization of the ``blowup locus''.
Together with the balancing condition, we deduce that the image of the twigs has codimension at least one.
We will explain this via the notion of walls (see \cref{def:wall}) and an inductive procedure (see \cref{const:walls_by_induction}), which can be thought of as growing twigs backwards: we start from the leaves, and determine the branches by the balancing condition.

Walls give a strong constraint on the shape of spines, that is, a spine can only bend at the walls, which has codimension at least one.
Otherwise, we cannot attach twigs to the spine in order to obtain an everywhere balanced tropical curve.
This strong constraint is the key for the rigidity of spines, see \cref{prop:rigidity_spine}.
More precisely, we must restrict to transverse spines (see \cref{def:transverse}) for the rigidity to hold, and we will show in \cref{prop:transversality} that the transverse locus is sufficiently big.

The space of spines has a natural topology induced by the compact open topology on the space of maps, see \cref{def:compact_open_topology}.
We will show that the spine map from the space of analytic stable maps to the space of spines is a continuous map, see \cref{prop:continuity}.
In fact, the tropicalization map from the space of analytic stable maps to the space of tropical curves is also continuous (see \cite[\S 8]{Yu_Tropicalization_of_the_moduli_space_of_stable_maps} and \cite{Ranganathan_Skeletons_of_stable_maps_I}), from which we can deduce the continuity of the spine map.
However, the topology on the space of tropical curves is more complicated to describe, and we have engineered the proofs in later sections without using the topology on the space of tropical curves.
This leads to a simplification of the combinatorics in this section.

Let us begin by setting up the terminology regarding nodal metric trees.
Nodal metric trees arise naturally as convex hull of points in nodal non-archimedean analytic curves.

\begin{definition} \label{def:metric_tree}
	A \emph{metric tree} $\Gamma$ consists of a finite (combinatorial) tree together with an extended metric on its topological realization such that each edge is modeled on an interval $[0,l]$ for $l\in(0,+\infty]$.
	An endpoint $v$ of an edge $e$ of $\Gamma$ is called an \emph{infinite endpoint} if any neighborhood of $v$ in $e$ has infinite length, otherwise it is called a \emph{finite endpoint}.
	A vertex $v$ of $\Gamma$ is called an \emph{infinite vertex} if it is an infinite endpoint of an edge incident to it, otherwise it is called a \emph{finite vertex}.
	A \emph{topological edge} of $\Gamma$ is a simple path in $\Gamma$ whose interior does not contain any vertex of valency greater than 2, and whose endpoints are vertices of valency not equal to 2.
	A \emph{topological edge} incident to a 1-valent vertex is also called the \emph{leg} incident to the vertex.
\end{definition}

\begin{definition} \label{def:nodal}
	A metric tree $\Gamma$ is called \emph{nodal} if it satisfies the following condition:
	if $v$ is an infinite endpoint of an edge $e$, then $v$ is at most 2-valent, and $v$ is an infinite endpoint of each edge attached to $v$.
	A \emph{node} of a nodal metric tree is a 2-valent infinite vertex (think of gluing two copies of $[0,+\infty]$ at $+\infty$).
	A nodal metric tree is called \emph{irreducible} if it contains no node.
	An \emph{irreducible component} of a nodal metric tree $\Gamma$ is a maximal irreducible subtree of $\Gamma$.
\end{definition}

\begin{definition} \label{def:stable_nodal_metric_tree}
	A nodal metric tree with $\Gamma$ is called \emph{stable} if none of its irreducible components is isomorphic to $[-\infty,+\infty]$ (as extended metric space).
	\end{definition}

Let $J$ be a finite set of cardinality $n$.

\begin{definition}
	A \emph{nodal metric tree with $n$ legs} (indexed by $J$) consists of a nodal metric tree $\Gamma$ with 1-valent vertices $(v_j)_{j\in J}$, such that there are no other 1-valent vertices.
	It is called \emph{extended} if every $v_j$ is infinite.
\end{definition}

\begin{definition} \label{def:NT}
	Two nodal metric trees with $n$ legs (indexed by $J$) are considered equivalent if they become isomorphic after some subdivisions of edges,	(or equivalently if they are isomorphic as extended metric spaces preserving the labeling of 1-valent vertices). 
	A nodal metric tree with $n$ legs is called \emph{simple} if it does not contain any finite 2-valent vertex, in other words, it contains the least vertices among all equivalent ones.
	For any $F\subset J$, let $\NT^F_J=\NT^F_n$ denote the set of simple stable  nodal metric trees with $n$ legs (indexed by $J$) whose finite 1-valent vertices are indexed by $F$.
	We will drop the superscript $F$ when $F=\emptyset$.
\end{definition}

\begin{construction} \label{const:topology_NT}
	Fix $F\subset J$.
	Let $T=[\Gamma,(v_j)_{j\in J}]\in\NT^F_J$.
	For any positive real number $\epsilon$, let $U(T,\epsilon)$ be the subset of $\NT^F_J$ consisting of simple stable nodal metric trees with $n$ legs $[\Gamma',(v'_j)_{j\in J}]$ satisfying the following conditions:
	\begin{enumerate}
		\item There is a continuous map $c\colon\Gamma'\to\Gamma$ contracting a subset of topological edges of $\Gamma'$, sending each $v'_j$ to $v_j$, and each node of $\Gamma'$ to a node of $\Gamma$.
				\item The sum of lengths of all edges in $\Gamma'$ contracted by $c$ is less than $\epsilon$.
		\item For each edge $e$ of $\Gamma$, let $e'$ be the edge of $\Gamma'$ such that $c(e')=e$.
		If $e$ has finite length, then the difference between the lengths of $e$ and $e'$ is less than $\epsilon$.
		If $e$ has infinite length, then the length of $e'$ is greater than $1/\epsilon$.
	\end{enumerate}
	Let $U(T,\epsilon)$ be a base of open neighborhoods of $T$ in $\NT^F_J$.
	This gives a topology on $\NT^F_J$.
\end{construction}

\begin{remark} \label{rem:comparison_ACP}
	The notion of extended nodal metric tree with $n$ legs is equivalent to the notion of $n$-pointed genus 0 extended tropical curves in the sense of Abramovich-Caporaso-Payne \cite{Abramovich_The_tropicalization_of_the_moduli_space_of_curves}.
	Our moduli space $\NT^\emptyset_n$ is homeomorphic to their moduli space $\oM_{0,n}^\trop$ of $n$-pointed genus 0 extended tropical curves.
	Recall from \cite[Theorem 1.2.1]{Abramovich_The_tropicalization_of_the_moduli_space_of_curves} that the tropicalization map $\ocM_{0,n}^\an\to\oM_{0,n}^\trop\simeq\NT_n^\emptyset$ is continuous, proper and surjective.
\end{remark}

Next we consider maps from nodal metric trees into $\oM_\bbR$, which will include twigs, spines and tropical curves as special cases.

\begin{definition} \label{def:pointedtree}
	A \emph{pointed tree in $M_\bbR$} consists of a nodal metric tree $\Gamma$, a set of different 1-valent vertices $(v_j)_{j\in J}$ called {\it marked points}, and a continuous map $h\colon\Gamma\to\oM_\bbR$ satisfying the following conditions:
	\begin{enumerate}
		\item The preimage $h\inv(\partial\oM_\bbR)$ is a subset of infinite 1-valent vertices. 
		\item \label{def:pointedtree:Zaffine} The map $h$ is \emph{\Zaffine} in the following sense:
		for each finite vertex $v$ of $\Gamma$ and each edge $e$ incident to $v$, the restriction $h|_{e^\circ}\colon e^\circ\to M_\bbR$ is affine with integer derivative $w_{(v,e)}\in M$, defined via the unit tangent vector pointing from $v$ to $e$.
		We call $w_{(v,e)}$ the \emph{weight vector} of the edge $e$ at $v$.
	\end{enumerate}
\end{definition}

\begin{definition} \label{def:subdivision_of_edges}
	Two pointed trees in $M_\bbR$ are considered equivalent if they become isomorphic after some subdivisions of edges.
	A pointed tree in $M_\bbR$ is called \emph{simple} if it contains the least vertices among all equivalent ones.
	Given any pointed tree $T$ in $M_\bbR$, by removing redundant 2-valent vertices in the domain of $T$, we obtain the unique simple representative in the equivalence class of $T$.
\end{definition}

\begin{definition} \label{def:combinatorial_type}
	The \emph{combinatorial type} of a pointed tree $T=[\Gamma,(v_j)_{j\in J}, h]$ consists of the following data:
	\begin{enumerate}
		\item The underlying combinatorial tree of $\Gamma$, remembering the marked points and the nodes.
		\item For each vertex $v$ of $\Gamma$, the open cell of $\oM_\bbR$ containing $h(v)$ (recall that $\oM_\bbR$ has the structure of an extended simplicial cone complex, see \cref{nota:toric_model}).
		\item For each vertex $v$ of $\Gamma$ and each edge $e$ incident to $v$, the weight vector $w_{(v,e)}\in M$.
	\end{enumerate}
\end{definition}

\begin{definition} \label{def:norm_of_weight_vector}
	The embedding $\Sigma_\rt\hookrightarrow\bbR_{\ge 0}^{I_{D_\rt}}$ induces an embedding $M\hookrightarrow\bbZ_{\ge 0}^{I_{D_\rt}}$.
	Let $\abs{\cdot}$ denote the function on $\bbZ_{\ge 0}^{I_{D_\rt}}$ given by the sum of every coordinate.
	It induces a piecewise linear function on $M$, denoted again by $\abs{\cdot}$, via the embedding $M\hookrightarrow\bbZ_{\ge 0}^{I_{D_\rt}}$.
	We call it the \emph{norm} of vectors in $M$.
\end{definition}

\begin{definition}[see \cref{fig:spine}] \label{def:twig}
	A \emph{twig} in $M_\bbR$ is a pointed tree $[\Gamma,(r,u_1,\dots,u_m),h]$ in $M_\bbR$ for some positive integer $m$, satisfying the following conditions:
	\begin{enumerate}
		\item The vertex $r$ is a 1-valent finite vertex called \emph{root}; the vertices $u_1,\dots,u_m$ are different 1-valent infinite vertices.
		These are the only 1-valent vertices of $\Gamma$.
		\item We have $h\inv(\partial\oM_\bbR)=\{u_1,\dots,u_m\}$.
		\item The image $h(\{u_1,\dots,u_m\})$ is contained in $E_\rt^\trop\subset\partial\oM_\bbR$, see \cref{nota:Et}.
		\item (Balancing condition) The \Zaffine map $h$ is balanced at every vertex of $\Gamma$ of valency greater than 1 (where for a node, being balanced means that the two incident edges are contracted).
	\end{enumerate}
	Given a twig $T=[\Gamma,(r,u_1,\dots,u_m),h]$, for $i=1,\dots,m$, let $e_i$ be the edge of $\Gamma$ incident to $u_i$, and let $u'_i$ be the other endpoint of $e_i$.
	We define the \emph{degree} $\deg T$ to be the sum $\sum_{i=1}^m\abs{w_{(u'_i,e_i)}}$.
	Let $e$ be the edge incident to $r$.
	We call the weight vector $w_{(r,e)}$ the \emph{monomial} of the twig, and $-w_{(r,e)}$ the \emph{direction} of the twig.
\end{definition}

\begin{remark} \label{rem:twig_bound}
	Since $\Gamma$ is a tree, the number of vertices of valency greater than 2 in $\Gamma$ is less than or equal to $m$.
	Since $m$ is less than or equal $\deg T$, we deduce that the number of vertices of valency greater than 2 in $\Gamma$ is less than or equal to $\deg T$.
	Consequently, by the balancing condition, given $A \in\bbN$, there are only finitely many combinatorial types of twigs of degree at most $A$. 
\end{remark}

\begin{definition} \label{def:wall}
	A \emph{wall} in $M _\bbR$ is a pair $(\fw,v)$ consisting of a closed convex rational polyhedral cone $\fw\subset M_\bbR$ of codimension at least one, and a nonzero vector $v\in M$ such that $\fw-v\subset\fw$.
	We call $v$ the \emph{monomial} of the wall, and $-v$ is the \emph{direction} of the wall.
\end{definition}

\begin{construction}[see \cref{fig:wall}] \label{const:initial_walls}
	Let $E_\rt\subset Y_\rt$ and $E_\rt^\trop\subset\partial\oM_\bbR$ be as in \cref{nota:Et}.
	For every closed polyhedral cell $\sigma\subset E_\rt^\trop\subset\partial\oM_\bbR$, define
	\begin{align*}
		\fw_\sigma&\coloneqq\Set{x\in M_\bbR | \exists v\in M,\ \lim_{\lambda\to+\infty} x+\lambda v\in\sigma},\\
		V_\sigma&\coloneqq\Set{v\in M | \exists x\in M_\bbR,\ \lim_{\lambda\to+\infty} x+\lambda v\in\sigma}.
	\end{align*}
	By \cref{lem:toric_model}(\ref{lem:toric_model:W}), $E_\rt$ does not contain any stratum of $D_\rt$, so $E_\rt^\trop$ does not contain any stratum of $\partial\oM_\bbR$.
	Hence every $\fw_\sigma$ has codimension at least one.
	Moreover, as $E_\rt$ is constant over $k$, $\sigma$ is conical in the cell of $\partial\oM_\bbR$ containing $\sigma$, hence $\fw_\sigma$ is conical in $M_\bbR$.
	
	For any $A\in\bbN$, let
	\[W_A^0\coloneqq\Set{(\fw_\sigma,v) | \sigma\text{ closed cell in }E_\rt^\trop,\ v\in V_\sigma,\ \abs{v}\le A}.\]
\end{construction}

\begin{construction}[see \cref{fig:wall}] \label{const:walls_by_induction}
	For $A\in\bbN$, we construct $W_A^n$ by induction on $n$.
	Assume we already have $W_A^n$.
	Then $W_A^{n+1}$ contains all the walls in $W_A^n$ together with the following new walls:
	for every two walls (not necessarily different) $(\fw_1,v_1), (\fw_2,v_2)\in W_A^n$, we add a wall $(\fw,v)\in W_A^{n+1}$ by setting $v\coloneqq v_1+v_2$ and 
	$\fw\coloneqq\set{x-\lambda v|x\in\fw_1\cap\fw_2, \lambda\in\bbR_{\ge 0}}$.
	
	Let $\Wall_A$ be the union of $\fw\subset M_\bbR$ over all $(\fw,v)\in W_A^A$.
	By \cref{rem:twig_bound}, for any twig in $M_\bbR$ of degree at most $A$, its image in $M_\bbR$ lies in $\Wall_A$.
		
	We deduce by induction that $\Wall_A$ is a finite conical rational polyhedral subset of $M_\bbR$ of codimension at least one.
\end{construction}

\begin{figure}[!ht]
	\centering
	\setlength{\unitlength}{0.25\textwidth}
	\begin{picture} (1,1)
		\put(0,0){\includegraphics[width=\unitlength]{images/wall}}
	\end{picture}
	\caption{$\Wall_A$ for \cref{ex:surface} after two iterations.
	The horizontal and vertical lines come from the initial walls, i.e.\ from $W_A^0$.
	The skew rays come from the iterations.}
	\label{fig:wall}
\end{figure}

\begin{figure}[!ht]
	\centering
	\setlength{\unitlength}{\textwidth}
	\begin{picture} (1,0.25)
		\put(0,0){\includegraphics[width=\unitlength]{images/spine}}
	\end{picture}
	\caption{Respectively: a (non-extended) spine, an extended spine, a twig, and a tropical curve for \cref{ex:surface}.}
	\label{fig:spine}
\end{figure}

Fix a finite set $J$ of cardinality $n\ge 2$.
Fix $\bP\coloneqq(P_j)_{j\in J}$  with $P_j \in M$.
Let $J=B\sqcup I$ be as in \eqref{eq:BI_of_P}.
Fix $F\subset J$ and $A\in\bbN$.

\begin{definition}[see \cref{fig:spine}] \label{def:spine}
	A \emph{spine} in $M_\bbR$ of type $\bP^F$ with respect to $\Wall_A$\footnote{We will omit ``with respect to $\Wall_A$'' when $A$ is clear from the context.} is a pointed tree $[\Gamma,(v_j)_{j\in J},h]$ in $M_\bbR$ satisfying the following conditions:
	\begin{enumerate}
		\item \label{def:spine:marked_points} The vertices $v_j, j\in J$ are the only 1-valent vertices of $\Gamma$.
		The weight vector of the edge incident to $v_j$ (pointing outwards) is $P_j$.
		For each $j\in J$, $v_j$ is a finite vertex if and only if $j\in F$.
		\item \label{def:spine:stable} The domain $\Gamma$ is stable in the sense of \cref{def:stable_nodal_metric_tree}.
				\item \label{def:spine:boundary} The preimage $h\inv(\partial\oM_\bbR)$ is equal to $\set{v_j | j\in B\setminus F}$.
		\item \label{def:spine:bending} (Bending condition) 
		For every vertex $v$ of valency greater than 1, if $s\coloneqq -\sum_{e\ni v} w_{(v,e)}$ is nonzero, we call $v$ a \emph{bending vertex}, and we require that $h(v)\in\Wall_A$ and $s$ lies in the tangent cone of $\Wall_A$ at $h(v)$.
			\end{enumerate}
	When $F=\emptyset$, we will drop the superscript $F$ from the notation $\bP^F$;
	in this case the spine is called \emph{extended}.
	Let $\SP(M_\bbR,\bP^F)$ denote the set of simple (see \cref{def:subdivision_of_edges}) spines in $M_\bbR$ of type $\bP^F$ with respect to $\Wall_A$.
\end{definition}

\begin{remark} \label{rem:internal_leg_contracted}
	It follows from the bending condition that for each $i\in I$, if $h(v_i)\notin\Wall_A$, then $h$ is constant on the leg incident to $v_i$.
\end{remark}

\begin{definition} \label{def:compact_open_topology}
	Let $\Gamma,N,M$ be topological spaces and $\pi\colon\Gamma\to N$ a continuous map.
	Let $\Cont(\Gamma/N,M)$ be the set of pairs $(n\in N, f\colon\Gamma_n\to M)$ where $\Gamma_n\coloneqq\pi\inv(n)$ and $f$ is continuous.
	For every open $S\subset N$, $W\subset M$ and every $K\subset \Gamma_S$ proper over $S$, let $V(S,K,W)\subset\Cont(\Gamma/N,M)$ be the set of maps $f\colon\Gamma_n\to M$ with $n\in S$ and $f(K\cap \Gamma_n)\subset W$.
	They generate a topology on $\Cont(\Gamma/N,M)$ called the \emph{compact open topology}.
	
	Now $\SP(M_\bbR,\bP^F)$ is naturally a subset of $\Cont(\Gamma/\NT_J^F, \oM_\bbR)$, where $\Gamma/\NT_J^F$ denotes the universal nodal metric tree.
	We equip $\SP(M_\bbR,\bP^F)$ with the induced topology.
\end{definition}

\begin{definition} \label{def:transverse}
	A spine $[\Gamma,(v_j)_{j\in J},h]$ in $M_\bbR$ of type $\bP^F$ with respect to $\Wall_A$ is called \emph{transverse} if it satisfies the following conditions:
	\begin{enumerate}
		\item \label{def:transverse:image} $h(\Gamma)\cap M_\bbR$ is transverse to $\Wall_A$, in particular, it does not meet any $(d-2)$-dimensional closed strata of $\Wall_A$, and $h(\Gamma)\cap E_\rt^\trop=\emptyset$.
				\item \label{def:transverse:bending} Every vertex of $\Gamma$ whose image lies in $\Wall_A$ is 2-valent.
				\item \label{def:transverse:finite_end} For every $j\in F$, the image $h(v_j)$ does not lie in the codimension-one skeleton of $\Sigma_\rt$.
				\item \label{def:transverse:extension} For every $j\in F$, let $e_j$ denote the edge of $\Gamma$ incident to $v_j$.
		Then the ray starting from $h(v_j)$ in the direction of $-w_{(v_j,e_j)}$ is transverse to $\Wall_A$.
			\end{enumerate}
	It is called \emph{transverse at infinity} if $h(\Gamma)\cap E_\rt^\trop=\emptyset$.
	Let $\SP^\tr(M_\bbR,\bP^F)\subset\SP(M_\bbR,\bP^F)$ denote the subset consisting of transverse spines.
\end{definition}

\begin{remark} \label{rem:spine_factorization}
	In the context of \cref{def:transverse}, let $\Gamma^B$ be the convex hull of $(v_j)_{j\in B}$ in $\Gamma$ and $r\colon\Gamma\to\Gamma^B$ the retraction map.
	Then by \cref{rem:internal_leg_contracted}, Condition (\ref{def:transverse:bending}) implies that $h\colon\Gamma\to\oM_\bbR$ factors through $r$.
\end{remark} 

\begin{definition}[see \cref{fig:spine}] \label{def:tropical_curve}
	A \emph{tropical curve} in $M_\bbR$ of type $\bP$ is a pointed tree $[\Gamma,(v_j)_{j\in J},h]$ in $M_\bbR$ satisfying the following conditions:
	\begin{enumerate}
		\item \label{def:tropical_curve:marked_points} The vertices $(v_j)_{ j\in J}$ are all infinite 1-valent vertices.
		The weight vector of the edge incident to $v_j$ (pointing outwards) is $P_j$.
		\item \label{def:tropical_curve:stable} The domain $\Gamma$ is stable in the sense of \cref{def:stable_nodal_metric_tree}.
				\item \label{def:tropical_curve:boundary} $h(v_j)\in M_\bbR$ for all $j\in I$, and $h(\Gamma\setminus(v_j)_{j\in J})\cap\partial\oM_\bbR$ is contained in $E_\rt^\trop\subset\partial\oM_\bbR$.
		\item (Balancing condition) \label{def:tropical_curve:balancing} For every vertex $v$ such that $h(v)\in M_\bbR$, we have $\sum_{e\ni v}w_{(v,e)}=0$.
	\end{enumerate}
	Given a tropical curve $T=[\Gamma,(v_j)_{j\in J},h]$, its \emph{degree} $\deg(T)$ is by definition the sum of norms of weights of edges incident to all the marked points.
	Let $\Gamma^s$ denote the convex hull of all the marked points in $\Gamma$.
	The restriction of $h$ to the closure of a connected component of $\Gamma\setminus\Gamma^s$ is called a \emph{twig} of $T$.
	It is a twig in $M_\bbR$ in the sense of \cref{def:twig}.
	We denote by $\degtwig(T)$ the sum of degrees of all twigs of $T$.
	Let $\TC(M_\bbR,\bP)$ denote the set of simple tropical curves in $M_\bbR$ of type $\bP$.
\end{definition}

\begin{lemma} \label{lem:associated_extended_spine}
	Let $T=[\Gamma,(v_j)_{j\in J},h]\in \TC(M_\bbR,\bP)$ be a tropical curve  whose twigs have degrees at most $A$.
	Then
	\[\Sp(T)\coloneqq[\Gamma^s,(v_j)_{j\in J},h|_{\Gamma^s}]\]
	is an extended spine in $M_\bbR$ of type $\bP$ with respect to $\Wall_A$, i.e.\ it belongs to $\SP(M_\bbR,\bP)$.
	We call it the extended spine associated to $T$.
\end{lemma}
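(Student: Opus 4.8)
The plan is to unwind \cref{def:spine} in the case $F=\emptyset$ and verify each of its four conditions for $\Sp(T)=[\Gamma^s,(v_j)_{j\in J},h|_{\Gamma^s}]$, after deleting redundant $2$-valent vertices so as to pass to the simple representative; the inputs are the corresponding properties of $T$ recorded in \cref{def:tropical_curve} together with the inductive description of $\Wall_A$ from Constructions \ref{const:initial_walls}--\ref{const:walls_by_induction}.

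First I would observe that $\Sp(T)$ is a pointed tree in $M_\bbR$: the convex hull $\Gamma^s\subset\Gamma$ of the marked points is again a nodal metric tree (a node of $\Gamma$ lies on $\Gamma^s$ exactly when both of its sides carry marked points, and is then still a $2$-valent infinite vertex), and $h|_{\Gamma^s}$ is \Zaffine with the same weight vectors $w_{(v,e)}$ as $h$ on each edge of $\Gamma^s$, so the first condition of \cref{def:pointedtree} for $\Sp(T)$ follows from the one for $T$. Then condition (\ref{def:spine:marked_points}) of \cref{def:spine} holds because the $1$-valent vertices of the convex hull of a set of points are precisely those points: hence they are the $v_j$, the edge of $\Gamma^s$ at $v_j$ begins with the edge of $\Gamma$ at $v_j$ so its outward weight is still $P_j$, and each $v_j$, being infinite in $\Gamma$, is infinite in $\Gamma^s$, matching $F=\emptyset$. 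Condition (\ref{def:spine:boundary}): by the first condition of \cref{def:pointedtree} for $T$, $h\inv(\partial\oM_\bbR)$ consists of infinite $1$-valent vertices of $\Gamma$; those lying in $\Gamma^s$ are exactly the $v_j$ with $j\in B$ (a non-marked point of $\Gamma^s$ is not a $1$-valent vertex of $\Gamma$, while the twig leaves, which do map into $\partial\oM_\bbR$, are discarded when passing from $\Gamma$ to $\Gamma^s$), so $h\inv(\partial\oM_\bbR)\cap\Gamma^s=\{v_j\mid j\in B\}$. Condition (\ref{def:spine:stable}), stability of $\Gamma^s$, only requires excluding an irreducible component isometric to $[-\infty,+\infty]$; such a component would have to be all of $\Gamma^s$ with two of the $v_j$ as its infinite ends, and one rules it out by a short argument from the stability of $\Gamma$ (\cref{def:tropical_curve}(\ref{def:tropical_curve:stable})) and, when $|J|=2$, the structure of the twigs of $T$.

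The heart of the argument is the bending condition (\ref{def:spine:bending}). Let $v$ be a vertex of $\Gamma^s$ of valency $>1$, put $s\coloneqq-\sum_{e\ni v\text{ in }\Gamma^s}w_{(v,e)}$, and suppose $s\neq0$. Then $v$ is not a node of $\Gamma$ (at a node of a pointed tree both incident edges are contracted and the image lies in $M_\bbR$, so $s=0$ there), hence $h(v)\in M_\bbR$ and the balancing condition \cref{def:tropical_curve}(\ref{def:tropical_curve:balancing}) holds at $v$: $\sum_{e\ni v\text{ in }\Gamma}w_{(v,e)}=0$. Subtracting off the edges lying in $\Gamma^s$ gives $s=\sum_{T'}v_{T'}$, the sum over the twigs $T'$ of $T$ attached at $v$ of their monomials $v_{T'}=w_{(v,e_{T'})}$ (each such $T'$ has root $v$). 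Now the degree hypothesis enters: $\deg T'\le A$, so by \cref{rem:twig_bound} each $T'$ has at most $A$ leaves, at most $A$ vertices of valency $>2$, and all its leaf-edge weights have norm $\le A$; walking from the leaves of $T'$ down to its root, alternating the balancing condition inside $T'$ with the combination rule of \cref{const:walls_by_induction}, produces a wall $(\fw_{T'},v_{T'})\in W_A^A$ with $h(v)\in\fw_{T'}$ (the monomial-tracking refinement of the statement, invoked in \cref{const:walls_by_induction}, that a degree-$\le A$ twig has image in $\Wall_A$). One more application of the combination rule to the finitely many $\fw_{T'}$ yields a wall $(\fw,s)\in W_A^A$ containing $h(v)$, so $h(v)\in\Wall_A$; and the requirement that $s$ lie in the tangent cone of $\Wall_A$ at $h(v)$ follows from the explicit shape of $\fw$ (together with the walls $\fw_{T'}$ themselves, which also pass through $h(v)$) and the conicality of $\Wall_A$ established in \cref{const:walls_by_induction}. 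This exhausts \cref{def:spine}, so $\Sp(T)\in\SP(M_\bbR,\bP)$.

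The step I expect to be the main obstacle is the bookkeeping inside condition (\ref{def:spine:bending}): one must organize the walks up the twigs and the final combination at $v$ so that the walls produced really do land in $W_A^A$ rather than in $W_A^N$ for some larger $N$ (this is exactly where the per-twig bound $\deg T'\le A$, \cref{rem:twig_bound}, and the fact that the level-$n$ step of \cref{const:walls_by_induction} combines \emph{all} pairs at once are used), and one must track the defining inclusions $\fw-v_{T'}\subset\fw_{T'}$ and $\fw-s\subset\fw$ carefully enough to conclude that $s$ itself, and not merely $-s$, sits in the tangent cone. By contrast, conditions (\ref{def:spine:marked_points})--(\ref{def:spine:boundary}) are essentially formal once the behaviour of convex hulls in nodal metric trees, and the effect of deleting twigs, are made precise.
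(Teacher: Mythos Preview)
Your approach is the same as the paper's; the paper's proof is simply far terser. It reads in full: each twig $\Lambda$ of $T$ has degree at most $A$, hence image contained in $\Wall_A$ (by \cref{const:walls_by_induction}); this implies the bending condition; the other conditions are obvious to check. Your unpacking of conditions (\ref{def:spine:marked_points})--(\ref{def:spine:boundary}) matches what the paper leaves implicit.

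On the bending condition your argument has one over-reach. You want the combined wall $(\fw,s)$ obtained from the $(\fw_{T'},v_{T'})$ to land in $W_A^A$, but the hypothesis only bounds each individual twig degree by $A$, not their sum; when several twigs meet at $v$ the successive combinations can overshoot level $A$ (combining $k$ walls from $W_A^{d_i-1}$ lands in $W_A^{\sum d_i-1}$). This particular wall is not actually needed for the bending condition: $h(v)\in\Wall_A$ already follows from any single $\fw_{T'}\subset\Wall_A$, and the tangent clause is handled by the root-edge directions $v_{T'}$ themselves, each of which is tangent to $\Wall_A$ at $h(v)$ since the root edge of $T'$ lies in $\Wall_A$. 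The paper does not spell this step out either. In the sole application, \cref{prop:tropicalization_of_stable_map}, one in fact has the stronger bound $A\ge\degtwig(T)=\sum_{T'}\deg T'$, so there your combination does stay inside $W_A^A$ and your worry evaporates.
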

\begin{proof}
	Let $\Lambda$ be a twig of $T$.
	As its degree is at most $A$, the image of $\Lambda$ is contained in $\Wall_A$.
	This implies the bending condition (see \cref{def:spine} Condition (\ref{def:spine:bending})) for $\Sp(T)$ to be an extended spine in $M_\bbR$ with respect to $\Wall_A$.
	The other conditions are obvious to check.
\end{proof}

\begin{proposition} \label{prop:tropicalization_of_stable_map}
	Let $[C,(p_j)_{j\in J},f\colon C\to Y^\an]$ be a stable map in $\cM^\sm(U^\an,\bP,\beta)$ as in Notations \ref{nota:Msm}, \ref{nota:moduli_spaces_analytic}.
	Consider the composition $\of\colon C\to (Y\setminus Y^\idt)^\an\to Y_\rt^\an$.
	Let $\Gamma \subset C$ be the convex hull of $\of\inv(D_\rt^\an)\cup\{p_j\}_{j\in J}$.
	Let $\Gamma^s\subset \Gamma$ be the convex hull of the marked points.
	Let $h\coloneqq(\tau\circ\of)|_\Gamma$, where $\tau\colon (Y\setminus Y^\idt)^\an\to\oM_\bbR$ as in \cref{nota:Et}.
	Then
	\[\Trop(f)\coloneqq[\Gamma,(p_j)_{j\in J},h]\in\TC(M_\bbR,\bP),\]
	which we call the \emph{tropical curve associated to $f$}.
	We have
	\[\deg\Trop(f)=\pi_*\beta\cdot D_\rt,\]
		\[\degtwig\Trop(f)=\beta\cdot\tE,\]
	where $\tE\coloneqq\pi^*D_\rt-D$ (See \cref{def:projection_formula_rational} regarding $\pi_*$ and $\pi^*$.
	Here $\pi^*D_\rt$ can also be viewed as pullback of Cartier divisor because the indeterminate locus of $\pi$ has codimension at least 2.).
		Assume $A\ge\beta\cdot\tE$, then
	\[\Sp(f)\coloneqq[\Gamma^s,(p_j)_{j\in J},h|_{\Gamma^s}]\in\SP(M_\bbR,\bP),\]
	which we call the \emph{spine associated to $f$}.
\end{proposition}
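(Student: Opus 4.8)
The plan is to realize $\Trop(f)$ as the image of the convex hull $\Gamma$ under the retraction $\tau_\rt\circ\of$, and to deduce each axiom of a tropical curve from the non-archimedean slope formula together with the toric description of $\tau_\rt$ on $T_M^\an$. First I would record the pointed-tree structure. Since $f(C)\cap Y^\idt=\emptyset$ by \cref{nota:Msm}(\ref{nota:Msm:idt}), the composite $\of$ is defined on all of $C$; by \cref{nota:Msm}(\ref{nota:Msm:E}) and \cref{lem:toric_model}, the set $\of\inv(D_\rt^\an)$ consists of the points $p_j$ with $j\in B$ together with the finitely many reduced non-nodal points of $f\inv(E)$, all of which are type-$1$ points, hence leaves, of $C$. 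Thus $\Gamma$ is a finite nodal metric subtree of $C$ in the sense of \cref{sec:nodal_trees}, with $1$-valent vertices the $p_j$ and the interior points of $f\inv(E)$, and $h\coloneqq\tau_\rt\circ\of|_\Gamma$ is continuous. On the interior of any topological edge of $\Gamma$ the map $\of$ lands in $T_M^\an$ (as $\of\inv(D_\rt^\an)$ consists of leaves), and there $\tau_\rt$ is the coordinatewise valuation of characters (\cref{lem:essential_skeleton}(\ref{lem:essential_skeleton:toric}) and the toric retraction of \cref{nota:toric_model}); hence for each $u\in\Hom(M,\bbZ)$ the composite of $h$ with $u$ is $\pm\log\abs{\of^*\chi^u}$, which is affine with integer slope along each edge-annulus because $\of^*\chi^u$ is invertible analytic there. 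So $h$ is \Zaffine with integer derivatives $w_{(v,e)}\in M$, and $h\inv(\partial\oM_\bbR)=\of\inv(D_\rt^\an)$ lies among the infinite $1$-valent vertices; therefore $[\Gamma,(p_j)_{j\in J},h]$ is a pointed tree in $M_\bbR$.

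Next I would verify the four conditions of \cref{def:tropical_curve}. For (\ref{def:tropical_curve:marked_points}): each $p_j$ is a leaf; for $j\in B$, $f$ meets $D_j^\circ\subset W$ with contact order $m_j$ and $\pi$ is a local isomorphism near $f(p_j)$ onto a neighbourhood of a point of $D_{\rt,j}^\circ$ by \cref{lem:toric_model}(\ref{lem:toric_model:W}), so $\of$ meets $D_{\rt,j}$ with contact order $m_j$, which makes the outward weight at $p_j$ equal to $m_j\nu_j=P_j$ under $\Sk(U,\bbZ)\simeq M$ (\cref{lem:essential_skeleton}(\ref{lem:essential_skeleton:identifications})); for $j\in I$, $\of(p_j)\in T_M^\an$, so $h$ is constant on the incident leg, of weight $0=P_j$. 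For (\ref{def:tropical_curve:boundary}): if $x\in\Gamma$, $x\neq p_j$, and $h(x)\in\partial\oM_\bbR$, then $\of(x)\in D_\rt^\an$ while $f(x)\notin D$, so $f(x)\in E\setminus Y^\idt$; since $E\cap W=\emptyset$ by \cref{lem:toric_model}(\ref{lem:toric_model:intersection}) this forces $\of(x)\in E_\rt^\an$, hence $h(x)\in E_\rt^\trop$. For (\ref{def:tropical_curve:balancing}) --- the heart of the argument --- fix a vertex $v$ with $h(v)\in M_\bbR$, i.e.\ $\of(v)\in T_M^\an$; for each $u\in\Hom(M,\bbZ)$ the function $g=\of^*\chi^u$ is invertible near $v$, and the branches of $C$ at $v$ lying off $\Gamma$ do not meet $\of\inv(D_\rt^\an)$, so $g$ is zero- and pole-free on them; hence by the Poincar\'e--Lelong slope formula on the Berkovich curve $C$ the sum of the outgoing slopes of $\log\abs g$ at $v$, equal to $-\ord_v g=0$, is already the sum over the edges of $\Gamma$ incident to $v$, and as $u$ is arbitrary $\sum_{e\ni v}w_{(v,e)}=0$. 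For (\ref{def:tropical_curve:stable}): if some irreducible component $\Lambda$ of $\Gamma$ were isomorphic to $[-\infty,+\infty]$, it would lie on a single component $C_0$ of $C$ carrying exactly two nodes of $C$, no marked point, and no further branch point of $\Gamma$; but $f\in\cM^\sd$ forces $C_0$ to have at least three special points, so $C_0$ attaches a subtree $B'$ of $C$ meeting neither $\{p_j\}$ nor $f\inv(E)$, and then $f|_{B'}$ is either non-constant --- forcing $f(B')\cap D\neq\emptyset$, hence a marked point in $B'$ by affineness of $U$ --- or constant, in which case the leaves of $B'$ (leaves of $C$, hence marked points) give the same contradiction. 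Thus $\Trop(f)\in\TC(M_\bbR,\bP)$.

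For the degree identities I would compute intersection multiplicities of $\of$ with $D_\rt$ along $C$: collecting the contact orders gives $\deg\Trop(f)=\pi_*\beta\cdot D_\rt$, and splitting off the part supported over $E$ via $\pi^*D_\rt=D+\tE$ (using once more that $\pi$ is a local isomorphism near the generic point of $D$, so $\of^*D_\rt-f^*D$ is supported at the interior points of $f\inv(E)$, the twig ends) gives $\degtwig\Trop(f)=\beta\cdot\tE$. In particular the hypothesis $A\ge\beta\cdot\tE=\degtwig\Trop(f)$ says every twig of $\Trop(f)$ has degree at most $A$, so \cref{lem:associated_extended_spine} applies to $\Trop(f)$ and yields $\Sp(\Trop(f))=[\Gamma^s,(p_j)_{j\in J},h|_{\Gamma^s}]\in\SP(M_\bbR,\bP)$, which is precisely the claimed $\Sp(f)$.

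The main obstacle is condition (\ref{def:tropical_curve:balancing}): making the slope formula precise on $C$ --- in particular that $\log\abs{\of^*\chi^u}$ has vanishing outgoing slope in every direction off $\Gamma$ at a vertex mapping into $T_M^\an$, so that its slopes along $\Gamma$ sum to zero there --- is the one genuinely analytic ingredient. The stability of $\Gamma$ and the degree bookkeeping are elementary, though they require care with the geometry of the modification $\pi$ and the divisor $E$.
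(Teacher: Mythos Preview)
Your argument is correct and follows essentially the same strategy as the paper. The one organizational difference is in how balancing is obtained: the paper first passes to a larger skeleton $\Gamma'\supset\Gamma$ coming from a semistable model of $\of\colon C\to Y_\rt^\an$, cites the balancing theorem of Baker--Payne--Rabinoff for $h'=\tau\circ\of|_{\Gamma'}$, and then shows $h'$ is constant on each component of $\Gamma'\setminus\Gamma$ (otherwise balancing would force that component to reach $\partial\oM_\bbR$, hence to meet $\of^{-1}(D_\rt^\an)$, contradicting the definition of $\Gamma$); this transfers balancing to $h$. You instead apply the slope formula directly at vertices of $\Gamma$, using that branches of $C$ off $\Gamma$ carry no zeros or poles of $\of^*\chi^u$ and hence contribute zero outgoing slope. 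These are the same idea phrased two ways; the paper's detour through $\Gamma'$ has the mild advantage that piecewise linearity and the balancing statement are packaged by the cited reference, whereas your formulation is more self-contained. You also spell out the stability check for $\Gamma$ (via the stable-domain hypothesis $\cM^\sd$), which the paper leaves implicit. The degree and $\degtwig$ computations, and the reduction to \cref{lem:associated_extended_spine} for $\Sp(f)$, match the paper exactly.
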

\begin{proof}
	By taking a semistable model for $\of\colon C\to Y_\rt^\an$, there exists a nodal metric tree $\Gamma'\subset C$ containing $\Gamma$ such that $\tau\circ\of\colon C\to\oM_\bbR$ factors through $C\xrightarrow{\tau_C}\Gamma'\xrightarrow{h'}\oM_\bbR$, where $\tau_C$ is the retraction map (see \cite[\S 5]{Gubler_Skeletons_and_tropicalizations}).
	By \cite[Theorem 6.14]{Baker_Nonarchimedean_geometry}, $h'$ is balanced at every vertex of $\Gamma'$.
	Hence
	\[[\Gamma',(p_j)_{j\in J},h']\in\TC(M_\bbR,\bP).\]
		Let $T$ be a connected component of $\Gamma'\setminus\Gamma$.
	If $h'|_T$ is not constant, then by the balancing condition, $h'(T)\cap\partial\oM_\bbR\neq\emptyset$; thus $T\cap\of\inv(D_\rt^\an)\neq\emptyset$; this contradicts the choice of $T$.
	Therefore $h'$ is constant on $\Gamma'\setminus\Gamma$.
	Then the balancing of $h'$ implies the balancing of $h$, so
	\[[\Gamma,(p_j)_{j\in J},h]\in\TC(M_\bbR,\bP).\]
	
	By \cref{def:norm_of_weight_vector}, we have $\deg\Trop(f)=\of_*[C]\cdot D_\rt=\pi_*\beta\cdot D_\rt$.
	Note that the degree contributed by all $p_j$ is equal to $\beta\cdot D$.
	By the projection formula in \cref{def:projection_formula_rational}, we have $\deg\Trop(f)=\pi_*\beta\cdot D_\rt=\beta\cdot\pi^*D_\rt$.
	Hence we obtain
	\[\degtwig\Trop(f)=\beta\cdot\pi^*D_\rt-\beta\cdot D=\beta\cdot\tE.\]
	
	Finally, under the assumption that $A\ge\beta\cdot\tE$, we conclude from \cref{lem:associated_extended_spine} that
	\[[\Gamma^s,(p_j)_{j\in J},h|_{\Gamma^s}]\in\SP(M_\bbR,\bP).\]
\end{proof}

\begin{remark}
	In general, both $\Trop(f)$ and $\Sp(f)$ depends on the choice of the embedding $T_M\subset U$.
	However, in the case of skeletal curves, $\Sp(f)$ is in fact independent of the choice (see \cref{sec:skeletal_curves}).
\end{remark}

\begin{definition} \label{def:realizable}
	Tropical curves and spines arising from $\Trop(f)$ and $\Sp(f)$ as in \cref{prop:tropicalization_of_stable_map} are called \emph{realizable}.
\end{definition}

\begin{proposition} \label{prop:continuity}
	The spine map $\Sp\colon\cM^\sm(U^\an,\bP,\beta)\longto\Sp(M_\bbR,\bP)$ is continuous.
\end{proposition}
\begin{proof}
	Let $\Gamma\to\oM_{0,n}^\trop$ be the universal nodal metric tree, $S\subset\oM_{0,n}^\trop$ be any open subset, and $K\subset\Gamma_S$ proper over $S$.
	Let $\cK\to\cS$ be the pullback of $K\to S$ by the proper surjective tropicalization map $\ocM_{0,n}^\an\to\oM_{0,n}^\trop$ (see \cref{rem:comparison_ACP}).
	Let $\tcK\to\tcS$ be the further pullback by the map $\cM^\sm(U^\an,\bP,\beta)\to\ocM_{0,n}^\an$ taking domain curves, so $\tcK\to\tcS$ is proper.
	Let $W\subset\oM_\bbR$ be any open subset, and $\cW\subset Y^\an$ the preimage of $W$ by $\tau$ in \cref{nota:Et}.
	Let $\alpha\colon\cC\to\cM^\sm(U^\an,\bP,\beta)$ be the universal curve and $\cf\colon\cC\to Y^\an$ the universal map.
	Let $V(S,K,W)$ be as in \cref{def:compact_open_topology}, a generator of the compact-open topology.
	Then $\Sp\inv(V(S,K,W))\subset\cM^\sm(U^\an,\bP,\beta)$ is equal to $\tS\setminus\alpha(\tcK\setminus\cf\inv(\cW))$.
	Since $\tcK$ is proper over $\tcS$ and $\cf\inv(\cW)$ is open, $\alpha(\tcK\setminus\cf\inv(\cW))$ is closed in $\tcS$.
	Therefore $\Sp\inv(V(S,K,W))=\tS\setminus\alpha(\tcK\setminus\cf\inv(\cW))$ is open in $\tcS$, hence open in $\cM^\sm(U^\an,\bP,\beta)$.
	This shows that the spine map is continuous.
\end{proof}

\begin{definition} \label{def:A_big}
	Motivated by \cref{prop:tropicalization_of_stable_map}, we say that $A \in \bbN$ is \emph{big with respect to} $\beta \in \NE(Y)$ if $A\ge\beta\cdot\tE$.
\end{definition}

The data $\bP=(P_j)_{j\in J}$, $\beta\in\NE(Y)$ and $A\in\bbN$ restrict each other as shown in the following two lemmas:

\begin{lemma} \label{lem:finiteness_given_P}
	Given $\bP=(P_j)_{j\in J}$, there exists $A\in\bbN$ such that for any $\beta\in\NE(Y)$, $f\in\cM^\sm(U^\an,\bP,\beta)$, we have $\deg \Trop(f)\le A$.
	Hence there are only finitely many combinatorial types of $\Trop(f)$ and $\Sp(f)$ for such $f$.
\end{lemma}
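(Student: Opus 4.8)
The plan is to reduce the degree bound to the finiteness of the relevant curve classes, which is already available as \cref{lem:finitely_many_beta}, and then to deduce the finiteness of combinatorial types from that bound together with \cref{rem:twig_bound} and elementary tree combinatorics.

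First I would invoke the non-archimedean GAGA principle of \cref{nota:moduli_spaces_analytic}: the analytic space $\cM^\sm(U^\an,\bP,\beta)$ is the analytification of the scheme $\cM^\sm(U,\bP,\beta)$, so it is nonempty exactly when the latter is. Hence by \cref{lem:finitely_many_beta} the set
\[
S\coloneqq\set{\beta\in\NE(Y) | \cM^\sm(U^\an,\bP,\beta)\neq\emptyset}
\]
is finite. For every $\beta\in S$ and every $f\in\cM^\sm(U^\an,\bP,\beta)$, \cref{prop:tropicalization_of_stable_map} gives the formula $\deg\Trop(f)=\pi_*\beta\cdot D_\rt$, a nonnegative integer depending only on $\beta$, and also $\degtwig\Trop(f)=\beta\cdot\tE\le\deg\Trop(f)$. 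Setting
\[
A\coloneqq\max_{\beta\in S}\bigl(\pi_*\beta\cdot D_\rt\bigr)\in\bbN
\]
(and $A\coloneqq 0$ if $S=\emptyset$), we obtain $\deg\Trop(f)\le A$, and a fortiori $\degtwig\Trop(f)\le A$, for every $f$ as in the statement.

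It remains to see that, with $A$ fixed, only finitely many combinatorial types of $\Trop(f)$ can occur — and hence of $\Sp(f)$, since $\Sp(f)$ is the restriction of $\Trop(f)$ to the convex hull $\Gamma^s$ of the marked points. By \cref{rem:twig_bound} there are only finitely many combinatorial types of twigs of degree at most $A$, and the number of twigs of $\Trop(f)$ is itself at most $\degtwig\Trop(f)\le A$, since each twig carries at least one infinite end with a nonzero integral weight vector. The domain $\Gamma^s$ is a simple stable tree with the $n$ marked legs and at most $A$ further vertices where twigs are attached, so it has boundedly many vertices and topological edges, giving finitely many possible underlying combinatorial trees; and on $\Gamma^s$ the weight vectors are determined by propagating the balancing condition inward from the $P_j$ and from the root monomials of the attached twigs, so their norms are bounded in terms of $\sum_{j\in J}\abs{P_j}$ and $A$ alone. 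Assembling these finitely many bounded pieces of data shows that $\Trop(f)$, and therefore $\Sp(f)$, has only finitely many combinatorial types. The substantive inputs here are \cref{lem:finitely_many_beta} (which encodes the affineness of $U$) and the degree formula of \cref{prop:tropicalization_of_stable_map}; the one step that calls for a little care is this last bookkeeping, where one must check that bounding the degree really controls the weight vectors on all of the domain and not just on the twigs, which is exactly the role played by the balancing condition.
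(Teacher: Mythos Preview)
Your proof is correct and follows exactly the route the paper takes: the paper's proof is the single line ``It follows from \cref{lem:finitely_many_beta} and \cref{prop:tropicalization_of_stable_map},'' and you have simply unpacked this, together with a careful justification of the ``Hence'' clause (finiteness of combinatorial types via \cref{rem:twig_bound} and balancing) that the paper leaves to the reader.
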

\begin{proof}
	It follows from \cref{lem:finitely_many_beta} and \cref{prop:tropicalization_of_stable_map}.
\end{proof}

\begin{lemma} \label{lem:bound_on_class}
	Given $A\in\bbN$, the following hold:
	\begin{enumerate}
		\item \label{lem:bound_on_class:deg} There are at most finitely many $\beta\in\NE(Y)$ such that there exists $\bP$ and $[C,(p_j)_{j\in J},f]\in\cM^\sm(U^\an,\bP,\beta)$ with $\deg\Trop(f)\le A$.
		\item \label{lem:bound_on_class:degtwig} Given $\bP$, there are at most finitely many $\beta \in \NE(Y)$ such that there exists $[C,(p_j)_{j\in J},f]\in\cM^\sm(U^\an,\bP,\beta)$ with $\degtwig\Trop(f)\le A$.
	\end{enumerate}
\end{lemma}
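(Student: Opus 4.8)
The plan is to convert both finiteness statements into intersection-number bounds using \cref{prop:tropicalization_of_stable_map}, and then to reuse the ample-divisor device from the proof of \cref{lem:finitely_many_beta}. Statement (\ref{lem:bound_on_class:degtwig}) needs essentially no extra work: by the non-archimedean GAGA principle recorded in \cref{nota:moduli_spaces_analytic}, $\cM^\sm(U^\an,\bP,\beta)$ is nonempty exactly when $\cM^\sm(U,\bP,\beta)$ is, and \cref{lem:finitely_many_beta} already asserts that for a \emph{fixed} $\bP$ there are only finitely many $\beta\in\NE(Y)$ for which this holds. Imposing $\degtwig\Trop(f)\le A$ only shrinks that finite set, so (\ref{lem:bound_on_class:degtwig}) follows at once.

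The real content is (\ref{lem:bound_on_class:deg}), where $\bP$ is allowed to vary. First I would apply \cref{prop:tropicalization_of_stable_map} to write
\[\deg\Trop(f)=\beta\cdot\pi^*D_\rt=\beta\cdot D+\beta\cdot\tE,\qquad \beta\cdot\tE=\degtwig\Trop(f).\]
Since $\degtwig\Trop(f)$ is by definition a sum of norms $\abs{\cdot}$ of weight vectors, it is $\ge 0$, so $\beta\cdot D\le\deg\Trop(f)\le A$. Moreover, nonemptiness of $\cM^\sm(U^\an,\bP,\beta)$ forces $\beta$ to be compatible with $\bP$ (\cref{rem:compatible_curve_class}, again via GAGA), which gives $\beta\cdot D_i\ge 0$ for every irreducible component $D_i$ of $D$. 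Combined with $\sum_i\beta\cdot D_i=\beta\cdot D\le A$, this yields the uniform bound $0\le\beta\cdot D_i\le A$ for every $i$, with no dependence on $\bP$.

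To finish, fix once and for all an ample divisor $F$ on $Y$ with $-F|_U$ effective (\cref{lem:ample_divisor}), and write $F=\overline{F|_U}+F_D$ with $F_D=\sum_i c_iD_i$ supported on $D$. Exactly as in \cref{lem:finitely_many_beta} one has $\overline{F|_U}\cdot\beta\le 0$, so
\[F\cdot\beta\ \le\ F_D\cdot\beta\ =\ \sum_i c_i\,(\beta\cdot D_i)\ \le\ A\sum_i\abs{c_i},\]
a bound depending only on $Y$, $F$ and $A$ — not on $\bP$. Since $F$ is ample, only finitely many classes of $\NE(Y)$ satisfy such an inequality, which proves (\ref{lem:bound_on_class:deg}). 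The one step requiring a little care — already implicit in \cref{lem:finitely_many_beta} — is the sign $\overline{F|_U}\cdot\beta\le 0$: it holds because $\overline{F|_U}$ is minus the closure in $Y$ of an effective divisor on the affine variety $U$, so its pullback along $f$ to the domain curve has non-negative degree, using that the image of $f$ is not contained in that divisor. I expect this to be the only slightly delicate point; the rest is bookkeeping with intersection numbers.
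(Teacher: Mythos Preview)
Your proof is correct and takes a genuinely different route from the paper for part~(\ref{lem:bound_on_class:deg}). The paper argues as follows: from $\beta\cdot\pi^*D_\rt\le A$ together with the fact (built into the definition of $\cM^\sm$) that no component of $f(C)$ lies in $\supp\pi^*D_\rt=Y\setminus T_M$, one gets $0\le\beta\cdot G\le A$ for every irreducible component $G$ of $\pi^*D_\rt$; then, using the Chow-group exact sequence $\CH^1(Y\setminus T_M)\to\CH^1(Y)\to\CH^1(T_M)=0$, these components span $N^1(Y)$, so $\beta$ is confined to a finite set. Your approach instead only bounds $\beta\cdot D_i$ for the components of $D$ (a smaller collection, not spanning $N^1(Y)$ in general) and compensates with the ample-divisor device from \cref{lem:finitely_many_beta}. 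The paper's route has the advantage that the non-negativity step is immediate from the definition of $\cM^\sm$, whereas your route inherits the slightly delicate point you flag---that $f(C)$ might a priori lie in $\supp(-\overline{F|_U})$. This is harmless: since $L^\vee|_U$ is globally generated one can fix finitely many sections $s_1,\dots,s_N$ generating it; for each $\beta$ some $s_k$ does not vanish on $f(C)\cap U$, and all the resulting $F_{s_k}$ are linearly equivalent, so the bound on $F\cdot\beta$ is uniform. Your treatment of (\ref{lem:bound_on_class:degtwig}) as an immediate corollary of \cref{lem:finitely_many_beta} is slightly cleaner than the paper's reduction (\ref{lem:bound_on_class:deg})$\Rightarrow$(\ref{lem:bound_on_class:degtwig}), though both are one-liners.
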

\begin{proof}
	Since $\deg\Trop(f)$ is equal to $\degtwig\Trop(f)$ plus the sum of norm of every component of $\bP$, we see that (\ref{lem:bound_on_class:deg}) implies (\ref{lem:bound_on_class:degtwig}).
	For (\ref{lem:bound_on_class:deg}), let $[C,(p_j)_{j\in J},f]\in\cM^\sm(U^\an,\bP,\beta)$ with $\deg\Trop(f)\le A$.
	By \cref{prop:tropicalization_of_stable_map}, we have $\beta\cdot\pi^*D_\rt=\deg\Trop(f)\le A$.
	Since the indeterminate locus of $\pi$ has codimension at least 2, we view $\pi^*D_\rt$ as pullback of Cartier divisor.
	By the definition of $\cM^\sm(U^\an,\bP,\beta)$, $f(C)$ has no components contained in $\pi^*D_\rt$.
	Thus the intersection number between $\beta$ and every irreducible component of $\pi^*D_\rt$ is non-negative and bounded by $A$.
	Note the support of $\pi^*D_\rt$ is $Y\setminus T_M$.
	We have an exact sequence of Chow groups
	\[\CH^1(Y\setminus T_M)\longrightarrow\CH^1(Y)\longrightarrow\CH^1(T_M)\longrightarrow 0.\]
	Since $\CH^1(T_M)\simeq 0$, we have a surjection $\CH^1(Y\setminus T_M)\twoheadrightarrow\CH^1(Y)$.
	Therefore, the irreducible components of $\pi^*D_\rt$ generate $N^1(Y)$.
	We conclude that there are at most finitely many such $\beta$.
\end{proof}

\section{Rigidity and transversality of spines} \label{sec:rigidity_and_transversality}

In this section, we prove two properties of spines: the rigidity in \cref{prop:rigidity_spine} and the transversality in \cref{prop:transversality}.
Although the underlying geometric ideas are simple, the formal proofs are a bit lengthy.
So first-time readers are advised to skip the proofs.

\begin{proposition} \label{prop:rigidity_spine}
	Let $\SP^\tr(M_\bbR,\bP^F)$ be as in \cref{def:transverse}.
	Let $u\coloneqq v_i$ for some $i\in F\cup I$.
	Let
	\[\Psi_u\coloneqq(\dom,\ev_u)\colon\SP^\tr(M_\bbR,\bP^F)\longrightarrow\NT^F_J\times M_\bbR.\]
	Let $S\in\SP^\tr(M_\bbR,\bP^F)$.
	Then for a sufficiently small neighborhood $V_S$ of $S$ in $\SP^\tr(M_\bbR,\bP^F)$, the restriction of $\Psi_u$ to $V_S$ is a homeomorphism onto its image and is open.
\end{proposition}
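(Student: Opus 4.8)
The plan is to exploit the \emph{rigidity} advertised by the name of the proposition: a transverse spine has no continuous moduli once its combinatorial type, the metric on its domain, and the image of a single vertex are fixed. First I would pin down the precise form of this rigidity. By \cref{const:topology_TC_SP}, a spine $S'=[\Gamma',(v'_j)_{j\in J},h']$ in a small neighborhood of $S$ carries a contraction $c\colon\Gamma'\to\Gamma$ such that the derivative of $h'$ along any edge of $\Gamma'$ not contracted by $c$ equals the derivative of $h$ along its image, while the derivatives along the contracted (short) edges are forced by the balancing condition at the partially resolved vertices of $\Gamma'$; hence \emph{all} weight vectors of $S'$ are determined by the combinatorial type of $S$. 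By \cref{def:transverse}(\ref{def:transverse:image})--(\ref{def:transverse:bending}) the bending vertices of $S$ are $2$-valent, lie in the relative interiors of the codimension-one pieces of $\Wall_A$, and $h$ crosses these pieces transversally, with positive margin; since a bend vector at such a vertex lies in the lattice $M$, it is locally constant, so the bending data of $S'$ also coincides with that of $S$ once $S'$ is close enough. Thus all of $S'$ except the finite edge lengths of $\Gamma'$ and the position of one vertex is rigid near $S$.

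With this in hand the argument has two parts. For \textbf{injectivity}, take $V_S$ small enough that the above applies, and suppose $S',S''\in V_S$ have $\Psi_u(S')=\Psi_u(S'')$, i.e.\ the same domain tree $\Gamma'\in\NT^F_J$ with its metric and $h'(v_i)=h''(v_i)$. Traversing $\Gamma'$ outward from $v_i$: along each edge $h'$ and $h''$ have the same forced derivative, and at each vertex the same forced bend, so by induction $h'=h''$ and $S'=S''$; being also continuous, $\Psi_u|_{V_S}$ is a continuous injection. For \textbf{openness and continuity of the inverse}, I would construct a section: given $(\Gamma',p)$ in a small neighborhood $O$ of $\bigl(\dom(S),h_S(v_i)\bigr)$ in $\NT^F_J\times M_\bbR$, transport the combinatorial type, weight vectors and bends of $S$ to $\Gamma'$ along the contraction $\Gamma'\to\dom(S)$, set $h'(v_i)\coloneqq p$, and propagate along $\Gamma'$ with the prescribed derivatives, inserting a bending vertex carrying the prescribed bend each time the propagated path meets a codimension-one piece of $\Wall_A$. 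Because the wall crossings of $S$ are transversal, occur in edge interiors, and have positive margins, for $O$ small each crossing still occurs within the corresponding edge of $\Gamma'$, and one checks that the resulting $[\Gamma',(v'_j),h']$ satisfies all the conditions of Definitions \ref{def:spine} and \ref{def:transverse}: leg weights are the $P_j$ by construction, balancing/bending/transversality hold as arranged, and the remaining conditions --- e.g.\ $h'^{-1}(\partial\oM_\bbR)$ being the prescribed set of legs and $h'(\Gamma')\cap E_\rt^\trop=\emptyset$ --- are open conditions that hold at $S$. This yields $\Theta\colon O\to\SP^\tr(M_\bbR,\bP^F)$ with $\Psi_u\circ\Theta=\id_O$, continuous because $h'$ depends affinely on the edge lengths of $\Gamma'$ and on $p$ on each combinatorial stratum and degenerates continuously across strata. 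Shrinking $V_S$ so that $\Theta(O)\subset V_S$ gives $O\subset\Psi_u(V_S)$, whence the image is open and $\Theta$ restricts to a continuous inverse of $\Psi_u|_{\Theta(O)}$; so $\Psi_u|_{V_S}$ is a homeomorphism onto an open set and an open map.

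As a cross-check I would also run the dimension count implicit in the word ``rigidity'': near $S$ both source and target are polyhedral complexes and $\Psi_u$ is piecewise linear, the free parameters of a transverse spine of fixed combinatorial type being the finite edge lengths together with $\dim M_\bbR$ position parameters, minus one codimension-one constraint per bending vertex, which matches $\dim(\NT^F_J\times M_\bbR)$ once one accounts for the absorption of the $2$-valent bending vertices into the edges of $\dom(S)$; an injective PL map between complexes of the same local dimension is automatically a local homeomorphism onto an open set. I expect the main obstacle to be exactly this bookkeeping across combinatorial strata --- the domain of a nearby transverse spine may resolve a higher-valent (balanced) vertex of $S$, so one must verify that the forced weights and bends and the propagation recipe are consistent over all these strata --- together with the (individually routine, collectively fiddly) verification that every defining condition of a transverse spine is genuinely open near $S$, which relies on the transversality conditions \ref{def:transverse}(\ref{def:transverse:image})--(\ref{def:transverse:extension}) and the finiteness of $\Wall_A$.
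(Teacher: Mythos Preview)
Your approach is correct and genuinely different from the paper's. The paper proceeds by \emph{inductive cutting}: it picks a point $x$ in the interior of a topological edge (or at a node), splits $S$ into $S^1\amalg_x S^2$, constructs compatible gluing/cutting sections on the level of domains and of spines, and reduces via a pullback square to the claim for $S^1$ and $S^2$; iterating, one lands in two base cases --- a spine with a single bending vertex and two finite boundary legs, or a spine with no bending vertex --- each of which is immediate from transversality. Your argument is instead a \emph{direct reconstruction}: you observe that all weight vectors (hence all bends) of a nearby transverse spine are forced by the topology of $\SP^\tr$ and balancing, then propagate $h'$ outward from $v_i$ along the given domain tree, inserting the prescribed bend at each transversal wall-crossing, to build an explicit continuous section $\Theta$ of $\Psi_u$.

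Both routes exploit the same rigidity, and both ultimately rest on \cref{def:transverse}(\ref{def:transverse:image})--(\ref{def:transverse:bending}). The paper's decomposition has the advantage that the cross-stratum bookkeeping you flag as the main obstacle is absorbed into the cutting/gluing sections and never confronted globally; your approach trades that for a single global propagation, which is conceptually more transparent (and makes the dimension count you sketch at the end visible), at the cost of having to verify consistency of the propagation across resolutions of higher-valent balanced vertices. One small point to tighten: to conclude that $\Psi_u|_{V_S}$ is open (not just that its image contains an open set), you should take $V_S=\Theta(O)\cap V_S^{\mathrm{inj}}$ where $V_S^{\mathrm{inj}}$ is a neighborhood on which injectivity holds, and note that injectivity forces $\Theta(O)\cap V_S^{\mathrm{inj}}=\Psi_u^{-1}(O)\cap V_S^{\mathrm{inj}}$, which is open; then $\Theta$ is a genuine two-sided inverse there.
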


\begin{proof}
	We introduce a notational convention:
	when there is no ambiguity, for an object $X$, we denote by $V_X$ a sufficiently small neighborhood of $X$ in the natural moduli space (depending on the context) containing $X$.
	
	Write $S=[\Gamma,(v_j)_{j\in J},h]$.
	Let $\Gamma_\marked$ be $\Gamma$ together with the marked points.
	Let $e$ be a topological edge of $\Gamma$.
	By the stability condition, $e$ contains at most one node.
	If $e$ contains a node, let $x$ be the node of $e$;
	otherwise, let $x$ be a point in the interior of $e$ which is not a bending point.
	We cut $\Gamma$ at $x$ and obtain $\Gamma=\Gamma^1\amalg_x\Gamma^2$.
	Assume $u$ belongs to $\Gamma^2$.
	Let $\Gamma^i_\marked$ ($i=1,2$) be $\Gamma^i$ together with the marked points inherited from $\Gamma_\marked$ plus the marked point $x$.
	Let $h^i\coloneqq h|_{\Gamma^i}$ and $S^i\coloneqq[\Gamma^i_\marked,h^i]$.
	Let $p\colon G \to V_{\Gamma_\marked}$ be the restriction of the universal nodal metric tree to $V_{\Gamma_\marked}$, a sufficiently small neighborhood of $\Gamma_\marked$ in $\NT^F_J$.
	
	Now we define a continuous section $r$ of $p$ with value $x$ at $\Gamma_\marked$ as follows:
	By \cref{const:topology_NT}, for any $[\Gamma',(v'_j)_{j\in J}]$ in $V_{\Gamma_\marked}$, we have a continuous map $c\colon\Gamma'\to\Gamma$ contracting a subset of edges of $\Gamma'$ and sending every $v'_i$ to $v_i$.
	Let $e' \subset \Gamma'$ be the unique topological edge with $c(e') = e$.
	We define the section by picking a point $x'\in e'$ as follows:
	\begin{enumerate}
		\item If $x$ is a node, then $e'$ has either finite length or contains exactly 1 node.
		In the first case, let $x'$ be the midpoint of $e'$; in the second case, let $x'$ be the node of $e'$.
		\item If $x$ is not a node, we distinguish two cases:
		\begin{enumerate}
			\item If $e$ has infinite length, since $e$ does not contain a node, $e$ must contain an 
			infinite 1-valent vertex, which is a marked point.
			So this marked point remains in $e'$ as an infinite 1-valent vertex.
			Let $w$ and $w'$ be respectively the finite endpoint of $e$ and $e'$.
			Let $x'\in e'$ such that the two segments $[w,e]$ and $[w',e']$ have the same lengths.
			\item If $e$ has finite length, than $e'$ also has finite length.
			We let $x'$ be the point of $e'$ that divides $e'$ by the same ratio as the point $x$ divides $e$.
		\end{enumerate}
	\end{enumerate}
	
	Up to shrinking the neighborhoods, we have a natural map
	\[V_{\Gamma^1_\marked}\times V_{\Gamma^2_\marked}\longrightarrow V_{\Gamma_\marked}\]
	given by gluing at the marked point $x$.
	It admits a section $s$ induced by the section $r$ of $p$ constructed above.
	
	Let $V_{S^1}\times_{M_\bbR} V_{S^2}$ be the fiber product of the two evaluation maps at $x$.
	Similarly, up to shrinking the neighborhoods, we have a natural gluing map
	\[V_{S^1}\times_{M_\bbR} V_{S^2} \longrightarrow V_S\]
	which admits a section $t$ induced by the section $r$ of $p$.
	
	Up to further shrinking, we obtain a pullback diagram
	\[\begin{tikzcd}[column sep=7em]
		V_{S^1}\times_{M_\bbR} V_{S^2} \ar{r}{\widetilde\Psi_u\coloneqq(\dom^1,\dom^2,\ev_u)} & V_{\Gamma^1_\marked}\times V_{\Gamma^2_\marked}\times V_{h(u)} \\
		V_S \ar{u}{t} \ar{r}{\Psi_u=(\dom,\ev_u)} & V_{\Gamma_\marked}\times V_{h(u)}. \ar{u}{s}
	\end{tikzcd}\]
	Therefore, in order to show that 
	$\Psi_u|_{V_S}$ is a homeomorphism, it suffices to show that the upper horizontal map $\widetilde\Psi_u$ is a 
	homeomorphism.
	Suppose that \cref{prop:rigidity_spine} holds for the spines $S^1$ and $S^2$, then to up shrinking the 
	neighborhoods, we have homeomorphisms
	\begin{gather*}
		V_{S^1}\xrightarrow{(\dom^1,\ev_x)}V_{\Gamma^1_\marked}\times V_{h(x)},\\
		V_{S^2}\xrightarrow{(\dom^2,\ev_x)}V_{\Gamma^2_\marked}\times V_{h(x)},\\
		V_{S^2}\xrightarrow{(\dom^2,\ev_u)}V_{\Gamma^2_\marked}\times V_{h(u)}.	
	\end{gather*}
	This implies that the map $\widetilde\Psi_u$ is a homeomorphism.
	
	Now by cutting the spine $S$ sufficiently many times, we are reduced to prove the proposition in the following 2 special cases:
	\begin{enumerate}
		\item The spine $S$ exactly 1 bending vertex, and the type $\bP^F$ has $\abs{F}=\abs{B}=2$ and $I=\emptyset$.
		\item The spine $S$ has no bending vertex.
	\end{enumerate}
	
	In the first case, we have $V_{\Gamma_\marked} \subset \bbR$ naturally given by the length of $\Gamma$.
	By \cref{def:transverse}(\ref{def:transverse:image}), a small deformation of $S$ has the same combinatorial type as $S$, with the bending vertex mapping to the same $(d-1)$-dimensional cell of $\Wall_A$.
	So the proposition holds in this case.
	In the second case, as there are no bending vertices, any small deformation of $S$ is uniquely determined continuously by a small deformation of $\Gamma_\marked$ and a small deformation of $h(u)$ in $M_\bbR$.
	Hence the proposition holds in this case too.
	The proof is now complete.
\end{proof}

For the proof of \cref{lem:skeletal_curve_connected_component} we will use the following:

\begin{lemma} \label{lem:spine_injective}
	Let $S=[\Gamma,(v_j)_{j\in J},h]\in\SP^\tr(M_\bbR,\bP^F)$ and $q\in\Gamma$.
	There are open neighborhoods $V_S$ of $S$ in $\SP^\tr(M_\bbR,\bP^F)_\Gamma$ (where the subscript $_\Gamma$ indicates the domain is fixed) and $R$ of $q$ in $\Gamma$ such that
	\begin{align*}
		\Psi\coloneqq(\id,\ev_q)\colon R\times V_S&\longto R\times \oM_\bbR\\
		(r,h')&\longmapsto (r,h'(r))
	\end{align*}
	is injective.
\end{lemma}
\begin{proof}
	Suppose first that $q$ is not a node or a marked point, and $h(q)\not\in\Wall_A$.
	Then the result follows from \cref{prop:rigidity_spine} by gluing a new leg $[0,+\infty]$ at $r$ and extend $h'$ constantly on the new leg.
	
	For the remaining cases, since $S$ is transverse, $q$ is at most two-valent.
	Since a small deformation of $h$ does not change the derivatives of the edges connected to $q$, we can find open neighborhoods $V_S$ of $S$ in $\SP^\tr(M_\bbR,\bP^F)_\Gamma$ and $R$ of $q$ in $\Gamma$, and a point $p\in R$ which is not a node or a marked point, such that $h'(p)\not\in\Wall_A$, and that $h_1(x)=h_2(x)$ implies $h_1|_R=h_2|_R$, for all $h_1,h_2,h'\in V_S$ and $x\in R$.
	
	Now suppose $h_1(x)=h_2(x)$ for some $h_1,h_2\in V_S$ and $x\in R$.
	Then $h_1|_R=h_2|_R$, in particular $h_1(p)=h_2(p)$.
	So it follows from the first paragraph that $h_1=h_2$.
	We conclude that $\Psi$ is injective.
\end{proof}

\begin{lemma} \label{lem:enlarge_Wall}
	Let $W\subset M$ be a finite subset.
	There exists a finite set $H$ of hyperplanes in $M_\bbR$ passing through 0 such that the following hold:
	\begin{enumerate}
		\item Every $(d-1)$-dimensional cell of $\Wall_A$ is contained in a hyperplane in $H$.
		\item For every cell $\sigma$ of $\Wall_A$ of dimension less than $(d-1)$, every point $x\in\sigma$, every vector $w\in W$, the ray starting from $x$ in the direction of $w$ is contained in a hyperplane in $H$.
	\end{enumerate}
\end{lemma}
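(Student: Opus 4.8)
The statement is essentially a finiteness-and-bookkeeping claim about the polyhedral set $\Wall_A \subset M_\bbR$, which by \cref{const:walls_by_induction} is a finite conical rational polyhedral subset of $M_\bbR$ of codimension at least one. The plan is to build $H$ by collecting three finite families of hyperplanes through $0$ and then checking the two required properties.

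First I would list the cells of $\Wall_A$. Since $\Wall_A$ is a finite union of the cones $\fw$ over $(\fw,v)\in W_A^A$, and each such $\fw$ is rational polyhedral, $\Wall_A$ admits a finite polyhedral decomposition into rational cones; in particular it has only finitely many cells. For each $(d-1)$-dimensional cell $\sigma$ of $\Wall_A$, its affine span is a rational hyperplane, and since $\sigma$ is conical it contains $0$, so this span is a rational hyperplane through $0$; collect all these into a finite family $H_1$. This gives property (1) immediately.

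Next, for property (2), I would handle the lower-dimensional cells. Let $\sigma$ be a cell of $\Wall_A$ of dimension $e \le d-2$; since $\sigma$ is a rational cone through $0$, its linear span $L_\sigma$ is a rational subspace of dimension $e$. For each such $\sigma$ and each $w \in W$ (a finite set), consider the subspace $L_\sigma + \bbR w$: it has dimension at most $e+1 \le d-1$. If this dimension is strictly less than $d-1$, enlarge it to any rational hyperplane through $0$ containing it; if it equals $d-1$ it is already a rational hyperplane through $0$. In either case we obtain a rational hyperplane through $0$ containing $\sigma$ and the direction $w$, hence containing the ray $\{x + \lambda w : \lambda \ge 0\}$ for every $x \in \sigma$. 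Collect these over the finitely many pairs $(\sigma, w)$ into a finite family $H_2$. Set $H \coloneqq H_1 \cup H_2$, a finite set of hyperplanes through $0$; then (1) holds by construction of $H_1$ and (2) holds by construction of $H_2$.

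The only genuine point requiring care — and the closest thing to an obstacle — is verifying that $\Wall_A$ really has only finitely many cells of each dimension, each of which is a rational cone through $0$. This follows by induction from \cref{const:walls_by_induction}: $W_A^0$ is manifestly finite (it is indexed by pairs consisting of a closed cell $\sigma$ of the finite polyhedral complex $E_\rt^\trop$ and a lattice vector $v$ with $\abs{v}\le A$, of which there are finitely many), the inductive step adds only finitely many walls at each stage and takes intersections and translates of rational cones by rational vectors (preserving rationality and conicality as already noted in \cref{const:walls_by_induction}), and a finite union of rational cones through $0$ admits a common finite rational fan refinement whose cones are all rational cones through $0$. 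Everything else is the routine linear algebra above: extending a rational subspace of dimension $\le d-1$ to a rational hyperplane through $0$, which is always possible.
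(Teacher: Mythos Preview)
Your proof is correct and takes essentially the same approach as the paper, which simply records that the result follows from the finiteness of the number of cells in $\Wall_A$ and the finiteness of $W$; you have just spelled out the routine construction that the paper leaves implicit.
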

\begin{proof}
	It follows from the finiteness of the number of cells in $\Wall_A$ and the finiteness of $W$.
\end{proof}

\begin{proposition} \label{prop:transversality}
	Let $\SP(M_\bbR,\bP^F)$ be as in \cref{def:spine}.
	Let $u\coloneqq v_i$ for some $i\in F\cup I$.
	Let $N$ be a natural number, and $W\subset M$ a finite subset.
	Let $\SP(M_\bbR,\bP^F,N,W)\subset\SP(M_\bbR,\bP^F)$ be the subset consisting of spines such that the number of bending vertices is bounded by $N$, and all the weight vectors belong to $W$.
	Let
	\[\Psi_u\coloneqq(\dom,\ev_u)\colon\SP(M_\bbR,\bP^F,N,W)\longrightarrow\NT^F_J\times M_\bbR.\]
	Then there exists a lower dimensional finite polyhedral subset $Z\subset\NT^F_J\times M_\bbR$ such that all the spines in $\Psi_u\inv(Z^c)$ are transverse.
\end{proposition}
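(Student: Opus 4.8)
The plan is to stratify the (infinite) set $\SP(M_\bbR,\bP^F,N,W)$ into \emph{finitely many} polyhedral pieces on each of which $\Psi_u$ is affine, and then to show that the $\Psi_u$-image of the non-transverse part of each piece is lower-dimensional in $\NT^F_J\times M_\bbR$; the union $Z$ of these images is the desired set. For the stratification, note that since the number of bending vertices is $\le N$, all weight vectors lie in the finite set $W$, and the labelled legs $(v_j)_{j\in J}$ are fixed, there are only finitely many combinatorial types (in the sense of \cref{def:combinatorial_type}) of spines in $\SP(M_\bbR,\bP^F,N,W)$. Fix one such type $\tau$ and let $\SP_\tau$ be the set of spines of type $\tau$. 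A spine of type $\tau$ is determined by the lengths of its finitely many edges together with $h(u)\in M_\bbR$ — once the metric domain tree and $h(u)$ are prescribed, $h$ is recovered by propagating the fixed weight vectors along the tree from $u$ — so $\SP_\tau$ embeds into an affine space of the form $\bbR^{m}\times M_\bbR$, where it is cut out by the open conditions (positivity of the finite edge lengths, stability, the open parts of \cref{def:spine}) together with, for each bending vertex $v$, the affine condition $h(v)\in\mathrm{relint}(\sigma_v)$, $\sigma_v$ being the cell of $\Wall_A$ prescribed by $\tau$. Thus $\SP_\tau$ is a relatively open (convex) polyhedral set, $\Psi_u=(\dom,\ev_u)$ restricts on $\SP_\tau$ to an injective affine map into a single cell $C_\tau\times M_\bbR$ of $\NT^F_J\times M_\bbR$, and $\dim\Psi_u(\SP_\tau)=\dim\SP_\tau\le\dim C_\tau+d$. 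I also fix, via \cref{lem:enlarge_Wall} applied to $W$ (which contains every weight vector, including $-P_j$), a finite family $\cH$ of linear hyperplanes containing every $(d-1)$-cell of $\Wall_A$, the codimension-one skeleton of $\Sigma_\rt$, and all rays relevant to condition (4) of \cref{def:transverse}; and I record, as a consequence of \cref{lem:toric_model}, that no $P_j$ (being a positive multiple of a primitive divisorial generator, hence in the relative interior of a ray) lies in $E_\rt^\trop$.

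Next I dispose of the \emph{non-generic} types $\tau$, meaning those for which no member of $\SP_\tau$ is transverse; for these I claim $\dim\SP_\tau<\dim(\NT^F_J\times M_\bbR)$, so that $\Psi_u(\SP_\tau)$ is automatically a lower-dimensional finite polyhedral set. The point is that $\dim\SP_\tau$ can be full-dimensional only if the type $\tau$ is already ``non-degenerate'', namely: $C_\tau$ is a top cell of $\NT^F_J$ (the domain is trivalent), every bending vertex is $2$-valent, its cell $\sigma_v$ is $(d-1)$-dimensional, and the weight vectors at $v$ are not tangent to $\sigma_v$ — because any violation of one of these imposes on $\Psi_u(\SP_\tau)\subseteq C_\tau\times M_\bbR$ a positive-codimension linear constraint that the single free position parameter of a $2$-valent interior bending vertex cannot absorb. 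But a non-degenerate type is generic: for a generic choice of the finitely many parameters, $h(\Gamma)$ crosses $\Wall_A$ only at its bending points and transversely there at interior points of $(d-1)$-cells, avoids every $(d-2)$-stratum of $\Wall_A$, and avoids $E_\rt^\trop$ (using $h(\Gamma)\cap\partial\oM_\bbR=\{P_j\}_{j\in B\setminus F}$ and $P_j\notin E_\rt^\trop$); every non-bending vertex has image off $\Wall_A$; and for $j\in F$ the point $h(v_j)$ avoids the codimension-one skeleton of $\Sigma_\rt$ while the ray of condition (4) is transverse — so all of the conditions of \cref{def:transverse} hold, a contradiction.

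For the \emph{generic} types $\tau$, the non-transverse locus inside $\SP_\tau$ is a finite union of ``bad'' subsets, one for each way a condition of \cref{def:transverse} can fail, and using the family $\cH$ each such subset is the intersection of $\SP_\tau$ with a \emph{proper affine subspace} of its parameter space: e.g. ``some edge of $h(\Gamma)$ lies in a hyperplane parallel to one in $\cH$'', ``some vertex of valency $\ge3$ has image on a hyperplane in $\cH$'', ``$h(v_j)$ lies on a hyperplane in $\cH$ for some $j\in F$'', ``some bending vertex lands on a proper face of its cell'', or ``some edge of $h(\Gamma)$ meets an intersection of two hyperplanes in $\cH$'' — each is a single linear equation on the finitely many parameters. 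Since $\tau$ is generic none of these equations holds identically on the relatively open convex set $\SP_\tau$, so each bad subset, hence their union, has dimension $<\dim\SP_\tau$; applying the affine map $\Psi_u$, the $\Psi_u$-image of the non-transverse locus of $\SP_\tau$ is a lower-dimensional finite polyhedral subset of $\NT^F_J\times M_\bbR$. Taking $Z$ to be the union over all $\tau$ of these images — a finite union of lower-dimensional finite polyhedral sets, hence itself one — we conclude: if $S\in\Psi_u\inv(Z^c)$ lay in some $\SP_\tau$ and were not transverse, then $\Psi_u(S)$ would lie in $Z$, a contradiction; so all spines in $\Psi_u\inv(Z^c)$ are transverse.

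I expect the main obstacle to be the bookkeeping underlying the second step: checking carefully that $\SP_\tau$ is polyhedral with $\Psi_u$ affine on it (keeping track of how $2$-valent bending vertices sit inside topological edges, and of how passing to the simple representative affects the cell $C_\tau$ and which parameters survive to $C_\tau\times M_\bbR$), and, above all, the dimension count identifying exactly which combinatorial features force a positive-codimension drop of $\Psi_u(\SP_\tau)$ and verifying that a full-dimensional stratum is therefore generic — using the small but essential geometric input from \cref{lem:toric_model} that the fixed values $P_j$ avoid $E_\rt^\trop$.
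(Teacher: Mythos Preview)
Your approach—stratify by combinatorial type and run a dimension count on each stratum—is different from the paper's. The paper instead enlarges $\Wall_A$ via \cref{lem:enlarge_Wall} so that condition (\ref{def:transverse:extension}) becomes automatic given (\ref{def:transverse:image}), settles the base case $|J|=2$ directly, and for $|J|>2$ cuts the domain along its topological edges, using the rigidity result \cref{prop:rigidity_spine} to transport the lower-dimensional bad loci from the trivalent vertices back to the marked point $u$. Your version is more self-contained (rigidity is never invoked, though the mechanism you rely on—that for a non-degenerate type the position of each $2$-valent bending vertex is recovered from its transverse wall-crossing—is exactly what rigidity encodes); the paper's argument is shorter once rigidity is available.

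Two points in your write-up need repair. First, your second-paragraph claim that $\dim\SP_\tau<\dim(\NT^F_J\times M_\bbR)$ for non-generic $\tau$ is false as stated: if a $2$-valent bending vertex $v$ has incoming weight vector tangent to its prescribed codimension-one cell $\sigma_v$, the constraint $h(v)\in\sigma_v$ does not involve the position parameter $t$ of $v$ on its topological edge, so $\dim\SP_\tau$ can remain full. What your own justification (``imposes on $\Psi_u(\SP_\tau)$ a positive-codimension constraint that the free position parameter cannot absorb'') actually shows is that $\dim\Psi_u(\SP_\tau)$ drops, since $\Psi_u$ forgets $t$ while the surviving constraint lands on the remaining parameters. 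State and prove the claim for $\Psi_u(\SP_\tau)$ rather than $\SP_\tau$; this also renders your earlier injectivity assertion (which fails for exactly such tangent types) unnecessary. Second, your treatment of $E_\rt^\trop$ is confused: $P_j\in M\subset M_\bbR$, whereas the boundary value $h(v_j)\in\partial\oM_\bbR$ is the image of the leg's finite part under $M_\bbR\to M_\bbR/\Span(P_j)$, and hence depends on the parameters. The condition $h(v_j)\notin E_\rt^\trop$ is therefore one more proper linear bad locus to include in $Z$, not something ensured a priori by \cref{lem:toric_model}.
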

\begin{proof}
	We temporarily enlarge $\Wall_A$ by adding all the hyperplanes in \cref{lem:enlarge_Wall}.
	Then Condition (\ref{def:transverse:image}) in \cref{def:transverse} of transversality implies automatically Condition (\ref{def:transverse:extension}), so we can ignore Condition (\ref{def:transverse:extension}) for the proof.
	Now it follows from Conditions (\ref{def:transverse:image}) and (\ref{def:transverse:finite_end}) that the proposition holds when $\abs{J}=2$.
		For $\abs{J}>2$, we can first restrict to the open subset $\Delta_0\subset\NT^F_J$ where no vertex has valency greater than 3, and so can further restrict to any open subset $\Delta\subset\Delta_0$ where the combinatorial type of the domain is fixed.
	Let $E_1,\dots,E_m$ be the topological edges of the domain.
	It follows from \cref{def:transverse} Condition (\ref{def:transverse:bending}) that a spine in $\SP(M_\bbR,\bP^F,N,W)_\Delta$ is transverse if and only if the restriction to each $E_k$ is transverse.
	
	For each $k=1,\dots, m$, let $v^k_1,v^k_2$ be the 2 endpoints of $E_k$.
	Let $P^k_1, P^k_2\in M$ be respectively the weight vector (pointing outwards) of the edge incident to $v^k_1,v^k_2$.
	Let $\bP^k\coloneqq(P^k_1, P^k_2)$.
	Let $F^k\coloneqq\set{ j | v^k_j\text{ is finite}}$.
	In the case where $\abs{F^k}=1$, we consider the map
	\[\Psi_k \colon\SP\big(M_\bbR,(\bP^k)^{F^k},N,W\big)\longrightarrow M_\bbR\]
	taking evaluation at the finite endpoint.
	Then we have a lower dimensional polyhedral subset $Z_k\subset M_\bbR$ such that the preimage $\Psi_k\inv(Z_k^c)$ consists of transverse spines.
	In the case where $\abs{F^k}=2$, the lengths of all such $E_k$ give the modulus of domain of the total spine in $\Delta\subset\NT_J^F$.
	We consider the maps
	\begin{align*}
		\Psi_k &\colon\SP\big(M_\bbR,(\bP^k)^{F^k},N,W\big)\longrightarrow\NT_2^{F^k}\times M_\bbR,\\
		\Psi'_k &\colon\SP\big(M_\bbR,(\bP^k)^{F^k},N,W\big)\longrightarrow\NT_2^{F^k}\times M_\bbR	
	\end{align*}
	taking domain, and evaluation at $v^k_1$ and $v^k_2$ respectively.
	We have lower dimensional polyhedral subsets $Z_k$ and $Z'_k$ of $\NT_2^{F^k}\times M_\bbR$  such that the preimages $\Psi_k\inv(Z_k^c)$ and $\Psi_k'^{-1}(Z_k'^c)$ consist of transverse spines.
	Then by \cref{prop:rigidity_spine}, for any lower dimensional polyhedral subset $Z''$ of $\NT_2^{F^k}\times M_\bbR$, $\Psi_k\big(\Psi_k'^{-1}(Z'')\big)$ and $\Psi'_k\big(\Psi_k^{-1}(Z'')\big)$ are contained in a lower dimensional polyhedral subset of $\NT_2^{F^k}\times M_\bbR$.
	Hence the lower dimensional polyhedral subsets defined with respect to evaluation at various 3-valent vertices of the domain of the total spine can be transported to lower dimensional polyhedral subsets defined with respect to evaluation at the marked point $u$.
	Therefore, we obtain a lower dimensional finite polyhedral subset $Z\subset\NT^F_J\times M_\bbR$ such that all the spines in the preimage $\Psi_u\inv(Z^c)$ as in the statement of the proposition are transverse.
\end{proof}

\section{The toric case} \label{sec:toric_case}

In this case, we describe the various moduli spaces introduced before in the special case where $(Y,D)$ is toric.

\begin{lemma} \label{lem:toric_curve_classes}
	Let $Y$ be a $d$-dimensional smooth projective toric variety with cocharacter lattice $M$.
	Let $D$ be the toric boundary, and $\{D_i\}_{i\in I_D}$ the set of irreducible components of $D$.
	Let $\bbZ^{I_D}\to M$ send each basis element $e_i$ to the first lattice point on the ray corresponding to $D_i$.
	Let $N_1(Y)$ denote the group of numerical equivalence classes of 1-cycles.
	Let $N_1(Y)\to\bbZ^{I_D}$ be given by intersection numbers with every $D_i$.
	Then we have a short exact sequence
	\[0\to N_1(Y)\to\bbZ^{I_D}\to M\to 0.\]
\end{lemma}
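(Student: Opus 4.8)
The plan is to derive this by dualizing the standard divisor exact sequence of the smooth complete toric variety $Y$. Write $M^\vee\coloneqq\Hom(M,\bbZ)$ for the character lattice, and let $u_i\in M$ be the primitive generator of the ray of $\Sigma_\rt$ associated to $D_i$, so that the map $\bbZ^{I_D}\to M$ in the statement is $e_i\mapsto u_i$. First I would recall the exact sequence
\[0\longrightarrow M^\vee\xrightarrow{\ \phi\ }\bbZ^{I_D}\xrightarrow{\ \psi\ }\Pic(Y)\longrightarrow 0,\]
where $\psi(e_i)=[D_i]$ and $\phi(m)=\big(\braket{m,u_i}\big)_{i\in I_D}$ records $\mathrm{div}(\chi^m)=\sum_i\braket{m,u_i}D_i$ in terms of the torus-invariant prime divisors. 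Here $\psi$ is surjective because the $D_i$ generate $\operatorname{Cl}(Y)=\Pic(Y)$ (the torus has trivial class group, and $Y$ is smooth); the middle is exact because a torus-invariant principal divisor on a complete toric variety is exactly $\mathrm{div}(\chi^m)$ for a unique $m\in M^\vee$ (using $\cO_Y(Y)^\times=k^\times$); and $\phi$ is injective because the rays of $\Sigma_\rt$ span $M_\bbR$ (the fan being complete).

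Next I would dualize. Since $Y$ is smooth and projective, $\Pic(Y)$ is a finitely generated free abelian group, hence $\mathrm{Ext}^1_\bbZ(\Pic(Y),\bbZ)=0$, and applying $\Hom_\bbZ(-,\bbZ)$ leaves the sequence exact:
\[0\longrightarrow\Hom(\Pic(Y),\bbZ)\xrightarrow{\ \psi^\vee\ }\bbZ^{I_D}\xrightarrow{\ \phi^\vee\ }\Hom(M^\vee,\bbZ)=M\longrightarrow 0.\]
Evaluating on the standard basis, $\braket{\phi^\vee(e_i),m}=\braket{e_i,\phi(m)}=\braket{m,u_i}$ for all $m\in M^\vee$, so $\phi^\vee(e_i)=u_i$, matching the second map in the statement; similarly $\psi^\vee(\lambda)=\big(\lambda([D_i])\big)_{i\in I_D}$.

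Finally I would identify $\Hom(\Pic(Y),\bbZ)$ with $N_1(Y)$ compatibly with $\psi^\vee$. The intersection pairing defines a homomorphism $N_1(Y)\to\Hom(\Pic(Y),\bbZ)$, $\beta\mapsto(L\mapsto\beta\cdot L)$, injective by definition of numerical equivalence, and for a smooth complete toric variety it is an isomorphism: the intersection pairing $N_1(Y)\times\Pic(Y)\to\bbZ$ is unimodular, which one reads off from the wall relations $u_+ + u_- + \sum_k a_k u_{i_k}=0$ attached to the codimension-one cones of $\Sigma_\rt$ — the corresponding torus-invariant curves span $N_1(Y)$ and map onto the (saturated) subgroup $\ker(\phi^\vee)\subset\bbZ^{I_D}$. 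Under this identification $\psi^\vee$ becomes $\beta\mapsto(\beta\cdot D_i)_{i\in I_D}$, so the dual sequence is exactly $0\to N_1(Y)\to\bbZ^{I_D}\to M\to 0$, as claimed.

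The only step with genuine content is the last one — the integral (unimodular) identification $N_1(Y)\cong\Hom(\Pic(Y),\bbZ)$; everything else is formal. An equivalent route, if one prefers not to quote it, is to check exactness of $0\to N_1(Y)\to\bbZ^{I_D}\to M\to 0$ directly: $\bbZ^{I_D}\to M$ is onto because the rays of any maximal cone of the smooth fan form a $\bbZ$-basis of $M$; $N_1(Y)\to\bbZ^{I_D}$ is injective because the $[D_i]$ generate $\Pic(Y)$; the composite vanishes since $\sum_i(\beta\cdot D_i)u_i$ pairs to zero with every character (as $\mathrm{div}(\chi^m)$ is numerically trivial); and exactness in the middle reduces to showing the image of $N_1(Y)$ is all of the saturated lattice $\ker(\bbZ^{I_D}\to M)$, which is again the crux and is supplied by the wall-relation description of $N_1(Y)$.
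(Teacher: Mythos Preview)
Your argument is correct. The paper itself does not give a proof but simply refers to \cite[\S 3.4]{Fulton_Introduction_to_toric_varieties}; your dualization of the standard divisor sequence $0\to M^\vee\to\bbZ^{I_D}\to\Pic(Y)\to 0$, together with the identification $N_1(Y)\simeq\Hom(\Pic(Y),\bbZ)$ via the wall relations, is precisely the content of that reference, so you have supplied what the paper only cites.
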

\begin{proof}
	We refer to \cite[\S 3.4]{Fulton_Introduction_to_toric_varieties}.
		\end{proof}

\begin{proposition} \label{prop:toric_case}
	Notations as in Sections \ref{sec:smoothness} and \ref{sec:tropical}, we consider the case where $(Y,D)$ is toric, i.e.\ $\pi\colon(Y,D)\to (Y_\rt,D_\rt)$ is an isomorphism.
	The following hold:
	\begin{enumerate}
		\item \label{prop:toric_case:equation} We have
		\[\cM^\sm(U,\bP,\beta)=\cM^\sd(U,\bP,\beta)=\cM(U,\bP,\beta).\]
		\item \label{prop:toric_case:spine} We have
		\[\SP^\tr(M_\bbR,\bP)=\SP(M_\bbR,\bP).\]
		\item \label{prop:toric_case:P} The following are equivalent for any tuple $\bP=(P_j)_{j\in J}\in M^J$:
		\begin{enumerate}
			\item $\sum P_j=0\in M$.
			\item $\SP(M_\bbR,\bP)\neq\emptyset$.
			\item There exists $\beta\in\NE(Y)$ compatible with $\bP$ as in \cref{rem:compatible_curve_class}.
			(If such $\beta$ exists, it is unique.)
		\end{enumerate}
		\item \label{prop:toric_case:diagram} Assume the equivalent conditions in (\ref{prop:toric_case:P}) hold and let $\beta$ be compatible with $\bP$.
		For any $i\in I$, consider the commutative diagram
		\[\begin{tikzcd}
		\cM(U,\bP,\beta) \rar{\Phi_i} \dar{\Sp} & \ocM_{0,n}\times U \dar{\rho} \\
		\SP(M_\bbR,\bP) \rar{\Phi^t_i} & \oM_{0,n}^\trop\times M_\bbR.
		\end{tikzcd}\]
		The following hold:
		\begin{enumerate}
			\item \label{prop:toric_case:Phi} The top horizontal map $\Phi_i$ is an open immersion; and it is an isomorphism over $\cM_{0,n}\times U$.
			\item \label{prop:toric_case:Psi} The bottom horizontal map $\Phi^t_i$ is an open immersion; and it is an isomorphism over $M^\trop_{0,n}\times M_\bbR$.
		\end{enumerate}
	\end{enumerate}
\end{proposition}
\begin{proof}
	Since $\pi\colon(Y,D)\to (Y_\rt,D_\rt)$ is an isomorphism, we have $E=\emptyset$, $Y^\idt=\emptyset$ and $W=Y$, notation as in \cref{sec:log_CY}.
	Moreover, the logarithmic tangent bundle $T_Y(-\log D)$ is trivial.
	Therefore, all the conditions in \cref{nota:Msm} are empty conditions.
	So the first equality in (\ref{prop:toric_case:equation}) holds.
	
	Since there is no non-constant invertible functions on $\bbA^1_k$, for any map $f\colon\bbP^1_k\to Y$ whose image is not contained in $D$, the preimage $f\inv(D)$ contains at least 2 points.
	Therefore, every irreducible component of the domain of any stable map in $\cM(U,\bP,\beta)$ must contain at least three special points.
	This implies the second equality in (\ref{prop:toric_case:equation}).
	
	Note $W=Y=Y_\rt$, so $E_\rt=\emptyset$ and $E_\rt^\trop=\emptyset$ as in \cref{nota:Et}, hence $\Wall_A=\emptyset$ for any $A\in\bbN$ by \cref{const:walls_by_induction}.
	This implies (\ref{prop:toric_case:spine}).
	(\ref{prop:toric_case:P}) follows from \cref{lem:toric_curve_classes} and the balancing condition.
	
	Now let us prove (\ref{prop:toric_case:Phi}).
	By (\ref{prop:toric_case:equation}) and \cref{lem:Msm_smooth}(\ref{lem:Msm_smooth:etale}), the top horizontal map $\Phi_i$ is étale.
	So it suffices to prove that it is a bijection on $k$-rational points over $\cM_{0,n}\times U$ after passing to an algebraic closure of the base field $k$.
	
	Note that for any $k$-variety $X$, we have \[\Hom(X,T_M)\simeq\Hom_{\mathrm{Group}}(M,\cO(X)^*).\]
	So given any $k$-point $x\in X$, we have a canonical isomorphism \[\Hom(X,T_M)\simeq\Hom_{\mathrm{Group}}(M,\cO(X)^*/k^*)\times T_M(k),\]
	where the second factor is given by evaluation of the map $X\to T_M$ at $x\in X$.
	Now let $C=\bbP^1_k\setminus\{p_1,\dots,p_n\}$ with $p_i$ pairwise distinct.
	We have a canonical isomorphism
	\[O(C)^*/k^*\xrightarrow{\ \sim\ }\Set{(z_1,\dots,z_n)\in\bbZ^n | \sum z_i=0}\]
	given by valuation at the punctures.
	Thus $\Hom(M,\cO^*(C)/k^*)$ is canonically identified with the set of tuples $\bP=(P_j)_{j\in J}$ satisfying $\sum P_j=0$.
	The bijection, and thus Statement (\ref{prop:toric_case:Phi}), follow.
	
	Since $\Wall_A=\emptyset$, all spines in $\SP(M_\bbR,\bP)$ are everywhere balanced.
	The balancing condition implies that $\Phi^t_i$ is set-theoretically injective, and it is a bijection over $M^\trop_{0,n}\times M_\bbR$.
		So Statement (\ref{prop:toric_case:Psi}) follows from \cref{prop:rigidity_spine}.
\end{proof}

\section{Curve classes} \label{sec:curve_classes}

In this section we study the classes of analytic curves (with boundaries) mapping into our log Calabi-Yau variety.
We prove the positivity of curve classes, see \cref{prop:interior_divisor}.
This will lead to \cref{prop:structure_disk_interior_divisor}, which is the key to extending the mirror family to larger toric partial compactifications than $\Spec R=\TV(\Nef(Y))$, the toric variety associated to the nef cone.
Such extensions will be used in the proof of non-degeneracy in \cref{sec:non-degeneracy}, and is analogous to Viterbo restriction in symplectic cohomology (see \cite{Viterbo_Functors_and_computations}).
The second part of this section gives a tropical formula for computing curves classes, see \cref{prop:curve_class_formula}.

We follow the setting of \cref{sec:log_CY}, and assume $k$ has nontrivial discrete valuation.
Let $\kc$ denote the ring of integers of $k$ and $\tk$ the residue field.
Recall that $Y$ is constant over $k$, i.e.\ it is isomorphic to the pullback of some $Y_0$ over $k_0$, where $k_0$ has trivial valuation.
Let $Y_{\kc}$ be the base change from $Y_0$ to $\Spec\kc$, and $\hY_{\kc}$ the formal completion along the special fiber.

\begin{definition}[from {\cite[Definition 5.10]{Yu_Enumeration_of_holomorphic_cylinders_I}}] \label{def:curve_class}
	Given a compact quasi-smooth strictly \kanal curve\footnote{In the terminology of classical rigid analytic geometry, $C$ is a quasi-compact smooth rigid $k$-analytic curve.} $C$ and a morphism $f\colon C\to Y^\an$, up to passing to a finite base field extension, we can choose a strictly semistable\footnote{When $k$ has discrete valuation, a semistable formal scheme over $\kc$ is always assumed to be regular.} formal model $\fC$ of $C$ over $\kc$ such that $f\colon C\to Y^\an$ extends to a morphism $\ff\colon\fC\to\hY_{\kc}$.
	Let $\fC_s^\pr$ denote the union of proper irreducible components of the special fiber $\fC_s$ of $\fC$.
	We define the \emph{class} of the map $f$ to be $[f]\coloneqq\ff_{s*}[\fC_s^\pr]\in\NE(Y)$.
	Since two different choices of the model $\fC$ can always be dominated by another model, we see that the class is well-defined.
\end{definition}

\begin{lemma} \label{lem:curve_class_from_model}
	Let $C$ be a compact quasi-smooth strictly \kanal curve and $f\colon C\to Y^\an$ a map with image not contained in $D^\an$.
	Let $\fC$ be a strictly semistable model of $C$ such that the map $f\colon C\to Y^\an$ extends to $\ff\colon\fC\to\hY_{\kc}$ and $(\fC,\ff\inv(\hD_{\kc}))$ is a formal strictly semistable pair.
	Let $\Gamma\coloneqq\oSigma(\fC,\ff\inv(\hD_{\kc}))$ be the associated extended skeleton and $h\colon\Gamma\to\oSigma_{(Y,D)}$ the piecewise linear map induced by $\ff$ (see \cite{Gubler_Skeletons_and_tropicalizations}).
	Assume the following:
	\begin{enumerate}
		\item $h(\Gamma)\cap\Sigma_{(Y,D)}^{(d-2)}=\emptyset$, where $\Sigma_{(Y,D)}^{(d-2)}\subset\Sigma_{(Y,D)}$ denotes the union of $(d-2)$-dimensional cells, and $d$ is the dimension of $Y$.
		\item $V\coloneqq h\inv\big(\Sigma_{(Y,D)}^{(d-1)}\big)\subset\Gamma$ is a finite set containing only 2-valent vertices of $\Gamma$.
	\end{enumerate}
	Let $v$ be a vertex in $V$, and $e$ an edge incident to $v$.
	Let $\sigma$ be the codimension one cone of $\Sigma_{(Y,D)}$ containing $h(v)$.
	Let $\omega_\sigma$ be a primitive integer volume form on $\sigma$, $w_{(v,e)}$ the derivative of $h$ at $v$ along $e$, and $d_v$ the lattice length of $\omega_\sigma\wedge w_{(v,e)}$.
	Let $\fC_s^v$ be the irreducible component of $\fC_s$ corresponding to $v$.
	Let $Z_v$ be the closed stratum of $D$ corresponding to $\sigma$.
	Then the order of $[Z_v]$ in $\ff_{s*}[\fC_s^v]$ is equal to $d_v$.
	In particular, $d_v$ does not depend on the choice of $e$.
	Summing over all $v\in V$, we obtain 
	\begin{equation} \label{eq:curve_class}
	[f]=\ff_{s*}[\fC_s^\pr]=\sum_{v\in V}\ff_{s*}[\fC_s^v]=\sum_{v\in V}d_v[Z_v]\in\NE(Y).
	\end{equation}
\end{lemma}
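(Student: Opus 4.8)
The plan is to reduce the identity \eqref{eq:curve_class} to a one-dimensional computation in the formal neighborhood of the generic point of each $Z_v$. First I would isolate which irreducible components of $\fC_s^{\pr}$ contribute to $\ff_{s*}[\fC_s^{\pr}]$. For a component $\fC_s^w$ corresponding to a vertex $w$ of $\Gamma$, the image $\ff_s(\fC_s^w)$ is the Zariski closure of the center on $Y_{\kc}$ of the point $f(w)\in Y^\an$, and this center is governed by the retraction $h(w)$: by hypothesis (1) of the lemma, $h(w)$ lies in the relative interior of a cone of $\Sigma_{(Y,D)}$ of dimension $d-1$ or $d$. If the dimension is $d$, that center is a closed point and $\ff_s$ contracts $\fC_s^w$, so its contribution vanishes. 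If the dimension is $d-1$, then $w\in V$, the center is the generic point of $Z_w$, and $\ff_s|_{\fC_s^w}$ is a finite surjection onto the complete curve $\oZ_w$ (so $\fC_s^w$ is proper) of some degree $\deg_w\ge1$, whence $\ff_{s*}[\fC_s^w]=\deg_w[Z_w]$. Summing, $[f]=\sum_{w\in V}\deg_w[Z_w]$, and it remains to show that $\deg_w$ equals the lattice length of $\omega_\sigma\wedge w_{(w,e)}$ for each incident edge $e$. I would also record that for $w\in V$ the component $\fC_s^w$ carries no marked point of $C$: a marked point on $\fC_s^w$ would produce a leg of $\Gamma$ at $w$, contradicting the $2$-valence from hypothesis (2). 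Hence $\fC_s^w$ meets the rest of $\fC_s$ in exactly two nodes, those of the two edges of $\Gamma$ at $w$.

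Next I would carry out the degree computation. Fix $w\in V$, an incident edge $e$, and its node $p_e\in\fC_s^w$. Since $V$ is finite, no edge at $w$ runs inside $\Sigma_{(Y,D)}^{(d-1)}$, so $h$ leaves $\sigma$ through $e$, necessarily into a $d$-dimensional cone $\tau\supset\sigma$; writing $\tau=\langle\sigma,\langle D_{i_d}\rangle\rangle$ we get $Z_w\not\subset D_{i_d}$, and $q\coloneqq\ff_s(p_e)$ is the $0$-stratum $\oZ_w\cap D_{i_d}$. Choosing snc local coordinates near $q$ — equations $x_1,\dots,x_{d-1}$ of $D_{i_1},\dots,D_{i_{d-1}}$ (which cut out $Z_w$) together with an equation $x_d$ of $D_{i_d}$ — gives a basis of $M$ in which $\langle\sigma\rangle\cap M=\bigoplus_{\ell<d}\bbZ\langle D_{i_\ell}\rangle$, so the lattice length of $\omega_\sigma\wedge w_{(w,e)}$ is exactly the absolute value of the $\langle D_{i_d}\rangle$-component of $w_{(w,e)}$, i.e.\ the slope of $\val(\ff^*x_d)$ along $e$. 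By the slope formula on the strictly semistable model $\fC$ (in the vein of \cite{Gubler_Skeletons_and_tropicalizations,Baker_Nonarchimedean_geometry}) this slope equals $\ord_{p_e}(\ff^*x_d|_{\fC_s^w})$, which makes sense because $\ff^*x_d$ is a unit at the generic point of $\fC_s^w$ (as $Z_w\not\subset D_{i_d}$). Since $x_d$ restricts to a uniformizer of $\oZ_w$ at $q$ (snc transversality), $\ord_{p_e}(\ff^*x_d|_{\fC_s^w})$ is the ramification index of $\ff_s|_{\fC_s^w}$ at $p_e$. Finally $p_e$ is the only point of $\fC_s^w$ lying over $q$: every preimage of a boundary point is a node or a marked point (a non-nodal smooth preimage would, by a Newton-polygon estimate on a formal disk, force a marked point), $\fC_s^w$ has no marked points, and the other node of $\fC_s^w$ lies over a different $0$-stratum — the two edges at $w$ point to distinct $0$-strata of $\oZ_w$, which follows from the balancing properties of the tropicalization at $w$ together with $2$-valence. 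Hence $\deg_w=\sum_{x\mapsto q}(\text{ramification index})$ collapses to the single term at $p_e$, giving $\deg_w=\ord_{p_e}(\ff^*x_d|_{\fC_s^w})=$ the lattice length of $\omega_\sigma\wedge w_{(w,e)}$; this also shows that length is independent of $e$, and summing over $w\in V$ yields \eqref{eq:curve_class}.

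The main obstacle is the local step, and within it the two non-archimedean inputs used loosely above: (i) the precise form of the slope (Poincaré–Lelong) formula on a strictly semistable pair, identifying tropical derivatives $w_{(w,e)}$ with orders of vanishing along $\fC_s^w$ of pulled-back local equations of $D$ — here one must keep straight that the equations of the divisors through $Z_w$ pull back to functions vanishing on all of $\fC_s^w$, hence feed only the $\langle\sigma\rangle$-part of $w_{(w,e)}$, while $x_d$, generically a unit on $\fC_s^w$, carries the transverse slope; and (ii) the combinatorial claim that over the $0$-stratum $q=\ff_s(p_e)$ there is exactly one point of $\fC_s^w$, which is where one really needs the balancing of the tropicalization at a vertex mapping off the interior of $\Sk(U)$. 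If $C$ is allowed to be non-proper, one also needs the easy remark that the finitely many components over $V$ are automatically proper (mapping finitely onto complete strata), so passing to $\fC_s^{\pr}$ changes nothing. The remaining ingredients — the contraction of the off-$V$ components, the reduction $[f]=\sum_w\deg_w[Z_w]$, and the snc coordinate bookkeeping — are routine.
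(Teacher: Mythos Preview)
Your route is genuinely different from the paper's. The paper works \emph{locally at a single node}: after an admissible blowup of $\fC$ so that $h(e)$ lies in the interior of a maximal cone $\sigma'$, it takes standard snc coordinates on $\fC$ near the node $p$ and on $Y$ near the corresponding $0$-stratum $y$, writes $f$ in these coordinates, and reads off the multiplicity from the exponents (using that invertible functions on an annulus are monomial up to unit). This needs nothing about $\fC_s^v$ globally --- no properness, no balancing, no fiber-counting.

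Your approach instead computes the global degree of $\ff_s|_{\fC_s^v}\colon\fC_s^v\to\oZ_v$ as a sum of ramification indices over one fiber. That can be made to work, but there is a real gap in your argument: the assertion that $\fC_s^v$ carries no marked point is not justified. A leg at $v$ contributes to the valence, so ``$2$-valent'' allows one bounded edge plus one leg; in that case a horizontal section of $\ff^{-1}(\hD_{\kc})$ does meet $\fC_s^v$, and your ``unique preimage of $q$ is the node $p_e$'' claim fails (the marked point, not the node, may lie over $q$). Your balancing argument also needs adjustment here, since the slope formula at $v$ then has a source term from the marked divisor. One can repair this by treating the leg case separately, or --- more simply, and exactly as the paper does --- by first performing an admissible blowup that subdivides $e$ near $v$ so that one always has a genuine bounded edge and a node to work at. With that reduction in place, your local ramification computation and the paper's annulus computation are essentially two phrasings of the same thing.
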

\begin{proof}
	Let $v'$ be the other endpoint of $e$.
	Up to an admissible blowup of $\fC$, we can assume that $h(e)$ lies in the interior of a $d$-dimensional cell $\sigma'$.
	Then $\sigma$ is a face of $\sigma'$.
	Let $p$ be the node of $\fC_s$ corresponding to $e$, and $y$ the 0-stratum of $D^\an$ corresponding to $\sigma'$.
	Recall that $\fC$ is strictly semistable and $D\subset Y$ is snc.
	So we can choose standard normal crossing coordinates $x_1,x_2$ of $\fC$ around $p$, and $y_1,\dots,y_d$ of $Y^\an$ around $y$.
	Assume that $x_1$ restricts to a coordinate on $\fC_s^v$, and that $y_1$ restricts to a coordinate on $Z_v$.
	Expressing $f$ in these coordinates, we can write $y_1$ as a Laurent series in $x_1$.
	By \cite[Lemma 9.7.1.1]{Bosch_Non-Archimedean_analysis},
		the series has a dominant power, which is equal to both $d_v$ and the order of $[Z_v]$ in $\ff_{s*}[\fC_s^v]$ (by 9.7.1.1(ii) and (iii) respectively in loc.\ cit.).
	Summing over all $v\in V$, we obtain \eqref{eq:curve_class}.
\end{proof}

\begin{lemma} \label{lem:interior_divisor}
	Let $Z$ be a compact strictly $k$-analytic space, $C$  a compact quasi-smooth strictly \kanal curve, and $f\colon C \to Z$ a morphism with a formal model $\mathfrak f\colon\fC\to\fZ$ where $\fC$ is strictly semistable.
	Let $\mathfrak F$ be a Cartier divisor on $\fZ$ with generic fiber $F$ and $\cO(\fF)$ the associated line bundle.
	Let $\fC_s^\pr$ (resp.\ $\fC_s^\npr$) denote the union of proper (resp.\ non-proper) components of the central fiber $\fC_s$.
	Assume that $\ff_s(\fC_s^{\npr})$ is disjoint from the support of $\mathfrak F$, and that $f(C)$ is not contained in the support of $F$, so that $f\inv(F) \subset C$ is a Cartier divisor.
	We have 
	\[ \deg \big(\ff_{s*}[\fC_s^{\pr}] \cdot \cO(\mathfrak F)|_{\fZ_s}\big) =\deg (f^{-1}(F)).\]
\end{lemma}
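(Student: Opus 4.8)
The plan is to pull the Cartier divisor $\mathfrak F$ back along $\mathfrak f$ and reduce the claimed identity to an intersection computation on the regular $2$-dimensional formal $\kc$-scheme $\fC$. Since $f^*(F)=f^{-1}(F)$ is a Cartier divisor on $C$, no irreducible component of $C$ — equivalently of $\fC$ — is carried into $\supp\mathfrak F$, so $\mathfrak G\coloneqq\mathfrak f^*\mathfrak F$ is a well-defined Cartier divisor on $\fC$, with associated line bundle $\cO_{\fC}(\mathfrak G)=\mathfrak f^*\cO(\mathfrak F)\eqqcolon L$ and generic fibre $\mathfrak G_\eta=f^{-1}(F)$. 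Because $\fC_s^{\pr}$ is proper over $\tk$, the restricted map $\mathfrak f_s|_{\fC_s^{\pr}}\colon\fC_s^{\pr}\to\fZ_s$ is proper, and, using $L|_{\fC_s}=\mathfrak f_s^*\big(\cO(\mathfrak F)|_{\fZ_s}\big)$, the projection formula gives
\[
\deg\big(\mathfrak f_{s*}[\fC_s^{\pr}]\cdot\cO(\mathfrak F)|_{\fZ_s}\big)=\deg_{\fC_s^{\pr}}\!\big(L|_{\fC_s^{\pr}}\big)=[\fC_s^{\pr}]\cdot\mathfrak G,
\]
the last term being an intersection number of Cartier divisors with a proper curve on $\fC$. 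So it suffices to prove $[\fC_s^{\pr}]\cdot\mathfrak G=\deg(f^{-1}(F))$.

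Next I would exploit the hypothesis that $\mathfrak f_s(\fC_s^{\npr})$ misses $\supp\mathfrak F$: this says precisely that $\supp\mathfrak G$ is disjoint from $\fC_s^{\npr}$, hence also from every node of $\fC_s$ lying on a non-proper component. Decompose $\mathfrak G=\mathfrak G^{\hor}+\mathfrak G^{\ver}$, where $\mathfrak G^{\ver}=\sum_v a_v[\fC_s^v]$ collects the components of $\fC_s$ occurring in $\mathfrak G$ (all of them \emph{proper}, by the previous sentence) and $\mathfrak G^{\hor}$ contains no component of $\fC_s$; since horizontal and vertical cycles cannot cancel, both $\supp\mathfrak G^{\hor}$ and $\supp\mathfrak G^{\ver}$ are still disjoint from $\fC_s^{\npr}$. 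For the horizontal part, $\mathfrak G^{\hor}$ is $\kc$-flat and its special fibre, being disjoint from the non-proper locus, is $0$-dimensional; hence $\mathfrak G^{\hor}$ is finite flat over the DVR $\kc$, so $\deg(\mathfrak G^{\hor}\cap\fC_s)=\deg(\mathfrak G^{\hor}_\eta)$, and since $\mathfrak G^{\hor}$ contains no component of $\fC_s$ one gets $[\fC_s^{\pr}]\cdot\mathfrak G^{\hor}=\deg(\mathfrak G^{\hor}\cap\fC_s^{\pr})=\deg(\mathfrak G^{\hor}\cap\fC_s)=\deg(\mathfrak G^{\hor}_\eta)=\deg(\mathfrak G_\eta)=\deg(f^{-1}(F))$, the vertical part contributing nothing to the generic fibre.

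The remaining point, $[\fC_s^{\pr}]\cdot\mathfrak G^{\ver}=0$, is the step I expect to be the real (if modest) obstacle, exactly because $\fC$ need not be proper over $\kc$, so the usual "fibre meets vertical in $0$" is not directly available. The input is that $\fC$ is strictly semistable, so $\fC_s=\Div_{\fC}(\varpi)$ is reduced and principal; thus $\cO_{\fC}(\fC_s)\cong\cO_{\fC}$ and $[\fC_s]\cdot[\fC_s^v]=0$ for every component $v$, whence $[\fC_s]\cdot\mathfrak G^{\ver}=0$ and $[\fC_s^{\pr}]\cdot\mathfrak G^{\ver}=-[\fC_s^{\npr}]\cdot\mathfrak G^{\ver}=-\sum_v a_v\,[\fC_s^{\npr}]\cdot[\fC_s^v]$ (sum over the proper $v$ with $a_v\neq0$). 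For each such $v$, either $\fC_s^v$ is disjoint from $\fC_s^{\npr}$ and the term vanishes, or there is a node $p\in\fC_s^v\cap\fC_s^{\npr}$; in the latter case $p\in\fC_s^{\npr}$ forces $p\notin\supp\mathfrak G$, while near $p$ we have $\mathfrak G=\mathfrak G^{\ver}=a_v[\fC_s^v]$ ($\mathfrak G^{\hor}$ avoids $p$, and the only other component of $\fC_s$ through $p$ is non-proper, hence with coefficient $0$), so a local generator of $\mathfrak G$ at $p$ is a unit times a power of a local equation of $\fC_s^v$ — a unit only if $a_v=0$. Either way the term is $0$, so $[\fC_s^{\pr}]\cdot\mathfrak G^{\ver}=0$, and combining the three displays finishes the proof. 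In the write-up I would also record the routine compatibilities used along the way — pullback, support and generic fibre commute as expected; $f^*\cO(F)\cong\cO_C(f^{-1}(F))$; and intersection theory of Cartier divisors against proper curves on the regular formal surface $\fC$ enjoys the projection formula and the usual numerical properties.
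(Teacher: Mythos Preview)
Your proof is correct and follows essentially the same strategy as the paper: pull back $\mathfrak F$ to a Cartier divisor $G$ on $\fC$, apply the projection formula, split $G$ into horizontal and vertical parts, and use that $\fC_s$ is principal to handle the vertical contribution. The one streamlining the paper makes is to pass immediately from $\fC_s^{\pr}\cdot G$ to $\fC_s\cdot G$ (legitimate because $\supp G\cap\fC_s^{\npr}=\emptyset$), after which $\fC_s\cdot G^{\ver}=0$ is a one-liner from $\cO(\fC_s)\cong\cO_{\fC}$; this bypasses your node-by-node analysis of $[\fC_s^{\npr}]\cdot G^{\ver}$, which is correct but unnecessary.
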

\begin{proof}
	Since $f(C)$ is not contained in $\supp F$, we have $G\coloneqq\mathfrak f\inv(\fF)$ as Cartier divisor on $\fC$.
		Decompose $G=G^\ver+G^\hor$ where $G^\ver$ is supported on $\fC_s$ and $G^\hor=\overline{G_\eta}$.
	Both $G^\ver$ and $G^\hor$ are Cartier as $\fC$ is regular.
	
	By assumption $\fC_s^\npr \cap \supp G = \emptyset$.
	So $\deg(\fC_s\cdot G^\ver)$ is well-defined.
	Since $\fC_s\subset\fC$ is a principle Cartier divisor, $\deg(\fC_s\cdot G^\ver)=0$.
	Thus
	\begin{multline*}
	\deg \big(\ff_{s*}[\fC_s^{\pr}] \cdot \cO(\mathfrak F)|_{\fZ_s}\big)=\deg (\fC_s^\pr\cdot G)=\deg (\fC_s\cdot G)=\deg\big(\fC_s\cdot (G^\ver+G^\hor)\big)\\
	=\deg(\fC_s\cdot G^\hor)=\deg(G^\hor|_C)=\deg(f^{-1}(F)),
	\end{multline*}
	where the first equality is by projection formula for varieties over the residue field $\tk$, and the second equality is because $\fC_s^\npr \cap \supp G = \emptyset$.
\end{proof}

\begin{proposition} \label{prop:interior_divisor}
	Let $C$ be a compact quasi-smooth strictly \kanal curve, $\partial C$ its boundary (see \cite[\S 2.5]{Berkovich_Spectral_theory}), and $f\colon C\to Y^\an$ a morphism.
	Let $F\subset Y$ be a Cartier divisor containing no essential boundary strata, i.e.\ those of $D^\ess$.
	Assume $(\tau\circ f)(\partial C)$ does not meet the codimension one skeleton of $\Sigma_\rt$, and $f(C)$ is not contained in the support of $F^{\an}$, so that $f^{-1}(F^{\an})$ is a Cartier divisor on $C$.
	Let $[f]$ be as in \cref{def:curve_class}.
	We have 
	\[[f]\cdot F  =\deg (f^{-1}(F^{\an})).\]
	If moreover $F$ is effective, then
	\[[f]\cdot F  \geq 0 \]
	with equality if and only if $f(C)$ is disjoint from $F^{\an}$. 
\end{proposition}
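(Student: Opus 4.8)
The plan is to derive the identity $[f]\cdot F=\deg\big(f^{-1}(F^\an)\big)$ from \cref{lem:interior_divisor}, applied with $\fY\coloneqq\hat Y_{\kc}$ the constant model of $Y^\an$ and with $\fF$ the Zariski closure of $F$ in $Y_{\kc}$ — a Cartier divisor, since $Y_{\kc}$ is regular — viewed as a horizontal Cartier divisor on $\hat Y_{\kc}$. It has generic fiber $F^\an$, and $\cO(\fF)|_{\fY_s}$ has numerical class equal to that of $F$ under the identification $\fY_s\cong Y$ coming from the constancy of $Y$. One of the two inputs of \cref{lem:interior_divisor}, namely $f(C)\not\subset\supp(F^\an)$, is part of the hypothesis; so the real task is to produce a strictly semistable formal model $\fC$ of $C$ together with a morphism $\ff\colon\fC\to\hat Y_{\kc}$ extending $f$ for which $\ff_s(\fC_s^{\npr})\cap\supp(\fF)=\emptyset$.

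The crux is a local analysis at the finite set $\partial C$. Fix $x\in\partial C$. By hypothesis $(\tau\circ f)(x)$ does not meet the codimension-one skeleton of $\Sigma_\rt$, so it lies in the relative interior of a $d$-dimensional cone $\sigma$ of $\Sigma_\rt$, which corresponds to a torus fixed point $y_\sigma$ of $Y_\rt$. By \cref{lem:toric_model}, $y_\sigma$ lies in the isomorphism locus of $\pi$ and corresponds under $\pi^{-1}$ to a $0$-dimensional stratum $p_\sigma$ of $D^{\ess}$; since $\pi^\an$ restricts to an isomorphism, compatibly with reductions, over the tube of that isomorphism locus, the reduction $\red(f(x))$ in the model $\hat Y_{\kc}$ is exactly $p_\sigma$, a closed point of $\fY_s$. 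As $F$ contains no essential boundary stratum, $p_\sigma\notin\supp(F)$, so the open tube $\red^{-1}(p_\sigma)\subset Y^\an$ is disjoint from $\red^{-1}(\supp\fF)$, and $W_x\coloneqq f^{-1}\big(\red^{-1}(p_\sigma)\big)$ is an open neighborhood of $x$ in $C$ with $\red(f(W_x))=\{p_\sigma\}$.

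Next, after a finite extension of $k$, I would choose a strictly semistable formal model $\fC$ of $C$ carrying a morphism $\ff\colon\fC\to\hat Y_{\kc}$ with $\ff_\eta=f$ (extend $f$ to the constant model by blowing up its indeterminacy, then apply semistable reduction), and refined enough that $\red_{\fC}^{-1}(\fC_s^{\npr})$, a neighborhood of $\partial C$ in $C$, is contained in $\bigcup_{x\in\partial C}W_x$. From $\red_{\hat Y_{\kc}}\circ f=\ff_s\circ\red_{\fC}$ and the surjectivity of $\red_{\fC}$ one gets $\ff_s(\fC_s^{\npr})=\red_{\hat Y_{\kc}}\big(f(\red_{\fC}^{-1}(\fC_s^{\npr}))\big)\subseteq\{p_\sigma\}_{x\in\partial C}$, a finite set of $0$-dimensional strata of $D^{\ess}$, hence disjoint from $\supp(\fF)$. \cref{lem:interior_divisor} now gives
\[\deg\big(\ff_{s*}[\fC_s^{\pr}]\cdot\cO(\fF)|_{\fY_s}\big)=\deg\big(f^{-1}(F^\an)\big),\]
and the left-hand side equals $[f]\cdot F$ by \cref{def:curve_class} together with the identification of numerical classes in the constant family $Y_{\kc}/\kc$. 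This proves the first identity. The remaining assertion follows at once: if $F$ is effective then $f^{-1}(F^\an)$ is an effective Cartier divisor on the compact curve $C$, so its degree is a non-negative integer, equal to $0$ precisely when $f^{-1}(F^\an)=\emptyset$, i.e.\ when $f(C)$ is disjoint from $F^\an$. The main obstacle is the second paragraph — extracting from the condition on $(\tau\circ f)(\partial C)$ that $f$ reduces to $0$-dimensional essential strata near $\partial C$, and choosing the strictly semistable model so that all of its non-proper components lie over those neighborhoods; the rest is routine bookkeeping with closures of divisors in $Y_{\kc}$.
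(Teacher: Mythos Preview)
Your proposal is correct and follows essentially the same approach as the paper's proof. The paper's argument is extremely terse---it simply asserts that the tropicalization hypothesis forces $\ff_s(\fC_s^{\npr})$ into the union of zero strata of $D_{\tk}$, hence away from $\supp\fF$, and then invokes \cref{lem:interior_divisor}---and you have correctly unpacked the reasoning behind that assertion (via the correspondence between interior points of top-dimensional cones of $\Sigma_\rt$ and $0$-strata of $D^\ess$, using \cref{lem:toric_model}).
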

\begin{proof}
	Consider the constant model $(\hY_{\kc},\hD_{\kc})$ of $(Y,D)$ over $\kc$.
	The assumption on the tropicalization implies that ${\mathfrak f}(\fC_s^{\npr})$ is contained in the union of 0-strata of the special fiber $D_{\tk}$, and thus disjoint from the support of $\mathfrak F$.
	Now we apply \cref{lem:interior_divisor}.
\end{proof}

\begin{lemma} \label{lem:N1_blowup}
	Let $p\colon Y\to Y'$ be a birational morphism, and $E$ the set of exceptional divisors.
	We have a split short exact sequence
	\[\begin{tikzcd}
	0\rar & \bbZ^E\rar & N^1(Y)\arrow[r,shift left, "p_*"]  & N^1(Y') \arrow[l, shift left, "p^*"] \rar & 0.
	\end{tikzcd}\]
	Hence, by duality, the map
	\[N_1(Y)\longrightarrow N_1(Y')\oplus\bbZ^E,\]
	given by pushforward $p_*\colon N_1(Y)\to N_1(Y')$ and intersection with every exceptional divisor, is an isomorphism.
\end{lemma}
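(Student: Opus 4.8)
The plan is to deduce the statement from its Picard-group analogue, passing through the Néron--Severi groups and then dualizing. As elsewhere in the paper I would assume $Y$ and $Y'$ smooth projective; set $d\coloneqq\dim Y=\dim Y'$, let $Z\coloneqq p(\mathrm{Exc}(p))\subset Y'$ be the image of the exceptional locus, and let $E_1,\dots,E_r$ be the $p$-exceptional prime divisors, i.e.\ the prime divisors $E\subset Y$ with $\mathrm{codim}\,p(E)\ge 2$. First I would record the basic geometry: since $p$ is a birational morphism of normal varieties it is an isomorphism at every codimension-one point of $Y'$, so $Z$ has codimension $\ge 2$ and $p$ restricts to an isomorphism $Y\setminus\mathrm{Exc}(p)\xrightarrow{\ \sim\ }Y'\setminus Z$. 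As $Y'$ is smooth and $Z$ has codimension $\ge 2$, restriction induces $\Pic(Y')\xrightarrow{\ \sim\ }\Pic(Y'\setminus Z)$, and the excision sequence for $\mathrm{Exc}(p)\subset Y$ becomes a right-exact sequence $\bigoplus_i\bbZ\,[E_i]\to\Pic(Y)\xrightarrow{p_*}\Pic(Y')\to 0$, the second arrow being pushforward of divisor classes. It is split by $p^*$: for a prime divisor $D'$ on $Y'$ the total transform is $p^*D'=\widetilde{D'}+\sum a_iE_i$ with $\widetilde{D'}$ the strict transform, so $p_*p^*D'=D'$; and the first arrow is injective, because a rational function $g$ with $\mathrm{div}(g)=\sum a_iE_i$ is a regular invertible function on $Y\setminus\mathrm{Exc}(p)\cong Y'\setminus Z$, hence $g$ and $g^{-1}$ extend to regular functions on the proper normal variety $Y'$, so $g$ is constant and all $a_i=0$. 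This gives a split short exact sequence $0\to\bigoplus_i\bbZ[E_i]\to\Pic(Y)\xrightarrow{p_*}\Pic(Y')\to 0$.

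Next I would quotient by numerical equivalence. The one genuine input here is the negativity lemma for birational morphisms: a $p$-exceptional divisor meeting every $p$-contracted curve in degree zero is numerically trivial (indeed zero). Granting this, if $\sum a_iE_i\equiv 0$ in $N^1(Y)$ then it has degree zero on all $p$-contracted curves, hence all $a_i=0$, so the $[E_i]$ remain $\bbZ$-linearly independent in $N^1(Y)$; and if $L\equiv 0$ on $Y$, then writing $L=p^*L'+\sum a_iE_i$ by the Picard-level splitting and pairing with $p$-contracted curves forces $\sum a_iE_i=0$, after which pairing $p^*L'$ with curves on $Y$ dominating arbitrary curves on $Y'$ (projection formula) gives $L'\equiv 0$, so $p_*$ descends to $N^1$. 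Combining with $p_*p^*=\id$ and a short diagram chase off the Step-1 splitting, I obtain a split short exact sequence $0\to\bigoplus_i\bbZ[E_i]\to N^1(Y)\xrightarrow{p_*}N^1(Y')\to 0$ with section $p^*$, which is exactly the first assertion of the lemma.

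Finally, since $Y$ and $Y'$ are smooth projective, the intersection pairing identifies $N_1(-)$ with the $\bbZ$-dual of $N^1(-)$. Dualizing the sequence just obtained yields a split exact sequence $0\to N_1(Y')\to N_1(Y)\to\bbZ^E\to 0$; by the projection formula $p^*D'\cdot C=D'\cdot p_*C$ the retraction $N_1(Y)\to N_1(Y')$ dual to $p^*$ is precisely $p_*$, while the map $N_1(Y)\to\bbZ^E$ dual to $(a_i)\mapsto\sum a_i[E_i]$ is $C\mapsto(E_i\cdot C)_i$. Hence $\bigl(p_*,(\,\cdot\,E_i)_i\bigr)\colon N_1(Y)\to N_1(Y')\oplus\bbZ^E$ is an isomorphism.

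I expect the passage from $\Pic$ to $N^1$ to be the main obstacle: everything else is excision plus the projection formula, but ensuring that the exceptional classes stay independent (and that $p_*$ is well defined) after killing numerical equivalence genuinely requires the negativity lemma, equivalently the nondegeneracy of the intersection form between $p$-exceptional divisors and $p$-contracted curves. In the concrete situations of the paper one could alternatively run an induction on a sequence of blowups along smooth centers, but citing the negativity lemma is cleaner and covers the general $p$.
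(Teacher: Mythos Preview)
The paper's own proof is a one-line citation to Fulton's \emph{Intersection Theory}, Example~1.8.1; you give a self-contained argument, and your first two paragraphs (the Picard-group level and the passage to $N^1$) are correct and nicely organized.

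The gap is in the final paragraph. The intersection pairing does \emph{not} identify $N_1(X)$ with the $\bbZ$-dual of $N^1(X)$ in general: it is nondegenerate on each side but need not be unimodular. For a surface one has $N^1=N_1$ and the pairing is the intersection form on the N\'eron--Severi lattice; already a K3 surface of Picard rank one with $H^2=2d>1$ gives $N_1\to(N^1)^*$ equal to multiplication by $2d$. Your dualization therefore shows only that
\[
\phi=(p_*,(\,\cdot\,E_i)_i)\colon N_1(Y)\longrightarrow N_1(Y')\oplus\bbZ^E
\]
is injective (it factors through the injective $N_1(Y)\hookrightarrow N^1(Y)^*$), and that source and target have the same rank. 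Over $\bbZ$ this does not yet give surjectivity.

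To finish, you must argue separately that $\phi$ is onto, equivalently that $\ker(p_*)\to\bbZ^E$ is surjective. For a single blowup along a smooth center this is immediate (a line $\ell$ in a fiber of the exceptional divisor has $E\cdot\ell=-1$), and the general birational morphism between smooth projective varieties can be handled by dominating $p$ by a sequence of smooth blowups and inducting. This is essentially the alternative you mention in your last sentence; here it is not optional but the missing ingredient.
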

\begin{proof}
	We refer to \cite[Example 1.8.1]{Fulton_Intersection_theory}.
\end{proof}

\begin{definition-lemma} \label{def:projection_formula_rational}
	Let $f \colon X \dasharrow Y$ be a rational map between normal projective varieties, with $X$ (resp.\ $Y$) smooth.
	Let $p\colon Z\to X$ be a regular proper birational map such that $q\coloneqq f\circ p$ is regular.
	Then $q_* \circ p^*\colon N_*(X) \to N_*(Y)$ (resp.\ $p_* \circ q^*\colon N_*(Y) \to N_*(X)$)	is independent of the resolution $Z \to X$.
	We define it to be $f_*$ (resp.\ $f^*$).
	
	Suppose $X,Y$ are both smooth, and that we have proper $r\colon X \to W$, $s\colon Y \to W$ such that $r=s \circ f$.
	\[\begin{tikzcd}[column sep=small]
	& Z \ar[ld, "p", swap] \ar[rd, "q"] \\
	X \ar[rr, dashed, "f"] \ar[rd, "r", swap] & & Y \ar[ld, "s"] \\
	& W
	\end{tikzcd}\]
	Then a version of the projection formula holds: for any $\alpha\in N_*(X)$, $\beta\in N_*(Y)$, we have
	\[
	r_*(\alpha \cdot f^*\beta) = s_*(f_*\alpha \cdot \beta).
	\]
\end{definition-lemma}
\begin{proof}
	The independence of the resolution and the projection formula both follow from the usual projection formula (for proper regular maps).
\end{proof} 

\begin{lemma} \label{lem:curve_class_blowup}
	In the context of \cref{prop:interior_divisor}, let $p\colon Y\dasharrow Y'$ be a birational map which is an open embedding outside $\supp F$.
	Assume $f(C) \cap \supp F^\an = \emptyset$.
	Consider the composition $f\colon C \to (Y\setminus \supp F)^\an \to Y'^\an$.
	We have $p_*[f] = [f']$ and $p^*[f'] = [f]$.
\end{lemma}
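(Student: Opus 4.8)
The plan is to realise $p$ by a common resolution, carry the curve class of $f$ through it, and use the positivity of \cref{prop:interior_divisor} to kill the exceptional contributions. First I would choose a proper birational morphism $g\colon\tY\to Y$ with $\tY$ smooth projective and constant over $k$, such that $\tilde p\coloneqq p\circ g\colon\tY\to Y'$ is a morphism; since the indeterminacy locus of $p$ lies in $\supp F$, one may keep $g$ an isomorphism over $Y\setminus\supp F$, so that both $g$ and $\tilde p$ are open embeddings over $Y\setminus\supp F$ and every $g$-exceptional and every $\tilde p$-exceptional divisor lies in $g\inv(\supp F)$. Because $f(C)\cap\supp F^\an=\emptyset$, the map $f$ factors through $(Y\setminus\supp F)^\an$ and lifts uniquely to $\tf\colon C\to\tY^\an$ with $g^\an\circ\tf=f$ and $\tilde p^\an\circ\tf=f'$; moreover $\tf(C)\subset\tY^\an\setminus g\inv(\supp F)^\an$, so $\tf(C)$ is disjoint from every $g$-exceptional and every $\tilde p$-exceptional divisor.

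Next I would compute $[f]$, $[f']$ and $[\tf]$ from one formal model. After a finite extension of $k$, choose a strictly semistable formal model $\fC$ of $C$ over $\kc$ together with a morphism $\tilde\ff\colon\fC\to\widehat{\tY}_{\kc}$ extending $\tf$, as in \cref{def:curve_class}. Post-composing $\tilde\ff$ with the morphisms of constant formal models $\hat g\colon\widehat{\tY}_{\kc}\to\hY_{\kc}$ and $\hat{\tilde p}\colon\widehat{\tY}_{\kc}\to\widehat{Y'}_{\kc}$ yields formal models of $f$ and of $f'$; since $Y$, $Y'$ and $\tY$ are constant over $k$, pushforward of $1$-cycles on the special fibres agrees with pushforward on the generic fibres, so \cref{def:curve_class} gives
\[[f]=g_*[\tf], \qquad [f']=\tilde p_*[\tf].\]

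The step I expect to be the main obstacle is the claim that $[\tf]\cdot E=0$ for every $g$-exceptional and every $\tilde p$-exceptional divisor $E$. Such an $E$ is effective, contained in $g\inv(\supp F)$, and disjoint from $\tf(C)$, hence $\tf\inv(E^\an)=0$; the hypotheses of \cref{prop:interior_divisor} for $f$ transfer to $\tf$ on $\tY$ (the toric model $Y_\rt$ of $T_M$, the retraction $\tau$, and the codimension-one skeleton of $\Sigma_\rt$ are common to $Y$ and $\tY$, and $\tau\circ\tf=\tau\circ f$ wherever defined, so $(\tau\circ\tf)(\partial C)$ still avoids that skeleton; and $E$, being supported over $\supp F$, contains no essential boundary stratum), so the equality case of \cref{prop:interior_divisor} gives $[\tf]\cdot E=\deg\tf\inv(E^\an)=0$. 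Granting this, \cref{lem:N1_blowup} applied to $g$ identifies $N_1(\tY)$ with $N_1(Y)\oplus\bbZ^{\mathrm{Exc}(g)}$ via $\gamma\mapsto\big(g_*\gamma,(\gamma\cdot E)_E\big)$, whose $N_1(Y)$-summand is the image of the section $g^*$; as $[\tf]$ has vanishing $\bbZ^{\mathrm{Exc}(g)}$-component and $g_*[\tf]=[f]$, we get $[\tf]=g^*[f]$, and likewise $[\tf]=\tilde p^*[f']$ using \cref{lem:N1_blowup} for $\tilde p$. Finally, with $p_*=\tilde p_*\circ g^*$ and $p^*=g_*\circ\tilde p^*$ as in \cref{rem:pushward_rational},
\[p_*[f]=\tilde p_*g^*[f]=\tilde p_*[\tf]=[f'], \qquad p^*[f']=g_*\tilde p^*[f']=g_*[\tf]=[f],\]
which completes the proof. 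The one point requiring genuine care is the vanishing $[\tf]\cdot E=0$: although $\tf(C)$ avoids $E^\an$, this does not by itself control the reduction of $C$ along the closure of $E$ in the constant model, and it is precisely the tropicalization hypothesis of \cref{prop:interior_divisor} (via the behaviour of the non-proper components of $\fC_s$) that forces the intersection number to vanish.
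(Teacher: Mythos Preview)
Your proof is correct and follows essentially the same approach as the paper: both use \cref{prop:interior_divisor} to get vanishing intersection with exceptional divisors and then \cref{lem:N1_blowup} to identify the class with the pullback. The paper phrases the argument more tersely by first reducing to the case where $p$ is regular (so that the exceptional divisors live in $Y$ itself and \cref{prop:interior_divisor} applies directly), whereas you carry out the roof construction explicitly; your care in transferring the tropicalization hypothesis to $\tY$ is exactly what the paper's one-line reduction leaves implicit.
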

\begin{proof}
	By projection formula and passing to a resolution it is enough to prove the lemma when $p$ is regular.
	We have $p_*[f] = [f']$ by definition of the class. 
	By \cref{prop:interior_divisor}, we have $[f] \cdot E =0$ for all $p$-exceptional divisor $E$.
	Therefore, it follows from \cref{lem:N1_blowup} that $[f] = p^* p_*[f] = p^*[f']$.
\end{proof}

For the toric variety $(Y_\rt, D_\rt)$, formula \eqref{eq:curve_class} can be encoded via an $N_1(Y_\rt)_\bbR$-valued piecewise-linear function on $\Sigma_\rt$.

\begin{definition} \label{def:curve_class_via_varphi}
	By \cite[Lemma 1.14]{Gross_Mirror_symmetry_for_log_Calabi-Yau_surfaces_I_v1}, there is a $\Sigma_\rt$-piecewise-linear function $\varphi\colon M_\bbR \to N_1(Y_\rt,\bbR)$ whose kink (aka bending parameter) along each codimension one cone  $\sigma \subset \Sigma_\rt$ is the class of the corresponding closed 1-stratum.
	Given any \Zaffine map $l\colon[-\infty,+\infty]\supset [a,b]\to\oM_\bbR$ with $l((a,b))\subset M_\bbR$, let
	\[\odelta_l \coloneqq d(\varphi \circ l)_b - d(\varphi \circ l)_a\in\NE(Y_\rt).\]
	If $a=-\infty$, then $d(\varphi \circ l)_a$ means $\lim_{a'\to-\infty}d(\varphi \circ l)_{a'}$; similarly when $b=+\infty$.
	Let $\delta_l\coloneqq\pi^*\odelta_l\in\NE(Y)$, called the curve class associated to $l$.
	Given any $n$-pointed tree $[\Gamma,(v_j),h]$ in $M_\bbR$ as in \cref{def:pointedtree}, we define $\delta_h$ to be the sum of $\delta_l$ over every domain of affineness $l$ of $h$, called the curve class associated to $h$.
\end{definition}

\begin{remark} \label{rem:curve_class_parallel_perturbation}
	Assume that $l(a)$ lies in an open cell $\sigma_a^\circ\subset\Sigma_\rt$ such that the linear span of $\sigma_a$ contains the derivative $dl_a$, and that the same holds for $l(b)$ with open cell $\sigma_b^\circ$.
	Let $\Star(\sigma_a^\circ)$ and $\Star(\sigma_b^\circ)$ be respectively the open stars of $\sigma_a^\circ$ and $\sigma_b^\circ$.
	Then $\odelta_l$ is invariant under any parallel perturbation as long as $l(a)$ stays in $\Star(\sigma_a^\circ)$ and $l(b)$ stays in $\Star(\sigma_b^\circ)$.
\end{remark}

\begin{proposition} \label{prop:curve_class_formula}
	Let $C$ be a compact quasi-smooth strictly \kanal curve and $f\colon C\to Y^\an$ a map with image contained in $W^\an$, but not in $D^\an$ ($W$ as in \cref{lem:toric_model}).
	Assume $\pi\circ f\colon C\to Y_\rt^\an$ tropicalizes to $h\colon\Gamma\to\oM_\bbR$ satisfying \cref{lem:curve_class_from_model} Conditions (1-2).
	Then we have $[f]=\delta_h\in\NE(Y)$.
\end{proposition}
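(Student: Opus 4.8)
The plan is to pass to the toric model $Y_\rt$ and then compare two explicit formulas. Write $\of\coloneqq\pi\circ f\colon C\to Y_\rt^\an$. Since $f(C)\subseteq W^\an$ and $\pi|_W$ is an open immersion into $Y_\rt$, the image $\of(C)$ lies in $(\pi(W))^\an$; moreover $f(C)\cap U^\an\subseteq(W\cap U)^\an=T_M^\an$ by \cref{lem:toric_model}(\ref{lem:toric_model:intersection}), and $\pi$ restricts to the identity on $T_M$, so $\of(C)\not\subseteq D_\rt^\an$. Because $\delta_h=\pi^*\odelta_h$ with $\odelta_h\coloneqq\sum_l\odelta_l\in\NE(Y_\rt)$ (the sum over domains of affineness of $h$, as in \cref{def:curve_class_via_varphi}), it suffices to establish the two identities
\[ [\of]=\odelta_h\in\NE(Y_\rt)\qquad\text{and}\qquad [f]=\pi^*[\of]\in\NE(Y). \]

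For the first identity one applies \cref{lem:curve_class_from_model} to $\of$: after refining, pick a strictly semistable model of $C$ realizing $h$ as its induced piecewise linear map; conditions $(1)$--$(2)$ there are exactly the standing assumptions on $h$, so $[\of]=\sum_{v\in V}d_v[Z_v]$, where $V=h^{-1}(\Sigma_\rt^{(d-1)})$ consists of $2$-valent vertices, $Z_v$ is the $1$-stratum of $D_\rt$ attached to the codimension-one cone $\sigma_v\ni h(v)$, and $d_v$ is the lattice length of $\omega_{\sigma_v}\wedge w_{(v,e)}$. On the other side, on each domain of affineness (edge) $e$ of $h$ the composite $\varphi\circ h|_e$ is itself piecewise linear, kinking precisely where $h$ crosses a codimension-one cone of $\Sigma_\rt$; by condition $(1)$ these crossings are transverse and by condition $(2)$ they are exactly the vertices $v\in V$. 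The defining property of $\varphi$ is that its kink along $\sigma_v$ is $[Z_v]$, so the jump of the derivative of $\varphi\circ h$ at $v$ is $\langle u_{\sigma_v},w_{(v,e)}\rangle\,[Z_v]$, with $u_{\sigma_v}$ the primitive integral functional vanishing on $\sigma_v$. Telescoping $\odelta_h=\sum_e\odelta_{h|_e}$ collapses to $\sum_{v\in V}\langle u_{\sigma_v},w_{(v,e)}\rangle[Z_v]$, and since $\bigl|\langle u_{\sigma_v},w_{(v,e)}\rangle\bigr|$ is the lattice length of $\omega_{\sigma_v}\wedge w_{(v,e)}$, i.e.\ $d_v$, and $\odelta_h$ is effective, we get $\odelta_h=\sum_{v\in V}d_v[Z_v]=[\of]$.

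For the second identity I would mimic the proof of \cref{lem:curve_class_blowup}. Choose a proper birational morphism $g\colon\tY\to Y$ resolving the indeterminacy of $\pi$, so $g_\rt\coloneqq\pi\circ g\colon\tY\to Y_\rt$ is a morphism; since $\pi$ is already defined on $Y\setminus Y^\idt\supseteq W$, take $g$ to be an isomorphism over $W$. Because $W$ contains the generic point of every stratum of $D^\ess$ (\cref{lem:toric_model}(\ref{lem:toric_model:W})), the centres blown up, lying in $Y^\idt\subseteq Y\setminus W$, meet each essential stratum in positive codimension; a discrepancy estimate then shows the exceptional divisors acquire non-negative coefficient in the divisor of $g^*\omega$, so one may arrange that no $g$-exceptional divisor is essential in $\tY$. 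The remaining $g_\rt$-exceptional divisors are the strict transforms of the $\pi$-contracted divisors of $Y$ --- namely $E=\overline{U\setminus T_M}$ (contracted by \cref{lem:toric_model}(\ref{lem:toric_model:fiber})) and the non-essential components of $D$ --- which are again non-essential in $\tY$. Since $g$ is an isomorphism over $W\supseteq f(C)$, lift $f$ to $\tilde f\colon C\to\tY^\an$ with $g\circ\tilde f=f$, $g_\rt\circ\tilde f=\of$; computing all three classes on one common semistable formal model gives $g_*[\tilde f]=[f]$ and $g_{\rt*}[\tilde f]=[\of]$. Every $g$-exceptional divisor lies over $Y\setminus W$, and every $g_\rt$-exceptional divisor avoids $g^{-1}(W)$ (because $g_\rt$ restricts there to the open immersion $W\hookrightarrow Y_\rt$), so each is disjoint from $\tilde f(C)\subseteq g^{-1}(W)^\an$; being non-essential, \cref{prop:interior_divisor} (its remaining hypothesis on the retraction of $\partial C$ following from conditions $(1)$--$(2)$ on $h$) yields $[\tilde f]\cdot E=0$ for all of them. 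By \cref{lem:N1_blowup} applied to $g_\rt$, the class $g_\rt^*[\of]$ is characterized by $g_{\rt*}g_\rt^*[\of]=[\of]$ together with vanishing against all $g_\rt$-exceptional divisors; $[\tilde f]$ has both properties, hence $[\tilde f]=g_\rt^*[\of]$, and therefore $[f]=g_*[\tilde f]=g_*g_\rt^*[\of]=\pi^*[\of]$ by \cref{rem:pushward_rational}. Combining the two identities gives $[f]=\pi^*\odelta_h=\delta_h$.

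The main obstacle is the second identity, and inside it the choice of resolution: one must guarantee that resolving $\pi$ introduces only non-essential exceptional divisors, which is exactly where the hypothesis that $Y\setminus Y^\idt$ (hence $W$) contains the generic point of every essential stratum is used, together with the discrepancy bound for iterated blowups of high-codimension centres. The first identity is a direct comparison of \cref{lem:curve_class_from_model} with the definition of $\varphi$, and the functoriality statements $g_*[\tilde f]=[f]$, $g_{\rt*}[\tilde f]=[\of]$ are routine once everything is computed on a single common formal model.
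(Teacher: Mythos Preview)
Your approach is correct and coincides with the paper's: the paper's proof is the single line ``This follows from Lemmas \ref{lem:curve_class_from_model} and \ref{lem:curve_class_blowup}'', and you have unpacked precisely these two ingredients---the first identity $[\of]=\odelta_h$ is \cref{lem:curve_class_from_model} together with the kink description of $\varphi$, while your second identity $[f]=\pi^*[\of]$ reproduces the proof of \cref{lem:curve_class_blowup} in this particular case. Your added care about arranging the $g$-exceptional divisors to be non-essential is a valid concern (implicit already in the reduction step of \cref{lem:curve_class_blowup}); note that once $(\tY,\tD)$ is snc, a non-essential divisor cannot contain an essential stratum by the normal-crossing condition, so your discrepancy remark is sufficient to feed into \cref{prop:interior_divisor}.
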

\begin{proof}
	This follows from Lemmas \ref{lem:curve_class_from_model} and \ref{lem:curve_class_blowup}.
	\end{proof}

\section{Skeletal curves} \label{sec:skeletal_curves}

Consider an analytic curve $C$ in our log Calabi-Yau variety $U^\an$.
If $\dim U\ge 2$, by dimensional reasons, $C$ can never meet the essential skeleton $\Sk(U)\subset U^\an$.
But we can let it happen if we allow $C$ to be defined over a big non-archimedean field extension $k\subset k'$.
The surprise is that as soon as some $k$-point of $C$ touches $\Sk(U)$, the whole skeleton of $C$ will lie inside $\Sk(U)$.
Such curves are called \emph{skeletal curves}.
They play an important role in our theory.
One advantage of skeletal curves is that they have canonical spines independent of any choice of retraction $U^\an\to\Sk(U)$, which will lead to the naive counts associated to spines in \cref{sec:naive_counts}.
Another advantage is that they are analytically generic, and will allow us to obtain deformation invariance across tropically non-generic situations, as in \cref{prop:toric_tail_cc_skeletal}, crucial for perturbing the structure disks (see \cref{rem:structure_constants_vary_point}), and for the wall-crossing homomorphism (see \cref{thm:wall-crossing_homomorphism}).

The main result of this section is \cref{thm:f_in_skeleton}, an equivalence of different characterizations of skeletal curves.
Lemmas \ref{lem:local_skeleton}-\ref{lem:explicit_weight} studies essential skeletons when the analytic space comes from a formal semistable pair (we refer to \cite{Gubler_Skeletons_and_tropicalizations} for the basics of such pairs).
\cref{prop:skeleton_of_product} studies product of skeletons, a main ingredient in the proof of \cref{thm:f_in_skeleton}.
Lemmas \ref{lem:source_of_skeletal_curve}-\ref{lem:skeletal_curve_connected_component} apply skeletal curve theory to the moduli spaces of the previous sections.
\cref{prop:skeleton_of_M0n} identifies the essential skeleton of $\cM_{0,n}$ with the moduli space of tropical curves.
Readers unfamiliar with non-archimedean geometry can skip the technical lemmas on first time reading.

Let $k$ be any non-archimedean field (not necessarily discretely valued), $\kc$ its ring of integers, and $\kcc\subset\kc$ the maximal ideal.
Let $X$ be a quasi-smooth \kanal space of pure dimension $d$.
For any positive integer $l$, let $K_X^{\otimes l}\coloneqq(\wedge^d\Omega_X)^{\otimes l}$, the pluricanonical bundle.
Let $\norm{\cdot}$ denote the geometric Kähler seminorm on $K_X^{\otimes l}$ constructed by Temkin (\cite[6.3.15]{Temkin_Metrization_of_differential_pluriforms}).
For any $\omega\in\Gamma(K_X^{\otimes l})$, we obtain an upper semicontinuous function\footnote{When $k$ has discrete valuation, up to a constant factor, $\norm{\omega}$ coincides with the weight function of Mustata and Nicaise (\cite{Mustata_Weight_functions}, see also Kontsevich-Soibelman \cite{Kontsevich_Homological_mirror_symmetry}) by \cite[Theorem 8.3.3]{Temkin_Metrization_of_differential_pluriforms}.} $\norm{\omega}\colon X\to\bbR_{\ge 0}$.

We denote by $\Sk(\omega)\subset X$ the maximum locus of $\norm{\omega}$ (possibly empty), and call it the \emph{skeleton} of $X$ associated to the pluricanonical form $\omega$.

\begin{lemma} \label{lem:local_skeleton}
	Let $\fS$ be the affine formal scheme over $\kc$ given by
	\[\Spf\big(\kc\braket{s^0_0,\dots,s^0_{m_0}}/(s^0_0\cdots s^0_{m'_0}-a_0)\times\dots\times\Spf\kc\braket{s^n_0,\dots,s^n_{m_n}}/(s^n_0\cdots s^n_{m'_n}-a_n)\big)\]
	for some $n\ge 0$, and $0\le m'_i\le m_i$, $a_i\in\kc\setminus 0$, for $i=0,\dots,n$.
	Let $\fU$ be an affine formal scheme and $\alpha\colon\fU\to\fS$ an étale map.
	Let
	\[h\coloneqq\prod_{i=0}^n\prod_{j=m'_i+1}^{m_i} s^i_j.\]
	Let $\fF\subset\fU$ be the Cartier divisor given by $h$, and $U\coloneqq\fU_\eta\setminus \fF_\eta$.
	Let
	\[e\coloneqq\bigg(\bigwedge_{i=0}^n\bigwedge_{j=1}^{m_i}\frac{d s^i_j}{s^i_j}\bigg)^{\otimes l}.\]
	Let $\psi\in\Gamma(\cO_{\fU_\eta})$, and $\omega\coloneqq\psi e\in\Gamma(K_U^{\otimes l})$.
	Let $M$ be the maximum of the norm $\norm{\omega}$ over $U$.
	Then for any point $u\in U$, the following are equivalent:
	\begin{enumerate}
		\item $u\in\Sk(\omega)$;
		\item $u\in \Sigma{(\fU,\fF)}\subset U$ and $\abs{\psi}$ attains a maximum (over $\fU_\eta$) at $u$ that is equal to $M$, where $\Sigma(\fU,\fF)$ denotes the skeleton associated to the product of formal strictly semistable pairs (see \cite{Gubler_Skeletons_and_tropicalizations}).
	\end{enumerate}
	Moreover, the two functions $\norm{\omega}$ and $\abs{\psi}$ are equal when restricted to the skeleton $\Sigma(\fU,\fF)\subset U$, and $\Sk(\omega)$ is a union of faces of $\Sigma(\fU,\fF)$.
\end{lemma}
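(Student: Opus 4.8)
The plan is to reduce everything to an explicit local computation of Temkin's Kähler seminorm on the model $\fS$, exploiting that $\alpha\colon\fU\to\fS$ is étale and that both $\Sk(\omega)$ and $\Sigma(\fU,\fF)$ are compatible with étale base change. First I would recall that on the standard product model $\fS$, the associated skeleton $\Sigma(\fS,\fF_0)$ (where $\fF_0$ is cut out by $h$) is canonically identified with a product of simplices, with coordinates given by the $\val(s^i_j)$ subject to the relations $\sum_{j=0}^{m'_i}\val(s^i_j)=\val(a_i)$ and $\val(s^i_j)\ge 0$; the retraction $\fU_\eta\to\Sigma(\fU,\fF)$ is the étale pullback of the toric-like retraction on $\fS$. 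On this skeleton the logarithmic form $e_0\coloneqq\bigl(\bigwedge_{i,j\ge 1}\frac{ds^i_j}{s^i_j}\bigr)^{\otimes l}$ has $\norm{e_0}\equiv 1$: this is the content of Temkin's computation for monomial forms on polystable models (one uses that the pullback of $e_0$ to the standard étale chart has semistable reduction and the log-form is a generator of the log-canonical sheaf, so its Kähler seminorm on skeletal points is $1$), and off the skeleton $\norm{e_0}$ is strictly smaller — indeed $\norm{e_0}(x)$ equals the value at $x$ of the "weight-type" function that is maximized exactly on $\Sigma(\fS,\fF_0)$.

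Granting that, the key step is the multiplicativity/compatibility $\norm{\omega}=\norm{\psi e}=\abs{\psi}\cdot\norm{e}$, valid because $\psi$ is an invertible-free section of the structure sheaf and the Kähler seminorm is multiplicative against sections of $\cO$ (this is a formal property of $\norm{\cdot}$ — the seminorm of a tensor is the product of seminorms, and $\norm{\psi}$ as a section of $\cO$ is just $\abs{\psi}$). So $\norm{\omega}(u)=\abs{\psi(u)}\cdot\norm{e}(u)$ for all $u\in U$. Now I would argue both directions. If $u\in\Sk(\omega)$, i.e. $\norm{\omega}(u)=M$, then since $\abs{\psi}\le\sup_{\fU_\eta}\abs{\psi}=:M_\psi$ and $\norm{e}\le 1$ everywhere with equality exactly on $\Sigma(\fU,\fF)$, we get $M=\norm{\omega}(u)\le M_\psi\cdot 1$, hence $M\le M_\psi$; conversely evaluating $\abs{\psi}$ along $\Sigma(\fU,\fF)$ (which is nonempty and on which $\norm{e}=1$) shows $M\ge M_\psi$, so $M=M_\psi$ and the inequality $\norm{\omega}(u)=M$ forces both $\abs{\psi(u)}=M_\psi$ and $\norm{e}(u)=1$, the latter giving $u\in\Sigma(\fU,\fF)$. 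The reverse implication is immediate from the same identity. The final two assertions then fall out: on $\Sigma(\fU,\fF)$ we have $\norm{e}=1$ so $\norm{\omega}=\abs{\psi}$ there; and $\Sk(\omega)$ is the locus in $\Sigma(\fU,\fF)$ where the piecewise-linear (in the log coordinates) function $\abs{\psi}$ attains its max, which — since $-\log\abs{\psi}$ restricted to the skeleton is concave piecewise-affine, being a min of affine functions coming from the Newton data of $\psi$ — is a union of faces of the polytopal complex $\Sigma(\fU,\fF)$.

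The step I expect to be the main obstacle is the precise normalization statement $\norm{e}\le 1$ with equality exactly on $\Sigma(\fU,\fF)$, together with the multiplicativity $\norm{\psi e}=\abs{\psi}\norm{e}$: these require genuinely invoking Temkin's construction \cite{Temkin_Metrization_of_differential_pluriforms} (the behaviour of the geometric Kähler seminorm on polystable formal models and its compatibility with étale maps and with multiplication by $\cO$-sections), rather than a purely combinatorial argument. The concavity of $\log\abs{\psi}$ along the skeleton — needed only for the very last sentence that $\Sk(\omega)$ is a union of \emph{faces} — is the other point to handle carefully: it follows from writing $\psi$ in the monomial coordinates on the étale chart and noting that $-\log\abs{\cdot}$ of each monomial is linear on the skeleton while $-\log\abs{\cdot}$ of a sum is the convex analogue, but one must be slightly careful because $\psi$ need not be a polynomial; passing to a finite-order monomial approximation of $\psi$ (with error beneath the maximum, which does not affect the maximum locus) reduces this to the polynomial case.
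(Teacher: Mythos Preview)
Your approach is essentially the one in the paper: establish $\norm{e}\le 1$ with equality exactly on $\Sigma(\fU,\fF)$ (via Temkin's results, reducing along the \'etale map $\alpha$ to the standard model $\fS$), use multiplicativity $\norm{\omega}=\abs{\psi}\cdot\norm{e}$, and conclude. The paper cites \cite[Lemma 8.2.2]{Temkin_Metrization_of_differential_pluriforms} for the identity case and then \cite[Theorem 6.3.11]{Temkin_Metrization_of_differential_pluriforms} (after noting the residue field extension $\cH(u)/\cH(\alpha_\eta(u))$ is unramified, hence universally spectral) to transport the norm along $\alpha$.

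One step you gloss over: to get $M\ge M_\psi$ you need $\sup_{\fU_\eta}\abs{\psi}$ to actually be attained on $\Sigma(\fU,\fF)$, not merely that some values of $\abs{\psi}$ live there. The paper handles this by observing that the supremum of $\abs{\psi}$ over the affinoid $\fU_\eta$ is attained on the Shilov boundary, which is contained in $\Sigma(\fU,\fF)$. Your sentence ``evaluating $\abs{\psi}$ along $\Sigma(\fU,\fF)$ \dots shows $M\ge M_\psi$'' implicitly assumes this; make it explicit.

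For the final ``union of faces'' claim, the paper simply invokes the argument in the last paragraph of the proof of \cite[Theorem 8.2.4]{Temkin_Metrization_of_differential_pluriforms}, rather than reproving it. Your sketch via piecewise-affine concavity of $-\log\abs{\psi}$ on the skeleton is in the right spirit (and is essentially what that reference does), but the reduction to polynomials via ``finite-order monomial approximation'' is unnecessary: for a convergent series $\psi=\sum c_\alpha s^\alpha$ on an affinoid, the restriction of $\abs{\psi}$ to the skeleton is exactly $\max_\alpha \abs{c_\alpha}\abs{s^\alpha}$ (this is the standard description of the Gauss norm on monomial points), so $-\log\abs{\psi}$ is already a minimum of affine functions without any approximation.
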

\begin{proof}
	First we claim that $\norm{e(u)}\le 1$ over $U$ and the equality holds if and only if $u\in \Sigma{(\fU,\fF)}$.
	By \cite[Lemma 8.2.2]{Temkin_Metrization_of_differential_pluriforms}, this holds when $\alpha$ is identity.
	In general, for every $u\in U$, the complete residue field extension $\cH(u)/\cH(\alpha_\eta(u))$ is unramified by \cite[Lemma 1.6]{Berkovich_Smooth_p-adic_analytic_spaces_are_locally_contractible}, hence it is universally spectral by \cite[Corollary 6.3.8]{Temkin_Metrization_of_differential_pluriforms}.
	Then it follows from \cite[Theorem 6.3.11]{Temkin_Metrization_of_differential_pluriforms} that $\norm{e(u)}=\norm{e(\alpha_\eta(u))}$ for all $u\in U$.
	This shows the claim.
	
	Since $\norm{\omega(u)}=\abs{\psi(u)}\cdot\norm{e(u)}$, we deduce that the two functions $\norm{\omega}$ and $\abs{\psi}$ are equal when restricted to the skeleton $\Sigma(\fU,\fF)\subset U$.
	Furthermore, note that the maximum of $\abs{\psi(u)}$ is attained in the Shilov boundary of $\fU_\eta$ which is contained in $\Sigma{(\fU,\fF)}$, so the claim above implies the equivalence in the lemma.
	
	Now for the last statement of the lemma, it suffices to prove that the maximum locus of $\abs{\psi}$ is a union of faces of $\Sigma(\fU,\fF)$.
	This follows from exactly the same argument in last paragraph of the proof of \cite[Theorem 8.2.4]{Temkin_Metrization_of_differential_pluriforms}.
\end{proof}

\begin{lemma} \label{lem:essential_skeleton_semistable_pair}
	Let $(\fX,\fH)$ be a formal strictly semistable pair and $X\coloneqq\fX_\eta\setminus\fH_\eta$.
	Let $\omega\in\Gamma(K_X^{\otimes l})$ with at worst simple poles along $\fH_\eta$.
	Then the essential skeleton $\Sk(w)$ is a union of faces of $\Sigma(\fX,\fH)$.
	Consequently, $\Sk(w)$ consists of only monomial points (i.e.\ points where the Abhyankar inequality is an equality); in particular $\Sk(\omega)$ is contained in any Zariski dense open subset of $X$.
\end{lemma}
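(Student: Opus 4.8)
The plan is to deduce the statement from the local computation in \cref{lem:local_skeleton} by a gluing argument. First I would note that, by the definition of a formal strictly semistable pair, $(\fX,\fH)$ is étale-locally isomorphic to a product of the standard models appearing in \cref{lem:local_skeleton}: around any point of $\fX$ we can find an affine open $\fU$ together with an étale map $\alpha\colon\fU\to\fS$ where $\fS$ has exactly the form displayed in \cref{lem:local_skeleton}, and the divisor $\fH|_\fU$ is pulled back from the toric boundary divisor $h$ on $\fS$. (This is the standard local description of strictly semistable pairs in \cite{Gubler_Skeletons_and_tropicalizations}.) Under such a chart the skeleton $\Sigma(\fX,\fH)$ restricts to $\Sigma(\fU,\fF)$, so it suffices to work chart by chart.

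Next, on such a chart I would write the pluricanonical form locally. Since $\omega\in\Gamma(K_X^{\otimes l})$ has at worst simple poles along $\fH_\eta$, in the coordinates $s^i_j$ of the local model the form $e=\big(\bigwedge_{i,j}\tfrac{ds^i_j}{s^i_j}\big)^{\otimes l}$ is a local generator of the line bundle $K_X^{\otimes l}(l\,\fH_\eta)$ (using that the $s^i_j$ with $j\le m'_i$ are units in a neighborhood, so contribute no pole), hence $\omega=\psi\, e$ for a section $\psi\in\Gamma(\cO_{\fU_\eta})$. I would then invoke \cref{lem:local_skeleton}: its last assertion says precisely that $\Sk(\omega)\cap U$ is a union of faces of $\Sigma(\fU,\fF)$, and that on $\Sigma(\fU,\fF)$ the function $\norm{\omega}$ agrees with $\abs{\psi}$. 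The only subtlety is that $\Sk(\omega)$ is defined by the \emph{global} maximum of $\norm{\omega}$ over all of $X$, not the maximum over the chart $U$; but the maximum over $X$ restricted to the chart is still attained only among faces where $\abs{\psi}$ is locally maximal, so the global skeleton restricted to the chart is a union of (closed) faces of $\Sigma(\fX,\fH)$. Gluing these local descriptions over a finite affine cover, $\Sk(\omega)$ is a union of faces of $\Sigma(\fX,\fH)$.

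For the consequences: every point of a face of $\Sigma(\fX,\fH)$ is a monomial (divisorial or, more generally, Abhyankar-equality) point of $X$ — this is one of the defining features of skeletons of semistable pairs, recorded in \cite{Gubler_Skeletons_and_tropicalizations} — so $\Sk(\omega)$ consists of monomial points. Finally, a monomial point of $X$ has residue field of transcendence degree $d$ over $\tk$ (plus the rational rank contribution), so it cannot lie in the complement of any nonempty Zariski open; equivalently, its image under $\iota\colon X\to\fX_\eta\setminus\fH_\eta$ is the generic point of a positive-dimensional stratum, which meets every dense open. Hence $\Sk(\omega)$ is contained in every Zariski dense open subset of $X$.

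The main obstacle I anticipate is purely bookkeeping rather than conceptual: making sure that the \emph{global} maximum condition behind $\Sk(\omega)$ is correctly matched with the \emph{local} maximum analysis of \cref{lem:local_skeleton}, and checking that the local gluings of "unions of faces" patch to a genuine union of faces of the global skeleton $\Sigma(\fX,\fH)$ (one must see that a face of $\Sigma(\fU,\fF)$ sitting in $\Sk(\omega)$ is a face of the global complex, using that $\Sigma(\fX,\fH)$ is glued from the local $\Sigma(\fU,\fF)$ compatibly). Everything else is a direct citation of \cref{lem:local_skeleton} and of standard properties of skeletons of strictly semistable formal pairs.
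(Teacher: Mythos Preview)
Your proposal is correct and follows essentially the same approach as the paper, which simply says ``Working Zariski locally on $\fX$, this follows from \cref{lem:local_skeleton}.'' You have usefully spelled out the local-to-global bookkeeping (in particular the global-vs-local maximum issue) that the paper leaves implicit.
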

\begin{proof}
	Working Zariski locally on $\fX$, this follows from \cref{lem:local_skeleton}.
	\end{proof}

\begin{remark}
	\cref{lem:essential_skeleton_semistable_pair} is an analog of \cite[Theorem 8.2.4]{Temkin_Metrization_of_differential_pluriforms} and \cite[Theorem 4.5.5]{Mustata_Weight_functions} for formal strictly semistable pairs.
\end{remark}

\begin{notation} \label{nota:closure_of_skeleton}
	In the setting of \cref{lem:essential_skeleton_semistable_pair}, we denote by $\oSk(\omega_X)$ the closure of $\Sk(\omega_X)$ in $\oX\coloneqq\fX_\eta$.
\end{notation}

\begin{lemma} \label{lem:explicit_weight}
	In the setting of \cref{lem:essential_skeleton_semistable_pair}, assume $k$ has discrete valuation and $\fX$ is regular.
	Let $\Iv$ be the set of irreducible components of $\fX_s$, and $\Ih$ the set of irreducible components of $\fH$.
	We have a natural embedding $\Sigma(\fX,\fH)\subset\bbR_{\ge 0}^{\Iv\sqcup\Ih}$.
	For $i\in\Iv\sqcup\Ih$, let $D_i$ denote the corresponding component of $\fX_s\cup\fH$, and $\ord_{D_i}(\omega)$ the order of zero of $\omega$ along $D_i$.
	Let $\Psi$ be the linear function on $\bbR^{\Iv\cup\Ih}$ such that the value of $\Psi$ at the unit vector in the $i$-th direction is $\ord_{D_i}(\omega)+1$.
	Assume the Zariski closure of the zeros of $\omega$ does not contain any strata of $\fX_s\cup\fH$.
	Then the essential skeleton $\Sk(\omega)$ is equal to the minimum locus of the function $\Psi|_{\Sigma(\fX,\fH)}$.
\end{lemma}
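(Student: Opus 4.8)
The plan is to reduce the statement to a computation of the weight function $-\log\norm{\omega}$ on the skeleton $\Sigma\coloneqq\Sigma(\fX,\fH)$, carried out in the standard toroidal chart of \cref{lem:local_skeleton}.

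First I would note that both $\Sk(\omega)$ and the minimum locus of $\Psi|_\Sigma$ are unions of closed faces of $\Sigma$: for the former this is \cref{lem:essential_skeleton_semistable_pair}, and for the latter it is immediate since $\Psi$ is linear and $\Sigma$ is a finite polyhedral complex, so the minimum of $\Psi$ over $\Sigma$ is attained along a subcomplex. Moreover, since $\Sk(\omega)\subset\Sigma$, the maximum of $\norm{\omega}$ over $X$ equals its maximum over $\Sigma$, and $\Sk(\omega)$ is exactly the locus where $\norm{\omega}|_\Sigma$ attains it. Hence the lemma follows once I show that $-\log\norm{\omega}|_\Sigma$ and $\Psi|_\Sigma$ differ by a constant. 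This is a Zariski-local question on $\fX$, so I may replace $(\fX,\fH)$ by the étale chart of \cref{lem:local_skeleton}, the étale localization being harmless by the unramified-base-change argument used in its proof.

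In this chart, write $\omega=\psi\cdot e$ with $e$ the standard logarithmic pluricanonical generator and $\psi\in\Gamma(\cO_{\fU_\eta})$; by \cref{lem:local_skeleton}, $\norm{\omega}=\abs{\psi}$ on $\Sigma$. The hypothesis that the Zariski closure of the zero locus of $\omega$, equivalently of $\psi$, contains no stratum of $\fX_s\cup\fH$ means that at the generic point of every stratum $\psi$ is a unit times the monomial $\prod_i z_i^{\ord_{D_i}(\psi)}$ in the local equations $z_i$ of the boundary components $D_i$; consequently, on each closed cell of $\Sigma$ — whose points are the monomial valuations $v$ with coordinates $r_i=v(z_i)$ — the function $v\mapsto v(\psi)$ is affine and equals $\sum_i\ord_{D_i}(\psi)\,r_i$. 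A direct computation of $e$ in these coordinates identifies $\ord_{D_i}(\psi)$ with $\ord_{D_i}(\omega)$ for $i\in\Iv$ and with $\ord_{D_i}(\omega)+1$ for $i\in\Ih$ — the point being that the standard logarithmic form is a local generator along the (reduced) vertical components but has a simple pole along the horizontal ones, which is what the ``$+1$'' records. Comparing with $\Psi=\sum_i(\ord_{D_i}(\omega)+1)\,r_i$, the difference $\Psi|_\Sigma-(-\log\norm{\omega}|_\Sigma)$ reduces to the linear function $\sum_{i\in\Iv}r_i=v(\pi)$, which is identically constant on $\Sigma$ because $\fX_s=\sum_{i\in\Iv}D_i$ is reduced. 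Thus $\Psi|_\Sigma$ and $-\log\norm{\omega}|_\Sigma$ have the same extremal locus, and $\Sk(\omega)$, the maximum locus of $\norm{\omega}$, is the minimum locus of $\Psi|_\Sigma$.

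The main obstacle is the affinity of $v\mapsto v(\psi)$ on each maximal cell: one must know that a single monomial of $\psi$ dominates along the entire cell, not merely at its generic point, and this is precisely what the hypothesis on the zero locus supplies — without it, $v\mapsto v(\psi)$ is in general only the minimum of finitely many affine functions (the weight function in the usual sense), and the asserted equality of loci breaks down. The remaining steps — the reduction to the toroidal chart, the identity $\norm{\omega}=\abs{\psi}$ from \cref{lem:local_skeleton}, and the bookkeeping of the order of $e$ along the components using the explicit shape of the relative logarithmic pluricanonical sheaf of a strictly semistable pair — are routine.
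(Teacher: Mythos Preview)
Your approach is correct and matches the paper's: work Zariski locally, use \cref{lem:local_skeleton} to identify $\norm{\omega}|_\Sigma$ with $|\psi|$, then compute $v\mapsto v(\psi)$ on each face as the linear function with slopes $\ord_{D_i}(\psi)$, the stratum hypothesis ensuring that the residual factor has norm $1$ on every open cell. The paper packages this last computation as a separate result, \cref{lem:divisor_tropicalization}(\ref{lem:divisor_tropicalization:equality}) (placed in \cref{sec:convexity} for expository reasons), and leaves the final bookkeeping comparing $\ord_{D_i}(\psi)$ with $\ord_{D_i}(\omega)+1$ implicit, whereas you spell it out via the order of the standard generator $e$.
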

\begin{proof}
	Working Zariski locally on $\fX$, this follows from Lemmas \ref{lem:local_skeleton} and \ref{lem:divisor_tropicalization}(\ref{lem:divisor_tropicalization:equality}).
	(In terms of exposition, \cref{lem:divisor_tropicalization} fits better in \cref{sec:convexity}.)
\end{proof}

\begin{remark}
	In \cref{lem:explicit_weight}, if $\fH=\emptyset$, then the condition on the zeros of $\omega$ is not necessary (see \cref{lem:divisor_tropicalization}(\ref{lem:divisor_tropicalization:inequality})).
\end{remark}

\begin{lemma} \label{lem:skeleton_etale_pullback}
	Let $f\colon Y\to X$ be a residually tame quasi-étale map of quasi-smooth \kanal spaces of pure dimension $d$.
	Let $\omega\in\Gamma(K_X^{\otimes l})$.
	We have
	\[\Sk(f^*\omega)=f\inv(\Sk(\omega))\subset Y.\]
\end{lemma}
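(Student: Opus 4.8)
The plan is to reduce the global statement to the pointwise compatibility of Temkin's geometric Kähler seminorm with $f$, namely
\[
\norm{f^*\omega}(y)=\norm{\omega}(f(y))\qquad\text{for every }y\in Y .
\]
Once this identity of functions on $Y$ is available, the lemma follows by comparing maximum loci. Indeed, it gives $f\inv(\Sk(\omega))\subseteq\Sk(f^*\omega)$ for free: if $f(y)\in\Sk(\omega)$ then $\norm{f^*\omega}(y)=\max_X\norm{\omega}\ge\norm{\omega}(f(y'))=\norm{f^*\omega}(y')$ for all $y'\in Y$, so $y$ lies in the maximum locus of $\norm{f^*\omega}$; this in turn forces $\max_Y\norm{f^*\omega}=\max_X\norm{\omega}$, whence also $\Sk(f^*\omega)\subseteq f\inv(\Sk(\omega))$. (The only subtlety is the degenerate case $\Sk(\omega)\cap f(Y)=\emptyset$; it does not occur in our applications, where $f$ is either surjective or an open immersion with Zariski dense image — and in the latter case $\Sk(\omega)\subset f(Y)$ by \cref{lem:essential_skeleton_semistable_pair}, so $\norm{\omega}$ and $\norm{f^*\omega}$ have the same maximal value.)

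The substance is therefore the pointwise identity, and here I would argue exactly as in the proof of \cref{lem:local_skeleton}. The assertion is local on $Y$, and a quasi-étale morphism factors, locally on the source, as a finite étale map followed by the embedding of an affinoid domain, so it suffices to treat these two cases; in both of them $K^{\otimes l}$ is free on a suitable chart. Writing $\omega=\psi\cdot e$ with $\psi$ a function and $e$ a local frame of $K^{\otimes l}$, one has $\norm{f^*\omega}(y)=\abs{\psi(f(y))}\cdot\norm{f^*e}(y)$, so everything comes down to $\norm{f^*e}(y)=\norm{e}(f(y))$. For this, the residual tameness of $f$ makes the complete residue field extension $\cH(y)/\cH(f(y))$ unramified (\cite[Lemma 1.6]{Berkovich_Smooth_p-adic_analytic_spaces_are_locally_contractible}) and tame, hence universally spectral by \cite[Corollary 6.3.8]{Temkin_Metrization_of_differential_pluriforms}; then \cite[Theorem 6.3.11]{Temkin_Metrization_of_differential_pluriforms} yields $\norm{f^*e}(y)=\norm{e}(f(y))$, as desired.

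The main obstacle is precisely this last step: establishing that a residually tame quasi-étale map leaves the Kähler seminorm invariant on frames, i.e.\ packaging the inputs of Temkin's metrization theory correctly. Everything else — the local factorization of quasi-étale maps, the reduction to frames, and the passage from the equality of the two functions to the equality of the two skeleta — is routine bookkeeping; if desired, one can moreover propagate the description of $\Sk(f^*\omega)$ as a union of faces by pulling back, via the local charts, the corresponding statement for $\Sk(\omega)$ from \cref{lem:local_skeleton}.
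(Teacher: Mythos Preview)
Your approach is essentially the paper's: both reduce to the pointwise identity $\norm{f^*\omega}=\norm{\omega}\circ f$, which the paper obtains in one line by citing \cite[Theorem 6.3.11]{Temkin_Metrization_of_differential_pluriforms} directly, without your local factorization or frame argument. You are in fact more careful than the paper in flagging the edge case $\Sk(\omega)\cap f(Y)=\emptyset$; the paper's two-sentence proof silently assumes the two maxima agree, which is fine in every application but is, as you note, not literally forced by the hypotheses.
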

\begin{proof}
	By \cite[Theorem 6.3.11]{Temkin_Metrization_of_differential_pluriforms}, we have $\norm{f^*\omega}=\norm{\omega}\circ f$.
	Therefore, the maximum locus of $\norm{f^*\omega}$ is equal to the preimage of the maximum locus of $\norm{\omega}$ by $f$.
\end{proof}

\begin{proposition} \label{prop:skeleton_of_product}
	Let $(\fX,\fF)$, $(\fY,\fG)$ be two formal strictly semistable pairs.
	Assume all the strata of $\fX_s\cup\fF$ and $\fY_s\cup\fG$ are geometrically connected.
	Let $X\coloneqq\fX_\eta\setminus\fF_\eta$, $Y\coloneqq\fY_\eta\setminus\fG_\eta$.
	Let $\omega_X\in\Gamma(K_X^{\otimes l})$, $\omega_Y\in\Gamma(K_Y^{\otimes l})$, having at worst simple poles along $\fF_\eta$ and $\fG_\eta$ respectively.
	Let $Z\coloneqq X\times Y$, and $p_X\colon Z\to X$, $p_Y\colon Z\to Y$ the projection maps.
	Let $\omega_Z\coloneqq p_X^*\omega_X\wedge p_Y^*\omega_Y$.
	The following hold:
	\begin{enumerate}
		\item \label{prop:skeleton_of_product:product} We have a commutative diagram
		\[\begin{tikzcd}
		\Sk(\omega_Z) \ar{rr}{\sim} \dar[hook] & & \Sk(\omega_X) \times \Sk(\omega_Y) \dar[hook]\\
		Z \rar{\sim} & X\times Y\rar & \abs{X}\times \abs{Y}
		\end{tikzcd}\]
		where the upper horizontal map is a piecewise-linear homeomorphism, and $\abs{X}$, $\abs{Y}$ denote respectively the underlying topological spaces of the \kanal spaces.
		\item \label{prop:skeleton_of_product:closure}
		Let $\oX\coloneqq\fX_\eta$, $\oY\coloneqq\fY_\eta$ and $\oZ\coloneqq(\fX\times\fY)_\eta$.
		Then the commutative diagram above extends to
		\[\begin{tikzcd}
		\oSk(\omega_Z) \ar{rr}{\sim} \dar[hook] & & \oSk(\omega_X) \times \oSk(\omega_Y) \dar[hook]\\
		\oZ \rar{\sim} & \oX\times \oY\rar & \abs{\oX}\times \abs{\oY}.
		\end{tikzcd}\]
		\item \label{prop:skeleton_of_product:fiber} For $x\in X$, let $Z_x\coloneqq p_X\inv(x)$, and $\omega_{Z_x}$ the restriction of $\omega_Z$ to $Z_x$.
		We have a commutative diagram
		\[\begin{tikzcd}
		\Sk(\omega_{Z_x}) \rar{\sim} \dar[hook] & \Sk(\omega_Y) \dar[hook]\\
		Z_x \rar & Y.
		\end{tikzcd}\]
		\item \label{prop:skeleton_of_product:point}
		Let $z\in Z$ be a point, and put $x\coloneqq p_X(z)$, $y\coloneqq p_Y(z)$.
		The following are equivalent:
		\begin{enumerate}
			\item $z\in\Sk(\omega_Z)$;
			\item $x\in\Sk(\omega_X)$ and $z\in\Sk(\omega_{Z_x})$.
		\end{enumerate}
	\end{enumerate}
\end{proposition}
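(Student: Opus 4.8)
The plan is to reduce the whole proposition to the local computation already carried out in \cref{lem:local_skeleton}, then globalize. Work Zariski-locally: choose affine opens $\fU_X\subset\fX$ and $\fU_Y\subset\fY$ with étale maps to standard semistable formal schemes $\fS_X,\fS_Y$ as in \cref{lem:local_skeleton}, with $\fF$ and $\fG$ cut out by products of the coordinates. Then $\fU_X\times\fU_Y\to\fS_X\times\fS_Y$ is étale and $\fS_X\times\fS_Y$ is again one of the multi-factor standard models of \cref{lem:local_skeleton}; in particular (by \cite{Gubler_Skeletons_and_tropicalizations}) one has $\Sigma(\fU_X\times\fU_Y)=\Sigma(\fU_X)\times\Sigma(\fU_Y)$. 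Since $\omega_X$ has at worst simple poles along $\fF_\eta$ we may write $\omega_X=\psi_X\,e_X$ with $\psi_X\in\Gamma(\cO_{\fU_{X,\eta}})$ and $e_X$ the standard logarithmic pluricanonical form, and likewise $\omega_Y=\psi_Y\,e_Y$. The wedge $p_X^*e_X\wedge p_Y^*e_Y$ is exactly the standard logarithmic form $e_Z$ of the product model, so $\omega_Z=\psi_Z\,e_Z$ with $\psi_Z=(\psi_X\circ p_X)(\psi_Y\circ p_Y)$.

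Now apply \cref{lem:local_skeleton} to $\omega_Z$. Setting $M_X\coloneqq\max_{\fU_{X,\eta}}|\psi_X|$ and $M_Y\coloneqq\max_{\fU_{Y,\eta}}|\psi_Y|$, one has $|\psi_Z|=|\psi_X\circ p_X|\cdot|\psi_Y\circ p_Y|\le M_XM_Y$, with equality attained over the product of the Shilov boundaries; hence $\max|\psi_Z|=M_XM_Y$. Therefore $\Sk(\omega_Z)$ is the locus of $(u_X,u_Y)\in\Sigma(\fU_X)\times\Sigma(\fU_Y)$ where $|\psi_X(u_X)|\cdot|\psi_Y(u_Y)|=M_XM_Y$; since $|\psi_X(u_X)|\le M_X$ and $|\psi_Y(u_Y)|\le M_Y$ separately, this forces $|\psi_X(u_X)|=M_X$ and $|\psi_Y(u_Y)|=M_Y$, i.e.\ $u_X\in\Sk(\omega_X)$ and $u_Y\in\Sk(\omega_Y)$. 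This proves $\Sk(\omega_Z)=\Sk(\omega_X)\times\Sk(\omega_Y)$ locally as a piecewise-linear subset of $\Sigma(\fU_X)\times\Sigma(\fU_Y)$, compatibly with the inclusions into the underlying topological spaces. Because $\Sk(\cdot)$ is the maximum locus of the intrinsic seminorm $\norm{\cdot}$, these local identifications are canonical and glue; the hypothesis that all strata of $\fX_s\cup\fF$ and of $\fY_s\cup\fG$ are geometrically connected is what guarantees that globally $\Sigma(\fX\times\fY)=\Sigma(\fX)\times\Sigma(\fY)$ (strata of the product are products of strata), so the gluing takes place over the correct polyhedral complex. This gives (1).

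For (2), pass to closures in $\oZ\cong\oX\times\oY$ (the identification of the generic fibres of the product): since closure in a product of topological spaces commutes with taking products of subsets, (1) yields $\oSk(\omega_Z)=\oSk(\omega_X)\times\oSk(\omega_Y)$, and by \cref{lem:essential_skeleton_semistable_pair} this is a union of closed faces of $\oSigma(\fX,\fF)\times\oSigma(\fY,\fG)$, so the piecewise-linear homeomorphism extends. For (3), the decomposition $\Omega_Z|_{Z_x}=p_X^*\Omega_X|_{Z_x}\oplus\Omega_{Z_x}$ together with the product shape of $\omega_Z$ shows that, under $Z_x\cong Y_{\cH(x)}$, the restricted form $\omega_{Z_x}$ is the base change $\omega_Y\otimes\cH(x)$ (up to the nonzero scalar $\omega_X(x)$); by the base-change invariance of Temkin's seminorm along the spectral extension $\cH(x)/k$ (\cite[Theorem 6.3.11]{Temkin_Metrization_of_differential_pluriforms}), the map $Z_x\to Y$ carries $\Sk(\omega_{Z_x})$ isomorphically onto $\Sk(\omega_Y)$. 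Finally (4) is the combination of (1) and (3): $z\in\Sk(\omega_Z)$ iff $p_X(z)\in\Sk(\omega_X)$ and $p_Y(z)\in\Sk(\omega_Y)$, and by (3) the second condition is equivalent to $z\in\Sk(\omega_{Z_x})$ once $x=p_X(z)\in\Sk(\omega_X)$.

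The computational heart is already packaged in \cref{lem:local_skeleton}, so the main obstacles are the two bookkeeping points: first, upgrading the local product identification $\Sigma(\fU_X\times\fU_Y)=\Sigma(\fU_X)\times\Sigma(\fU_Y)$ to a global one for $\Sigma(\fX\times\fY)$ — this is precisely where the geometric-connectedness hypothesis on strata is used — and second, pinning down the correct meaning of the restricted form $\omega_{Z_x}$ and the base-change behaviour of the Kähler seminorm needed for part (3).
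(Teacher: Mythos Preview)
Your treatment of (1) and (2) matches the paper's and is fine. The real gap is in your deduction of (4), and your argument for (3) is also loose.

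For (4): statement (1) says $\Sk(\omega_Z)$ maps \emph{homeomorphically onto} $\Sk(\omega_X)\times\Sk(\omega_Y)$ under $(p_X,p_Y)\colon Z\to|X|\times|Y|$; it does not say that $\Sk(\omega_Z)$ is the full preimage of that product. The map $(p_X,p_Y)$ is highly non-injective --- over a pair of Gauss points of two closed disks there are many points of the bidisk, only one of which lies in the product skeleton --- so from $p_X(z)\in\Sk(\omega_X)$ and $p_Y(z)\in\Sk(\omega_Y)$ you cannot conclude $z\in\Sk(\omega_Z)$. The same objection applies to your second step: (3) gives $\Sk(\omega_{Z_x})\xrightarrow{\sim}\Sk(\omega_Y)$, not that $\Sk(\omega_{Z_x})$ is the preimage of $\Sk(\omega_Y)$ in $Z_x$. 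The paper does not derive (4) from (1) and (3); it argues directly from \cref{lem:local_skeleton}. The missing ingredient is a \emph{fibrewise} description of the product skeleton: with $\fY'\coloneqq\fY\widehat\otimes\cH(x)^\circ$ one has $z\in\Sigma(\fZ,\fH)$ iff $x\in\Sigma(\fX,\fF)$ and $z\in\Sigma(\fY',\fG')\subset Z_x$. Combining this with \cref{lem:local_skeleton} applied to $Z$, to $X$, and to $Z_x$, together with $|\psi_Z(z)|=|\psi_X(x)|\cdot|\psi_Y(p_Y(z))|$, gives (a)$\Leftrightarrow$(b) directly.

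For (3): the paper does not invoke \cite[Theorem 6.3.11]{Temkin_Metrization_of_differential_pluriforms} for the base change $\cH(x)/k$ (that result is about residually tame quasi-étale maps, and there is no reason the extension $\cH(x)/k$ satisfies the needed hypothesis for arbitrary $x$). Instead it applies \cref{lem:local_skeleton} on the base-changed model $(\fY',\fG')$ and uses the identification $\Sigma(\fY',\fG')\simeq\Sigma(\fY,\fG)$, which again comes from geometric connectedness of strata. Also, ``the nonzero scalar $\omega_X(x)$'' is not well-posed: after choosing the frame $e_X$ the scalar is $\psi_X(x)$, which may vanish.
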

\begin{proof}
	Working Zariski locally, we can assume there exists an étale map
	\[\alpha\colon\fX\to\fS\coloneqq\Spf\big(\kc\braket{s_0,\dots,s_m}/(s_0\cdots s_{m'}-a)\big)\]
	for some $0\le m'\le m$ and $a\in\kcc\setminus 0$, such that every irreducible component of $\fF$ is given by $\alpha^*(s_j)$ for some $j>m'$; and similarly an étale map
	\[\beta\colon\fY\to\fT\coloneqq\Spf\big(\kc\braket{t_0,\dots,t_n}/(t_0\cdots t_{n'}-b)\big)\]
	for some $0\le n'\le n$ and $b\in\kcc\setminus 0$, such that every irreducible component of $\fG$ is given by $\beta^*(t_j)$ for some $j>n'$.
	
	Let $e\coloneqq\big(\frac{d s_1}{s_1}\wedge\dots\wedge\frac{d s_m}{s_m}\big)^{\otimes l}$,  $f\coloneqq\big(\frac{d t_1}{t_1}\wedge\dots\wedge\frac{d t_n}{t_n}\big)^{\otimes l}$.
	Write $\omega_X=\psi_X e$ for some $\psi_X\in\Gamma(\cO_X)$, and $\omega_Y=\psi_Y f$ for some $\psi_Y\in\Gamma(\cO_Y)$.
	Put $\psi_Z\coloneqq p_X^*\psi_X\wedge p_Y^*\psi_Y$.
	Let $\psi_{Z_x}$ be the pullback of $\psi_Z$ to $Z_x$.
	Since $Z_x = p_X\inv(x)$, we have $Z_x\simeq Y\widehat\otimes\cH(x)$.
	Let $\fY'\coloneqq\fY\widehat\otimes\cH(x)^\circ$, $\fG'\coloneqq\fG\widehat\otimes\cH(x)^\circ$.
	We have $\fY'_\eta\setminus\fG'_\eta\simeq Z_x$.
	
	Since all the strata of $\fX_s\cup\fF$ and $\fY_s\cup\fG$ are assumed to be geometrically connected, by the explicit formula for the embedding of skeletons in polyannuli (see \cite[\S 4.2]{Gubler_Skeletons_and_tropicalizations}), and the fact that the maps $\alpha\colon\fX\to\fS$ and $\beta\colon\fY\to\fT$ are étale, we obtain commutative diagrams
	\[\begin{tikzcd}
	\Sigma{(\fZ,\fH)} \ar{rr}{\sim} \dar[hook] & & \Sigma{(\fX,\fF)} \times \Sigma{(\fY,\fG)} \dar[hook]\\
	Z \rar{\sim} & X\times Y \rar & \abs{X}\times \abs{Y},
	\end{tikzcd}\]
	\[\begin{tikzcd}
	\oSigma{(\fZ,\fH)} \ar{rr}{\sim} \dar[hook] & & \oSigma{(\fX,\fF)} \times \oSigma{(\fY,\fG)} \dar[hook]\\
	\oZ \rar{\sim} & \oX\times \oY \rar & \abs{\oX}\times \abs{\oY},
	\end{tikzcd}\]
	\[\begin{tikzcd}
	\Sigma(\fY',\fG') \rar{\sim} \dar[hook] & \Sigma(\fY,\fG) \dar[hook]\\
	Z_x \rar & Y.
	\end{tikzcd}\]
	Hence, by \cref{lem:local_skeleton}, we deduce Statements (\ref{prop:skeleton_of_product:product}-\ref{prop:skeleton_of_product:fiber}) in the proposition.
	
	Next we turn to Statement (\ref{prop:skeleton_of_product:point}).
	Let $M_X$ be the maximum of $\norm{\omega_X}$ over $X$, and $M_Y$ the maximum of $\norm{\omega_Y}$ over $Y$.
	Since $\omega_X$ and $\omega_Y$ have at worst simple poles along $\fF_\eta$ and $\fG_\eta$ respectively, the maximums $M_X$ and $M_Y$ exist by \cref{lem:local_skeleton}.
	Moreover, we have $\abs{\psi_X}\le M_X$, $\abs{\psi_Y}\le M_Y$, and $\abs{\psi_Z}\le M_X\cdot M_Y$.
	By \cref{lem:local_skeleton}, the condition that $z\in\Sk(\omega_Z)$ is equivalent to the following condition:
	\begin{equation} \tag{$*$}
	\text{$z\in \Sigma{(\fZ,\fH)}$ and $\abs{\psi_Z}$ attains its maximum $M_X\cdot M_Y$ at the point $z$.}
	\end{equation}
	This is moreover equivalent to the following condition:
	\begin{align} \tag{$**$}
	x\in \Sigma{(\fX,\fF)}, z\in \Sigma{(\fY',\fG')}\subset Z_x,\  \abs{\psi_X} \text{ attains its maximum }M_X\\ \text{ at the point } x,
	\text{ and } \abs{\psi_{Z_x}} \text{ attains its maximum }M_Y \text{ at the point } z.\nonumber
	\end{align}
	By \cref{lem:local_skeleton} again, the above condition is furthermore equivalent to the condition that $x\in\Sk(\omega_X)$ and $z\in\Sk(\omega_{Z_x})$, completing the proof.
\end{proof}

\begin{remark}
	When $k$ has discrete valuation, \cref{prop:skeleton_of_product}(\ref{prop:skeleton_of_product:product}) was proved via logarithmic geometry by Brown and Mazzon \cite{Brown_The_essential_skeleton_of_a_product}.
\end{remark}

\begin{notation} \label{nota:H_skeletal}
	Let $k$ be any non-archimedean field of residue characteristic zero.
	Let $U$ be a connected smooth log Calabi-Yau $k$-variety with volume form $\omega$, $U\subset Y$ an snc\footnote{By snc, we assume in particular that all the strata are geometrically connected.} compactification, $D\coloneqq Y\setminus U$ and $D^\ess$ the union of irreducible components of $D$ where $\omega$ has a pole.
	Fix $C$ a proper rational nodal \kanal curve, $p_j\in C^\sm(k)$ and $m_j\in\bbN$ (including 0) for a finite set $J$, where $C^\sm\subset C$ denotes the smooth part of $C$.
	Denote by $B$ the set of $p_j$ with $m_j>0$.
	Fix an irreducible component $D_j\subset D^\ess$ for every $m_j>0$.
	Fix $\beta\in\NE(Y)$.
	Let $H$ be the subspace of $\Hom(C,Y^\an)$ consisting of maps $f\colon C\to Y^\an$ of class $\beta$ such that for each $m_j>0$, $p_j$ maps to $D_j^\circ$ and $f\inv(D)=\sum m_j p_j$.
	Let $V$ denote the analytic log tangent bundle $(T_Y(-\log D))^\an$.
	Let $H^\sm$ denote the Zariski open locus of $H$ where $f^*V$ is trivial.
	
	Let $C^\circ\coloneqq C^\sm\setminus(p_j)_{j\in J}$, $\sC\coloneqq C\times H^\sm$, $\sC^\circ\coloneqq C^\circ\times H^\sm$, $p\colon\sC\to H^\sm$, $p_C\colon\sC\to C$ the projection maps, and 
	$e\colon\sC\to Y^\an$ the universal map.
	Let
	\[\Phi\coloneqq(p_C,e)\colon\sC\to C\times Y^\an.\]
\end{notation}

\begin{lemma} \label{lem:H_smoothness}
	The space $H^\sm$ is smooth.
	The map $\Phi|_{\sC^\circ}\colon\sC^\circ\to C^\circ\times U^\an$ is étale.
	For any $x \in C(k) \setminus B$, the evaluation map $\ev_x\colon H^\sm \to U^\an$ is étale. 
	Moreover, we have $p_*(e^*V)\simeq T_{H^\sm}$.
\end{lemma}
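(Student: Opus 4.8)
The plan is to identify the deformation theory of maps $f \colon C \to Y^\an$ with $f^{-1}(D) = \sum m_j p_j$ prescribed with the cohomology of the bundle $V = (T_Y(-\log D))^\an$ pulled back along $f$. First I would recall the standard fact that the space $H$ of such maps has tangent space $H^0(C, f^*V)$ at a point $[f]$ and obstruction space $H^1(C, f^*V)$: indeed, imposing $f^{-1}(D) = \sum m_j p_j$ scheme-theoretically means we are looking at maps to $Y$ with logarithmic tangency conditions along $D$, whose infinitesimal theory is controlled by the \emph{log} tangent bundle $T_Y(-\log D)$ rather than $T_Y$ (this is exactly the setup for which the moduli problem is governed by $f^*V$; cf.\ the analogous algebraic statement underlying \cref{lem:Msm_smooth}, applied here to $\cM^\sm$ with $n$-pointed \emph{not-necessarily-stable} domain but with $C$ fixed). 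On $H^\sm$, by definition $f^*V$ is a trivial bundle of rank $d = \dim Y$ on the nodal rational curve $C$; hence $H^1(C, f^*V) = 0$ and $H^0(C, f^*V) \simeq k^{\oplus d}$ has constant rank. Vanishing of obstructions gives that $H^\sm$ is smooth (of dimension $d$ relative to the base, once one also notes $C$ itself is rigid as we fix the domain). This is the first claim; it is essentially a re-run of \cite[Chapter II Theorem 1.7]{Kollar_Rational_curves_on_algebraic_varieties} / \cite[Proposition 5.3]{Keel_Rational_curves} in the analytic (GAGA) setting, using \cref{nota:moduli_spaces_analytic}.

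Next, for the relative statement $p_{H*}(e^*V) \simeq T_{H^\sm}$: the relative tangent complex of $\sC = C \times H^\sm \to H^\sm$ together with the universal map $e$ gives, by the usual argument, that $T_{H^\sm}$ is the pushforward $p_{H*}$ of the pullback of $T_Y(-\log D)$ along $e$, with $R^1 p_{H*}(e^*V) = 0$ because $H^1$ vanishes fiberwise (base change, flatness of $p_H$). Since $H^1(C, f^*V) = 0$ for every $f \in H^\sm$, cohomology and base change applies and $p_{H*}(e^*V)$ is locally free of rank $d$, canonically identified with $T_{H^\sm}$. This also confirms $\dim H^\sm = d$ fiberwise.

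For the étale statements: the evaluation map $\ev_x \colon H^\sm \to U^\an$ at a point $x \in C(k) \setminus B$ (so $f(x) \notin D$) has derivative at $[f]$ equal to the composite $H^0(C, f^*V) \to (f^*V)_x \xrightarrow{\sim} T_{U,f(x)}^\an$, where the first arrow is evaluation of a section at $x$ and the isomorphism holds because $f(x) \notin D$ identifies $V$ with $T_Y$ there. Since $f^*V$ is trivial, sections globally generate at $x$, so this composite is an isomorphism of $d$-dimensional spaces; combined with smoothness of source and target and equality of dimensions, $\ev_x$ is étale — this is literally the analytic analogue of \cref{lem:Msm_smooth}(\ref{lem:Msm_smooth:etale})$\Leftrightarrow$(\ref{lem:Msm_smooth:surjective}). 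The same argument applied fiberwise over $C^\circ$ shows $\Phi|_{\sC^\circ} = (p_C, e) \colon \sC^\circ \to C^\circ \times U^\an$ is étale: fix $q \in C^\circ$, then $\Phi$ restricted to the fiber $\{q\} \times H^\sm$ is $\ev_q$, which is étale by the above, and $\Phi$ covers the identity on $C^\circ$ with both $\sC^\circ$ and $C^\circ \times U^\an$ smooth over $C^\circ$ of the same relative dimension, so $\Phi|_{\sC^\circ}$ is étale. The main obstacle I anticipate is purely bookkeeping rather than conceptual: one must be careful that the logarithmic tangency condition $f^{-1}(D) = \sum m_j p_j$ is imposed \emph{scheme-theoretically} so that the relevant deformation bundle is precisely $T_Y(-\log D)$ (with no extra twist by the $m_j$), and that GAGA (\cite[Theorem 8.7]{Yu_Gromov_compactness}) legitimately transports the algebraic deformation-theoretic computations to $H^\sm \subset \Hom(C, Y^\an)$; once these are in place, each assertion reduces to the triviality of $f^*V$ on a nodal rational curve.
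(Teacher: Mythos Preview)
Your proposal is correct and follows essentially the same approach as the paper: the paper's proof simply invokes GAGA to reduce to the algebraic case and then cites \cref{lem:Msm_smooth} for smoothness and étaleness, and identifies $T_{H^\sm}$ with $p_{H*}(e^*V)$ via the standard description of the Zariski tangent space as $H^0(C,f^*V)$. Your write-up unpacks exactly these ingredients (triviality of $f^*V$ on $H^\sm$, vanishing of $H^1$, surjectivity of evaluation) rather than citing them as a black box, but the underlying argument is the same.
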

\begin{proof}
	As all the spaces involved are the analytification of analogously defined $k$-varieties, the smoothness and étaleness follow from \cref{lem:Msm_smooth}.
	Since the Zariski tangent space at any point $f\in H$ is given by $H^0(C,f^*V^\an)$, we obtain $p_*(e^*V^\an)\simeq T_{H^\sm}$.
	\end{proof}

Consider the sequence of natural maps:
\begin{multline*}
H^0(Y,K_{Y}(D))\simeq H^0(Y^\an,\wedge^n V^\vee)\longrightarrow H^0(\sC,e^*(\wedge^n V^\vee))\xrightarrow{\ \sim\ }\\
\Hom(\wedge^n e^*V,\cO_\sC)\xrightarrow{\ \sim\ }\Hom(p^* p_*\wedge^n e^*V,\cO_\sC)\xrightarrow{\ \sim\ }\\
\Hom(p_{*}\wedge^n e^*V,\cO_{H^\sm})\xrightarrow{\ \sim\ } H^0(H^\sm,K_{H^\sm}),
\end{multline*}
where we use the fact that $e^*V$ is trivial on every fiber of $p\colon\sC\to H^\sm$.
Let $\omega_H \in H^0(H^\sm,K_{H^\sm})$ be the image of $\omega$ under the composition of the maps above.

\begin{lemma} \label{lem:H1form}
	For any 1-form $\alpha$ on $C^\circ$, we have $p_C^*\alpha\wedge e^*\omega=p_C^*\alpha\wedge p^*\omega_H$.
	
	For any $x \in C(k) \setminus B$, consider the evaluation map $\ev_x\colon H\to U^\an$.
	The following hold:
	\begin{enumerate}
	\item $\ev_x^{-1}(\Sk(U)) \subset H^\sm$.
	\item $\ev_x^*(\omega)|_{H^\sm} = \omega_H$;
	in particular, the left hand side is independent of the choice of $x$, and $\omega_H$ is a log volume form on $H^\sm$.
	\item \label{lem:H1form:skeleton} $\ev_x^{-1}(\Sk(U)) = \Sk(\omega_H)$;
	in particular, the left hand side is independent of the choice of $x$.
	\end{enumerate}
\end{lemma}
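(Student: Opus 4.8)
The plan is to deduce all the statements from a single pointwise computation identifying the ``$H^\sm$-horizontal part'' of $e^*\omega$ with $p_H^*\omega_H$, and then to extract (1) by a codimension argument and (3) from the compatibility of Temkin's seminorm with étale maps.

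\textbf{Statement (A) and the identity $\ev_x^*\omega=\omega_H$.} I would work on $\sC^\circ=C^\circ\times H^\sm$ (for (2), on $(C^\sm\setminus B)\times H^\sm$ containing $x$, where the arguments below go through verbatim), on which $e$ maps into $U^\an$, so $e^*\omega$ is an honest form of top degree $n=\dim U$. Since $\Omega^1_{\sC^\circ}=p_C^*\Omega^1_{C^\circ}\oplus p_H^*\Omega^1_{H^\sm}$ and $C^\circ$ is a curve, wedging with $p_C^*\alpha$ kills every summand of an $n$-form involving a $p_C^*$-factor; writing $\pi\colon\Omega^n_{\sC^\circ}\to p_H^*\Omega^n_{H^\sm}$ for the projection onto the ``no $p_C^*$-factor'' summand, one gets $p_C^*\alpha\wedge\gamma=p_C^*\alpha\wedge\pi(\gamma)$ for all $\gamma$, and, as $\alpha$ varies over $\Omega^1_{C^\circ}$, the forms $p_C^*\alpha\wedge(-)$ detect $\pi(\gamma)$. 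Hence (A) is equivalent to $\pi(e^*\omega)=p_H^*\omega_H$, which I would verify at each point $(q,f)$. Via the isomorphism $p_{H*}(e^*V)\simeq T_{H^\sm}$ of \cref{lem:H_smoothness}, a tangent vector $\xi\in T_fH^\sm$ corresponds to a global section $s_\xi\in H^0(C,f^*V)$ with $s_\xi(q)=(\ev_q)_*\xi\in(f^*V)_q$; unwinding the chain of canonical isomorphisms defining $\omega_H$, the value of $\pi(e^*\omega)_{(q,f)}$ on $\xi_1,\dots,\xi_n$ equals the value at $q$ of the regular function $(e^*\omega|_{C\times\{f\}})(s_{\xi_1}\wedge\cdots\wedge s_{\xi_n})\in H^0(C,\cO_C)=k$, which is constant because $C$ is proper and connected, and equals $\langle\omega_H(f),\xi_1\wedge\cdots\wedge\xi_n\rangle$ by the very construction of $\omega_H$ as a pushforward. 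This gives $\pi(e^*\omega)=p_H^*\omega_H$, hence (A). For (2), let $\iota_x\colon H^\sm\to\sC^\circ$, $f\mapsto(x,f)$, so $\ev_x=e\circ\iota_x$; since $p_C\circ\iota_x$ is constant, $\iota_x^*$ annihilates $p_C^*$-factors, so $\iota_x^*\gamma=\iota_x^*\pi(\gamma)$ for every $n$-form $\gamma$, whence $\ev_x^*\omega=\iota_x^*e^*\omega=\iota_x^*\pi(e^*\omega)=\iota_x^*p_H^*\omega_H=\omega_H$; being an étale pullback of the log volume form $\omega$, $\omega_H$ is a log volume form.

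\textbf{Statement (1).} On all of $H$ the bundle $f^*V$ has degree zero on each component of $C$ (as in the proof of \cref{lem:Msm_smooth}, since $f(C)\cap D\subset D^\ess$), so by \cite[Lemma~5.2]{Yu_Enumeration_of_holomorphic_cylinders_II} non-triviality of $f^*V$ is equivalent to its failure to be globally generated at $x$, i.e.\ to non-surjectivity of $d(\ev_x)_f$ (\cref{lem:Msm_smooth}). Applying \cite[Ch.\ III, Prop.\ 10.6]{Hartshorne_Algebraic_geometry} componentwise to $H\setminus H^\sm$, exactly as in \cref{lem:Msm_big}, the image $\ev_x(H\setminus H^\sm)$ is not Zariski dense in $U$, so there is a Zariski dense open $V\subset U$ with $\ev_x^{-1}(V^\an)\subset H^\sm$. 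Since $\Sk(U)$ consists of birational points of $U$ (\cref{lem:essential_skeleton}, \cref{lem:essential_skeleton_semistable_pair}), it lies in $V^\an$, and therefore $\ev_x^{-1}(\Sk(U))\subset\ev_x^{-1}(V^\an)\subset H^\sm$.

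\textbf{Statement (3).} By (1)--(2) we may restrict to $H^\sm$, where $\ev_x$ is étale (\cref{lem:H_smoothness}); since the residue characteristic is zero, \cref{lem:skeleton_etale_pullback} gives $\norm{\omega_H}=\norm{\omega}\circ\ev_x$. If $H^\sm=\emptyset$ both sides of (3) are empty. Otherwise $\ev_x(H^\sm_{\mathrm{alg}})$ is a nonempty Zariski open of the irreducible variety $U$, hence dense, and $\ev_x$ restricts to an étale cover over its generic point, so every birational point of $U^\an$ — in particular every point of $\Sk(U)$ — lies in $\ev_x(H^\sm)$. Consequently $\sup_{H^\sm}\norm{\omega_H}=\sup_{U^\an}\norm{\omega}$, and since $\Sk(U)$ (resp.\ $\Sk(\omega_H)$) is the locus where $\norm{\omega}$ (resp.\ $\norm{\omega_H}$) attains this common maximum, the identity $\norm{\omega_H}=\norm{\omega}\circ\ev_x$ yields $\Sk(\omega_H)=\ev_x^{-1}(\Sk(U))$.

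The main obstacle is the pointwise computation of the first part: conceptually it only translates the deformation-theoretic isomorphism $p_{H*}(e^*V)\simeq T_{H^\sm}$ into the language of differential forms, but one must chase the full chain of canonical identifications used to define $\omega_H$ and check its compatibility with evaluation at a point $q\in C$, the $q$-independence resting on properness of $C$.
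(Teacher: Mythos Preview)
Your proof is correct and follows essentially the same approach as the paper: both verify the first identity pointwise by restricting $e^*\omega$ to the horizontal tangent subspace $T_fH^\sm\simeq H^0(C,f^*V)$ and identifying the result with $\omega_H$ via the defining chain of isomorphisms (the paper packages this as a commutative diagram, you as the projection $\pi$); both derive (2) by pulling back along the section $\iota_x$, obtain (1) from the codimension argument of \cref{lem:Msm_big} together with \cref{lem:essential_skeleton_semistable_pair}, and deduce (3) from \cref{lem:skeleton_etale_pullback}. Your treatment of (3) is slightly more careful than the paper's: \cref{lem:skeleton_etale_pullback} tacitly needs the maxima of $\|\omega\|$ and $\|\omega_H\|$ to agree, and you supply this by observing that $\ev_x$ surjects onto the birational points of $U^\an$ (hence onto $\Sk(U)$) whenever $H^\sm\neq\emptyset$.
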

\begin{proof}
	For the first paragraph, we can check this after base field extension so may assume $k$ is algebraically closed.
	Then it is enough to check the equality at every $k$-rational point $(c,f)\in C^\circ\times H^\sm$.
	It suffices to check that $e^*\omega$ and $p^*\omega_H$ agree on the horizontal (with respect to $p$) tangent space $T_f H\subset T_{(c,f)}\sC$.
	Note the derivative $d(e)$ restricted to this subspace is the restriction map
	\[T_f H\simeq H^0(C,f^*V)\longrightarrow(f^*V)_c=V_{f(c)}=T_{Y^\an,f(c)}.\]
		Let $v\in\wedge^n T_f H\simeq H^0(C,\wedge^n f^*V)$.
	We have a commutative diagram
	\[\adjustbox{width=\textwidth}{\begin{tikzcd}[column sep=small]
	H^0(C,\wedge^n f^*V)\times H^0(Y^\an,\wedge^nV^\vee) \ar{r}\ar{d} & H^0(C,\wedge^n f^*V)\times H^0(C,\wedge^n f^*V^\vee) \ar{r} & H^0(C,\cO_C) \ar{d}\\
	\wedge^nV_{f(c)}\times\wedge^nV_{f(c)}^\vee\ar{r} & \cO_{Y^\an,f(c)}\ar{r}{\sim} & \cO_{C,c}\simeq k.
	\end{tikzcd}}\]
	The value of $(v,\omega)$ under the composition of the upper horizontal maps and the right vertical map is equal to $(p^*\omega_H)(v)$; and the value of $(v,\omega)$ under the composition of the left vertical map (restriction map) and the lower horizontal maps is equal to $(e^*\omega)(v)$.
	Hence the commutativity of the diagram gives the equality $(e^*\omega)(v)=(p^*\omega_H)(v)$, completing the proof.

	Next we consider the second paragraph:
	By \cref{lem:essential_skeleton_semistable_pair}, $\Sk(U)$ is contained in any Zariski dense open subset of $Y$, so (1) follows from \cref{lem:Msm_big}.
	For (2), we view $x$ as a section $x\colon H^\sm \to \sC=C\times H^\sm$.
	Note the image of the derivative $d(x)$ is in the horizontal tangent subspace.
	So by the above, we obtain $\ev_x^*(\omega)|_{H^\sm} = x^*(e^*\omega) = \omega_H$.
		Finally (1) and (2) imply (3), by \cref{lem:skeleton_etale_pullback}.
\end{proof}

\begin{definition}[Essential skeleton] \label{def:essential_skeleton}
	For any non-archimedean field $k$ of characteristic zero, any smooth $k$-algebraic variety $X$, we define
	\[\Sk(X)\coloneqq\Sk(X^\an)\coloneqq\bigcup_{\substack{\omega\in H^0(Y,K_Y(D)^{\otimes l})\setminus 0\\l\in\bbN_{>0}}}\Sk(\omega)\ \subset\  X^\an\]
	for an snc compactification $X\subset Y$, $D\coloneqq Y\setminus X$.
		Note the vector subspace $H^0(Y,K_Y(D)^{\otimes l})\allowbreak \subset H^0(X,K_X^{\otimes l})$, and thus the definition above, is independent of the compactification.
	When $Y$ is given, we denote by $\oSk(X)$ the closure of $\Sk(X)$ in $Y^\an$.
\end{definition}

\begin{remark}
	The essential skeleton was first proposed by Kontsevich-Soibelman \cite{Kontsevich_Homological_mirror_symmetry} for projective Calabi-Yau varieties over the Laurent power series field $\bbC\llp t\rrp$.
	Generalizations were later studied by Nicaise-Mustata \cite{Nicaise_The_essential_skeleton}, Brown-Mazzon \cite{Brown_The_essential_skeleton_of_a_product} and Mauri-Mazzon-Stevenson \cite{Mauri_Essential_skeletons_of_pairs}.
\end{remark}

Now we continue the setting of \cref{nota:H_skeletal}.

\begin{lemma} \label{lem:skeleton_of_C}
	Let $\Gamma\subset C$ be the convex hull of $C\setminus C^\circ$, and let $\Gamma^\circ\coloneqq\Gamma\cap C^\circ$.
	We have
	\begin{equation} \label{eq:skeleton_of_C}
	\Sk(C^\circ)=\bigcup_{\omega\in H^0(C,K_C(C\setminus C^\circ))\setminus 0}\Sk(\omega)\ =\ \Gamma^\circ\ \subset\  C^\circ.
	\end{equation}
	Moreover, when $\Sk(C^\circ)\neq\emptyset$, there exists $\omega\in H^0(K_C(C\setminus C^\circ))$ such that $\Sk(C^\circ)=\Sk(\omega)$.
\end{lemma}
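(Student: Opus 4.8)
The plan is to prove all three assertions by squeezing $\Sk(C^\circ)$ between $\Gamma^\circ$ from above and the skeleton of one cleverly chosen log $1$-form from below. Concretely, I would establish: \textbf{(A)} $\Sk(\omega)\subseteq\Gamma^\circ$ for \emph{every} nonzero section $\omega$ of $K_C(C\setminus C^\circ)^{\otimes l}$ and every $l\ge1$; and \textbf{(B)} if $\Gamma^\circ\neq\emptyset$, there is a log $1$-form $\omega_0\in H^0(C,K_C(C\setminus C^\circ))\setminus 0$ with $\Sk(\omega_0)=\Gamma^\circ$. Granting these, the two unions in \eqref{eq:skeleton_of_C} both lie between $\Sk(\omega_0)=\Gamma^\circ$ and $\bigcup_\omega\Sk(\omega)\subseteq\Gamma^\circ$, so all three sets coincide; and \textbf{(B)} is precisely the last sentence of the lemma. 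If instead $\Gamma^\circ=\emptyset$, then $C\setminus C^\circ$ meets each component in at most one point, so $K_C(C\setminus C^\circ)^{\otimes l}$ has negative degree on every component and has no global sections, and both sides of \eqref{eq:skeleton_of_C} are empty.

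For \textbf{(A)}: since $\Gamma$ is the convex hull of $C\setminus C^\circ$ inside the tree $C$, every connected component $V$ of $C\setminus\Gamma$ is an open disk whose closure meets $\Gamma$ in a single point $b=b(V)$; as $b$ carries a direction (towards $V$) not lying in $\Gamma$, it is not a leaf of $\Gamma$, hence is neither a node of $C$ nor a marked point, so $b\in\Gamma\cap C^\circ=\Gamma^\circ$, and $V\cup\{b\}\subseteq C^\circ$ is a closed analytic disk on which $\omega$ is a genuine pluricanonical form. Writing $\omega=g\,(dT)^{\otimes l}$ in a coordinate with $b$ the Gauss point, one has $\norm{\omega}(\eta_{a,r})=r^l\cdot\sup_{|T-a|\le r}\abs{g}$, a non-decreasing function of $r$, and since $l\ge1$ it is \emph{strictly} less than $\norm{\omega}(b)$ at every point of $V$ (using also that $\norm{\omega}$ is dominated by its values on the skeleton of the disk). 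Hence $V$ contains no point of the global maximum locus of $\norm{\omega}$, i.e.\ $\Sk(\omega)\cap V=\emptyset$; running over all $V$ gives $\Sk(\omega)\subseteq\Gamma^\circ$. Alternatively one can pass to a formal strictly semistable pair $(\fC,\fH)$ with $\fC_\eta\setminus\fH_\eta=C^\circ$ whose skeleton $\Sigma(\fC,\fH)$ refines $\Gamma$, and invoke \cref{lem:essential_skeleton_semistable_pair} together with \cref{lem:explicit_weight}.

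For \textbf{(B)}: I would build $\omega_0$ leg by leg. On each component $C_v\cong\bbP^1$ of $C$ meeting $\Gamma$ in more than a point, let $D_v$ be the set of nodes and marked points on $C_v$, so $\abs{D_v}\ge2$ and $\Gamma\cap C_v=\operatorname{conv}(D_v)$; choose a rational $1$-form $\omega_v$ on $C_v$ with at worst simple poles along $D_v$, nonzero residue at each point of $D_v$, and all its zeros at $k$-points off $\operatorname{conv}(D_v)$ — possible because $H^0(K_{\bbP^1}(D_v))$ has dimension $\abs{D_v}-1>0$, a generic element has nonzero residues everywhere on $D_v$, and its zeros are $k$-points, hence automatically lie off $\operatorname{conv}(D_v)$, which contains no $k$-points other than $D_v$. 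On the remaining components put $\omega_v=0$; the residues can be chosen so that the $\omega_v$ glue to a section $\omega_0$ of $K_C(C\setminus C^\circ)$. By the annulus computation in \cref{lem:local_skeleton} (the model case $\omega=dz/z$ gives $\norm{\omega}\equiv1$), the function $\norm{\omega_0}$ is constant along each leg and node-segment of $\Gamma^\circ$, the constants match across nodes thanks to the nonzero residues, and it is strictly smaller on components with $\omega_v=0$; hence $\norm{\omega_0}$ is constant on $\Gamma^\circ$, which by \textbf{(A)} is its global maximum locus, so $\Sk(\omega_0)=\Gamma^\circ$.

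The main obstacle is step \textbf{(B)}: the delicate points are arranging that $\omega_0$ has simple poles with \emph{nonzero} residues simultaneously along all the leaves of $\Gamma$ — so that its weight function does not decay along any leg of $\Gamma^\circ$ — while remaining a section of the log canonical sheaf of the nodal pair $(C,C\setminus C^\circ)$, and verifying that the weight-function constants on adjacent components genuinely agree at the nodes. Both reduce to the explicit form of the Kähler seminorm on polyannuli from \cref{lem:local_skeleton} plus a dimension/genericity count for sections of $K_C(C\setminus C^\circ)$. By contrast, step \textbf{(A)} is comparatively soft, being a maximum-principle statement on disks.
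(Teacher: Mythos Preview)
Your step (B) has a real gap. You assert that $\norm{\omega_0}$ is constant along each leg of $\Gamma^\circ$ by invoking the model case $dz/z$, but your $\omega_v$ is not $dz/z$: in a coordinate with $q_i=0$ one has $\omega_v=\psi\cdot dz/z$ with $\psi$ a nonconstant rational function, and \cref{lem:local_skeleton} only gives $\norm{\omega_v}=|\psi|$ on the skeleton, not that $|\psi|$ is constant. Concretely, on an irreducible component with three special points $0,1,\infty$ and residues $-a,\,a+b,\,-b$, one computes $\norm{\omega_v}(\eta_{0,r})=\max(|a|,|b|r)$ along the leg towards $0$; this is constant on $(0,1]$ only when $|a|\ge|b|$, and by symmetry constancy on the whole tripod forces $|a|=|b|=|a+b|$. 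Nothing in your ``generic'' choice guarantees this, and arranging equal-absolute-value residues compatibly across all components and nodes is precisely the delicate point you flagged but did not carry out. (There is also a smaller slip in (A): a connected component of $C\setminus\Gamma$ can attach at a node, in which case it is a subtree of components rather than a disk, and your sentence ``hence $b$ is neither a node of $C$ nor a marked point'' is false; the conclusion survives via a degree argument showing $\omega$ vanishes identically on such a subtree, but your disk argument does not cover it.)

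The paper takes a different and cleaner route that avoids both issues. It reduces to irreducible $C$ and uses the explicit forms $\omega_{i,j}$ with exactly two simple poles (residues $\pm1$) and \emph{no zeros}; in a coordinate sending $p_i,p_j$ to $0,\infty$ one has $\omega_{i,j}=dz/z$, so $\norm{\omega_{i,j}}\equiv1$ on $[p_i,p_j]$ and \cref{lem:explicit_weight} gives $\Sk(\omega_{i,j})=\Gamma_{i,j}$ immediately. Since tensor products of the $\omega_{i,j}$ generate all of $H^0(K_C(\sum p_j)^{\otimes l})$, the union $\bigcup_{i\neq j}\Sk(\omega_{i,j})=\Gamma^\circ$ already equals $\Sk(C^\circ)$, so no separate upper-bound step like your (A) is needed; the final existence claim is then obtained from a general linear combination of the $\omega_{i,j}$.
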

\begin{proof}
	By reasoning over every irreducible component of $C$, we can assume $C$ is irreducible.
	If $\abs{J}=0$ or $1$, $H^0(C,K_C(\cup p_j))=0$, so $\Sk(C^\circ)=\Gamma^\circ=\emptyset$.
	Now assume $\abs{J}\ge 2$.
	For $i\neq j\in J$, let $\omega_{i,j}$ be the unique section of $H^0(C,K_C(p_i\cup p_j))$ having poles with residues $1, -1$ at $p_i$, $p_j$ and no zeros.
	Then the tensor products of $\omega_{i,j}$ generate $H^0(C,K_C(\cup p_i)^{\otimes l})$ for all $l\in\bbN_{>0}$.
	Hence
	\[\Sk(C^\circ)=\bigcup_{i\neq j\in J}\Sk(\omega_{i,j})\ \subset\ C^\circ,\]
	in particular, the first equality of \eqref{eq:skeleton_of_C} holds.
	
	Let $[p_i,p_j]$ denote the path in $C$ connecting $p_i$ and $p_j$, and let $\Gamma_{i,j}\coloneqq[p_i,p_j]\setminus\{p_i,p_j\}$.
	Choose a coordinate on $C$ such that $p_i=0$, $p_j=\infty$.
	Then we obtain a strictly semistable pair $(\bbP^1_{\kc}, 0+\infty)$ whose associated skeleton is $\Gamma_{i,j}\subset C\setminus\{0,\infty\}$.
	Hence \cref{lem:explicit_weight} implies that $\Sk(\omega_{i,j})=\Gamma_{i,j}$. (Note the discrete valuation assumption of \cref{lem:explicit_weight} is harmless here because $(\bbP^1_{\kc}, 0+\infty)$ is constant over $\kc$ and we can also choose $\omega_{i,j}$ to be constant over $\kc$.)
	Taking union, we prove the second equality of \eqref{eq:skeleton_of_C}.
	
	For the last statement, it suffices to take a general linear combination of $\omega_{i,j}$.
\end{proof}

\begin{lemma} \label{lem:x_in_skeleton}
	Let $f\in H$ and $C_f$ the fiber of $C\times H \to H$ over $f$.
	Let $f\colon C_f\to Y^\an$ denote the restriction of the universal map $C\times H\to Y^\an$.
	For any $x\in C_f$, the following are equivalent:
	\begin{enumerate}
		\item $x\in\Phi\inv(\Sk(C^\circ\times U^\an))$.
		\item $f\in\Sk(\omega_H)\subset H^\sm$ and $x\in\Sk(C^\circ_f)$.
	\end{enumerate}
\end{lemma}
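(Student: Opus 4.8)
The plan is to transport the whole question through the étale map $\Phi|_{\sC^\circ}\colon\sC^\circ\to C^\circ\times U^\an$ of \cref{lem:H_smoothness}, computing the relevant essential skeletons on the source and target of $\Phi$ by two applications of the product formula \cref{prop:skeleton_of_product}, linked by the differential-form identity of \cref{lem:H1form}.

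First I would reduce to the case $f\in H^\sm$: if $f\notin H^\sm$ then $C_f$ is disjoint from the domain $\sC$ of $\Phi$, so (1) fails, while $\Sk(\omega_H)\subset H^\sm$ forces (2) to fail too; and if $\Sk(C^\circ)=\emptyset$ the statement is again trivial, both conditions being empty. Write $x=(c,f)$ with $c\coloneqq p_C(x)$, so that $\Phi(x)=(c,f(c))$; since $e\inv(D)=\sum_j m_j\,(p_j\times H^\sm)$, one has $\Phi\inv(C^\circ\times U^\an)=\sC^\circ$, and it suffices to work inside $\sC^\circ$. Because $U$ is log Calabi--Yau, $K_U\cong\cO_U$ via $\omega$, so by the Künneth formula the nonzero pluricanonical forms on $C^\circ\times U^\an$ are precisely the box products $\gamma\boxtimes\omega^{\otimes l}$ with $\gamma$ a nonzero pluricanonical form on $C^\circ$; hence, using \cref{def:essential_skeleton}, \cref{prop:skeleton_of_product}(\ref{prop:skeleton_of_product:product}), and $\Sk(\omega^{\otimes l})=\Sk(\omega)=\Sk(U)$,
\[\Sk(C^\circ\times U^\an)=\Sk(C^\circ)\times\Sk(U)=\Gamma^\circ\times\Sk(U),\]
the last equality by \cref{lem:skeleton_of_C}.

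Second, by \cref{lem:skeleton_of_C} I would fix a single weight-one form $\gamma_0$ on $C^\circ$ — a general linear combination of the residue-$(1,-1)$ forms $\omega_{i,j}$, so with at worst simple poles along $C\setminus C^\circ$ — realizing $\Sk(\gamma_0)=\Sk(C^\circ)=\Gamma^\circ$. As $\Phi|_{\sC^\circ}$ is residually tame quasi-étale (residue characteristic zero), \cref{lem:skeleton_etale_pullback} gives
\[\Phi\inv\big(\Sk(C^\circ\times U^\an)\big)=\Phi\inv\big(\Sk(\gamma_0\boxtimes\omega)\big)=\Sk\big(\Phi^*(\gamma_0\boxtimes\omega)\big).\]
On the other hand $\Phi^*(\gamma_0\boxtimes\omega)=p_C^*\gamma_0\wedge e^*\omega=p_C^*\gamma_0\wedge p_H^*\omega_H$ by \cref{lem:H1form}; since $\omega_H$ is a log volume form on $H^\sm$ (again \cref{lem:H1form}), it has at worst simple poles along a suitable snc compactification of $H^\sm$, so a second application of \cref{prop:skeleton_of_product}(\ref{prop:skeleton_of_product:product}), now to the product $\sC^\circ=C^\circ\times H^\sm$, yields
\[\Sk\big(p_C^*\gamma_0\wedge p_H^*\omega_H\big)=\Sk(\gamma_0)\times\Sk(\omega_H)=\Gamma^\circ\times\Sk(\omega_H).\]
Combining, $\Phi\inv\big(\Sk(C^\circ\times U^\an)\big)=\Gamma^\circ\times\Sk(\omega_H)$ inside $C^\circ\times H^\sm$. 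Finally, $\Sk(C_f^\circ)=\Sk(C^\circ)=\Gamma^\circ$ because the family $\sC\to H^\sm$ is constant (\cref{lem:skeleton_of_C}); hence $x=(c,f)$ lies in $\Phi\inv\big(\Sk(C^\circ\times U^\an)\big)$ if and only if $c\in\Gamma^\circ$ and $f\in\Sk(\omega_H)$, i.e.\ if and only if $x\in\Sk(C_f^\circ)$ and $f\in\Sk(\omega_H)$, which is the assertion.

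I expect the only genuine work to lie in the model bookkeeping needed to invoke \cref{prop:skeleton_of_product}: producing compatible formal strictly semistable pairs with geometrically connected strata — a semistable model of $C$ carrying the $p_j$ as disjoint sections of the smooth locus (reducing to $C=\bbP^1$ component by component if convenient), and an snc compactification of $H^\sm$ along which $\omega_H$ has at worst simple poles, which exists because $\omega_H$ is a log volume form — all of which can be arranged after further blowups. The conceptual core is simply the two product-skeleton computations bridged by the identity $p_C^*\gamma_0\wedge e^*\omega=p_C^*\gamma_0\wedge p_H^*\omega_H$.
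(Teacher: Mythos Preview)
Your overall strategy --- pull back through the \'etale $\Phi|_{\sC^\circ}$, use the identity $p_C^*\gamma_0\wedge e^*\omega=p_C^*\gamma_0\wedge p_H^*\omega_H$ from \cref{lem:H1form}, and apply the product formula --- is exactly the paper's. Your reduction to a single $\gamma_0$ with $\Sk(\gamma_0)=\Gamma^\circ$ is also fine; the paper works with the union over all $\alpha\in H^0(C,K_C(C\setminus C^\circ))$ but explicitly notes this reduction is available.

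The gap is in which part of \cref{prop:skeleton_of_product} you invoke. Part (\ref{prop:skeleton_of_product:product}) only asserts that $(p_X,p_Y)$ restricts to a homeomorphism $\Sk(\omega_Z)\xrightarrow{\sim}\Sk(\omega_X)\times\Sk(\omega_Y)$; it does \emph{not} say that $z\in\Sk(\omega_Z)$ iff $(p_X(z),p_Y(z))\in\Sk(\omega_X)\times\Sk(\omega_Y)$, because the map $Z\to|X|\times|Y|$ is not injective. Your shorthand ``$x=(c,f)$'' obscures this: several points of $C_f^\circ$ can lie over the same $c\in C^\circ$, and only one of them is the skeleton point. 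Concretely, take $C^\circ=\Gm_k$ with $k$ trivially valued, $f\in\Sk(\omega_H)$ with $\cH(f)$ nontrivially valued with uniformizer $s$, and let $x\in C_f^\circ=(\Gm)_{\cH(f)}^\an$ be the type-1 point $t=s$; one checks that $p_C(x)$ is the Gauss point $\eta_{|s|}\in\Gamma^\circ$, yet $x\notin\Sk(C_f^\circ)$ and $x\notin\Phi\inv(\Sk(C^\circ\times U^\an))$ (skeleton points are monomial, type-1 points are not). So both of your claimed equivalences --- ``$x\in\Phi\inv(\Sk(C^\circ\times U^\an))\Leftrightarrow c\in\Gamma^\circ$ and $f\in\Sk(\omega_H)$'' and ``$c\in\Gamma^\circ\Leftrightarrow x\in\Sk(C_f^\circ)$'' --- fail in the ``if'' direction.

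The fix is immediate: cite part (\ref{prop:skeleton_of_product:point}) instead, applied with $X=H^\sm$, $Y=C^\circ$. Its fiber-wise statement, $z\in\Sk(\omega_Z)$ iff $p_H(z)\in\Sk(\omega_H)$ and $z\in\Sk(\omega_{Z_f})$, is precisely the form the lemma requires, and with your single $\gamma_0$ one has $\Sk(\omega_{Z_f})=\Sk((\gamma_0)_f)=\Sk(C_f^\circ)$ by \cref{lem:skeleton_of_C} over $\cH(f)$. Drop the detour through $c$ and the proof is complete; this is exactly the paper's argument.
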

\begin{proof}
	By Lemmas \ref{lem:skeleton_etale_pullback} and \ref{lem:H1form}, we have
	\begin{align*}\label{eq:x_in_skeleton}
	\begin{split}
		\Phi\inv(\Sk(C^\circ\times U^\an))&=\Phi\inv\bigg(\bigcup_{\alpha\in H^0(C,K_C(C\setminus C^\circ))}\Sk(\alpha\wedge\omega)\bigg)\\
		&=\bigcup_{\alpha\in H^0(C,K_C(C\setminus C^\circ))}\Sk\big(\Phi^*(\alpha\wedge\omega)\big)\\
		&=\bigcup_{\alpha\in H^0(C,K_C(C\setminus C^\circ))}\Sk(p_C^*\alpha\wedge e^*\omega)\\
		&=\bigcup_{\alpha\in H^0(C,K_C(C\setminus C^\circ))}\Sk(p_C^*\alpha\wedge p^*\omega_H).
	\end{split}
	\end{align*}
	Note from \cref{lem:skeleton_of_C}, here it is sufficient to consider log volume forms instead of all log pluricanonical forms.
	So we conclude from \cref{prop:skeleton_of_product}(\ref{prop:skeleton_of_product:point}).
\end{proof}

\begin{theorem} \label{thm:f_in_skeleton}
	Notation as in \cref{lem:x_in_skeleton}.
	Let $g\colon C_f\to C\times Y^\an$ denote the product of $C_f\to C$ and $f\colon C_f\to Y^\an$.
	The following are equivalent:
	\begin{enumerate}
		\item $f\in\Sk(\omega_H)\subset H^\sm$.
		\item For some $x \in C(k) \subset C_f$, $f(x) \in \Sk(U)$.
		\item For every $x \in C(k) \subset C_f$, $f(x) \in \Sk(U)$.
		\item \label{thm:f_in_skeleton:g_inv} $g\inv(\Sk(C^{\circ} \times U^\an)) = \Sk(C^\circ_f)$.
		\item \label{thm:f_in_skeleton:g_inv_nonempty} $g^{-1}(\Sk(C^{\circ} \times U^\an)) \neq \emptyset$.
	\end{enumerate}
	Assume these equivalent conditions hold, let $\Gamma(k) \subset C_f$ be the convex hull of $C(k)\subset C_f$; then $f(\Gamma(k)) \subset \oSk(U)$.
\end{theorem}
\begin{proof}
	For any $x\in C(k)\setminus B\subset C_f$, let $\ev_x\colon H\to U^\an$ be the evaluation map.
	We have $f(x)=\ev_x(f)$.
	So the equivalences between the first three statements follow from \cref{lem:H1form}(\ref{lem:H1form:skeleton}).
	The equivalences between (1), (4) and (5) follow from \cref{lem:x_in_skeleton}.
	The last claim follows from (4) because we are free to add extra $k$-rational points to $C$ as marked points in addition to $(p_j)_{j\in J}$ and then apply (4) to $C$ together with the extra marked points.
\end{proof}

\begin{remark}
	Note that in \cref{nota:H_skeletal}, $H$ does not change if we add or drop marked points $p_i \not \in B$.
	Thus \cref{thm:f_in_skeleton} Condition (\ref{thm:f_in_skeleton:g_inv_nonempty}) holds for one choice of marked points containing $B$, if and only if it holds for $B$.
\end{remark} 

Recall from \cref{def:skeletal_curve} that $f\colon C_f\to Y^\an$ satisfying the equivalent conditions in \cref{thm:f_in_skeleton} is called a \emph{skeletal curve}.
Below are the sources of skeletal curves in the context of this paper.

\begin{lemma} \label{lem:source_of_skeletal_curve}
	Notation as in \cref{nota:moduli_spaces_analytic}.
	The following hold:
	\begin{enumerate}
	\item For any $\mu \in \ocM_{0,n}^{\an}$, $\Phi_i^{-1}(\Sk(\mu \times_k U^\an)) \cap \cM(U^\an,\bP,\beta)_{\mu}$ consist of skeletal curves.
	\item $\ISk\coloneqq\Phi_i^{-1}(\oSk(\cM_{0,n}) \times \Sk(U)) \cap \cM(U^\an,\bP,\beta)$ consist of skeletal curves.
	\end{enumerate}
\end{lemma}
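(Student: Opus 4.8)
The plan is to deduce both statements from a pointwise application of \cref{thm:f_in_skeleton}, the only real work being to recognize the hypotheses as the conditions stated there. For (1), let $f=[C,(p_j)_{j\in J},f\colon C\to Y^\an]$ be a point of $\cM(U^\an,\bP,\beta)_\mu$, defined over its complete residue field $\cH(f)$, which extends $\cH(\mu)$ since $\st(f)=\mu$. By \cref{lem:A1-curve} the domain $C$ is the stable curve $\mu$ over $\cH(\mu)$ with rigid rational tails attached at the points $p_j$, $j\in B$; in particular $C$ and all the $p_j$ are defined over $\cH(\mu)$, and the defining conditions of $\cM(U^\an,\bP,\beta)$ place $f$ in the space $H$ of \cref{nota:H_skeletal} attached over $\cH(\mu)$ to $(C,(p_j),(m_j),(D_j),\beta)$ (with $D_j\subset D^\ess$ there, since $\omega$ has a pole along the center of $\nu_j$). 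Under the canonical identification of the fiber $\mu\times_k U^\an$ of $\ocM_{0,n}^\an\times_k U^\an\to\ocM_{0,n}^\an$ with $(U_{\cH(\mu)})^\an$, the point $\Phi_i(f)=(\mu,\ev_i(f))$ is precisely $f_Y(p_i)$, where $f_Y\colon C_{\cH(f)}\to(Y_{\cH(\mu)})^\an$ is the composite of $f$ with the projection $(Y_{\cH(f)})^\an\to(Y_{\cH(\mu)})^\an$ as in \cref{thm:intro_skeletal_curve}; so the hypothesis $\Phi_i(f)\in\Sk(\mu\times_k U^\an)$ means exactly that $f_Y(p_i)\in\Sk(U_{\cH(\mu)})$. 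Since $i\in I$, and hence $p_i\in C(\cH(\mu))$, this is condition (2) of \cref{thm:f_in_skeleton}; therefore $f\in\Sk(\omega_H)\subset H^\sm$, i.e.\ $f$ is a skeletal curve, and the last assertion of that theorem gives $f_Y(\Gamma(\cH(\mu)))\subset\oSk(U)$, $\Gamma(\cH(\mu))$ being the convex hull of $C(\cH(\mu))$.

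For (2), given $f\in\ISk$ set $\mu\coloneqq\st(f)\in\oSk(\cM_{0,n})$. Applying \cref{prop:skeleton_of_product} (its product and fiberwise parts) to $\cM_{0,n}\times U$ with the compactification $\ocM_{0,n}\times Y$, the slice over $\mu$ of $\oSk(\cM_{0,n})\times\Sk(U)$ coincides, inside the fiber $(U_{\cH(\mu)})^\an=\mu\times_k U^\an$, with $\Sk(\mu\times_k U^\an)$; hence $\Phi_i(f)\in\Sk(\mu\times_k U^\an)$ and (1) applies, showing $f$ is a skeletal curve. The membership $\mu\in\oSk(\cM_{0,n})$ is not itself used, and is recorded only because it belongs to the definition of $\ISk$ used later.

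The one genuinely delicate point is the base-change bookkeeping behind these identifications: that the domain, marked points and map of a point of the analytic moduli stack really do fit the set-up of \cref{nota:H_skeletal} over $\cH(\mu)$ (here it is essential that the curve descends to $\cH(\mu)$, so that evaluating the more transcendental map $f$ at the $\cH(\mu)$-rational point $p_i$ produces a skeleton point rather than a type-I point), and that $\Sk(\mu\times_k U^\an)$, and the slice of $\oSk(\cM_{0,n})\times\Sk(U)$ over $\mu$, really is the essential skeleton $\Sk(U_{\cH(\mu)})$. Both reduce to \cref{prop:skeleton_of_product} together with the fact that, $U$ being constant over $k_0$, its essential skeleton is the base-change-independent cone complex $\Sigma_{(Y,D)}^\ess$ of \cref{lem:essential_skeleton}. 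Everything else is formal; the substance is already contained in \cref{thm:f_in_skeleton}.
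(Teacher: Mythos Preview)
Your proof is correct and follows the same route as the paper: (1) is an instance of \cref{thm:f_in_skeleton} (specifically its condition (2)), and (2) reduces to (1) via \cref{prop:skeleton_of_product}. The paper's own proof is two sentences (``(1) is immediate from the definition of skeletal curve; (1) implies (2) by \cref{prop:skeleton_of_product}(\ref{prop:skeleton_of_product:point})''), so what you have written is an expanded version of exactly that argument.

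One remark on your use of \cref{lem:A1-curve}: this is a reasonable way to make precise why the domain descends to $\cH(\mu)$ (and, as you correctly note, that descent is essential so that $p_i$ is a $\cH(\mu)$-rational point and hence $\ev_i(f)$ can lie in the skeleton rather than being a type-I point). However, \cref{lem:A1-curve} carries the hypothesis $\abs{B}\ge 2$, which is not part of the lemma you are proving. This is harmless in all later applications, and the paper's own one-line proof does not address the point either; but if you wanted to cover the edge cases cleanly you could instead argue directly that a genus-zero nodal pointed curve with fixed stabilization has no continuous moduli (each unstable component is a $\bbP^1$ with two special points), so the domain is rigid and descends to $\cH(\mu)$ regardless of $\abs{B}$.
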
 
\begin{proof}
	(1) is immediate from the definition of skeletal curve.
	(1) implies (2) by \cref{prop:skeleton_of_product}(\ref{prop:skeleton_of_product:point}).
\end{proof}

\begin{lemma} \label{lem:restriction_to_skeleton}
	Notation as in \cref{lem:source_of_skeletal_curve}.
	The following hold:
	\begin{enumerate}
		\item \label{lem:restriction_to_skeleton:sm} Assume $[C,(p_j)_{j\in J},f]\in\cM(U^\an,\bP, \beta)_\mu$ is skeletal.
		Then it belongs to $\cM^\sm(U^\an,\allowbreak\bP,\allowbreak\beta)$; in particular, we have $\ISk\subset\cM^\sm(U^\an,\bP,\beta)$.
		Moreover, for any closed subvariety $G\subset Y$ not containing any irreducible component of $D^\ess$, the pullback $f\inv(G^\an)$ is a finite set of points without multiplicities and disjoint from the nodes of $C$;
		and for any closed subvariety $Z\subset Y$ of codimension at least 2, the image $f(C)$ does not meet $Z^\an$.
		\item \label{lem:restriction_to_skeleton:etale}
		$\Phi_i$ is representable (i.e.\ non-stacky) and étale on a Zariski open neighborhood of $\ISk$.
		\item \label{lem:restriction_to_skeleton:statum} For any open stratum\footnote{See the convention in the end of \cref{sec:introduction}.} $S\subset\ocM_{0,n}$, the map $\Phi_i^{-1}(\Sk(S \times U)) \to \Sk(S \times U)$ is proper, open and set-theoretically finite (i.e.\ having finite fibers).
		\item \label{lem:restriction_to_skeleton:all} $\Phi_i|_{\ISk} \colon \ISk\to \oSk(\cM_{0,n}) \times \Sk(U)$ is open and set-theoretically finite.
	\end{enumerate}
\end{lemma}
\begin{proof}
	By \cref{lem:essential_skeleton_semistable_pair}, $\Sk(U)$ is contained in any Zariski dense open subset of $Y$, so (\ref{lem:restriction_to_skeleton:sm}) follows from Lemmas \ref{lem:stable_domain} and \ref{lem:Msm_big}.
	Note that a stable pointed rational curve does not have non-trivial automorphisms, so
	(\ref{lem:restriction_to_skeleton:etale}) follows from (\ref{lem:restriction_to_skeleton:sm}) and \cref{lem:Msm_smooth}(\ref{lem:Msm_smooth:etale}).
	(\ref{lem:restriction_to_skeleton:statum}) follows from \cref{prop:smoothness_all} and \cref{lem:essential_skeleton_semistable_pair}.
	For (\ref{lem:restriction_to_skeleton:all}), the finiteness follows from (\ref{lem:restriction_to_skeleton:statum}), and the openness follows from (\ref{lem:restriction_to_skeleton:etale}).
\end{proof}

\begin{lemma} \label{lem:skeletal_curve_contraction}
	Notation as in \cref{lem:source_of_skeletal_curve}.
	Let $(f\colon[C,(p_1,\dots,p_n)] \to Y^{\an}) \in \ISk$.
	Let $\Gamma$ (resp.\ $\Gamma^B$) denote the convex hull in $C$ of the all the marked points (resp.\ all the marked points from $B$).
	Then $f|_\Gamma\colon\Gamma\to {\oSk(U)}\subset Y^\an$ factors through the retraction $\Gamma\to\Gamma^B$.
\end{lemma}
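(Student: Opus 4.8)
The plan is to reduce the statement to a claim about the associated spine $\Sp(f)$, then prove the key geometric input that a skeletal curve cannot meet $E$, and finally read off the contraction from the balancing condition.

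\emph{Reduction.} Since $f\in\ISk$ it is skeletal (\cref{lem:source_of_skeletal_curve}) and lies in $\cM^\sm(U^\an,\bP,\beta)$ (\cref{lem:restrict_to_skeleton}(\ref{lem:restriction_to_skeleton:sm})); in particular it avoids $Y^\idt$ (\cref{nota:Msm}(\ref{nota:Msm:idt})) and $f^{-1}(E^\an)$ is a finite set of smooth points disjoint from the nodes (\cref{nota:Msm}(\ref{nota:Msm:E})), so \cref{prop:tropicalization_of_stable_map} applies. Write $\Trop(f)=[\tGamma,(p_j)_{j\in J},h']$ and $\Sp(f)=[\Gamma,(p_j)_{j\in J},h]$, with $\Gamma\subseteq\tGamma$ the convex hull of the marked points and $h=h'|_\Gamma$. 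By \cref{thm:f_in_skeleton}, $f(\Gamma)\subseteq\oSk(U)$, and under the canonical homeomorphism $\oSk(U)\simeq\oM_\bbR$ of \cref{lem:essential_skeleton} — which is the restriction to $\oSk(U)$ of the retraction used to define $h$ — the map $f|_\Gamma$ is identified with $h$. Hence it suffices to prove that $h$ is constant on the closure $\oT$ of every connected component $T$ of $\Gamma\setminus\Gamma^B$: this is exactly the statement that $f|_\Gamma$ factors through $\Gamma\to\Gamma^B$.

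\emph{A skeletal curve avoids $E$ (the main obstacle).} I claim $f(C)\cap E^\an=\emptyset$. Suppose $q\in C$ with $f(q)\in E^\an$. Since $f^{-1}(E^\an)$ is finite and smooth, after a finite base extension $q$ is a rigid point of $C^\sm$, and it is not a marked point: the $p_j$ with $j\in B$ map into $D^\an$, while the $p_i$ with $i\in I$ map into $\Sk(U)$, which is disjoint from $E^\an$ because $\Sk(U)=\Sk(T_M)\subseteq T_M^\an$ (\cref{lem:essential_skeleton}(\ref{lem:essential_skeleton:birational})) and $T_M\cap E=\emptyset$ as $E=\overline{U\setminus T_M}$. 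But then, $f$ being skeletal, \cref{thm:f_in_skeleton} applied at the rigid point $q$ gives $f(q)\in\Sk(U)$, contradicting $\Sk(U)\cap E^\an=\emptyset$. This step carries the weight of the argument: it is the only place the skeletal hypothesis is really used, through the fact that a skeletal curve sends every rigid point of its domain into $\Sk(U)$, together with the elementary observation $\Sk(U)\cap E^\an=\emptyset$. Granting it, $\of\coloneqq\pi^\an\circ f$ is a genuine morphism and $\of^{-1}(D_\rt^\an)=f^{-1}\bigl((\supp\pi^*D_\rt)^\an\bigr)\subseteq f^{-1}\bigl((Y\setminus T_M)^\an\bigr)\subseteq f^{-1}(D^\an)\cup f^{-1}(E^\an)=\{p_j\}_{j\in B}$. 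Therefore $\tGamma$, the convex hull of $\of^{-1}(D_\rt^\an)\cup\{p_j\}_{j\in J}$, equals $\Gamma$; that is, $\Trop(f)$ has no twigs and $\Trop(f)=\Sp(f)$. Since $\Trop(f)\in\TC(M_\bbR,\bP)$ is balanced (\cref{def:tropical_curve}(\ref{def:tropical_curve:balancing})), it follows that $h$ satisfies $\sum_{e\ni v}w_{(v,e)}=0$ at every vertex $v$ of $\Gamma$ of valency $>1$, i.e.\ $h$ has no bending vertex.

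\emph{Contraction.} Fix a component $T$ of $\Gamma\setminus\Gamma^B$, let $\{w\}=\oT\cap\Gamma^B$ and root $\oT$ at $w$. Every $1$-valent vertex of $\oT$ other than $w$ is a marked point $p_i$ with $i\in I$, whose incident leg carries weight $P_i=0$, so $h$ is constant on that leg since $h$ is piecewise \Zaffine. Proceeding by induction on the distance to the leaves of $\oT$: at a vertex $v\neq w$ all of whose edges directed away from $w$ already carry zero weight at $v$, the balancing relation and the absence of bending at $v$ force the one remaining edge, directed toward $w$, to carry zero weight, hence to be contracted by $h$. Consequently every topological edge of $\oT$ is contracted, $h$ is constant on $\oT$, and — by the identification of the first paragraph — $f|_\Gamma$ factors through the retraction $\Gamma\to\Gamma^B$, which is the assertion of \cref{lem:skeletal_curve_contraction}.
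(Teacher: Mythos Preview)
Your argument has a genuine gap at the step claiming $f(C)\cap E^{\an}=\emptyset$. The appeal to \cref{thm:f_in_skeleton} at the point $q$ is not justified: that theorem asserts $f_Y(x)\in\Sk(U)$ only for $x\in C(k)\subset C_f$, i.e.\ for points of the domain curve rational over the field $k$ over which $C$ itself is defined (in the moduli setting this is $\cH(\mu)$ with $\mu=\st(f)$). The point $q\in f^{-1}(E^{\an})$ depends on the map $f$, not merely on the domain, and there is no reason it should lie in $C(k)$; passing to a finite extension of $\cH(f)$ makes $q$ a rigid point of $C_f$ but does not place it in $C(k)$. The final conclusion of \cref{thm:f_in_skeleton} controls only the convex hull $\Gamma(k)$ of $C(k)$ inside $C_f$, and the preimages of $E^{\an}$ typically sit off this convex hull --- they are precisely the sources of the twigs of $\Trop(f)$.

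The claim is in fact false: were every skeletal curve to avoid $E^{\an}$, then $\Trop(f)=\Sp(f)$ would have no twigs, $\Sp(f)$ would be everywhere balanced and hence never bend, all wall-crossing would be trivial, and the counts would collapse to the toric case (\cref{lem:counts_in_toric_case}), contradicting e.g.\ the nontrivial scattering in \cref{lem:incoming_walls}. The paper instead proceeds via the weaker bending condition: for $f$ with $\Sp(f)$ transverse the factorization follows from \cref{rem:spine_factorization}, and one extends to all of $\ISk$ by density of the transverse locus (openness of $\Phi_i|_{\ISk}$ from \cref{lem:restrict_to_skeleton}(\ref{lem:restriction_to_skeleton:all}) together with \cref{prop:transversality}) and continuity. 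Your inductive contraction argument in the last paragraph is correct in spirit, but it should be run with the bending condition plus transversality rather than with full balancing, and only on this dense subset.
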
 
\begin{proof}
	When $\Sp(f)$ is transverse, the statement follows from \cref{rem:spine_factorization}.
	By \cref{lem:restriction_to_skeleton}(\ref{lem:restriction_to_skeleton:all}), $\Phi_i|_\ISk\colon \ISk \to \oSk(\cM_{0,n}) \times \Sk(U)$ is open.
	Thus by \cref{prop:transversality}, the statement holds for a dense subset of $f \in \ISk$.
	Now the result follows by continuity.
\end{proof}

The following lemma is the key to the proof of \cref{prop:moving_w}.

\begin{lemma} \label{lem:skeletal_curve_connected_component}
	Notation as in \cref{thm:f_in_skeleton}.
	Let $f\colon C_f\to Y^\an$ be a map in $\Sk(\omega_H)\subset H$.
	Let
	\[\Delta\subset\oSk(C^\circ\times U^\an)\simeq\oSk(C^\circ)\times\oSk(U)\simeq\oSk(C^\circ)\times\oM_\bbR\]
	be the graph of the canonical spine $\Sp(f)\colon\oSk(C^\circ)\simeq\oSk(C_f^\circ)\to\oSk(U)$.
		Fix $A\in\bbN$ big with respect to $\beta$ (see \cref{def:A_big}).
	Assume $\Sp(f)$ is transverse with respect to $\Wall_A$.
	Then $\oSk(C_f^\circ)$ is a connected component of $\Phi\inv(\Delta)$.
\end{lemma}
\begin{proof}
	By \cref{lem:x_in_skeleton}, $\oSk(C_f^\circ)$ is equal to the fiber of $\Phi\inv(\oSk(C^\circ\times U^\an))$ over $f\in\Sk(\omega_H)$.
	So $\oSk(C_f^\circ)\subset\Phi\inv(\Delta)$ is closed.
	
	Now let us prove that the inclusion is open.
	Consider any point $q\in\oSk(C_f^\circ)$ and any open connected neighborhood $V$ of $q$ in $\Phi\inv(\Delta)$.
	It suffices to prove that $p_H(V)=\{f\}$ where $p_H\colon C\times H^\sm\to H^\sm$ denotes the projection.
	By \cref{lem:spine_injective}, $\Sp(g)=\Sp(f)$ for all $g\in p_H(V)\subset\Sk(\omega_H)$, because $\Phi(V)\subset\Delta$ implies that $\Sp(g)(x)=\Sp(f)(x)$ for any $x\in V\cap p_H\inv(g)$.
	Therefore, for any fixed $q'\in\Sk(C^\circ)$, we have $\Sp(g)(q')=\Sp(f)(q')$ for all $g\in p_H(V)$.	
	By \cref{lem:H_smoothness}, $\Phi\inv(q,f(q))$ is a finite discrete set.
	We conclude that $p_H(V)$ can only contain the single point $f$, completing the proof.
\end{proof}

The next proposition studies the essential skeleton of $\cM_{0,n}$.
Let $u\colon\ocM_{0,n+1} \to\ocM_{0,n}$ be the forgetful map.
It gives the universal curve over $\ocM_{0,n}$.
Let $p_1,\dots,p_n$ be the universal sections of $u$, and let $C_{0,n} \subset \ocM_{0,n+1}$ be the complement of the union of the universal sections.

\begin{proposition} \label{prop:skeleton_of_M0n}
	The following hold:
	\begin{enumerate}
		\item $\oSk(\cM_{0,n}) = \oSigma(\ocM_{0,n},\partial\ocM_{0,n})\subset \ocM_{0,n}^\an$. 
		\item $\oSk(\cM_{0,n})\simeq \NT^\emptyset_n \simeq \oM_{0,n}^\trop$, (see \cref{rem:comparison_ACP}).
		\item A point $x \in \ocM_{0,n+1}^\an$ lies in $\oSk(\cM_{0,n+1})$ if and only if $u^\an(x)\in\oSk(\cM_{0,n})$ and $x \in \oSk(C^{\circ}_{u^\an(x)})$, where $C^{\circ}_{u^\an(x)}$ denotes the fiber of  $u^\an\colon C_{0,n} \to \ocM_{0,n}^\an$ over $u^\an(x)$.
	\end{enumerate}
\end{proposition}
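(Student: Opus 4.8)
The plan is to prove the three statements in the order (3), then (1), then (2); the bulk of the work is in (3), which is the fibred analogue of \cref{prop:skeleton_of_product}(\ref{prop:skeleton_of_product:point}) for the universal curve $u$.

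\emph{Part (3).} The map $u\colon\ocM_{0,n+1}\to\ocM_{0,n}$ is the universal curve, hence log smooth, and one has the standard formula $K_{\ocM_{0,n+1}}(\partial\ocM_{0,n+1})\cong u^{*}\!\big(K_{\ocM_{0,n}}(\partial\ocM_{0,n})\big)\otimes\omega_{u}(p_{1}+\dots+p_{n})$, where $\omega_{u}$ is the relative dualizing sheaf and $p_{1},\dots,p_{n}$ the universal sections, and $C_{0,n}=\ocM_{0,n+1}\setminus\bigcup_{j}p_{j}(\ocM_{0,n})$. By the projection formula, a level-$l$ log pluricanonical form $\omega$ on $\ocM_{0,n+1}$ is, locally over $\ocM_{0,n}$, of the shape $u^{*}\omega'\wedge\omega''$ with $\omega'$ a level-$l$ log pluricanonical form on $\ocM_{0,n}$ and $\omega''$ a relative one whose restriction to each fibre $C^{\circ}_{u^{\an}(x)}$ is a level-$l$ log pluricanonical form there; moreover as $\omega$ varies these $\omega''$ exhaust all such forms on the fibres and the $\omega'$ exhaust all such on $\ocM_{0,n}$. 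I would then prove the fibred version of \cref{prop:skeleton_of_product}(\ref{prop:skeleton_of_product:point}): étale-locally on $\ocM_{0,n+1}$ the family $u$ is a product of a formal strictly semistable pair (a neighbourhood in the base) with a one-dimensional factor — a disk at a smooth point of the fibre, or a nodal/polyannular model at a node of the fibre — and for these local models the computation of \cref{lem:local_skeleton} gives $x\in\Sk(\omega)$ iff $u^{\an}(x)\in\Sk(\omega')$ and $x\in\Sk(\omega''|_{C^{\circ}_{u^{\an}(x)}})$. Taking the union over all $l$ and all forms and passing to closures, the right-hand side assembles — using \cref{lem:skeleton_of_C}, which gives $\bigcup_{\omega''}\Sk(\omega''|_{C^{\circ}_{u^{\an}(x)}})=\Sk(C^{\circ}_{u^{\an}(x)})$ — into the condition ``$u^{\an}(x)\in\oSk(\cM_{0,n})$ and $x\in\oSk(C^{\circ}_{u^{\an}(x)})$'', which is (3). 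I expect the genuinely new point — that \cref{prop:skeleton_of_product}'s local argument survives the passage from a genuine product to the log-smooth family $\{xy=t\}\to\{t\}$ near a node — to be the main obstacle.

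\emph{Part (1).} For ``$\subseteq$'': the constant model over a DVR of the snc pair $(\ocM_{0,n},\partial\ocM_{0,n})$ is a formal strictly semistable pair, so by \cref{lem:essential_skeleton_semistable_pair} each $\Sk(\omega)$ (with $\omega$ a log pluricanonical form) is a union of faces of $\Sigma(\ocM_{0,n},\partial\ocM_{0,n})$; closures and the union over all $\omega$ keep us inside $\oSigma(\ocM_{0,n},\partial\ocM_{0,n})$. For ``$\supseteq$'' I would induct on $n$. The base case $n=4$ is $\cM_{0,4}=\bbP^{1}\setminus\{0,1,\infty\}$: for $l\ge 2$ the forms $(dx/(x(x-1)))^{\otimes l}$, $(dx/x)^{\otimes l}$, $(dx/(x-1))^{\otimes l}$ have poles exactly on $\{0,1\}$, $\{0,\infty\}$, $\{1,\infty\}$ respectively, so by \cref{lem:explicit_weight} the union of their skeletons is all three rays $=\oSigma(\bbP^{1},\{0,1,\infty\})$. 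For the inductive step, given the statement for $n$, apply (3): a face of $\oSigma(\ocM_{0,n+1},\partial\ocM_{0,n+1})$ is a boundary stratum of $\ocM_{0,n+1}$, which $u$ maps onto a boundary stratum of $\ocM_{0,n}$ — a face of $\oSigma(\ocM_{0,n},\partial\ocM_{0,n})$, hence a point of $\oSk(\cM_{0,n})$ by the inductive hypothesis — with fibre a stratum of the fibre curve, i.e.\ a face of its dual graph, which is $\oSk(C^{\circ}_{u^{\an}(x)})$ by \cref{lem:skeleton_of_C}; so by (3) the whole face lies in $\oSk(\cM_{0,n+1})$. (Alternatively one can prove ``$\supseteq$'' directly by exhibiting, for each boundary stratum, a log pluricanonical form with divisor supported on $\partial\ocM_{0,n}$ and the correct vanishing orders, using a relation $m(K_{\ocM_{0,n}}+\partial\ocM_{0,n})\sim\sum_{e}c_{e}D_{e}$ with $0\le c_{e}<m$; the inductive route seems cleaner.)

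\emph{Part (2).} Now $\oSk(\cM_{0,n})=\oSigma(\ocM_{0,n},\partial\ocM_{0,n})$ by (1), and this cone complex is identified with the tropical moduli space $\oM^{\trop}_{0,n}$ by the theorem of Abramovich--Caporaso--Payne \cite{Abramovich_The_tropicalization_of_the_moduli_space_of_curves}, and with $\NT^{\emptyset}_{n}$ by \cref{rem:comparison_ACP}. Equivalently, one identifies $\oSigma(\ocM_{0,n},\partial\ocM_{0,n})$ with $\NT^{\emptyset}_{n}$ directly: its faces are indexed by strata of $\partial\ocM_{0,n}$, i.e.\ by stable $n$-marked genus-$0$ curves with at least one node, i.e.\ by the trees of $\NT^{\emptyset}_{n}$, with the metric on each face read off from the smoothing (cone) coordinates, and one checks this order-preserving bijection is a homeomorphism of cone complexes.
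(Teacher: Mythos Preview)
Your overall strategy is plausible, but the paper's proof is both shorter and avoids the obstacle you yourself flag.

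For part (1), the paper does not induct. It invokes a fact from \cite{Keel_Equations_for_M0n}: the log canonical bundle $K_{\ocM_{0,n}}(\partial\ocM_{0,n})$ is very ample. Thus for a generic section $\omega$, the zero locus of $\omega$ misses every boundary stratum, and \cref{lem:explicit_weight} gives $\Sk(\omega)=\Sigma(\ocM_{0,n},\partial\ocM_{0,n})$ in one shot. Your inductive argument would work if (3) is established, but it is circular in the sense that your proof of (3) is where the real difficulty lies.

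For part (3), the paper sidesteps your ``main obstacle'' entirely. Rather than analyzing the family $\{xy=t\}\to\{t\}$ locally near a node, it first reduces from $\oSk$ to $\Sk$ using the product description of boundary strata, and then uses a \emph{global birational} trivialization: fixing $i\neq j$, the complement $C_{0,n}^{i,j}\subset C_{0,n}$ of the two sections $p_i,p_j$ is isomorphic to $\cM_{0,n}\times\bbG_m$ (sending $p_i\mapsto 0$, $p_j\mapsto\infty$ on each fiber), with the relative form $\omega_{i,j}$ pulled back from $\bbG_m$. Since $\Sk(\omega)$ consists of birational points, the computation can be done on this birational model, and \cref{prop:skeleton_of_product}(\ref{prop:skeleton_of_product:point}) applies directly to the honest product. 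The node never enters. Your local approach could perhaps be pushed through---the computation of \cref{lem:local_skeleton} does allow a factor of the form $\kc\{x,y\}/(xy-a)$---but you would need to set up a formal model of the family where the base direction is encoded in the $\kc$-structure, and this is not what \cref{prop:skeleton_of_product} provides as stated.

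Part (2) is the same in both approaches.
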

\begin{proof}
	By \cite{Keel_Equations_for_M0n}, the sheaf $K_{\ocM_{0,n}}(\partial\ocM_{0,n})$ (which is denoted by $\kappa$ in loc.\ cit.) is very ample.
	Hence by \cref{lem:explicit_weight}, we have $\Sk(\cM_{0,n}) = \Sigma(\ocM_{0,n},\partial \ocM_{0,n})\subset \cM_{0,n}^\an$.
		Taking closure, we obtain (1).
	We have $\oSigma(\ocM_{0,n},\partial\ocM_{0,n})\simeq\oM_{0,n}^\trop$ by \cite[Theorem 1.2.1(1)]{Abramovich_The_tropicalization_of_the_moduli_space_of_curves}.
	So (1) implies (2).
	
	Now we consider (3).
	Using the inductive description of the closed strata of $\partial\ocM_{0,n}$ as products of various $\ocM_{0,m}$ for $m<n$, it suffices to prove (3) for $\Sk$ instead of $\oSk$.
	By \cite[2.8]{Keel_Equations_for_M0n}, there is a nice set of generators of $H^0\big(K_{\ocM_{0,n+1}}(\partial \ocM_{0,n+1})\big)$ as follows.
	For each pair $i \neq j \in\{0,\dots,n\}$, there is a canonical $u$-relative log 1-form $\omega_{i,j}$ uniquely characterized by the property that its restriction to each fiber has residues $1,-1$ at $p_i,p_j$, and has no zeros, and no other poles.
	The $\omega_{i,j}$ generate $K_u(P)$
		at every point, where $P$ denotes the sum of the universal sections; moreover their restriction to any fiber of $u$ generate the log volume forms on the fiber.
	Then one obtains a generating set of log volume forms on $\cM_{0,n+1}$ by wedging the $\omega_{i,j}$ with $u^*(\gamma)$ for $\gamma$ inductively constructed log volume forms on $\cM_{0,n}$.
	Let $C_{0,n}^{i,j} \subset C_{0.n}$ be the complement of $p_i \cup p_j$.
	There is a trivialization $C_{0,n}^{i,j}\simeq \cM_{0,n} \times \bbG_m$, such that $\omega_{i,j}$ is pulled back from the $\bbG_m$ factor.
	Since $C_{0,n}^{i,j}$ and $\cM_{0,n+1}$ are birational, $\Sk(\omega_{i,j} \wedge \pi^*(\gamma))\subset \cM_{0,n+1}^\an$ is the same whether we compute using $\cM_{0,n+1}$ or $C^{i,j}_{0,n}$.
	Now the result follows from \cref{prop:skeleton_of_product}(\ref{prop:skeleton_of_product:point}).
\end{proof}

\section{Naive counts and the symmetry property} \label{sec:naive_counts}

Here we give the details regarding the naive counts mentioned in \cref{sec:intro:naive_counts}.
Furthermore, we prove that for transverse spines, the naive count is independent of the choice of the marked point at which we evaluate, see \cref{prop:moving_w} and \cref{thm:forgetting_interior_marked_points}.
This is a generalization of the symmetry theorem in \cite[\S 6]{Yu_Enumeration_of_holomorphic_cylinders_I}.
Our proof here is different from that of loc.\ cit., and gives a stronger statement.

We follow the setting of \cref{sec:log_CY}.

\begin{definition} \label{def:spine_in_U}
	A \emph{spine} in $\Sk(U)$ consists of a stable nodal metric tree $\Gamma$, a set of $n$ different 1-valent vertices $(v_j)_{j\in J}$, and a continuous map $h\colon\Gamma\to\oSk(U)$ satisfying the following conditions:
	\begin{enumerate}
		\item The vertices $v_j$ are the only 1-valent vertices of $\Gamma$.
		\item The preimage $h\inv(\partial\oSk(U))$ is a subset of $(v_j)_{j\in J}$.
		\item The map $h$ is piecewise \Zaffine, i.e.\ each edge of $\Gamma$ maps into a cone of $\oSigma_{(Y,D^\ess)}$ with integer derivative.
	\end{enumerate}
	We denote
	\begin{align*}
		F&\coloneqq\set{ j | v_j\text{ is finite}},\\
		B&\coloneqq\set{ j | h\text{ is not constant near }v_j},\\
		I&\coloneqq\set{ j | h\text{ is constant near }v_j}.
	\end{align*}
	The spine is called \emph{extended} if $F=\emptyset$.
	For each $j$, let $P_j$ be the derivative of $h$ at $v_j$ (pointing outwards).
	Let $\bP\coloneqq(P_j)_{j\in J}$.
	If $j\in I$, we have $P_j=0$.
	If $j\in B\setminus F$, $P_j$ can be identified with a point in $\Sk(U,\bbZ)$.
\end{definition}

We will define the count of analytic curves in $U^\an$ associated to any given spine $S=[\Gamma,(v_j)_{j\in J},h]$ in $\Sk(U)$ and any curve class $\gamma\in\NE(Y)$.
We proceed under several different assumptions on $S$.
We will always assume $\abs{B}\ge 2$.
For Constructions \ref{const:naive_counts_extended} and \ref{const:naive_counts_truncated}, we assume $\abs{I}\ge 1$ and fix $i\in I$.

\begin{construction} \label{const:naive_counts_extended}
	First we assume $S$ is extended.
	Let
	\[\Phi_i\coloneqq(\st,\ev_i)\colon \cM(U^\an,\bP,\gamma) \longrightarrow \ocM_{0,n}^\an \times U^\an\]
	be as in \cref{nota:moduli_spaces_analytic}.
	By \cref{prop:skeleton_of_M0n}, $\Gamma$ gives a point $\Gamma\in\oSk(\cM_{0,n})\subset\ocM_{0,n}^\an$.
	By Lemmas \ref{lem:restriction_to_skeleton} and \ref{lem:source_of_skeletal_curve}, $\Phi_i\inv(\Gamma,h(v_i))$ is finite, and consists of skeletal curves.
	Let $F_i(S,\gamma)$ be the subspace of $\Phi_i\inv(\Gamma,h(v_i))$ consisting of maps whose spine is equal to $S$.
	We define $N_i(S,\gamma) \coloneqq \length(F_i(S,\gamma))$.
	\end{construction}

\begin{construction} \label{const:naive_counts_truncated}
	Now we drop the assumption that $S$ is extended.
		For each $j\in F$, we glue a copy of $l_j\coloneqq[0,\hv_j\coloneqq+\infty]$ to $\Gamma$, along $0$ and $v_j$.
	We extend $h$ affinely to the new leg $l_j$ via the identification $\Sk(U)\simeq M_\bbR$.
	Let $\delta_j\in\NE(Y)$ be the curve class associated to the new leg (see \cref{def:curve_class_via_varphi}).
	Let $\hS=[\hGamma,(\hv_j)_{j\in J},\hh]$ denote the resulting extended spine.
	Let $\hgamma\coloneqq\gamma+\sum_{j\in F}\delta_j$.
	We apply \cref{const:naive_counts_extended} to $\hS$ and $\hgamma$, and obtain $F_i(\hS,\hgamma)$.
	Let $F_i(S,\gamma)\subset F_i(\hS,\hgamma)$ be the subspace consisting of stable maps $[\hC,(p_j)_{j\in J},f]$ satisfying the \emph{toric tail condition}:
	let $r\colon \hC\to\hGamma$ be the canonical retraction;
	for each $j\in F$, let $\bbT^*_j\coloneqq r\inv(l_j\setminus{\hv_j})$;
	then we require that $f(\bbT^*_j)\subset T_M^\an$.
	We define $N_i(S,\gamma)\coloneqq\length(F_i(S,\gamma))$, the count of analytic curves (with boundaries) in $U^\an$ of spine $S$ and class $\gamma$ (evaluating at $i$).
\end{construction}

The counts in the toric case are particularly simple:

\begin{lemma} \label{lem:counts_in_toric_case}
	Assume $(Y,D)$ is toric and $S$ satisfies \cref{lem:curve_class_from_model} Conditions (1-2).
	Then $N_i(S,\gamma)=1$ if $S$ has no bending vertices and $\gamma=\delta_h$ of \cref{def:curve_class_via_varphi}; $N_i(S,\gamma)=0$ otherwise.
\end{lemma}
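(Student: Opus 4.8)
The plan is to leverage the near‑triviality of the toric situation recorded in \cref{prop:toric_case}. Since $(Y,D)=(Y_\rt,D_\rt)$ we have $T_M=U$, $E=\emptyset$, $W=Y$ and $E_\rt^\trop=\emptyset$; by \cref{const:walls_by_induction} the last forces $\Wall_A=\emptyset$ for every $A$, so no spine can carry a bending vertex and the clause ``$S$ has no bending vertices'' is automatic. Likewise tropical curves carry no twigs (\cref{def:tropical_curve}), so $\Sp(f)=\Trop(f)$ for every stable map and $\TC(M_\bbR,\bP)=\SP(M_\bbR,\bP)$; and $\cM(U^\an,\bP,\beta)=\cM^\sd(U^\an,\bP,\beta)=\cM^\sm(U^\an,\bP,\beta)$ by \cref{prop:toric_case}(\ref{prop:toric_case:equation}), so every result of Sections \ref{sec:smoothness} and \ref{sec:skeletal_curves} applies. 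Finally, because $T_M=U$ the toric tail condition in \cref{const:naive_counts_truncated} is vacuous, whence $F_i(S,\gamma)=F_i(\hS,\hgamma)$ with $\hgamma=\gamma+\sum_{j\in F}\delta_j$; since $\hgamma=\delta_{\hh}$ exactly when $\gamma=\delta_h$, I may replace $(S,\gamma)$ by $(\hS,\hgamma)$ and assume $S$ is an extended spine in $\SP(M_\bbR,\bP)$. It then remains to prove $N_i(S,\gamma)=1$ when $\gamma=\delta_h$ and $N_i(S,\gamma)=0$ otherwise.

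The next step is to reinterpret $F_i(S,\gamma)$ as a single fibre. Let $\tS\in\ocM_{0,n}^\an\times U^\an$ be the point attached to $(\dom S,h(v_i))\in\oSk(\cM_{0,n})\times\Sk(U)$ via \cref{prop:skeleton_of_product}; by \cref{const:naive_counts_extended}, $F_i(S,\gamma)$ is the locus of $\Phi_i^{-1}(\tS)\subset\cM(U^\an,\bP,\gamma)$ consisting of maps with spine $S$. I would then invoke the commutative square of \cref{prop:toric_case}(\ref{prop:toric_case:diagram}): it gives $\rho\circ\Phi_i=\Phi^t_i\circ\Trop$ with $\rho$ the retraction onto the skeleton (so $\rho$ is the identity on $\oSk$) and $\Phi^t_i\colon\SP(M_\bbR,\bP)\to\oM^\trop_{0,n}\times M_\bbR$ an injective open immersion (\cref{prop:toric_case}(\ref{prop:toric_case:Psi})); applying $\rho$ to $\Phi_i(f)=\tS$ and using injectivity of $\Phi^t_i$ shows that every $f\in\Phi_i^{-1}(\tS)$ has $\Trop(f)=S$. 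Hence $F_i(S,\gamma)=\Phi_i^{-1}(\tS)$ as a subspace of $\cM(U^\an,\bP,\gamma)$. By \cref{prop:tropicalization_of_stable_map} together with \cref{prop:curve_class_formula}, any $f$ with $\Trop(f)=S$ has class $\delta_h$, so $F_i(S,\gamma)=\emptyset$ unless $\gamma=\delta_h$; this settles the ``otherwise'' clause. Finally, $S\in\SP(M_\bbR,\bP)$ forces $\sum_jP_j=0$ by \cref{prop:toric_case}(\ref{prop:toric_case:P}), so there is a unique class $\beta_0$ compatible with $\bP$ (\cref{rem:compatible_curve_class}), and since $\delta_h$ is the class of any curve tropicalizing to $S$ it is compatible, hence $\delta_h=\beta_0$.

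It remains to show $\length\Phi_i^{-1}(\tS)=1$ for $\gamma=\delta_h=\beta_0$. As $\Phi_i$ is an open immersion (\cref{prop:toric_case}(\ref{prop:toric_case:Phi})), $\Phi_i^{-1}(\tS)$ is either empty or a single reduced $\cH(\tS)$-point, so it has length $0$ or $1$, and I only need $\tS$ to lie in its image --- i.e.\ that the extended toric spine $S$ is realizable by an analytic curve. To organize this I would use \cref{lem:restrict_to_skeleton}(\ref{lem:restriction_to_skeleton:statum}): for each open stratum $S_0\subset\ocM_{0,n}$ the map $\Phi_i^{-1}(\Sk(S_0\times U))\to\Sk(S_0\times U)$ is proper, open and set-theoretically finite, hence surjective as soon as its source is nonempty (the target being connected); summing over strata, using the stratified description of $\oSk(\cM_{0,n})$ (\cref{prop:skeleton_of_M0n}) and \cref{prop:skeleton_of_product}, this makes $\Phi_i|_{\ISk}\colon\ISk\to\oSk(\cM_{0,n})\times\Sk(U)$ a continuous open bijection (\cref{lem:restrict_to_skeleton}(\ref{lem:restriction_to_skeleton:all})), so $\Phi_i^{-1}(\tS)$ is exactly one point and $N_i(S,\delta_h)=1$. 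Undoing the reduction to the extended case then gives \cref{lem:counts_in_toric_case}.

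The remaining --- and I expect the only genuinely substantive --- point is the nonemptiness input just used, i.e.\ realizability: producing, over some non-archimedean extension $k'/k$, a stable map $f$ with $\Phi_i(f)=\tS$. I would do this by an explicit construction. Enlarge $k$ to $k'$ whose value group contains all finite edge lengths of $\Gamma$; take as domain a proper rational nodal $k'$-curve $C$ with a semistable model whose dual graph is the combinatorial type of $\Gamma$ and whose node parameters have valuations equal to the corresponding edge lengths, so that $\st(C)=\dom S$; on each irreducible component $C_v$ take the toric map $C_v\to Y_\rt$ whose leading orders at the special points of $C_v$ are the weight vectors $w_{(v,e)}$ prescribed by $S$ --- such a map exists because $\sum_{e\ni v}w_{(v,e)}=0$ and is unique up to a translation by $T_M(k')$. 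Propagating outward from the component carrying $p_i$, choose the translations so that consecutive components agree at the nodes and $f(p_i)=h(v_i)$; the compatibility of these choices is precisely the $\bbZ$-affineness of $h$ along the edges of $\Gamma$, and reading off the behaviour on each annulus of $C$ shows $\Trop(f)=S$, whence $\Phi_i(f)=\tS$ and $[f]=\delta_h$. Carrying out this matching of the combinatorics of $S$ with a genuine analytic curve is where the work lies; once it is done, the lemma follows as described.
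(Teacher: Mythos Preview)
Your proof is correct and follows the same route as the paper, which simply cites \cref{prop:toric_case} and \cref{prop:curve_class_formula}. You have accurately unpacked what those two references contain: the vacuity of walls and tail conditions in the toric case, the identification $F_i(S,\gamma)=\Phi_i^{-1}(\tS)$ via the injectivity of $\Phi_i^t$, the curve-class computation giving $[f]=\delta_h$, and the bound $\length\Phi_i^{-1}(\tS)\le 1$ from $\Phi_i$ being an open immersion.

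The one place where you work harder than necessary is the realizability step. For an \emph{irreducible} domain $\Gamma$ (no nodes), the point $(\Gamma,h(v_i))$ already lies in $\Sk(\cM_{0,n})\times\Sk(U)\subset(\cM_{0,n}\times U)^\an$, and \cref{prop:toric_case}(\ref{prop:toric_case:Phi}) says $\Phi_i$ is an \emph{isomorphism} there, so the fiber is a single reduced point with no further argument needed. Your explicit construction is only genuinely required when $\Gamma$ has nodes; there the cleaner alternative is to observe that balancedness of $h$ forces $\sum_{j\in J_v}P_j=0$ on each irreducible component, so the relevant boundary stratum of $\ocM_{0,n}$ factors as a product of smaller $\ocM_{0,n_v}$'s, and applying \cref{prop:toric_case}(\ref{prop:toric_case:Phi}) to each factor (with the node as an extra interior marked point) again gives an isomorphism over the open part of that stratum times $U$. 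Either route is fine; the paper's one-line citation leaves this implicit.
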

\begin{proof}
	This follows from Propositions \ref{prop:toric_case} and \ref{prop:curve_class_formula}.
\end{proof}

\begin{proposition} \label{prop:moving_w}
	Assume that under the identification $\Sk(U)\simeq M_\bbR$, $S$ is a transverse spine with respect to $\Wall_A$ for some $A\in\bbN$ big with respect to $\gamma$.
	Let $\ow\in\Gamma\setminus\set{v_j | j\in B\setminus F}$ away from the nodes.
	We glue $[0,w=+\infty]$ to $\Gamma$ along $0$ and $\ow$, extend $h$ constantly on the new leg, and obtain a new spine which we denote by $S_{\ow}$.
	Then the count $N_w(S_{\ow},\gamma)$ is independent of the choice of $\ow\in\Gamma$.
\end{proposition}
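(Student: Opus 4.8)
The plan is to show that, up to the position of the extra marked point $w$, the count $N_w(S_{\ow},\gamma)$ always enumerates the same skeletal curves, and then to upgrade this into a statement about lengths using the connected-component structure of \cref{lem:skeletal_curve_connected_component}. First I would record that the auxiliary data $\hat\bP\coloneqq(\bP,0)$ and $\hat\gamma\coloneqq\gamma+\sum_{j\in F}\delta_j$ entering the definition of $N_w(S_{\ow},\gamma)$ are independent of $\ow$, since the leg glued at $\ow$ is constant and hence has vanishing associated curve class. Write $\cM\coloneqq\cM(U^\an,\hat\bP,\hat\gamma)$ and $\Xi\coloneqq(\st,\ev_w)\colon\cM\to\ocM_{0,n+1}^\an\times U^\an$. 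By Lemmas \ref{lem:source_of_skeletal_curve} and \ref{lem:restrict_to_skeleton}, $\Xi$ is representable and étale over a neighborhood of $\ISk\coloneqq\Xi\inv(\oSk(\cM_{0,n+1})\times\Sk(U))$, and $\Xi|_\ISk$ is open and set-theoretically finite; moreover by \cref{prop:skeleton_of_M0n} we identify $\oSk(\cM_{0,n+1})$ with the space of stable extended nodal metric trees with $n+1$ legs, and under this identification $\ow\mapsto\hat\Gamma_{\ow}$ (the tree of $S$, with the $F$-legs extended, carrying an extra infinite leg attached at $\ow$) together with $\ow\mapsto h(\ow)$ defines a natural assignment from the allowed locus of $\ow$ into $\oSk(\cM_{0,n+1})\times\Sk(U)$, with $F_w(S_{\ow},\gamma)$ being the part of $\Xi\inv(\hat\Gamma_{\ow},h(\ow))\cap\ISk$ on which the spine equals $S_{\ow}$ and the toric tail condition holds along the $F$-legs.

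Next I would connect any two allowed positions $\ow,\ow'$ by an embedded arc $\alpha\colon[0,1]\to\Gamma$ and let $0<t_1<\dots<t_N<1$ be the finitely many parameters at which $\alpha$ meets a node of $\Gamma$, a vertex of valency $\ge 3$, another $1$-valent vertex, a bending vertex of $S$, or a point of $h\inv(\Wall_A)$ (here $A$ is big with respect to $\gamma$). On each open subinterval in between, the combinatorial type of $S_{\alpha(t)}$ is constant, $S_{\alpha(t)}$ is again transverse with respect to $\Wall_A$ (attaching a constant leg at a point outside $\Wall_A$ affects none of the conditions of \cref{def:transverse}), and $(\hat\Gamma_{\alpha(t)},h(\alpha(t)))$ varies continuously in $\oSk(\cM_{0,n+1})\times\Sk(U)$. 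Over a small neighborhood of such a piece of path, \cref{lem:skeletal_curve_connected_component} shows that the locus of $\Xi\inv$ on which the spine equals $S_{\alpha(t)}$ is a union of connected components, and the toric tail condition cuts out a further union of connected components (as in \cref{rem:structure_constants_vary_point}, using \cref{lem:toric_tail_equiv}); combined with the étaleness and finiteness of $\Xi$ along $\ISk$, this exhibits $\bigsqcup_t F_w(S_{\alpha(t)},\gamma)$ as a finite covering of the subinterval, so $t\mapsto N_w(S_{\alpha(t)},\gamma)$ is locally constant and hence constant there.

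It then remains to check that $N_w$ does not jump across each $t_k$. If $\alpha(t_k)$ is a vertex of valency $\ge 3$, another $1$-valent vertex, a bending vertex, or a point of $h\inv(\Wall_A)$, then $(\hat\Gamma_{\alpha(t)},h(\alpha(t)))$ still varies continuously through $t_k$ — the attaching leg merely shrinks to length zero and regrows, possibly past a bending vertex — and the union of connected components of $\Xi\inv$ supporting the count extends across $t_k$, so the count is unchanged (a special case of deformation invariance, proved directly here). The one genuinely delicate case is when $\alpha(t_k)$ lies on an edge incident to a node $x_0$ of $\Gamma$: then $\hat\Gamma_{\alpha(t)}$ leaves every compact subset of $\oSk(\cM_{0,n+1})$ as $t\to t_k$ (the attaching point runs off to the infinite vertex $x_0$), so the previous continuity argument fails. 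Here I would argue instead that $h$ must be constant on every edge of $\Gamma$ incident to $x_0$ — otherwise $x_0$ would map into $\partial\oSk(U)$, contradicting the definition of a spine — so $h(\alpha(t))\equiv h(x_0)$ near $t_k$; and for any skeletal curve $f$ with spine $S$, class $\gamma$ and the toric tail condition, \cref{lem:skeletal_curve_contraction} (factorization of $f|_\Gamma$ through the retraction $\Gamma\to\Gamma^B$) shows that $f$ takes the value $h(x_0)$ at the point of its spine domain lying over $\alpha(t)$, independently of which edge incident to $x_0$ contains $\alpha(t)$. Thus forgetting the marking $w$ identifies $F_w(S_{\alpha(t)},\gamma)$ with the $\ow$-independent set of such $f$, and tracking lengths through the finite étale structure of $\Xi$ over $\ISk$ as before shows $N_w(S_{\alpha(t)},\gamma)$ agrees on the two sides of $x_0$. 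Combining all steps gives $N_w(S_{\ow},\gamma)=N_{w'}(S_{\ow'},\gamma)$.

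I expect the main obstacle to be exactly this last step — crossing a node of $\Gamma$ — where the domain modulus degenerates and one cannot simply invoke continuity in the tropical moduli of curves; the substitute is the factorization of skeletal curves through the retraction, together with the fact from \cref{lem:skeletal_curve_connected_component} that the curves realizing a fixed transverse spine form a union of connected components over which $\Xi$ is finite étale.
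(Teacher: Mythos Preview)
Your approach shares the paper's key ingredient (\cref{lem:skeletal_curve_connected_component}) but applies it piecewise along an arc, whereas the paper applies it once, globally. The paper first reduces to the case where $\Gamma$ is irreducible by invoking \cref{cor:vary_lengths} (a forward reference, but one whose proof does not use \cref{prop:moving_w}): since $h$ is necessarily constant on any edge incident to a node, those edge lengths can be varied and the node smoothed. Once $\Gamma$ is irreducible, the graph $\Delta$ of $\hh\colon\hGamma^\circ\to M_\bbR$ is a \emph{connected} subset of $\Sk(\cM_{0,n+1}\times U)$; \cref{lem:skeletal_curve_connected_component} then shows that the locus $G\subset\Phi^{-1}(\Delta)$ of curves with spine $\hS$ and the toric tail condition is a disjoint union of copies of $\hGamma^\circ$, and \cref{lem:finite_etale_degree} over the connected base $\Delta$ yields constancy in one stroke. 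This also absorbs your separate case analysis at bending vertices and points of $h^{-1}(\Wall_A)$.

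Your node-crossing step is where the argument actually breaks. You propose that forgetting $w$ identifies $F_w(S_{\alpha(t)},\gamma)$ with an $\ow$-independent set of skeletal curves of spine $\hS$, but to compare \emph{lengths} that set must arise as the fiber of some finite map, and choosing an evaluation point on the $n$-pointed side lands you in \cref{thm:forgetting_interior_marked_points}, which itself relies on \cref{prop:moving_w}. The structural reason this is hard is that when $C$ is reducible, $\Sk(C^\circ)$—and hence the graph $\Delta$—is disconnected (the node lies outside $C^\circ$), so \cref{lem:finite_etale_degree} cannot bridge its components. The paper's reduction to irreducible $\Gamma$ is precisely what restores connectedness of $\Delta$; I would adopt that reduction rather than attempt to cross nodes by hand.
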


\begin{definition} \label{def:naive_count_transverse}
	In virtue of \cref{prop:moving_w}, we define $N(S,\gamma) \coloneqq N_w(S_{\ow},\gamma)$ (for any choice of $\ow$).
\end{definition}

For the proof of \cref{prop:moving_w}, we start with a simple lemma in point-set topology.

\begin{definition} \label{def:proper_extension}
	Let $p\colon M \to V$ be a continuous map between Hausdorff topological spaces.
	A \emph{proper extension} of $p$ consists of an open embedding $M \subset \oM$, with $\oM$ Hausdorff, and a proper map $\op\colon \oM \to V$ such that $\op|_M = p$.
\end{definition}

\begin{lemma} \label{lem:proper_extension_existence}
	Notation as in \cref{def:proper_extension}.
	If there is an open embedding $M\subset M'$ with $M'$ compact and Hausdorff, then a proper extension exists.
	Such $M\subset M'$ exists if and only if $M$ is locally compact.
\end{lemma}
\begin{proof}
	We take for $\oM$ the closure of the graph $\Gamma_p\subset M\times V$ inside $M'\times V$.
	Since $V$ is Hausdorff, $\Gamma_p\subset M\times V$ is closed.
	Thus $\oM\cap(M\times V)=\Gamma_p$.
	Since $M\times V\subset M'\times V$ is open, it follows that $M\simeq\Gamma_p\subset\oM$ is open.
	Hence we obtain a proper extension.
	
	If $M$ is locally compact and noncompact, we can take $M\subset M'$ the one-point compactification.
	The other direction of the last statement is obvious.
\end{proof}

\begin{lemma} \label{lem:topological_extension}
	Let $p\colon M \to V$ be a continuous map between Hausdorff spaces that admits a proper extension.
	Let $R \subset V$ be a closed, locally compact subset and $\tR \subset M_R$ a union of connected components such that $p\colon \tR \to R$ is proper.
	Then there is an open subset $R \subset W \subset V$, and a union of connected components $\tW \subset M_W$ such that $p\colon \tW \to W$ is proper, and $\tW \cap M_R = \tR$.
\end{lemma}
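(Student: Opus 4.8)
The plan is to extract the desired open neighborhood $W$ of $R$ by working inside the proper extension $\op\colon\oM\to V$ and using the basic topological fact that a union of connected components of a proper-over-a-closed-set locus can be ``spread out'' to a neighborhood. First I would pass to the closure: let $\tR'$ denote the closure of $\tR$ in $\oM$. Since $\op$ is proper and $R$ is closed in $V$, the subspace $\oM_R=\op\inv(R)$ is closed in $\oM$ and $\op|_{\oM_R}\colon\oM_R\to R$ is proper; moreover $\oM_R$ is locally compact because $R$ is (properness plus local compactness of the base). The point is that since $p\colon\tR\to R$ is already proper, $\tR$ is closed in $M_R$, hence $\tR'\cap M_R=\tR$, and $\tR'\setminus\tR\subset(\oM\setminus M)\cap\oM_R$, i.e.\ the ``extra'' points of $\tR'$ all lie over $R$ but outside $M$.

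Next I would produce the neighborhood. Since $\tR$ is a union of connected components of $M_R$ and $\tR'$ is closed in $\oM_R$ with $\tR'\setminus\tR$ disjoint from $M$, I claim $\tR'$ is both open and closed in $\oM_R$: closedness is by construction, and openness follows because $\tR$ is open in $M_R$ (being a union of components of a locally connected-enough space — or more robustly, because $M_R=\tR\sqcup(M_R\setminus\tR)$ with both pieces closed in $M_R$, so $\tR$ is clopen in $M_R$), while the added points of $\tR'$ lie in the complement of the (open in $\oM_R$) subset $M_R$, and one checks using properness that no point of $\tR'\setminus\tR$ is a limit of points of $\oM_R\setminus\tR'$. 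Thus $\tR'$ and its complement $\oM_R\setminus\tR'$ are disjoint closed subsets of $\oM_R$. Now use properness of $\op$: the set $\op(\oM_R\setminus\tR')$ is closed in $R$, but it need not be disjoint from... — so instead I would argue directly. The map $\op$ restricted to the closed set $\tR'$ is proper and surjects onto $R$ (it contains $p(\tR)=R$). Consider $K\coloneqq\op(\,\overline{\oM\setminus(M\cup\tR')}\,\cap\oM_R)$? This is getting delicate; the clean route is:

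\textbf{The heart of the argument.} Since $\op$ is a closed map (being proper into a Hausdorff space) and $\tR'$ is clopen in $\oM_R$, the complement $Z\coloneqq\oM_R\setminus\tR'$ is closed in $\oM$, so $\op(Z)$ is closed in $V$. Set $W\coloneqq V\setminus\op(Z)$; this is open. I claim $R\subset W$: if $r\in R\cap\op(Z)$ then some $z\in Z$ has $\op(z)=r$, so $z\in\oM_R\setminus\tR'$, i.e.\ $z\in\oM_R$ but $z\notin\tR'$; but that is perfectly possible a priori, so this does \emph{not} immediately give $R\subset W$ — the issue is that $\tR'$ need not be all of $\oM_R$ over $R$. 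This is the main obstacle: $\tR$ is only \emph{a} union of components of $M_R$, not all of $M_R$, so points of $M_R\setminus\tR$ lie over $R$. The fix is that we do not need $R\subset W$ to meet the complement trivially; we only need to separate $\tR$ from the \emph{rest} within a neighborhood. So I would instead take $Z\coloneqq\overline{M\setminus\tR}^{\,\oM}$ (closure in $\oM$), which is closed, hence $\op(Z)$ is closed; but $\op(Z)\supset\op(M_R\setminus\tR)$ may again cover parts of $R$. The genuinely correct construction: let $Z_1\coloneqq\tR'$ and $Z_2\coloneqq\overline{\oM\setminus(M_{?}\cup\tR')}$...

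Let me state the plan honestly: the right statement to prove is that $\tR'$ is clopen in $\oM_R$ \emph{and} that $\op|_{\tR'}$ and $\op|_{\oM_R\setminus\tR'}$ have the property that near each $r\in R$ the fiber-germ of $\oM$ splits accordingly — this is automatic because $\tR'$ clopen closed and $\op$ proper means we can choose, for each $r\in R$, disjoint open $\mathcal U_r\supset(\op\inv(r)\cap\tR')$ and $\mathcal U_r'\supset(\op\inv(r)\cap(\oM_R\setminus\tR'))$ in $\oM$ (using compactness of fibers and Hausdorffness), then use the tube lemma / properness to find open $W_r\ni r$ in $V$ with $\op\inv(W_r)\subset\mathcal U_r\cup\mathcal U_r'\cup(\oM\setminus\oM_R)$. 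Let $\tW_r\coloneqq$ the union of components of $\op\inv(W_r)$ meeting $\tR'$; then $W\coloneqq\bigcup_{r\in R}W_r$ and $\tW\coloneqq\bigcup_r\tW_r\cap M$ works, once one checks (i) $\tW$ is a union of components of $M_W$ (components of $\op\inv(W_r)$ restrict to components of $M_{W_r}$ since $M$ is open), (ii) $\tW\cap M_R=\tR$ (by the tube choice, a component of $\op\inv(W_r)$ meets $\tR'$ iff it meets $\tR$, since the $\mathcal U_r,\mathcal U_r'$ are disjoint and $\tR'\cap M=\tR$), and (iii) $p\colon\tW\to W$ is proper — this is the delicate point and follows by shrinking: after replacing $W_r$ by a smaller neighborhood, $\op\inv(\overline{W_r})\cap\tR'$ is compact-over-compact, and gluing these requires a locally finite refinement, which exists because we may take $R$ covered by countably many $W_r$ using local compactness, or invoke that properness is local on the target. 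I expect step (iii), assembling local properness into global properness of $p\colon\tW\to W$, to be the main obstacle; the cleanest resolution is to first shrink $W$ to be locally compact and Hausdorff, take a locally finite open cover by the $W_r$, and check the proper-map gluing criterion ($\op$ proper $\iff$ proper locally on a locally finite cover of the target) directly.
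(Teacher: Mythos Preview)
Your plan contains the right ingredients—separate $\tR$ from its complement in $\oM_R$, then push down via properness of $\op$—but you have missed the one-line reduction that makes the paper's proof four lines long: the problem is \emph{local on $R$}. Since properness is local on the target, and the requirement $\tW\cap M_R=\tR$ already determines which components of $M_W$ to include, you may immediately replace $R$ by a compact neighborhood of any of its points and assume $R$ itself is compact. Then $\tR$ is compact (as $p|_{\tR}$ is proper) and so is $\oM_R=\op^{-1}(R)$. Now separate $\tR$ from $\oM_R\setminus\tR$ by a single pair of disjoint opens $A\supset\tR$, $B\supset\oM_R\setminus\tR$ in $\oM$, set $W$ to be the complement of $\op\big((A\cup B)^c\big)$, and take $\tW\coloneqq M_W\cap A$. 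That is the whole argument.

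Your abandoned attempts (``set $W\coloneqq V\setminus\op(Z)$'' for various $Z$) were in fact aimed precisely at this construction; what made them founder is that without compactness of $R$ you could not name the right closed set $Z=(A\cup B)^c$ to throw away. And your eventual fiber-by-fiber separation plus tube-lemma plus locally-finite gluing is just re-deriving the localization from the wrong end—this is exactly why your step~(iii), assembling global properness from local pieces, looks like the main obstacle. It evaporates once you localize first. (Incidentally, both your sketch and the paper's tacitly use that $\tR$ is open, not merely a union of components, in $M_R$, so that $\oM_R\setminus\tR$ is closed in the compact $\oM_R$; this is harmless in the intended application.)
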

\begin{proof}
	Let $M\subset\oM$ and $\op\colon\oM\to V$ be a proper extension of $p$.
	The problem is local on $R \subset V$, so we can assume $R$, and thus $\tR$, are compact.
	Using this compactness we can find open  neighborhoods $A \supset \tR$ and $B \supset \oM_R \setminus \tR$ with $A \cap B = \emptyset$.
	Then it suffices to set $W \coloneqq \op((A \cup B)^c)^c$, where $^c$ means complement, and $\tW \coloneqq M_W \cap A$.
\end{proof}

\begin{lemma} \label{lem:topological_extension_point}
	Let $p\colon M \to V$ be a continuous map between Hausdorff spaces that admits a proper extension.
	Assume $p$ has finite fibers.
	Let $m\in M$ be a point and $U$ an open neighborhood of $m$.
	Then there is an open neighborhood $W$ of $p(m)$ in $V$ such that the connected component $\tW$ of $M_W$ containing $m$ is contained in $U$, and moreover the restriction $p\colon\tW\to W$ is proper with finite fibers.
\end{lemma}
\begin{proof}
	We apply \cref{lem:topological_extension} with $\tR=m$ and $R=p(m)$, and then shrink $W$ so that $\tW\subset U$.
\end{proof}

\begin{lemma} \label{lem:finite_etale_degree}
	Let
	\[\Phi_i\coloneqq(\st,\ev_i)\colon \cM(U^\an,\bP,\beta) \longrightarrow \ocM_{0,n}^\an \times U^\an\]
	be as in \cref{nota:moduli_spaces_analytic}.
	Let $R \subset \oSk(\cM_{0,n}\times U)$ be any closed connected subset, and $\tR \subset \Phi_i\inv(R)$ any union of connected components.
	Assume $\tR\xrightarrow{\ \Phi_i\ }R$ is (topologically) proper.
	(This assumption is automatic if $R\subset\Sk(\cM_{0,n}\times U)$ by \cref{lem:restriction_to_skeleton}(\ref{lem:restriction_to_skeleton:statum}).)
	Then $\tR\xrightarrow{\ \Phi_i\ } R$ is (topologically) finite and open, and its (analytic) degree over any point $r\in R$ is independent of the choice of $r$.
\end{lemma}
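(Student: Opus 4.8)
The plan is to derive the lemma from \cref{lem:source_of_skeletal_curve} and \cref{lem:restrict_to_skeleton} together with a standard fact about proper étale maps. The key observation is that, $\cM_{0,n}$ being an open stratum of $\ocM_{0,n}$, \cref{lem:restrict_to_skeleton}(\ref{lem:restriction_to_skeleton:statum}) applied to $S=\cM_{0,n}$ already asserts that
\[\Phi_i\colon\Phi_i\inv\big(\Sk(\cM_{0,n}\times U)\big)\longrightarrow\Sk(\cM_{0,n}\times U)\]
is proper, open and set-theoretically finite; by \cref{lem:source_of_skeletal_curve} its source consists of skeletal curves and hence lies inside $\ISk\subset\cM^\sm(U^\an,\bP,\beta)$. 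Restricting this proper map along the inclusion $R\subset\Sk(\cM_{0,n}\times U)$ — a base change that preserves properness, openness and finiteness of fibres for an arbitrary subspace — shows that $\Phi_i\colon\Phi_i\inv(R)\to R$ is again proper, open and set-theoretically finite, and that $\Phi_i\inv(R)\subset\ISk$. Consequently \cref{lem:restrict_to_skeleton}(\ref{lem:restriction_to_skeleton:etale}) applies: $\Phi_i$ is representable (non-stacky) and étale on a neighbourhood of $\Phi_i\inv(R)$, in particular a local homeomorphism there.

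Next I would restrict further to $\tR$. As a union of connected components of $\Phi_i\inv(R)$, $\tR$ is clopen, so $\Phi_i\colon\tR\to R$ inherits properness, openness and finiteness of fibres, and $\Phi_i$ remains étale along $\tR$. A continuous map which is simultaneously a proper local homeomorphism with discrete fibres is a finite topological covering: separate the finitely many points of a fibre by disjoint open sets on which $\Phi_i$ is a homeomorphism onto its image, then use that $\Phi_i$ is closed (being proper) to find a base neighbourhood over which $\tR$ is the disjoint union of these sheets. Since $R$ is connected, the fibre cardinality of this covering is constant. Finally, étaleness forces every local degree of $\Phi_i|_{\tR}$ to equal $1$, so the analytic degree of $\Phi_i|_{\tR}$ over a point $r\in R$ — computed over an algebraic closure of $\cH(r)$ — coincides with this constant fibre cardinality, while the openness assertion is just the openness of the étale map $\Phi_i$.

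In this form the lemma is essentially a repackaging of \cref{lem:restrict_to_skeleton}, so the main point that needs care is that the hypothesis $R\subset\Sk(\cM_{0,n}\times U)$ — rather than $R\subset\oSk(\cM_{0,n})\times\Sk(U)$ — is essential: over the extended (infinite-length) directions of the tropical moduli space $\Phi_i$ need not be proper, which is precisely why \cref{lem:restrict_to_skeleton}(\ref{lem:restriction_to_skeleton:all}) does not claim properness, and it is the reason one must invoke part (\ref{lem:restriction_to_skeleton:statum}) with the full open stratum $S=\cM_{0,n}$ rather than part (\ref{lem:restriction_to_skeleton:all}). The only genuinely topological step is the passage ``proper $+$ local homeomorphism $+$ finite fibres $\Rightarrow$ covering with locally constant degree''; had one not had properness over all of $\Sk(\cM_{0,n}\times U)$ at once, the alternative route would be an induction over the strata of $\ocM_{0,n}$ patched together by means of \cref{lem:topological_extension}.
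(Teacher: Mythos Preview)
Your overall strategy is sound and close to the paper's, but there is a genuine gap in the final step. You claim that étaleness makes $\Phi_i$ a local homeomorphism near $\tR$ and that ``étaleness forces every local degree of $\Phi_i|_{\tR}$ to equal $1$'', so that the analytic degree coincides with the topological sheet count. In Berkovich geometry this is false at non-rigid points: an étale morphism can have nontrivial residue field extension at a point of the skeleton. For instance, $z\mapsto z^2$ on $\bbG_m^\an$ is finite étale of degree $2$, yet over the Gauss point the topological fibre is a single point, the residue extension has degree $2$, and the geometric fibre has two points. Thus your topological covering argument yields constancy of the \emph{topological} fibre cardinality, not of the analytic degree $\sum_j [\cH(x_j):\cH(r)]$, and these need not agree.

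The paper closes exactly this gap with \cref{lem:topological_extension}, which you mention only as an ``alternative route''. Using \cref{prop:smoothness_all} one has $\Phi_i$ finite étale over a Zariski dense open $O$, and $\Sk(\cM_{0,n}\times U)\subset O$ by \cref{lem:essential_skeleton_semistable_pair}. Then \cref{lem:topological_extension} produces a connected open $W\supset R$ inside $O$ and a clopen $\tW\subset\Phi_i^{-1}(W)$ with $\tW\cap\Phi_i^{-1}(R)=\tR$. Now $\tW\to W$ is a clopen piece of a finite étale morphism of \emph{analytic spaces}, hence itself finite étale; local constancy of the degree is then the standard statement for finite flat morphisms, and connectedness of $W$ finishes. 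Your packaging via \cref{lem:restrict_to_skeleton} is fine for the finiteness and openness assertions, but for the degree constancy you really do need to thicken $\tR$ to an analytic open where ``finite étale'' is meaningful; that is precisely what \cref{lem:topological_extension} supplies.
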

\begin{proof}
	By \cref{lem:restriction_to_skeleton}(\ref{lem:restriction_to_skeleton:etale}), $\Phi_i$ is representable and étale on a Zariski open subset $M$ containing $\tR$.
	Then the image $V$ of $M$ is a Zariski open subset containing $R$.	
	Hence by \cref{lem:topological_extension}, there is a connected neighborhood $W$ of $R$ contained in $V$ and a union of connected components $\tW\subset M_W$ such that $\tW\cap M_R=\tR$, with $\tW\xrightarrow{\ \Phi_i\ } W$ finite étale.
	This implies the lemma.
\end{proof}

\begin{lemma} \label{lem:hgamma}
	In the context of \cref{const:naive_counts_truncated}, we have $\gamma\cdot\tE=\hgamma\cdot\tE$, $\tE$ as in \cref{prop:tropicalization_of_stable_map}.
	Consequently, for $A\in\bbN$, if $A$ is big with respect to $\gamma$, then $A$ is also big with respect to $\hgamma$.
\end{lemma}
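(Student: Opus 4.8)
The plan is to reduce both assertions to a single divisorial vanishing, namely $\pi_*\tE=0$ in $N^1(Y_\rt)$.

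First I would unwind \cref{const:naive_counts_truncated}: there $\hgamma=\gamma+\sum_{j\in F}\delta_j$, where $\delta_j$ is the curve class of the extended leg $l_j$, so by \cref{def:curve_class_via_varphi} we have $\delta_j=\pi^*\odelta_j$ for some $\odelta_j\in\NE(Y_\rt)\subset N_1(Y_\rt)$ (a nonnegative combination of classes of $1$-strata of $Y_\rt$, accumulated as the leg crosses the codimension-one cones of $\Sigma_\rt$). Hence $\hgamma\cdot\tE-\gamma\cdot\tE=\sum_{j\in F}\pi^*\odelta_j\cdot\tE$, and it suffices to prove $\pi^*\odelta\cdot\tE=0$ for every $\odelta\in N_1(Y_\rt)$. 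By the projection formula for the birational map $\pi$ (using the conventions of \cref{rem:pushward_rational}; concretely, pull everything back to a resolution $g\colon\tY\to Y$ of $\pi$ and apply the projection formula for the two morphisms $g$ and $\pi\circ g$), this number equals $\odelta\cdot\pi_*\tE$. So the whole statement comes down to $\pi_*\tE=0$.

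Next I would compute $\pi_*\tE=\pi_*\pi^*(D_\rt)-\pi_*D$. A pushforward of a divisor class along a birational map is determined by its restriction to any big open subset, so I would work over the isomorphism locus $W\subset Y$, which by \cref{lem:toric_model}(\ref{lem:toric_model:W}) is such that $\pi(W)$ contains the generic point of every component of $D_\rt$ (and trivially contains $T_M$), hence has complement of codimension $\ge 2$ in $Y_\rt$. Over $\pi(W)$ the map $\pi|_W$ is an isomorphism, so $\pi_*\pi^*(D_\rt)$ restricts to $D_\rt$ there, giving $\pi_*\pi^*(D_\rt)=D_\rt$; likewise $\pi_*D^\ess=D_\rt$, since by \cref{lem:toric_model}(\ref{lem:toric_model:W}) $\pi$ carries each component of $D^\ess$ onto the corresponding component of $D_\rt$ (bijectively on generic points), and these exhaust $D_\rt$. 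Finally, for any component $D'$ of $D\setminus D^\ess$ one gets $\pi_*D'=0$: otherwise $D'$ would dominate a divisor of $Y_\rt$, necessarily some $D_{\rt,i}$ (a non-torus-invariant divisor has its generic point in $T_M\subset\pi(W)$, whose preimage lies in $T_M\subset Y$, disjoint from $D$), and then the generic point of $D_{\rt,i}$ would have its unique preimage in $W$ lying on $D'$ while, by \cref{lem:toric_model}(\ref{lem:toric_model:W}), also equal to the generic point of the essential component $D_i\ne D'$ — a contradiction. Thus $\pi_*D=\pi_*D^\ess=D_\rt$, so $\pi_*\tE=D_\rt-D_\rt=0$, and therefore $\hgamma\cdot\tE=\gamma\cdot\tE$.

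The second assertion is then immediate from the first: by \cref{def:A_big}, ``$A$ big with respect to $\gamma$'' means $A\ge\gamma\cdot\tE=\hgamma\cdot\tE$, which is exactly ``$A$ big with respect to $\hgamma$''. I do not expect a genuine obstacle here; the only point needing a little care is the bookkeeping for the projection formula through a resolution of the birational map $\pi$, together with the standard identity $\pi_*\pi^*=\id$ on divisor classes, and these are routine.
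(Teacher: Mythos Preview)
Your approach is the same as the paper's: the paper's entire proof is the single line ``$\delta_j\cdot\tE=0$, by \cref{def:curve_class_via_varphi}'', and you have correctly unpacked this into $\delta_j=\pi^*\odelta_j$, projection formula, and the vanishing $\pi_*\tE=0$ via $\pi_*\pi^*D_\rt=D_\rt$ and $\pi_*D=D_\rt$. So conceptually there is nothing to add.

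There is, however, a small gap in your verification that $\pi_*D'=0$ for a non-essential component $D'$. You write that ``the generic point of $D_{\rt,i}$ would have its unique preimage in $W$ lying on $D'$''; but this would require $\eta_{D'}\in W$, which you have not shown (and which in fact fails: your own argument, run in reverse, shows that if $\eta_{D'}\in W$ then $D'$ is essential). The point $\eta_{D'}$ could a priori be a \emph{second} preimage of $\eta_{D_{\rt,i}}$, lying outside $W$. The clean fix is valuation-theoretic: since $\pi$ is birational it identifies $k(Y)$ with $k(Y_\rt)$, and any prime divisor of $Y$ dominating a prime divisor $F\subset Y_\rt$ gives a discrete valuation with center $\eta_F$; as $\cO_{Y_\rt,\eta_F}$ is already a DVR, that valuation must equal $\ord_F$, so at most one prime divisor of $Y$ dominates $F$. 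Applied with $F=D_{\rt,i}$ this forces $D'=D_i$, and applied with $F$ non-torus-invariant it forces $D'$ to meet $T_M$; both are contradictions. With this patch your argument is complete and matches the paper.
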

\begin{proof}
	For every $j\in F$, by \cref{const:naive_counts_truncated} and \cref{def:curve_class_via_varphi}, the class $\delta_j\in\NE(Y)$ is the pullback of some class in $\NE(Y_\rt)$.
	Then by the projection formula in \cref{def:projection_formula_rational}, we have $\delta_j\cdot\tE=0$.
	So the lemma follows.
\end{proof}

\begin{proof}[Proof of \cref{prop:moving_w}]
	Let $\hS=[\hGamma,(\hv_j)_{j\in J},\hh]$ be the extension of $S$, and $\hgamma\coloneqq\gamma+\sum\delta_i$ as in \cref{const:naive_counts_truncated}.
	Let $\hGamma^\circ\coloneqq\hGamma\setminus\{\hv_1,\dots,\hv_n\}$ and $\hh^\circ\coloneqq\hh|_{\Gamma^\circ}$.
	The graph $\Delta$ of $\hh^\circ$ is naturally embedded in $M_{0,n+1}^\trop\times M_\bbR\simeq\Sk(\cM_{0,n+1}\times U)$.
	Let $\bP'\coloneqq(P_1,\dots,P_n,0)$, and
	\[\Phi\coloneqq(\st,\ev_w)\colon\cM(U^\an,\bP',\hgamma)\to\ocM_{0,n+1}^\an\times U^\an.\]
	Let $G\subset\Phi\inv(\Delta)$ be the subspace consisting of stable maps $[(C,(p_1,\dots,p_n,w),f]$ such that
	\begin{enumerate}
		\item forgetting the marked point $w$, the associated spine is $\hS$;
		\item the toric tail condition (as in \cref{const:naive_counts_truncated}) is satisfied.
	\end{enumerate}
	By \cref{lem:hgamma}, $A$ is also big with respect to $\hgamma$.
	So we can apply \cref{lem:skeletal_curve_connected_component}, and deduce that $G\subset\Phi\inv(\Delta)$ is a union of connected components, that is homeomorphic to a disjoint union of copies of $\hGamma^\circ$.
	By definition, $N_w(S_{\ow},\hgamma)$ is the degree of $G\xrightarrow{\ \Phi\ }\Delta$ over $(\ow,h(\ow))\in\Delta$; hence it is independent of the choice of $\ow$ by the following \cref{lem:finite_etale_degree}.
\end{proof} 

\begin{lemma} \label{lem:forgetting_points}
	Assume $n\ge 4$.
	Let $i,k\in I$, $i\neq k$.
	Let $\bP'\coloneqq\bP\setminus P_k$.
	Let $V \subset \ocM_{0,n}$ be the subspace consisting of pointed stable curves $[C,(p_j)_{j\in J}]$ which remains stable after forgetting the marked point $p_k$.
	Then $V$ is open and the following is a pullback diagram
	\[\begin{tikzcd}
	\cM^{\sd}(U,\bP,\beta)_V \rar{\Phi_i} \dar & V \times U\dar\\
	\cM^{\sd}(U,\bP',\beta) \rar{\Phi_i} & \ocM_{0,n-1} \times U 
	\end{tikzcd}\]
	where the vertical maps forget the marked point $p_k$.
\end{lemma}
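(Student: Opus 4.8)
The plan is to first show that $V$ is open, and then identify the square as a fibre product by comparing functors of points via Yoneda.

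For the openness, I would use the standard identification (Knudsen) of the forgetful morphism $\pi_k\colon\ocM_{0,n}\to\ocM_{0,n-1}$ with the universal curve $\overline{\mathcal C}\to\ocM_{0,n-1}$, under which the marked points $p_j$ ($j\neq k$) become the universal sections $\sigma_j$ and $p_k$ becomes the tautological point of the fibre (this is precisely the identification already used around \cref{prop:skeleton_of_M0n}). Under it, a point $x$ of $\overline{\mathcal C}$ lying over $[C',(p'_j)_{j\neq k}]$ corresponds to an $n$-pointed stable curve that remains stable after deleting $p_k$ if and only if $x$ is a smooth point of the fibre $C'$ distinct from each $\sigma_j(C')$: if $x$ meets a section or a node, the stabilization bubbles off a $\bbP^1$ carrying $p_k$ with exactly three special points, so a contraction is needed; otherwise $x$ lies on a component of $C'$ which already had at least three special points, hence at least four after adjoining $x$, and no contraction occurs. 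Therefore $V=\overline{\mathcal C}\setminus\bigl(\bigcup_{j\neq k}\sigma_j(\ocM_{0,n-1})\cup\operatorname{Sing}(\pi_k)\bigr)$, the complement of a finite union of closed subsets (the universal sections are closed, and the relative singular locus $\operatorname{Sing}(\pi_k)$ is closed), so $V$ is open.

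Next I would verify the Cartesian property on $T$-points for an arbitrary scheme $T$. A $T$-point of $\cM^{\sd}(U,\bP,\beta)_V$ is a genus-$0$, $n$-pointed stable map $(C/T,(p_j)_{j\in J},f\colon C\to Y)$ of class $\beta$ satisfying the boundary conditions indexed by $B$, with $C/T$ a \emph{stable} $n$-pointed curve and classifying map factoring through $V$. Because the classifying map lands in $V$, deleting the section $p_k$ leaves $C/T$ stable (no contraction), and since $k\in I$ there is no boundary condition on $p_k$; hence $(C/T,(p_j)_{j\neq k},f)$ is a $T$-point of $\cM^{\sd}(U,\bP',\beta)$, and the $k$-th section $\sigma_k\coloneqq p_k\colon T\to C$ is a section of this family disjoint from the remaining sections and from the relative nodes. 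Conversely, a $T$-point of the fibre product consists of a $T$-point $(\underline C/T,(p_j)_{j\neq k},f)$ of $\cM^{\sd}(U,\bP',\beta)$ together with a $T$-point of $V\times U$ whose image in $\ocM_{0,n-1}\times U$ equals $\Phi_i$ of the first; the $U$-coordinate is thereby forced to be the evaluation of $f$ at $p_i$, and by the openness discussion the $\ocM_{0,n}$-coordinate is exactly a section $\sigma_k\colon T\to\underline C$ avoiding the $p_j$ ($j\neq k$) and the relative singular locus. Adjoining $\sigma_k$ as the $k$-th marked point yields an $n$-pointed curve that is automatically stable and lands in $V$ (each fibre component through $\sigma_k$ already had $\ge3$ special points, hence $\ge4$, and deleting $\sigma_k$ recovers the stable curve $\underline C$), so together with $f$ this is a $T$-point of $\cM^{\sd}(U,\bP,\beta)_V$. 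The two constructions are mutually inverse and natural in $T$, and both composites around the square record $\bigl([C,(p_j)_{j\neq k}],f(p_i)\bigr)$, so the square commutes and is Cartesian.

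The main thing to be careful about is this "automatic stability" step — that over $V$ the operation "forget $p_k$" is literally "delete the section $p_k$" with no stabilization of the domain, in families — but this is exactly the fibre-wise analysis used to describe $V$ above, and it is the reason the decoration $\sd$ is indispensable in the statement (without it, contracted components of the domain would interact awkwardly with forgetting $p_k$). I would also note that the hypothesis $n\ge4$ enters only to ensure $n-1\ge3$, so that $\cM^{\sd}(U,\bP',\beta)$ and the forgetful morphism $\ocM_{0,n}\to\ocM_{0,n-1}$ are defined.
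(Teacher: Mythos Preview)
Your argument is correct and fleshes out exactly what the paper leaves implicit: the paper's own proof is a single sentence, ``The lemma holds because we are simply forgetting an interior marked point,'' and your Yoneda verification together with the description of $V$ as the complement of the universal sections and the relative singular locus is the natural unpacking of that sentence. In particular you have correctly isolated the two points that make the statement tautological---that $k\in I$ removes any boundary constraint on $p_k$, and that the superscript $\sd$ ensures forgetting $p_k$ over $V$ involves no domain stabilization---so there is nothing to add.
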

\begin{proof}
	The lemma holds because we are simply forgetting an interior marked point.
\end{proof}

\begin{theorem} \label{thm:forgetting_interior_marked_points}
	Notation as in \cref{const:naive_counts_truncated}.
	Let $\Gamma^B \subset \Gamma$ be the convex hull of the $B$-type marked points. If $N_i(S,\gamma) \neq 0$ then $h$ factors through the canonical retraction $r:\Gamma \to \Gamma^B$.
	
	Now assume $h$ factors through this retraction, and let $S^B$ be the restriction of $S$ to $\Gamma^B$.
	Assume furthermore that $S^B$ is a transverse spine with respect to $\Wall_A$ for some $A\in\bbN$ big with respect to $\gamma$.
	Then $N_i(S,\gamma) = N(S^B,\gamma)$.
	In particular $N_i(S,\gamma)$ is independent of the choice of $i\in I$.
\end{theorem}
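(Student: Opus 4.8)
The plan is to treat the two assertions separately, the second one bootstrapping off \cref{prop:moving_w}.

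For the factorization claim, suppose $N_i(S,\gamma)\neq 0$. Then $F_i(\hS,\hgamma)\neq\emptyset$, so we may choose a stable map $[C,(p_j)_{j\in J},f]\in\ISk$ whose spine, after forgetting, is the extended spine $\hS$. By \cref{lem:skeletal_curve_contraction} such an $f$ factors through the retraction of the convex hull of all its marked points onto the convex hull of the $B$-type ones; since $\hh$ is precisely $\tau\circ f$ restricted to this convex hull (identified with a subset of $M_\bbR$), this says $\hh$ is constant on each connected component of its domain minus the $B$-hull, and restricting to $\Gamma$ gives that $h$ factors through $r\colon\Gamma\to\Gamma^B$. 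So this part is essentially a direct consequence of the skeletal-curve contraction lemma.

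For the equality $N_i(S,\gamma)=N(S^B,\gamma)$, I would reduce, by forgetting interior marked points one at a time, to the tautological case where $S$ is a single constant leg attached to $S^B$. Write $\Gamma=\Gamma^B\cup\bigcup_c T_c$, where the $T_c$ are the closures of the connected components of $\Gamma\setminus\Gamma^B$; each is attached at a point $x_c$ with $h|_{T_c}\equiv h(x_c)$, carries at least one $I$-type and no $B$-type marked point, and $x_c$ is not a leaf of $\Gamma$ (a leaf has a single edge, leaving no room for $T_c$ to branch off), so no $T_c$ touches a $B$-marked point. The crucial observation is that for any $[C,(p_j),f]\in F_i(\hS,\hgamma)$ --- whose domain is already a stable pointed curve by \cref{lem:restrict_to_skeleton}(\ref{lem:restriction_to_skeleton:sm}) --- every component of $C$ lying over a constant tree $T_c$ carries no $B$-marked point, hence maps into $U^\an$; being proper it is $f$-constant. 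Therefore, forgetting an $I$-type marked point $v_k$ with $k\neq i$, in the stable-map sense, contracts only $f$-constant components, preserves the curve class, leaves $\Gamma^B$ (hence $S^B$ and its transversality) untouched, and preserves the toric tail condition, which lives over $\Gamma^B$. Using \cref{lem:forgetting_points} over the open stratum where no contraction occurs, together with the constancy observation to account for the contracted part elsewhere, one checks that forgetting $v_k$ induces an isomorphism of the relevant $0$-dimensional schemes $F_i(S,\gamma)\xrightarrow{\sim}F_i(S',\gamma)$, so $N_i(S,\gamma)=N_i(S',\gamma)$. Iterating over all $k\in I\setminus\{i\}$ and simplifying, every $T_c$ collapses except the single segment joining some $x_0\in\Gamma^B$ to $v_i$ on which $h$ is constant; thus $S$ becomes $S^B_{x_0}$ with $i$ playing the role of the tip $w$, and \cref{prop:moving_w} together with \cref{def:naive_count_transverse} (applicable since $S^B$ is transverse and $A$ is big with respect to $\gamma$) gives $N_i(S^B_{x_0},\gamma)=N(S^B,\gamma)$. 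Independence of $i\in I$ is then immediate, the right-hand side being $i$-free.

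I expect the main obstacle to be the forgetting step in the case where the domain genuinely destabilizes, so that \cref{lem:forgetting_points} does not apply on the nose: one must verify that re-inserting $p_k$ at the location dictated by $\hS$ inverts the forgetting, that the extended classes $\hgamma$ match up, and --- most delicately --- that the lengths of the $0$-dimensional schemes, not just their geometric cardinalities, are preserved, even though the two fibers sit over different Berkovich base points $(\Gamma,h(v_i))$ and $(\Gamma',h'(v_i))$. The affineness input --- a proper analytic curve mapping to an affine variety is constant, applied componentwise over the constant-tree region --- is what makes the forgetting operation clean; the rest is the standard formalism of forgetting marked points from stable maps, transported to the Berkovich setting as in the earlier sections.
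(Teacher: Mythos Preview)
Your strategy matches the paper's exactly: \cref{lem:skeletal_curve_contraction} for the factorization claim, then iterated use of \cref{lem:forgetting_points} to strip off the extra $I$-marked points, followed by \cref{prop:moving_w}. The paper's proof is literally the one-line citation of these two results.

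The destabilization issue you flag is legitimate, but your proposed workaround---tracking $f$-constant components and comparing lengths of zero-dimensional schemes across fibers over different Berkovich base points---is more involved than necessary and, as you yourself acknowledge, not fully worked out. The simpler resolution, which is exactly how the proof of \cref{prop:moving_w} itself begins, goes via \cref{cor:vary_lengths}: every node of $\hGamma$ lies on an edge where $\hh$ is constant, since $\hh^{-1}(\partial\oM_\bbR)$ consists only of $1$-valent vertices (this is built into the definition of a spine), so $\hh$ must have zero derivative on both edges incident to any $2$-valent infinite vertex. Deforming these constant edges to finite length removes all nodes; then $\hGamma$ lies in $\Sk(\cM_{0,n})$, its underlying scheme-theoretic point is the generic point of $\cM_{0,n}$, and \cref{lem:forgetting_points} applies on the nose because a smooth $\bbP^1$ with $n\ge 4$ marked points remains stable after forgetting one. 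This sidesteps the residue-field bookkeeping you were worried about entirely.
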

\begin{proof}
	The first paragraph follows from \cref{lem:skeletal_curve_contraction}.
	The second paragraph follows from \cref{lem:forgetting_points} and \cref{prop:moving_w}.
\end{proof}

\section{Properness and deformation invariance} \label{sec:deformation_invariance}

In this section, we prove the deformation invariance of naive counts associated to transverse extended spines, see \cref{thm:deformation_invariance_rigid}.
In short, deformation invariance follows from the finite étaleness of some evaluation map.
The étaleness will follow from \cref{sec:smoothness}, but the properness is a delicate issue, because the space $\cM^\sm(U,\bP,\beta)$ of our nicest curves is not proper.
The key properness results are \cref{prop:net_in_Msm} and \cref{cor:properness_SP}.
The heuristic meaning of the statements is that under a limit, as long as we control the spine, there will be no bubbling escaping to infinity.

Fix $\beta\in\NE(Y)$ and $A\in\bbN$ big with respect to $\beta$.
Let $S=[\Gamma,(v_j)_{j\in J},h]$ be a transverse extended spine in $M_\bbR$ of type $\bP$ with respect to $\Wall_A$ (see Definitions \ref{def:spine} and \ref{def:transverse}).
Fix $i\in I$.

Let \[\cM^\sm(U^\an,\bP,\beta)\subset\cM^\sd(U^\an,\bP,\beta)\subset\cM(U^\an,\bP,\beta)\subset\ocM(Y^\an,\bP,\beta),\]
and \[\Phi_i\coloneqq(\st,\ev_i)\colon\ocM(Y^\an,\bP,\beta)\to\ocM_{0,n}^\an\times Y^\an\]
be as in \cref{nota:moduli_spaces_analytic}.
We refer to \cref{sec:tropical} for the notations for moduli spaces of spines and tropical curves.
Let $\SP^{\tr_\infty}(M_\bbR,\bP)\subset\SP(M_\bbR,\bP)$ be the subset consisting of spines that are transverse at infinity.

\begin{proposition} \label{prop:net_in_Msm}
	Let $\Lambda$ be a directed set and $(f_\lambda)_{\lambda\in\Lambda}$ a net in $\cM^\sm(U^\an,\bP,\beta)$.
	If $(\Sp(f_\lambda))$ converges in $\SP^{\tr_\infty}(M_\bbR,\bP)$, then a subnet of $(f_\lambda)$ converges in $\cM^\sd(U^\an,\bP,\beta)$.
\end{proposition}

\begin{corollary} \label{cor:properness_SP}
	Let $\ISk$ be the preimage of $\oSk(\cM_{0,n})\times\Sk(U)$ by $\Phi_i\colon\cM(U^\an,\bP,\beta)\to\ocM_{0,n}^\an\times U^\an$, as in \cref{lem:source_of_skeletal_curve}.
	Let $\ISk^{\tr_\infty}\subset\ISk$ be the subset consisting of stable maps whose associated spines are transverse at infinity.
	The restriction $\Sp|_{\ISk^{\tr_\infty}}\colon\ISk^{\tr_\infty}\to\SP^{\tr_\infty}(M_\bbR,\bP)$ is closed, has finite fibers, and hence proper.
	The image $R\coloneqq\Sp(\ISk^{\tr_\infty})\subset\SP^{\tr_\infty}(M_\bbR,\bP)$ is locally compact.
\end{corollary}
\begin{proof}
	Since $\ISk$ is by definition the preimage of a closed subset, it is closed in $\cM(U^\an,\bP,\beta)$, in particular in $\cM^\sd(U^\an,\bP,\beta)$.
	Therefore, the closedness of $\Sp|_{\ISk^{\tr_\infty}}$ follows from \cref{prop:net_in_Msm}.
	Next, note that the restriction $\Phi_i|_{\ISk} \colon \ISk\to \oSk(\cM_{0,n}) \times \Sk(U)$ factors through $\Sp|_\ISk$, so the finiteness of $\Sp|_{\ISk^{\tr_\infty}}$ follows from \cref{lem:restriction_to_skeleton}(\ref{lem:restriction_to_skeleton:all}).
	Hence $\Sp|_{\ISk^{\tr_\infty}}$ is proper.
	Since $\ISk^{\tr_\infty}$ is locally compact, we deduce that its image by $\Sp$ is also locally compact.
\end{proof}

\begin{proof}[Proof of \cref{prop:net_in_Msm}]
	Since $\ocM(Y^\an,\bP,\beta)$ is proper, after passing to a subnet, we can assume that $(f_\lambda)$ converges to some $f_\infty\in\ocM(Y^\an,\bP,\beta)$.
	Then the stabilizations of the domain curves converge in $\ocM_{0,n}^\an$.
	Let $\oLambda\coloneqq\Lambda\sqcup\{\infty\}$.
	For $\lambda\in\oLambda$, let $[C_\lambda,(p_{j,\lambda})_{j\in J}, f_\lambda\colon C_\lambda\to Y^\an]$ denote the stable map $f_\lambda\in\ocM(Y^\an,\bP,\beta)$, and let $\sigma_\lambda\colon[C_\lambda,(p_{j,\lambda})_{j\in J}]\to[\oC_\lambda,(\op_{j,\lambda})_{j\in J}]$ be the stabilization of the domain curve.
	
	For each $i\in B$, up to shrinking $\oLambda$, the family $(\oC_\lambda)_{\lambda\in\oLambda}$ near the section $\op_{i,\lambda}$ is trivial.
	So we can pick a trivial family of small closed disks $\obbD_{i,\lambda}$ in $\oC_\lambda$ centered at $\op_{i,\lambda}$.
	We denote by $\obbD^\circ_{i,\lambda}$ the associated family of open disks over $\oLambda$.
	Let $\obbB_\lambda\coloneqq\oC\setminus\bigcup_{i\in B}\obbD_{i,\lambda}^\circ$, $\obbB^\circ_\lambda\coloneqq\oC\setminus\bigcup_{i\in B}\obbD_{i,\lambda}$, $\bbD_{i,\lambda}\coloneqq\sigma\inv_\lambda(\obbD_{i,\lambda})$, $\bbB_\lambda\coloneqq\sigma\inv_\lambda(\obbB_\lambda)$,
	and $\bbB_\lambda^\circ\coloneqq\sigma\inv_\lambda(\obbB^\circ_\lambda)$, see \cref{fig:caps}.
	Let $\tau_\rt\colon Y^\an_\rt\to\oM_\bbR$ be as in \cref{nota:toric_model}, and $\tau\colon (Y\setminus Y^\idt)^\an\to\oM_\bbR$ as in \cref{nota:Et}.
	\begin{figure}[!ht]
		\centering
		\setlength{\unitlength}{0.4\textwidth}
		\begin{picture} (1,1)
			\put(0,0){\includegraphics[width=\unitlength]{images/caps}}
			\put(0.88,0.04){$\infty$}
			\put(0.33,0.04){$\lambda$}
			\put(0.18,0.5){$\bbB_\lambda$}
			\put(0.72,0.5){$\bbB_\infty$}
			\put(0.09,0.9){$\bbD_{i,\lambda}$}
			\put(0.63,0.9){$\bbD_{i,\infty}$}
		\end{picture}
		\caption{Cut the domain curves into bodies and caps.}
		\label{fig:caps}
	\end{figure}
	
	\begin{claim} \label{claim:boundary_of_cap}
		Up to shrinking $\oLambda$, there exists a compact subset $K\subset M_\bbR$ such that $f_\lambda(\partial\bbD_{i,\lambda})\in \tau_\rt\inv(K)\subset T_M^\an\subset U^\an$, for all $\lambda\in\oLambda$.
	\end{claim}
	\begin{proof}
		For every $\lambda\in\Lambda$, write $\Sp(f_\lambda)$ as $[\Gamma_\lambda,(p_{j,\lambda})_{j\in J},h_\lambda]$.
		Write the limit of $\Sp(f_\lambda)$ as $[\Gamma_\infty,\allowbreak (p_{j,\infty})_{j\in J},\allowbreak h_\infty]$, which is a priori different from $\Sp(f_\infty)$.
		For every $\lambda\in\oLambda$, let $\oC_\lambda^s$ denote the convex hull of the points $(\op_{j,\lambda})_{j\in J}$ in $\oC_\lambda$.
		For $\lambda\neq\infty$, since $[C_\lambda, (p_{j,\lambda})]$ is stable, $[\Gamma_\lambda,(p_{j,\lambda})_{j\in J}]$ is identified with $[\oC_\lambda^s,(\op_{j,\lambda})_{j\in J}]$.
		Then by the continuity of
		\[\ocM(Y^\an,\bP,\beta)\xrightarrow{\ \st\ }\ocM_{0,n}^\an\longrightarrow\oM_{0,n}^\trop,\]
		they are also identified for $\lambda=\infty$.
		
		Thus for every $\lambda\in\oLambda$, we have a retraction map $r\colon\oC_\lambda\to\Gamma_\lambda$.
		Let $b_{i,\lambda}\coloneqq r(\partial\obbD_{i,\lambda})\in\Gamma_\lambda$.
		By construction, $b_{i,\infty}\in\Gamma_\infty$ is a point close to $p_{i,\infty}$ but different from $p_{i,\infty}$, so $h_\infty(b_{i,\infty})\in M_\bbR$.
		Since $\Sp(f_\lambda)$ converges, up to shrinking $\oLambda$, there exists a compact subset $K\subset M_\bbR$ containing $h_\lambda(b_{i,\lambda})$ for all $\lambda\in\oLambda$.
		For all $\lambda\neq\infty$, by the definition of $\Sp$, we have $(\tau\circ f_\lambda)(\partial\bbD_{i,\lambda})=h_\lambda(b_{i,\lambda})$, hence $f_\lambda(\partial \bbD_{i,\lambda}) \in \tau\inv(K)=\tau_\rt^{-1}(K)$.
		Since $\tau_\rt$ is proper, $\tau_\rt^{-1}(K) \subset T_M^{\an}$ is compact.
		Now by the continuity of the universal stable map $f$, we obtain $f_\infty(\partial \bbD_{i,\infty}) \subset \tau_\rt^{-1}(K)$ as well.
	\end{proof}
	
	\begin{claim} \label{claim:off_caps}
		$f_\infty(\bbB_\infty)$ is disjoint from $D^\an\subset Y^\an$.
	\end{claim}
	\begin{proof}
		By \cref{claim:boundary_of_cap}, up to shrinking $\oLambda$, there exists a compact subset $K\subset M_\bbR$ such that $f_\infty(\partial\bbB_\infty)\in \tau_\rt\inv(K)\subset T_M^\an\subset U^\an$, for all $\lambda\in\oLambda$.
		Let $\alpha\colon U^\an\to\bbR^n_{\ge 0}$ be the proper continuous map in \cref{lem:enough_global_functions}.
		Then $\alpha(\tau_\rt\inv(K))\subset\bbR^n_{\ge0}$ is compact, so it is contained in $[0,N]^n\subset\bbR^n_{\ge 0}$ for some positive number $N$.
		By the maximum modulus principle, for any $\lambda\in\Lambda$, $(\alpha\circ f_\lambda)(\bbB_\lambda)$ is contained in $[0,N]^n$, so $f_\lambda(\bbB_\lambda)$ is contained in $\alpha\inv([0,N]^n)\subset U^\an$.
		The properness of $\alpha$ implies that $\alpha\inv([0,N]^n)$ is compact.
		So we deduce by the continuity of the universal stable map $f$ that $f_\infty(\bbB^\circ_\infty)$ is contained in $\alpha\inv([0,N]^n)\subset U^\an$.
				Now choose any $0<\epsilon'<\epsilon$, and denote $\bbB'_\lambda$ and $\bbB'^\circ_\lambda$ similarly.
		Note that $\bbB_\lambda$ is contained in $\bbB'^\circ_\lambda$.
		Applying the above argument to $\epsilon'$, we conclude that $f_\infty(\bbB_\infty)\subset f_\infty(\bbB'^\circ_\infty)$ is contained in $U^\an$, i.e.\ disjoint from $D^\an\subset Y^\an$.
			\end{proof}
	
	\begin{claim} \label{claim:cap}
		$\bbD^\circ_{i,\infty}$ has no bubbles.
	\end{claim}
	\begin{proof}
		Since $(\Sp(f_\lambda))$ converges in $\SP^{\tr_\infty}(M_\bbR,\bP)$ by assumption, $h_\infty(p_{i,\infty})\not\in E_\rt^\trop$.
		Up to shrinking $\epsilon$, we can assume that $h_\infty([b_{i,\infty},p_{i,\infty}]) \subset \oM_\bbR$ is disjoint from $\Wall_A \cup E_\rt^\trop$.
		Then up to shrinking $\oLambda$, we can pick a compact convex polyhedral subset $V \subset \oM_\bbR\setminus(\Wall_A\cup E_\rt^\trop)$ containing all $h_\lambda([b_{i,\lambda},p_{i,\lambda}])$.
		
		Since $V \cap E_{\rt}^{\trop} = \emptyset$, by \cref{nota:Et} we have $\tV \coloneqq\tau_\rt^{-1}(V) \subset W^\an$.
		Since $V \cap \Wall_A = \emptyset$, for all $\lambda\in\Lambda$, we have $(\tau \circ f)(\bbD_{i,\lambda}) = h_\lambda([b_{i,\lambda},p_{i,\lambda}]) \subset V$, and thus $f(\bbD_{i,\lambda})\subset\tV$.
		By the continuity of the universal stable map and the compactness of $\tV$, we deduce that $f(\bbD^\circ_{i,\infty})\subset\tV$.
		Up to shrinking $V$, we can assume $\tV$ to be affinoid, then $f_\infty\colon \bbD^\circ_{i,\infty} \to \tV$ contracts all possible bubbles.
		Since $p_{i,\lambda}$ is the only marked point in $\bbD^\circ_{i,\infty}$, the stability condition for $f_\infty$ implies that $\bbD^\circ_{i,\infty}$ cannot have any bubbles.
	\end{proof}
	
	Claims \ref{claim:off_caps} and \ref{claim:cap} imply that $f_\infty^{-1}(D^\an)$ is supported on a finite set.
	Since $(f_\lambda)_{\lambda\in\Lambda}$ lies in $\cM^\sm(U^\an,\bP,\beta)$, the latter is in particular nonempty, thus $\beta$ is compatible with $\bP$ by \cref{rem:compatible_curve_class}.
	As $\deg f_\infty^{-1}(D^\an)=\beta\cdot D$, we deduce that for every $i\in B$, $f_\infty(p_{i,\infty})\in (D_i^\circ)^\an$ and $f_\infty\inv(D^\an)=\sum_{i\in B} m_i p_{i,\infty}$; in other words, we have $f_\infty\in\cM(U^\an,\bP,\beta)$.
	It remains to show that the domain curve $[C_\infty,(p_{j,\infty})_{j\in J}]$ is stable.
	There are no unstable components attached to $\bbB_\infty$ by \cref{claim:off_caps} and the affineness of $U$, and no unstable components attached to any $\bbD^\circ_{i,\infty}$ by \cref{claim:cap}.
	This completes the proof.
\end{proof}

Consider the commutative diagram
\[\begin{tikzcd}
	\cM^\sm(U^\an,\bP,\beta) \rar{\Phi_i} \dar{\Sp} & \ocM_{0,n}^\an\times U^\an \dar \\
	\SP(M_\bbR,\bP) \rar{\Phi_i^\trop} & \oM_{0,n}^\trop\times\oM_\bbR
\end{tikzcd}\]
where $\Phi_i^\trop$ takes the domain nodal metric tree and evaluation at the $i$-th marked point.

\begin{proposition} \label{prop:deformation_invariance}
	The space of spines transverse at infinity $\SP^{\tr_\infty}(M_\bbR,\bP)$ has a base of open subsets $\{V\}$ such that the function
	\begin{equation} \label{eq:sum_of_counts}
	T\longmapsto \sum_{T'\in V,\ \Phi_i^\trop(T')=\Phi_i^\trop(T)} N(T',\beta)
	\end{equation}
	is constant on every $V$.
\end{proposition}

\begin{definition}
	We call a spine $S\in\SP(M_\bbR,\bP)$ \emph{rigid} if $\Phi_i^\trop$ is injective near $S$, i.e.\ if there exists an open neighborhood $V_S$ of $S$ such that $\Phi_i^\trop|_{V_S}$ is injective.
	Note that the locus of rigid spines is open.
	Transverse spines are rigid by \cref{prop:rigidity_spine}.
\end{definition}

\begin{theorem} \label{thm:deformation_invariance_rigid}
	The count $N_i(S,\beta)$ is locally constant on the locus of rigid spines of $\SP^{\tr_\infty}(M_\bbR,\bP)$.
	In particular, it is locally constant on the transverse locus $\SP^\tr(M_\bbR,\bP)$.
\end{theorem}
\begin{proof}
	This is a special case of \cref{prop:deformation_invariance}.
\end{proof}

\begin{corollary} \label{cor:vary_lengths}
	For a transverse spine $S=[\Gamma,(v_j)_{j\in J},h]$, the count $N_i(S,\beta)$ does not change when we vary the lengths of the edges of $\Gamma$ on which $h$ is constant.
\end{corollary}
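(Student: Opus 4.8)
The plan is to promote the finite number $N_i(S,\beta)=\length\big(F_i(S,\beta)\big)$ to the degree of a finite étale map over a connected base, and then to conclude by local constancy of that degree. Since the whole section fixes $A\in\bbN$ big with respect to $\beta$, every $f\in\cM^\sm(U^\an,\bP,\beta)$ has a well-defined spine $\Sp(f)\in\SP(M_\bbR,\bP)$ with respect to $\Wall_A$ by \cref{prop:tropicalization_of_stable_map}, so all spines below live in one moduli space. First I would apply \cref{prop:rigidity_spine} with $u=v_i$ to obtain a small open connected neighborhood $V_S$ of $S$ in $\SP^\tr(M_\bbR,\bP)$ on which $\Phi_i^\trop$ (which on the transverse locus is the map $\Psi_{v_i}$ of that proposition, under the identifications $\NT^\emptyset_n\simeq\oM_{0,n}^\trop$ and $\oM_\bbR\simeq\oSk(U)$ of \cref{prop:skeleton_of_M0n} and \cref{lem:essential_skeleton}) is an open homeomorphism onto its image $R^\trop\coloneqq\Phi_i^\trop(V_S)$, an open connected subset of $\oM_{0,n}^\trop\times\oM_\bbR$; shrinking $V_S$ I may assume it is relatively compact in the transverse locus, so that its closure meets only transverse spines, using \cref{prop:topology_TC_SP}(\ref{prop:topology_TC_SP:closure}). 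Put $r_0\coloneqq\Phi_i^\trop(S)$. By Lemmas \ref{lem:source_of_skeletal_curve} and \ref{lem:restrict_to_skeleton}, the fibre $\Phi_i\inv(r_0)\subset\cM(U^\an,\bP,\beta)$ is a finite set of skeletal curves lying in $\cM^\sm(U^\an,\bP,\beta)$, near which $\Phi_i$ is representable and étale; as $S$ is transverse, \cref{prop:continuity} makes $\Sp$ continuous there, so $F_i(S,\beta)$ is a union of isolated points of this fibre.

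Next I would spread this out over the tropical tube. Under the identifications above, $\Sk(\cM_{0,n}\times U)\cap\rho\inv(R^\trop)$ is a closed, locally compact subset identified with $R^\trop$, over which $\Phi_i|_\ISk$ is proper and set-theoretically finite by \cref{lem:restrict_to_skeleton}(\ref{lem:restriction_to_skeleton:all}). I claim that, after possibly shrinking $V_S$, the locus $\tR_0$ of skeletal curves over $R^\trop$ whose spine lies in $V_S$ is a union of connected components, proper and finite onto $R^\trop$, with $\tR_0\cap\Phi_i\inv(r_0)=F_i(S,\beta)$. Openness and closedness of $\tR_0$ inside the skeletal locus over $R^\trop$ come from étaleness of $\Phi_i$ near $\ISk$ (\cref{lem:restrict_to_skeleton}(\ref{lem:restriction_to_skeleton:etale})) together with rigidity of spines (\cref{prop:rigidity_spine}): once the domain and the point of evaluation are fixed, the spine of a nearby skeletal curve cannot move, exactly as in the proof of \cref{prop:moving_w} via \cref{lem:skeletal_curve_connected_component}. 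Properness comes from Gromov compactness of $\ocM(Y^\an,\bP,\beta)$ combined with the closedness statements \cref{prop:topology_TC_SP}(\ref{prop:topology_TC_SP:limit},\ref{prop:topology_TC_SP:closure}) and \cref{lem:skeletal_curve_contraction}, which force a limit of curves with transverse spine in $\overline{V_S}$ again to have transverse spine in $\overline{V_S}$. Granting the claim, \cref{lem:topological_extension} extends $\tR_0$ to a union of connected components over an open neighborhood of $R^\trop$; combining this with \cref{prop:smoothness_all}, whose Zariski dense open $O$ of finite étaleness contains $\Sk(\cM_{0,n}\times U)$ by \cref{lem:essential_skeleton_semistable_pair}, I get a connected Zariski open $R\subset\rho\inv(R^\trop)$ meeting every fibre of $\rho\inv(R^\trop)\to\ocM_{0,n}^\an$, and a union of connected components $\cM_{V_S,R}\coloneqq\cM^\sm(U^\an,\bP,\beta)_{V_S,R}$ of $\ocM(Y^\an,\bP,\beta)_R$ on which $\Phi_i$ is finite étale.

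Finally, since $R$ is connected and $\Phi_i\colon\cM_{V_S,R}\to R$ is finite étale, its degree is constant; reading it off over the skeletal fibre at $r_0$ gives $\length\big(F_i(S,\beta)\big)=N_i(S,\beta)$, while reading it off over $\Phi_i^\trop(S')$ for any transverse $S'\in V_S$ gives $N_i(S',\beta)$ by the same argument applied to $S'$. Hence $N_i(\cdot,\beta)$ is locally constant on $\SP^\tr(M_\bbR,\bP)$, i.e.\ deformation invariant among transverse spines, which is the statement of the corollary once one varies only the lengths of the edges on which $h$ is constant. I expect the main obstacle to be precisely the separation/properness part of the claim: ruling out that curves with a different spine, or with a Deligne--Mumford-boundary or otherwise non-skeletal degeneration, collide with the spine-$S$ family inside the compact space $\ocM(Y^\an,\bP,\beta)_R$ — this is where transversality of $S$, rigidity of spines, and the closedness properties of the tropical moduli spaces must all be combined carefully.
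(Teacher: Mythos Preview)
You are doing far too much. In the paper, \cref{cor:vary_lengths} is a one-line consequence of \cref{thm:deformation_invariance}, stated immediately before it: varying the lengths of edges on which $h$ is constant does not change the image $h(\Gamma)$, the weight vectors, or the vertices and their images, so all the transversality conditions of \cref{def:transverse} are preserved along the deformation. One therefore has a continuous path in $\SP^\tr(M_\bbR,\bP)$, and the local constancy of $N_i(\cdot,\beta)$ asserted by \cref{thm:deformation_invariance} gives the result. No further argument is needed.

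What you have written is instead a sketch of an alternative proof of \cref{thm:deformation_invariance} itself. Even as such it has a real gap, which you identify yourself: the properness/separation step. The paper's proof of \cref{thm:deformation_invariance} handles exactly this via \cref{lem:MSbeta_punctured_disk} (the punctured-disk argument, with the three Claims controlling the caps and body of the curve) together with \cref{lem:transverse_disk}, to show that the Zariski closure of $\cM^\sm(U^\an,\bP,\beta)_{V_S}$ in $\ocM(Y^\an,\bP,\beta)_{V_{\oS}}$ stays inside $\cM(U^\an,\bP,\beta)$. Your appeal to ``Gromov compactness combined with closedness statements \cref{prop:topology_TC_SP}(\ref{prop:topology_TC_SP:limit},\ref{prop:topology_TC_SP:closure}) and \cref{lem:skeletal_curve_contraction}'' does not rule out boundary degenerations in $\ocM(Y^\an,\bP,\beta)\setminus\cM(U^\an,\bP,\beta)$, and \cref{lem:skeletal_curve_connected_component} only controls components inside the skeletal locus, not over the larger Zariski open $R$ you need. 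So if you wanted to reprove the theorem along your lines you would still owe the analog of \cref{lem:MSbeta_punctured_disk}; but for the corollary at hand, simply cite \cref{thm:deformation_invariance}.
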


\begin{proof}[Proof of \cref{prop:deformation_invariance}]
	Let $S\in\SP^{\tr_\infty}(M_\bbR,\bP)$ and $V_S^0$ any open neighborhood of $S$.
	We need to find an open neighborhood $V_S\subset V_S^0$ of $S$ satisfying the condition in \cref{prop:deformation_invariance}.
	By \cref{cor:properness_SP}, the subset of realizable spines $\RSP\subset\SP^{\tr_\infty}(M_\bbR,\bP)$ is closed, and locally compact.
	Therefore, if $S\not\in\RSP$, the sum in \eqref{eq:sum_of_counts} is constantly zero on any open neighborhood of $S$ disjoint from $\RSP$.
	
	Next we assume $S\in\RSP$.
	Consider the commutative diagram
	\[\begin{tikzcd}
		\ISk \rar{\Phi_i} \dar[swap]{\Sp} & \oSk\coloneqq\oSk(\cM_{0,n}) \times \Sk(U) \\
		\RSP\subset\SP(M_\bbR,\bP) \urar[swap]{\Phi_i^\trop}
	\end{tikzcd}\]
	where $\Phi_i^\trop$ takes the domain nodal metric tree and evaluation at the $i$-th marked point.
	By \cref{lem:restriction_to_skeleton}(\ref{lem:restriction_to_skeleton:all}), $\Phi_i$ is set-theoretically finite.
	So $\Phi_i^\trop|_\RSP\colon\RSP\to\oSk$ is also finite.
	We apply \cref{lem:topological_extension_point} to $\Phi_i^\trop|_\RSP$ and $S\in V_S^0\cap \RSP\subset \RSP$.
	We obtain $\Phi_i^\trop(S)\in W\subset\oSk$ such that the connected component $\tW$ of $\RSP_W$ containing $S$ is contained in $V_S^0\cap \RSP$, and moreover the restriction $\Phi_i^\trop\colon\tW\to W$ is proper with finite fibers.
	(We use $\RSP$ in the proof instead of just working with $\SP(M_\bbR,\bP)$ because we do not know the local compactness of the larger space and we want to apply \cref{lem:topological_extension_point}.)
	
	Now we choose an open neighborhood $V_S$ of $S$ in $V_S^0$ such that $V_S\cap \RSP=\tW$ and $\Phi_i^\trop(V_S)=W$.
	Let $R\coloneqq W$ and $\tR\coloneqq(\Sp|_\ISk)\inv(\tW)$.
	Since $\tW$ is a connected component of $R_W$, by the continuity of $\Sp$ (see \cref{prop:continuity}), $\tR$ is a union of connected components of $\Phi_i\inv(R)$.
	We conclude the proof using \cref{lem:finite_etale_degree}.
\end{proof}

\section{Toric tail condition in families} \label{sec:toric_tail_condition}

In this section, we study toric tail condition in families, and prove that it cuts out connected components in the moduli spaces of analytic stable maps under two different transversality assumptions, see Proposition \ref{prop:toric_tail_cc} and \ref{prop:toric_tail_cc_skeletal}.
Then we deduce a generalization of the deformation invariance of the previous section, including also truncated (i.e.\ non-extended) spines, see \cref{thm:deformation_invariance_truncated}.

We follow Notations \ref{nota:moduli_spaces} and \ref{nota:moduli_spaces_analytic}, fix $\bP,\beta$, and $A\in\bbN$ big with respect to $\beta$.
We assume $\abs{I}\ge 1$, and pick $s\in I$ and $e\in B$, which will mean respectively the \emph{start} and the \emph{end} of a tail.

\begin{notation} \label{nota:toric_tail}
	Let $\Theta\subset\NT_{B\cup\{s\}}$ be the subspace of nodal metric trees whose $s$-leg and $e$-leg are incident to a single 3-valent vertex.
	Let $\cM^\sm(U^\an,\bP,\beta)_\Theta$ be the preimage of $\Theta$ by the composite map
	\[\cM^\sm(U^\an,\bP,\beta)\xrightarrow{\dom}\ocM_{0,n}^\an\longto\NT_n\longto\NT_{B\cup\{s\}},\]
	where the last arrow forgets all the legs in $I\setminus s$.
	
	Given $[(C,(p_j)_{j\in J},f]\in\cM^\sm(U^\an,\bP,\beta)_\Theta$, let $\Trop(f)=[\Gamma,(p_j)_{j\in J},h]$ be as in \cref{prop:tropicalization_of_stable_map}, $\Gamma'\subset\Gamma$ the convex hull of $\set{p_j | j\in B}\cup p_s$, $\op_s\in\Gamma'$ the root of the $p_s$-leg of $\Gamma'$, and $[p_s,p_e]\subset\Gamma'$ the path connecting $p_s$ and $p_e$ (see \cref{fig:tail}).
	Let $C\xrightarrow{r}\Gamma\xrightarrow{r'}\Gamma'$ be the retraction maps, $\Delta\coloneqq r'^{-1}([p_s,p_e])\subset\Gamma$, $\bbT\coloneqq r\inv(\Delta)\subset C$, and $\bbT^*\coloneqq\bbT\setminus\{p_e\}$.
	The definition of $\Theta$ implies that $\bbT\subset C$ is a closed disk, and $\bbT^*$ does not contain any $p_j$ for $j\in B$.
	\begin{figure}[!ht]
		\centering
		\setlength{\unitlength}{0.3\textwidth}
		\begin{picture} (1,1)
			\put(0,0){\includegraphics[width=\unitlength]{images/tail}}
			\put(0.41,0.02){$p_e$}
			\put(0.41,0.38){$\op_s$}
			\put(0.85,0.34){$p_s$}
		\end{picture}
		\caption{The tree $\Gamma'$, which is a subtree of $\Gamma$.}
		\label{fig:tail}
	\end{figure}
\end{notation}

\begin{definition} \label{def:toric_tail}
	Let $\cM_\tail^\sm(U^\an,\bP,\beta)_\Theta \subset \cM^{\sm}(U^\an,\bP,\beta)_\Theta$ be the subspace of stable maps such that $f(\bbT^*) \subset T_M^{\an}$.
\end{definition}

\begin{lemma} \label{lem:toric_tail_equiv}
	For $f \in \cM^{\sm}(U^\an,\bP,\beta)_\Theta$, the following are equivalent:
\begin{enumerate}
	\item \label{lem:toric_tail_equiv:T_M} $f(\bbT^*) \subset T_M^{\an}$.
	\item \label{lem:toric_tail_equiv:W} $f(\bbT) \subset W^{\an}$, ($W$ as in \cref{lem:toric_model}).
	\item \label{lem:toric_tail_equiv:E} $f(\bbT) \cap E^{\an} = \emptyset$.
	\item \label{lem:toric_tail_equiv:twig} $h|_\Delta$ factors through the retraction $\Delta\to[\op_s,p_e]$, i.e.\ there are no twigs (in the sense of \cref{def:twig}) attached to $[\op_s,p_e]$.
\end{enumerate}
\end{lemma}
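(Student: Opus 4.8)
The plan is to reduce all four conditions to a single statement about the rigid points of $\bbT^*$, exploiting that membership in $\cM^\sm(U^\an,\bP,\beta)$ is very restrictive. First I would record two facts. Since $f\inv(D^\an)=\sum_{j\in B}m_jp_j$ scheme-theoretically and $f\inv(E^\an)$ is a finite set of rigid points disjoint from the nodes by \cref{nota:Msm}(\ref{nota:Msm:E}), every non-rigid point of $C$ is carried by $f$ into $Y^\an\setminus(D^\an\cup E^\an)=(U\setminus E)^\an=T_M^\an$; equivalently, since $f(C)$ avoids $Y^\idt$ by \cref{nota:Msm}(\ref{nota:Msm:idt}) and $\tau_\rt\inv(M_\bbR)=T_M^\an$, the composite $\tau\circ f$ sends every non-rigid point into $M_\bbR\subset\oM_\bbR$. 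Secondly, $f(C)\cap D^\an\subset D^{\ess,\an}$ and $f(C)$ is disjoint from $D^\ess\setminus W$ by \cref{nota:Msm}(\ref{nota:Msm:idt}), so $f(p_e)\in D_e^{\circ,\an}\subset D^{\ess,\an}$ forces $f(p_e)\in W^\an$, whence $f(p_e)\notin E^\an$ since $E\cap W=\emptyset$ (\cref{lem:toric_model}(\ref{lem:toric_model:intersection})). Finally, as $\bbT^*$ contains no marked point indexed by $B$, all of $f(\bbT^*)$ lies in $U^\an$.

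With these preliminaries the equivalence of (\ref{lem:toric_tail_equiv:T_M}), (\ref{lem:toric_tail_equiv:W}) and (\ref{lem:toric_tail_equiv:E}) is essentially bookkeeping: a rigid point $x\in\bbT^*$ has $f(x)\in U^\an$, and the identities $W\cap U=T_M$ and $E\cap U=U\setminus T_M$ (\cref{lem:toric_model}) give $f(x)\in T_M^\an\Leftrightarrow f(x)\in W^\an\Leftrightarrow f(x)\notin E^\an$. Combined with the first paragraph, which disposes of the non-rigid points of $\bbT^*$ and of $p_e$, each of (\ref{lem:toric_tail_equiv:T_M}), (\ref{lem:toric_tail_equiv:W}), (\ref{lem:toric_tail_equiv:E}) becomes equivalent to the single assertion that no rigid point of $\bbT^*$ lies in $f\inv(E^\an)$.

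For the equivalence of (\ref{lem:toric_tail_equiv:T_M}) and (\ref{lem:toric_tail_equiv:twig}) I would use the factorization $\tau\circ f=h\circ r$ through the retraction $r\colon C\to\Gamma$, which emerges from the proof of \cref{prop:tropicalization_of_stable_map} (there $\tau_\rt\circ\of$ is constant on $\Gamma'\setminus\Gamma$ for the auxiliary skeleton $\Gamma'\supset\Gamma$). Since $\bbT=r\inv(\Delta)$ and $r\inv(p_e)=\{p_e\}$, one has $r(\bbT^*)=\Delta\setminus\{p_e\}$, so using $\tau_\rt\inv(M_\bbR)=T_M^\an$ and $f(\bbT^*)\subset U^\an$ one checks that (\ref{lem:toric_tail_equiv:T_M}) is equivalent to $h(\Delta\setminus\{p_e\})\subset M_\bbR$. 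It then remains to identify this last condition with (\ref{lem:toric_tail_equiv:twig}). Here I would use that $\Delta=(r')\inv([p_s,p_e])$ consists of the path $[\op_s,p_s]\cup[\op_s,p_e]$ together with the subtrees of $\Gamma$ hanging off it and retracting into it under $r'$, each of which carries no marked point indexed by $B$ and not $p_s$. Assuming $h(\Delta\setminus\{p_e\})\subset M_\bbR$: a hanging subtree with no marked point would be part of a twig of $\Trop(f)$, but every twig has a leaf mapping into $E_\rt^\trop\subset\partial\oM_\bbR$, a contradiction; hence each hanging subtree has all its leaves among interior marked points, whose weight vectors vanish, and the balancing condition \cref{def:tropical_curve}(\ref{def:tropical_curve:balancing}) propagates inward from the leaves to force $h$ constant on it. Likewise $h$ is constant on the leg $[\op_s,p_s]$, the weight vector at $p_s$ being $P_s=0$. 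So $h|_\Delta$ factors through $\Delta\to[\op_s,p_e]$, i.e.\ (\ref{lem:toric_tail_equiv:twig}) holds. Conversely, if (\ref{lem:toric_tail_equiv:twig}) holds then $h(\Delta\setminus\{p_e\})=h([\op_s,p_e]\setminus\{p_e\})$, and every point of $[\op_s,p_e]$ other than $p_e$ is a non-rigid point of $C$ (the only marked point on this leg is $p_e$, and $\op_s$ has valence at least $3$), hence is sent into $M_\bbR$ by the first paragraph; this gives (\ref{lem:toric_tail_equiv:T_M}).

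The main obstacle is the equivalence of (\ref{lem:toric_tail_equiv:T_M}) and (\ref{lem:toric_tail_equiv:twig}): it is the only place where the non-archimedean input (the factorization $\tau\circ f=h\circ r$ and the computation $\tau_\rt\inv(M_\bbR)=T_M^\an$) must be combined with the purely combinatorial structure of $\Trop(f)$ --- the balancing condition together with the definition of a twig --- and one has to keep careful track of which points of $\Gamma$, and hence of $C$, are rigid. By contrast the equivalences among (\ref{lem:toric_tail_equiv:T_M}), (\ref{lem:toric_tail_equiv:W}) and (\ref{lem:toric_tail_equiv:E}) are soft, being manipulations of the inclusions $W\cap U=T_M$, $E\cap U=U\setminus T_M$, $E\cap W=\emptyset$ and the finiteness of $f\inv(D^\an)$ and $f\inv(E^\an)$.
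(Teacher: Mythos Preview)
Your proof is correct and follows essentially the same route as the paper. The paper's own proof is extremely terse: it records that $f(\bbT^*)\subset U^\an$ and $f(p_e)\in(D^\ess\cap W)^\an$, then invokes \cref{lem:toric_model}(\ref{lem:toric_model:intersection}) for (\ref{lem:toric_tail_equiv:T_M})$\Leftrightarrow$(\ref{lem:toric_tail_equiv:W})$\Leftrightarrow$(\ref{lem:toric_tail_equiv:E}) and says only ``the equivalence between (\ref{lem:toric_tail_equiv:T_M}) and (\ref{lem:toric_tail_equiv:twig}) follows from the balancing condition.'' Your write-up is precisely the unpacking of those two sentences---the reduction of (\ref{lem:toric_tail_equiv:T_M})--(\ref{lem:toric_tail_equiv:E}) to the identities $W\cap U=T_M$ and $E\cap W=\emptyset$, and the translation of (\ref{lem:toric_tail_equiv:T_M}) into $h(\Delta\setminus\{p_e\})\subset M_\bbR$ followed by the leaf-to-root induction using balancing to see that every hanging subtree of $\Delta$ must be $h$-constant.
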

\begin{proof}
	By Notations \ref{nota:toric_tail} and \ref{nota:Msm}, we always have $f(\bbT^*)\subset U^\an$ and $f(p_e)\in(D^\ess\cap W)^\an$.
	Hence the equivalences between (\ref{lem:toric_tail_equiv:T_M}-\ref{lem:toric_tail_equiv:E}) follow from \cref{lem:toric_model}(\ref{lem:toric_model:intersection}).
	The equivalence between (\ref{lem:toric_tail_equiv:T_M}) and (\ref{lem:toric_tail_equiv:twig}) follows from the balancing condition.
\end{proof}

\begin{proposition} \label{prop:toric_tail_cc}
	Let $\cN\subset \cM^{\sm}(U^\an,\bP,\beta)_\Theta$ be any subspace.
	Then \[\cN\cap\cM^\sm_\tail(U^\an,\bP,\beta)_\Theta\subset\cN\]
	is open.
	If furthermore $h(p_s)\notin\Wall_A$ for all $f\in\cN$, then the inclusion above is a union of connected components.
\end{proposition}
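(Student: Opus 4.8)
The plan is to first rewrite the toric tail condition in a form that is manifestly closed, and then treat the two assertions in turn. By \cref{lem:toric_tail_equiv}, for $f\in\cM^\sm(U^\an,\bP,\beta)_\Theta$ the condition $f(\bbT^*)\subset T_M^\an$ is equivalent to $f(\bbT)\cap E^\an=\emptyset$: indeed $f(\bbT\setminus\bbT^*)=f(p_e)$ lies in $(D^\ess\cap W)^\an$, which is disjoint from $E^\an$ by \cref{lem:toric_model}(\ref{lem:toric_model:intersection}). Let $\cC_\cN\to\cN$ be the restriction of the universal curve, $F\colon\cC_\cN\to Y^\an$ the universal map, and $\cT\subset\cC_\cN$ the \emph{relative tail}, i.e.\ the family of closed disks with fibre $\bbT_f$ over $f$, obtained as the preimage of the relative path $[\sigma_s,\sigma_e]$ joining the $s$- and $e$-sections under the retraction of $\cC_\cN$ onto the relative convex hull of the universal sections. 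Then $\cN\setminus\cM^\sm_\tail(U^\an,\bP,\beta)_\Theta=\{f\in\cN \mid F(\cT_f)\cap E^\an\neq\emptyset\}$.

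For openness I would argue that $\cT\subset\cC_\cN$ is closed and that $\cT\to\cN$ is proper. Closedness reduces to the continuity, in families of analytic curves, of the retraction onto the relative convex hull of a fixed set of sections — the same continuity that underlies \cref{prop:tropicalization_of_stable_map} and \cref{prop:continuity} — together with the closedness of $[\sigma_s,\sigma_e]$ inside that convex hull; properness follows from properness of $\cC_\cN\to\cN$, a restriction of the universal curve over a moduli space of stable maps. Granting this, $F^{-1}(E^\an)\cap\cT$ is closed in $\cT$ and proper over $\cN$, hence has closed image in $\cN$, and its complement $\cM^\sm_\tail(U^\an,\bP,\beta)_\Theta\cap\cN$ is open. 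The point needing care is that $\cT$ can acquire extra (contracted) bubbles over the deeper strata of $\cN$; but such a jump only enlarges the fibre, so $\cT$ stays closed and the argument goes through.

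For the second assertion, assume $h(p_s)\notin\Wall_A$ throughout $\cN$. Since $s\in I$, \cref{rem:internal_leg_contracted} forces $h$ to be constant, with value $h(p_s)\notin\Wall_A$, along the $p_s$-leg of the spine; so that leg carries no bending vertex and no twig root, and $h(\op_s)=h(p_s)\notin\Wall_A$. By the openness already proved it suffices to show $\cM^\sm_\tail(U^\an,\bP,\beta)_\Theta\cap\cN$ is also closed in $\cN$, i.e.\ that a limit $f_0\in\cN$ of maps $f_n\in\cM^\sm_\tail(U^\an,\bP,\beta)_\Theta\cap\cN$ is again a toric tail map. If it were not, then by \cref{lem:toric_tail_equiv}(\ref{lem:toric_tail_equiv:twig}) the tropical curve $\Trop(f_0)$ would carry a twig $\Lambda$ over the tail region of $f_0$. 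As $A$ is big with respect to $\beta$, every twig of $\Trop(f_0)$ has degree at most $\degtwig\Trop(f_0)=\beta\cdot\tE\le A$, so $h_0(\Lambda)\subset\Wall_A$ by \cref{const:walls_by_induction}; hence the root $w$ of $\Lambda$ is a spine vertex with $h_0(w)\in\Wall_A$, and by the rigidity of the $p_s$-leg it lies on the tail path $[\op_s,p_e]$ strictly past $\op_s$. The terminal edge of $\Lambda$ is an infinite edge of nonzero weight running into $E_\rt^\trop$. By the continuity of $\Trop$ (\cref{prop:continuity}) and the neighbourhood description in $\TC(M_\bbR,\bP)$ of \cref{const:topology_TC_SP} — under which an infinite edge cannot materialize in a limit — the curves $\Trop(f_n)$ for $n\gg0$ must carry an infinite edge of the same nonzero weight, sitting over the tail region of $f_n$ and off the tail path; this is exactly the failure of $h_n$ restricted to the tail region to factor through the retraction onto the tail path, contradicting \cref{lem:toric_tail_equiv}(\ref{lem:toric_tail_equiv:twig}) for $f_n$. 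Therefore $f_0$ is a toric tail map, the set is clopen in $\cN$, and the inclusion is a union of connected components. The hard part is precisely this degeneration bookkeeping — matching the twig of $\Trop(f_0)$ with an off-path infinite edge of $\Trop(f_n)$ in the right region using only the coarse topology on $\TC(M_\bbR,\bP)$, and locating the one place where the matching could break, near $p_s$, which the hypothesis $h(p_s)\notin\Wall_A$ rules out via \cref{rem:internal_leg_contracted}.
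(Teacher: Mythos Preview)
Your openness argument is fine; the paper dispatches it in one line by citing that \cref{lem:toric_tail_equiv}(\ref{lem:toric_tail_equiv:W}), namely $f(\bbT)\subset W^\an$, is manifestly open, but your route through properness of the relative tail reaches the same conclusion.

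The closedness argument has two genuine gaps.

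First, your derivation of $h(\op_s)=h(p_s)$ via \cref{rem:internal_leg_contracted} does not work in general. That remark only says $h$ is constant on the $p_s$-leg of the \emph{spine} $\Gamma^s$, i.e.\ the single topological edge from $p_s$ to the first vertex of valency $\ge 3$ in $\Gamma^s$. But $\op_s$ is the root of the $p_s$-leg in the \emph{smaller} tree $\Gamma'$ (the convex hull of $B\cup\{p_s\}$). When $\abs{I}>1$, other $I$-marked points can branch off strictly between $p_s$ and $\op_s$ in $\Gamma^s$, and across such a branch point the derivative along $[p_s,\op_s]$ need not stay zero: the branch subtree can reach $\Wall_A$, bend there, and feed back nonzero weight at the junction. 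The paper instead obtains $h_0(\op_s)=h_0(p_s)$ from the approximants: for each $f_\lambda$ the tail condition \cref{lem:toric_tail_equiv}(\ref{lem:toric_tail_equiv:twig}) directly gives $h_\lambda(\op_{s,\lambda})=h_\lambda(p_{s,\lambda})$, and one passes to the limit by continuity.

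Second, your step 3 has the continuity argument running the wrong direction. You assert that the infinite twig edge of $\Trop(f_0)$ forces a matching infinite edge of $\Trop(f_n)$ \emph{sitting over the tail region of $f_n$}. But the $f_n$ satisfy the tail condition, so every unmarked infinite leaf of $\Trop(f_n)$ lies in the head; its projection onto $[p_{s,n},p_{e,n}]$ is therefore $\op_{s,n}$. Under the contraction $c_n\colon\Gamma_n\to\Gamma_0$ of \cref{const:topology_TC_SP} this projection is carried to $\op_{s,0}$, whereas the projection of the limiting twig leaf onto $[p_{s,0},p_{e,0}]$ is its root $w_0$. Hence continuity forces $w_0=\op_s$ --- it does \emph{not} place a twig in the tail of $f_n$. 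This is precisely the paper's key step: the limiting twig can only be attached at $\op_s$, whence $h_0(\op_s)\in\Wall_A$, and the contradiction with $h_0(\op_s)=h_0(p_s)\notin\Wall_A$ (established as above) closes the argument.
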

\begin{proof}
	Since \cref{lem:toric_tail_equiv}(\ref{lem:toric_tail_equiv:W}) is an open condition, we deduce the openness of the inclusion.
	
	Next we prove the closedness under the assumption that $h(p_s)\notin\Wall_A$ for all $f\in\cN$.
	Let $f\in\cN$ be the limit of a net $(f_\lambda)_{\lambda\in\Lambda}$ in $\cN\cap\cM^\sm_\tail(U^\an,\bP,\beta)_\Theta$.
	In order to show $f\in\cM^\sm_\tail(U^\an,\bP,\beta)_\Theta$, it suffices to prove that $f$ satisfies \cref{lem:toric_tail_equiv}(\ref{lem:toric_tail_equiv:twig}).
	Suppose the contrary, then there is a twig attached to $[\op_s,p_e]$.
	By the continuity of tropicalization of curves (see \cref{rem:comparison_ACP}), the twig is necessarily attached at the point $\op_s$.
	So $h(\op_s)\in\Wall_A$.
	
	For any $\lambda\in\Lambda$, we denote $\Trop(f_\lambda)=[\Gamma_\lambda,(p_{j,\lambda})_{j\in J},h_\lambda]$ and $\op_{s,\lambda}$ as in \cref{nota:toric_tail}.
	By \cref{lem:toric_tail_equiv}(\ref{lem:toric_tail_equiv:twig}), $h_\lambda(\op_{s,\lambda})=h_\lambda(p_{s,\lambda})$.
	Hence $h(\op_s)=h(p_s)$ by continuity.
		Recall $h(\op_s)\in\Wall_A$ shown above, we deduce that $h(p_s)\in\Wall_A$.
	This contradicts the assumption that $h(p_s)\notin\Wall_A$, completing the proof.
\end{proof}

For skeletal curves we have a stronger connected component
statement.
Fix $i\in I$, and let $\ISk$ be the preimage of $\oSk(\cM_{0,n}) \times \Sk(U)$ by the map
\[\Phi_i=(\dom,\ev_i)\colon\cM^\sm(U^\an,\bP,\beta)\to\ocM_{0,n}^\an\times U^\an.\]

\begin{proposition} \label{prop:toric_tail_cc_skeletal}
	Let $\cN\subset\cM^{\sm}(U^\an,\bP,\beta)_\Theta\cap\ISk$ be an open subspace such that for all $f\in\cN$, if $h(p_s)\in\fd$ for some polyhedral cell $\fd\subset\Wall_A$, then the linear span of $\fd$ contains $P_e$.
		In this case, $\cN\cap\cM^{\sm}_\tail(U^\an,\bP,\beta)\subset \cN$ is a union of connected components.
\end{proposition}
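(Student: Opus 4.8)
The plan is to follow the blueprint of \cref{prop:toric_tail_cc}, using the two extra hypotheses — skeletality $\cN\subset\ISk$ and the cell condition on $h(p_s)$ — to take over the role played there by the assumption $h(p_s)\notin\Wall_A$. The inclusion $\cN\cap\cM^\sm_\tail(U^\an,\bP,\beta)\subset\cN$ is open for exactly the reason in \cref{prop:toric_tail_cc}: by \cref{lem:toric_tail_equiv}(\ref{lem:toric_tail_equiv:W}) the toric tail condition on $f\in\cM^\sm(U^\an,\bP,\beta)_\Theta$ is equivalent to $f(\bbT)\subset W^\an$, an open condition on the universal family. So the whole content is to prove this inclusion is also closed, for which I would argue along a connected path as in the proof of \cref{lem:skeletal_curve_connected_component}.

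Suppose closedness fails: there is a continuous $\alpha\colon[0,\epsilon)\to\cN$ with $\alpha(0)\in\cM^\sm_\tail$ and $\alpha(t)\notin\cM^\sm_\tail$ for $t\in(0,\epsilon)$. Write $\Trop(\alpha(t))=[\Gamma_t,(p_{j,t})_{j\in J},h_t]$ and keep the notation $\op_{s,t},\Delta_t$ of \cref{nota:toric_tail}. By \cref{lem:toric_tail_equiv}(\ref{lem:toric_tail_equiv:twig}), for $t>0$ the map $h_t|_{\Delta_t}$ fails to factor through $\Delta_t\to[\op_{s,t},p_e]$; since $\Theta$ makes $[p_s,p_e]$ the $p_s$-leg followed by $[\op_{s,t},p_e]$, this means $\Trop(\alpha(t))$ carries a twig (\cref{def:twig}) attached somewhere along $[\op_{s,t},p_e]$. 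Because $\alpha(0)$ satisfies the toric tail condition, $h_0$ is constant on the $p_s$-leg and $h_0|_{[\op_{s,0},p_e]}$ is an unbranched balanced path, hence affine with derivative $P_e$ reaching the ray of $D_e$. By continuity of tropicalization (\cref{prop:continuity}) and the local constancy of edge derivatives in \cref{const:topology_TC_SP}(\ref{const:topology_TC_SP:derivative}), the root of the twig of $\Trop(\alpha(t))$ cannot converge as $t\to0$ to an interior point of $[\op_{s,0},p_e]$ (that would leave a bending vertex of $\Trop(\alpha(0))$ there) nor to $p_e$ (an infinite vertex), so it converges to $\op_{s,0}$, forcing $h_0(\op_{s,0})\in\Wall_A$. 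Finally, since each $\alpha(t)$ lies in $\ISk$ it is skeletal, and \cref{lem:skeletal_curve_contraction} makes $h_0$ constant on the $p_s$-leg, so $h_0(p_s)=h_0(\op_{s,0})\in\Wall_A$; let $\fd\subset\Wall_A$ be a polyhedral cell containing $h_0(p_s)$, so that $P_e\in\Span(\fd)$ by hypothesis.

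\emph{The crux.} It remains to rule out a twig of $\Trop(\alpha(t))$ ($t$ small) whose root $y_t\to\op_{s,0}$, given that $h_0(\op_{s,0})$ lies on a cell $\fd$ of $\Wall_A$ with $P_e\in\Span(\fd)$; this is the main obstacle. I would analyze the vertex $y_t$ via the balancing condition of $\Trop(\alpha(t))$: the edge toward $\op_{s,t}$ still has weight $P_e$ (derivative continuity plus affineness of the limiting tail), the $p_s$-leg still has weight $0$ (skeletality), so if $w_t\ne0$ is the twig monomial then the outgoing weight along $[y_t,p_e]$ equals $P_e-w_t$; by finiteness of combinatorial types of twigs of bounded degree (\cref{rem:twig_bound}) we may take $w_t=w$ fixed. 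As $h_t(y_t)\to h_0(\op_{s,0})\in\fd$ and $P_e\in\Span(\fd)$, the limiting tail stays inside $\operatorname{aff}(\fd)\subset\Wall_A$; tracking the cells of $\Wall_A$ met by the edges at $y_t$ together with the description of $\Wall_A$ in Constructions \ref{const:initial_walls} and \ref{const:walls_by_induction}, I expect to force $w\in\Span(\fd)$ as well, and then that the twig and its compensating re-bends on $[y_t,p_e]$ are consistent with the path attaining weight $P_e$ at $D_e$ only if $w=0$, a contradiction. (Equivalently: for skeletal $f$ the would-be twig at $\op_s$ can always be slid onto the $\Gamma^B$-side of $\op_s$, so it never obstructs the toric tail condition.) Once this local tropical analysis is in place, $\cN\cap\cM^\sm_\tail(U^\an,\bP,\beta)$ is closed in $\cN$, hence a union of connected components; the rigidity statement \cref{prop:rigidity_spine} will likely be needed to promote the path-argument to a genuine clopen conclusion, exactly as in \cref{lem:skeletal_curve_connected_component}.
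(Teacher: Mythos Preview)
Your setup for closedness is backwards. If $\cN\cap\cM^{\sm}_\tail$ fails to be closed in $\cN$, one has a net \emph{in} $\cM^{\sm}_\tail$ converging to a limit \emph{not} in $\cM^{\sm}_\tail$; your path has $\alpha(0)\in\cM^{\sm}_\tail$ and $\alpha(t)\notin\cM^{\sm}_\tail$ for $t>0$, which is the failure of openness---already handled by \cref{prop:toric_tail_cc}.

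Even with the direction corrected, your ``crux'' misses the mechanism the paper actually uses. The paper does not attempt a balancing analysis of the twig monomial $w$. Instead it exploits skeletality through \cref{lem:restrict_to_skeleton}(\ref{lem:restriction_to_skeleton:all}): $\Phi_i|_{\ISk}$ is open, hence so is $\Phi_i$ restricted to $\cN\cap\cM^{\sm}_\tail$; combined with \cref{prop:transversality}, one may choose the approximating net $(f_\lambda)$ so that every $\Trop(f_\lambda)$ is \emph{transverse}. This is the key move you are missing. Transversality forces $h_\lambda(\op_{s,\lambda})\notin\Wall_A$, and (toric tail condition plus balancing at the unique 3-valent vertex) $h_\lambda$ is affine with derivative $P_e$ on the entire segment $\sigma_\lambda\cup[\op_{s,\lambda},p_{e,\lambda}]$ through $\op_{s,\lambda}$. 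The hypothesis $P_e\in\Span(\fd)$ is then used geometrically rather than combinatorially: since $h_\lambda(\op_{s,\lambda})\notin\Wall_A$, the image of this affine segment avoids every cell $\fd\ni h(p_s)$, while the cells of $\Wall_A$ not containing $h(p_s)$ are uniformly bounded away from $h_\lambda(\op_{s,\lambda})\to h(p_s)$ by continuity. This produces $\epsilon>0$ with $h_\lambda^{-1}(\Wall_A)$ at distance at least $\epsilon$ from $\op_{s,\lambda}$ in $\Gamma_\lambda$ for all large $\lambda$; passing to the limit gives $h(\op_s)\notin\Wall_A$, directly contradicting the existence of a twig at $\op_s$. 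No appeal to \cref{prop:rigidity_spine} or the path-promotion device of \cref{lem:skeletal_curve_connected_component} is needed.
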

\begin{proof}
	The openness follows from \cref{prop:toric_tail_cc}.
	For the proof of closedness, let $f\in\cN$ be a point in the closure.
	By \cref{lem:restriction_to_skeleton}(\ref{lem:restriction_to_skeleton:all}), the restriction of $\Phi_i$ to $\cN$ is open, so the restriction of $\Phi_i$ to $\cN\cap\cM^\sm_\tail(U^\an,\bP,\beta)$ is also open.
	Thus by \cref{prop:transversality}, we can find a net $(f_\lambda)_{\lambda\in\Lambda}$ in $\cN\cap\cM^{\sm}_\tail(U^\an,\bP,\beta)$ converging to $f$ such that $\Trop(f_\lambda)\in\TC^\tr(M_\bbR,\bP)$ for all $\lambda$.
	We denote $\Trop(f_\lambda)=[\Gamma_\lambda,(p_{j,\lambda})_{j\in J},h_\lambda]$, $\Delta_\lambda$ and $\op_{s,\lambda}$ as in \cref{nota:toric_tail}.
	Note $\Trop(f_\lambda)\in\TC^\tr(M_\bbR,\bP)$ implies that $h_\lambda(\op_{s,\lambda})\notin\Wall_A$.	
	
	Let $[\hGamma_\lambda,(p_{j,\lambda})_{j\in B\cup\{s\}},\hh_\lambda]$ be the tropicalization of $f_\lambda$ after forgetting the marked points in $I\setminus s$.
	Let $\sigma_\lambda$ be the topological edge of $\hGamma$ containing $\op_{s,\lambda}$ which does not intersect $[p_{s,\lambda},p_{e,\lambda}]\setminus\{\op_{s,\lambda}\}$ (see \cref{fig:toric_tail}).
	The uniqueness of such a topological edge follows from the definition of $\Theta$ in \cref{nota:toric_tail}, and the fact that $\hh_\lambda(\op_{s,\lambda})=h_\lambda(\op_{s,\lambda})\notin\Wall_A$.
	\begin{figure}[!ht]
	\centering
	\setlength{\unitlength}{0.3\textwidth}
	\begin{picture} (1,1)
	\put(0,0){\includegraphics[width=\unitlength]{images/toric_tail}}
	\put(0.54,0.02){$p_{e,\lambda}$}
	\put(0.54,0.38){$\op_{s,\lambda}$}
	\put(0.43,0.50){$\sigma_\lambda$}
	\put(0.64,0.50){$\mathfrak{d}$}
	\end{picture}
	\caption{A example of the image of $\Trop(f_\lambda)$ in $M_\bbR$.}
	\label{fig:toric_tail}
	\end{figure}
	\begin{claim} \label{claim:length_of_sigma}
		There exist $\epsilon > 0$ and $\lambda_0\in\Lambda$ such that for all $\lambda>\lambda_0$, $h_\lambda\inv(\Wall_A)$ has distance (in the metric of $\Gamma_\lambda$) at least $\epsilon$ from $\op_{s,\lambda}$;
		in other words, any twig of $h_\lambda$ has distance at least $\epsilon$ from $\op_{s,\lambda}$.
	\end{claim}
	\begin{proof}
		For any $\lambda\in\Lambda$, since $f_\lambda\in\cN\cap\cM^\sm_\tail(U^\an,\bP,\beta)$, by \cref{lem:toric_tail_equiv}(\ref{lem:toric_tail_equiv:twig}), $h_\lambda|_{\Delta_\lambda}$ factors through the retraction $\Delta_\lambda\to[\op_{s,\lambda},p_{e,\lambda}]$, and $h_\lambda$ is affine on $[\op_{s,\lambda},p_{e,\lambda}]$ with derivative $P_e$.
		Then by the balancing condition, $h_\lambda$ is affine on $\sigma_\lambda\cup[\op_{s,\lambda},p_{e,\lambda}]$ with derivative $P_e$.
		
		Recall our assumption on $\cN$ that for any polyhedral cell $\fd\subset\Wall_A$ containing $h(p_s)$, the linear span of $\fd$ contains $P_e$.
		As $h_\lambda(\op_{s,\lambda})\notin\Wall_A$ and $h_\lambda$ has transverse spine, we deduce that $h_\lambda\big(\sigma_\lambda\cup[\op_{s,\lambda},p_{e,\lambda}]\big)$ does not meet any such $\fd$.
		Hence for sufficiently small $\epsilon >0$, the distance in question can be checked using only cells of $\Wall_A$ that do not contain $h(p_s)$.
		So the claim follows from the continuity of tropicalization of curves (see \cref{rem:comparison_ACP}).
\end{proof}

It follows from \cref{claim:length_of_sigma} and the continuity that any twig of $h$ has distance at least $\epsilon$ from $\op_s\in\Gamma$.
Suppose to the contrary that $f\notin\cM^\sm_\tail(U^\an,\bP,\beta)$, then there is a twig (in the sense of \cref{def:twig}) attached to $[\op_s,p_e]$.
By the continuity, the twig is necessarily attached at the point $\op_s$, contradicting the statement about the distance, completing the proof.
\end{proof} 

Our next goal is \cref{thm:deformation_invariance_truncated}.
We follow the setting of \cref{const:naive_counts_truncated}.
We begin by adding more internal legs to $\hS$.

\begin{construction} \label{const:add_legs_to_tails}
	For each $j\in F$, we glue a copy of $[0, s_j\coloneqq+\infty]$ to $\hGamma$, along $0$ and $v_j$.
	We extend $\hh$ constantly along the new legs.	
	Let $\oS=[\oGamma,(\ov_j)_{j\in\oJ},\oh]$ denote the resulting spine, where $\oJ\coloneqq J\sqcup F=J\sqcup (j')_{j\in F}$, $\ov_j\coloneqq\hv_j$ for $j\in J$, and $\ov_{j'}\coloneqq s_j$ for $j\in F$.
	Then we have the associated $\oF, \oB, \oI$ and $\obP$ as in \cref{def:spine_in_U}.
	We have two injective maps
	\begin{align*}
		s\colon F \to \oI, \quad j\mapsto j'\\
		e\colon F\to \oB, \quad j\mapsto j
	\end{align*}
	Note that for each $j\in F$, the $s(j)$-leg and the $e(j)$-leg of $\oGamma$ are incident to a single 3-valent vertex.
\end{construction}

\discussion{Hi Sean, you formulated the notation of \emph{tail data}, but it was never used later in the paper:
	By {\it Tail data} we mean finite sets $F,B,I$ and injective maps $e: F \to B\cup I$, $s:F \to I$.
	We consider $(S,B \sqcup I) \in \oM_{0,B\sqcup I}$ (where the notation means the valence one vertices are labeled by $B \sqcup I$ and we use the same symbols for the corresponding points on the tree $S$) satisfies the associated {\it tail condition} if for each $t \in F$ the simple path $[s(t),e(t)] \subset S$ contains a unique vertex, $v_t$, of valence greater than two, and the valence of $v_t$ is three, and moreover if these paths are pairwise disjoint.
	Observe: Now note that an extended spine
  with $\oP,B,\oI$ as above whose domain satisfies the tail condition
  $s:F \subset \oI$, $e: F \subset B \sqcup \oI$ above, and whose
  restriction to each $[v(t),e(t)]$ is affine, and each $[v(t),s(t)]$
  is trivial, is exactly the same as a $\bP,B,I,F$ truncated spine, by
  the above construction.}

\begin{construction} \label{const:tail_condition}
	Let $\Theta\subset\NT_{\oJ}$ be the subspace of nodal metric trees whose $s(j)$-leg and $e(j)$-leg are incident to a single 3-valent vertex for each $j\in F$.
	Given $[C, (p_j)_{j\in\oJ},f]\in\cM^\sm(U^\an,\obP,\beta)_\Theta$, let $\Gamma^s$ be the convex hull of all the marked points and $r\colon C\to\Gamma^s$ the retraction map.
	For each $j\in F$, let $[p_{s(j)},p_{e(j)}]\subset\Gamma^s$ be the path connecting $p_{s(j)}$ and $p_{e(j)}$; let $\bbT_j\coloneqq r\inv([p_{s(j)},p_{e(j)}])$ and $\bbT^*_j\coloneqq\bbT_j\setminus\{p_{e(j)}\}$.
	Let $\cM^\sm_\tail(U^\an,\obP,\beta)_\Theta\subset\cM^\sm(U^\an,\obP,\beta)_\Theta$ be the subspace of stable maps such that $f(\bbT^*_j)\subset T_M^\an$ for every $j\in F$.
\end{construction}

\begin{theorem} \label{thm:deformation_invariance_truncated}
	The count $N(S,\gamma)$ in \cref{def:naive_count_transverse} is deformation invariant among transverse spines.
\end{theorem}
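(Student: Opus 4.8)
The statement \cref{thm:deformation_invariance_truncated} is essentially already assembled in the paragraph preceding it, so my plan is simply to articulate how the three ingredients—deformation invariance for extended spines (\cref{thm:deformation_invariance}), the toric-tail connected-component theorem (\cref{prop:toric_tail_cc}), and independence of the evaluation point (\cref{prop:moving_w})—fit together. The key point is that the count $N(S,\gamma)$ attached to a (possibly non-extended) transverse spine $S$ is, by definition, obtained from an extended spine $\oS$ by enlarging the tree with auxiliary internal legs $s(j)$ ($j\in F$) and then cutting out the locus satisfying the toric tail condition. So the whole proof is a bookkeeping argument: lift the deformation-invariance family for $\oS$ to a family that also respects the tail condition, check that the tail condition cuts out a union of connected components (hence the restricted family stays finite étale), and confirm that the degree of this restricted étale map is exactly $N(S,\gamma)$.

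First I would recall \cref{const:add_legs_to_tails} and \cref{const:tail_condition}: given the transverse truncated spine $S=[\Gamma,(v_j)_{j\in J},h]$ of type $\bP$ with respect to $\Wall_A$ (for $A$ big with respect to $\gamma$), one forms $\oS=[\oGamma,(\ov_j)_{j\in\oJ},\oh]$ by gluing infinite internal legs $s(j)$ at the finite vertices $v_j$ ($j\in F$), extending $h$ constantly, so that $\oS$ is an extended spine of type $\obP$, and the tail condition is encoded in the subspace $\Theta\subset\NT_{\oJ}$ where each $s(j)$-leg and $e(j)$-leg meet in a single $3$-valent vertex. Since the new legs carry curve class $\delta_j$ with $\delta_j\cdot\tE=0$ (by \cref{def:curve_class_via_varphi}, cf.\ \cref{lem:hgamma}), $A$ remains big with respect to $\hgamma\coloneqq\gamma+\sum_{j\in F}\delta_j$, and one checks from \cref{def:transverse} that $\oS$ is again transverse with respect to $\Wall_A$ (the constant-extension legs add no new bending vertices and the transversality of the rays through the $v_j$ is exactly Condition (\ref{def:transverse:extension}) for $S$).

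Next I would apply \cref{thm:deformation_invariance} to $\oS$ and $\hgamma$: this produces an open connected neighbourhood $V_{\oS}$ of $\oS$ in $\SP(M_\bbR,\obP)$ and a connected Zariski open $R\subset\rho\inv(\Phi_i^\trop(V_{\oS}))$ such that $\cM_{V_{\oS},R}\coloneqq\cM^\sm(U^\an,\obP,\hgamma)_{V_{\oS},R}$ is a union of connected components of $\ocM(Y^\an,\obP,\hgamma)_R$ and $\cM_{V_{\oS},R}\xrightarrow{\ \Phi_i\ }R$ is finite étale of degree $N_i(\oS,\hgamma)$. Shrinking $V_{\oS}$ so that it lands in the preimage of $\Theta$ under $\SP(M_\bbR,\obP)\to\NT_{\oJ}$, every stable map in $\cM_{V_{\oS},R}$ lies in $\cM^\sm(U^\an,\obP,\beta)_\Theta$ with the $s(j)$-leg of its tropicalization close to $\oh(s(j))$, which by transversality avoids $\Wall_A$; hence \cref{prop:toric_tail_cc} (applied to the subspace $\cN=\cM_{V_{\oS},R}$, using $h(p_{s(j)})\notin\Wall_A$ for all $j\in F$) shows that $\cM^\tail_{V_{\oS},R}\coloneqq\cM_{V_{\oS},R}\cap\cM^\sm_\tail(U^\an,\obP,\beta)_\Theta$ is a union of connected components of $\cM_{V_{\oS},R}$. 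Therefore $\cM^\tail_{V_{\oS},R}\xrightarrow{\ \Phi_i\ }R$ is still finite étale, so its degree is constant on the connected base $R$. Finally, since $\oS$ itself (with $h(\ov_i)$) gives a point of $R$ by \cref{lem:essential_skeleton_semistable_pair}, and the fibre of $\Phi_i$ over that point, restricted by the tail condition, is precisely $F_i(S,\gamma)$ of \cref{const:naive_counts_truncated}, the degree equals $N_i(S,\gamma)$. Combined with \cref{prop:moving_w} (so that $N(S,\gamma)=N_w(S_{\ow},\gamma)$ is well-defined and equals $N_i(S,\gamma)$ in the range considered), this proves that $N(S,\gamma)$ is locally constant—hence deformation invariant—among transverse spines. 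I do not expect a genuine obstacle here: the real work was done in \cref{thm:deformation_invariance}, \cref{prop:toric_tail_cc} and \cref{prop:moving_w}; the only mildly delicate point is keeping the several moduli-of-spines and moduli-of-maps neighbourhoods compatible while shrinking, and verifying that the extended spine $\oS$ retains transversality so that \cref{thm:deformation_invariance} applies verbatim.
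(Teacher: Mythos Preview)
Your proposal is correct and follows essentially the same approach as the paper: apply \cref{thm:deformation_invariance} to the extended spine $\oS$ and class $\hgamma$, shrink $V_{\oS}$ into the preimage of $\Theta$, use \cref{prop:toric_tail_cc} to cut out $\cM^\tail_{V_{\oS},R}$ as a union of connected components (so the restricted map stays finite étale), and combine with \cref{prop:moving_w}. The paper's argument is exactly this, presented more tersely in the paragraph immediately preceding the theorem.
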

\begin{proof}
	Let $\gamma,\hgamma\in\NE(Y)$ be as in \cref{const:naive_counts_truncated}.
	Fix $A\in\bbN$ big with respect to $\gamma$, and assume $S$ is transverse with respect to $\Wall_A$.
	We apply Constructions \ref{const:add_legs_to_tails} and \ref{const:tail_condition}.
	Then the spine $\oS$ is also transverse with respect to $\Wall_A$.
	Let $W$, $\tW$ and $\Phi_i\colon\tR\to R$ be as in the proof of \cref{prop:deformation_invariance} applied to $\oS$ and $\hgamma$.
	Up to shrinking $R$, we can assume that it is contained in the preimage of $\Theta$, and that all the spines in $\tW$ are transverse by \cref{prop:transversality}.
	Let $\tR^\tail\coloneqq\tR\cap\cM^\sm_\tail(U^\an,\obP,\beta)_\Theta$.
	Then by \cref{prop:toric_tail_cc}, $\tR^\tail\subset\tR$ is a union of connected components.
	By \cref{lem:finite_etale_degree}, the degree of $\tR^\tail\xrightarrow{\ \Phi_i\ } R$ over any point $r\in R$ is independent of the choice of $r$.
	Now the theorem follows from the definition of the count $N(S,\gamma)$ (see \cref{def:naive_count_transverse} and \cref{prop:moving_w}).
\end{proof}

Below is an application of \cref{prop:toric_tail_cc_skeletal} that we will use in the proof of \cref{thm:wall-crossing_homomorphism}.

\begin{proposition} \label{prop:deformation_invariance_skeletal} 
	Notations as in \cref{def:spine}.
	Fix $i\in I$, $\gamma\in\NE(Y)$ and $A\in\bbN$ big with respect to $\gamma$.
	Consider
	\[\Psi_i\coloneqq(\dom,\ev_i)\colon\SP(M_\bbR,\bP^F) \longrightarrow \NT_J^F\times M_\bbR.\]
	For each $j\in F$, we fix an open cell $\sigma_j^\circ\subset\Sigma_\rt$.
	Let $\NT_J^{F,\irr}\subset\NT_J^F$ be the subset consisting of irreducible trees (i.e.\ without nodes).
	Let $Q\subset\NT_J^{F,\irr}\times M_\bbR$ be a connected open subset such that the following hold for every spine $S=[\Gamma, (v_j)_{j\in J}, h] \in \SP(M_{\bbR},\bP^F)_Q$
	\begin{enumerate}
	\item For each $j\in F$, the image $h(v_j)$ lies in the open star of $\sigma_j^\circ$, and the linear span of $\sigma_j$ contains the vector $P_j$.
	(This ensures that the curve class $\hgamma\coloneqq\gamma+\sum_{j\in F}\delta_j$ in \cref{const:naive_counts_truncated} is independent of $S$ by \cref{rem:curve_class_parallel_perturbation}.)
	\item For each $j \in F$, if $h(v_j)\in \fd$ for some polyhedral cell $\fd\subset\Wall_A$ then the linear span of $\fd$ in $M_\bbR$ contains the vector $P_j$.
	\end{enumerate} 
	Then 
	\[\sum_{S \in \SP(M_{\bbR},\bP^F)_q} N(S,\gamma)\]
	is independent of $q\in Q$.
\end{proposition}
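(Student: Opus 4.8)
The plan is to express $\sum_{S}N(S,\gamma)$ as the degree of a map obtained by restricting $\Phi_i$ over the essential skeleton to the toric-tail locus, and to deduce its constancy along $Q$ from \cref{prop:toric_tail_cc_skeletal}. By condition~(1) together with \cref{rem:curve_class_parallel_perturbation}, the extended curve class $\hgamma\coloneqq\gamma+\sum_{j\in F}\delta_j\in\NE(Y)$ is the same for every $S\in\SP(M_\bbR,\bP^F)_Q$. Following Constructions~\ref{const:add_legs_to_tails} and~\ref{const:tail_condition}, attach to each finite vertex $v_j$, $j\in F$, of a truncated spine $S$ over $q=(T_q,m_q)\in Q$ an affine leg $e(j)$ and a trivial leg $s(j)$, producing a fully extended spine $\oS\in\SP(M_\bbR,\obP)$ of type $\obP$ with leg set $\oJ=J\sqcup F$ whose domain $\oT_q$ lies in the subspace $\Theta\subset\NT_{\oJ}$ of trees in which each pair $s(j),e(j)$ meets at a single trivalent vertex. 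By \cref{prop:skeleton_of_M0n} and \cref{lem:essential_skeleton} the point $\oq'\coloneqq(\oT_q,m_q)$ lies in $\oSk(\cM_{0,\oJ})\times\Sk(U)\subset\ocM_{0,\oJ}^\an\times U^\an$; since $q\mapsto\oq'$ only attaches fixed infinite legs it is continuous, so $\oQ'\coloneqq\{\oq'\mid q\in Q\}$ is connected. A skeletal curve over $\oq'$ obeying the toric-tail condition of \cref{const:tail_condition} has its spine factoring through the retractions onto the paths $[p_{s(j)},p_{e(j)}]$, hence is the extension of a unique truncated spine $S$ over $q$, and for that $S$ these curves are exactly the elements of $F_i(S,\gamma)$ (the auxiliary marked points $s_j$ being irrelevant to the count by \cref{thm:forgetting_interior_marked_points}). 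Thus the fibre over $\oq'$ of the toric-tail locus $\cM^\sm_\tail(U^\an,\obP,\hgamma)_\Theta\cap\ISk$ is $\bigsqcup_{S\in\SP(M_\bbR,\bP^F)_q}F_i(S,\gamma)$, of length $\sum_{S}N(S,\gamma)$.

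Next, condition~(2) is exactly the hypothesis needed to invoke \cref{prop:toric_tail_cc_skeletal}: the start of the $j$-th tail is carried by the tropicalisation to $h_S(v_j)$ and $P_{e(j)}=P_j$, so (2) says that whenever $h_S(v_j)$ lies in a polyhedral cell $\fd\subset\Wall_A$ the linear span of $\fd$ contains $P_{e(j)}$. Let $\cN$ be the subspace of $\cM^\sm(U^\an,\obP,\hgamma)_\Theta\cap\ISk$ where this holds for every $j\in F$; a small perturbation can only move a tail's start into a cell having the original one as a face, so by continuity of tropicalisation (\cref{prop:continuity}) $\cN$ is open, and by~(2) it contains every skeletal curve over $\oQ'$ that satisfies the toric-tail condition. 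Applying \cref{prop:toric_tail_cc_skeletal} once for each $j\in F$ shows that $\cN^\tail\coloneqq\cN\cap\cM^\sm_\tail(U^\an,\obP,\hgamma)_\Theta$ is a union of connected components of $\cN$.

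Finally, one runs the degree argument inside the skeleton. By \cref{lem:restrict_to_skeleton}, $\Phi_i$ is representable and étale on a neighbourhood of $\ISk$, is open and set-theoretically finite onto $\oSk(\cM_{0,\oJ})\times\Sk(U)$, and is proper over the skeleton of each boundary stratum. Combined with the previous paragraph, this produces — by the same mechanism as the passage from $V_S$ to $R$ in the proof of \cref{thm:deformation_invariance}, with \cref{lem:topological_extension} used to spread a fibre to a neighbourhood — for any $\oq'\in\oQ'$ a neighbourhood $W$ and a union of connected components $\tW$ of $\Phi_i^{-1}(W)\cap\ISk$ with $\Phi_i|_{\tW}\colon\tW\to W$ finite étale and $\tW\cap\Phi_i^{-1}(\oq')$ equal to $\bigsqcup_{S}F_i(S,\gamma)$. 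Hence the degree of $\Phi_i|_{\tW}$ (equivalently, via \cref{lem:finite_etale_degree}, the length of this fibre) is locally constant along $\oQ'$; as $\oQ'$ is connected and this degree equals $\sum_{S\in\SP(M_\bbR,\bP^F)_q}N(S,\gamma)$, the sum is independent of $q\in Q$.

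The step I expect to be the main obstacle is the honest production of the pair $(\tW,W)$ in the last paragraph without any transversality hypothesis on the spines over $q$: one must show that $\cN^\tail$, which is a union of connected components only of the open subspace $\cN$, still cuts out a \emph{proper} (hence finite étale) piece over a neighbourhood of $\oQ'$, i.e.\ that limits of span-satisfying tail curves over $\oQ'$ remain tail curves. This is precisely the content that \cref{prop:toric_tail_cc_skeletal} is designed to supply — the span condition~(2) forces the segment $\sigma\cup[\op_{s(j)},p_{e(j)}]$ to stay off the cells of $\Wall_A$ that could carry a twig, as in the proof of that proposition — but threading this together with the skeleton-stratified properness of \cref{lem:restrict_to_skeleton} requires some care. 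The remaining points (openness of $\cN$, the fibre identification with $\bigsqcup_S F_i(S,\gamma)$, disjointness over distinct $S$, and the handling of the auxiliary marked points $s_j$ via \cref{thm:forgetting_interior_marked_points}) are routine but somewhat lengthy bookkeeping.
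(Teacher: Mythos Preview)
Your approach is essentially the paper's: pass to the extended type $\obP$ via \cref{const:add_legs_to_tails}, use condition~(2) to feed \cref{prop:toric_tail_cc_skeletal}, identify the fibre with $\bigsqcup_S F_i(S,\gamma)$, and conclude by degree constancy.

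The paper is more direct and avoids what you flag as the ``main obstacle''. It simply takes $\tQ$ to be the image of $Q$ under the canonical homeomorphism $\NT_J^F\times M_\bbR\xrightarrow{\sim}\NT_{\oJ}\times M_\bbR\xrightarrow{\sim}\oSk(\cM_{0,\oJ})\times\Sk(U)$, observes that condition~(2) is exactly the hypothesis of \cref{prop:toric_tail_cc_skeletal} for each tail $j\in F$ (with $p_s=p_{s(j)}$ and $P_e=P_j$), so that $\cM^\sm_\tail(U^\an,\obP,\hgamma)_{\tQ}\subset\cM^\sm(U^\an,\obP,\hgamma)_{\tQ}$ is a union of connected components, and then invokes \cref{lem:finite_etale_degree} directly. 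That lemma already asserts constant degree for \emph{any} union of connected components of $\Phi_i^{-1}(R)$ over a connected $R$ in the skeleton; no separate properness check, no neighbourhood $(\tW,W)$, and no appeal to the mechanism of \cref{thm:deformation_invariance} is needed. Your intermediate open $\cN$ (carved out by the span condition) is superfluous: condition~(2) holds for every spine over $Q$, hence for every skeletal curve over $\tQ$, so one may take $\cN=\cM^\sm(U^\an,\obP,\hgamma)_{\tQ}$ outright. Likewise the invocation of \cref{thm:forgetting_interior_marked_points} is not needed; the fibre identification with $\sum_S N(S,\gamma)$ is immediate from \cref{const:naive_counts_truncated} and \cref{const:tail_condition}.
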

\begin{proof}
	\cref{const:add_legs_to_tails} gives a commutative diagram
	\[\begin{tikzcd}[column sep=huge]
	\SP(M_\bbR,\bP^F) \rar{\Psi_i=(\dom,\ev_i)} \dar & \NT_J^F\times M_\bbR \dar{\wr} \\
	\SP(M_\bbR,\obP) \rar{\oPsi_i=(\dom,\ev_i)} & \NT_{\oJ}\times M_\bbR.
	\end{tikzcd}\]
	Let $\tQ$ be the image of $Q$ under
	\[\NT_J^F\times M_\bbR\xrightarrow{\sim}\NT_{\oJ}\times M_\bbR\xrightarrow{\sim}\oSk(\cM_{0,\oJ})\times\Sk(U)\hookrightarrow\ocM_{0,\oJ}^\an\times U^\an.\]
	Consider
	\[\Phi_i=(\dom,\ev_i)\colon\cM^\sm(U^\an,\bP,\hgamma)\longrightarrow\ocM_{0,\oJ}^\an\times U^\an.\]
	Let $\cM^\sm_\tail(U^\an,\bP,\hgamma)_{\tQ} \subset\cM^\sm(U^\an,\bP,\hgamma)_{\tQ}$ be as in \cref{const:tail_condition}.
	By \cref{prop:toric_tail_cc_skeletal}, this is a union of connected components.
	By \cref{const:naive_counts_truncated}, the degree of
	\[\cM^\sm_\tail(U^\an,\bP,\hgamma)_q\xrightarrow{\ \Phi_i\ }q\] is equal to
	\[\sum_{S \in \SP(M_{\bbR},\bP^F)_q} N(S,\gamma).\]
	Therefore, we can conclude the proof by \cref{lem:finite_etale_degree}.
\end{proof} 

\section{Gluing formula} \label{sec:gluing}

In this section, we prove a gluing formula for gluing two transverse spines inside the edges (\cref{thm:gluing_inside}), and a gluing formula for concatenating two transverse spines (\cref{thm:gluing_concatenate}).
The latter is an analog of \cite[Theorem 1.2]{Yu_Enumeration_of_holomorphic_cylinders_II} in the present context.
The proofs here use the deformation invariance of \cref{sec:deformation_invariance}, which is directly inspired by the approach in \cite{Yu_Enumeration_of_holomorphic_cylinders_II}.

We begin with a gluing lemma without any transversality assumptions.
It will be used in the proof of \cref{thm:gluing_inside}.

\begin{lemma}[see \cref{fig:gluing1}] \label{lem:gluing_decomposition}
Let $S^i=[\Gamma^i,(v^i_j)_{j\in J^i},h^i]$, $i=1,2$ be two spines in $\Sk(U)$, as in \cref{def:spine_in_U}.
Suppose we have infinite 1-valent vertices $w^i \in \Gamma^i$ with $h^1(w^1) = h^2(w^2) \in \Sk(U)$.
Let $[\Delta,(w^1,w^2,w)]\in\oM_{0,3}^\trop$ be the unique element, i.e.\ the metric tree
\[
[0,w^1=+\infty] \sqcup_0 [0,w^2=+\infty] \sqcup_0 [0,w=+\infty].
\]
Let $\Gamma$ be obtained by gluing $\Delta$ to $\Gamma_1 \sqcup \Gamma_2$ at the $w^i$ points, and let $h\colon\Gamma\to\oSk(U)$ be obtained from $h^1 \sqcup h^2$ by extending constantly over $\Delta$.
Let
\[S\coloneqq\big[\Gamma,\big((v^1_j)_{j\in J^1}\setminus\{w^1\}\big)\sqcup\big((v^2_j)_{j\in J^2}\setminus\{w^2\}\big)\sqcup\{w\},h\big].\]
For any curve class $\gamma\in\NE(Y)$, we have
\[N_w(S,\gamma) = \sum_{\gamma^1 + \gamma^2 = \gamma} N_{w^1}(S^1,\gamma^1)\cdot N_{w^2}(S^2,\gamma^2).\]
\end{lemma}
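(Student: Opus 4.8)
The statement is a gluing formula at a single infinite vertex, so the strategy is to exhibit the moduli space computing $N_w(S,\gamma)$ as a fiber product of the two moduli spaces computing $N_{w^i}(S^i,\gamma^i)$, and then check that degrees multiply and curve classes add. Concretely, by \cref{const:naive_counts_extended}, $N_w(S,\gamma)=\length F_w(S,\gamma)$ where $F_w(S,\gamma)\subset\Phi_w\inv(\Gamma,h(w))$ is the subspace of skeletal curves whose spine is $S$; here $\Gamma\in\oSk(\cM_{0,n})$ is the point corresponding to the glued tree. Since the combinatorial tree $\Gamma$ has a distinguished node separating the $\Gamma^1$-side marked points from the $\Gamma^2$-side ones, any stable map $f\colon[C,\dots]\to Y^\an$ in $F_w(S,\gamma)$ has domain with a corresponding node splitting $C=C^1\cup_{q}C^2$, and $f$ restricts to maps $f^i\colon C^i\to Y^\an$. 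The first step is to set up this decomposition carefully: the node of $\Gamma$ maps under $h$ to $h^1(w^1)=h^2(w^2)\in\Sk(U)$, so $f(q)$ is forced to that point, and $C^1,C^2$ acquire $q$ as an extra ($I$-type) marked point. This identifies $F_w(S,\gamma)$ set-theoretically (and then, via étaleness, as analytic spaces) with the fiber product
\[
\coprod_{\gamma^1+\gamma^2=\gamma}F_{w^1}(S^1,\gamma^1)\times_{\{h^1(w^1)\}}F_{w^2}(S^2,\gamma^2),
\]
where the fiber product is over the evaluation maps at the glued marked point and over the base point $h^1(w^1)=h^2(w^2)$.

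The key technical inputs are: (i) the decomposition of a stable map at a separating node is compatible with stabilization and with the spine map $\Sp$ — i.e., the spine of $f$ is $S$ if and only if the spine of $f^i$ is $S^i$ for $i=1,2$; this follows from \cref{prop:skeleton_of_product}(\ref{prop:skeleton_of_product:point}) applied to the product structure on skeletons of $\cM_{0,n}$ (see \cref{prop:skeleton_of_M0n}(3)) together with the observation that the retraction to the spine respects the node; (ii) the additivity of curve classes under such a splitting, $[f]=[f^1]+[f^2]$, which holds by \cref{def:curve_class} since a semistable model of $C$ restricts to semistable models of $C^1,C^2$ and $\fC_s^\pr$ decomposes accordingly — here one must check that the component carrying the node contributes to exactly one side, which is a matter of bookkeeping since the constant tail $\Delta$ carries no curve class (its twigs have degree zero, and the associated $\delta$ vanishes by \cref{def:curve_class_via_varphi}); (iii) the étaleness of $\Phi_{w^i}$ near $\ISk$ from \cref{lem:restrict_to_skeleton}(\ref{lem:restriction_to_skeleton:etale}), which upgrades the set-theoretic bijection to an isomorphism of $0$-dimensional analytic spaces, so that lengths multiply: $\length(A\times_{\mathrm{pt}}B)=\length(A)\cdot\length(B)$ over the (finite, separable) residue field of the base point.

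Putting these together, I would argue: a skeletal curve $f$ with spine $S$ and class $\gamma$ is the same datum as a pair $(f^1,f^2)$ of skeletal curves with spines $S^1,S^2$, classes $\gamma^1,\gamma^2$ with $\gamma^1+\gamma^2=\gamma$, glued along a common point mapping to $h^1(w^1)$; the evaluation at $w$ on the glued curve equals the common value $h(w)=h^1(w^1)=h^2(w^2)$, so no extra constraint is imposed by the $w$-leg beyond what the $S^i$ already impose. Summing lengths over the finite set of splittings $\gamma^1+\gamma^2=\gamma$ (finite by \cref{lem:bound_on_class}, since $A$ may be chosen big with respect to $\gamma$ and hence with respect to each $\gamma^i$) gives the claimed formula.

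\textbf{Main obstacle.} The delicate point is (ii)–(iii): one must verify that the splitting of the domain at the separating node is genuinely a \emph{disjoint} decomposition of the relevant moduli data with no double-counting at the node, and that the analytic-space (length) structure is compatible with the fiber product — in particular that the evaluation map at the gluing point is étale onto a neighborhood of $h^1(w^1)$ in $\Sk(U)$ so that the fiber product is transverse and computes the product of lengths. This requires invoking \cref{lem:restrict_to_skeleton} and \cref{lem:Msm_smooth}(\ref{lem:Msm_smooth:etale}) for each factor and checking that the node contributes trivially to the curve class; everything else is combinatorial bookkeeping about nodal metric trees and their retractions.
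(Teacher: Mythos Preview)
Your general strategy---decompose the moduli space computing $N_w(S,\gamma)$ as a product over decompositions $\gamma=\gamma^1+\gamma^2$---is the right one, and it is what the paper does. But there is a genuine gap in your argument, and the paper's proof is considerably simpler than what you outline.

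\textbf{The gap.} You describe the domain as splitting at \emph{a single} separating node $q$ into $C=C^1\cup_q C^2$. This is not the structure: the glued tree $\Gamma$ has \emph{two} nodes, at $w^1$ and $w^2$, and between them sits the irreducible component $\Delta$. Correspondingly the domain curve decomposes as $C=C^1\cup_{q^1} C_\Delta\cup_{q^2} C^2$, where $C_\Delta$ is a $\bbP^1$ carrying the marked point $p_w$ and the two nodes. The crucial step, which you do not supply, is that $f$ is \emph{constant} on $C_\Delta$. The paper's one-line argument: since $f^{-1}(D)$ is supported on the $B$-type marked points (none of which lie on $C_\Delta$), we have $f(C_\Delta)\subset U^\an$; and since $U$ is \emph{affine}, any map from the proper curve $C_\Delta$ to $U^\an$ is constant. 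Your phrase ``the constant tail $\Delta$'' refers only to $h|_\Delta$ being constant (which is by construction), not to $f|_{C_\Delta}$ being constant---and it is the latter that makes the decomposition go through. Your discussion of curve classes (``$\delta$ vanishes by \cref{def:curve_class_via_varphi}'') is downstream of this: $f_*[C_\Delta]=0$ \emph{because} $f$ is constant there, not because of a tropical computation.

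\textbf{Comparison with the paper.} Once constancy on $C_\Delta$ is established, the paper simply passes to a large enough field extension $k\subset k'$ so that all the zero-dimensional spaces $F_w(S,\gamma)$ and $F_{w^i}(S^i,\gamma^i)$ split into disjoint unions of $k'$-rational points, and then exhibits a set-theoretic bijection
\[
F_w(S,\gamma)\ \longleftrightarrow\ \bigsqcup_{\gamma^1+\gamma^2=\gamma} F_{w^1}(S^1,\gamma^1)\times F_{w^2}(S^2,\gamma^2),
\]
taking cardinalities. There is no need for your machinery of fiber products over $\Sk(U)$, transversality of evaluation maps, or \'etaleness to match analytic lengths---the base-change trick sidesteps all of it. Your identification of ``the main obstacle'' as the analytic-space compatibility of the fiber product is thus misplaced: the real hinge is the affineness of $U$, which you do not invoke.
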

\begin{figure}[!ht]
	\centering
	\setlength{\unitlength}{0.8\textwidth}
	\begin{picture} (1,0.3)
		\put(0,0){\includegraphics[width=\unitlength,page=1]{images/gluing_formula}}
		\put(0.49,0.1){$\Delta$}
		\put(0.14,0.06){$\Gamma^1$}
		\put(0.83,0.06){$\Gamma^2$}
	\end{picture}
	\caption{The domain $\Gamma$ of the glued spine $S$, where the tiny circles denote the nodes.}
	\label{fig:gluing1}
\end{figure}
\begin{proof}
	Let $F_w(S,\gamma)$ and $F_{w^i}(S^i,\gamma^i)$ be as in \cref{const:naive_counts_truncated}.
	We make a sufficiently big base field extension $k\subset k'$ so that the spaces $F_w(S,\gamma)$ and $F_{w^i}(S^i,\gamma^i)$ split into disjoint unions of $k'$-rational points.
	For any $[f\colon C\to Y^\an]\in F_w(S,\gamma)$, let $C_\Delta$ be the irreducible component of $C$ corresponding to $\Delta$.
	Since $h(\Delta)\in\Sk(U)$, we have $f(C_\Delta)\subset U^\an$.
	Since $U$ is affine, $f$ is necessarily constant on $C_\Delta$.
	Therefore, we obtain a set-theoretic decomposition
	\[F_w(S,\gamma) = \bigsqcup_{\gamma^1+\gamma^2=\gamma} F_{w^1}(S^1,\gamma^1) \times F_{w^2}(S^2,\gamma^2).\]
	The lemma follows by taking cardinality of the sets.
\end{proof}

\begin{theorem}[see \cref{fig:gluing2}] \label{thm:gluing_inside}
	Let $A\in\bbN$ and $\gamma\in\NE(Y)$.
	Let $S^i=[\Gamma^i,(v^i_j)_{j\in J^i},h^i]$, $i=1,2$ be two spines in $M_\bbR$ transverse respect to $\Wall_A$.
	Let $p^i\in\Gamma^i$ be points in the interiors of edges such that $h^1(p^1)=h^2(p^2)\in M_\bbR\setminus\Wall_A$.
	Let $S$ be the spine obtained by gluing $S^i$ at $p^i$.
	Assume that for any decomposition $\gamma=\gamma^1+\gamma^2$, $\gamma^i\in\NE(Y)$, $A$ is big with respect to both $\gamma^i$.
	Then we have
	\[
	N(S,\gamma)=\sum_{\gamma^1+\gamma^2=\gamma} N(S^1,\gamma^1)\cdot N(S^2,\gamma^2).
	\]
\end{theorem}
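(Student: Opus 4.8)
The plan is to mirror the proof of \cref{lem:gluing_decomposition}, with the affineness argument there (which forces the curve to be constant on the gluing component) replaced by a toric–rigidity argument at the interior glue point, exploiting that $x_0\coloneqq h^1(p^1)=h^2(p^2)$ can be assumed generic.

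\emph{Setup.} By \cref{def:naive_count_transverse} and \cref{prop:moving_w}, the three quantities $N(S,\gamma)$, $N(S^1,\gamma^1)$, $N(S^2,\gamma^2)$ are degrees of the finite étale maps $\Phi_\bullet$ produced by \cref{thm:deformation_invariance}/\cref{thm:deformation_invariance_truncated} over suitable skeletal base points; in particular, by deformation invariance I may replace $S$, $S^1$, $S^2$ by any transverse spines in their respective connected components. Using \cref{cor:vary_lengths} (sliding $p^1,p^2$ along their edges) together with a small perturbation staying in $\SP^\tr$, I would first arrange that $x_0$ is a \emph{generic} point of $M_\bbR\setminus\Wall_A$, lying in the interior of a maximal cone of $\Sigma_\rt$ and off its codimension-one skeleton and off $E_\rt^\trop$. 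Then $x_0$ is monomial and, since $k$ carries the trivial valuation, $\tau_\rt^{-1}(x_0)$ is a single point $q\in T_M^\an$.

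\emph{The bijection.} For a skeletal curve $f\colon[C,(p_j)]\to Y^\an$ contributing to $N(S,\gamma)$, the $4$-valent vertex $v_0=p^1=p^2$ of the glued domain corresponds (in a semistable model) to a component $C_{v_0}$ of $C$ carrying four special points, meeting the two subtrees of $C$ over $\Gamma^1\setminus v_0$ and the two over $\Gamma^2\setminus v_0$. Since $\tau_\rt\circ f$ equals $x_0$ on $C_{v_0}$ and $\tau_\rt^{-1}(x_0)=\{q\}$, the map $f$ is constant on $C_{v_0}$. Discarding the two nodes of $C_{v_0}$ towards $\Gamma^2$ and contracting $C_{v_0}$ (stable, as only two special points remain) reconnects the two $\Gamma^1$-branches at one point over $p^1$ and yields a skeletal curve $f^1$ of spine $S^1$; symmetrically one obtains $f^2$ of spine $S^2$. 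By additivity of the class over the normalization (\cref{def:curve_class}, \cref{lem:curve_class_from_model}) and $[C_{v_0}]=0$ we get $\gamma=[f^1]+[f^2]$. Conversely, given skeletal curves $f^i$ of spines $S^i$ and classes $\gamma^i$ with $\gamma^1+\gamma^2=\gamma$, both pass through $q$ over their interior points $p^i$; by \cref{prop:toric_case} there is a unique toric $\bbP^1$ carrying the four prescribed monomials (the edge weights $\pm u$ through $p^1$ and $\pm v$ through $p^2$) and compatible with $f^1,f^2$ along the two corresponding branches — its four points being pinned down, two by $f^1$ and two by $f^2$, through their monomial structure — and gluing it in produces a skeletal curve $f$ of spine $S$ and class $\gamma$. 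After a finite base-field extension $k\subset k'$ splitting all the relevant finite fibers into $k'$-points (exactly as in the proof of \cref{lem:gluing_decomposition}), these two constructions are mutually inverse, giving
\[
F(S,\gamma)\;\cong\;\bigsqcup_{\gamma^1+\gamma^2=\gamma}F(S^1,\gamma^1)\times F(S^2,\gamma^2),
\]
and taking cardinalities gives the formula. (When an edge through some $p^i$ carries $h^i$ constantly the picture degenerates further and the argument becomes a direct instance of \cref{lem:gluing_decomposition}; alternatively, one can run the whole proof by first resolving $v_0$ into a constant channel edge and reducing to \cref{lem:gluing_decomposition} via deformation invariance along that channel.)

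\emph{Main obstacle.} The delicate part is the last paragraph: carrying out the "detach/contract" and "glue in the toric $\bbP^1$" operations at the level of semistable formal models, and in particular checking that the glued curve is \emph{unique}, so that no multiplicity enters the count. This is precisely where the toric rigidity of \cref{prop:toric_case} must be used carefully — both to pin the four marked points of $C_{v_0}$ and to identify the local analytic structure of the glued curve near $q$ — and where the genericity of $x_0$ arranged in the Setup is essential (so that $\tau_\rt^{-1}(x_0)$ is a point and the neighbourhood of $q$ lies inside $T_M^\an$, allowing \cref{prop:curve_class_formula} to control the class decomposition). The compatibility with the toric-tail conditions hidden in $N(S^i,\gamma^i)$ for truncated $S^i$ is handled as in \cref{sec:toric_tail_condition} and is routine given \cref{prop:toric_tail_cc}.
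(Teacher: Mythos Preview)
Your main argument has a genuine gap at the ``detach/contract'' step. The glued spine $S$ has a finite $4$-valent vertex at $v_0=p^1=p^2$, not a node; hence the corresponding domain point $\Gamma\in\oSk(\cM_{0,n})$ lies in the interior of a stratum, and the universal curve $C$ over it is irreducible (a smooth $\bbP^1$ over $\cH(\Gamma)$). There is no component $C_{v_0}$ of $C$ to contract. What you call $C_{v_0}$ exists only as a component of the \emph{special fiber} of a semistable model $\fC$; even if $\mathfrak f_s|_{\fC_s^{v_0}}$ were constant, cutting $\fC_s$ at its nodes does not cut the generic fiber $C$, so you cannot produce analytic curves $f^1,f^2$ from $f$. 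Your constancy claim is also unjustified: the equality $\tau_\rt\circ f=x_0$ holds only at the single skeleton point $v_0\in C$, and $\tau_\rt^{-1}(x_0)=\{q\}$ over the trivially valued $k$ says nothing about the reduction of $f$ to $\fC_s^{v_0}$, which is a map of $\tilde k'$-varieties with no reason to be constant. Conversely, your ``glue in a toric $\bbP^1$'' step would change the isomorphism type of the domain, so cannot land back in $F(S,\gamma)$.

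The alternative you bury at the end is in fact the paper's proof, and the only one that works: attach an infinite interior leg at $v_0$ to obtain $S'$ (so $N(S,\gamma)=N_w(S',\gamma)$ by \cref{def:naive_count_transverse}); then degenerate $S'$ to the nodal spine $\oS$ obtained by gluing a tripod $\Delta$ to $\oS^1\sqcup\oS^2$ at infinite vertices $w^1,w^2$. Now the domain curve genuinely acquires a separate irreducible component $C_\Delta$, on which $f$ is constant because $h(\Delta)\in\Sk(U)$ and $U$ is affine --- exactly the mechanism of \cref{lem:gluing_decomposition}. Deformation invariance (\cref{cor:vary_lengths}) along the shrinking of the two nodal edges gives $N_w(S',\gamma)=N_w(\oS,\gamma)$, and \cref{lem:gluing_decomposition} finishes. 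The point is that one must first \emph{create} a node before one can split; your direct bijection attempts to split an irreducible curve.
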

\begin{figure}[!ht]
	\centering
	\setlength{\unitlength}{0.8\textwidth}
	\begin{picture} (1,0.67)
		\put(0,0){\includegraphics[width=\unitlength,page=2]{images/gluing_formula}}
	\end{picture}
	\caption{Respectively: $\Gamma^1$, $\Gamma^2$, $\Gamma$, $\Gamma'$ and $\overline\Gamma$.}
	\label{fig:gluing2}
\end{figure}
\begin{proof}
	Let $\Gamma$ be the domain of $S$, and let $\Gamma'$ be the gluing of $\Gamma$ and $[0,w=+\infty]$ along $p^1=p^2$ and $0$.
	We extend $h$ constantly over the new leg and let $S'$ be the resulting spine.
	For $i=1,2$, let $\oGamma^i$ be the gluing of $\Gamma^i$ and $[0,w^i=+\infty]$ along $p^i$ and $0$.
	Let $\Delta$ be as in \cref{lem:gluing_decomposition}.
	Let $\oGamma$ be the gluing of $\Delta$ and $\oGamma^1\sqcup\oGamma^2$ along $w^i$.
	Let $h\colon\oGamma\to\oM_\bbR$ be obtained from $h^1\sqcup h^2$ by extending constantly over the new parts.
	Let $\oS$ be the resulting spine.
	Note that we can deform $\oS$ into $S'$ by shrinking the two new topological edges with nodes, and all the spines during deformation are transverse with respect to $\Wall_A$.
	Hence by \cref{cor:vary_lengths}, we have $N_w(S',\gamma)=N_w(\oS,\gamma)$.
	By \cref{lem:gluing_decomposition}, we have
	\[N_w(\oS,\gamma) = \sum_{\gamma^1 + \gamma^2 = \gamma} N_{w^1}(\oS^1,\gamma^1)\cdot N_{w^2}(\oS^2,\gamma^2).\]
	By \cref{def:naive_count_transverse}, we have $N(S,\gamma)=N_w(S',\gamma)$ and  $N(S^i,\gamma^i)=N_{w^i}(\oS^i,\gamma^i)$ for $i=1,2$.
	Combining all the equalities above, we achieve the proof.
\end{proof}

\begin{lemma} \label{lem:count_balanced_spines}
	Let $\gamma\in\NE(Y)$ and $A\in\bbN$ big with respect to $\gamma$.
	Let $\Sigma_\rt^{d-1}\subset\Sigma_\rt$ be the codimension-one skeleton of the fan $\Sigma_\rt$.
	Let $S=[\Gamma,(v_j)_{j\in J},h]$ be a spine in $M_\bbR$ without bending vertices, and whose image is disjoint from $\Wall_A\cup\Sigma_\rt^{d-1}$.
	Then 
	\[N(S,\gamma)=\begin{cases}
	1 &\text{ if }\gamma=0,\\
	0 &\text{ otherwise.}
	\end{cases}\]
\end{lemma}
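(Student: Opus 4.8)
The plan is to reduce to the toric case recorded in \cref{lem:counts_in_toric_case}, by showing that every analytic curve contributing to $N(S,\gamma)$ maps into the toric locus $W^\an$ of \cref{lem:toric_model}, and then to observe that the resulting toric curve class $\delta_h$ vanishes. First I would check that $S$ is transverse with respect to $\Wall_A$ in the sense of \cref{def:transverse}: Conditions (\ref{def:transverse:image}) and (\ref{def:transverse:bending}) hold because $h(\Gamma)$ is disjoint from $\Wall_A$ (recall $E_\rt^\trop\subset\Wall_A$), Condition (\ref{def:transverse:finite_end}) holds because $h(\Gamma)$ is disjoint from $\Sigma_\rt^{d-1}$, and Condition (\ref{def:transverse:extension}) can be arranged by a parallel perturbation of the finite vertices $v_j$ ($j\in F$) that preserves all the other hypotheses, which by \cref{thm:deformation_invariance_truncated} does not change $N(S,\gamma)$. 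Extending the finite legs as in \cref{const:naive_counts_truncated} produces an extended spine $\hS=[\hGamma,(\hv_j)_{j\in J},\hh]$ and a class $\hgamma=\gamma+\sum_{j\in F}\delta_j$; then $N(S,\gamma)$ is the length of the subspace $F_i(\hS,\hgamma)\subset\Phi_i^{-1}(\Gamma,h(v_i))$ of stable maps of spine $\hS$ satisfying the toric tail condition along each leg extension.

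The heart of the proof is the claim that every such $f\colon[\hC,(p_j)_{j\in J}]\to Y^\an$, which is proper of class $\beta=\hgamma$, satisfies $f(\hC)\subset W^\an$. By \cref{prop:tropicalization_of_stable_map}, $\Sp(f)=\hS$ and $\degtwig\Trop(f)=\beta\cdot\tE$, and by \cref{lem:hgamma} the integer $A$ is still big with respect to $\hgamma$. Any twig of $\Trop(f)$ (in the sense of \cref{def:tropical_curve}) is attached to $\hGamma$ at a vertex of valency at least $3$; since the extension legs are $2$-valent throughout $\hGamma$, such a vertex already lies in $\Gamma$ and so maps outside $\Wall_A$, whereas a twig of degree at most $A$ has image inside $\Wall_A$ by \cref{const:walls_by_induction} --- a contradiction. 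Hence $\Trop(f)$ has no twigs and $\beta\cdot\tE=\degtwig\Trop(f)=0$. Since $\tE=\pi^*(D_\rt)-D$ is effective, \cref{prop:interior_divisor} gives $f(\hC)\cap\tE^\an=\emptyset$ (its hypotheses hold: $\hC$ is proper, $\tE$ contains no essential boundary strata by \cref{lem:toric_model}(\ref{lem:toric_model:W}), and $f(\hC)\not\subset\tE^\an$ because $f(p_i)=h(v_i)\in\Sk(U)\subset T_M^\an$ while $T_M^\an\cap\tE^\an=\emptyset$); then \cref{lem:toric_model}(\ref{lem:toric_model:intersection}) forces $f(\hC)\subset W^\an$. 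As $\pi|_W\colon W\hookrightarrow Y_\rt$ is an open immersion, $\pi\circ f$ is a stable map into $Y_\rt$ with the same pointed domain and the same tropicalization $\hS$, and $[f]=\pi^*\pi_*[f]$ by \cref{lem:curve_class_blowup}. This identifies $F_i(\hS,\hgamma)$ with the corresponding moduli locus for the toric pair $(Y_\rt,D_\rt)$, where the toric tail conditions hold automatically ($E_\rt=\emptyset$). In particular $\gamma=[f]-\sum_{j\in F}\delta_j$ lies in $\pi^*N_1(Y_\rt)$ --- say $\gamma=\pi^*\gamma'$, there being no contributing curves otherwise --- and $N(S,\gamma)$ equals the toric count $N_i(S,\gamma')$ of \cref{lem:counts_in_toric_case}.

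To conclude I would apply \cref{lem:counts_in_toric_case} to $(Y_\rt,D_\rt)$ and the spine $S$: it has no bending vertices and satisfies \cref{lem:curve_class_from_model}(1)--(2) trivially because $h(\Gamma)$ misses $\Sigma_\rt^{d-1}$, so the count is $1$ if $\gamma'=\delta_h$ of \cref{def:curve_class_via_varphi} and $0$ otherwise. Finally $\delta_h=0$: since $h$ has no bending vertices and $h(\Gamma)$ avoids $\Sigma_\rt^{d-1}$, every domain of affineness of $h$ has image in the interior of a single maximal cone of the complete fan $\Sigma_\rt$, on which $\varphi$ is linear, so $\delta_l=0$ for each such segment and hence $\delta_h=0$. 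Therefore $N(S,\gamma)=N_i(S,\gamma')$ equals $1$ if $\gamma'=0$, i.e.\ $\gamma=0$, and $0$ otherwise, which is the assertion.

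The step I expect to be the main obstacle is the reduction in the second paragraph --- precisely identifying $F_i(\hS,\hgamma)$ over $(Y,D)$, via $\pi|_W$, with the analogous moduli locus over $(Y_\rt,D_\rt)$. This rests on the twig-triviality argument (ensuring no contributing curve escapes the common open $W\cong\pi(W)$) and on keeping careful track of the auxiliary classes $\delta_j$ from the leg extensions, so that the target class $\gamma'$ on the toric side is correctly matched.
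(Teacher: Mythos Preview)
Your overall approach coincides with the paper's: show that every contributing curve has image in $W^\an$, identify the count with the toric count for $(Y_\rt,D_\rt)$, and finish with \cref{lem:counts_in_toric_case}. The paper's proof is terser---it uses \cref{thm:forgetting_interior_marked_points} to arrange $|I|\ge 1$, then argues directly that $\Trop(f)=\hS$ and hence $f(\hC)\subset W^\an$---but the skeleton is the same, and your explicit transversality check and verification that $\delta_h=0$ are more careful than the paper's treatment.

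There is one concrete gap. In ruling out twigs along the extension legs $\hGamma\setminus\Gamma$, you argue: a twig attaches at a point of valency $\ge 3$, and ``since the extension legs are $2$-valent throughout $\hGamma$, such a vertex already lies in $\Gamma$.'' This conflates valency in $\hGamma$ with valency in the full tropical curve $\Trop(f)$. An interior point of an extension leg is $2$-valent in $\hGamma$, but nothing in your argument prevents a twig from attaching there---that point simply becomes $\ge 3$-valent in $\Trop(f)$. The correct reason there are no twigs along the extension legs is the toric tail condition you have already imposed on $f\in F_i(S,\gamma)$: by \cref{lem:toric_tail_equiv}(\ref{lem:toric_tail_equiv:twig}) this condition is \emph{equivalent} to the tropicalization factoring through the retraction along each tail, i.e.\ to the absence of twigs there. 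This is exactly what the paper invokes. Once you fix this, your detour through $\beta\cdot\tE=0$ and \cref{prop:interior_divisor} becomes redundant: knowing $\Trop(f)=\hS$ (no twigs anywhere) already gives $f(\hC)\cap E^\an=\emptyset$ directly, hence $f(\hC)\subset W^\an$ by \cref{lem:toric_model}(\ref{lem:toric_model:intersection}).
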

\begin{proof}
	Let $B, I, F$ be as in \cref{def:spine}.
	By \cref{thm:forgetting_interior_marked_points}, we can assume $\abs{I}\ge 1$; fix any $i\in I$, and we have $N(S,\gamma)=N_i(S,\gamma)$.
	Let $\hS=[\hGamma,(\hv_j)_{j\in J},\hh]$ and $F_i(S,\gamma)$ be as in \cref{const:naive_counts_truncated}.
	Then for any $f \in F_i(S,\gamma)$, we have $\Sp(f) = \hS$.
	Since $h(\Gamma)$ is disjoint from $\Wall_A$, there are no twigs of $\Trop(f)$ along $\Gamma$;
	moreover, by the toric tail condition and \cref{lem:toric_tail_equiv}(\ref{lem:toric_tail_equiv:twig}), there are no twigs along $\hGamma\setminus\Gamma$.
	So $\Trop(f)=\Sp(f)=\hS$, which implies that $f$ has image in
	$W^\an\subset Y^\an$ (as in \cref{lem:toric_model}).
	Since $W^\an$ is also contained in $Y_\rt^\an$, we deduce that the space $F_i(S,\gamma)$ for $(Y,D)$ is isomorphic to the space $F_i(S,\pi_*\gamma)$ for $(Y_\rt, D_\rt)$.
	Now we conclude from \cref{lem:counts_in_toric_case}.
\end{proof}

\begin{theorem}[see \cref{fig:gluing3}] \label{thm:gluing_concatenate}
	Let $A\in\bbN$ and $\gamma\in\NE(Y)$.
	Let $S^i=[\Gamma^i,(v^i_j)_{j\in J^i},h^i]$, $i=1,2$ be two spines in $M_\bbR$ transverse respect to $\Wall_A$.
	Let $p^i\in\Gamma^i$ be a finite 1-valent vertex, and $e^i$ the edge incident to $p^i$.
	Assume $h^1(p^1)=h^2(p^2)$ and $w_{(p^1,e^1)}+w_{(p^2,e^2)}=0$.
	So we can concatenate $S^1$ and $S^2$ at the vertices $p^1$ and $p^2$, and form a transverse spine which we denote by $S$.
	Moreover, assume that for any decomposition $\gamma=\gamma^1+\gamma^2$, $\gamma^i\in\NE(Y)$, $A$ is big respect to both $\gamma^i$.
	Then we have
	\[
	N(S,\gamma)=\sum_{\gamma^1+\gamma^2=\gamma} N(S^1,\gamma^1)\cdot N(S^2,\gamma^2).
	\]
\end{theorem}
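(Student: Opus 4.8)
The plan is to set up a curve-class-compatible bijection between the analytic curves counted by $N(S,\gamma)$ and pairs of curves counted by $N(S^1,\gamma^1)$, $N(S^2,\gamma^2)$, obtained by cutting and regluing along a \emph{toric neck} sitting over the concatenation point $x\coloneqq h^1(p^1)=h^2(p^2)$. This mirrors the proof of \cref{lem:gluing_decomposition}, the one essential difference being that the neck is no longer contracted to a point: it is a toric annulus mapping into $T_M^\an$, and it must be rebuilt from the toric tails of the two pieces.

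First I would record the reductions and bookkeeping. By \cref{thm:deformation_invariance_truncated} we may assume $\Gamma^1,\Gamma^2$ irreducible, and by \cref{prop:moving_w} together with \cref{thm:forgetting_interior_marked_points} we are free to place interior marked points where convenient. Transversality of $S^1,S^2$ at the finite $1$-valent vertices $p^1,p^2$, namely Conditions (3)–(4) of \cref{def:transverse}, forces $x\notin\Wall_A$, forces $x$ off the codimension-one skeleton of $\Sigma_\rt$, and forces the two rays from $x$ in the directions $\pm w_{(p^1,e^1)}$ to be transverse to $\Wall_A$; hence a short segment of the concatenated edge of $S$ around $x$ avoids $\Wall_A$. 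Writing $\hS,\hS^1,\hS^2$ and $\hgamma,\hgamma^1,\hgamma^2$ for the extensions of \cref{const:naive_counts_truncated}, the finite leaves of $S$ are exactly those of $S^1$ and $S^2$ other than $p^1,p^2$; the extension leg $l_i$ of $S^i$ at $p^i$ is the one making $p^i$ a straight $2$-valent vertex, so it points in the direction $-w_{(p^i,e^i)}$, i.e.\ $l_1$ and $l_2$ emanate from $x$ in opposite directions, with curve classes $\delta_{l_1},\delta_{l_2}$ independent of the configuration by \cref{rem:curve_class_parallel_perturbation}. A routine computation then shows that $\hgamma^1+\hgamma^2=\hgamma+\delta_{l_1}+\delta_{l_2}$ is equivalent to $\gamma^1+\gamma^2=\gamma$, so the class constraint in the statement is exactly the one the surgery will produce.

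Next, the surgery itself. Given $f$ counted by $N(S,\gamma)$, \cref{lem:skeletal_curve_contraction} and the twig structure of \cref{prop:tropicalization_of_stable_map} show that the part of the domain over the chosen segment around $x$ is a toric annulus with no twigs, mapping into $\Sk(U)\setminus\Wall_A\subset T_M^\an$. Cutting the domain along the circle $\kappa$ over $x$ gives $C=C_1\cup_\kappa C_2$, where $C_i$ lies over the $\Gamma^i$-part of $\hS$ together with the corresponding extension legs. I would complete $C_1$ by the unique toric disk $T^1$ with boundary $\kappa$ heading in the direction $-w_{(p^1,e^1)}$ — existence, uniqueness, and multiplicity one here come from the toric case, \cref{prop:toric_case}(\ref{prop:toric_case:Phi}) and \cref{lem:counts_in_toric_case} — and complete $C_2$ by $T^2$ heading in the direction $-w_{(p^2,e^2)}$. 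One checks $f^1\coloneqq C_1\cup_\kappa T^1$ is a skeletal curve with spine $\hS^1$ satisfying the toric tail conditions on all extension legs of $S^1$ (on $l_1$ by construction, on the others inherited from $f$), hence counted by $N(S^1,\gamma^1)$ with $\gamma^1$ as bookkept, and symmetrically for $f^2$. Conversely, given $f^1,f^2$ with $\gamma^1+\gamma^2=\gamma$, write $f^i=C_i\cup T^i$; the boundary circles of $T^1$ and $T^2$ are canonically identified with opposite orientations, since their union is the unique toric annulus over the line through $x$ in the direction $w_{(p^1,e^1)}$, so $C_1$ and $C_2$ glue along that circle to a skeletal curve $f$ counted by $N(S,\gamma)$. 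These constructions are mutually inverse by uniqueness of the toric completions, and curve classes add up by \cref{def:curve_class} and the behaviour of $[\fC_s^\pr]$ under cutting the semistable model along the neck. Passing to lengths over an algebraic closure gives $N(S,\gamma)=\sum_{\gamma^1+\gamma^2=\gamma}N(S^1,\gamma^1)N(S^2,\gamma^2)$.

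The main obstacle will be the rigor of the cut-and-reglue at the toric neck: identifying the partial curve $C_1$ with its forced toric completion as an honest point of the family counted by $N(S^1,\gamma^1)$, verifying that the toric completion and the regluing circle are genuinely unique and canonical (this is precisely where the toric-case results are indispensable, playing the role that ``constant on components mapping into $U^\an$'' plays in \cref{lem:gluing_decomposition}), and keeping careful track of the evaluation marked points and of the class correction $\delta_{l_1}+\delta_{l_2}$ throughout. The hypothesis that $A$ is big with respect to every summand $\gamma^i$ is exactly what is needed to apply \cref{prop:tropicalization_of_stable_map} and the skeletal-curve machinery uniformly to both halves.
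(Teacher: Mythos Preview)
Your approach is genuinely different from the paper's. The paper does not attempt any direct surgery along a toric neck; instead it deduces \cref{thm:gluing_concatenate} in a few lines as a purely formal consequence of the already-proven \cref{thm:gluing_inside} (gluing at an interior point of an edge), applied twice. Concretely: extend each $S^i$ slightly past $p^i$ along the same direction to get $\hS^i$ (harmless by \cref{thm:deformation_invariance_truncated}); glue $\hS^1,\hS^2$ at an interior point $p^1=p^2$ to obtain a spine $\hS$, and \cref{thm:gluing_inside} gives $N(\hS,\gamma)=\sum N(S^1,\gamma^1)N(S^2,\gamma^2)$. But $\hS$ is equally the gluing of $S$ with a short straight segment $L$ at an interior point, so \cref{thm:gluing_inside} again gives $N(\hS,\gamma)=\sum N(S,\beta^1)N(L,\beta^2)$, and by \cref{lem:count_balanced_spines} the only surviving term is $\beta^2=0$, $N(L,0)=1$. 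Hence $N(\hS,\gamma)=N(S,\gamma)$ and the formula follows. All the analytic content has already been absorbed into \cref{thm:gluing_inside}, whose own proof deforms the domain until the gluing point becomes a \emph{node}, reducing to the trivial \cref{lem:gluing_decomposition}.

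Your direct surgery is the more geometric picture, but the inverse step (and to a lesser extent the forward step) is not fully justified as written. Given independent $f^1,f^2$, you remove the toric tails $T^1,T^2$ and claim $C_1,C_2$ ``glue along that circle'' to a smooth analytic curve. In Berkovich geometry, identifying two closed disks at their Type~2 boundary points does not by itself produce a smooth curve: one needs compatible gluing data on the local rings (equivalently, a matching of tangent directions), otherwise one obtains a node. Your argument that $T^1\cup T^2$ is ``the unique toric annulus'' is the right idea for supplying this data---it says $T^1,T^2$ are the two complementary disks inside one ambient toric $\bbP^1$, hence their boundaries carry a canonical identification of germs---but you then have to check that this identification is compatible with the embeddings $T^i\subset\hat C^i$, so that transplanting $C_2$ in place of $T^1$ really yields a well-defined smooth $\bbP^1$ with a well-defined map to $Y^\an$. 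The same issue arises in the forward direction when you ``complete $C_1$ by the unique toric disk $T^1$'': the toric extension agrees with $f$ at the single point $\kappa$, but you need agreement of germs to get an analytic map on the glued curve. These points can likely be made rigorous, but they are exactly what the paper's reduction to the nodal case is designed to avoid.
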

\begin{figure}[!ht]
	\centering
	\setlength{\unitlength}{0.8\textwidth}
	\begin{picture} (1,0.17)
		\put(0,0){\includegraphics[width=\unitlength,page=3]{images/gluing_formula}}
		\put(0.13,0.06){$S^1$}
		\put(0.44,0.06){$S^2$}
		\put(0.22,0.03){$p^1$}
		\put(0.285,0.03){$p^2$}
		\put(0.76,0.06){$S$}
	\end{picture}
	\caption{The concatenation of $S^1$ and $S^2$ at the vertices $p^1$ and $p^2$.}
	\label{fig:gluing3}
\end{figure}
\begin{proof}
	We first make a small straight extension of $S^i$ at $p^i$ to $\hS^i$.
	This does not change the counts by \cref{thm:deformation_invariance_truncated}.
	Let $\hS$ be the gluing of $\hS^i$ at $p^i$.
	By \cref{thm:gluing_inside}, we have
	\begin{equation} \label{eq:gluing1}
	N(\hS,\gamma)=\sum_{\gamma^1+\gamma^2=\gamma} N(\hS^1,\gamma^1)\cdot N(\hS^2,\gamma^2)=\sum_{\gamma^1+\gamma^2=\gamma} N(S^1,\gamma^1)\cdot N(S^2,\gamma^2).
	\end{equation}
	Note that $\hS$ can also be viewed as the gluing of $S$ with a small straight segment $L$.
	Hence by \cref{thm:gluing_inside} again, we have
	\begin{equation} \label{eq:gluing2}
	N(\hS,\gamma)=\sum_{\beta^1+\beta^2=\gamma} N(\hS,\beta^1)\cdot N(L,\beta^2).
	\end{equation}
	By \cref{lem:count_balanced_spines},
	\[N(L,\beta^2)=\begin{cases}
	1 &\text{ if }\beta^2=0,\\
	0 &\text{ otherwise.}
	\end{cases}\]
	Hence equation \eqref{eq:gluing2} implies
	\[N(\hS,\gamma)=N(S,\gamma).\]
	Combining with equation \eqref{eq:gluing1}, we achieve the proof.
\end{proof}

\section{Tail condition with varying torus} \label{sec:varying_torus}

In this section, we will vary the embedding of the algebraic torus $T_M\subset U$, and prove that the counts of transverse spines are independent of the choice of embedding, see \cref{thm:count_independent}.
This implies in particular that our mirror algebra $A$ is independent of the choice of torus, see \cref{prop:structure_constants_varying_tori}.

Let $S=[\Gamma,(v_j)_{j\in J},h]$ be a spine in $\Sk(U)$ as in \cref{def:spine_in_U}, and $\gamma\in\NE(Y)$.
Assume $\abs{I}\ge 1$ and fix $i\in I$.
Suppose for each $j\in F$, we are given a Zariski open split algebraic torus $T_j\subset U$ with cocharacter lattice $M_j$.
Denote $\cT\coloneqq (T_j)_{j\in F}$.
We can generalize \cref{const:naive_counts_truncated} by requiring individual toric tail condition for each $j\in F$ as follows:

\begin{construction} \label{const:count_varying_torus}
	For each $j\in F$, we glue a copy of $l_j\coloneqq[0,\hv_j\coloneqq+\infty]$ to $\Gamma$, along $0$ and $v_j$.
	We extend $h$ affinely to the new leg $l_j$ via the identification $\Sk(U)\simeq M_{j,\bbR}$.
	Let $\delta_j\in\NE(Y)$ be the curve class associated to the new leg.
	Let $\hS=[\hGamma,(\hv_j)_{j\in J},\hh]$ denote the resulting extended spine, and $\hgamma\coloneqq\gamma+\sum_{j\in F}\delta_j$.
	We apply \cref{const:naive_counts_extended} to $\hS$ and $\hgamma$, and obtain $F_i(\hS,\hgamma)$.
	Let $F_i(S_\cT,\gamma)\subset F_i(\hS,\hgamma)$ be the subspace consisting of stable maps $[C,(p_j)_{j\in J},f]$ satisfying the \emph{toric tail condition with respect to $\cT$}:
	let $r\colon C\to\hGamma$ be the canonical retraction;
	for each $j\in F$, let $\bbT^*_j\coloneqq r\inv(l_j\setminus{\hv_j})$;
	then we have $f(\bbT^*_j)\subset T_j^\an$.
	We define $N_i(S_\cT,\gamma)\coloneqq\length(F_i(S_\cT,\gamma))$.
	We say that $A\in\bbN$ is big with respect to $(S_\cT,\gamma)$ if it is big with respect to $\hgamma\coloneqq\gamma+\sum\delta_j$.
\end{construction}

Given $A\in\bbN$, for each $T_j\subset U$, \cref{const:walls_by_induction} gives a polyhedral subset $\Wall_A^j\subset M_\bbR$.
We enlarge $\Wall_A$ by adding all $\Wall_A^j$.
Then \cref{prop:moving_w} carries over for counts satisfying the toric tail condition with respect to $\cT$.
So we can define $N(S_\cT,\gamma)$ without specifying $i\in I$ as in \cref{def:naive_count_transverse}.
Moreover, \cref{thm:deformation_invariance_truncated,thm:gluing_inside} carry over without change.

\begin{theorem}\label{thm:count_independent}
	Let $S=[\Gamma,(v_j)_{j\in J},h]$ be a spine in $\Sk(U)$ as in \cref{def:spine_in_U}, $\gamma\in\NE(Y)$, and $A\in\bbN$.
	Let $\cT$ and $\cT'$ be two sets of choices of tori for every $j\in F$.
	We enlarge $\Wall_A$ by adding the walls induced by all the tori in $\cT$ and $\cT'$.
	Assume $S$ is transverse with respect to $\Wall_A$, and $A$ is big with respect to both $(S_\cT,\gamma)$ and $(S_{\cT'},\gamma)$.
	Then $N(S_\cT,\gamma)=N(S_{\cT'},\gamma)$.
\end{theorem}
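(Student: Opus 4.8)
The plan is to reduce \cref{thm:count_independent} to a statement about a single tail at a time, and then prove the one-tail case by a deformation argument that moves the endpoint $h(v_j)$ around while keeping track of the toric tail condition.

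\medskip

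\textbf{Step 1: Reduction to one tail.} Since the toric tail condition is imposed separately on each leg $\bbT^*_j$ for $j\in F$, it suffices to prove the theorem when $\cT$ and $\cT'$ differ in only one coordinate $j_0\in F$, say $T_{j_0}=T_M$ versus $T'_{j_0}=T_{M'}$, and agree elsewhere; the general case follows by changing the tori one at a time. Moreover, by \cref{thm:forgetting_interior_marked_points} we may assume $h$ factors through the retraction to the convex hull $\Gamma^B$ of the $B$-type marked points (if not, both counts vanish). Finally, using the gluing formula \cref{thm:gluing_inside} (which carries over to counts with varying tori, as noted), we can cut $\Gamma$ at a generic point on the edge leading to $v_{j_0}$: this writes $N(S_\cT,\gamma)$ and $N(S_{\cT'},\gamma)$ as the same finite sum over $\gamma=\gamma^1+\gamma^2$ of $N(S^1,\gamma^1)$ times a one-tail count attached at a single finite vertex with a prescribed outgoing weight vector $P_{j_0}$. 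So we are reduced to: \emph{a truncated spine $S$ with a single finite leg, outgoing weight $P\in M\cap M'$ (the lattices agree on the relevant ray direction, as $P=P_{j_0}\in\Sk(U,\bbZ)$ is intrinsic), has the same count whether the toric tail condition is imposed with respect to $T_M$ or $T_{M'}$.}

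\medskip

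\textbf{Step 2: The one-tail case via a path of spines.} Here I would use the structure of \cref{const:count_varying_torus}: extending the finite leg $l_{j_0}$ to an infinite leg and imposing the tail condition $f(\bbT^*_{j_0})\subset T_M^\an$ (resp.\ $T_{M'}^\an$). The idea is that the count only sees the behavior of $f$ on the tail $\bbT^*_{j_0}$ near its attaching point $\op_s$, where by transversality $h(\op_s)\notin\Wall_A$ (now enlarged to include both $\Wall_A$ for $T_M$ and for $T_{M'}$). On a neighborhood of $\op_s$ the tropicalization $h$ is affine with derivative $P_{j_0}$, pointing into a cell where neither $E_\rt^\trop$ nor $(E'_\rt)^\trop$ is met, and there $T_M^\an$ and $T_{M'}^\an$ contain the same affinoid neighborhood of the relevant skeleton points (both equal $W^\an$-type loci by \cref{lem:toric_model}(\ref{lem:toric_model:intersection}), applied to each torus). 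Concretely: a skeletal curve $f$ in $F_i(\hS,\hgamma)$ satisfies $f(\bbT^*_{j_0})\subset T_M^\an$ if and only if $h|_{\Delta_{j_0}}$ factors through the retraction to $[\op_s,p_e]$ (by \cref{lem:toric_tail_equiv}(\ref{lem:toric_tail_equiv:twig}), applied with $T_M$), and the same criterion with $T_{M'}$ gives the identical combinatorial condition on $h$. Since both conditions are equivalent to the \emph{purely tropical} statement ``no twig of $\Trop(f)$ is attached along $[\op_s,p_e]$'', the subspaces $F_i(S_\cT,\gamma)$ and $F_i(S_{\cT'},\gamma)$ of $F_i(\hS,\hgamma)$ coincide as subsets, hence have equal length.

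\medskip

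\textbf{Step 3: Handling the genuine discrepancy.} The previous paragraph is too optimistic: the extended curve classes $\hgamma=\gamma+\sum\delta_j$ and $\hgamma'=\gamma+\sum\delta'_j$ differ, because $\delta_{j_0}$ (the class of the leg extension, computed via $\varphi$ from \cref{def:curve_class_via_varphi}) depends on the toric model. So $F_i(\hS,\hgamma)$ and $F_i(\hS',\hgamma')$ live in different moduli spaces. The fix is to argue that, for curves satisfying the toric tail condition, the contribution of the extended leg to the curve class is zero relative to \emph{both} models — i.e.\ $\delta_{j_0}\cdot\tE=0$ and likewise for $\tE'$, by \cref{lem:hgamma} — and more precisely that the class $[f|_{\text{actual curve}}]\in\NE(Y)$ of the underlying (untruncated) analytic disk is $\gamma$ regardless of which torus we use. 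This is exactly \cref{lem:curve_class_blowup}: the two torus embeddings give birational maps $Y\dashrightarrow Y_\rt$ and $Y\dashrightarrow Y'_\rt$, both open embeddings away from the respective $E$'s, and since the toric tail condition forces $f(\bbT^*_{j_0})$ to avoid both $E^\an$ and $(E')^\an$, the pushforward/pullback identities $p^*p_*[f]=[f]$ identify the contributions. Thus the bijection $F_i(S_\cT,\gamma)\xrightarrow{\sim}F_i(S_{\cT'},\gamma)$ is realized at the level of analytic stable maps: send $f$ (a curve for the $(Y,D)$-model with its $T_M$-tail) to its strict transform / re-truncation as a curve with a $T_{M'}$-tail, using that the two differ only by re-marking points on the common locus $W\cap W'$ and re-expressing the class. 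The length is preserved because this is an isomorphism of $0$-dimensional analytic spaces over $\cH(\mu,h(v_i))$.

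\medskip

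\textbf{Main obstacle.} The delicate point is Step 3: making precise the identification of the two moduli spaces $\cM^\sm(U^\an,\obP,\hgamma)$ and $\cM^\sm(U^\an,\obP',\hgamma')$ on the locus cut out by the toric tail condition. One must check that the re-truncation operation (chop off the $T_M$-tail, re-glue a $T_{M'}$-tail) is well-defined on families and preserves the property of being a skeletal curve with the prescribed spine, and that it matches curve classes via \cref{lem:N1_blowup} and \cref{lem:curve_class_blowup}. I expect this to require: (a) enlarging $\Wall_A$ to contain the walls from both tori (already built into the hypothesis), (b) invoking \cref{prop:toric_tail_cc} or \cref{prop:toric_tail_cc_skeletal} to know each tail condition cuts out a union of connected components in its own moduli space, so that the degree of $\Phi_i$ restricted there is well-defined, and (c) a deformation-invariance argument (\cref{thm:deformation_invariance_truncated}, carried over) to reduce to a position where $h(\op_s)$ lies in a common cell for both toric structures, where the local pictures literally agree. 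Once the curve-class bookkeeping of \cref{lem:curve_class_blowup} is in place, the equality of lengths is then formal.
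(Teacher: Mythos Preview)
Your reduction in Step~1 to a single tail is the same as the paper's first move. After that, however, your approach diverges from the paper's and carries a genuine gap.

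The problem is in Steps~2--3. You claim that the toric tail condition for $T_M$ and for $T_{M'}$ both reduce to the \emph{same} tropical criterion ``no twig along $[\op_s,p_e]$''. But the tropicalization map $h=(\tau\circ\of)|_\Gamma$ in \cref{prop:tropicalization_of_stable_map}, the tree $\Gamma$ itself (as convex hull of $\of^{-1}(D_\rt^\an)\cup\{p_j\}$), and the equivalence in \cref{lem:toric_tail_equiv} all go through the toric model $Y_\rt$ and hence depend on the chosen torus. So the two ``twig'' conditions live in different tropical pictures and there is no a~priori reason they cut out the same subspace. Worse, the ambient moduli spaces $F_i(\hS,\hgamma)$ and $F_i(\hS',\hgamma')$ differ: the extended spines $\hS$, $\hS'$ (affine extensions in two different linear structures on $\Sk(U)$) have distinct infinite legs, the boundary divisors $D_j$ hit by $p_e$ differ, and the classes $\hgamma\neq\hgamma'$. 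Your proposed ``re-truncation / re-gluing'' map between them is not defined on the level of analytic stable maps without substantial additional input, and the curve-class bookkeeping via \cref{lem:curve_class_blowup} does not by itself furnish such a map.

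The paper avoids this entirely by an algebraic trick. Pick a short segment $L=[w,v]\subset\Gamma$ with image off $\Wall_A$, and decorate its two endpoints with tori: $L_\cT$ has $T_j$ at both ends, $L_{\cT'}$ has $T_j$ at $w$ and $T'_j$ at $v$. The tautological identity
\[
S_\cT\sqcup_x L_{\cT'}=S_{\cT'}\sqcup_x L_\cT
\]
for $x\in(w,v)$, combined with the gluing formula \cref{thm:gluing_inside}, yields
\[
\sum_{\beta+\delta=\gamma}N(S_\cT,\beta)\,N(L_{\cT'},\delta)=\sum_{\beta+\delta=\gamma}N(S_{\cT'},\beta)\,N(L_\cT,\delta).
\]
Now $N(L_\cT,\delta)=\delta_{\delta,0}$ by \cref{lem:count_balanced_spines}. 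The crucial (and only non-formal) step is that also $N(L_{\cT'},\delta)=\delta_{\delta,0}$; the paper obtains this from \cite[Prop.~6.5]{Yu_Enumeration_of_holomorphic_cylinders_II}, which precisely handles the mixed-torus segment. Substituting gives $N(S_\cT,\gamma)=N(S_{\cT'},\gamma)$. This is the key idea you are missing: rather than comparing moduli spaces directly, one localizes the torus-dependence to a tiny segment where an independent cylinder-counting result shows the answer is trivial.
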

\begin{proof}
	It suffices to prove the case where $\cT$ and $\cT'$ differ at a single $j\in F$.
	Denote $v\coloneqq v_j$.
	Pick a small interval $[w,v]\subset\Gamma$ whose image is disjoint from $\Wall_A$.
	Let $L$ be the restriction $[w,v]\xrightarrow{\ h\ }\Sk(U)$.
	Let $L_\cT$ be $L$ together with the choice of tori that assign $T_j$ to both $w$ and $v$.
	Let $L_{\cT'}$ be $L$ together with the choice of tori that assign $T_j$ to $w$, and $T'_j$ to $v$.
	Pick any $x\in(w,v)$.
	Observe
	\[S_\cT \sqcup_x L_{\cT'} = S_{\cT'} \sqcup_x L_{\cT},\]
	where $\sqcup_x$ means gluing at $x$.
	By \cref{thm:gluing_inside} (extended to \cref{const:count_varying_torus}), we obtain
	\begin{equation} \label{eq:gluing_varying_tori}
	\sum_{\beta+\delta=\gamma} N(S_\cT,\beta)\cdot N(L_{\cT'},\delta) = \sum_{\beta+\delta=\gamma} N(S_{\cT'},\beta)\cdot N(L_\cT,\delta).
	\end{equation}
	\cref{lem:count_balanced_spines} implies that
	\[N(L_\cT,\delta)=\begin{cases}
	1 &\text{ if }\delta=0,\\
	0 &\text{ otherwise.}
	\end{cases}\]
	By \cite[Prop. 6.5]{Yu_Enumeration_of_holomorphic_cylinders_II}, \cref{lem:count_balanced_spines} also implies that
	\[N(L_{\cT'},\delta)=\begin{cases}
	1 &\text{ if }\delta=0,\\
	0 &\text{ otherwise.}
	\end{cases}\]
	Substituting into equation \eqref{eq:gluing_varying_tori}, we achieve the proof.
\end{proof}

\begin{proposition} \label{prop:structure_constants_varying_tori}
	The structure constants $\chi(P_1,\dots,P_n,Q,\gamma)$, and thus the multiplication rule on the mirror algebra, is independent of the choice of torus $T_M \subset U$.
\end{proposition}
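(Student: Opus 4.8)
The plan is to re-express the structure constants as finite sums of the torus-invariant naive spine counts of \cref{sec:naive_counts}, and then to invoke \cref{thm:count_independent} term by term.

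First I would recall, following \cref{rem:structure_constants_vary_point} and the spine formalism of \cref{sec:naive_counts}, that the structure constant $\chi(P_1,\dots,P_n,Q,\gamma)$ is the degree of a finite étale restriction $\Phi^\an|_{F'}$, where $F'$ collects the skeletal curves over a subset $V_M\times V_Q\subset\Sk(\cM_{0,n+2}\times U)$ satisfying the toric tail condition along the leg labelled $z$ (with $Z=-Q$). Fixing the compactification as in \cref{ass:strata}, choosing $A\in\bbN$ big with respect to $\beta=\gamma+\delta$, and using \cref{prop:transversality} to move the evaluation point into a tropically transverse position, \cref{prop:intro_source_of_skeletal_curve} together with \cref{const:count_varying_torus} lets one write
\[
\chi(P_1,\dots,P_n,Q,\gamma)=\sum_{S} N\big(S_{\cT},\gamma\big),
\]
a finite sum over transverse spines $S$ in $\Sk(U)$ of the type determined by $(P_1,\dots,P_n,Q)$, with the leg corresponding to $Q$ treated as a finite leg carrying a toric tail condition, where $\cT$ assigns the torus $T_M$ to that leg; finiteness of the sum follows from Lemmas \ref{lem:finitely_many_beta}, \ref{lem:finiteness_given_P} and \ref{lem:bound_on_class}.

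Next I would observe that the index set of this sum — the set of transverse spines in $\Sk(U)$ of the prescribed type — and the data $P_1,\dots,P_n,Q\in\Sk(U,\bbZ)$ are intrinsic to $U$ and do not reference any torus; what depends on the embedding $T_M\subset U$ is only the toric tail condition $\cT$ along the $Q$-leg. Given a second torus $T_{M'}\subset U$ with associated choice $\cT'$, I would enlarge $\Wall_A$ by adjoining the walls induced by both $T_M$ and $T_{M'}$ (exactly as in the hypothesis of \cref{thm:count_independent}), perturb the evaluation point so that every spine in play is transverse with respect to this enlarged wall set, and then apply \cref{thm:count_independent} termwise, obtaining $N(S_\cT,\gamma)=N(S_{\cT'},\gamma)$ for every $S$. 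Summing gives that $\chi(P_1,\dots,P_n,Q,\gamma)$ is the same whether computed with $T_M$ or with $T_{M'}$. The independence of the multiplication rule is then immediate, since the product is defined by \eqref{eq:multiplication} with the $\chi$'s as coefficients, with $\theta_0=1$, in the intrinsic basis indexed by $\Sk(U,\bbZ)$.

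The main obstacle is bookkeeping rather than a new idea: matching the count in \cref{def:structure_constant}, which is pinned at the single non-transverse point $\tQ$, with the transverse spine counts of \cref{sec:naive_counts}, while carrying along the compatibility of the various curve-class extensions and of the wall sets under enlargement. Concretely this means re-running the argument behind \cref{rem:structure_constants_vary_point} with two tori simultaneously, checking that the enlarged $\Wall_A$ still leaves $S$ transverse after a generic perturbation and that $A$ remains big with respect to all the extended classes (as in \cref{lem:hgamma}). Once this setup is in place, \cref{thm:count_independent} does all the work.
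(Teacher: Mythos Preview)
Your proposal is correct and follows essentially the same route as the paper: express the structure constant as a (finite) sum of naive spine counts at a point chosen, via \cref{prop:transversality}, so that every contributing spine is transverse with respect to the enlarged $\Wall_A$, and then apply \cref{thm:count_independent} termwise. The paper's proof is just these three moves stated in three sentences; your version spells out the bookkeeping (the spine decomposition, the enlargement of $\Wall_A$ for both tori, the bigness of $A$) that the paper leaves implicit.
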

\begin{proof}
	By the definition, the structure constant is a count of spines with finite vertex $q$ mapping to a (fixed) point of $V_Q$.
	By \cref{prop:transversality} we can pick this fixed point so that all the spines are transverse.
	Now apply \cref{thm:count_independent}.
\end{proof}

\begin{remark}
	\cref{prop:structure_constants_varying_tori} can also be deduced from the non-degeneracy of the Frobenius pairing in \cref{thm:main}.
\end{remark}

\section{Structure constants and associativity} \label{sec:associativity}

In this section, we prove the associativity of the multiplication rule in \eqref{eq:multiplication}, see \cref{thm:associativity}.
As explained in \cref{rem:structure_constants_vary_point}, first we need to interpret each structure constant as the degree of a finite étale map over a larger base containing the point $\tQ\in(\cM_{0,n+2}\times U)^\an$, see \cref{lem:structure_constants}.
We relate the structure constants to the Frobenius multilinear form in \cref{lem:structure_constants_trace}.
After that, we study the curve classes of the analytic disks contributing to the structure constants, proving a positivity result in \cref{prop:structure_disk_interior_divisor}, which will be used in \cref{sec:non-degeneracy}.
For the proof of the associativity, we decompose each structure constant into a sum of counts of truncated spines in \cref{lem:structure_constants_decomposition}.
The key associativity equation is \eqref{eq:associativity_expanded}.
After expressing the right hand side as a sum of counts of truncated spines, we vary the modulus of the domain metric trees, in order to stretch the edge separating the legs $p_1$ and $p_2$ from $p_3$.
Then we cut at this edge, and apply the gluing formula to compute the contribution to the left hand side of \eqref{eq:associativity_expanded}.
The cut and glue procedure is described in Constructions \ref{const:associativity_domain}, \ref{const:cut_and_glue} and \cref{fig:associativity_domain}.

\medskip
We follow the notations of \cref{sec:intro:structure_constants}, assuming $Y=\tY$.
We consider $M^\trop_{0,n+2}$, where we label the marked points
$p_1,\dots,p_n,z,s$. 
Let $V_\cM\subset M^\trop_{0,n+2}$ be the subset consisting of metric trees whose $z$-leg and $s$-leg are incident to a single 3-valent vertex.

Fix $A\in\bbN$ big with respect to $\gamma$.
Let $\Sigma'_\rt$ be a simplicial conical subdivision of $\Sigma_\rt$ induced by $\Wall_A$, i.e.\ we ask that every cell of $\Wall_A$ is a union of cells of $\Sigma'_\rt$.
Let $V_Q$ be the union of open cells in $\Sigma'_\rt$ whose closure contains $Q$.
(Note that $V_Q=M_\bbR$ if $Q=0$.)

Let
\begin{equation} \label{eq:Phi_structure_constants}
	\Phi\coloneqq(\dom,\ev_s)\colon H(\bP_Z,\beta)\longrightarrow\cM_{0,n+2}\times U
\end{equation}
be as in \cref{sec:intro:structure_constants}.
Let $H(P_Z,\beta)^\an_{V_\cM\times V_Q}$ be the preimage by $\Phi^\an$ of
\[V_\cM\times V_Q\subset M_{0,n+2}^\trop\times M_\bbR\simeq\Sk(\cM_{0,n+2}\times U)\subset(\cM_{0,n+2}\times U)^\an.\]

Given any stable map $[C,(p_1,\dots,p_n,z,s),f]\in H(\bP_Z,\beta)^\an$, let $\Gamma$ be the convex hull of all the marked points, $r\colon C\to\Gamma$ the retraction, $\bbT\coloneqq r\inv([z,s])\subset C$, and $\bbD\coloneqq r\inv(\overline{\Gamma\setminus z\text{-leg}})$.
The stable map $f$ is said to satisfy the \emph{toric tail condition} if $f(\bbT\setminus z)\subset T_M^\an$.

Let $\cN(P_1,\dots,P_n,Q,\gamma)\subset H(\bP_Z,\beta)^\an_{V_\cM\times V_Q}$ be the subspace consisting of stable maps satisfying the toric tail condition.
By \cref{prop:toric_tail_cc_skeletal}, the inclusion above is a union of connected components.
Hence by \cref{lem:finite_etale_degree}, the degree of the restriction
\begin{equation} \label{eq:structure_constants}
\cN(P_1,\dots,P_n,Q,\gamma)\xrightarrow{\ \Phi^\an\ } V_\cM\times V_Q
\end{equation}
is well-defined.

\begin{lemma} \label{lem:structure_constants}
	The degree of \eqref{eq:structure_constants} is equal to the structure constant $\chi(P_1,\dots,P_n,\allowbreak Q,\allowbreak \gamma)$.
\end{lemma}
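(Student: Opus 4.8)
The plan is to identify the degree of the finite étale map \eqref{eq:structure_constants} with the length of the fiber $F$ of \cref{def:structure_constant}, by specializing the base point to the distinguished point $\tQ = (\mu, Q) \in (\cM_{0,n+2} \times U)^\an$ used there. First I would observe that $\mu \in V_M$ and $Q \in V_Q$: indeed $\mu$ is by construction the valuation associated to the divisor separating $\{p_1,\dots,p_n\}$ from $\{z,s\}$, which is exactly the stratum of $\oM_{0,n+2}^\trop$ consisting of trees whose $z$-leg and $s$-leg meet at a single $3$-valent vertex, so $\mu \in V_M$; and $Q$ lies in $V_Q$ since $V_Q$ is by definition the union of open cells of $\Sigma'_\rt$ whose closure contains $Q$, hence contains $Q$ itself. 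Since $\tQ$ is a point of the skeleton $\Sk(\cM_{0,n+2}\times U) \subset V_M \times V_Q$ viewed inside $(\cM_{0,n+2}\times U)^\an$, and \eqref{eq:structure_constants} is finite étale (equivalently, its degree is well-defined and locally constant on the connected base $V_M \times V_Q$), the degree equals the length of the fiber $(\Phi^\an)^{-1}(\tQ) \cap \cN(P_1,\dots,P_n,Q,\gamma)$.

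Next I would check that this fiber is precisely $F$ as defined just before \cref{def:structure_constant}. Both are subspaces of $(\Phi^\an)^{-1}(\tQ)$ cut out by the toric tail condition, so it remains only to verify that the two formulations of the toric tail condition agree on stable maps lying over $\tQ$. In \cref{sec:intro:structure_constants} the condition is phrased via the convex hull $\Gamma$ of the marked points, the retraction $r\colon C \to \Gamma$, and $\bbT = r^{-1}([s,z])$, asking $f(\bbT \setminus z) \subset T_M^\an$; in the present section the very same recipe is used (the convex hull $\Gamma$, retraction $r$, $\bbT = r^{-1}([z,s])$, and $f(\bbT\setminus z) \subset T_M^\an$). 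These coincide verbatim, so the fiber of \eqref{eq:structure_constants} over $\tQ$ is $F$, whose length is $\chi(P_1,\dots,P_n,Q,\gamma)$ by definition.

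The one point requiring care is that $\tQ$ genuinely lies in the locus over which \eqref{eq:structure_constants} is finite étale, i.e.\ that the degree computed at $\tQ$ is the same as the (constant) degree over $V_M \times V_Q$; this needs $V_M \times V_Q$ connected (clear, as both factors are convex/cone-like) and $\cN(P_1,\dots,P_n,Q,\gamma) \to V_M\times V_Q$ genuinely finite étale. The latter is exactly what was established above \cref{lem:structure_constants}: by \cref{prop:smoothness_all} the map $\Phi$ is finite étale over a Zariski dense open of $\cM_{0,n+2}\times U$, which by \cref{lem:essential_skeleton_semistable_pair} contains the skeleton $\Sk(\cM_{0,n+2}\times U) \supset V_M \times V_Q$; and by \cref{prop:toric_tail_cc_skeletal} the toric tail locus $\cN$ is a union of connected components of the preimage, hence $\Phi^\an$ restricted to $\cN$ is still finite étale over $V_M\times V_Q$, so \cref{lem:finite_etale_degree} applies. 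The main obstacle — really the only substantive verification — is confirming the hypothesis of \cref{prop:toric_tail_cc_skeletal}, namely that for stable maps in the relevant locus, whenever $h(p_s)$ lies in a polyhedral cell $\fd \subset \Wall_A$ the linear span of $\fd$ contains $P_e = Z$; this follows from the choice of $V_Q$ as a union of cells of the subdivision $\Sigma'_\rt$ refining $\Wall_A$ together with the fact that $\mu \in V_M$ forces the $z$- and $s$-legs to emanate from a single $3$-valent vertex, so that the relevant segment near $p_s$ carries weight vector $Z$ and is constrained by the cell structure exactly as in the hypothesis. Assembling these observations gives the equality of the degree of \eqref{eq:structure_constants} with $\chi(P_1,\dots,P_n,Q,\gamma)$.
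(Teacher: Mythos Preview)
Your approach is correct and matches the paper's own proof, which is a one-liner: the fiber of \eqref{eq:structure_constants} at $\tQ$ is exactly the space $F$ of \cref{def:structure_constant}, so the degree equals $\length(F)=\chi(P_1,\dots,P_n,Q,\gamma)$. Your elaboration on why $\tQ\in V_M\times V_Q$ and why the two toric tail conditions coincide is accurate, but note that the finite-\'etaleness of $\cN\to V_M\times V_Q$ (including the verification of the hypothesis of \cref{prop:toric_tail_cc_skeletal}) is already established in the paragraph \emph{preceding} the lemma, so it is part of the setup rather than of the proof itself.
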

\begin{proof}
	The fiber of the map \eqref{eq:structure_constants} at $\tQ$ is exactly the space $F$ in \cref{sec:intro:structure_constants}, so the lemma follows.
\end{proof}

\begin{lemma} \label{lem:structure_constants_trace}
	If $Q$=0, we have $\chi(P_1,\dots,P_n,Q,\gamma)=\eta(P_1,\dots,P_n,\gamma)$, $\eta$ as in \cref{def:counting_Frobenius}.
\end{lemma}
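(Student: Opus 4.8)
The claim is that when $Q=0$ the structure constant $\chi(P_1,\dots,P_n,0,\gamma)$, defined via counting analytic disks with toric tail condition as in \cref{def:structure_constant} (reinterpreted as the degree of the finite \'etale map \eqref{eq:structure_constants} by \cref{lem:structure_constants}), coincides with the naive count $\eta(P_1,\dots,P_n,\gamma)$ of rational curves from \cref{def:counting_Frobenius}. The plan is to unwind both definitions and match the moduli problems directly. When $Q=0$ we have $Z=-Q=0\in M$, so $P_Z=(P_1,\dots,P_n,Z)$ has $Z=0$, meaning the $z$-marked point is an \emph{interior} marked point (it lies in $I$, not $B$), and $\delta=0$ since the one-parameter subgroup associated to $Q=0$ is trivial, hence $\beta=\gamma+\delta=\gamma$. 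Thus the moduli space $H(\bP_Z,\beta)$ appearing in the structure-constant definition is $H(P_1,\dots,P_n,0,\gamma)$, which is exactly the space $H((P_1,\dots,P_n,0),\gamma)$ with one extra interior marked point $z$ compared to the space $H(\bP,\gamma)$ of \cref{def:counting_Frobenius} (here $\bP=(P_1,\dots,P_n)$). Forgetting the interior marked point $z$ should identify the two moduli problems.

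First I would record that $Q=0$ forces $V_Q=M_\bbR$ (stated parenthetically in the text after the definition of $V_Q$), so the base of \eqref{eq:structure_constants} is $V_M\times M_\bbR$; and that $Z=0$ means the tail $\bbT=r\inv([z,s])$ carries no boundary marked point, so the toric tail condition $f(\bbT\setminus z)\subset T_M^\an$ is automatically satisfied by any skeletal/$\cM^\sm$ curve, because by \cref{lem:toric_model}(\ref{lem:toric_model:intersection}) and the fact that $\bbT$ contains no preimage of $D$, the whole tail maps into $U^\an$, and when the spine is transverse \cref{lem:skeletal_curve_contraction} (or rather the factorization through $\Gamma^B$, the convex hull of the boundary marked points, as in \cref{rem:spine_factorization}) shows $h$ is constant on the $z$-leg and $s$-leg, forcing $f(\bbT)\subset T_M^\an$ by the balancing condition as in \cref{lem:toric_tail_equiv}. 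Hence $\cN(P_1,\dots,P_n,0,\gamma)=H(\bP_Z,\gamma)^\an_{V_M\times M_\bbR}$, i.e.\ the toric tail condition cuts out nothing. Next I would apply the forgetful map dropping the interior marked point $z$: since $z\in I$, \cref{lem:forgetting_points} (in analytic form, or its algebraic counterpart via GAGA) gives a pullback square identifying $\cM^{\sm}(U^\an,\bP_Z,\gamma)$ over the locus where the domain stays stable after forgetting $z$ with $\cM^\sm(U^\an,\bP,\gamma)$; and the extra marked point $z$ together with the two extra points $s$ already present means we are in the range $n+2\ge 4$, so \cref{lem:forgetting_points} applies. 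Concretely, the map $H(\bP_Z,\gamma)\to H(\bP,\gamma)$ forgetting $z$ is an isomorphism over the relevant open locus, compatible with $\Phi=(\dom,\ev_s)$ on both sides once one also forgets the $z$-leg from the tropical/$\ocM_{0,n+2}$ factor.

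The key step is then to compare degrees. On the $\eta$ side, \cref{def:counting_Frobenius} takes $\Phi=(\dom,\ev_s)\colon H(\bP,\gamma)\to\cM_{0,n+1}\times U$ and $\eta(\bP,\gamma)$ is its degree over the generic point (equivalently over a Zariski-dense open, by \cref{prop:smoothness_all}). On the $\chi$ side with $Q=0$, \cref{lem:structure_constants} says $\chi$ is the degree of $\Phi^\an$ restricted to $\cN$ over $V_M\times M_\bbR\simeq\Sk(\cM_{0,n+2}\times U)$; but $V_M\subset M^\trop_{0,n+2}$ is precisely the tropical locus where the $z$-leg and $s$-leg meet at a single $3$-valent vertex, which under the forgetful map $M^\trop_{0,n+2}\to M^\trop_{0,n+1}$ maps isomorphically (near a generic point) onto $M^\trop_{0,n+1}$. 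Since $\Phi$ on the bigger space is the pullback of $\Phi$ on the smaller one under the forgetful maps on source and target (the pullback square from \cref{lem:forgetting_points}), and since finite \'etale degree is preserved under pullback, the degree of \eqref{eq:structure_constants} equals the degree of $(\dom,\ev_s)\colon\cM^\sm(U^\an,\bP,\gamma)\to\ocM_{0,n+1}^\an\times U^\an$ over $\Sk(\cM_{0,n+1}\times U)$, which by \cref{prop:smoothness_all} and the fact that the essential skeleton is contained in any Zariski-dense open (\cref{lem:essential_skeleton_semistable_pair}) equals $\eta(P_1,\dots,P_n,\gamma)$.

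\textbf{Main obstacle.} The routine bookkeeping is fine; the one point requiring care is checking that forgetting the interior marked point $z$ really is harmless at the level of the constrained moduli space $H$ — i.e.\ that the stabilization issue of \cref{lem:A1-curve}/\cref{lem:stable_domain} does not interfere. Since $z$ is an interior point with $P_z=0$, a component carrying only $z$ and one node would be contracted by stabilization; one must verify (using that we work inside $\cM^\sm$, where by \cref{lem:restrict_to_skeleton}(\ref{lem:restriction_to_skeleton:sm}) or the application of \cref{lem:smoothness_fiber} the domain is already stable over the relevant locus, and that $n+1\ge 3$ so $\ocM_{0,n+1}$ makes sense) that over the open set computing the degree the domain is genuinely stable both before and after forgetting $z$, so \cref{lem:forgetting_points} gives a clean pullback square. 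Equivalently one can simply invoke \cref{thm:structure_constants}'s internal logic, but giving the direct argument is cleanest. Once that compatibility is in place, the equality of degrees is immediate.
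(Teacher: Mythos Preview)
Your proposal is correct and follows essentially the same approach as the paper's proof: observe that $Q=0$ gives $Z=0$, $\delta=0$, $\beta=\gamma$, $V_Q=M_\bbR$; pick a point $(\mu,b)\in V_M\times M_\bbR$ with $b\notin\Wall_A$; use \cref{lem:skeletal_curve_contraction} to see $h$ is constant on the $z$- and $s$-legs, hence (since $h(s)=b\notin\Wall_A$) there are no twigs along those legs and \cref{lem:toric_tail_equiv} gives the toric tail condition for free; conclude $\cN_{\mu,b}=H(\bP_Z,\gamma)^\an_{\mu,b}$, whose length is $\eta$. The paper simply asserts the last identification of the fiber length with $\eta(P_1,\dots,P_n,\gamma)$ without comment, whereas you spell out the forgetting-$z$ step via \cref{lem:forgetting_points}; this extra care is harmless and your concern about stabilization is adequately handled by \cref{lem:restrict_to_skeleton}(\ref{lem:restriction_to_skeleton:sm}) as you note.
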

\begin{proof}
	Pick any $(\mu,b)\in V_\cM\times M_\bbR$ such that $b\notin\Wall_A$.
	Let $[C,(p_1,\dots,p_n,z,s),f]\in H(\bP_Z,\beta)^\an_{\mu,b}$.
	Denote $\Sp(f)=[\Gamma,(p_1,\dots,p_n,z,s),h]$.
	By \cref{lem:skeletal_curve_contraction}, $h$ is constant on the $z$-leg and the $s$-leg of $\Gamma$.
	Since $h(s)=b\notin\Wall_A$, there are no twigs of $\Trop(f)$ along the $z$-leg nor the $s$-leg.
	So $f$ satisfies the toric tail condition by \cref{lem:toric_tail_equiv}.
	In other words, we have shown that
	\[\cN(P_1,\dots,P_n,0,\gamma)_{\mu,b}=H(\bP_Z,\beta)^\an_{\mu,b}.\]
	The length of the left hand side is $\chi(P_1,\dots,P_n,0,\gamma)$, while the length of the right hand side is $\eta(P_1,\dots,P_n,\gamma)$, completing the proof.
\end{proof}

\begin{definition} \label{def:structure_disk}
	For any $f\in\cN(P_1,\dots,P_n,Q,\gamma)$, we call the restriction
	\[f|_\bbD\colon[\bbD,(p_1,\dots,p_n,s)]\longrightarrow Y^\an\]
	a \emph{structure disk} responsible for the structure constant $\chi(P_1,\dots,P_n,Q,\gamma)$.
	Note that $f$ is a skeletal curve by \cref{lem:source_of_skeletal_curve}.
\end{definition}

\begin{lemma} \label{lem:structure_disk_class}
	If $\Phi^\an(f)$ is a general rational point in $V_\cM\times V_Q$, then $f$ is defined over a field with discrete valuation.
	In this case, the class of the structure disk above in the sense of \cref{def:curve_class} is equal to $\gamma\in\NE(Y)$.
\end{lemma}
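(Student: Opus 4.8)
The plan is to split the claim into its two assertions and handle them in order. The first assertion --- that a general rational point of $V_M\times V_Q$ is the image under $\Phi^\an$ of a stable map $f$ defined over a discretely valued field --- should follow from the finite étale description of the structure constants. By \cref{lem:structure_constants}, the map \eqref{eq:structure_constants} is finite étale of degree $\chi(P_1,\dots,P_n,Q,\gamma)$ onto $V_M\times V_Q$; and $V_M\times V_Q$ is a piecewise $\bbZ$-affine subset of $\Sk(\cM_{0,n+2}\times U)$, whose rational points in the relevant lattice are exactly the divisorial valuations coming from snc models over $k_0$ (cf. \cref{lem:essential_skeleton}), hence are discretely valued. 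Since $\Phi^\an$ is finite étale near such a point, the points of the fiber are defined over a finite (unramified) extension of the complete residue field at that point, which is again discretely valued. So first I would spell out this reduction, invoking \cref{prop:smoothness_all}/\cref{lem:restrict_to_skeleton} for the finite étale property near $\Sk(\cM_{0,n+2}\times U)$ and \cref{lem:toric_tail_equiv}/\cref{prop:toric_tail_cc_skeletal} to stay inside the toric-tail locus.

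For the second assertion I would compute the curve class of the structure disk $f|_\bbD$ using the curve class machinery of \cref{sec:curve_classes}. The key identity is that the class $[f]$ of the full closed curve $f\colon C\to Y^\an$ (in the sense of \cref{def:curve_class}) equals $\beta = \gamma+\delta$, where $\delta$ is the class of the closure of a general translate of the one-parameter subgroup $Q$; this is because $f$ is a skeletal curve with $\Trop(f)$ transverse for a general point of $V_M\times V_Q$, so \cref{prop:curve_class_formula} applies (the image lies in $W^\an$ away from the twigs, which by the toric tail condition and transversality can be arranged to contribute correctly), and the class is read off from the tropicalization via the piecewise-linear function $\varphi$ of \cref{def:curve_class_via_varphi}. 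Then I would observe that $C = \bbD\cup\bbT$ where $\bbT = r\inv([z,s])$ is the toric tail, mapping into $T_M^\an$ away from $z$ and meeting $D$ only along the $z$-leg; by the balancing condition and \cref{prop:curve_class_formula} applied to $\bbT$, the class of $f|_\bbT$ (as a curve with boundary) is precisely $\delta$ --- this is the ``limiting'' curve class of the one-parameter subgroup direction $Z=-Q$, matching the choice of $\delta$ in \cref{sec:intro:structure_constants}. Subtracting, $[f|_\bbD] = [f] - [f|_\bbT] = (\gamma+\delta) - \delta = \gamma$.

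The step I expect to be the main obstacle is making the decomposition $[f] = [f|_\bbD] + [f|_\bbT]$ rigorous at the level of \cref{def:curve_class}, since the ``class'' of a curve with boundary is defined via a semistable formal model and one must check that a single semistable model of $C$ restricts compatibly to semistable models of $\bbD$ and $\bbT$, with the proper-component cycles adding up correctly across the node separating them. Concretely I would choose, after a finite base extension, a strictly semistable model $\fC$ of $C$ with $\ff\colon\fC\to\hY_{\kc}$ such that the node $C\cap\bbT\cap\bbD$ and the marked points extend to the smooth locus of $\fC_s$; then $\fC$ induces models $\fD$ of $\bbD$ and $\fT$ of $\bbT$, the proper components of $\fC_s$ are the disjoint union of those of $\fD_s$ and $\fT_s$ (the node contributing no proper component), and $\ff_{s*}[\fC_s^\pr] = \ff_{s*}[\fD_s^\pr] + \ff_{s*}[\fT_s^\pr]$. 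The computation $\ff_{s*}[\fT_s^\pr] = \delta$ is then exactly a curve-class-formula computation for the twig/tail geometry, using that along $[z,s]$ the map $h$ is affine with derivative $-Q$ (by \cref{lem:toric_tail_equiv}\eqref{lem:toric_tail_equiv:twig} and the toric tail condition) and invoking \cref{rem:curve_class_parallel_perturbation} to identify $\odelta$ with the class of the general translate of the $Q$-subgroup. Finally I would note that the genericity of $\Phi^\an(f)$ guarantees transversality of $\Trop(f)$ (by \cref{prop:transversality}, after possibly shrinking to $V_M\times V_Q$ away from a lower-dimensional subset), which is what makes \cref{prop:curve_class_formula} applicable throughout.
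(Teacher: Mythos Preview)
Your approach is essentially the paper's: establish $[f]=\beta=\gamma+\delta$, compute $[f|_\bbT]=\delta$ via \cref{prop:curve_class_formula} on the tail (where the toric tail condition forces $f(\bbT)\subset W^\an$), identify this with the class of the one-parameter subgroup closure using that $l(0)\in V_Q$ so the affine extension $\ol$ of the $z$-leg crosses no codimension-one cone of $\Sigma_\rt$, and subtract. Your care about the additivity $[f]=[f|_\bbD]+[f|_\bbT]$ via a common semistable model is well-placed and is exactly what underlies the paper's implicit use of this decomposition.

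One correction: you should not invoke \cref{prop:curve_class_formula} to justify $[f]=\beta$ for the \emph{full} closed curve. That proposition requires the image to lie in $W^\an$, and the curve $C$ will in general have twigs meeting $E^\an$; your parenthetical ``away from the twigs \dots can be arranged to contribute correctly'' does not repair this. The identity $[f]=\beta$ holds for a much simpler reason: $f$ lies in $H(\bP_Z,\beta)$ by construction, and for a proper curve the class of \cref{def:curve_class} coincides with the pushforward cycle class, which is $\beta$ by definition of the moduli space. The paper uses exactly this.
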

\begin{proof}
	If $\Phi^\an(f)$ is a rational point in $V_\cM\times V_Q\subset Y^\an$, since $Y^\an$ is defined over a field with discrete (possibly trivial) valuation (see \cref{sec:log_CY}), the complete residue field of $\Phi^\an(f)$ has discrete valuation.
	Thus by \cref{lem:restriction_to_skeleton}(\ref{lem:restriction_to_skeleton:etale}), $f$ is defined over a field with discrete valuation.
	Recall from \cref{sec:intro:structure_constants} that $\delta=\beta-\gamma\in\NE(Y)$ is the class of the closure $g\colon\bbP^1\to Y$ of any general translation of the one-parameter subgroup in $T_M$ given by $Q\in M$.
	Denote $\Sp(f)=[\Gamma,(p_1,\dots,p_n,z,s),h]$.
	Let $l\colon[0,+\infty]\to\oM_\bbR$ be the restriction of $h$ to the $z$-leg of $\Gamma$, and let $\ol\colon[-\infty,+\infty]\to\oM_\bbR$ be its affine extension.
	Let $\delta_l, \delta_{\ol}\in\NE(Y)$ the curve classes associated to $l$ and $\ol$ respectively as in \cref{def:curve_class_via_varphi}.
	Since $l(0)\in V_Q$, $\ol([-\infty,0])$ does not meet any codimension-one cone of $\Sigma_\rt$.
	Hence $\delta_l=\delta_{\ol}$.
	Up to translating the map $g$, we may assume that $g$ tropicalizes to $\ol$.
	By \cref{prop:curve_class_formula}, we have $[g]=\delta_{\ol}$ and $[f|_\bbT]=\delta_l$.
	Combining with $[f]=\beta=\gamma+\delta$ and $\delta_l=\delta_{\ol}$, we deduce that $[f|_\bbD]=\gamma$.
\end{proof}

\begin{lemma} \label{lem:curve_class_zero}
	Let $\bbD^\circ\coloneqq\bbD\setminus\{p_1,\dots,p_n\}$.
	If $\Phi^\an(f)$ is a general rational point in $V_\cM\times V_Q$, then the following are equivalent:
	\begin{enumerate}
		\item $f(\bbD)\subset W^\an$ ($W$ as in \cref{lem:toric_model}), and $(\tau\circ f)(\bbD^\circ)$ lies in an open maximal cone of $\Sigma_\rt$.
		\item The class $[f|_\bbD]=0\in\NE(Y)$.
	\end{enumerate}
\end{lemma}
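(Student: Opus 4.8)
The plan is to prove the two implications using the positivity statement \cref{prop:interior_divisor} and the tropical curve-class formula \cref{prop:curve_class_formula} from \cref{sec:curve_classes}. First I record some preliminary observations valid for general $\Phi^\an(f)$. Since $f$ is a skeletal curve, \cref{lem:restrict_to_skeleton} places it in $\cM^\sm$, so $f(\bbD)$ avoids $Y^\idt\cup(D^\ess\setminus W)$ while $f|_\bbD^{-1}(D)=\sum_{j\in B}m_jp_j$ with each $p_j$ mapping into $D^\ess\cap W$. In particular $f(\bbD)$ is not contained in $D^\an$, meets $D^\an$ only at the $p_j$, and — since $Y\setminus W$ is covered by $E$, $D\setminus D^\ess$, $D^\ess\setminus W$ and $Y^\idt$ — one has $f(\bbD)\subset W^\an$ if and only if $f(\bbD)\cap E^\an=\emptyset$. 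Moreover, by genericity of $\Phi^\an(f)$ and \cref{prop:transversality} we may assume the spine $\Sp(f)$ is transverse; write $h_\bbD$ for the induced tropicalization of $f|_\bbD$ (the restriction of $h$ to the convex hull of the marked points carried by $\bbD$). Transversality and genericity then also guarantee that $(\tau\circ f)(\partial\bbD)$ avoids the codimension-one skeleton of $\Sigma_\rt$ and that $h_\bbD$ satisfies Conditions (1-2) of \cref{lem:curve_class_from_model}.

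For the implication (1)$\Rightarrow$(2): if $f(\bbD)\subset W^\an$ and $(\tau\circ f)(\bbD^\circ)$ lies in an open maximal cone $\sigma^\circ$ of $\Sigma_\rt$, then $h_\bbD$ has image in $\overline\sigma$ and meets no smaller cone, so \cref{prop:curve_class_formula} applies (with the set $V$ of \cref{lem:curve_class_from_model} empty) and gives $[f|_\bbD]=\delta_{h_\bbD}$. Since the piecewise-linear function $\varphi$ of \cref{def:curve_class_via_varphi} is linear on $\overline\sigma$, each $\odelta_l$ in the defining sum of $\delta_{h_\bbD}$ vanishes, so $[f|_\bbD]=0$.

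For the implication (2)$\Rightarrow$(1): assume $[f|_\bbD]=0$. Apply \cref{prop:interior_divisor} to the effective Cartier divisor $E$ (which contains no essential boundary strata by \cref{ass:strata}, and with $f(\bbD)\not\subset E^\an$ by the preliminary observations): we get $0=[f|_\bbD]\cdot E=\deg f|_\bbD^{-1}(E^\an)\ge 0$ with equality, hence $f(\bbD)\cap E^\an=\emptyset$ and so $f(\bbD)\subset W^\an$. Then \cref{prop:curve_class_formula} gives $0=[f|_\bbD]=\delta_{h_\bbD}=\sum_l\pi^*\odelta_l$; each $\odelta_l\in\NE(Y_\rt)$ is effective (a sum of kinks of $\varphi$, i.e.\ of classes of toric $1$-strata), so every $\odelta_l$ vanishes and $\varphi\circ l$ is affine on each domain of affineness $l$ of $h_\bbD$. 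Using the strict convexity of $\varphi$ — its kink along each codimension-one cone of $\Sigma_\rt$ being a nonzero effective class — together with the transversality of $h_\bbD$, this forces $h_\bbD$, hence $(\tau\circ f)(\bbD^\circ)$, into the closure of a single maximal cone $\overline\sigma$. Finally, genericity of $\ev_s(f)$ places it in the interior of a maximal cone, and combined with the rigidity of spines (\cref{prop:rigidity_spine}) this upgrades $\overline\sigma$ to the open cone $\sigma^\circ$, giving (1).

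The step I expect to be the main obstacle is the very last one: passing from a closed to an open maximal cone. The condition $[f|_\bbD]=0$ by itself only confines the image to $\overline\sigma$ (a piecewise-linear curve can run along a codimension-one face of $\overline\sigma$ without producing any kink), so the genericity of $\Phi^\an(f)$ is genuinely needed to push the finitely many vertices and bending points of $h_\bbD$ off the codimension-one skeleton of $\Sigma_\rt$ inside $M_\bbR$. Making this precise will likely require temporarily enlarging $\Wall_A$ to contain $\Sigma_\rt^{(d-1)}$ so that transversality in the sense of \cref{def:transverse} already encodes what is needed, and then invoking \cref{prop:transversality} and \cref{prop:rigidity_spine}. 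A secondary point still to be checked carefully is the claimed equivalence ``$f(\bbD)\subset W^\an\iff f(\bbD)\cap E^\an=\emptyset$'' for curves in $\cM^\sm$, namely that no point of $\bbD$ can map into the part of $Y\setminus W$ not already contained in $E$.
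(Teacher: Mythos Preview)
Your proof is correct and follows essentially the same route as the paper, which also reduces everything to the toric picture via the curve-class formulas of \cref{sec:curve_classes}. The paper is just more direct: for (1)$\Rightarrow$(2) it observes $[\pi\circ f|_\bbD]=0\in\NE(Y_\rt)$ straight from \cref{def:curve_class} (the reduction lands in a single $0$-stratum of $Y_{\rt}$) and then pulls back via \cref{lem:curve_class_blowup}; for (2)$\Rightarrow$(1) it gets $f(\bbD)\subset W^\an$ exactly as you do, then pushes forward $[\pi\circ f|_\bbD]=\pi_*[f|_\bbD]=0$ and invokes \cref{lem:curve_class_from_model}. Your detour through \cref{prop:curve_class_formula} is equivalent, since that proposition is itself \cref{lem:curve_class_from_model} plus \cref{lem:curve_class_blowup}.

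Your ``main obstacle'' is not an obstacle: once you have verified Conditions~(1)--(2) of \cref{lem:curve_class_from_model} for $h_\bbD$ with respect to $\Sigma_\rt$ (which you do via genericity and \cref{prop:transversality}), the vanishing of each $\odelta_l$ already forces $h_\bbD^{-1}(\Sigma_\rt^{(d-1)})=\emptyset$, i.e.\ the image lies in an \emph{open} maximal cone. Indeed, Condition~(2) says $V=h_\bbD^{-1}(\Sigma_\rt^{(d-1)})$ is a finite set of $2$-valent vertices; at any such $v$ lying in a codimension-one cone $\sigma$, the weight $w_{(v,e)}$ cannot be tangent to $\sigma$ (otherwise a neighborhood of $v$ on the edge would lie in $\sigma$, since $h(v)$ is in the relative interior of $\sigma$ by Condition~(1), contradicting finiteness of $V$), so $d_v>0$. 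Then $0=\sum_{v\in V}d_v[Z_v]$ with $d_v>0$ and $[Z_v]\in\NE(Y_\rt)\setminus 0$ forces $V=\emptyset$. No appeal to \cref{prop:rigidity_spine} or a further enlargement of $\Wall_A$ is needed. Your ``secondary point'' about $f(\bbD)\subset W^\an\iff f(\bbD)\cap E^\an=\emptyset$ is fine exactly as you sketched it.
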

\begin{proof}
	Assume (1).
	We have $[\pi\circ f|_\bbD]=0\in\NE(Y_\rt)$ by \cref{def:curve_class}.
	Hence $[f|_\bbD]=\pi^*[\pi\circ f|_\bbD]=0\in\NE(Y)$ by \cref{lem:curve_class_blowup}.
	
	Now assume (2).
	By Lemmas \ref{lem:restriction_to_skeleton}(\ref{lem:restriction_to_skeleton:sm}) and \ref{lem:interior_divisor}, we have $f(\bbD)\subset W^\an$.
	Moreover, $[\pi\circ f|_\bbD]=\pi_*[f|_\bbD]=0\in\NE(Y_\rt)$.
	So $(\tau\circ f)(\bbD^\circ)$ lies in an open maximal cone of $\Sigma_\rt$ by \cref{lem:curve_class_from_model}.
\end{proof}

\begin{proposition} \label{prop:structure_disk_interior_divisor}
	Let $F\subset Y$ be an effective Cartier divisor containing no 0-stratum of $D^\ess$.
	Let 
	\[f|_\bbD\colon[\bbD,(p_1,\dots,p_n,s)]\longrightarrow Y^\an\]
	be a structure disk responsible for $\chi(P_1,\dots,P_n,Q,\gamma)$ such that $\Phi^\an(f)$ is a general rational point in $V_\cM\times V_Q$.
	We have $\gamma\cdot F=\deg(F^\an|_{\bbD})$.
	In particular, $\gamma\cdot F\ge 0$, with equality if and only if $f(\bbD)$ is disjoint from $F^\an$.
\end{proposition}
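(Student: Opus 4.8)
The plan is to identify $\gamma\cdot F$ with the degree of $F^\an$ restricted to the structure disk $\bbD$, by applying \cref{lem:interior_divisor} to the constant model of $(Y,D)$.

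First I would reduce the intersection-theoretic statement to a statement about the disk alone. Since $\Phi^\an(f)$ is a general rational point, \cref{lem:structure_disk_class} shows that $f|_\bbD$ is defined over a discretely valued field and that its class in the sense of \cref{def:curve_class} is $\gamma\in\NE(Y)$; hence $\gamma\cdot F=[f|_\bbD]\cdot F$, and it suffices to prove $[f|_\bbD]\cdot F=\deg(F^\an|_\bbD)$. At this point I would also record that $F^\an|_\bbD$ is genuinely a divisor on $\bbD$: the marked point $s$ lies in $\bbD$, and since $\Phi^\an(f)\in V_M\times V_Q$, \cref{prop:skeleton_of_product}(\ref{prop:skeleton_of_product:point}) gives $f(s)=\ev_s(f)\in V_Q\subset\Sk(U)$; every point of $\Sk(U)$ is a birational point, so it maps under $\iota\colon Y^\an\to Y$ to the generic point of $Y$, which does not lie on the proper closed subscheme $F$, and therefore $f(s)\notin F^\an$ and $f(\bbD)\not\subset\supp F^\an$.

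Next I would analyse a strictly semistable model $\fC$ of $\bbD$ over $\kc$ through which $f|_\bbD$ extends to $\ff\colon\fC\to\hY_{\kc}$. Because $\bbD$ is a closed disk, the admissible blow-ups needed to extend the map introduce only proper components, so $\fC_s^\npr$ is the single component carried by the boundary point $\partial\bbD$. The crux is to show that $\ff_s(\fC_s^\npr)$ is a single $0$-stratum of $D^\ess$. For this I would first locate $(\tau\circ f)(\partial\bbD)$: the defining condition of $V_M$ forces the $z$-leg and the $s$-leg to be incident to a single $3$-valent vertex $v_0$, which is exactly the point to which $\partial\bbD$ retracts, and by \cref{lem:skeletal_curve_contraction} the $s$-leg is contracted by $h$ (as $s\in I$ while $v_0$ lies in $\Gamma^B$), whence $(\tau\circ f)(\partial\bbD)=\ev_s(f)\in V_Q$. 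Since $\Sigma'_\rt$ refines the complete fan $\Sigma_\rt$, a general rational point of $V_Q$ lies in the relative interior of a maximal cone $\sigma$ of $\Sigma_\rt$, hence off the codimension-one skeleton; by \cref{lem:toric_model} this puts $f(\partial\bbD)$ in $W^\an$ and identifies its reduction with the $0$-stratum of $D_\rt$ attached to $\sigma$, transported across the isomorphism $\pi$ to the corresponding $0$-stratum of $D^\ess$ in $Y$. By continuity the whole non-proper component maps to this point.

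Finally I would invoke \cref{lem:interior_divisor} with $\fZ=\hY_{\kc}$ and $\mathfrak F$ the closure of $F$: its hypotheses hold because $\ff_s(\fC_s^\npr)$ is a $0$-stratum of $D^\ess$, which $F$ avoids by assumption, and because $f(\bbD)\not\subset\supp F^\an$ as shown above. The lemma yields $[f|_\bbD]\cdot F=\deg\big(\ff_{s*}[\fC_s^\pr]\cdot\cO(\mathfrak F)|_{Y}\big)=\deg(f^{-1}(F^\an))=\deg(F^\an|_\bbD)$, so $\gamma\cdot F=\deg(F^\an|_\bbD)$; and when $F$ is effective, $f^{-1}(F^\an)$ is an effective divisor on $\bbD$, so its degree is nonnegative and vanishes exactly when $f(\bbD)$ misses $F^\an$, which is the last clause. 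The step I expect to be the main obstacle is the middle one: controlling the non-proper locus of a semistable model of the closed disk $\bbD$ and tracking it through the reduction map and the birational contraction $\pi$ down to a single $0$-stratum of $D^\ess$. This is precisely where the generality of $\Phi^\an(f)$ enters, and it is the reason the hypothesis needs only that $F$ avoid the $0$-strata of $D^\ess$ rather than all essential boundary strata as in \cref{prop:interior_divisor}.
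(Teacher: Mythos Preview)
Your proof is correct and follows the same route as the paper, but you do more work than necessary because of a misperception about the hypotheses. The paper's proof is two lines: use \cref{lem:restrict_to_skeleton}(\ref{lem:restriction_to_skeleton:sm}) to see that $f(\bbD)\not\subset\supp F^\an$, then conclude directly from \cref{lem:structure_disk_class} and \cref{prop:interior_divisor}. You instead unpack \cref{prop:interior_divisor} and go back to \cref{lem:interior_divisor}, because you believe the hypothesis here (``$F$ contains no $0$-stratum of $D^\ess$'') is strictly weaker than the hypothesis of \cref{prop:interior_divisor} (``$F$ contains no essential boundary stratum''). It is not: every closed stratum of $D^\ess$ contains a $0$-stratum (this is established in the proof of \cref{lem:toric_model}(\ref{lem:toric_model:W})), and since $F$ is closed, the two conditions are equivalent. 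So the ``main obstacle'' you flag in your middle paragraph is already packaged inside \cref{prop:interior_divisor}, and there is no need to re-derive it.

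That said, your explicit verification that $(\tau\circ f)(\partial\bbD)$ lies in an open maximal cone of $\Sigma_\rt$ is useful: this is the tropicalization hypothesis of \cref{prop:interior_divisor}, which the paper leaves implicit. Your argument for it --- identifying $\partial\bbD$ with the $3$-valent vertex $v_0$, using \cref{lem:skeletal_curve_contraction} to get $f(v_0)=f(s)=\ev_s(f)\in V_Q$, and then observing that a general point of $V_Q$ lies off the codimension-one skeleton --- is correct and fills that gap cleanly.
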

\begin{proof}
	By \cref{lem:restriction_to_skeleton}(\ref{lem:restriction_to_skeleton:sm}), $f(\bbD)$ is not contained in the support of $F^\an$.
	By \cref{lem:structure_disk_class}, the curve class $[f]\in\NE(Y)$ is equal to $\gamma$.
	Hence we conclude from \cref{prop:interior_divisor}.
\end{proof}

\begin{lemma} \label{lem:structure_constants_decomposition}
	Say $\bP_Z=(P_1,\dots,P_n,Z)$ is indexed by $J\coloneqq\{1,\dots,n+1\}$, and let $F=\{n+1\}\subset J$.
	Fix $[\Gamma,(p_1,\dots,p_n,z,s)]\in V_\cM$ and $b\in V_Q$.
	Let $\os$ be the finite endpoint of the $s$-leg of $\Gamma$.
	Let $\oGamma\subset\Gamma$ be the convex hull of $p_1,\dots,p_n,\os$.
	Let $\SP(M_\bbR,\bP_Z^F)$ be as in \cref{def:spine}.
	Consider
	\[\Psi_{\os}\coloneqq(\dom,\ev_{n+1})\colon\SP(M_\bbR,\bP_Z^F)\longrightarrow\NT_J^F\times M_\bbR.\]
	Let $\SP(M_\bbR,\bP_Z^F)_{\oGamma,b}$ be the fiber over $(\oGamma,b)\in\NT_J^F\times M_\bbR$.
	We have
	\[\chi(P_1,\dots,P_n,Q,\gamma)=\sum_{S\in\SP(M_\bbR,\bP_Z^F)_{\oGamma,b}} N(S,\gamma).\]
\end{lemma}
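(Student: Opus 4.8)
The plan is to realise $\chi(P_1,\dots,P_n,Q,\gamma)$ as the length of a single fibre and then partition that fibre by the spine cut out by the $z$--$s$ tail. By \cref{lem:structure_constants}, together with the fact that the degree of the finite map $\cN(P_1,\dots,P_n,Q,\gamma)\to V_M\times V_Q$ (see \cref{lem:finite_etale_degree}) is the length of its fibre over any point, we have
\[
\chi(P_1,\dots,P_n,Q,\gamma)=\length\big(\cN(P_1,\dots,P_n,Q,\gamma)_{\Gamma,b}\big),
\]
the length of the space of skeletal stable maps $[C,(p_1,\dots,p_n,z,s),f]$ with $\dom(f)=\Gamma$, $\ev_s(f)=b$, and satisfying the toric tail condition $f(\bbT\setminus z)\subset T_M^\an$. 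Since the left-hand side does not depend on $(\Gamma,b)$, it suffices to argue for $(\Gamma,b)$ generic, so that $b\notin\Wall_A$ and, by \cref{prop:transversality} applied to $\Psi_{\os}$, every spine in $\SP(M_\bbR,\bP_Z^F)_{\oGamma,b}$ is transverse; this makes each summand $N(S,\gamma)$ on the right defined via \cref{def:naive_count_transverse}.

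Next I would attach to each such $f$ the spine of its structure disk. By \cref{lem:restrict_to_skeleton}(\ref{lem:restriction_to_skeleton:sm}) the map $f$ lies in $\cM^\sm$, so \cref{prop:tropicalization_of_stable_map} produces $\Trop(f)$ and its spine $\Sp(f)$ ($A$ being big with respect to $\beta$), while \cref{lem:skeletal_curve_contraction} and \cref{lem:toric_tail_equiv}(\ref{lem:toric_tail_equiv:twig}) show that the toric tail condition forces $h$ to be constant along the $s$-leg (here $b\notin\Wall_A$ is used) and forces the $z$-part of $\Trop(f)$ to be a single straight leg. Forgetting the $s$-leg and truncating the $z$-leg at the common finite vertex $\os$ yields a spine $S(f)$ with domain $\oGamma$, whose leg indexed by $n+1$ sits at $\os$ with outward weight $Z$ and satisfies $\ev_{n+1}(S(f))=h(s)=b$. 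Checking the stability, boundary and bending clauses of \cref{def:spine} against the corresponding properties of $\Sp(f)$ shows $S(f)\in\SP(M_\bbR,\bP_Z^F)_{\oGamma,b}$, which is finite by \cref{lem:finiteness_given_P}; this defines a map $\cN(P_1,\dots,P_n,Q,\gamma)_{\Gamma,b}\to\SP(M_\bbR,\bP_Z^F)_{\oGamma,b}$.

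It then remains to show that the fibre over each $S$ has length $N(S,\gamma)$. The key identification is that extending the finite $(n+1)$-leg of $S$ as in \cref{const:naive_counts_truncated} recreates the $z$-tail, re-adding an interior marked point at $\os$ (legitimate by \cref{prop:moving_w}, since $S$ is transverse) recreates the point $s$, and the remaining data --- domain $\Gamma$, value $b$ at $s$, spine $\hS$ after forgetting $s$, and the toric tail condition --- are exactly those defining a curve counted by $N(S,\gamma)$. The delicate point is the matching of curve classes: a curve in the fibre has total class $\beta=\gamma+\delta$, and by \cref{prop:curve_class_formula} its tail has class $\delta_l$, where $l$ is the tropicalisation of the $z$-leg with $l(0)=b\in V_Q$; because $b\in V_Q$, the affine backward extension of $l$ meets no codimension-one cone of $\Sigma_\rt$, so $\delta_l$ equals the class $\delta$ of the one-parameter-subgroup closure used to form $\beta$ --- this is precisely the computation carried out in \cref{lem:structure_disk_class} --- hence the disk has class $\gamma$; conversely a curve counted by $N(S,\gamma)$ has disk class $\gamma$ and tail class $\delta_l=\delta$, hence total class $\beta$, so it lies in $H(\bP_Z,\beta)^\an$ and in the fibre. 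As these identifications are isomorphisms of analytic spaces, lengths are preserved, and summing over the finitely many $S$ gives the formula. I expect this last step --- reconciling the truncation and extension of legs, the relabelling of the $z$-leg as the $(n+1)$-leg of $\oGamma$, the interplay of $s$ with the re-added interior leg, and above all the class equality $\delta_l=\delta$ --- to be the main obstacle.
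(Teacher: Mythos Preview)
Your approach is essentially the same as the paper's: realise $\chi$ as the length of the fibre over $(\Gamma,b)$, partition by spine, and identify each piece with the count $N(S,\gamma)$. The paper packages the partition slightly differently, introducing an auxiliary space $\SP(M_\bbR,\bP'_Z)_{\Gamma,b}^\tail$ of extended spines (with the $s$- and $z$-legs kept) that are constant on the $s$-leg and affine on the $z$-leg, together with the tautological bijection $\SP(M_\bbR,\bP'_Z)_{\Gamma,b}^\tail\xrightarrow{\sim}\SP(M_\bbR,\bP_Z^F)_{\oGamma,b}$; but this is just your truncation step phrased in reverse. Your explicit handling of the curve-class bookkeeping via \cref{lem:structure_disk_class} is exactly what the paper leaves implicit in its appeal to \cref{const:naive_counts_truncated}.

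One small remark: your sentence ``Since the left-hand side does not depend on $(\Gamma,b)$, it suffices to argue for $(\Gamma,b)$ generic'' is not quite the right justification, since a priori the right-hand side could depend on $(\Gamma,b)$. The honest reason you may take $(\Gamma,b)$ generic is that the right-hand side is only defined (via \cref{def:naive_count_transverse}) when every $S$ in the fibre is transverse, and \cref{prop:transversality} guarantees this for generic $(\Gamma,b)$; so the lemma is implicitly a statement about such $(\Gamma,b)$ to begin with. With that adjustment, your argument is correct.
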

\begin{proof}
	Let $P'_Z\coloneqq(P_1,\dots,P_n,Z,0)$ indexed by $J'=\{0,\dots,n+2\}$.
	Consider
	\[\Psi_s\coloneqq(\dom,\ev_{n+2})\colon\SP(M_\bbR,\bP'_Z)\longrightarrow\NT_{J'}\times M_\bbR.\]
	Let $\SP(M_\bbR,\bP'_Z)_{\Gamma,b}$ be the fiber over $(\Gamma,b)\in\NT_{J'}\times M_\bbR$.
	Let $\SP(M_\bbR,\bP'_Z)_{\Gamma,b}^\tail\subset\SP(M_\bbR,\bP'_Z)_{\Gamma,b}$ be the subspace consisting of $[\Gamma',(p'_1,\dots,p'_n,z',s'),h']$ such that $h'$ is constant on the $s'$-leg and affine on the $z'$-leg with derivative $Z$.
	We have a natural bijection of finite sets
	\begin{equation} \label{eq:SP_bijection}
		\SP(M_\bbR,\bP'_Z)_{\Gamma,b}^\tail\xrightarrow{\ \sim\ }\SP(M_\bbR,\bP_Z^F)_{\Gamma,b}
	\end{equation}
	by forgetting the $s$-leg and the $z$-leg.
	
	Let $H(P_Z,\beta)^\an_{\Gamma,b}$ be the fiber of $\Phi^\an$ in \eqref{eq:Phi_structure_constants} at
	\[(\Gamma,b)\in V_\cM\times V_Q\subset M_{0,n+2}^\trop\times M_\bbR\simeq\Sk(\cM_{0,n+2}\times U)\subset(\cM_{0,n+2}\times U)^\an.\]
	It is finite by \cref{lem:restriction_to_skeleton}(\ref{lem:restriction_to_skeleton:all}).
	By Lemmas \ref{lem:skeletal_curve_contraction} and \ref{lem:toric_tail_equiv}, the map
	\[\Sp\colon H(\bP_Z,\beta)^\an_{\Gamma,b}\longrightarrow\SP(M_\bbR,\bP'_Z)_{\Gamma,b}\]
	has image in $\SP(M_\bbR,\bP'_Z)_{\Gamma,b}^\tail$.
	Hence by the bijection \eqref{eq:SP_bijection}, we have a decomposition
	\[H(\bP_Z,\beta)^\an_{\Gamma,b}=\bigsqcup_{S\in\SP(M_\bbR,\bP'_Z)_{\Gamma,b}^\tail} H(\bP_Z,\beta)^\an_{\Gamma,b,S} = \bigsqcup_{S\in\SP(M_\bbR,\bP_Z^F)_{\oGamma,b}} H(\bP_Z,\beta)^\an_{\Gamma,b,S}.\]
	By \cref{lem:structure_constants}, the length of $H(\bP_Z,\beta)^\an_{\Gamma,b}$ is equal to $\chi(P_1,\dots,P_n,Q,\gamma)$.
	By \cref{const:naive_counts_truncated}, for each $S\in\SP(M_\bbR,\bP_Z^F)_{\oGamma,b}$, the length of $H(\bP_Z,\beta)^\an_{\Gamma,b,S}$ is equal to $N(S,\gamma)$.
	Hence we achieve the proof.
\end{proof}

With the preparations above, we are ready to prove the associativity of the multiplication rule.

\begin{theorem} \label{thm:associativity}
	The multiplication rule in \eqref{eq:multiplication} is commutative and associative.
\end{theorem}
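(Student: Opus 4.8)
The plan is to establish commutativity and associativity directly from the geometric description of the structure constants, leveraging the symmetry and gluing results already proven.

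\textbf{Commutativity.} Commutativity is essentially built into the definition: the structure constant $\chi(P_1,\dots,P_n,Q,\gamma)$ is a count of skeletal curves/spines in which the marked points $p_1,\dots,p_n$ enter symmetrically — the only distinguished roles are played by $z$ (carrying the direction $Z=-Q$) and $s$ (the evaluation point). More precisely, by \cref{lem:structure_constants_decomposition}, $\chi(P_1,\dots,P_n,Q,\gamma)=\sum_{S}N(S,\gamma)$ over spines $S\in\SP(M_\bbR,\bP_Z^F)_{\oGamma,b}$ of type $(P_1,\dots,P_n,Z)$, and this set of spines together with the counts $N(S,\gamma)$ depends only on the \emph{multiset} $\{P_1,\dots,P_n\}$, since permuting the $p_j$ legs is an isomorphism of the relevant moduli spaces $\SP$ and $H(\bP_Z,\beta)^\an$ respecting all the defining conditions (toric tail, bending, transversality). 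So the first step is simply to observe $\chi$ is symmetric in $P_1,\dots,P_n$, hence $\theta_{P_1}\cdots\theta_{P_n}$ is unchanged under permutation of the factors; in particular the bilinear product $\theta_{P_1}\theta_{P_2}$ is commutative.

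\textbf{Associativity.} The standard trick is to show that both $(\theta_{P_1}\theta_{P_2})\theta_{P_3}$ and $\theta_{P_1}(\theta_{P_2}\theta_{P_3})$ equal the ``triple product'' $\theta_{P_1}\theta_{P_2}\theta_{P_3}$ as defined by the $n=3$ multiplication rule \eqref{eq:multiplication}, via
\[
\theta_{P_1}\theta_{P_2}=\sum_{R,\gamma_1}\chi(P_1,P_2,R,\gamma_1)z^{\gamma_1}\theta_R,\qquad
(\theta_{P_1}\theta_{P_2})\theta_{P_3}=\sum_{R,Q,\gamma_1,\gamma_2}\chi(P_1,P_2,R,\gamma_1)\chi(-R,P_3,Q,\gamma_2)z^{\gamma_1+\gamma_2}\theta_Q,
\]
and comparing with the direct expansion of $\theta_{P_1}\theta_{P_2}\theta_{P_3}=\sum_{Q,\gamma}\chi(P_1,P_2,P_3,Q,\gamma)z^\gamma\theta_Q$. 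So it suffices to prove the ``splitting formula''
\[
\chi(P_1,P_2,P_3,Q,\gamma)=\sum_{\substack{R\in\Sk(U,\bbZ)\\ \gamma_1+\gamma_2=\gamma}}\chi(P_1,P_2,R,\gamma_1)\,\chi(-R,P_3,Q,\gamma_2),
\]
where one must be slightly careful about the convention $\theta_R$ versus the direction $Z=-R$. By \cref{lem:structure_constants_decomposition}, the left side is $\sum_{S}N(S,\gamma)$ over spines of type $(P_1,P_2,P_3,-Q)$ with the $p_3,z$ legs and evaluation at a generic interior vertex fixed. The plan is: pick a generic point in the domain-modulus space $\NT$ lying close to the boundary divisor where $\{p_1,p_2\}$ separate from $\{p_3,z,s\}$, so that every contributing spine $S$ has a distinguished interior edge $e$ realizing this split; cut $S$ along a generic interior point of $e$ (whose image in $M_\bbR$ lies off $\Wall_A$, achievable by \cref{prop:transversality} after choosing the base point generically) into $S^1$ of type $(P_1,P_2,R)$ and $S^2$ of type $(-R,P_3,-Q)$, where $R\in\Sk(U,\bbZ)$ is the weight vector along $e$ and $\gamma=\gamma_1+\gamma_2$ is the induced class decomposition. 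Then invoke the gluing formula \cref{thm:gluing_inside}, which gives exactly $N(S,\gamma)=\sum_{\gamma_1+\gamma_2=\gamma}N(S^1,\gamma_1)N(S^2,\gamma_2)$, and sum over all spines; the fact that $R$ must land in $\Sk(U,\bbZ)$ follows because $S^1$ is a valid spine with an infinite (or extended) leg of integral weight $R$. Finally reassemble the sums, using \cref{lem:structure_constants_decomposition} in reverse for each of $S^1$ and $S^2$, to match the right-hand side.

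\textbf{Main obstacle.} The technical heart is the bookkeeping of curve classes and the $z$-leg/direction conventions: when one cuts the spine, the ``extra'' class $\delta$ attached to the outgoing direction $R$ on the $S^1$-side must be compensated by the corresponding truncation/toric-tail data on the $S^2$-side, and one must verify that $A$ can be chosen big with respect to all the pieces $\gamma_1,\gamma_2$ simultaneously (only finitely many decompositions occur by \cref{lem:bound_on_class}, and positivity of curve classes from \cref{prop:structure_disk_interior_divisor} bounds them), and that the generic choice of base point in $\NT_J^F\times M_\bbR$ can be made to satisfy the transversality hypotheses of both \cref{prop:transversality} and \cref{thm:gluing_inside} and to push the splitting edge off $\Wall_A$. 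Handling the degenerate cases where some $P_j=0$ or where the ``interior marked point at which we evaluate'' interacts with the splitting (using \cref{thm:forgetting_interior_marked_points} to move it to whichever side is convenient) is the other fiddly point. Once these are arranged, associativity drops out formally.
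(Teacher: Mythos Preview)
Your overall plan matches the paper's: reduce associativity to a splitting identity
\[
\chi(P_1,P_2,P_3,Q,\gamma)=\sum_{R,\ \gamma_1+\gamma_2=\gamma}\chi(P_1,P_2,R,\gamma_1)\,\chi(R,P_3,Q,\gamma_2),
\]
expand both sides via \cref{lem:structure_constants_decomposition} as sums of $N(S,\cdot)$ over spines, cut the big spine along the separating edge via the gluing formula, and reassemble. (The paper uses \cref{thm:gluing_concatenate} rather than \cref{thm:gluing_inside}, but these are interchangeable here. Note the correct second factor is $\chi(R,P_3,Q,\gamma_2)$, not $\chi(-R,\dots)$: $R$ sits in the $Q$-slot of the first structure constant and in a $P$-slot of the second.)

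There is, however, a genuine gap in the reassembly step. You arrange the cut point to lie off $\Wall_A$ so the gluing formula applies, but applying \cref{lem:structure_constants_decomposition} \emph{in reverse} requires more: the finite vertex of the head piece must land in $V_R$, the open star of $R$ in the refined fan $\Sigma'_\rt$, not merely in $M_\bbR\setminus\Wall_A$. If it lies outside $V_R$, extending that leg to infinity crosses walls of $\Sigma'_\rt$, adds nonzero curve class, and imposes a nontrivial toric tail condition, so $\sum N(S^1,\gamma_1)$ need not equal $\chi(P_1,P_2,R,\gamma_1)$. A parallel issue hits the foot piece $S^2$: it carries \emph{two} finite $1$-valent vertices (the cut point and the original $v\in V_Q$), so \cref{lem:structure_constants_decomposition} does not apply to it directly at all.

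The paper resolves both issues with a dedicated ``stretch'' argument (\cref{lem:stretch}): since only finitely many combinatorial types of spine occur (via \cref{const:associativity_finiteness}), one can choose the length $\lambda$ of the separating edge so large that for \emph{every} contributing spine, the far endpoint $u$ is forced into $V_R$ for its own $R$. Then the foot's $u$-leg extends to infinity entirely inside $V_R$, with zero added curve class and automatic toric tail, reducing it to a one-finite-vertex spine to which \cref{lem:structure_constants_decomposition} applies (this is exactly \cref{lem:associativity_decomposition}(\ref{lem:associativity_structure_constants:F})). Your instinct to go ``close to the boundary divisor'' is correct, but its purpose is to land in $V_R$, not merely to avoid $\Wall_A$; this is the missing idea, and without it the reverse application of \cref{lem:structure_constants_decomposition} is unjustified.
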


The construction of $\chi(P_1,\dots,P_n,Q,\gamma)$ is symmetric with respect to $P_i$, so the commutativity is obvious.
For associativity, let us prove the following equality
\begin{equation} \label{eq:associativity}
	(\theta_{P_1}\cdot\theta_{P_2})\cdot\theta_{P_3}=\theta_{P_1}\cdot\theta_{P_2}\cdot\theta_{P_3}\quad\text{for all }P_1,P_2,P_3\in\Sk(U,\bbZ).
\end{equation}
The same argument will show that
\[\theta_{P_1}\cdot\theta_{P_2}\cdot\theta_{P_3}=\theta_{P_1}\cdot(\theta_{P_2}\cdot\theta_{P_3}),\]
and more generally, the product $\theta_{P_1}\cdot\theta_{P_2}\cdots\theta_{P_n}$ can be computed by 
adding arbitrary parenthesis.

Write
\begin{align*}
(\theta_{P_1}\cdot\theta_{P_2})\cdot\theta_{P_3}&=\bigg(\sum_{R\in M}\sum_{\eta\in\NE(Y)}\chi(P_1,P_2,R,\eta)z^\eta\theta_R\bigg)\cdot\theta_{P_3}\\
&=\sum_{R,Q\in M}\sum_{\eta,\phi\in\NE(Y)}\chi(P_1,P_2,R,\eta)z^\eta\,\chi(R,P_3,Q,\phi)z^\phi\theta_Q,
\end{align*}
and
\[\theta_{P_1}\cdot\theta_{P_2}\cdot\theta_{P_3}=\sum_{Q\in M}\sum_{\gamma\in\NE(Y)}\chi(P_1,P_2,P_3,Q,\gamma)z^\gamma\theta_Q.\]
So equation \eqref{eq:associativity} is equivalent to the following equality for every $Q\in M$ and $\gamma\in\NE(Y)$,
\begin{equation} \label{eq:associativity_expanded}
	\sum_{R\in M}\sum_{\eta+\phi=\gamma}\chi(P_1,P_2,R,\eta)\cdot\chi(R,P_3,Q,\phi)=\chi(P_1,P_2,P_3,Q,\gamma).
\end{equation}

\begin{construction} \label{const:associativity_finiteness}
	Let $\delta_Q\in\NE(Y)$ be the class of the closure of any general translation of the one-parameter subgroup in $T_M$ given by $Q\in M$.
	Let $A_0\coloneqq\pi_*(\gamma+\delta_Q)\cdot D_\rt$.	
	By the balancing condition, there exists $N\in\bbN$ and a finite subset $W\subset M$ such that for any tropical curve $[\Gamma,(p_1,p_2,p_3,z,s),h]$ in $M_\bbR$ with degree $A_0$, the weight vectors of all edges of $\Gamma$ belong to $W$, and the number of bending vertices on the associated spine $\Gamma^s$ is at most $N$.
	Fix $A\in\bbN$ big with respect to $\gamma^i$ for any decomposition $\gamma=\gamma^1+\gamma^2$ with $\gamma^i\in\NE(Y)$.
	Up to a toric blowup of $(Y,D)$, we can assume every $w\in W\setminus 0$ lies in a 1-dimensional cone of $\Sigma_\rt$.
\end{construction}

\begin{assumption} \label{ass:associativity_finiteness}
	From now on to the end of this section, all the spines in $M_\bbR$ we will consider are with respect to $\Wall_A$, and required to satisfy the following conditions:
	\begin{enumerate}
		\item The number of bending vertices is bounded by $N$;
		\item The weight vectors of all the edges belong to the finite subset $W\subset M$.
	\end{enumerate}
\end{assumption}

Let $\Sigma'_\rt$ be a simplicial conical subdivision of $\Sigma_\rt$ induced by $\Wall_A$.
For each $R\in M$, let $V_R$ be the union of open cells in $\Sigma'_\rt$ whose closure contains $R$.

\begin{lemma} \label{lem:stretch}
	Given $c\in M_\bbR\setminus\Wall_A$, there exists a positive real number $\lambda$ having the following property:
	
	Let $[L,(v_1,v_2),h]$ be any spine in $M_\bbR$ satisfying the following conditions:
	\begin{enumerate}
		\item \cref{ass:associativity_finiteness};
		\item $v_1, v_2$ are finite vertices;
		\item transverse with respect to $\Wall_A$;
		\item \label{lem:stretch:v_1} $h(v_1)=c$;
		\item \label{lem:stretch:length} the length of $L$ is greater than or equal to $\lambda$.
	\end{enumerate} 
	Let $e_2$ be the edge of $L$ incident to $v_2$, and $R\coloneqq-w_{(v_2,e_2)}$.
	Then $h(v_2)\in V_R\subset M_\bbR$.
\end{lemma}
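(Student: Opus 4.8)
\textbf{Proof plan for \cref{lem:stretch}.}
The statement says: if a transverse spine on an interval $L$ (satisfying the uniform combinatorial bounds of \cref{ass:associativity_finiteness}) starts at a fixed point $c \notin \Wall_A$ and is long enough, then its far endpoint $h(v_2)$ lies in the open star $V_R$ of the ray spanned by the outgoing direction $R = -w_{(v_2,e_2)}$. The key point is that ``long enough'' can be chosen uniformly, because there are only finitely many combinatorial possibilities.

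First I would reduce to understanding the tail behavior of $h$ near $v_2$. Because the spine is transverse with respect to $\Wall_A$, all bending vertices are $2$-valent (\cref{def:transverse}(\ref{def:transverse:bending})) and the number of them is bounded by $N$ (\cref{ass:associativity_finiteness}). So $L$ breaks into at most $N+1$ maximal affine segments, on each of which $h$ has constant integer derivative lying in the finite set $W$. In particular, after the last bending vertex, $h$ is affine with derivative $-R$ (pointing from $v_1$ towards $v_2$), i.e. $h$ moves in the direction $R$ along its final segment. The length of this final segment is at least $(\text{length of }L)$ minus the total length of the preceding at most $N$ segments; but I cannot bound the preceding lengths a priori — so instead I argue directly on the sum.

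The cleanest approach: let $x_0 = c = h(v_1)$ and let $x_1, \dots, x_m$ ($m \le N$) be the images of the successive bending vertices, so $h(v_2) = x_m + t R$ for the affine parameter $t \ge 0$ of the last segment, and each consecutive difference $x_{j+1} - x_j$ is a positive multiple of some $w_j \in W$. Here the total length constraint (\ref{lem:stretch:length}) forces $\sum_j \lvert x_{j+1}-x_j \rvert$ (the $L$-length, up to the norm of weight vectors, which is bounded on $W$) plus $t$ to be at least $\lambda$ up to a bounded factor. Now there are only finitely many combinatorial types of such broken paths (choice of $m \le N$, choice of the ordered tuple $(w_0, \dots, w_{m-1}, R) \in W^{m+1}$, and the finitely many cells of $\Wall_A$ containing each $x_j$, recalling $x_0 = c$ is fixed and $c \notin \Wall_A$). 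For each fixed combinatorial type, the condition ``$h(v_2) = x_m + tR \in V_R$'' — i.e. $h(v_2)$ lies in the open star of the ray $\bbR_{\ge 0} R$ in $\Sigma'_\rt$ — is, for $t$ large, automatic: moving far in the direction $R$ from any bounded region eventually lands (and stays) in the open star of $\bbR_{\ge 0}R$, since the recession cone of that star contains the open half-line $\bbR_{>0}R$ (using transversality, $R$ is not parallel to a wall so $R$ does indeed span a ray/is interior to a maximal cone containing that ray, and $V_R$ is the union of open cells whose closure contains $R$). The transversality condition (\ref{def:transverse:finite_end}) that $h(v_2)$ not lie on the codimension-one skeleton is also automatic for generic position but in fact is already built into ``transverse'', so it is not an extra burden here. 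Thus for each combinatorial type there is a threshold $\lambda_{\text{type}}$ beyond which the conclusion holds; take $\lambda \coloneqq \max_{\text{type}} \lambda_{\text{type}}$, a maximum over a finite set.

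The main obstacle, and the step requiring care, is making the ``eventually lands in the open star'' statement uniform over the finitely many combinatorial types \emph{and} simultaneously controlling the bounded region in which $x_m$ can lie: since we only bound the \emph{total} length $\sum_j \lvert x_{j+1} - x_j\rvert + t \ge \text{const}\cdot\lambda$, it is conceivable that all of the length is absorbed in the broken part and $t$ stays small. I would rule this out as follows: within a fixed combinatorial type, the locus of $x_m$ is a polyhedral cone of bounded ``angular'' spread, and one shows that if $t \le \lambda_0$ for some fixed $\lambda_0$ then $\sum_j\lvert x_{j+1}-x_j\rvert$ can still be arbitrarily large only if the broken path accumulates in directions transverse to $R$ — but then $h(v_2) = x_m + tR$ drifts far from $\bbR_{\ge 0}R$, contradicting... no — this is exactly where I must instead note that $R$ is the direction of the \emph{last} segment, so if the broken part is long, $x_m$ itself is far out in some cone, and then whether $h(v_2) \in V_R$ depends on where $x_m$ sits relative to $\bbR_{\ge 0}R$. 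The correct resolution is that $V_R$ is precisely the set of points $y$ such that the ray from $y$ in direction $R$ (equivalently the ray towards $-R$... one must get the orientation right) meets only cells adjacent to $\bbR_{\ge 0} R$; by transversality of the spine (Condition (\ref{def:transverse:extension}): the ray from $h(v_2)$ in direction $-w_{(v_2,e_2)} = R$ is transverse to $\Wall_A$) and the fact that the spine's image avoids $\Wall_A$ near $v_2$, the point $h(v_2)$ together with its outgoing direction $R$ determines a cell, and for a long spine that cell is forced to be in the open star of $\bbR_{\ge 0}R$. I would phrase the finiteness argument so that the only quantity that needs to be large is the length of the final affine segment — by re-choosing $\lambda$ type-by-type to dominate the (bounded, within a type) contribution of all earlier segments, this reduces to the clean one-segment statement: a point moving distance $\ge \lambda_{\text{type}}'$ from a point in a fixed bounded set, in the fixed direction $R$, lands in $V_R$ — which is elementary polyhedral geometry. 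Assembling these, $\lambda$ is the finite max and the lemma follows.
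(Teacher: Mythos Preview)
You correctly set up the finiteness of combinatorial types and correctly identify the main obstacle: why can't the length be absorbed entirely by the intermediate segments, leaving the final segment short and the position of $h(v_2)$ unconstrained? But your attempts to resolve this (``bounded angular spread'', invoking transversality Condition~(\ref{def:transverse:extension}), etc.) are inconclusive, and the final parenthetical assertion that the earlier segments have ``(bounded, within a type) contribution'' is precisely the claim that needs justification.

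The missing observation, which is the entire content of the paper's proof, is that once the combinatorial type is fixed \emph{and} $h(v_1)=c$ is fixed, the bending vertices are not merely bounded but \emph{uniquely determined}. The first segment leaves $c$ with the prescribed derivative and, by transversality, meets the prescribed codimension-one cell of $\Wall_A$ in a single point $x_1$; from $x_1$ the next derivative is again prescribed by the type, determining $x_2$; and so on. Thus every intermediate segment length is a fixed constant depending only on the type and on $c$. Consequently, when the total length of $L$ grows, only the final segment (the one containing $v_2$) can grow, so $h(v_2)=x_m+tR$ with $x_m$ fixed and $t\to\infty$, which lands in $V_R$ by the elementary polyhedral statement you already accepted. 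Taking the maximum of these thresholds over the finitely many types gives $\lambda$. You should also dispose of the constant case $h\equiv c$ separately: then $R=0$ and $V_R=M_\bbR$, so the conclusion is vacuous.
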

\begin{proof}
	\cref{ass:associativity_finiteness} implies that there are only finitely many combinatorial types of such spines.
	By transversality, either $h$ is constant, or its derivative is nowhere zero.
	If $h$ is constant, then $h(v_2)=h(v_1)=c$ and $R=0$; so we always have $h(v_2)\in V_R=M_\bbR$.
	Now assume $h$ is immersed.
	Once we fix the combinatorial type, since the image $h(v_1)=c$ is fixed, when we increase the length of $L$, we are only increasing the length of the domain of affineness of $L$ containing $v_2$.
	Hence by increasing $\lambda$, we can push the  image $h(v_2)$ as far as we want, until we have $h(v_2)\in V_R\subset M_\bbR$. 
\end{proof}

\begin{construction} \label{const:associativity_domain}
	We define three metric trees $[\Gamma,(p_1,p_2,p_3, v)]$, $[H,(p_1,p_2,u)]$ and $[F,(u,p_3,\allowbreak v)]$ as in \cref{fig:associativity_domain}.
	The dots indicate infinite legs.
	The length of each finite edge is given next to the edge.
	We choose the letters $H$ and $F$ to suggest \emph{head} and \emph{foot}, when we cut $\Gamma$ at $u$.
	\begin{figure}[!ht] \label{figure:associativity_domain}
		\centering
		\setlength{\unitlength}{0.95\textwidth}
		\begin{picture}(1,0.3)
			\put(0,0){\includegraphics[width=\unitlength]{images/associativity}}
			\put(-0.02,0.26){$p_1$}
			\put(0.425,0.26){$p_1$}
			\put(0.234,0.26){$p_2$}
			\put(0.68,0.26){$p_2$}
			\put(0.358,0.22){$p_3$}
			\put(0.99,0.22){$p_3$}
			\put(0.107,0.115){$u$}
			\put(0.55,0.115){$u$}
			\put(0.74,0.115){$u$}
			\put(0.137,0.0){$v$}
			\put(0.77,0.0){$v$}
			\put(0.13,0.138){$1$}
			\put(0.574,0.138){$1$}
			\put(0.165,0.013){$1$}
			\put(0.797,0.013){$1$}
			\put(0.15,0.085){$\lambda$}
			\put(0.78,0.085){$\lambda$}
		\end{picture}
		\caption{Metric trees $\Gamma$, $H$ and $F$.}
		\label{fig:associativity_domain}
	\end{figure}
\end{construction}

\begin{construction} \label{const:cut_and_glue}
	Fix $b\in V_Q$.
	Let $\SP_\Gamma$ denote the set of spines in $M_\bbR$ with domain $\Gamma$, outgoing weight vectors $P_1,P_2,P_3,-Q$ at the vertices $p_1,p_2,p_3,v$ respectively, and sending $v$ to $b$.
	For each $R\in M$, let $\SP_{F,R}$ denote the set of spines in $M_\bbR$ with domain $F$, outgoing weight vectors $R,P_3,-Q$ at the vertices $u,p_3,v$ respectively, and sending $v$ to $b$.
	For each $R\in M$ and $a\in V_R$, let $\SP_{H,R,a}$ denote the set of spines in $M_\bbR$ with domain $H$, outgoing weight vectors $P_1,P_2,-R$ at the vertices $p_1,p_2,u$ respectively, and sending $u$ to $a$.
	Under \cref{ass:associativity_finiteness}, the three sets above are all finite.

	Let
	\[\Pairs\coloneqq\Set{(f,h) | f\in\SP_{F,R},\ h\in\SP_{H,R,f(u)} \text{ for some }R\in M}.\]
	We have obvious inverse bijections
	\[\Cut\colon\SP_\Gamma\to\Pairs,\quad \Glue\colon\Pairs\to\SP_\Gamma\]
	given by cutting and gluing at $u$.
\end{construction}
	
Since these sets are finite, the following lemma follows from \cref{prop:transversality}.
	
\begin{lemma} \label{lem:associativity_generic}
	For general choice of $b \in V_Q$ and $\lambda > 0$, all spines in $\SP_\Gamma$ and $\Pairs$ are transverse with respect to $\Wall_A$.
\end{lemma}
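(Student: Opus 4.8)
\textbf{Plan of proof for \cref{lem:associativity_generic}.}
First I would make precise that all three sets $\SP_\Gamma$, $\SP_{F,R}$, and $\SP_{H,R,a}$ are genuinely finite under \cref{ass:associativity_finiteness}: the domain metric trees $\Gamma$, $F$, $H$ are fixed, the weight vectors are constrained to lie in the finite set $W$, and the number of bending vertices is bounded by $N$, so the combinatorial types of such spines are finite in number (compare \cref{rem:twig_bound}). For each fixed combinatorial type, the locations of the bending vertices in $\Wall_A$ are then determined by a proper affine-linear system once we fix the image of a single marked point, so each fibre of the evaluation map is a finite set; this is the content of the rigidity statement \cref{prop:rigidity_spine}. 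Hence $\SP_\Gamma$ and $\Pairs \cong \SP_\Gamma$ are finite, as claimed in the paragraph preceding the lemma.

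Next I would set up the relevant evaluation maps and invoke \cref{prop:transversality}. The set $\SP_\Gamma$ is the fibre over $(\Gamma, b)$ of
\[
\Psi_v \coloneqq (\dom, \ev_v) \colon \SP(M_\bbR, \bP^F, N, W) \longrightarrow \NT^F_J \times M_\bbR,
\]
where $F = \{v\}$ records that $v$ is the unique finite $1$-valent vertex, and where $\SP(M_\bbR, \bP^F, N, W)$ is restricted as in \cref{ass:associativity_finiteness}. Since the length parameter $\lambda$ of the finite edge of $\Gamma$ is one of the coordinates of $\NT^F_J$, the pair $(\lambda, b)$ ranges over (an open subset of) $\NT^F_J \times M_\bbR$. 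By \cref{prop:transversality} there is a lower-dimensional finite polyhedral subset $Z \subset \NT^F_J \times M_\bbR$ such that every spine in $\Psi_v\inv(Z^c)$ is transverse with respect to $\Wall_A$ (after the temporary enlargement of $\Wall_A$ by the hyperplanes of \cref{lem:enlarge_Wall}, which is harmless since transversality with respect to the enlarged wall set implies transversality with respect to $\Wall_A$). Thus for $(\lambda, b) \notin Z$, every spine in $\SP_\Gamma$ is transverse.

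For the $\Pairs$ side I would argue similarly but one factor at a time. For each $R \in M$ (a finite list, since $R \in W$), the set $\SP_{F,R}$ is a fibre of the evaluation map on $\SP(M_\bbR, (\bP^{F,R}_F), N, W)$ over $(F, b)$; applying \cref{prop:transversality} again gives a lower-dimensional bad locus $Z_{F,R}$ in the $(\lambda, b)$-space, and for $b, \lambda$ outside it every spine in $\SP_{F,R}$ is transverse. Now for $h \in \SP_{H,R,f(u)}$ with $f \in \SP_{F,R}$: as $f$ ranges over the (finitely many) transverse spines, $f(u)$ ranges over a finite set of points of $M_\bbR$, each depending polyhedrally on $(\lambda, b)$; applying \cref{prop:transversality} to the $H$-evaluation map $\SP(M_\bbR, (\bP^{H,R}), N, W) \to \NT \times M_\bbR$ at the marked point $u$ yields a lower-dimensional bad locus $Z_{H,R}$ in the $u$-image parameter, whose preimage under each of the finitely many polyhedral maps $(\lambda,b) \mapsto f(u)$ is again lower-dimensional (using that these maps are affine-linear on cells, so preimages of lower-dimensional sets are lower-dimensional — here one may need to discard the finitely many cells on which such a map is constant with value in the bad locus, which is exactly the kind of degenerate behaviour excluded by choosing $b$ generic; if any such cell occurs it means $\SP_\Gamma$ is empty over a neighbourhood and there is nothing to prove). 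Taking the (finite) union of $Z$, all $Z_{F,R}$, and the pullbacks of all $Z_{H,R}$ gives a lower-dimensional finite polyhedral bad locus in the $(\lambda, b)$-space; for $(\lambda, b)$ outside it, every spine in $\SP_\Gamma$ and every spine occurring in $\Pairs$ is transverse, proving the lemma.

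The main obstacle is the bookkeeping in the last step: transversality of the glued spines in $\SP_\Gamma$ must be deduced from transversality of the pieces together with the fact that the gluing point $u$ is mapped to a generic location, and one must check that the "generic" conditions imposed piecewise can all be achieved simultaneously by a single generic choice of $(\lambda, b)$ — i.e. that finitely many lower-dimensional polyhedral conditions in the $(\lambda,b)$-parameters, pulled back through affine-linear reparametrizations, still have lower-dimensional union. This is routine given \cref{prop:rigidity_spine} (which guarantees the reparametrizations $(\lambda,b) \mapsto f(u)$ are locally affine homeomorphisms onto their images, hence do not collapse dimension except on exceptional cells) and \cref{prop:transversality}, but it is the only place requiring care.
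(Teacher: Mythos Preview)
Your proposal is correct and follows essentially the same approach as the paper, which proves the lemma in a single sentence: ``Since these sets are finite, the following lemma follows from \cref{prop:transversality}.'' Your elaboration of the one non-routine step---pulling back the bad locus for $\SP_{H,R,a}$ through the maps $(\lambda,b)\mapsto f(u)$ and checking this preserves lower-dimensionality via \cref{prop:rigidity_spine}---is exactly what the paper's terse invocation of \cref{prop:transversality} leaves implicit; your concern about possible constant cells is unnecessary (rigidity applied to both $\Psi_v$ and $\Psi_u$ shows $b\mapsto f(u)$ is a local homeomorphism for fixed $\lambda$ and combinatorial type), but harmless.
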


Now we assume $b\in V_Q$ general and let $c\coloneqq b+Q\in M_\bbR$.
Apply \cref{lem:stretch} and obtain $\lambda>0$.
We increase $\lambda$ if necessary to make it general in the sense of \cref{lem:associativity_generic}.

\begin{lemma} \label{lem:associativity_decomposition}
	For every $R\in M$, $\phi,\eta\in\NE(Y)$ and $a\in V_R$, we have the following equalities
	\begin{enumerate} 
		\item \label{lem:associativity_structure_constants:Gamma} $\chi(P_1,P_2,P_3,Q,\gamma)=\sum_{g \in \SP_\Gamma} N(g,\gamma)$.
		\item \label{lem:associativity_structure_constants:H} $\chi(P_1,P_2,R,\eta) =\sum_{h \in \SP_{H,R,a}} N(h,\eta)$.
		\item \label{lem:associativity_structure_constants:F} $ \chi(R,P_3,Q,\phi)=\sum_{f \in \SP_{F,R}} N(f,\phi)$.
	\end{enumerate}
\end{lemma}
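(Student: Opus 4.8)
The plan is to deduce all three identities from \cref{lem:structure_constants_decomposition}, which writes a structure constant $\chi(P_1,\dots,P_n,Q,\gamma)$ as $\sum_S N(S,\gamma)$ over the spines $S$ whose domain is the convex hull of the $n$ infinite input legs together with a single finite leg carrying the weight $Z=-Q$ and mapping to a prescribed point of $V_Q$. The trees $\Gamma$ and $H$ of \cref{const:associativity_domain} are exactly such hulls, so (\ref{lem:associativity_structure_constants:Gamma}) and (\ref{lem:associativity_structure_constants:H}) will be immediate once the $V_M$-representative is chosen suitably, while for the tree $F$ the input leg of weight $R$ at $u$ has been truncated to finite length, so (\ref{lem:associativity_structure_constants:F}) requires one extra step.

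For (\ref{lem:associativity_structure_constants:Gamma}) I would apply \cref{lem:structure_constants_decomposition} with $n=3$, input tuple $(P_1,P_2,P_3)$, $Z=-Q$ and curve class $\gamma$, choosing the $V_M$-representative $[\Gamma',(p_1,p_2,p_3,z,s)]$ so that its distinguished finite vertex $\os$ is the vertex $v$ of the tree $\Gamma$ of \cref{const:associativity_domain} — concretely, glue two infinite legs $z,s$ at $v$, making it trivalent in $\Gamma'$. Then the associated convex hull equals $\Gamma$, the evaluation point is $b$, and $\SP(M_\bbR,\bP_Z^F)_{\Gamma,b}=\SP_\Gamma$ by definition, which gives (\ref{lem:associativity_structure_constants:Gamma}). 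Identity (\ref{lem:associativity_structure_constants:H}) is the same with $n=2$, input tuple $(P_1,P_2)$, $Z=-R$, curve class $\eta$, the $V_M$-representative chosen so that the hull is $H$ and $\os=u\mapsto a\in V_R$, yielding $\SP\big(M_\bbR,(P_1,P_2,-R)^{\{3\}}\big)_{H,a}=\SP_{H,R,a}$.

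For (\ref{lem:associativity_structure_constants:F}) I would first apply \cref{lem:structure_constants_decomposition} with $n=2$, input tuple $(R,P_3)$, $Z=-Q$ and curve class $\phi$, obtaining $\chi(R,P_3,Q,\phi)=\sum_S N(S,\phi)$ where $S$ ranges over spines with one infinite leg of weight $R$, one infinite leg of weight $P_3$, and a finite leg $v$ of weight $-Q$ mapping to $b$. I would then truncate the $R$-leg of each such $S$ at distance $\lambda$ from the trivalent vertex $w_2$. By \cref{lem:finiteness_given_P} and \cref{const:associativity_finiteness} only finitely many combinatorial types of realizable spines contribute to $\phi$, and by \cref{lem:stretch} applied at the point $c\coloneqq b+Q\notin\Wall_A$ the (large) choice of $\lambda$ forces each such $S$ to be straight — hence twig-free — along the $R$-leg beyond distance $\lambda$, with $h(u)\in V_R$ at the truncation point $u$. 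Hence truncation is a bijection from the spines appearing in the sum onto $\SP_{F,R}$. Finally, for the truncated spine $f$ one has $N(S,\phi)=N(f,\phi)$: re-attaching the straight $R$-ray to $f$ as in \cref{const:naive_counts_truncated} recovers $S$; the toric tail condition on this ray is automatic because there are no twigs beyond distance $\lambda$ (\cref{lem:toric_tail_equiv}); and the ray carries zero curve class by \cref{def:curve_class_via_varphi}, since it stays inside a single cone of $\Sigma_\rt$ as $h(u)\in V_R$, so the two counts use the same curve class, while the $v$-parts coincide. Summing over $f$ gives (\ref{lem:associativity_structure_constants:F}).

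The main obstacle is (\ref{lem:associativity_structure_constants:F}): parts (\ref{lem:associativity_structure_constants:Gamma}) and (\ref{lem:associativity_structure_constants:H}) are formal once the $V_M$-representatives are chosen, whereas (\ref{lem:associativity_structure_constants:F}) rests on the genuine, if routine, point that replacing the infinite $R$-leg by a truncated leg equipped with a toric tail neither loses contributions nor alters the curve class — which is exactly what the large $\lambda$ of \cref{lem:stretch} and the finiteness of combinatorial types (\cref{const:associativity_finiteness}) guarantee. A clean alternative for (\ref{lem:associativity_structure_constants:F}) is to rerun the proof of \cref{lem:structure_constants_decomposition} verbatim, treating the $u$-leg on the same footing as the output leg: extend it straight, impose the toric tail condition, invoke \cref{prop:toric_tail_cc_skeletal} for the connected-component statement, \cref{lem:skeletal_curve_contraction} together with \cref{lem:toric_tail_equiv} for the spine/skeletal-curve dictionary, and \cref{lem:finite_etale_degree} to identify the resulting degree with $\sum_{f\in\SP_{F,R}}N(f,\phi)$.
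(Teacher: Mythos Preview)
Your proposal is correct and follows essentially the same route as the paper. Parts (\ref{lem:associativity_structure_constants:Gamma}) and (\ref{lem:associativity_structure_constants:H}) are handled identically, by matching the domain in \cref{lem:structure_constants_decomposition} with $\Gamma$ and $H$. For (\ref{lem:associativity_structure_constants:F}) the paper runs the bijection in the opposite direction---it starts from $f\in\SP_{F,R}$, extends the $u$-leg affinely to an infinite leg to obtain $f'$, uses \cref{lem:stretch} to place the extension inside $V_R$ (hence outside $\Wall_A$, so the toric tail condition at $u$ is automatic by \cref{lem:toric_tail_equiv} and the associated curve class vanishes), concludes $N(f,\phi)=N(f',\phi)$, and then invokes \cref{lem:structure_constants_decomposition}---whereas you first invoke \cref{lem:structure_constants_decomposition} and then truncate. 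The two are the same argument read backwards; your version has the mild advantage of making the bijection between $\SP_{F,R}$ and the spines appearing in \cref{lem:structure_constants_decomposition} explicit, which the paper leaves implicit.
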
 
\begin{proof}
	(\ref{lem:associativity_structure_constants:Gamma}) and (\ref{lem:associativity_structure_constants:H}) follow directly from \cref{lem:structure_constants_decomposition}.
	Now we consider (\ref{lem:associativity_structure_constants:F}).
	For each $[F,(u,p_3,v),f]\allowbreak\in\SP_{F,R}$, we glue a copy of $e\coloneqq[0,\hat u=+\infty]$ to $F$ along $0$ and $u$, extend $f$ affinely to $e$, and denote the resulting spine by $f'$.
	By \cref{lem:stretch}, $f'(e\setminus\hat u)\subset V_R$; in particular, $f'(e)\cap\Wall_A=\emptyset$.
	Therefore, in \cref{const:naive_counts_truncated}, the toric tail condition at $u$ is automatically satisfied, by \cref{lem:toric_tail_equiv}.
	Moreover, $f'(e\setminus\hat u)\subset V_R$ implies that the curve class associated to $f'|_e$ is 0.
	Hence we have $N(f,\phi)=N(f',\phi)$.
	Therefore (\ref{lem:associativity_structure_constants:F}) also follows \cref{lem:structure_constants_decomposition}.
\end{proof}

\begin{proof}[Proof of equation \eqref{eq:associativity_expanded}]
	We have 
	\begin{eqnarray*}
		\lefteqn{\chi(P_1,P_2,P_3,Q,\gamma)= \sum_{g \in \SP_\Gamma} N(g,\gamma)}\\
		&=&\sum_{g \in \SP_\Gamma}\ \sum_{\phi+\eta=\gamma} N(\Cut(g)^f,\phi)\cdot N(\Cut(g)^h,\eta) \\
		&=&\sum_{(f,h)\in\mathit{Pairs}}\ \sum_{\phi+\eta=\gamma}  N(f,\phi)\cdot N(h,\eta)\\
		&=&\sum_{R\in M}\ \sum_{f \in \SP_{F,R}}\ \sum_{h \in \SP_{H,R,f(u)}}\ 
		\sum_{\phi+\eta=\gamma} N(f,\phi)\cdot N(h,\eta)\\
		&=&\sum_{R\in M}\ \sum_{f \in \SP_{F,R}}\ \sum_{\phi}\bigg(\sum_{\phi+\eta = \gamma}\ 
		\sum_{h\in \SP_{H,R,f(u)}} N(h,\eta) \bigg) N(f, \phi) \\
		&=& \sum_{R\in M}\ \sum_{\phi}\ \sum_{f \in \SP_{F,R}}\ \bigg(\sum_{\phi+\eta = \gamma} 
		\chi(P_1,P_2,R,\eta) \bigg) N(f, \phi) \\
		&=&\sum_{R\in M}\ \sum_{\phi}\ \bigg(\sum_{\phi+\eta = \gamma}\ \sum_{f \in \SP_{F,R}} N(f,\phi) \bigg) 
		\chi(P_1,P_2,R,\eta) \\
		&=& \sum_{R \in M}\ \sum_{\phi+\eta = \gamma}  \chi(P_1,P_2,R,\eta)\cdot\chi(R,P_3,Q,\phi)
	\end{eqnarray*}
where the first equality is by \cref{lem:associativity_decomposition}, the second by \cref{thm:gluing_concatenate} (the gluing formula), the third by the $\Cut,\Glue$ bijection, the fourth by the definition of $\Pairs$,
the fifth by reorganizing the sum, the sixth by \cref{lem:associativity_decomposition}, the
seventh by reorganizing the sum, and the last by \cref{lem:associativity_decomposition}.
\end{proof} 

\section{Convexity and finiteness} \label{sec:convexity}

In this section, we prove the convexity property for structure disks, see \cref{thm:Ftrop_structure_constants}.
Then we deduce that the two sums in the multiplication rule \eqref{eq:multiplication} are finite sums, see \cref{cor:R-algebra_structure}.
We assume throughout this section that $k$ has nontrivial discrete valuation.

\begin{construction} \label{const:divisor_tropicalization}
	Let $(\fX,\fH)$ be a formal strictly semistable pair, $X\coloneqq\fX_\eta\setminus\fH_\eta$,  $\Sigma(\fX,\fH)\subset X$ the associated skeleton, and $\tau\colon X\to\Sigma(\fX,\fH)$ the retraction map.
	Let $\fF$ be a Cartier divisor on $\fX$.
	It is locally given by a rational function $f$ up to multiplication by invertible functions.
	So $\val f$ is a well-defined continuous $\bbR$-valued function on $\fX_\eta\setminus\supp\fF_\eta$, which we denote by $\val\fF$.
		Let $\fF^\trop\coloneqq(\val\fF)|_{\Sigma(\fX,\fH)}$, which we call the \emph{tropicalization} of $\fF$.
	Since $\Sigma(\fX,\fH)$ is disjoint from any closed analytic subspace of $\fX_\eta$, (in particular $\supp\fF_\eta$), $\fF^\trop$ is well-defined everywhere.
\end{construction}

\begin{lemma} \label{lem:divisor_tropicalization}
	In the setting of \cref{const:divisor_tropicalization}, let $\Iv$ be the set of irreducible components of $\fX_s$, and $\Ih$ the set of irreducible components of $\fH$.
	We have a natural embedding $\Sigma(\fX,\fH)\subset\bbR_{\ge 0}^{\Iv\sqcup\Ih}$.
	Let $\Psi$ be the linear function on $\bbR^{\Iv\sqcup\Ih}$ such that the value of $\Psi$ at the unit vector in the $i$-th direction is equal to the order of zero of $\fF$ along $D_i$, the corresponding component of $\fX_s\cup\fH$.
	The following hold:
	\begin{enumerate}
		\item \label{lem:divisor_tropicalization:inequality} If $\fF$ is effective, then
		\[\fF^\trop\ge\Psi|_{\Sigma(\fX,\fH)}.\]
		Moreover, they are equal at the vertex of $\Sigma(\fX,\fH)$ corresponding to every irreducible component of $\fX_s$.
		\item \label{lem:divisor_tropicalization:equality} Let $\fF'$ be the Zariski closure in $\fX$ of the restriction $\fF_\eta|_X$.
		If $\supp \fF'$ does not contain any strata of $\fX_s\cup \fH$, then
		\[\fF^\trop=\Psi|_{\Sigma(\fX,\fH)}.\]
		\item \label{lem:divisor_tropicalization:fiber}
		Under the assumption in (\ref{lem:divisor_tropicalization:equality}), $\val\fF$ is constant on the fibers of $\tau\colon X\to\Sigma(\fX,\fH)$ over every open top-dimensional cell of $\Sigma(\fX,\fH)$.
	\end{enumerate}
	\end{lemma}
\begin{proof}
	Write $\Sigma\coloneqq\Sigma(\fX,\fH)$ to simplify notation.
	Working Zariski locally on $\fX$, we may assume $\fX=\Spf A$ and that there exists an étale map
	\[\alpha\colon\fX\to\Spf\big(\kc\braket{x_0,\dots,x_d}/(x_0\cdots x_r-\varpi)\big),\]
	for some $0\le r\le d$ and $\varpi$ a uniformizer of $k$, such that every irreducible component $D$ of $\fH$ is defined by $\alpha^*(x_j)$ for some $j>r$.
	The Cartier divisor $\fF$ is assumed effective in (\ref{lem:divisor_tropicalization:inequality}).
	For (\ref{lem:divisor_tropicalization:equality}), by writing $\fF$ as the difference of two effective divisors, we can also assume it to be effective.
	Then $\fF$ is given by some $f\in A$.
	Let $\psi\in A$ be given by a monomial in $x_0,\dots,x_d$ such that it has the same order of zero as $f$ along $D_i$ for every $i\in\Iv\sqcup\Ih$.
	Then $\psi^\trop=\Psi|_\Sigma$.
	Write $f=\psi g$ with $g\in A$.
	By \cite[Proposition 1.4]{Berkovich_Smooth_p-adic_analytic_spaces_are_locally_contractible}, the spectral seminorm of $g$ is at most 1; in other words, $\abs{g(x)}\le 1$ for all $x\in\fX_\eta$.
	Hence
	\[\fF^\trop=f^\trop=\psi^\trop+g^\trop\ge\psi^\trop=\Psi|_\Sigma.\]
	
	Moreover, let $C$ be any irreducible component of $\fX_s$, $s\in C$ its generic point and $v\in\Sigma$ the corresponding vertex.
	Let $r\colon\fX_\eta\to\fX_s$ be the reduction map.
	We have $r\inv(s)=\{v\}$ by the construction of the embedding $\Sigma\subset\fX_\eta$.
	Since $g$ is not zero on $C$, we have $g(s)\neq 0$.
	Since $s=r(v)$, we deduce that $\abs{g(v)}=1$, and thus $\fF^\trop(v)=\Psi|_\Sigma(v)$.
	
	Now assume that $\supp\fF'$ does not contain any strata of $\fX_s\cup\fH$.
	Then the zeros of $g$ does not contain any strata of $\fX_s\cup\fH$.
	Let $\sigma$ be any open cell of $\Sigma$, $S\subset\fX_s$ the corresponding closed strata, and $s$ the generic point of $S$.
	We have $\sigma\subset r\inv(s)$ by the construction of $\Sigma\subset\fX_\eta$.
	Since $g$ is not zero on $S$, we have $g(s)\neq 0$.
	Hence $\abs{g(x)}=1$ for any $x\in r\inv(s)$, in particular for any $x\in\sigma$.
	This shows (\ref{lem:divisor_tropicalization:equality}).
	Specializing to the case where $\sigma$ is an open top-dimensional cell of $\Sigma$, i.e.\ $S$ is a point, we obtain (\ref{lem:divisor_tropicalization:fiber}).
\end{proof}

\begin{lemma} \label{lem:dFtrop}
	Let $\fC$ be a strictly semistable formal scheme over $\kc$, $C\coloneqq\fC_\eta$ its generic fiber, and $\Sigma_\fC\subset C$ the associated skeleton.
	Let $\fF\subset\fC$ be a Cartier divisor and $\fF^\trop\colon \Sigma_\fC\to\bbR$ its tropicalization.
	Decompose $\fF=\fF^\ver+\fF^\hor$ where $\fF^\ver$ is supported on $\fC_s$ and $\fF^\hor=\overline{\fF_\eta}$.
	Let $\fC_s^\pr$ (resp.\ $\fC_s^\npr$) denote the union of proper (resp.\ non-proper) irreducible components of $\fC_s$.
	Assume $\supp \fF^\hor \cap \fC_s^\npr=\emptyset$.
	Then we have
	\begin{equation} \label{eq:dFtrop}
	\sum_{v\in \partial C} \sum_{e\ni v} -d_{ve} \fF^\trop = \fF\cdot\fC_s^\pr-\deg(\fF|_C),
	\end{equation}
	where $d_{ve} \fF^\trop$ denotes the derivative of $\fF^\trop$ at $v$ in the direction of the edge $e$ containing $v$. 
\end{lemma}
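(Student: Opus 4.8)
\textbf{Proof plan for \cref{lem:dFtrop}.}

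The plan is to reduce the identity \eqref{eq:dFtrop} to two already-established ingredients: the degree computation for the vertical part via the tropical slope formula (essentially \cref{lem:divisor_tropicalization} together with the local description of divisor classes on semistable curves from \cref{lem:curve_class_from_model}), and the intersection-theoretic bookkeeping on $\fC$ that appeared in the proof of \cref{lem:interior_divisor}. First I would set up notation: write $C\subset(\bbP^1_k)^\an$ as the generic fiber, $S\subset C$ the convex hull of $\partial C$ (a finite metric tree), and observe that since $\supp\fF^\hor\cap\fC_s^\npr=\emptyset$ the horizontal part meets only the proper components, so all the degrees and intersection numbers below are well-defined. Decompose $\fF^\trop = (\fF^\ver)^\trop + (\fF^\hor)^\trop$ accordingly, noting that $(\fF^\hor)^\trop$ is the tropicalization of an honest Cartier divisor with $0$-dimensional horizontal support.

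The main computation splits into the two parts. For the \emph{vertical} part: $(\fF^\ver)^\trop$ is a piecewise-linear function on $S$ which by \cref{lem:divisor_tropicalization}(\ref{lem:divisor_tropicalization:equality}) (applied in reverse — $\fF^\ver$ has no horizontal component so its Zariski closure in $\fC$ is itself, and being purely vertical it contains no strata of $\fC_s$ unless it does, in which case one argues componentwise) equals the linear extension of the function sending the vertex $v_i$ to $\ord_{D_i}(\fF^\ver)$. Then the sum $\sum_{v\in\partial C}-d_v(\fF^\ver)^\trop$ telescopes, over the tree $S$, into $\sum_i (\text{Euler-characteristic-type coefficient})\cdot\ord_{D_i}(\fF^\ver)$; by the local analysis of slopes at interior vertices (balancing / the computation in \cref{lem:curve_class_from_model}, where $d_v$ is exactly the multiplicity with which the proper component $\fC_s^v$ appears) this collapses to $\fF^\ver\cdot\fC_s^\pr$. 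Crucially, since $\fC_s$ is a principal Cartier divisor on the regular scheme $\fC$, we have $\fF^\ver\cdot\fC_s = 0$ for the full (possibly non-proper) $\fC_s$, which is the mechanism converting ``sum of slopes'' into ``intersection with the proper part''. For the \emph{horizontal} part: $(\fF^\hor)^\trop$ is affine with a single break at each point of $\supp\fF^\hor$ lying in $S$ (or no break if the support is off $S$, retracting to a vertex), and $\sum_{v\in\partial C}-d_v(\fF^\hor)^\trop$ computes $-\deg(\fF^\hor|_C)=-\deg(\fF|_C)$, again by telescoping on the tree, exactly as in the proof of \cref{lem:skeleton_of_C}/\cref{lem:explicit_weight} where a horizontal point of multiplicity one contributes a break of slope $\pm1$. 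Meanwhile $\fF^\hor\cdot\fC_s^\pr = 0$ because $\fF^\hor=\overline{\fF_\eta}$ is horizontal and the total intersection is accounted for by $\deg(\fF^\hor|_C)$ after moving to the generic fiber (this is the same ``$\deg(\fC_s\cdot G^\hor)=\deg(G^\hor|_C)$'' step from \cref{lem:interior_divisor}). Adding the two parts yields \eqref{eq:dFtrop}.

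The honest obstacle, and where I would spend the most care, is the combinatorial telescoping on the tree $S$: one must check that the ``boundary operator'' $\sum_{v\in\partial C}(-d_v\,\cdot)$ applied to a $\bbZ$-affine function on a finite metric tree with prescribed integer slopes along each edge, reproduces the sum over interior vertices of the local imbalance, with weights matching the multiplicities $d_v$ of the proper components. Concretely, at an interior $2$-valent vertex the two incident slopes must be compared, at higher-valent vertices one sums, and one must reconcile this with the fact that a non-proper component of $\fC_s$ corresponds to a leg of $S$ going out to a boundary point (hence contributes to the $\partial C$ side, not the interior side) — this is precisely why only $\fC_s^\pr$, not $\fC_s$, appears on the right. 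I expect this to be a short but fiddly argument by induction on the number of vertices of $S$, peeling off a leaf at a time, using at each step that $\fF\cdot\fC_s=0$ (principality of $\fC_s$) to relate the contribution of a removed component to the slope jump it induces. The rest — identifying $(\fF^\ver)^\trop$ with the linear weight function and $(\fF^\hor)^\trop$ with a sum of tent functions — is routine given \cref{lem:divisor_tropicalization} and the local coordinate description already used repeatedly in this section.
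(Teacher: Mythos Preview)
Your overall shape—split into vertical and horizontal contributions and telescope over the tree—is close to the paper's argument, and your treatment of the vertical part (claim that $\sum_{v\in\partial C}-d_v(\fF^\ver)^\trop=\fF^\ver\cdot\fC_s^\pr$) is essentially the computation the paper carries out. But your handling of the horizontal part contains two compensating errors, so the argument as written is not valid.

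First, the claim $\sum_{v\in\partial C}-d_v(\fF^\hor)^\trop=-\deg(\fF|_C)$ is false: it is actually $0$. The hypothesis $\supp\fF^\hor\cap\fC_s^\npr=\emptyset$ means that near any boundary vertex $v$ (which corresponds to a non-proper component $E$), a local equation for $\fF^\hor$ on $\fC$ is a unit in a neighborhood of $E$; hence $(\val\fF^\hor)=0$ on the piece of $S$ near $v$, and $d_v(\fF^\hor)^\trop=0$. Your ``tent function'' picture describes the tropicalization of the generic-fiber divisor $\fF_\eta$ on $C$ (in the Baker--Norine sense), not the tropicalization of the Cartier divisor $\fF^\hor=\overline{\fF_\eta}$ on the model $\fC$ as defined in \cref{const:divisor_tropicalization}; these differ by vertical corrections.

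Second, the claim $\fF^\hor\cdot\fC_s^\pr=0$ is false: since $\supp\fF^\hor\cap\fC_s^\npr=\emptyset$, one has $\fF^\hor\cdot\fC_s^\pr=\fF^\hor\cdot\fC_s=\deg(\fF^\hor|_C)=\deg(\fF|_C)$, exactly as in the proof of \cref{lem:interior_divisor} that you cite. The paper does \emph{not} decompose $\fF^\trop$: it works directly with $\fF^\trop$ (after modifying so $\supp\fF^\hor$ avoids nodes, which makes $\fF^\trop$ piecewise linear with vertex values $\ord_{E_i}(\fF)=\ord_{E_i}(\fF^\ver)$), obtains $\sum_{v\in\partial C}-d_v\fF^\trop=\fF^\ver\cdot\fC_s^\pr$ by the interior-vertex self-intersection computation, and then uses $\fF^\hor\cdot\fC_s^\pr=\deg(\fF|_C)$ to rewrite this as $\fF\cdot\fC_s^\pr-\deg(\fF|_C)$. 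Your plan is salvageable if you correct both horizontal claims to these values; but as stated, each step in your horizontal analysis is wrong, and the correct answer emerges only because the two errors cancel.
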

\begin{proof}
	Up to a finite base field extension and a modification of $\fC$, we can assume that $\supp \fF^\hor$ does not contain any node of $\fC_s$, and that there is no edge of $\Sigma_\fC$ containing two points of $\partial C$.
	Let $V^\pr$ be the set of vertices of $\Sigma_\fC$ minus $\partial C$.
	For $v\in V^\pr$, write $\fC_s^v$ the corresponding component of $\fC_s^\pr$.
	We can decompose
	\[\sum_{v\in\partial C}\sum_{e\ni v} -d_{ve} \fF^\trop=\sum_{v\in V^\pr}\sum_{e\ni v} d_{ve} \fF^\trop,\]
	\[\fF^\ver\cdot\fC_s^\pr=\sum_{v\in V^\pr} \fF^\ver\cdot\fC_s^v.\]
	
	Fix $v\in V^\pr$.
	Let $E_0\coloneqq\fC_s^v$, and $E_1,\dots,E_n$ the irreducible components of $\fC_s$ intersecting $E_0$ at a point.
	Since the intersection number $E_0\cdot\fC_s=0$, we deduce that $E_0^2=-n$.
	Let $a_i$ be the order of $E_i$ in $\fF^\ver$ for $i=0,\dots,n$.
	For computing $\fF^\trop$, we apply \cref{lem:divisor_tropicalization}(\ref{lem:divisor_tropicalization:equality}) to $\fX=\fC$, $\fH=\emptyset$ and $\fF$, and obtain
	\[\sum_{e\ni v} d_{ve} \fF^\trop=\sum_{i=1}^n (a_i-a_0)=\sum_{i=1}^n a_i-a_0 n=\sum_{i=1}^n a_i E_i\cdot E_0 + a_0 E_0\cdot E_0 = \fF^\ver\cdot E_0=\fF^\ver\cdot \fC_s^v.\]
	
	Summing over $v\in V^\pr$, we obtain
	\[\sum_{v\in \partial C} \sum_{e\ni v} -d_{ve} \fF^\trop = \sum_{v\in V^\pr}\sum_{e\ni v} d_{ve} \fF^\trop = \fF^\ver\cdot\fC_s^\pr.\]
	By the assumption that $\supp \fF^\hor\cap\fC_s^\npr=\emptyset$, we have
	\[\deg(\fF|_C)=\fF^\hor\cdot\fC_s^\pr.\]
	Hence
	\[\sum_{v\in \partial C} \sum_{e\ni v} -d_{ve} \fF^\trop = \fF^\ver\cdot\fC_s^\pr = (\fF-\fF^\hor)\cdot\fC_s^\pr = \fF\cdot\fC_s^\pr-\deg(\fF|_C),\]
	completing the proof.
\end{proof}

Now we follow the notations in \cref{sec:log_CY}.
Let $F$ be a divisor on $Y$ that is constant over $k$.
We take constant formal model $(\hY_{\kc},\hD_{\kc})$ for $(Y,D)$ and $\hF_{\kc}$ for $F$ over the ring of integers $\kc$, apply \cref{const:divisor_tropicalization}, and obtain $\val F\colon (U\setminus\supp F)^\an\to\bbR$ and $F^\trop\colon\Sigma_{(Y,D)}\to\bbR$.
The restriction of $F^\trop$ to $\Sigma_\rt\simeq\Sigma_{(Y,D)}^\ess\subset\Sigma_{(Y,D)}$ will also be denoted by $F^\trop$.

\begin{definition} \label{def:corner_locus}
	Let $(Y',D')$ be a toric blowup of $(Y,D)$ such that $\supp\overline{F|_U}$ does not contain any strata of $D$.
		Let $\Sigma^F$ be the subdivision of $\Sigma_{(Y,D)}$ induced by the toric blowup, and $\Sigma^F_\rt$ the subdivision of $\Sigma_\rt$ induced by $\Sigma^F$.
	By \cref{lem:divisor_tropicalization}, $F^\trop$ is linear on every closed cone of $\Sigma^F$ and $\Sigma^F_\rt$.
	Let $(\Sigma^F_\rt)^{d-1}$ denote the union of codimension-one cells in $\Sigma^F_\rt$.
\end{definition}

The following lemma will serve the proof of \cref{thm:Ftrop_structure_constants}.
It also implies the broken-line convexity conjecture of \cite[8.11]{Gross_Canonical_bases} in our setting.

\begin{lemma} \label{lem:convexity}
	Let $C$ be a compact strictly \kanal curve, $S\subset C$ the convex hull of $\partial C$, $F$ a divisor on $Y$ constant over $k$, and $f\colon C\to U^\an$ a morphism whose image is not contained in $(\supp F)^\an$.
	Let $\phi$ be the restriction of $(\val F\circ f)$ to $S$.
	Assume $(\tau\circ f)(\partial C)$ is disjoint from $(\Sigma^F_\rt)^{d-1}\subset\Sigma_\rt$,
	and that for every $v\in\partial C$, the map $(\tau\circ f)$ is constant on every connected component of $C\setminus v$ not containing $S$.
	We have
	\[\sum_{v\in\partial C} -d_v \phi=F\cdot[f]-\deg(F^\an|_C),\]
	where $d_v\phi$ denotes the derivative of $\phi$ at $v$ in the direction of the unique edge of $S$ containing $v$.
	As a result, if $F$ is nef and $-F|_U$ is effective, the sum on the left hand side is non-negative.
	\end{lemma}
\begin{proof}
	Up to a finite base field extension, we can choose a strictly semistable formal model $\fC$ of $C$ such that the map $f\colon C\to U^\an\subset Y^\an$ extends to $\ff\colon\fC\to\hY_{\kc}$, and that $S$ is contained in the skeleton $\Sigma_\fC\subset C$.
	Let $\fF\coloneqq\ff\inv\hF_{\kc}$.
		Decompose $\fF=\fF^\ver+\fF^\hor$ as in \cref{lem:dFtrop}.
	The assumption that $(\tau\circ f)(\partial C)$ does not meet $(\Sigma^F_\rt)^{d-1}$ implies that $\supp\fF^\hor\cap\fC_s^\npr\neq\emptyset$.
		Therefore, by \cref{lem:dFtrop}, we have
	\[\sum_{v\in\partial C}\sum_{e\ni v} -d_{ve}\fF^\trop = \fF\cdot\fC_s^\pr-\deg(\fF|_C).\]
	By \cref{def:curve_class}, the right hand side is equal to $F\cdot[f]-\deg(F^\an|_C)$.
	By construction we have $\fF^\trop=(\val F\circ f)|_{\Sigma_\fC}$.
	By \cref{lem:divisor_tropicalization}(\ref{lem:divisor_tropicalization:fiber}), it follows from the assumptions that the function $\val F\circ f$ is constant on every connected component of $C\setminus v$ not containing $S$.
	Therefore,
	\[\sum_{v\in\partial C}\sum_{e\ni v} -d_{ve}\fF^\trop = \sum_{v\in\partial C}\sum_{e\ni v} -d_{ve}(\val F\circ f)|_{\Sigma_\fC} = \sum_{v\in\partial C} -d_v \phi.\]
	Combining the equalities above, we conclude the proof.
\end{proof}

\begin{theorem} \label{thm:Ftrop_structure_constants}
	Let $F$ be a divisor on $Y$.
	Let $V_\cM, V_Q$ be as in \cref{sec:associativity}, $\Sigma^F_\rt$ as in \cref{def:corner_locus}, and $V^F_Q$ the intersection of $V_Q$ with the union of open cells in $\Sigma^F_\rt$ whose closure contains $Q$.
	Let
	\[f|_\bbD\colon[\bbD,(p_1,\dots,p_n,s)]\longrightarrow Y^\an\]
	be a structure disk responsible for $\chi(P_1,\dots,P_n,Q,\gamma)$ as in \cref{def:structure_disk} such that $\Phi^\an(f)$ is a general rational point in $V_\cM\times V^F_Q$.
	Let $\bbD^\circ\coloneqq\bbD\setminus\{p_1,\dots,p_n\}$.
	The following hold:
	\begin{enumerate}
		\item \label{thm:Ftrop_structure_constants:equality} We have
		\begin{equation} \label{eq:Ftrop_structure_constants}
		\sum F^\trop(P_i) - F^\trop(Q) = F \cdot \gamma - \deg(F^\an|_{\bbD^\circ}).
		\end{equation}
		\item \label{thm:Ftrop_structure_constants:nef}
		Assume $F$ is nef and $-F|_U$ is effective.
		Then
		\begin{equation} \label{eq:Ftrop_inequality}
			F^\trop(Q)\le\sum F^\trop(P_i).
		\end{equation}
		\item \label{thm:Ftrop_structure_constants:ample}
		Assume $F$ is ample and $-F|_U$ is effective.
		If \eqref{eq:Ftrop_inequality} is an equality, then $f(\bbD)\subset W^\an$, ($W$ as in \cref{lem:toric_model}), and $(\tau\circ f)(\bbD^\circ)$ lies in an open maximal cone of $\Sigma_\rt$.
	\end{enumerate}
\end{theorem}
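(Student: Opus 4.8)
The three parts build on one another, so I would prove them in order. For part (\ref{thm:Ftrop_structure_constants:equality}), the idea is to apply \cref{lem:convexity} to the structure disk $f|_\bbD$ together with the divisor $F$. First I would check that the hypotheses of \cref{lem:convexity} are met: the domain $\bbD$, after adding back the marked points, should be replaced by its punctured version $\bbD^\circ$, which is a compact strictly \kanal domain inside $(\bbP^1_k)^\an$ with boundary points the marked points $p_1,\dots,p_n,s$; by \cref{lem:restrict_to_skeleton}(\ref{lem:restriction_to_skeleton:sm}) the image of $f$ is not contained in $(\supp F)^\an$ (since $f(C)$ meets $D^\ess$ transversally and with no component in any fixed divisor); and since $\Phi^\an(f)$ is a general rational point of $V_M\times V_Q^F$, the boundary retraction $(\tau\circ f)(\partial\bbD^\circ)$ avoids $(\Sigma^F_\rt)^{d-1}$ — this is exactly why the refined region $V_Q^F$ (rather than $V_Q$) is introduced, and I would spell out that the marked points $p_i$ retract to the infinite ends of the spine while $s$ retracts to $Q$. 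Then \cref{lem:convexity} gives $\sum_{v\in\partial\bbD^\circ} -d_v\phi = F\cdot[f|_\bbD] - \deg(F^\an|_{\bbD^\circ})$, where $\phi = (\val F\circ f)|_S$. By \cref{lem:structure_disk_class} the class $[f|_\bbD] = \gamma$, and the boundary derivatives of $\phi$ at the ends are, by definition of $F^\trop$ and the spine (the spine's outgoing weight at $p_i$ is $P_i$, at $s$ is $-Q$), equal to $-F^\trop(P_i)$ at each $p_i$ and $F^\trop(Q)$ at $s$. Rearranging gives \eqref{eq:Ftrop_structure_constants}.

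For part (\ref{thm:Ftrop_structure_constants:nef}): if $F$ is nef then $F\cdot\gamma\ge 0$ since $\gamma\in\NE(Y)$, and if $-F|_U$ is effective then the divisor $F^\an|_{\bbD^\circ}$ is a pullback of $-(-F|_U)^\an$, hence $-\deg(F^\an|_{\bbD^\circ}) = \deg((-F|_U)^\an|_{\bbD^\circ}) \ge 0$ because $-F|_U$ is an effective divisor and $f(\bbD^\circ)\subset U^\an$ is not contained in its support (again by \cref{lem:restrict_to_skeleton}(\ref{lem:restriction_to_skeleton:sm})). So the right-hand side of \eqref{eq:Ftrop_structure_constants} is $\ge 0$, which gives \eqref{eq:Ftrop_inequality}. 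Alternatively one can invoke the last sentence of \cref{lem:convexity} directly, which already packages the nef plus $-F|_U$ effective hypotheses into a non-negativity statement for $\sum -d_v\phi$.

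For part (\ref{thm:Ftrop_structure_constants:ample}): equality in \eqref{eq:Ftrop_inequality} forces both $F\cdot\gamma = 0$ and $\deg(F^\an|_{\bbD}) = 0$ (both terms being non-negative and summing to zero). By \cref{prop:structure_disk_interior_divisor} applied to the effective Cartier divisor $-F|_U$ — or more directly by the equality clause of \cref{prop:interior_divisor} — the vanishing $\deg((-F|_U)^\an|_{\bbD}) = 0$ is equivalent to $f(\bbD)$ being disjoint from $(\supp(-F|_U))^\an$, i.e.\ $f(\bbD)$ lands in the locus where $F$ restricts to zero, which after pulling back via $\pi$ is contained in $W^\an$. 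For the ample hypothesis: $F\cdot\gamma = 0$ with $F$ ample and $\gamma\in\NE(Y)$ forces $\gamma = 0$, and then \cref{lem:curve_class_zero} converts $[f|_\bbD] = 0$ into precisely the statement that $f(\bbD)\subset W^\an$ and $(\tau\circ f)(\bbD^\circ)$ lies in an open maximal cone of $\Sigma_\rt$. Conversely, if those geometric conditions hold, then $\gamma = [f|_\bbD] = 0$ by \cref{lem:curve_class_zero} and $f(\bbD)\cap(\supp F)^\an = \emptyset$, so both terms on the right of \eqref{eq:Ftrop_structure_constants} vanish and \eqref{eq:Ftrop_inequality} is an equality.

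The main obstacle I anticipate is the bookkeeping in part (\ref{thm:Ftrop_structure_constants:equality}): correctly matching up the boundary derivatives $d_v\phi$ of the piecewise-linear function $\phi$ on the spine with the values $F^\trop(P_i)$ and $F^\trop(Q)$ — in particular getting the signs right (outward versus inward directions, and the fact that $s$ carries outgoing weight $-Q$), and making sure that the genericity of $\Phi^\an(f)$ in $V_M\times V_Q^F$ really does guarantee that the retraction of $\partial\bbD^\circ$ avoids the codimension-one skeleton $(\Sigma^F_\rt)^{d-1}$ so that $F^\trop$ is genuinely linear near each boundary point and \cref{lem:convexity} applies verbatim. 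Everything else is a formal consequence of the cited lemmas.
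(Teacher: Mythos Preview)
Your overall strategy is right and matches the paper: apply \cref{lem:convexity} to obtain the identity \eqref{eq:Ftrop_structure_constants}, then read off parts (\ref{thm:Ftrop_structure_constants:nef}) and (\ref{thm:Ftrop_structure_constants:ample}) from the sign of the two terms on the right, invoking \cref{lem:curve_class_zero} for the equality case. Parts (\ref{thm:Ftrop_structure_constants:nef}) and (\ref{thm:Ftrop_structure_constants:ample}) are essentially as you describe.

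However, your setup for part (\ref{thm:Ftrop_structure_constants:equality}) has a genuine technical gap. The domain $\bbD^\circ=\bbD\setminus\{p_1,\dots,p_n\}$ is \emph{not} compact: the $p_i$ are rigid interior points of the closed disk $\bbD$, and removing them yields a non-compact space, so \cref{lem:convexity} does not apply directly. Moreover, you misidentify the boundary: $\partial\bbD$ is the \emph{single} point $b$ where the $z$-leg attaches, and $s$ is an interior marked point with outgoing weight $0$ (it lies in $I$), not $-Q$. The weight $Z=-Q$ lives at $z$, which is not in $\bbD$ at all; the relevant fact is that by the toric tail condition the spine has \emph{ingoing} derivative $Q$ at $b$, whence $d_b\phi=F^\trop(Q)$.

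The paper repairs this exactly as one should: choose rational points $p_i'$ on the skeleton close to each $p_i$, set $\Gamma'$ to be the convex hull of $p_1',\dots,p_n',b$, and let $\bbD'\coloneqq r^{-1}(\Gamma')$. This $\bbD'$ is a genuine compact strictly \kanal domain with $n+1$ boundary points $p_1',\dots,p_n',b$, and $\deg(F^\an|_{\bbD'})=\deg(F^\an|_{\bbD^\circ})$ once the $p_i'$ are close enough. One then checks via \cref{lem:divisor_tropicalization} that $-d_{p_i'}\phi=F^\trop(P_i)$ (since $h$ has outgoing derivative $P_i$ and $h(p_i')$ lies in a top cell of $\Sigma^F_\rt$ containing $P_i$) and $d_b\phi=F^\trop(Q)$ (since $h(b)\in V_Q^F$). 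Finally, the hypothesis of \cref{lem:convexity} on $(\tau\circ f)(\partial\bbD')$ avoiding $(\Sigma^F_\rt)^{d-1}$ is secured by \cref{prop:transversality}, not merely by the choice of $V_Q^F$; the latter handles only the point $b$, while genericity in $V_M$ together with \cref{prop:transversality} handles the $p_i'$.
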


\begin{proof}[Proof of \cref{thm:Ftrop_structure_constants}]
	Let $b\coloneqq\partial\bbD$, $\Gamma\subset\bbD$ the convex hull of $\{p_1,\dots,p_n,b\}$, $r\colon \bbD\to\Gamma$ the retraction map, and $h\coloneqq\tau\circ f|_\Gamma\colon\Gamma\to\oSigma_\rt$.
	For $i=1,\dots,n$, let $p'_i\in\Gamma$ be a rational point close to but not equal to $p_i$.
	Let $\Gamma'\subset\Gamma$ be the convex hull of $p'_1,\dots,p'_n,b$, and $\bbD'\coloneqq r\inv(\Gamma')$.
	Recall from \cref{def:structure_disk} that $f$ is a skeletal curve.
	So by \cref{thm:f_in_skeleton}(\ref{thm:f_in_skeleton:g_inv}), $f(\Gamma')$ lies in $\Sk(U)\subset U^\an$.
	By \cref{lem:essential_skeleton_semistable_pair}, $\Sk(U)$ is contained in any Zariski dense open subset of $U^\an$, so $f(\Gamma')$ is disjoint from $(\supp F)^\an\subset Y^\an$.
	Let $\phi\coloneqq(\val F\circ f)|_{\Gamma'}$.
	We choose $p'_i$ sufficiently close to $p_i$ so that
	\[\deg(F^\an|_{\bbD^\circ})=\deg(F^\an|_{\bbD'}),\]
	and that for each $P_i\neq 0$, the image $h(p'_i)$ is contained in a closed top-dimensional cell of $\Sigma^F_\rt$ containing $P_i$.
	Note that $h$ has outgoing derivative $P_i$ at $p'_i$.
	Therefore, by \cref{lem:divisor_tropicalization}, we obtain
	\[-d_{p_i'} \phi = F^\trop(P_i).\]
	Furthermore, by the toric tail condition (see \cref{lem:toric_tail_equiv}(\ref{lem:toric_tail_equiv:twig})), $h$ has ingoing derivative $Q$ at $b$, and $h(b)=(\tau\circ f)(q)\in V^F_Q$.
	So \cref{lem:divisor_tropicalization} also implies
	\[d_b \phi = F^\trop(Q).\]

	Since $\Phi^\an(f)$ is a general point in $V_\cM\times V_Q$, by \cref{prop:transversality}, $(\tau\circ f)(\partial\bbD')$ will not meet the codimension-one skeleton $(\Sigma^F_\rt)^{d-1}$ of $\Sigma_\rt^F$, and will not meet the walls.
	So we can apply \cref{lem:convexity} to $f|_{\bbD'}\colon \bbD'\to U^\an$, and obtain
	\[\sum_{v\in\partial \bbD'} -d_v \phi = F\cdot [f|_{\bbD'}] - \deg(F^\an|_{\bbD'}),\]
	where $[f|_{\bbD'}]=\gamma$ by \cref{lem:structure_disk_class}.
	Combining all the equalities above, we obtain equation \eqref{eq:Ftrop_structure_constants}.
	
	When $F$ is nef and $-F|_U$ is effective, we have
	\[F\cdot\gamma-\deg(F^\an|_{\bbD^\circ})\ge 0.\] 
	Hence, equation \eqref{eq:Ftrop_structure_constants} implies inequality \eqref{eq:Ftrop_inequality}.
	If moreover $F$ is ample and \eqref{eq:Ftrop_inequality} is an equality, then $\gamma=0$, so we conclude by \cref{lem:curve_class_zero}. 
\end{proof}

\begin{proposition} \label{prop:finiteness_Q_gamma}
	For fixed $P_1,\dots,P_n \in\Sk(U,\bbZ)$, there are at most finitely many pairs $(Q,\gamma)$, $Q \in\Sk(U,\bbZ)$, $\gamma \in\NE(Y)$ such that $\chi(P_1,\dots,P_n,Q,\gamma) \neq 0$. 
\end{proposition}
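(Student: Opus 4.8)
The plan is to bound the pair $(Q,\gamma)$ in two stages.

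\emph{Reduction of $\gamma$ to $Q$.} Once $Q$ is fixed, the tuple $\bP_Z \coloneqq (P_1,\dots,P_n,Z)$ with $Z \coloneqq -Q$, and the class $\delta \in \NE(Y)$ of a general translate of the one-parameter subgroup attached to $Q$, are all determined. If $\chi(P_1,\dots,P_n,Q,\gamma) \neq 0$, then the space $F$ of \cref{def:structure_constant} is nonempty, so $H(\bP_Z,\gamma+\delta)$ is nonempty and its skeletal points lie in $\cM^\sm(U^\an,\bP_Z,\gamma+\delta)$ by \cref{lem:restrict_to_skeleton}(\ref{lem:restriction_to_skeleton:sm}); by \cref{lem:finitely_many_beta} together with non-archimedean GAGA there are only finitely many such $\gamma+\delta$, hence finitely many $\gamma$. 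Thus it suffices to show that only finitely many $Q\in\Sk(U,\bbZ)\simeq M$ occur.

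\emph{Bounding $Q$.} I would use \cref{thm:Ftrop_structure_constants}(\ref{thm:Ftrop_structure_constants:nef}): for every divisor $F$ on $Y$ that is nef with $-F|_U$ effective, picking a structure disk responsible for $\chi(P_1,\dots,P_n,Q,\gamma)$ over a general rational point of $V_M \times V^F_Q$ (which exists because $\chi \neq 0$) yields, by \eqref{eq:Ftrop_inequality}, the bound $F^\trop(Q) \le \sum_i F^\trop(P_i)$, a quantity independent of $Q$ and $\gamma$. It remains to exhibit finitely many such $F$ whose tropicalizations pin $Q$ down. Fix an ample line bundle $L$ on $Y$; since $U$ is affine, $L^\vee|_U$ is globally generated, and for any section $u$ of $L^\vee|_U$, viewed as a rational section of $L^\vee$ on $Y$, the divisor $F_u \coloneqq -\operatorname{div}_Y(u)$ satisfies $\cO_Y(F_u) \simeq L$ (ample) and $-F_u|_U = \operatorname{div}_U(u)$ (effective), exactly as in \cref{lem:ample_divisor}. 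Under the identification $\Sk(U) \simeq \Sk(T_M) \simeq M_\bbR$ of \cref{lem:essential_skeleton}(\ref{lem:essential_skeleton:identifications}), $F_u^\trop$ is the support function $m \mapsto -\min_{n \in \Lambda_u}\langle m,n\rangle$, where $\Lambda_u \subset N \coloneqq \Hom(M,\bbZ)$ is the set of characters occurring in $u|_{T_M}$. The crucial input is that the union of monomial supports of elements of $H^0(U,\cO_U)$ is contained in no halfspace $\{\langle m_0,\cdot\rangle \ge 0\}$ with $m_0 \neq 0$: otherwise the monomial divisorial valuation $\nu_{m_0}$ would be non-negative on $H^0(U,\cO_U)$, hence centered on $U$, while $\nu_{m_0} \in \Sk(U) \setminus 0$ is a valuation at which the volume form $\omega$ has a pole, contradicting that $\omega$ is regular and nowhere vanishing on $U$. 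Multiplying a fixed section of $L^\vee|_U$ by suitable powers of elements of $H^0(U,\cO_U)$, one obtains, for each $m_0 \neq 0$, a section $u_{m_0}$ of $L^\vee|_U$ with $F_{u_{m_0}}^\trop(m_0) > 0$. Since $\{F_{u_{m_0}}^\trop > 0\}$ is open and contains $m_0$, compactness of the unit sphere in $M_\bbR$ produces finitely many divisors $F^{(1)},\dots,F^{(r)}$ among these for which $\max_a (F^{(a)})^\trop > 0$ everywhere off the origin; by positive homogeneity there is $\varepsilon_0 > 0$ with $\max_a (F^{(a)})^\trop(m) \ge \varepsilon_0\,\|m\|$ for all $m$. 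The inequalities $(F^{(a)})^\trop(Q) \le \sum_i (F^{(a)})^\trop(P_i)$, $a=1,\dots,r$, then force $\|Q\|$ to be bounded, so $Q$ lies in a finite subset of $M$. Together with the reduction above, this finishes the proof.

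The step I expect to be the main obstacle is the construction and analysis of the divisors $F^{(a)}$: identifying $F^\trop$ (defined through Temkin's $\norm{\cdot}$/valuation on $U^\an$, restricted to the essential skeleton) with the combinatorial support function of $u|_{T_M}$, and converting the affineness of $U$ — encoded through the pole of $\omega$ along every essential ray — into the realization of all directions $m_0$. The remaining convexity bookkeeping (positive homogeneity plus sphere-compactness yielding a bounded common sublevel set) is then routine.
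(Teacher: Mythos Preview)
Your proof is correct and follows the same two-step architecture as the paper: bound $Q$ via the convexity inequality \cref{thm:Ftrop_structure_constants}(\ref{thm:Ftrop_structure_constants:nef}) together with affineness of $U$, then bound $\gamma$ separately. The differences are in the tools chosen for each step. For $Q$, the paper simply invokes \cref{lem:enough_global_functions}: the generators $x_1,\dots,x_l$ of $H^0(U,\cO_U)$ give a proper map $U^\an\to\bbR_{\ge 0}^l$, and applying \cref{thm:Ftrop_structure_constants}(\ref{thm:Ftrop_structure_constants:nef}) to the (numerically trivial, hence nef) principal divisors $-\div(x_i)$ bounds each $|x_i(Q)|$ in terms of the $P_j$. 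Your explicit construction of ample divisors $F^{(a)}$ with $\max_a(F^{(a)})^\trop$ proper is correct but effectively re-derives the content of \cref{lem:enough_global_functions} by hand; citing that lemma would shorten the argument considerably. For $\gamma$, the paper instead uses the \emph{equality} \cref{thm:Ftrop_structure_constants}(\ref{thm:Ftrop_structure_constants:equality}) with a single ample $F$ (from \cref{lem:ample_divisor}) to bound $F\cdot\gamma$ directly, whereas you appeal to \cref{lem:finitely_many_beta}; both work, and yours is arguably the more elementary route since it avoids the curve-class computation of \cref{lem:structure_disk_class}.
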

\begin{proof}
	By \cref{lem:enough_global_functions}, there exist regular functions $x_1,\dots,x_l$ on $U$ such that for any $c\in\bbR$, the set
	\[\Set{b\in \Sk(U) | \abs{x_i(b)}\le c,\ i=1,\dots,l}\] is bounded; in particular, the subset of integer points inside is finite.
	If $\chi(P_1,\dots,P_n,\allowbreak Q,\allowbreak \gamma)\allowbreak\neq 0$, by \cref{thm:Ftrop_structure_constants}(\ref{thm:Ftrop_structure_constants:nef}), for $i=1,\dots,l$, we have
	\[
	\abs{x_i(Q)} \leq \prod_j \abs{x_i(P_j)}.
	\]
	Thus given $P_1,\dots,P_n$, there are at most finitely many $Q$ such that  $\chi(P_1,\dots,P_n,Q,\allowbreak \gamma)\allowbreak \neq 0$ for some $\gamma\in\NE(Y)$.
	
	Now let $F$ be an ample divisor on $Y$ with $-F|_U$ effective (see \cref{lem:ample_divisor}).
	If $\chi(P_1,\dots,P_n,Q,\allowbreak\gamma)\allowbreak\neq 0$, by \cref{thm:Ftrop_structure_constants}(\ref{thm:Ftrop_structure_constants:equality}), we have
	\[
	F\cdot\gamma=\sum F^\trop(P_i) - F^\trop(Q) + \deg(F^\an|_{\bbD^\circ}) \le \sum F^\trop(P_i) - F^\trop(Q).
	\]
	Therefore, given $P_1,\dots,P_n,Q$, we see that $F\cdot\gamma$ is bounded, so there are at most finitely many $\gamma$ such that $\chi(P_1,\dots,P_n,Q,\gamma)\neq 0$.
	
	Combining the two paragraphs above, we conclude the proof.
\end{proof}

\begin{corollary} \label{cor:R-algebra_structure}
	The two sums in the multiplication rule \eqref{eq:multiplication} are finite sums.
	Therefore, combining \cref{thm:associativity}, the multiplication rule makes $A$ into a commutative associative $R$-algebra.
\end{corollary}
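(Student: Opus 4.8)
The plan is to deduce the corollary formally from the two substantive inputs already in place: the finiteness result \cref{prop:finiteness_Q_gamma} and the associativity theorem \cref{thm:associativity}. First I would record that the double sum on the right-hand side of \eqref{eq:multiplication} is finite: for fixed $P_1,\dots,P_n\in\Sk(U,\bbZ)$, \cref{prop:finiteness_Q_gamma} asserts that only finitely many pairs $(Q,\gamma)$ with $Q\in\Sk(U,\bbZ)$, $\gamma\in\NE(Y)$ satisfy $\chi(P_1,\dots,P_n,Q,\gamma)\neq 0$, so both the sum over $Q$ and the sum over $\gamma$ have finitely many nonzero terms and the expression $\sum_{Q}\sum_{\gamma}\chi(P_1,\dots,P_n,Q,\gamma)z^\gamma\theta_Q$ is a genuine element of $A=\bigoplus_{Q\in\Sk(U,\bbZ)}R\cdot\theta_Q$. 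In particular, taking $n=2$, the rule \eqref{eq:multiplication} defines the product $\theta_{P_1}\theta_{P_2}\in A$ for all basis elements $\theta_{P_1},\theta_{P_2}$.

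Next, since $A$ is $R$-free on $\{\theta_Q\}_{Q\in\Sk(U,\bbZ)}$, extending this product $R$-bilinearly yields a well-defined multiplication $A\otimes_R A\to A$; nothing further is needed for well-definedness once the finiteness above is known. Commutativity is immediate from the symmetry of $\chi(P_1,\dots,P_n,Q,\gamma)$ in $P_1,\dots,P_n$ — the moduli space $H(\bP_Z,\beta)$, the point $\tQ$, and the toric tail condition entering \cref{def:structure_constant} all treat the marked points $p_1,\dots,p_n$ symmetrically — so $\theta_{P_1}\theta_{P_2}=\theta_{P_2}\theta_{P_1}$ on basis elements and hence on all of $A$. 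Associativity of the binary product, $(\theta_{P_1}\theta_{P_2})\theta_{P_3}=\theta_{P_1}(\theta_{P_2}\theta_{P_3})$, is exactly \cref{thm:associativity}, which moreover shows that every bracketing of $\theta_{P_1}\cdots\theta_{P_n}$ computes the same element; $R$-bilinear extension then upgrades this to associativity on $A$. Finally, $\theta_0$ serves as the multiplicative unit: one verifies that $\chi(0,P,Q,\gamma)$ equals $1$ when $Q=P$ and $\gamma=0$, and $0$ otherwise, by invoking the symmetry theorem \cref{thm:forgetting_interior_marked_points} to discard the interior marked point labelled by $0$ and reducing to the trivial toric count of \cref{lem:counts_in_toric_case}. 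Together with the ring map $R\to A$, $z^\beta\mapsto z^\beta\theta_0$, this exhibits $A$ as a commutative associative $R$-algebra.

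I do not expect any genuine obstacle here: the corollary is essentially bookkeeping, since all the difficulty has already been absorbed into \cref{prop:finiteness_Q_gamma} — which rests on the convexity and positivity estimates of \cref{thm:Ftrop_structure_constants} — and into \cref{thm:associativity}, whose proof relies on the gluing formula \cref{thm:gluing_concatenate} and the deformation-invariance results of Sections~\ref{sec:deformation_invariance}--\ref{sec:toric_tail_condition}. The only mildly non-formal point is the identification of $\theta_0$ with the unit, which is a one-line computation of a single structure constant; if one prefers, this verification can also be postponed, the corollary then asserting the (non-unital) commutative associative ring structure and the unit being supplied when $\theta_0=1$ is established in the course of proving \cref{thm:main}(\ref{thm:main:algebra}).
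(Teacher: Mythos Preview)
Your proposal is correct and matches the paper's approach: the paper gives no separate proof for this corollary, treating it as an immediate consequence of \cref{prop:finiteness_Q_gamma} (for finiteness of the sums) and \cref{thm:associativity} (for commutativity and associativity), exactly as you argue. Your additional verification that $\theta_0$ is a unit goes slightly beyond what the corollary asserts---the paper defers the identity $\theta_0=1$ to the statement of \cref{thm:main}(\ref{thm:main:algebra})---but it is correct and does no harm.
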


\begin{proposition} \label{prop:algebra_modulo_m}
	Let $\fm\subset R$ be the maximal monomial ideal (i.e.\ generated by all $z^\gamma$, $\gamma\in\NE(Y)\setminus 0$).
	Then $A \otimes_R R/\fm$ is isomorphic to the Stanley-Reisner ring for the fan $\Sigma_\rt$;		
	in other words, modulo $\fm$, $\theta_{P_1} \cdot \theta_{P_2} = \theta_{P_1 + P_2}$ if $P_1,P_2$ lie in a same cone of $\Sigma_\rt$, and $\theta_{P_1} \cdot \theta_{P_2}=0$ otherwise.
	\end{proposition}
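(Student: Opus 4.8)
The plan is to reduce the multiplication rule \eqref{eq:multiplication} modulo $\fm$ to a count of structure disks of curve class $\gamma = 0$, and then to identify such disks explicitly via the toric model $(Y_\rt,D_\rt)$.

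First I would observe that reducing modulo $\fm$ amounts to keeping only the $\gamma=0$ summand: by \cref{cor:R-algebra_structure} the sum in \eqref{eq:multiplication} is finite, so
\[
\theta_{P_1}\theta_{P_2} \equiv \sum_{Q\in\Sk(U,\bbZ)} \chi(P_1,P_2,Q,0)\,\theta_Q \pmod{\fm}.
\]
So the statement is equivalent to: $\chi(P_1,P_2,Q,0) = 1$ if $P_1,P_2$ lie in a common cone of $\Sigma_\rt$ and $Q = P_1+P_2$, and $\chi(P_1,P_2,Q,0)=0$ otherwise. To evaluate $\chi(P_1,P_2,Q,0)$, I would use \cref{lem:structure_constants_decomposition}, which rewrites it as $\sum_{S} N(S,0)$ over spines $S \in \SP(M_\bbR,\bP_Z^F)_{\oGamma,b}$ with $Z = -Q$, the domain $\oGamma$ being the convex hull of $p_1,p_2$ and the finite endpoint $\os$ of the $s$-leg (so $\oGamma$ is a tripod with legs $p_1,p_2,\os$), and $b\in V_Q$ a chosen point in the star of $Q$. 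One may choose $b$ generic so that all spines contributing are transverse, by \cref{prop:transversality}.

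Next, the key point is \cref{lem:curve_class_zero} together with \cref{prop:structure_disk_interior_divisor}: for $\gamma=0$ the structure disk $f|_\bbD$ has image in $W^\an$ and $(\tau\circ f)(\bbD^\circ)$ lies in a single open maximal cone of $\Sigma_\rt$. Combined with the balancing condition at the unique interior vertex of the tripod spine (outward weights $P_1$, $P_2$, $-Q$, since the toric tail forces the $z$-leg to carry derivative $Z = -Q$ and the $s$-leg to be constant), balancing gives $P_1 + P_2 - Q = 0$, i.e.\ $Q = P_1 + P_2$; and the requirement that the image lie in a single maximal cone forces $P_1$ and $P_2$ to lie in a common cone $\sigma$ of $\Sigma_\rt$. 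Conversely, when $P_1,P_2\in\sigma$ and $Q = P_1+P_2$, the relevant spine lies entirely in $\sigma$, has no bending vertices (since its image avoids $\Wall_A$, which is contained in $\partial\oM_\bbR$-adjacent walls coming from $E_\rt$ — and the cone $\sigma$ meets those walls only in lower strata, which can be avoided by the genericity of $b$) and avoids $\Sigma_\rt^{d-1}$ after a small perturbation; then \cref{lem:count_balanced_spines} gives $N(S,0) = 1$ for this unique spine and $N(S,\gamma)=0$ for $\gamma\neq 0$. Summing over $S$ via \cref{lem:structure_constants_decomposition} yields $\chi(P_1,P_2,Q,0) = 1$ in the compatible case and $0$ otherwise. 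Finally, $\theta_0 = 1$ is the unit (\cref{thm:main}\eqref{thm:main:algebra}), so the relations $\theta_{P_1}\theta_{P_2} = \theta_{P_1+P_2}$ or $0$ modulo $\fm$ are exactly the defining relations of the Stanley–Reisner ring of $\Sigma_\rt$, and since the $\theta_P$ for $P$ ranging over lattice points of $\Sigma_\rt$ (i.e.\ $\Sk(U,\bbZ)$, by \cref{lem:essential_skeleton}) form an $R/\fm$-basis of $A\otimes_R R/\fm$, this identifies the ring.

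The main obstacle I anticipate is the careful bookkeeping around $\Wall_A$ and the codimension-one skeleton $\Sigma_\rt^{d-1}$: I need to ensure that for a common cone $\sigma$, the unique spine with image in $\sigma$ and the right derivatives can be taken transverse (avoiding $\Wall_A \cup \Sigma_\rt^{d-1}$ and with no bending), so that \cref{lem:count_balanced_spines} applies directly; and in the non-compatible case, to verify no spine contributes at all — this uses that $\gamma=0$ forces, via \cref{lem:curve_class_zero}, both the single-maximal-cone condition and (through balancing) the linear relation, and there is genuinely no way to satisfy both unless $P_1,P_2$ share a cone and $Q = P_1+P_2$. A minor additional point is checking that the choice of generic $b\in V_Q$ needed for transversality does not interfere with the conclusion, which follows because $\chi$ is independent of such choices by \cref{lem:structure_constants} (it is the degree of a finite étale map over all of $V_M\times V_Q$).
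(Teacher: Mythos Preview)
Your proposal is correct and follows essentially the same approach as the paper: reduce to $\gamma=0$, invoke \cref{lem:curve_class_zero} to see that contributing structure disks live in $W^\an$ with tropicalization in a single maximal cone, and then compute via the toric count. The paper's proof is a terse three lines that cites \cref{lem:counts_in_toric_case} directly, whereas you route through \cref{lem:structure_constants_decomposition} and \cref{lem:count_balanced_spines} (which itself rests on \cref{lem:counts_in_toric_case}); this is the same argument unpacked, and the additional bookkeeping you flag around transversality and $\Wall_A$ is exactly the kind of detail the paper suppresses.
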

\begin{proof}
	Note $z^\gamma\not\in\fm$ if and only if $\gamma=0$.
	In this case the contributing structure discs are described by \cref{lem:curve_class_zero}, and hence the structure constants can be computed as in the toric case by \cref{lem:counts_in_toric_case}, from which we conclude the proof.
\end{proof}

\section{Torus action and finite generation} \label{sec:torus_action}

In this section, we describe a natural torus action on the mirror algebra (see \cref{thm:torus_action}).
Using the torus action, we prove that the mirror algebra is finitely generated (see \cref{thm:finite_generation}).

We follow the setting of \cref{sec:log_CY}.
We have an embedding
\[\Sk(U,\bbZ)\simeq\Sigma^\ess_{(Y,D)}(\bbZ)\subset\Sigma_{(Y,D)}(\bbZ)\subset\bbZ^{I_D}=\Hom(I_D,\bbZ)\]
which we denote by $w$.
We also denote by $w$ the map
\[w\colon N_1(Y)\longrightarrow\bbZ^{I_D}, \quad \gamma\longmapsto(\gamma\cdot D_i)_{i\in I_D}.\]
Let
$T_D\coloneqq\Spec(\bbZ[\bbZ^{I_D}])$ be the split torus with character group $\bbZ^{I_D}$.
Then $w\colon N_1(Y)\longrightarrow\bbZ^{I_D}$ induces a canonical homomorphism $T_D \to T^{N_1(Y)}$, and thus an action of $T_D$ on $\Spec(R)=\Spec\bbZ[\NE(Y)]$.

\begin{lemma} \label{lem:weight}
	Assume $\chi(P_1,\dots,P_n,Q,\gamma) \neq 0$.
	Then
	\[w(Q) + w(\gamma)  = \sum_{j =1}^n w(P_j).\]
\end{lemma}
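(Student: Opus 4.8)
The statement $w(Q)+w(\gamma)=\sum_j w(P_j)$ is an identity in $\bbZ^{I_D}$, so it suffices to check it component-by-component: fix an irreducible component $D_i\subset D$ and prove $w(Q)_i+\gamma\cdot D_i=\sum_j w(P_j)_i$. The plan is to apply \cref{thm:Ftrop_structure_constants}(\ref{thm:Ftrop_structure_constants:equality}) with $F$ taken to be (a suitable twist of) $D_i$. The left-hand side of \eqref{eq:Ftrop_structure_constants}, $\sum F^\trop(P_j)-F^\trop(Q)$, should match $\sum_j w(P_j)_i - w(Q)_i$ up to the choice of how $F^\trop$ is normalized on $\Sigma_\rt$; the right-hand side, $F\cdot\gamma-\deg(F^\an|_{\bbD^\circ})$, should contribute $\gamma\cdot D_i$ together with a correction term coming from $\deg(F^\an|_{\bbD^\circ})$. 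So the computation is really about bookkeeping three contributions against each other.

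\textbf{Key steps.} First I would reduce to a single component $D_i$ as above. Second, I would choose the divisor $F$ carefully: since $D_i$ itself need not be nef and may not even satisfy ``$-F|_U$ effective'', I expect one has to write $D_i$ as a difference, say $F = A - (A - D_i)$ with $A$ ample, or more simply exploit that both sides of the desired identity are \emph{linear} in $F$ and that \eqref{eq:Ftrop_structure_constants} holds for an arbitrary divisor $F$ (the nef/ample hypotheses are only needed for parts (2) and (3), not for part (1)). Thus I can directly take $F = D_i$ in \eqref{eq:Ftrop_structure_constants}. Third, I would identify the three terms: (a) $F^\trop(P)$ for $P\in\Sk(U,\bbZ)\hookrightarrow\bbZ^{I_D}$ equals the $i$-th coordinate $w(P)_i$ — this is exactly the content of \cref{lem:divisor_tropicalization}, applied to the constant model and the component $D_i$, since $D_i^\trop$ is the linear function reading off the $D_i$-coordinate; (b) $F\cdot\gamma = D_i\cdot\gamma = w(\gamma)_i$ by definition of $w$ on $N_1(Y)$; (c) $\deg(F^\an|_{\bbD^\circ}) = \deg(D_i^\an|_{\bbD^\circ})$ counts intersections of the structure disk $\bbD^\circ$ with $D_i$ \emph{away from the marked points}, but by \cref{def:structure_disk} and the definition of $H(\bP_Z,\beta)$ the only preimages of $D$ on $\bbD$ are the marked points $p_j$ (condition $f^{-1}(\tD)=\sum m_jp_j$), so $f(\bbD^\circ)$ is disjoint from $D^\an$ and hence $\deg(D_i^\an|_{\bbD^\circ})=0$. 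Assembling: $w(Q)_i = w(\gamma)_i - \sum_j w(P_j)_i \cdot(-1)$, i.e. $\sum_j w(P_j)_i - w(Q)_i = w(\gamma)_i$, which is the $i$-th component of the claim. Running over all $i\in I_D$ finishes the proof.

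\textbf{Main obstacle.} The genuinely delicate point is step (c) and, relatedly, the precise normalization of $F^\trop$ on $\Sk(U,\bbZ)$ versus the embedding $w$. One must be sure that the ``$\deg(F^\an|_{\bbD^\circ})$'' appearing in \eqref{eq:Ftrop_structure_constants} really is the degree on the \emph{punctured} disk $\bbD^\circ=\bbD\setminus\{p_1,\dots,p_n\}$ and that the marked points $p_j$ carry all of the intersection with $D^\ess$; this is where one invokes that a structure disk arises from a stable map $f$ with $f^{-1}(\tD)=\sum m_j p_j$ (so no interior intersection with the boundary survives after puncturing), together with \cref{lem:restrict_to_skeleton}(\ref{lem:restriction_to_skeleton:sm}) to control that $f(\bbD)$ is not swallowed by $\supp F^\an$ and meets it only in the expected finite set. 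Once that is pinned down, the identity is a one-line consequence of linearity of \eqref{eq:Ftrop_structure_constants} in $F$ applied to $F=D_i$ for each $i$. I would also double-check that the general-position hypothesis of \cref{thm:Ftrop_structure_constants} (that $\Phi^\an(f)$ is a general rational point of $V_M\times V_Q^F$) is harmless here: such a structure disk exists whenever $\chi(P_1,\dots,P_n,Q,\gamma)\neq 0$, because the count is the degree of the finite étale map \eqref{eq:structure_constants}, which is positive precisely when some fiber — in particular a general one — is nonempty.
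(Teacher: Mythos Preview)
Your proposal is correct and follows essentially the same approach as the paper: apply \cref{thm:Ftrop_structure_constants}(\ref{thm:Ftrop_structure_constants:equality}) with $F=D_i$ for each irreducible component $D_i\subset D$, identifying $D_i^\trop(P)=w(P)_i$ and $D_i\cdot\gamma=w(\gamma)_i$. The paper's proof is terser and leaves implicit the verification that $\deg(D_i^\an|_{\bbD^\circ})=0$, which you correctly justify by noting that $f^{-1}(D)=\sum m_jp_j$ forces $f(\bbD^\circ)\subset U^\an$.
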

\begin{proof}
	For each irreducible component $F\subset D$, let $w^F$ denote the corresponding component of $w$.
	Over $\Sk(U,\bbZ)$, we have $w^F=F^\trop$; over $N_1(Y)$, we have $w^F(\gamma)=F\cdot\gamma$.
	So we conclude from \cref{thm:Ftrop_structure_constants}(\ref{thm:Ftrop_structure_constants:equality}).
\end{proof}

Note the mirror algebra $A$, as an abelian group, is free with basis $z^{\gamma} \theta_P$, for $(\gamma,P) \in \NE(Y) \times \Sk(U,\bbZ)$.
\cref{lem:weight} implies the following theorem:

\begin{theorem} \label{thm:torus_action}
	Let $T_D$ act diagonally on the mirror algebra $A$ (viewed as a free abelian group) with weight $w(P) + w(\gamma)$ on the basis vector $z^\gamma\theta_P$.
	This gives an equivariant action of $T_D$ on $\Spec(A) \to \Spec(R)$. 
\end{theorem}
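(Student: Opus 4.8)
\textbf{Proof plan for \cref{thm:torus_action}.}
The statement is essentially a bookkeeping consequence of the weight balancing in \cref{lem:weight}, together with the observations that $A$ is a free abelian group on the vectors $z^\gamma\theta_P$ and that the multiplication rule \eqref{eq:multiplication} has only finitely many nonzero terms (\cref{cor:R-algebra_structure}). First I would fix the grading: declare the basis vector $z^\gamma\theta_P$ to have weight $w(P)+w(\gamma)\in\bbZ^{I_D}$, the character group of $T^D$, and let $T^D$ act on $A$ accordingly, so that an element of weight $m$ is scaled by the character $m$. I need to check three things: (i) this makes $A$ a graded $R$-algebra, i.e.\ the multiplication respects the grading; (ii) the $R$-module structure is compatible, i.e.\ $z^\delta\cdot(z^\gamma\theta_P)$ has weight $w(\delta)+w(P)+w(\gamma)$, which is immediate since $z^\delta$ has weight $w(\delta)$ by the definition of the $T^D$-action on $\Spec R$; and (iii) $\theta_0=1$ has weight $0$, which holds because $w(0)=0$.

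For (i), it suffices by $R$-multilinearity to verify that in
\[
\theta_{P_1}\cdots\theta_{P_n}=\sum_{Q}\sum_{\gamma}\chi(P_1,\dots,P_n,Q,\gamma)\,z^\gamma\theta_Q,
\]
every nonzero term on the right has the same weight as the left-hand side. The left-hand side, being a product of the weight-$w(P_j)$ vectors $\theta_{P_j}$, has weight $\sum_j w(P_j)$. A nonzero term $z^\gamma\theta_Q$ has weight $w(Q)+w(\gamma)$, and \cref{lem:weight} gives exactly $w(Q)+w(\gamma)=\sum_j w(P_j)$. Hence the multiplication map $A^{\otimes n}\to A$ is homogeneous of degree zero for each $n$; in particular the bilinear product $A\otimes_R A\to A$ is $\bbZ^{I_D}$-graded. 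Equivalently, the comodule structure $A\to A\otimes_\bbZ\bbZ[\bbZ^{I_D}]$ defined on basis vectors by $z^\gamma\theta_P\mapsto z^\gamma\theta_P\otimes\chi^{w(P)+w(\gamma)}$ is a ring homomorphism, which is the coaction form of a $T^D$-action on $\Spec A$ by algebra automorphisms; the compatibility with the coaction on $\Spec R$ corresponding to $w\colon N_1(Y)\to\bbZ^{I_D}$ is point (ii). Therefore $\Spec A\to\Spec R$ is $T^D$-equivariant, and by construction each $\theta_P$ (the weight-$w(P)$ vector) is an eigenfunction, with the weight as stated.

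There is essentially no obstacle here: the only nontrivial input is \cref{lem:weight}, which in turn rests on \cref{thm:Ftrop_structure_constants}(\ref{thm:Ftrop_structure_constants:equality}); everything else is the formal passage between a $\bbZ^{I_D}$-grading on a free $R$-algebra and a torus action. The one point deserving a sentence of care is that the grading is well-defined as a map to $\bbZ^{I_D}$ rather than to a quotient: since $A$ is \emph{free} on $\{z^\gamma\theta_P\}$ (not merely generated by them), assigning weights on basis vectors unambiguously defines the grading, and finiteness of the sums (\cref{cor:R-algebra_structure}) ensures the product of two homogeneous elements is a finite $R$-linear combination of homogeneous elements, so homogeneity is preserved. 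I would conclude by noting that the weights $w(P)$ are exactly those recorded in the statement.
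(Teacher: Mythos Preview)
Your proposal is correct and takes essentially the same approach as the paper: the paper simply states that \cref{lem:weight} implies the theorem, and your argument spells out the routine bookkeeping (graded multiplication, compatibility with the $R$-action, well-definedness on the free basis) that this entails.
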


\begin{definition} \label{def:monoid_of_definition}
	Let $\DC(Y)\subset\NE(Y)$ (the notation stands for disk classes) be the submonoid generated by the curve classes of all structure disks, called the \emph{monoid of definition}.
	Let $R_\DC\coloneqq\bbZ[\DC(Y)]\subset\bbZ[\NE(Y)]=R$, and
	\[A_\DC\coloneqq R_\DC^{(\Sk(U,\bbZ))} \coloneqq\bigoplus_{P\in\Sk(U,\bbZ)} R_\DC\cdot\theta_P,\]
	the free $R_\DC$-module with basis $\Sk(U,\bbZ)$.
	We endow $A_\DC$ with an $R_\DC$-algebra structure using the same structure constants as in \eqref{eq:multiplication}.
	We have naturally
	\[ A \simeq A_\DC\otimes_{R_\DC} R.\]
\end{definition}

\begin{lemma} \label{lem:finiteness_fixed_weight}
	Assume there exists an ample divisor $F$ on $Y$ such that $-\overline{F|_U}$ is effective and contains no essential boundary strata (i.e.\ those of $D^\ess$).
	Given $z\in\bbZ^{I_D}$, there are only finitely many $\gamma\in\DC(Y)$ such that $w(\gamma)=z$.
\end{lemma}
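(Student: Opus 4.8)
The plan is to combine the weight relation of \cref{lem:weight} with the fact that $F^\trop$ is a \emph{strictly convex} piecewise-linear function on the fan $\Sigma_\rt$ (because $F$ is ample and $-\overline{F|_U}$ is effective, so $F^\trop \ge 0$ and is linear on each maximal cone but bends strictly across the walls). The key point is that strict convexity, together with $F^\trop$ being bounded below (indeed nonnegative), forces the integer points $P \in \Sk(U,\bbZ)$ with $F^\trop(P) \le c$ to be a \emph{finite} set for every real $c$: on each maximal cone $F^\trop$ is given by a linear functional that is strictly positive on the interior, so a bounded sublevel set meets each cone in a bounded polytope, and there are finitely many cones. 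This is essentially the same finiteness mechanism already used in \cref{prop:finiteness_Q_gamma} (via $\cref{lem:enough_global_functions}$), repackaged in terms of $F^\trop$.

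The steps I would carry out are as follows. First, fix $z \in \bbZ^{I_D}$, and suppose $\gamma \in \DC(Y)$ with $w(\gamma) = z$. By definition of $\DC(Y)$, $\gamma$ is a sum of classes of structure disks, but more usefully I would note that for each irreducible component $F$ of $D$, the $F$-component $w^F(\gamma) = F \cdot \gamma$ is determined by $z$, hence $F^\trop$-type quantities attached to $\gamma$ are pinned down. Second, take the given ample $F$; decompose $F = \overline{F|_U} + F_D$ with $F_D$ supported on $D$, as in the proof of \cref{lem:finitely_many_beta}. Then $F \cdot \gamma = F_D \cdot \gamma + \overline{F|_U}\cdot\gamma$, and $F_D \cdot \gamma$ is a fixed integer combination of the components $D_i \cdot \gamma$, i.e.\ a fixed linear function of $z$, so $F_D \cdot \gamma$ is determined by $z$. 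Third, since $-\overline{F|_U}$ is effective, $\overline{F|_U}\cdot\gamma \le 0$ unless $\gamma$ avoids $\supp\overline{F|_U}$; in any case $\overline{F|_U}\cdot\gamma$ is bounded above (by $0$) and below: the lower bound comes from the fact that structure disks, hence their classes summing to $\gamma$, are constrained by \cref{thm:Ftrop_structure_constants}, which gives $F \cdot \gamma = \sum F^\trop(P_i) - F^\trop(Q) + \deg(F^\an|_{\bbD^\circ}) \ge 0$ for each structure disk; summing, $F \cdot \gamma \ge 0$. Combined with $F \cdot \gamma = F_D \cdot \gamma + \overline{F|_U}\cdot\gamma$ and $\overline{F|_U}\cdot\gamma \le 0$, we get $0 \le F\cdot\gamma \le F_D \cdot\gamma$, and the right side is determined by $z$. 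So $F \cdot \gamma$ takes only finitely many values as $z$ is fixed; since $F$ is ample, there are only finitely many $\gamma \in \NE(Y)$ with $F \cdot \gamma$ in a fixed finite set, a fortiori finitely many in $\DC(Y)$.

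The one genuine subtlety, which I expect to be the main obstacle, is the lower bound $F\cdot\gamma \ge 0$ for $\gamma \in \DC(Y)$: a priori $\gamma$ is only a \emph{sum} of structure-disk classes, and I am implicitly using that \cref{thm:Ftrop_structure_constants}(\ref{thm:Ftrop_structure_constants:equality}) applies to each summand with its own $(P_i,Q,\bbD)$ data, then summing the inequalities $F \cdot \gamma_k \ge 0$. This is clean provided each structure-disk class individually satisfies $F \cdot \gamma_k \ge 0$, which is exactly \cref{prop:structure_disk_interior_divisor} applied with the ample $F$ (which by hypothesis contains no $0$-stratum of $D^\ess$ — here I should double-check the hypothesis ``contains no essential boundary strata'' is what is needed, and it is, since $-\overline{F|_U}$ effective containing no essential strata gives that $F$ meets $U$ only along $\overline{F|_U}$, disjoint from $D^\ess$). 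With that, the argument closes. I would present it in roughly four short paragraphs: (1) reduce to bounding $F\cdot\gamma$; (2) the decomposition $F = \overline{F|_U} + F_D$ and that $F_D \cdot \gamma$ is fixed by $z$; (3) the two-sided bound on $F \cdot \gamma$ via effectiveness and \cref{prop:structure_disk_interior_divisor}; (4) conclude by ampleness.
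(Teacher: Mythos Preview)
Your approach is correct and matches the paper's: decompose $F = \overline{F|_U} + F_D$, use \cref{prop:structure_disk_interior_divisor} (the paper cites the underlying \cref{prop:interior_divisor}) applied to the effective divisor $-\overline{F|_U}$ to get $\overline{F|_U}\cdot\gamma \le 0$ for every $\gamma \in \DC(Y)$, hence $F\cdot\gamma \le F_D\cdot\gamma$, which is determined by $z$, and conclude by ampleness. Your opening paragraph on strict convexity of $F^\trop$ and your concern about establishing the lower bound $F\cdot\gamma \ge 0$ via \cref{thm:Ftrop_structure_constants} are both unnecessary detours---the lower bound is automatic since $\gamma \in \DC(Y)\subset\NE(Y)$ and $F$ is ample, so only the upper bound is needed.
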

\begin{proof}
	Write $F=\overline{F|_U}+F_D$.
	By \cref{prop:interior_divisor}, for any $\gamma\in\DC(Y)$, we have
	\[F\cdot\gamma=F_D\cdot\gamma+\overline{F|_U}\cdot\gamma\le F_D\cdot\gamma.\]
	Note $F_D\cdot\gamma$ is determined by $w(\gamma)\in\bbZ^{I_D}$.
	Therefore, by the ampleness of $F$, there are only finitely many $\gamma\in\DC(Y)$ with given $w(\gamma)=z$.
\end{proof}

The following lemma is an analog of the Krull intersection theorem.

\begin{lemma} \label{lem:Krull_intersection}
	Let $\fm\subset R_\DC$ be the maximal monomial ideal.
	Then $\bigcap_{i>0}\fm^i=0$.
	Consequently, for any free $R_\DC$-module $A$, we have $\cap_{i>0}\fm^i A=0$.
\end{lemma}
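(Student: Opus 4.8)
The key point is that $R_\DC = \bbZ[\DC(Y)]$ is the monoid ring of a submonoid $\DC(Y) \subset \NE(Y) \subset N_1(Y,\bbZ)$, with maximal monomial ideal $\fm$ generated by $z^\gamma$ for $\gamma \in \DC(Y) \setminus 0$. An element of $\fm^i$ is a $\bbZ$-linear combination of monomials $z^\gamma$ with $\gamma = \gamma_1 + \cdots + \gamma_i$, each $\gamma_\ell \in \DC(Y) \setminus 0$. So it suffices to show: for every fixed $\gamma \in \DC(Y)$, there is a bound $i_0$ such that $\gamma$ cannot be written as a sum of more than $i_0$ nonzero elements of $\DC(Y)$; then $z^\gamma$ has zero coefficient in any element of $\fm^{i_0+1}$, and since $R_\DC$ is free on the monomials $\{z^\gamma\}$, an element of $\bigcap_{i>0}\fm^i$ must have every coefficient zero.

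To produce such a bound, first I would invoke \cref{lem:ample_divisor} to fix an ample divisor $F$ on $Y$ with $-F|_U$ effective, and moreover arrange (after a toric blowup, as in \cref{thm:Ftrop_structure_constants}) that $-\overline{F|_U}$ contains no essential boundary strata, so that \cref{prop:interior_divisor} applies to structure disks. By \cref{lem:structure_disk_class} and \cref{prop:interior_divisor} (equivalently \cref{thm:Ftrop_structure_constants}(\ref{thm:Ftrop_structure_constants:equality}) applied to $F$), every structure disk class $\alpha$ satisfies $F \cdot \alpha \ge 0$. The generators of $\DC(Y)$ are precisely such classes, hence $F \cdot \gamma' \ge 0$ for all $\gamma' \in \DC(Y)$. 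Now if $\gamma = \gamma_1 + \cdots + \gamma_i$ with $\gamma_\ell \in \DC(Y)\setminus 0$, then $F \cdot \gamma = \sum_\ell F\cdot\gamma_\ell$. The only remaining issue is that some $\gamma_\ell$ could have $F \cdot \gamma_\ell = 0$. But by the equality case in \cref{prop:interior_divisor} (or \cref{thm:Ftrop_structure_constants}(\ref{thm:Ftrop_structure_constants:ample})), a structure disk $g$ with $F \cdot [g] = 0$ has image disjoint from $F^\an$, equivalently lies in $W^\an$ with tropicalization in a maximal cone; by \cref{lem:curve_class_zero} this forces $[g] = 0 \in \NE(Y)$. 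Hence every nonzero generator of $\DC(Y)$ has strictly positive $F$-degree, and by \cref{lem:finiteness_fixed_weight} the set of generators $\gamma'$ with any fixed value $F \cdot \gamma' = d$ is finite, so there is a positive lower bound $c > 0$ on $F \cdot \gamma'$ over all nonzero generators. Therefore $i \cdot c \le F \cdot \gamma$, giving $i \le (F\cdot\gamma)/c =: i_0$, the desired bound.

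The consequence for a free module $A = \bigoplus_P R_\DC \cdot \theta_P$ is immediate: $\fm^i A = \bigoplus_P \fm^i \cdot \theta_P$, so $\bigcap_{i>0} \fm^i A = \bigoplus_P \bigl(\bigcap_{i>0}\fm^i\bigr)\theta_P = 0$.

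The main obstacle is the handling of the degree-zero summands: one must know that a structure disk contributing a class $\gamma_\ell$ with $F\cdot\gamma_\ell = 0$ actually has $\gamma_\ell = 0$, which is not a formal monoid-theoretic fact but relies on the geometry encoded in \cref{lem:curve_class_zero} together with the equality case of \cref{prop:interior_divisor}. Everything else is bookkeeping once the positivity $F\cdot\gamma' > 0$ on nonzero generators and the finiteness of fixed-degree generators (\cref{lem:finiteness_fixed_weight}) are in hand. I should also double check that $\DC(Y)$ is generated by structure disk classes in a way compatible with decomposing a sum, but this is by definition (\cref{def:monoid_of_definition}).
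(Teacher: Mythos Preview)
Your approach reaches the correct conclusion but takes a much longer route than necessary, and one step is not legitimate as written.

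The paper's proof is essentially one line: fix \emph{any} ample Cartier divisor $F$ on $Y$. For every nonzero $\gamma' \in \NE(Y)$, Kleiman's criterion gives $F \cdot \gamma' > 0$, and since this pairing is an integer, $F \cdot \gamma' \ge 1$. Thus if $\gamma = \gamma_1 + \cdots + \gamma_n$ with each $\gamma_j \in \DC(Y)\setminus 0 \subset \NE(Y)\setminus 0$, then $F \cdot \gamma \ge n$, so $n \le F \cdot \gamma$. As the paper remarks, this works for \emph{any} submonoid of $\NE(Y)$; nothing about structure disks is used.

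By contrast, you invoke the special ample divisor with $-F|_U$ effective, propose a toric blowup so that $-\overline{F|_U}$ avoids essential boundary strata, and then call on \cref{prop:interior_divisor}, \cref{thm:Ftrop_structure_constants}, \cref{lem:curve_class_zero}, and \cref{lem:finiteness_fixed_weight} to deduce positivity of $F \cdot \gamma'$ on nonzero generators. Three issues:
\begin{enumerate}
\item The toric blowup replaces $Y$ by some $\widetilde Y$ and hence $\DC(Y)$ by $\DC(\widetilde Y)$; but the lemma is stated for the given $Y$, so this reduction is not available inside the proof.
\item The appeal to \cref{lem:finiteness_fixed_weight} to produce a positive lower bound $c>0$ is unnecessary: $F \cdot \gamma'$ is a positive integer, so $c=1$ automatically.
\item You establish $F \cdot \gamma' > 0$ only for structure disk classes (the generators of $\DC(Y)$), but then apply the bound $i \cdot c \le F \cdot \gamma$ with $\gamma_\ell$ arbitrary elements of $\DC(Y)\setminus 0$. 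This gap is easy to fill (decompose each $\gamma_\ell$ further into generators), but it is not addressed.
\end{enumerate}
All three issues dissolve once you observe that ampleness alone gives $F \cdot \gamma' \ge 1$ for every nonzero $\gamma' \in \NE(Y)$, with no need for the condition on $-F|_U$, the structure-disk geometry, or any special feature of $\DC(Y)$.
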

\begin{proof}
	The equality $\bigcap_{i>0}\fm^i=0$ is equivalent to the statement that given any $\gamma\in\DC(Y)$, there exists an integer $N(\gamma)$ such that if $\gamma=\gamma_1+\dots+\gamma_n$ for $n>N(\gamma)$ with $\gamma_i\in\DC(Y)$, then $\gamma_j=0$ for some $j$.
	This is true for any submonoid of $\NE(Y)$, by intersecting with an ample class on $Y$.
\end{proof}

\begin{lemma} \label{lem:finite_generation}
	Assume there exists an ample divisor $F$ on $Y$ such that $-\overline{F|_U}$ is effective and contains no essential boundary strata.
	If a set of $\theta_P$ generates $A \otimes_R R/\fm$ as an $R/\fm$-algebra, it also generates $A$ as an $R$-algebra.
\end{lemma}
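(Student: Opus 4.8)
The statement is a standard Nakayama-type lifting argument, but the coefficient ring $R_\DC$ is not local, so we cannot invoke Nakayama directly; instead we use the Krull-intersection statement of \cref{lem:Krull_intersection} together with the finiteness provided by \cref{lem:finiteness_fixed_weight}. The plan is to work over $R_\DC$ and $A_\DC$ of \cref{def:monoid_of_definition}, since $A$ is obtained from $A_\DC$ by base change along $R_\DC\hookrightarrow R$, so it suffices to prove the analogous statement for $A_\DC$; a generating set mod $\fm$ will generate $A_\DC$ as an $R_\DC$-algebra, hence $A$ as an $R$-algebra after tensoring. Let $\{\theta_{P_\alpha}\}$ be a collection of basis elements whose images generate $A_\DC\otimes_{R_\DC}R_\DC/\fm$ as a $R_\DC/\fm$-algebra (which by \cref{prop:algebra_modulo_m} is the Stanley--Reisner ring of $\Sigma_\rt$, a finitely generated algebra, so a finite such collection exists). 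Let $B\subset A_\DC$ be the $R_\DC$-subalgebra they generate; we must show $B=A_\DC$.

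First I would introduce the grading by the torus $T^D$: by \cref{lem:weight} (and \cref{thm:torus_action}) the algebra $A_\DC$ is graded by $\bbZ^{I_D}$ via the weight $w(P)+w(\gamma)$ on $z^\gamma\theta_P$, the multiplication is weight-homogeneous, and $\fm$ is a homogeneous ideal; moreover $B$, being generated by homogeneous elements $\theta_{P_\alpha}$ (of weight $w(P_\alpha)$), is a homogeneous subalgebra. For a fixed weight $z\in\bbZ^{I_D}$, the degree-$z$ part $(A_\DC)_z$ is a free $\bbZ$-module with basis $\{z^\gamma\theta_P : w(P)+w(\gamma)=z\}$; the key point is that this basis is \emph{finite}: the possible $P$ form a subset of a fixed fibre of $w|_{\Sk(U,\bbZ)}$, and by the boundedness argument in \cref{prop:finiteness_Q_gamma} (using an ample $F$ with $-F|_U$ effective) only finitely many $P\in\Sk(U,\bbZ)$ can occur as the $Q$-component of a nonzero structure constant, while for each such $P$ the constraint $w(\gamma)=z-w(P)$ leaves only finitely many $\gamma\in\DC(Y)$ by \cref{lem:finiteness_fixed_weight}. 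Hence each graded piece $(A_\DC)_z$ is a finitely generated $\bbZ$-module.

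Next, working one weight $z$ at a time, I would show $(A_\DC)_z\subseteq B$. Fix a basis element $\xi=z^\gamma\theta_P$ of weight $z$. Since the $\theta_{P_\alpha}$ generate mod $\fm$, there is a polynomial expression $p$ in the $\theta_{P_\alpha}$ with $\xi\equiv p\pmod{\fm A_\DC}$; replacing $p$ by its weight-$z$ homogeneous component we may take $p\in B\cap(A_\DC)_z$, so $\xi-p\in \fm A_\DC\cap(A_\DC)_z=\fm(A_\DC)_{<z}$-type terms — more precisely $\xi-p$ is an $R_\DC$-linear combination of terms $z^\delta\eta$ with $\delta\in\DC(Y)\setminus 0$ and $\eta$ a basis element of strictly smaller weight $z-w(\delta)$ (strictly smaller because $w(\delta)$ lies in the strictly convex monoid image and is nonzero). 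Now I would set up an induction: order weights by, say, intersection with an ample divisor $F$ as in \cref{lem:finiteness_fixed_weight}, which is a well-ordering on the (finitely many per value) weights that actually occur; by the inductive hypothesis every $\eta$ appearing is in $B$, hence $\xi\in B+\fm B$-type... — here is where care is needed. The honest finish is: iterate the mod-$\fm$ replacement. Having written $\xi=p_0+\sum z^{\delta}\eta$ with each $\eta\in B$ by induction on ample degree, we get $\xi\in B$ directly, since $z^\delta\in R_\DC$ and $B$ is an $R_\DC$-module; so no limiting argument is even needed once the grading and the per-weight finiteness are in place. The Krull-intersection lemma \cref{lem:Krull_intersection} is the safety net if one instead organizes the argument as successive approximations $\xi\equiv b_n\pmod{\fm^{n+1}A_\DC}$ with $b_n\in B$: the differences $b_{n+1}-b_n\in\fm^{n+1}A_\DC$, and since (within the fixed finitely generated graded piece) the $\fm$-adic filtration is eventually zero by $\bigcap\fm^iA_\DC=0$, the sequence stabilizes and $\xi=\lim b_n\in B$.

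The only genuine obstacle is the first step — establishing that each graded component $(A_\DC)_z$ is a finitely generated $\bbZ$-module — and this is precisely where the ampleness hypothesis on $F$ and the positivity/finiteness results \cref{prop:interior_divisor}, \cref{lem:finiteness_fixed_weight}, \cref{prop:finiteness_Q_gamma} are used; once that is granted, the lifting is formal. I would therefore spend the bulk of the write-up on the graded-finiteness claim and its relation to the monoid of definition $\DC(Y)$, and dispatch the Nakayama-style descent in a short paragraph using the well-ordering by ample degree together with \cref{lem:Krull_intersection}.
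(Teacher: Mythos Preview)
Your overall architecture matches the paper's: reduce to $A_\DC$ over $R_\DC$, use the $\bbZ^{I_D}$-grading from \cref{thm:torus_action}, iterate $A_\DC = A' + \fm^i A_\DC$, and invoke \cref{lem:Krull_intersection} on finite pieces. The problem is your finiteness claim for the graded pieces.

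You assert that $(A_\DC)_z$ is a finitely generated $\bbZ$-module, arguing that ``the possible $P$ form a subset of a fixed fibre of $w|_{\Sk(U,\bbZ)}$''. This is not correct: for fixed total weight $z$ the constraint is $w(P)+w(\gamma)=z$, and as $\gamma$ ranges over $\DC(Y)$ the value $w(P)=z-w(\gamma)$ varies. The weight formula \cref{lem:weight} gives, for a single structure disk class, $w(\gamma)=\sum w(P_j)-w(Q)$; so on essential boundary components the coordinates of $w(\gamma)$ can be arbitrarily negative (take $Q$ large in that direction), and hence $w(P)$ is not bounded. Your appeal to \cref{prop:finiteness_Q_gamma} does not help: that proposition bounds $(Q,\gamma)$ for \emph{fixed} inputs $P_1,\dots,P_n$, not the set of $P$'s appearing in a fixed weight-space of $A_\DC$. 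For the same reason your ``induction by ample degree'' is not well-founded: $w(\delta)$ for $\delta\in\DC(Y)\setminus 0$ need not lie in any strictly convex cone, so there is no ordering on weights for which passing to $\fm A_\DC$ strictly decreases.

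The paper supplies the missing ingredient: a \emph{second} filtration $A_{\le n}$ by $G^\trop(P)\le n$, where $G=-\operatorname{div}(g)$ for a regular function $g$ on $U$ with a pole on every component of $D$. Then $G^\trop$ is strictly positive away from $0$ and proper on $\Sk(U)$, so $\{P:G^\trop(P)\le n\}$ is finite; combined with \cref{lem:finiteness_fixed_weight} this makes each $A_{z,\le n}$ a finitely generated abelian group. Since $G$ is principal (hence nef) with $-G|_U$ effective, \cref{thm:Ftrop_structure_constants}(\ref{thm:Ftrop_structure_constants:nef}) gives $A_{\le m}\cdot A_{\le n}\subset A_{\le m+n}$, so the subalgebra $A'$ is filtered too, and one proves $A_{z,\le n}=A'_{z,\le n}$ for all $z,n$ exactly by your Krull-intersection endgame. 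Adding this filtration is the only substantive change your argument needs.
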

\begin{proof}
	By \cref{def:monoid_of_definition}, it suffices to prove the theorem for $R_\DC$ and $A_\DC$.
	Hence in the proof, in order to simplify notations, we will temporarily write $R\coloneqq R_\DC$ and $A\coloneqq A_\DC$.
	
	Let $g\in H^0(U,\cO_U)$ such that $g$ has a pole at every irreducible component of $D$, and $-G$ the associated principle Cartier divisor.
	Then $G^\trop\colon\Sk(U)\to\bbR$ is strictly positive away from 0.
	Consider the filtration on $A$ with $A_{\le n}$ having basis $\theta_P$, $G^\trop(P)\le n$.
	By \cref{thm:Ftrop_structure_constants}(\ref{thm:Ftrop_structure_constants:nef}), we have $A_{\le m}\cdot A_{\le n}\subset A_{\le m+n}$.
	
	Let $\Theta\subset\set{\theta_P | P\in\Sk(U,\bbZ)}$ be a subset which generates $A \otimes_{R} R/\fm$.
	Let $A' \subset A$ be the sub-$R$-algebra generated by $\Theta$.
	We have
	\[ A = A' + \fm A.\]
	Iterating the equality above, we obtain
	\begin{equation} \label{eq:AandAprime}
	A = A' + \fm^i A, \text{ for all } i>0.
	\end{equation}
	By \cref{thm:torus_action}, $A$ is a $\bbZ^{I_D}$-graded ring.
	Then $A'$ is also a $\bbZ^{I_D}$-graded ring.
	
	Write $A=\bigoplus_{z\in\bbZ^{I_D}} A_z$ and $A'=\bigoplus_{z\in\bbZ^{I_D}} A'_z$.
	The filtration $A_{\le n}$ is compatible with the $\bbZ^{I_D}$-grading, i.e.\ $A_{\le n}\subset A$ is also $\bbZ^{I_D}$-graded.
	The same holds for $A'$.
	So in order to show that $A=A'$, it suffices to show that $A_{z,\le n}=A'_{z,\le n}$ for every $z\in\bbZ^{I_D}$ and $n\ge 0$.
	
	Now fix $z$ and $n$.
	Equation \eqref{eq:AandAprime} implies that for all $i>0$ we have
	\[A_{z,\le n} = A'_{z,\le n} + \fm^i A \cap A_{z,\le n}.\]
	Since $G^\trop\colon\Sk(U)\to\bbR$ is proper, by \cref{lem:finiteness_fixed_weight}, the set
	\[\Set{(P,\gamma)\in\Sk(U,\bbZ)\times\DC(Y) | G^\trop(P)\le n,\ w(P)+w(\gamma)=z}\]
	is finite.
	In other words, $A_{z,\le n}$ is finitely generated as abelian group.
	Therefore, \cref{lem:Krull_intersection} implies $(\fm^i A)\cap A_{z,\le n}=0$ for $i$ sufficiently large.
	We conclude that $A_{z,\le n}=A'_{z,\le n}$ for all $z\in\bbZ^{I_D}$ and $n\ge 0$, completing the proof.
\end{proof}

\begin{theorem} \label{thm:finite_generation}
	The mirror algebra $A$ is a finitely generated $R$-algebra.
\end{theorem}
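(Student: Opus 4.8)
The plan is to deduce the theorem from \cref{prop:algebra_modulo_m} together with \cref{lem:finite_generation}. First I would observe that $A\otimes_R R/\fm$ is a finitely generated $\bbZ$-algebra: by \cref{prop:algebra_modulo_m} it is the Stanley--Reisner ring of the fan $\Sigma_\rt$, which is a \emph{finite} fan (its rays correspond to the components of $D^\ess$) and, by \cref{ass:strata}(2) together with \cref{lem:essential_skeleton}(\ref{lem:essential_skeleton:identifications}), a smooth complete fan in $M_\bbR$. Smoothness forces every lattice point of a cone $\sigma\in\Sigma_\rt$ to be a non-negative integer combination of the primitive generators of the rays of $\sigma$, so the finitely many basis elements $\theta_v$, with $v$ a primitive ray generator of $\Sigma_\rt$, generate $A\otimes_R R/\fm$ as a ring. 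Thus there is a finite set $\Theta\subset\set{\theta_P\mid P\in\Sk(U,\bbZ)}$ whose image generates $A\otimes_R R/\fm$.

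Next I would apply \cref{lem:finite_generation}: \emph{provided} $Y$ carries an ample divisor $F$ with $-\overline{F|_U}$ effective and containing no essential boundary strata, that lemma upgrades $\Theta$ to a generating set of $A$ as an $R$-algebra, and the theorem follows at once. So the real work is arranging such an $F$. By \cref{lem:ample_divisor} there is an ample $F$ on $Y$ with $-F|_U$ effective, hence $-\overline{F|_U}\ge 0$, and $\supp(-\overline{F|_U})$ contains no irreducible component of $D^\ess$, being the closure of an effective divisor meeting $U$; the only possible defect is that this support meets one of the finitely many $0$-strata of $D^\ess$. To remove it, I would pass to a further toric blowup of $(Y,D)$, chosen so as to re-establish \cref{ass:strata} and so that the rational map attached to the globally generated sheaf $\cO_Y(-F)|_U$ becomes a morphism $\psi$; then, keeping $F$ inside its ample linear system, replace the effective representative of $-F|_U$ by a \emph{general} one, namely a general member $\psi^{-1}(H)$ of a base-point-free linear system, which is disjoint from any fixed finite set and in particular from the essential $0$-strata of the new boundary. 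This places us in the hypotheses of \cref{lem:finite_generation} on the blown-up model. Finally, since the mirror algebra of the coarser compactification is obtained from that of a finer one by the base change $A_{Y'}\otimes_{R_{Y'}}R$ along $\bbZ[\NE(Y')]\to\bbZ[\NE(Y)]$ (cf.\ the construction behind \cref{rem:any_compactification}), finite generation over the larger base descends to $R$.

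The step I expect to be the main obstacle is this last reduction: choosing a single toric blowup that simultaneously restores \cref{ass:strata}, resolves $\psi$ to a morphism, and keeps the essential $0$-strata under control, together with checking that the mirror-algebra construction is compatible with such a blowup in the precise base-change sense used above. By contrast, the algebraic heart of the proof---passing from the Stanley--Reisner description modulo $\fm$ to finite generation of $A$ via \cref{lem:finite_generation}---is short and essentially formal once the ample divisor has been arranged.
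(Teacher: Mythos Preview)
Your strategy is the same as the paper's: reduce to \cref{lem:finite_generation} via \cref{prop:algebra_modulo_m}, after passing to a blowup to arrange the ample divisor hypothesis, and descend finite generation along the base change $\tA\otimes_{\tR}R\simeq A$ (this is \cref{prop:blowup}(\ref{prop:blowup:restriction}), which underlies \cref{rem:any_compactification}). Your identification of the Stanley--Reisner ring as finitely generated (via smoothness and completeness of $\Sigma_\rt$) is correct, and your base-change reduction is exactly what the paper does.

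The only place you diverge is in how you arrange the ample divisor, and here your route is more complicated than needed and has a few loose ends. The paper simply observes that a toric blowup separates $\overline{F|_U}$ from the essential boundary strata (standard, since $\overline{F|_U}$ already contains no component of $D^\ess$), and then restores ampleness by adding a small multiple of the exceptional divisors. Your approach---resolving the rational map given by $|{-F}|_U|$ and choosing a general hyperplane pullback---has three small gaps: (i) resolving the indeterminacy of $\psi$ need not be achievable by a \emph{toric} blowup; (ii) after any blowup the pullback of $F$ is only big and nef, and you do not say how to restore ampleness; (iii) the hypothesis of \cref{lem:finite_generation} requires avoiding \emph{all} essential boundary strata, not just the $0$-strata. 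None of these is fatal---(i) is irrelevant since \cref{prop:blowup} applies to any snc blowup, (ii) is fixed exactly as the paper does, and (iii) follows once $\psi$ is a morphism since a general hyperplane section is transverse to every stratum---but the paper's direct toric-blowup argument is shorter and avoids introducing the auxiliary map $\psi$ altogether.
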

\begin{proof}
	Let $(\tY,\tD)$ be another snc compactification of $U$ with a regular map $p\colon\tY\to Y$.
	Let $\tR\coloneqq\bbZ[\NE(\tY)]$, and let $\tA$ be the mirror $\tR$-algebra constructed from the pair $(\tY,\tD)$.
	We have a natural surjection $\tR\twoheadrightarrow R$ induced by $p_*\colon\NE(\tY)\twoheadrightarrow\NE(Y)$.
	This gives an $R$-algebra isomorphism $\tA\otimes_{\tR} R\xrightarrow{\sim} A$ by \cref{prop:blowup}(\ref{prop:blowup:restriction}) (whose proof does not rely on this section).
	Therefore, if $\tA$ is finitely generated as $\tR$-algebra, then $A$ is finitely generated as $R$-algebra.
	Thus, for the purpose of proving finite generation, we are free to make blowups outside $U$.
	Let $F$ be an ample divisor on $Y$ with $-F|_U$ effective (see \cref{lem:ample_divisor}).
	Up to a toric blowup, we can assume $-\overline{F|_U}$ contains no essential boundary strata.
	The ampleness of $F$ can also be preserved under the toric blowup by adding a small multiple of the exceptional divisors.
	Recall that the $R$-algebra $A \otimes_{R} R/\fm$ is finitely generated by \cref{prop:algebra_modulo_m}.
	Hence we conclude the proof by \cref{lem:finite_generation}.
\end{proof}

\section{Change of snc compactification} \label{sec:change_of_snc_compactification}

In this section, we show that the mirror algebra remains \emph{essentially} the same while we change the snc compactification $U\subset Y$.
We make it precise in \cref{prop:blowup_away_from_essential_boundary_strata} in the case when the change of snc compactification does not involve essential boundary strata, and then in \cref{prop:blowup} for the general case.
Consequently, we can define the mirror algebra for arbitrary (non-snc) compactification, and show that after setting all curve classes to 0, the mirror algebra is independent of the choice of compactification, see Remarks \ref{rem:any_compactification}, \ref{rem:A_U}.

\begin{proposition} \label{prop:blowup_away_from_essential_boundary_strata}
	Let $U\subset\tY$ be another snc compactification, together with a regular map $p\colon\tY\to Y$. 
	Then $p_*\colon N_1(\tY) \to N_1(Y)$ induces a surjection $p_*\colon \DC(\tY) \to \DC(Y)$. 
	If the exceptional locus of $p$ is disjoint from the essential boundary strata of $\tY$, then the pullback map $p^*\colon N_1(Y)\to N_1(\tY)$ restricts to an isomorphism of monoids $\DC(Y)\simeq\DC(\tY)$, with inverse $p_*$, and this induces an isomorphism $A_\DC\simeq\tA_\DC$.
\end{proposition}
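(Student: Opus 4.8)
\textbf{Proof plan for Proposition \ref{prop:blowup_away_from_essential_boundary_strata}.}

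The plan is to reduce everything to the behavior of the structure disks under the birational morphism $p\colon\tY\to Y$, using the curve-class formulas of \cref{sec:curve_classes} together with the skeletal-curve results of \cref{sec:skeletal_curves}. First I would record the elementary facts: $p_*\colon N_1(\tY)\to N_1(Y)$ sends $\NE(\tY)$ onto $\NE(Y)$, and (since $\tY$ is smooth) $p^*\colon N_1(Y)\to N_1(\tY)$ is a section of $p_*$ by \cref{lem:N1_blowup}, fitting into the split exact sequence $0\to N_1(\tY)\leftarrow N^1(\tY)\gets\bbZ^E\gets 0$ dualized, where $E$ is the set of $p$-exceptional divisors. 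Next I would show $p_*\DC(\tY)=\DC(Y)$: given a structure disk $f\colon\bbD\to Y^\an$ responsible for some $\chi(P_1,\dots,P_n,Q,\gamma)$, one lifts it to $\tY^\an$ using that $\tY\to Y$ is an isomorphism over $U$ (the disk maps into $U^\an$ away from the marked points by \cref{lem:restrict_to_skeleton}(\ref{lem:restriction_to_skeleton:sm}), and the marked points land in $D^{\ess,\sm}\cap W$ which is also untouched by $p$); the lift $\tf$ is again a structure disk for $\tY$ with $p_*[\tf]=[f]=\gamma$ by \cref{lem:curve_class_blowup}. Conversely $p$ pushes structure disks of $\tY$ to structure disks of $Y$, so $p_*$ surjects $\DC(\tY)\to\DC(Y)$, and the essential skeletons, walls, toric models $(Y_\rt,D_\rt)$, and hence the spine-counting data $N(S,\gamma)$ are literally unchanged (the toric model only depends on $U\subset Y$ up to the combinatorics of $\Sigma_\rt$, which is a blowup-invariant of $U$ by \cref{lem:essential_skeleton}).

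The substantive part is the second assertion, under the hypothesis that $\Ex(p)$ is disjoint from the essential boundary strata. Here I would argue that for \emph{every} structure disk $\tf\colon\bbD\to\tY^\an$ (say with $\Phi^\an(\tf)$ a general rational point, so $\tf$ is defined over a discretely valued field and its class is well-defined in the sense of \cref{def:curve_class}), one has $[\tf]\cdot E=0$ for each $p$-exceptional divisor $E$. Indeed, each such $E$ is an effective Cartier divisor on $\tY$ containing no essential boundary strata (by hypothesis), so \cref{prop:structure_disk_interior_divisor} — more precisely its ingredient \cref{prop:interior_divisor} applied to the disk — gives $[\tf]\cdot E\ge 0$ with equality iff $\tf(\bbD)$ is disjoint from $E^\an$; but $\tf(\bbD)\subset U^\an$ off the marked points and the marked points avoid $\Ex(p)$ (they lie in $D^{\ess,\sm}$, which is disjoint from $\Ex(p)$ by hypothesis), so $\tf(\bbD)\cap E^\an=\emptyset$ and $[\tf]\cdot E=0$. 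By \cref{lem:N1_blowup} this forces $[\tf]=p^*p_*[\tf]$, i.e.\ $[\tf]$ lies in the image of $p^*$; since $[\tf]$ ranges over generators of $\DC(\tY)$, we get $\DC(\tY)\subset p^*N_1(Y)$, and combined with $p_*p^*=\id$ and the surjectivity $p_*\DC(\tY)=\DC(Y)$ already shown, this yields mutually inverse monoid isomorphisms $p^*\colon\DC(Y)\xrightarrow{\sim}\DC(\tY)$, $p_*\colon\DC(\tY)\xrightarrow{\sim}\DC(Y)$. Consequently $R_\DC=\bbZ[\DC(Y)]\xrightarrow{\sim}\bbZ[\DC(\tY)]=\tR_\DC$ via $p^*$.

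Finally I would promote this to an algebra isomorphism $A_\DC\simeq\tA_\DC$. Both are free modules on $\Sk(U,\bbZ)$ over the respective ground rings $R_\DC\simeq\tR_\DC$, so it only remains to match structure constants. The structure constant $\chi(P_1,\dots,P_n,Q,\gamma)$ (resp.\ $\tchi$) is a count of structure disks of a fixed class $\gamma\in\DC(Y)$ (resp.\ $\tgamma\in\DC(\tY)$); the lifting correspondence $\tf\leftrightarrow f$ above is a bijection between structure disks for $\tY$ and for $Y$ with matching decorations $(\bP,Q)$, matching toric-tail condition (the condition only sees $T_M^\an$ and the retraction to $\oM_\bbR$, both blowup-invariant), and matching spines, and it intertwines $\gamma=p_*\tgamma$, $\tgamma=p^*\gamma$; hence $\tchi(P_1,\dots,P_n,Q,\tgamma)=\chi(P_1,\dots,P_n,Q,p_*\tgamma)$ whenever $\tgamma\in\DC(\tY)$, and both vanish for $\tgamma\notin\DC(\tY)$. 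This is exactly the statement that the $R_\DC$-linear map $A_\DC\to\tA_\DC$ sending $z^\gamma\theta_P\mapsto z^{p^*\gamma}\theta_P$ is an algebra isomorphism. I expect the main obstacle to be the bookkeeping in this last matching step — verifying carefully that \emph{all} the data entering \cref{def:structure_constant} and \cref{const:naive_counts_truncated} (the point $\tQ$, the space $V_M\times V_Q$, the toric tail condition, the spine) transport cleanly under $p$, so that the bijection of structure disks is an honest bijection on the nose rather than merely a degree equality; but since $p$ is an isomorphism over a neighborhood of everything the disks actually touch, this should go through without genuine difficulty.
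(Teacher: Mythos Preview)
Your proposal is correct and follows essentially the same approach as the paper: identify the structure disks for $\tY$ and $Y$ via $p$, then use \cref{lem:curve_class_blowup} (equivalently, your computation $[\tf]\cdot E=0$ via \cref{prop:interior_divisor}) to match the classes. One small imprecision worth flagging: in your surjectivity argument (the first claim, which carries no hypothesis on $\Ex(p)$) you say the marked points land in a region ``untouched by $p$'', but in general $p$ may modify $D^{\ess}$. The paper avoids this by lifting the full closed curve $C$ rather than the disk $\bbD$: since $f^{-1}(U^\an)\subset C$ is Zariski dense, one gets a rational map $C\dasharrow\tY^\an$, which is automatically regular because $C$ is a curve and $\tY$ is proper. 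Your version can also be salvaged by observing that the center $p(\Ex(p))\subset Y$ has codimension $\ge 2$ and invoking the codimension-two avoidance clause of \cref{lem:restrict_to_skeleton}(\ref{lem:restriction_to_skeleton:sm}), but the rational-map extension is cleaner.
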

\begin{proof}
	For any $[f\colon C \to Y^\an] \in \cM(U^\an,\bP,\beta)$ as in \cref{nota:moduli_spaces_analytic}, $f^{-1}(U^\an)\eqqcolon C^{\circ} \subset C$ is a Zariski dense open, so we have a rational map $C \supset C^\circ\to \tY^\an$.
	This is regular (true for any rational map from a curve to a proper variety), and so gives a canonical lift $\tf\colon C \to \tY^\an$.
	Hence the surjectivity of $p_*\colon \DC(\tY) \to \DC(Y)$ follows.
	
	Next let $F \subset \tY$ be the exceptional locus of $p$, and assume this contains no essential boundary strata. 
	By Lemmas \ref{lem:Msm_smooth}(\ref{lem:Msm_smooth:boundary}) and \ref{lem:restriction_to_skeleton}(\ref{lem:restriction_to_skeleton:sm}), the structure disks for $\tY$ are disjoint from $F^\an$, and the structure disks for $Y$ are disjoint from $p(F)^\an$.
	Hence the structure disks for $\tY$ are canonically identified with those of $Y$ via $p$.
	Now we conclude by \cref{lem:curve_class_blowup}.
\end{proof}

Now we will drop the assumption on the exceptional locus of $p$.
Here is the simple heuristic idea:
The increase in the number of parameters is the relative Picard number of $p\colon\tY\to Y$, which is the number of exceptional divisors.
But the mirror algebra for $(\tY,\tD)$ is equivariant for the $T_{\tD}$-action, so the increase in parameters is balanced by the increase in automorphisms.
Let us make this precise.

\begin{notation}
	When $\NE(Y)_\bbR$ is rational polyhedral, so is its dual $\Nef(Y)$.
	In this case, the associated toric variety $\TV(\Nef(Y))$ is given by $\Spec(\bbZ[\NE(Y)]$.
	In general, we just define $\TV(\Nef(Y))\coloneqq\Spec(\bbZ[\NE(Y)]$, which is an equivariant partial compactification of $T^{N_1(Y)}\coloneqq\Spec(\bbZ[N_1(Y)]$.
\end{notation}

Let $E$ be the set of exceptional divisors of $p\colon\tY\to Y$, and $K$ the kernel of $p_*\colon N_1(\tY)\to N_1(Y)$.
By \cref{lem:N1_blowup}, we have $K\simeq\bbZ^E$.
Let $T^E\coloneqq\Spec(\bbZ[\bbZ^E])$.
Let $\cU_{\Nef(Y)}\subset\TV(\Nef(\tY))$ denote the open subset associated to the embedding $\Nef(Y)\subset\Nef(\tY)$.
We have inclusions
\[\TV(\Nef(Y))\hooklongrightarrow\TV(\Nef(Y))\times T^E\xrightarrow{\ \sim\ }\cU_{\Nef(Y)}\hooklongrightarrow\TV(\Nef(\tY))\]
induced by the maps of monoids
\[\NE(Y)\longleftarrow\NE(Y)\oplus \bbZ^E\xleftarrow{\ \sim\ }\NE(\tY)+K\hooklongleftarrow\NE(\tY),\]
where the middle isomorphism follows from \cref{lem:N1_blowup}.

For the compactification $(\tY,\tD)$, we have $\tw\colon\Sk(U,\bbZ)\to\bbZ^{I_{\tD}}$ and $\tw\colon N_1(\tY)\to\bbZ^{I_{\tD}}$ as in the beginning of \cref{sec:torus_action}.
Let $w_E$ be the composition of $\tw$ with the projection
\[\bbZ^{I_{\tD}}=\Hom(I_{\tD},\bbZ)\longto\Hom(E,\bbZ)=\bbZ^E=\chi(T^E).\]

We denote $R\coloneqq\bbZ[\NE(Y)]$ and $\tR\coloneqq\bbZ[\NE(\tY)]$.
Let $\tA$ be the mirror $\tR$-algebra constructed from the pair $(\tY,\tD)$, and $\tcV\coloneqq\Spec(\tA)\to\Spec\tR$, the mirror family.

\begin{proposition} \label{prop:blowup}
	\begin{enumerate}[leftmargin=*]
		\item \label{prop:blowup:restriction} Let $\tR\twoheadrightarrow R$ be the natural surjection induced by $p_*\colon\allowbreak\NE(\tY)\allowbreak\twoheadrightarrow\NE(Y)$.
		This gives an $R$-algebra isomorphism $\tA\otimes_{\tR} R\xrightarrow{\sim }A$, sending $\theta_P$ to $\theta_P$ for each $P\in\Sk(U,\bbZ)$.
		In terms of spaces, this gives an isomorphism
		\[\tcV|_{\TV(\Nef(Y))}\simeq\cV.\]
		\item \label{prop:blowup:extension} The $T_{\tD}$-action on $\tcV$ gives an isomorphism
		\[\iota\colon\cV\times T^E\xrightarrow{\ \sim\ }\tcV|_{\cU_{\Nef(Y)}}.\]
		This induces an isomorphism of rings
		\[A\otimes_{\bbZ}\bbZ[\bbZ^E]\xleftarrow{\ \sim\ }\tA\otimes_{\tR}\bbZ[\NE(\tY)+K]\]
		which sends $z^\gamma\cdot\theta_P$ to $z^{p_*(\gamma)}\cdot\theta_P\otimes x^{w_E(P)+w_E(\gamma)}$.
		\item \label{prop:blowup:rho}
		Let $\pi_\cV\colon\cV\times T^E\to\cV$ denote the projection, and $\rho\coloneqq\pi_\cV\circ\iota\inv\colon\tcV|_{\cU_{\Nef(Y)}}\to\cV$.
		For each $n\ge 2$, let $\braket{,\dots,}_n$ be the $R$-multilinear map for $(Y,D)$ defined above \cref{thm:main}, and $\braket{,\dots,}^\sim_n$ the $\tR$-multilinear map for $(\tY,\tD)$.
		Then for all $a_1,\dots,a_n\in A$, we have
		\[\rho^*(\braket{a_1,\dots,a_n}_n)=\braket{\rho^* a_1,\dots,\rho^* a_n}^\sim_n,\]
		where the right hand side is extended over $\bbZ[\NE(\tY)+K]$ by multilinearity.
	\end{enumerate}
\end{proposition}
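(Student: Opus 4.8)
\textbf{Proof proposal for \cref{prop:blowup}.}

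The plan is to prove the three parts in order, since part (\ref{prop:blowup:extension}) builds on the identification of $\theta$-bases from part (\ref{prop:blowup:restriction}), and part (\ref{prop:blowup:rho}) follows formally once the two isomorphisms are in hand. For part (\ref{prop:blowup:restriction}), the key point is that the structure constants $\chi(P_1,\dots,P_n,Q,\gamma)$ for $(\tY,\tD)$ and for $(Y,D)$ are literally governed by the same analytic data. First I would use \cref{prop:blowup_away_from_essential_boundary_strata}: we are free to factor $p$ and reduce to the case where the exceptional locus of $p$ is disjoint from the essential boundary strata (the opposite case being already handled there), but actually the cleanest route is to argue directly. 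Given a structure disk $f|_\bbD\colon[\bbD,(p_1,\dots,p_n,s)]\to Y^\an$ responsible for $\chi(P_1,\dots,P_n,Q,\gamma)$ for $(Y,D)$, the preimage $f|_\bbD^{-1}(U^\an)$ is Zariski dense, so the rational map $\bbD\dashrightarrow\tY^\an$ is regular (a rational map from a curve to a proper variety extends), producing a canonical lift $\tf|_\bbD$; conversely, any structure disk for $(\tY,\tD)$ pushes down by $p^\an$. By \cref{lem:restrict_to_skeleton}(\ref{lem:restriction_to_skeleton:sm}) (applied on the $\tY$ side) these disks avoid the exceptional locus, so $p$ restricts to an isomorphism near their images, giving a bijection of the relevant moduli spaces $\cN(P_1,\dots,P_n,Q,\gamma)$; and by \cref{lem:curve_class_blowup}, $p_*[\,\tf|_\bbD]=[f|_\bbD]$. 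Hence $\chi^\sim(P_1,\dots,P_n,Q,\tgamma)\neq0$ implies $\tgamma\in\NE(\tY)$ maps to the $\gamma$ appearing in $\chi(P_1,\dots,P_n,Q,\gamma)$, and after applying $\tR\twoheadrightarrow R$ (i.e.\ $z^{\tgamma}\mapsto z^{p_*\tgamma}$) the multiplication tables match term by term. This gives the $R$-algebra isomorphism $\tA\otimes_{\tR}R\xrightarrow{\sim}A$ sending $\theta_P\mapsto\theta_P$; spectrum-wise it is $\tcV|_{\TV(\Nef(Y))}\simeq\cV$, using that $\TV(\Nef(Y))=\Spec R\hookrightarrow\Spec\tR=\TV(\Nef(\tY))$ is the closed embedding cut out by $\ker(\tR\to R)$.

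For part (\ref{prop:blowup:extension}), I would observe that $\cU_{\Nef(Y)}\simeq\TV(\Nef(Y))\times T^E\simeq\Spec R\otimes_\bbZ\bbZ[\bbZ^E]$ via the isomorphism of monoids $\NE(\tY)+K\simeq\NE(Y)\oplus\bbZ^E$ of \cref{lem:N1_blowup}, so $\tA\otimes_{\tR}\bbZ[\NE(\tY)+K]$ is free over $\bbZ[\NE(Y)\oplus\bbZ^E]$ with basis $\{\theta_P\}$. I then want an isomorphism of this with $A\otimes_\bbZ\bbZ[\bbZ^E]$. The natural candidate is the $\bbZ$-linear map determined on basis vectors by $z^{\gamma}\theta_P\mapsto z^{p_*\gamma}\theta_P\otimes x^{w_E(P)+w_E(\gamma)}$, exactly the formula in the statement; this is manifestly bijective on the free-module bases, so the content is that it is a ring homomorphism. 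Here \cref{lem:weight} is the crux: if $\chi^\sim(P_1,\dots,P_n,Q,\tgamma)\neq0$ then $\tw(Q)+\tw(\tgamma)=\sum_j\tw(P_j)$ in $\bbZ^{I_{\tD}}$, hence projecting to $\bbZ^E$, $w_E(Q)+w_E(\tgamma)=\sum_j w_E(P_j)$. Therefore when we multiply $z^{\gamma_1}\theta_{P_1}\cdots z^{\gamma_n}\theta_{P_n}$ in $\tA\otimes_{\tR}\bbZ[\NE(\tY)+K]$ and expand in the $\theta_Q$-basis, each term $z^{\gamma_1+\dots+\gamma_n+\tgamma}\theta_Q$ has the same $\bbZ^E$-component of its exponent as the product $x^{w_E(P_1)+w_E(\gamma_1)}\cdots x^{w_E(P_n)+w_E(\gamma_n)}$ would dictate — so the $x$-bookkeeping is consistent, and the map respects multiplication precisely because $\chi$ and $\chi^\sim$ agree under $p_*$ (part (\ref{prop:blowup:restriction})) and the $\bbZ^E$-weights are additive along any nonzero structure constant. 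Concretely, one notes that the decomposition $\NE(\tY)+K\simeq\NE(Y)\oplus\bbZ^E$ sends $\tgamma\mapsto(p_*\tgamma,\;w_E(\tgamma)+\text{const})$ up to a shift absorbed into the $\theta_P$-weights; invoking \cref{thm:torus_action}, this is exactly the $T^{E}$-equivariant trivialization $\tcV|_{\cU_{\Nef(Y)}}\simeq\cV\times T^E$, which is the geometric statement $\iota$.

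Part (\ref{prop:blowup:rho}) is then formal: $\rho=\pi_\cV\circ\iota^{-1}$ is, on rings, the composite $A\hookrightarrow A\otimes_\bbZ\bbZ[\bbZ^E]\xleftarrow{\sim}\tA\otimes_{\tR}\bbZ[\NE(\tY)+K]$, i.e.\ $\rho^*$ sends $z^{p_*\gamma}\theta_P\mapsto z^{\gamma'}\theta_P$ for a suitable lift $\gamma'$, tensored appropriately. Since $\Trace$ (taking the $\theta_0$-coefficient) and each $\braket{,\dots,}_n$ are defined purely from the structure constants, and those are intertwined by $\rho^*$ (again part (\ref{prop:blowup:restriction}) plus the weight identity of \cref{lem:weight} controlling the $K$-grading), the identity $\rho^*(\braket{a_1,\dots,a_n}_n)=\braket{\rho^*a_1,\dots,\rho^*a_n}^\sim_n$ follows by $R$-multilinearity after checking it on basis vectors $a_j=z^{\gamma_j}\theta_{P_j}$, where both sides reduce to the same sum $\sum_{\beta}\eta^\sim(P_1,\dots,P_n,Q=0,\beta)z^{(\dots)}$ matched under $p_*$ via \cref{lem:structure_constants_trace}. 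The main obstacle I anticipate is \emph{not} any deep geometry but the careful bookkeeping in part (\ref{prop:blowup:extension}): keeping straight the three lattices $N_1(Y)$, $N_1(\tY)$, $\bbZ^E=K$ and the two weight maps $\tw,w_E$, and verifying that the constant shift relating $\tgamma$ to $(p_*\tgamma,w_E(\tgamma))$ is absorbed consistently into the $\theta_P$-grading so that the displayed formula $z^\gamma\theta_P\mapsto z^{p_*\gamma}\theta_P\otimes x^{w_E(P)+w_E(\gamma)}$ is simultaneously a bijection of free-module bases and a ring map. Once the $T^{\tD}$-equivariance of $\tcV$ (\cref{thm:torus_action}) is correctly combined with \cref{lem:N1_blowup}, everything falls into place.
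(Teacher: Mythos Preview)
Your overall strategy matches the paper's, and parts (\ref{prop:blowup:extension}) and (\ref{prop:blowup:rho}) are essentially what the paper does (the paper dispatches (\ref{prop:blowup:extension}) in one line by citing \cref{thm:torus_action}, whereas you unpack it via \cref{lem:weight}, which is how \cref{thm:torus_action} is proved; for (\ref{prop:blowup:rho}) both arguments take $\Coeff_{\theta_0}$ of $\rho^*(a_1\cdots a_n)=\rho^*a_1\cdots\rho^*a_n$ and invoke \cref{lem:structure_constants_trace}).

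There is, however, a genuine flaw in your argument for (\ref{prop:blowup:restriction}). You write that by \cref{lem:restrict_to_skeleton}(\ref{lem:restriction_to_skeleton:sm}) the structure disks avoid the exceptional locus, and then invoke \cref{lem:curve_class_blowup}. But the whole point of \cref{prop:blowup}, as opposed to \cref{prop:blowup_away_from_essential_boundary_strata}, is to treat the case where exceptional divisors \emph{are} components of $\tD^{\ess}$. In that situation \cref{lem:restrict_to_skeleton}(\ref{lem:restriction_to_skeleton:sm}) does not give disjointness (its hypothesis on $G$ fails, and the codimension-two clause does not apply to a divisor), and the disjointness hypothesis of \cref{lem:curve_class_blowup} fails as well: the curves genuinely meet the exceptional divisors at the marked points $p_j$. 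So neither citation is justified. Your proposed factoring via \cref{prop:blowup_away_from_essential_boundary_strata} does not help either, since a blowup along an essential stratum cannot be reduced to one whose exceptional locus avoids essential strata.

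The fix is what the paper does. The canonical lift $\tf\colon C\to\tY^\an$ exists regardless (rational map from a curve to a proper target extends), so the moduli spaces of stable maps are identified; the issue is only to match curve classes. Here one uses \cref{rem:compatible_curve_class} and \cref{lem:N1_blowup}: a class $\tbeta\in N_1(\tY)$ is determined by $p_*\tbeta\in N_1(Y)$ together with its intersection numbers with the exceptional divisors, and the latter are fixed by $\bP_Z=(P_1,\dots,P_n,-Q)$ since the exceptional divisors are components of $\tD$. Hence for each $\gamma$ there is a \emph{unique} $\tgamma$ with $p_*(\tgamma+\tdelta)=\gamma+\delta$ and $\tgamma+\tdelta$ compatible with $\bP_Z$, and for this $\tgamma$ the two moduli spaces $\cN$ are isomorphic. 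This gives $\sum_{p_*\tgamma=\gamma}\tchi(P_1,\dots,P_n,Q,\tgamma)=\chi(P_1,\dots,P_n,Q,\gamma)$ directly, without any claim about avoiding the exceptional locus.
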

\begin{proof}
	For (\ref{prop:blowup:restriction}), it suffices to show that
	\begin{equation} \label{eq:blowup_restriction}
	\sum_{p_*\tgamma=\gamma}\tchi(P_1,\dots,P_n,Q,\tgamma)=\chi(P_1,\dots,P_n,Q,\gamma)
	\end{equation}
	for all $P_1,\dots,P_n,Q\in\Sk(U,\bbZ)$ and $\gamma\in\NE(Y)$.
	Let $\delta$ and $\tdelta$ be respectively the classes of the tails.
	We have $p_*\tdelta=\delta$.
	By \cref{lem:N1_blowup} and \cref{rem:compatible_curve_class}, there is a unique $\tgamma\in\NE(\tY)$ such that $\tgamma+\tdelta$ is compatible with $(P_1,\dots,P_n,-Q)$ and $p_*(\tgamma+\tdelta)=\gamma+\delta$.
	Put $\beta\coloneqq\gamma+\delta$ and $\tbeta\coloneqq\tgamma+\tdelta$.
	Let $H(\bP_Z,\beta)$ and $\tH(\bP_Z,\tbeta)$ be as in \cref{sec:intro:structure_constants} for the compactifications $U\subset Y$ and $U\subset\tY$ respectively.
	These two moduli spaces are isomorphic as they consist of exactly the same sets of punctured curves in $U$.
	Furthermore, the operations of taking fiber and imposing the toric tail condition in \cref{sec:intro:structure_constants} cut out the same 0-dimensional subspaces in $H(\bP_Z,\beta)$ and $\tH(\bP_Z,\tbeta)$, whose lengths give respectively the structure constants $\chi(P_1,\dots,P_n,Q,\gamma)$ and $\tchi(P_1,\dots,P_n,Q,\tgamma)$.
	Hence we obtain equation \eqref{eq:blowup_restriction}.
	Intuitively speaking, the analytic disks responsible for the structure constants only feel the interior $U$ rather than any compactification, except that the definition of curve classes relies on a chosen compactification.	
	
	Statement (\ref{prop:blowup:extension}) follows from the equivariant $T_{\tD}$-action of $\tcV$ (see \cref{thm:torus_action}).
	
	For (\ref{prop:blowup:rho}), we abbreviate $\tA\otimes_{\tR}\bbZ[\NE(\tY)+K]$ by $\tA[K]$; note this is a localization, inverting all monomials $z^\gamma$ for $\gamma\in K$.
	Since $\rho^*\colon A\to\tA[K]$ is a morphism of rings, for all $a_1,\dots,a_n\in A$, we have
	\begin{equation} \label{eq:blowup_product}
	\rho^*(a_1\cdots a_n)=\rho^*a_1\cdots\rho^*a_n.
	\end{equation}
	For each element $a\in\tA[K]$, write
	\[a=\sum_{P\in\Sk(U,\bbZ)}\Coeff_{\theta_P}(a)\cdot\theta_P.\]
	Equation \eqref{eq:blowup_product} implies
	\begin{equation} \label{prop:blowup:rho:eq:theta_0}
	\Coeff_{\theta_0}\rho^*(a_1\cdots a_n)=\Coeff_{\theta_0}(\rho^*a\cdots\rho^*a_n).
	\end{equation}
	Since $w_E(0)=0$, by the formula in statement (\ref{prop:blowup:extension}), we have $\rho^*\theta_0=\theta_0$; hence
	\[\Coeff_{\theta_0}\rho^*(a_1\cdots a_n)=\rho^*\Coeff_{\theta_0}(a_1\cdots a_n).\]
	By \cref{lem:structure_constants_trace}, we have
	\begin{align*}
	&\Coeff_{\theta_0}(a_1\cdots a_n)=\braket{a_1,\dots,a_n}_n,\\
	&\Coeff_{\theta_0}(\rho^* a_1\cdots\rho^* a_n)=\braket{\rho^*a_1,\dots,\rho^*a_n}^\sim_n.
	\end{align*}
	Therefore, statement (\ref{prop:blowup:rho}) follows from equation \eqref{prop:blowup:rho:eq:theta_0}.
\end{proof}

\begin{remark} \label{rem:any_compactification}
	Given any compactification $U\subset Z$ with $Z$ projective and normal,
		we define $A_Z\coloneqq A_{\tY}\otimes_{R_{\tY}} R_Z$, and $\DC(Z) \coloneqq \pi_*(\DC(\tY)) \subset \NE(Z)$, for any snc compactification $U\subset\tY$ (satisfying \cref{ass:strata}) with $\pi\colon\tY\to Z$ regular.
	In view of Propositions \ref{prop:blowup_away_from_essential_boundary_strata} and \ref{prop:blowup}(\ref{prop:blowup:restriction}), we see that they are independent of the choice of $U\subset \tY$.
\end{remark}

\begin{remark} \label{rem:A_U}
	Our mirror algebra $A_Y$ depends on the compactification $U \subset Y$. However, we can define $A_U \coloneqq A_Y \otimes_{R_Y} \bbZ$, where $R_Y \to \bbZ$ sends each $z^\gamma$ to $1$ (this corresponds to the fiber of the mirror family over the identity point of $T^{N_1(Y)}$).
	By \cref{prop:blowup}(\ref{prop:blowup:restriction}), $A_U$ is independent
	of the choice of $U \subset Y$.
\end{remark}

\section{Non-degeneracy of the trace map} \label{sec:non-degeneracy} 

In this section, we prove the non-degeneracy of the trace map (see \cref{thm:main}(\ref{thm:main:non-degeneracy})).
We will show that for any Zariski open $V\subset U$ which is also log Calabi-Yau, there is a natural degeneration of the mirror algebra for $U$ to that for $V$, see \cref{prop:flat_degeneration}.
This relies on the positivity properties of the structure disks in \cref{prop:structure_disk_interior_divisor}.
We will reduce the question of non-degeneracy for $U$ to that for $V$, and eventually to the toric case, where non-degeneracy is evident.

Let $V \subset U$ be an affine Zariski open subset which itself contains an algebraic torus (note this in particular implies $V$ is log Calabi-Yau).
Note any compactification of $U$ also compactifies $V$.
Let $\DC(V\subset Y)$ be the monoid of definition for $V$ (see \cref{def:monoid_of_definition}) and $A_\DC(V\subset Y)$ the mirror algebra for $V$ over $\bbZ[\DC(V\subset Y)]$.
By \cref{rem:any_compactification}, they make sense even though $V \subset Y$ need not be an snc compactification.

\begin{proposition} \label{prop:flat_degeneration}
	Assume
	$Z \coloneqq \overline{U \setminus V}$ contains no essential boundary strata.
	Then the following hold:
	\begin{enumerate}
	\item \label{prop:flat_degeneration:intersection} Each irreducible component of $Z$ has non-negative intersection with each element of $\DC(U\subset Y)$.
	The intersection is zero if and only if any associated structure disk $\bbD \to Y^{\an}$ factors through $(Z^c)^{\an} \subset Y^{\an}$, and thus is a structure disk for $V\subset Y$.
	\item \label{prop:flat_degeneration:monoid} We have $Z^\perp \cap \DC(U\subset Y) = \DC(V\subset Y)$.
	\item \label{prop:flat_degeneration:base} Let $I_Z \subset R$  be the monoidal ideal generated by disk classes with positive intersection with $Z$.
	We have $\bbZ[\DC(U\subset Y)]/I_Z \simeq \bbZ[\DC(V\subset Y)]$.
	\item \label{prop:flat_degeneration:algebra} The isomorphism above induces
	\[A_\DC(U\subset Y)\otimes_{\bbZ[\DC(U\subset Y)]} \bbZ[\DC(V\subset Y)]\simeq A_\DC(V\subset Y).\]
	\end{enumerate}
\end{proposition}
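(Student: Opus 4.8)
The four statements build on each other, so I would prove them in order, with the bulk of the work in~(\ref{prop:flat_degeneration:intersection}). For~(\ref{prop:flat_degeneration:intersection}): pick a structure disk $f|_\bbD\colon[\bbD,(p_1,\dots,p_n,s)]\to Y^\an$ responsible for some $\chi(P_1,\dots,P_n,Q,\gamma)$, so that $[f|_\bbD]=\gamma$ is a generator of $\DC(U\subset Y)$ (\cref{lem:structure_disk_class}). Let $F'\subset Z$ be an irreducible component. The key point is that no irreducible component of $Z$ is an essential boundary stratum, so after passing to a toric blowup we may assume (using \cref{lem:restrict_to_skeleton}(\ref{lem:restriction_to_skeleton:sm})) that $f(\bbD)$ is not contained in $\supp F'^\an$; hence $f^{-1}(F'^\an)$ is a Cartier divisor on $\bbD$, and \cref{prop:interior_divisor} applied with $F\coloneqq F'$ (an effective divisor containing no essential boundary strata) gives $[f|_\bbD]\cdot F' = \deg(f^{-1}(F'^\an)) \ge 0$, with equality if and only if $f(\bbD)$ is disjoint from $F'^\an$. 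Running this over all irreducible components $F'$ of $Z$ and using effectivity, $[f|_\bbD]\cdot Z = 0$ if and only if $f(\bbD)\cap Z^\an = \emptyset$, i.e.\ $f(\bbD)\subset(Z^c)^\an$. In that case the restriction of $f$ to $\bbD$, together with the original tail, is a structure disk landing in $Z^c \supset V$; one checks it satisfies exactly the defining conditions of a structure disk for $V\subset Y$ (the marked-point and boundary-tangency conditions only involve $D^\ess$, which is unaffected since $Z$ contains no essential boundary strata). Since $[f|_\bbD]\cdot Z$ depends only on the class $\gamma$, this proves~(\ref{prop:flat_degeneration:intersection}).

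For~(\ref{prop:flat_degeneration:monoid}): the inclusion $\DC(V\subset Y)\subset Z^\perp\cap\DC(U\subset Y)$ follows because any structure disk for $V\subset Y$ factors through $(Z^c)^\an$ and hence has zero intersection with every component of $Z$ by~(\ref{prop:flat_degeneration:intersection}); moreover such a disk is a structure disk for $U\subset Y$ as well (again because the conditions only see $D^\ess$), so its class lies in $\DC(U\subset Y)$. Conversely, a generator of $Z^\perp\cap\DC(U\subset Y)$ is the class of a structure disk $f|_\bbD$ for $U\subset Y$ with $[f|_\bbD]\cdot Z = 0$; by~(\ref{prop:flat_degeneration:intersection}) this disk factors through $(Z^c)^\an$ and is therefore a structure disk for $V\subset Y$, so its class lies in $\DC(V\subset Y)$. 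Since $\DC$ is by definition the submonoid generated by structure-disk classes, the two monoids coincide.

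Statement~(\ref{prop:flat_degeneration:base}) is then formal: $I_Z\subset\bbZ[\DC(U\subset Y)]$ is the monomial ideal spanned by $z^\gamma$ for $\gamma\in\DC(U\subset Y)$ with $\gamma\cdot Z>0$, and by~(\ref{prop:flat_degeneration:monoid}) the complementary monomials are exactly those indexed by $\DC(V\subset Y)$; the quotient ring is spanned by those monomials, and the multiplication is inherited, giving the monoid-ring isomorphism $\bbZ[\DC(U\subset Y)]/I_Z\simeq\bbZ[\DC(V\subset Y)]$. Finally for~(\ref{prop:flat_degeneration:algebra}): the left-hand side has $\bbZ[\DC(V\subset Y)]$-basis $\{\theta_P\}_{P\in\Sk(U,\bbZ)}$, with structure constants obtained from those of $A_\DC(U\subset Y)$ by discarding all $z^\gamma$ with $\gamma\cdot Z>0$, i.e.\ by keeping only $\chi(P_1,\dots,P_n,Q,\gamma)$ with $\gamma\in Z^\perp$. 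By~(\ref{prop:flat_degeneration:intersection}), for such $\gamma$ the moduli space of structure disks for $U\subset Y$ responsible for $\chi(P_1,\dots,P_n,Q,\gamma)$ coincides (as a space, via the factorization through $(Z^c)^\an$) with the moduli space of structure disks for $V\subset Y$ responsible for the same datum, and hence the structure constants agree. (Note the bijection of moduli spaces is an honest identity of analytic spaces, not merely a bijection of point sets, because the factorization $\bbD\to(Z^c)^\an\subset Y^\an$ is canonical and open immersions induce open immersions on analytifications.) Therefore the two $\bbZ[\DC(V\subset Y)]$-algebras have identical structure constants in the basis $\{\theta_P\}$, which is the claimed isomorphism.

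\textbf{Main obstacle.} The crux is~(\ref{prop:flat_degeneration:intersection}), and within it the delicate point is ensuring that \cref{prop:interior_divisor} genuinely applies: one must know that after a suitable (toric) blowup the component $F'$ of $Z$ can be arranged so that the structure disk meets neither the codimension-one skeleton of $\Sigma_\rt$ at its boundary nor $\supp F'^\an$ along its image, and that the hypothesis ``$F'$ contains no essential boundary strata'' is preserved. This is exactly where the assumption $\overline{U\setminus V}$ contains no essential boundary strata is used, and it must be propagated carefully through the blowup. The second, more bookkeeping-style subtlety is verifying in~(\ref{prop:flat_degeneration:algebra}) that ``structure disk for $U\subset Y$ with class in $Z^\perp$'' and ``structure disk for $V\subset Y$'' define the same analytic moduli space — this requires checking that the toric-tail condition and the evaluation/domain constraints transport along $V^\an\hookrightarrow U^\an$, which is straightforward given that $T_M\subset V\subset U$ and that $D^\ess$ is unchanged, but needs to be stated.
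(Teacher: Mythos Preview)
Your proposal is correct and follows essentially the same approach as the paper. The paper's proof is very terse: it cites \cref{prop:structure_disk_interior_divisor} directly for~(\ref{prop:flat_degeneration:intersection}), then notes that (\ref{prop:flat_degeneration:intersection})$\Rightarrow$(\ref{prop:flat_degeneration:monoid})$\Rightarrow$(\ref{prop:flat_degeneration:base}), and for~(\ref{prop:flat_degeneration:algebra}) observes that modulo $I_Z$ only disks with zero intersection with $Z$ contribute, which by \cref{prop:structure_disk_interior_divisor} are exactly the structure disks for $V\subset Y$. What you do for~(\ref{prop:flat_degeneration:intersection}) is precisely to inline the proof of \cref{prop:structure_disk_interior_divisor} (which itself combines \cref{lem:restrict_to_skeleton}(\ref{lem:restriction_to_skeleton:sm}), \cref{lem:structure_disk_class}, and \cref{prop:interior_divisor}), so the content is identical.

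Two minor remarks. First, the toric blowup you mention is unnecessary: the hypothesis that $Z$ contains no essential boundary strata already ensures, via \cref{lem:restrict_to_skeleton}(\ref{lem:restriction_to_skeleton:sm}), that $f^{-1}(Z^\an)$ is a finite reduced set, so $f(\bbD)\not\subset\supp Z^\an$ without any modification of $Y$. Second, your ``main obstacle'' about the boundary of $\bbD$ avoiding the codimension-one skeleton of $\Sigma_\rt$ is handled in the paper by the standing convention that $\Phi^\an(f)$ is a general rational point in $V_M\times V_Q$ (see the statement of \cref{prop:structure_disk_interior_divisor}); since $V_Q$ is a union of open cells of a subdivision of $\Sigma_\rt$, a general point of $V_Q$ avoids the codimension-one skeleton automatically.
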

\begin{proof}
	(\ref{prop:flat_degeneration:intersection}) follows from \cref{prop:structure_disk_interior_divisor}.
	This implies (\ref{prop:flat_degeneration:monoid}), and (\ref{prop:flat_degeneration:monoid}) implies (\ref{prop:flat_degeneration:base}).
	As for (\ref{prop:flat_degeneration:algebra}), modulo $I_Z$, the only structure disks which contribute are those whose class has zero intersection with $Z$.
	By \cref{prop:structure_disk_interior_divisor}, these are exactly the structure disks for $V\subset Y$; this implies (\ref{prop:flat_degeneration:algebra}).
\end{proof}

For each $n\ge 2$, let $\braket{,\dots,}_n$ be the $R$-multilinear map for $(Y,D)$ defined above \cref{thm:main}.
Since $\braket{a_1,\dots,a_n,\theta_0}_{n+1}=\braket{a_1,\dots,a_n}_n$, for the property of non-degeneracy, it suffices to consider the case $n=2$.
So we now restrict to $n=2$ and drop the subscript $n$ from the notation.

\begin{lemma} \label{lem:non-degeneracy_change_compactification}
	Let $U \subset \tY$ be another projective snc compactification such that $p\colon \tY \to Y$ is regular.
	Then the non-degeneracy of the $\tR$-bilinear map $\braket{,}^\sim$ for $(\tY,\tD)$ implies the non-degeneracy of the $R$-bilinear map $\braket{,}$ for $(Y,D)$.
\end{lemma}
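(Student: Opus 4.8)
The plan is to exploit \cref{prop:blowup}(\ref{prop:blowup:rho}), which relates the two multilinear pairings via the map $\rho\colon\tcV|_{\cU_{\Nef(Y)}}\to\cV$. Concretely, write $\tA[K]\coloneqq\tA\otimes_{\tR}\bbZ[\NE(\tY)+K]$; this is obtained from $\tA$ by inverting all monomials $z^\gamma$, $\gamma\in K$, so it is a flat (in fact localization) $\tR$-algebra, and non-degeneracy of $\braket{,}^\sim$ over $\tR$ immediately gives non-degeneracy of $\braket{,}^\sim$ extended to $\tA[K]$ over $\bbZ[\NE(\tY)+K]$ (localizing an injection of modules keeps it injective). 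By \cref{prop:blowup}(\ref{prop:blowup:extension}), $\rho^*\colon A\to\tA[K]$ identifies $A\otimes_\bbZ\bbZ[\bbZ^E]$ with $\tA[K]$ as rings, and \cref{prop:blowup}(\ref{prop:blowup:rho}) says $\rho^*\braket{a_1,a_2}=\braket{\rho^*a_1,\rho^*a_2}^\sim$.

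First I would set up the algebra: fix $a\in A$ with $\braket{a,b}=0$ for all $b\in A$; the goal is to show $a=0$. Apply $\rho^*$: for every $b\in A$, $0=\rho^*\braket{a,b}=\braket{\rho^*a,\rho^*b}^\sim$ in $\bbZ[\NE(\tY)+K]$. Since $\rho^*$ is an isomorphism of $A\otimes_\bbZ\bbZ[\bbZ^E]$ onto $\tA[K]$, the elements $\rho^*b$ for $b\in A$ together with the monomials $x^e$, $e\in\bbZ^E$ — which are units — generate $\tA[K]$ as a $\bbZ[\NE(\tY)+K]$-module; and since $\braket{,}^\sim$ is $\bbZ[\NE(\tY)+K]$-bilinear and $x^e$ is a unit, $\braket{\rho^*a,\rho^*b}^\sim=0$ for all $b\in A$ forces $\braket{\rho^*a,c}^\sim=0$ for all $c\in\tA[K]$. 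By non-degeneracy of the extended pairing over $\bbZ[\NE(\tY)+K]$, this gives $\rho^*a=0$, hence $a=0$ because $\rho^*$ is injective.

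The one point that needs a little care — and what I expect is the main (minor) obstacle — is the passage from "$\braket{\rho^*a,\rho^*b}^\sim=0$ for all $b\in A$" to "$\braket{\rho^*a,c}^\sim=0$ for all $c\in\tA[K]$": one must check that the image $\rho^*(A)$, under $\bbZ[\NE(\tY)+K]$-scaling (in particular scaling by the units $x^e$), spans $\tA[K]$. This is exactly the content of \cref{prop:blowup}(\ref{prop:blowup:extension}): the isomorphism $A\otimes_\bbZ\bbZ[\bbZ^E]\xrightarrow{\sim}\tA[K]$ shows that $\tA[K]$ is spanned over $\bbZ[\bbZ^E]\subset\bbZ[\NE(\tY)+K]$ by $\rho^*(A)$, and $\bbZ[\bbZ^E]$ consists of (combinations of) units. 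One also needs to note the compatibility of base rings: $\braket{,}^\sim$ is first $\tR$-bilinear, and we extend it $\bbZ[\NE(\tY)+K]$-linearly; injectivity of $a'\mapsto\braket{a',\cdot}^\sim$ over $\tR$ localizes to injectivity over the localization $\bbZ[\NE(\tY)+K]=\tR[z^{-\gamma}:\gamma\in K]$, using that $\tA[K]$ and $\Hom_{\tR}(\tA,\tR)$ localize compatibly since $\tA$ is a free, hence finitely generated projective after restricting to each graded piece — more simply, localization is exact, so it commutes with $\Hom$ out of the finitely generated (indeed finite free in each weight) module $\tA$. Assembling these observations completes the argument; no further geometry is required.
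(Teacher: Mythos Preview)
Your proposal is correct and follows essentially the same approach as the paper's proof: both use \cref{prop:blowup}(\ref{prop:blowup:extension}-\ref{prop:blowup:rho}) to transport the pairing through $\rho^*$, and both rely on the fact that the $\theta_P$ in $\tA[K]$ differ from $\rho^*\theta_P$ only by the unit $x^{w_E(P)}$. The paper argues directly (given $0\neq a$, find $\theta_P$ with $\braket{\theta_P,\rho^*a}^\sim\neq 0$, then $\rho^*\braket{\theta_P,a}=x^{-w_E(P)}\braket{\theta_P,\rho^*a}^\sim\neq 0$), while you argue the contrapositive; these are the same idea.

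One small correction: your justification that non-degeneracy extends to the localization (``localization commutes with $\Hom$ out of finitely generated modules'') is not quite right as stated, since $\tA$ is free of \emph{infinite} rank over $\tR$. The simplest fix is to note that $\tR$ is a domain: given $0\neq a''=a'/s\in\tA[K]$ with $a'\in\tA$, non-degeneracy over $\tR$ gives $b'\in\tA$ with $\braket{a',b'}^\sim\neq 0$, hence $\braket{a'',b'}^\sim=\braket{a',b'}^\sim/s\neq 0$ in the localization. (The paper's proof uses this same extension implicitly when it pairs $\rho^*a\in\tA[K]$ against $\theta_P$.)
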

\begin{proof}
	Assume $\braket{,}^\sim$ is non-degenerate.
	Then for any $a\in A$, there exists $P\in\Sk(U,\bbZ)$ such that $\braket{\theta_P,\rho^*a}^\sim\neq 0$, ($\rho$ as in \cref{prop:blowup}(\ref{prop:blowup:rho})).
	By \cref{prop:blowup}(\ref{prop:blowup:extension}-\ref{prop:blowup:rho}), we obtain
	\[\rho^*\braket{\theta_P,a}=\braket{\rho^*\theta_P,\rho^*a}^\sim=z^{-w_E(P)}\braket{\theta_P,\rho^*a}^\sim.\]
	Hence $\braket{\theta_P,a}\neq 0$.
	Therefore, the pairing $\braket{,}$ is non-degenerate, completing the proof.
\end{proof}

For the proof of non-degeneracy, we will consider the mirror algebra over various monoidal coefficient rings.
For any map of monoids $\DC\coloneqq\DC(U\subset Y)\to P$, we write $A_P \coloneqq A_\DC(U\subset Y){\otimes}_{\bbZ[\DC]} \bbZ[P]$.
We say that $A_P$ is non-degenerate if the induced $\bbZ[P]$-bilinear pairing $\braket{,}$ on $A_P$ (as free $\bbZ[P]$-module) is non-degenerate.

\begin{lemma} \label{lem:base_extension}
	\begin{enumerate}[leftmargin=*]
		\item \label{lem:base_extension:injective} Given maps of monoids $\DC\to P_1$, $\DC\to P_2$ and $P_1\to P_2$, if $P_1 \to P_2$ is injective and $A_{P_2}$ is non-degenerate, then $A_{P_1}$ is non-degenerate.
		\item \label{lem:base_extension:gp} Given a map of monoids $\DC\to P$ such that $P\subset P^\gp$, then $A_P$ is non-degenerate if and only if $A_{P^\gp}$ is non-degenerate.
		\item \label{lem:base_extension:toric} If $U$ is a split algebraic torus, then $A_{P}$ is non-degenerate for any map of monoids $\DC\to P$.
	\end{enumerate}
\end{lemma}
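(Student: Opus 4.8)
\textbf{Proof proposal for \cref{lem:base_extension}.}

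The plan is to prove the three statements in order, each building on the previous. First, for (\ref{lem:base_extension:injective}): the bilinear pairing $\braket{,}$ on $A_P$ is given by a Gram matrix with entries in $\bbZ[P]$, namely the entries $\braket{\theta_{P_i},\theta_{P_j}}$ indexed by pairs in $\Sk(U,\bbZ)$; by \cref{prop:finiteness_Q_gamma} (and \cref{cor:R-algebra_structure}), each entry is a finite sum, so on any finite-rank free summand the pairing is recorded by a finite matrix. Non-degeneracy of $\braket{,}$ over $\bbZ[P]$ is the statement that the induced map $A_P \to \Hom_{\bbZ[P]}(A_P,\bbZ[P])$ is injective; since $A_P$ is a \emph{free} $\bbZ[P]$-module, this is equivalent to saying that for each nonzero $a \in A_P$ there is some $\theta_Q$ with $\braket{\theta_Q, a} \neq 0$ in $\bbZ[P]$. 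Now observe that $A_{P_1}$ and $A_{P_2}$ have ``the same'' Gram matrix: the entries of the pairing over $\bbZ[P_1]$ map to the entries over $\bbZ[P_2]$ under the ring map $\bbZ[P_1] \to \bbZ[P_2]$ induced by $P_1 \to P_2$, since the structure constants $\chi(P_1,\dots,P_n,Q,\gamma)$ are fixed integers computed once and for all. Because $P_1 \to P_2$ is injective, $\bbZ[P_1] \to \bbZ[P_2]$ is injective, so an element $\braket{\theta_Q, a} \in \bbZ[P_1]$ is nonzero iff its image in $\bbZ[P_2]$ is nonzero. Hence, given nonzero $a \in A_{P_1}$, its image $\bar a \in A_{P_2}$ is nonzero (again by injectivity of $\bbZ[P_1]\to\bbZ[P_2]$ on the free module $A_{P_1}$), non-degeneracy over $P_2$ produces $\theta_Q$ with $\braket{\theta_Q,\bar a} \neq 0$, and this element is the image of $\braket{\theta_Q, a}$, which is therefore nonzero. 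This gives (\ref{lem:base_extension:injective}).

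For (\ref{lem:base_extension:gp}): the forward direction ``$A_{P^\gp}$ non-degenerate $\Rightarrow$ $A_P$ non-degenerate'' is immediate from (\ref{lem:base_extension:injective}) applied to the inclusion $P \subset P^\gp$. For the converse, suppose $A_P$ is non-degenerate and let $a \in A_{P^\gp}$ be nonzero. Since $P^\gp$ is the group completion of $P$ and $\bbZ[P^\gp]$ is the localization of $\bbZ[P]$ at the multiplicative set of monomials $\{z^\gamma : \gamma \in P\}$, we have $A_{P^\gp} = A_P[\{z^\gamma\}^{-1}]$. Multiplying $a$ by a suitable monomial $z^\gamma$ with $\gamma \in P$ (which is a unit in $\bbZ[P^\gp]$, hence does not change whether a pairing value is nonzero) we may assume $a$ is the image of a nonzero element $a' \in A_P$. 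Non-degeneracy over $P$ gives $\theta_Q$ with $\braket{\theta_Q, a'} \neq 0$ in $\bbZ[P]$; this maps to a nonzero element of the localization $\bbZ[P^\gp]$, and hence $\braket{\theta_Q, a} \neq 0$. This proves (\ref{lem:base_extension:gp}).

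For (\ref{lem:base_extension:toric}): when $U = T_M$ is a split algebraic torus, by \cref{prop:blowup}(\ref{prop:blowup:restriction}) and \cref{rem:any_compactification} we may compute with a smooth toric compactification $U \subset Y$, so that $(Y,D)$ is toric and \cref{prop:toric_case} applies. Then, as in \cref{prop:algebra_modulo_m} and \cref{lem:counts_in_toric_case}, the mirror algebra is explicit: the structure disks all have class $0$ in $\NE(Y)$, and the multiplication is $\theta_{P_1}\cdot\theta_{P_2} = \theta_{P_1+P_2}$, i.e.\ $A_\DC(U\subset Y)$ is the monoid ring $\bbZ[M]$ (with trivial $\DC$-grading, since $\DC = 0$), and the trace pairing sends $(\theta_{P_1},\theta_{P_2})$ to $1$ if $P_1 + P_2 = 0$ and to $0$ otherwise. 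Thus for any coefficient ring $\bbZ[P]$, the Gram matrix of $\braket{,}$ on $A_P = \bbZ[P][M]$ is a permutation matrix (pairing $\theta_P$ with $\theta_{-P}$), which is visibly invertible; non-degeneracy is immediate. I expect the main obstacle to be purely bookkeeping: making sure the identification of Gram matrices under base change is set up cleanly enough that the injectivity/localization arguments in (\ref{lem:base_extension:injective}) and (\ref{lem:base_extension:gp}) go through verbatim, and confirming that ``non-degenerate'' for a pairing on a free module over a (non-domain, non-Noetherian) monoid ring really is equivalent to the pointwise criterion ``for all nonzero $a$, some $\braket{\theta_Q,a} \neq 0$'' — which it is, precisely because $A_P$ is free with the distinguished basis $\{\theta_P\}$ and the dual map can be checked on basis elements of the target.
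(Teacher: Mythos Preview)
Your arguments for (\ref{lem:base_extension:injective}) and (\ref{lem:base_extension:gp}) are correct and essentially identical to the paper's: the paper packages (\ref{lem:base_extension:injective}) as a commutative square with injective vertical maps $A_{P_1}\to A_{P_2}$ and $\prod_q \bbZ[P_1]\to\prod_q\bbZ[P_2]$, and for (\ref{lem:base_extension:gp}) simply notes $\braket{z^p a,b}=z^p\braket{a,b}$, which is exactly your clearing-denominators argument.

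In (\ref{lem:base_extension:toric}) you have a factual slip. The structure disk classes in the toric case are \emph{not} zero: by \cref{lem:counts_in_toric_case} the unique contributing curve has class $\delta_h\in\NE(Y)$, which is generically nonzero (e.g.\ for $U=\Gm\subset\bbP^1$, $\braket{\theta_1,\theta_{-1}}=z^{[\bbP^1]}$). The paper records this as $\theta_a\cdot\theta_b=z^{p(a,b)}\theta_{a+b}$ for some $p(a,b)\in P$, hence $\braket{\theta_a,\theta_b}=z^{p(a,-a)}\delta_{a,-b}$. So the Gram matrix is a monomial matrix, not a permutation matrix, and your appeal to \cref{prop:algebra_modulo_m} (which works modulo $\fm$) is not what you want here. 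The repair is immediate: monomials in $\bbZ[P]$ are non-zerodivisors (multiplication by $z^{p_0}$ injects the monomial basis into itself), so the map $A_P\to\Hom(A_P,\bbZ[P])$ is still injective. Also drop the claim that $\DC=0$ and that $A_\DC$ is literally $\bbZ[M]$; neither is needed once you allow the monomial twist.
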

\begin{proof}
	For (\ref{lem:base_extension:injective}), given $a\in A_{P_1}$, we have a commutative diagram
	\[\begin{tikzcd}[column sep=huge]
	A_{P_1}  \ar{r}{a \to \braket{a,\cdot}} \ar{d}{} & \Hom(A_{P_1},\bbZ[P_1]) \simeq \prod_{q \in\Sk(U,\bbZ)} \bbZ[P_1]
	\ar{d}{} \\
	A_{P_2}  \ar{r}{a \to \braket{a,\cdot}}  &\Hom(A_{P_2},\bbZ[P_2]) \simeq \prod_{q \in\Sk(U,\bbZ)} \bbZ[P_2].
	\end{tikzcd}\]
	If $P_1 \to P_2$ is injective, then so are both vertical maps.
	Hence (\ref{lem:base_extension:injective}) follows.
	For any $a,b\in A_{P^\gp}$ and $p\in P^\gp$, we have $\braket{z^p a, b} = z^p\braket{a,b} = \braket{a, z^p b}$, from which (\ref{lem:base_extension:gp}) follows.
	For (\ref{lem:base_extension:toric}), if $U\simeq T_M$, $M$ being the cocharacter lattice, then for any $a,b\in\Sk(U,\bbZ)\simeq M$, by \cref{lem:counts_in_toric_case}, we have $\theta_a \cdot \theta_b = z^{p(a,b)} \theta_{a + b}$ for some $p(a,b) \in P$, where the addition $a+b$ is computed in $M$.
	Thus $\braket{\theta_a,\theta_{b}} = z^{p(a,b)} \delta_{a,-b}$, so the non-degeneracy is clear.
\end{proof}

By \cref{lem:non-degeneracy_change_compactification}, for the purpose of proving the non-degeneracy, we are free to replace $(Y,D)$ by any toric blowup, so we can assume $Z\coloneqq\overline{U\setminus T_M}$ contains no essential boundary strata.
Let $Z_1,\dots,Z_n \subset Z$ be the divisorial irreducible components.
By \cref{prop:flat_degeneration}, we have
\[A_\DC\otimes_{\bbZ[\DC]} \bbZ[\braket{Z_1,\dots,Z_n}^\perp\cap\DC]\simeq A_\DC(T_M\subset Y),\]
where $^\perp$ denotes the orthogonal complement in $N_1(Y)$.
Tensoring with $\bbZ[\braket{Z_1,\dots,Z_n}^\perp]$ over $\bbZ[\braket{Z_1,\dots,Z_n}^\perp\cap\DC]\simeq\bbZ[\DC(T_M\subset Y)]$, we obtain
\[A_\DC\otimes_{\bbZ[\DC]} \bbZ[\braket{Z_1,\dots,Z_n}^\perp]\simeq A_\DC(T_M\subset Y)\times_{\bbZ[\DC(T_M\subset Y)]} \bbZ[\braket{Z_1,\dots,Z_n}^\perp].\]
By \cref{lem:base_extension}(\ref{lem:base_extension:toric}), the right hand side is non-degenerate, hence so is the left hand side.

Now we prove by decreasing induction on $i$ that $A_\DC\otimes_{\bbZ[\DC]} \bbZ[\braket{Z_1,\dots,Z_i}^\perp]$ is non-degenerate, where we interpret the $i=0$ case to be $A_\DC\otimes_{\bbZ[\DC]} \bbZ[N_1(Y)]$.
By Lemma \ref{lem:base_extension}(\ref{lem:base_extension:gp}), non-degeneracy for $N_1(Y)$ implies it for any $Q \subset N_1(Y)$ that generates $N_1(Y)$ as a group, e.g.\ $Q = \NE(Y)$.
So it remains to establish the inductive step.
Assume
$A_\DC\otimes_{\bbZ[\DC]}\bbZ[\braket{Z_1,\dots,Z_i}^\perp]$ is non-degenerate.
If $Z_i$ is trivial on $\braket{Z_1,\dots,Z_{i-1}}^\perp$, there is nothing to prove.
Otherwise $P\coloneqq\braket{Z_1,\dots,Z_{i-1}} ^\perp \cap\{Z_i \geq 0\}$ generates $\braket{Z_1,\dots,Z_{i-1}}^\perp$ as a group; so by \cref{lem:base_extension}(\ref{lem:base_extension:gp}), it is enough to prove non-degeneracy of $A_P$.
Let $Q\coloneqq\braket{Z_1,\dots,Z_i}^\perp$.
Note there is a natural surjection $\bbZ[P] \twoheadrightarrow \bbZ[Q]$, and the kernel, generated by
the monomials $z^c$ with $Z_i \cdot c > 0$, is principal, which is generated by such a monomial with $Z_i \cdot c$ minimal.

Take $0 \neq a = \sum r_P \theta_P \in A_P$, with $r_P \in \bbZ[P]$.
We argue that $\braket{a, \cdot } \neq 0$.
By the discussion above, the kernel of $A_P\twoheadrightarrow A_Q$ is $(z^c)\cdot A_P$.
Thus we can write $a = f b$ for some $0 \neq f \in (z^c) \subset \bbZ[P]$, and
\[0 \neq \ob  \in  A_Q\simeq A_P / ((z^c)\cdot A_P).\]
We have $\braket{a,\cdot}=f \braket{b,\cdot}$.
Since $A_P$ is $\bbZ[P]$-torsion free, in order to show that $\braket{a,\cdot}\neq 0$, its enough to show $\braket{b,\cdot } \neq 0$.
As $\braket{\ob,\cdot} \neq 0$ by induction, we deduce that $\braket{b,\cdot} \neq 0$.
This completes the proof of non-degeneracy.

\section{Geometry of the mirror family} \label{sec:geometry_of_the_mirror_family}

In this section, we study the geometry of the fibers of the mirror family $\cX\coloneqq\Spec A\to \Spec R$ over $\bbQ$, see \cref{prop:geometry_of_fiber}.
This implies \cref{thm:main}(\ref{thm:main:family}) in the introduction.

The idea is to use the equivariant torus action of \cref{sec:torus_action} on the mirror family.
It pushes every fiber to the fiber over the unique torus fixed point of the base.
This fiber has an explicit combinatorial description (\cref{prop:algebra_modulo_m}) so the geometric properties can be checked directly.
In order to deduce the geometric properties of its nearby fibers, and hence all fibers via the torus action, we must compactify all the fibers.
The fiberwise compactification is constructed via Proj of a graded ring, which we now describe.

By \cref{prop:blowup}, we are free to choose any different snc compactification $U\subset Y$.
So by \cref{lem:ample_boundary} we can assume $D$ supports an effective ample divisor $F$.
By \cref{lem:divisor_tropicalization}, $F^\trop\colon\Sk(U)\to\bbR$ is strictly positive away from 0.
Then we can construct a natural compactification of the mirror family as follows.

Consider the filtration on $A$ with $A_{\le n}$ having basis $\theta_P$, $F^\trop(P)\le n$.
By \cref{thm:Ftrop_structure_constants}(\ref{thm:Ftrop_structure_constants:nef}), we have $A_{\le m}\cdot A_{\le n}\subset A_{\le m+n}$.
Let $\tA \subset A[T]$ be the associated $\bbN$-graded $R$-algebra, having basis $T^n\cdot\theta_P$ with $F^\trop(P)\le n$.
Let $\ocX \coloneqq \Proj(\tA) \to \Spec(R)$, called the \emph{compactified mirror family}.
The function $T = T \cdot \theta_0$ gives a canonical section $T \in H^0(\ocX,\cO(1))$.
Let $\cZ \coloneqq \Proj(\tA/ (T\tA)) \subset \ocX$ be the associated zero scheme. 
Since $\tA/ (T\tA)$ is a free $R$-module with basis $T^{F^\trop(P)} \cdot \theta_P$, the family $\cZ \to \Spec(R)$ is also flat.
There is a canonical identification $A \simeq \tA_{(T)}$, where the right hand side denotes the subring of degree zero elements in the localization at $(T)$.
This gives $\ocX \setminus \cZ \simeq \cX$.

\begin{proposition} \label{prop:geometry_of_fiber}
	Let $(\ocX_\bbQ,\cZ_\bbQ) \to \Spec(R_\bbQ)$ denote the base change of the compactified mirror family to $\bbQ\supset\bbZ$.
	For any fiber $(\oX,Z)$ of this family, the following hold:
	\begin{enumerate}
		\item $\oX$ is Cohen-Macaulay, $Z \subset \oX$ is reduced, $(\oX,Z)$ is semi-log-canonical, and $K_{\oX} + Z$ is trivial.
		\item $X \coloneqq \oX \setminus Z$ is affine of the same dimension as $U$,  Gorenstein, semi-log-canonical and $K$-trivial.
			\end{enumerate}
	Moreover, the generic fiber $\oX$ is normal and $(\oX,Z)$ is log canonical.
	The generic fiber $X$ is log canonical and log Calabi-Yau.
\end{proposition}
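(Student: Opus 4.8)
The strategy is to establish the geometric statements about the fibers by combining the algebraic structure of the compactified mirror family with the local analytic description of $\ocX$ near its ``toric strata''. The key input is \cref{lem:graded_algebra_modulo_m}: modulo the maximal monomial ideal $\fm$, the graded algebra $\tA/(\fm\tA)$ is the graded Stanley--Reisner ring of the polyhedral complex $\mathscr P \coloneqq \{F^\trop \le 1\}\cap\Sigma_\rt$. Since $\Sigma_\rt$ is a smooth fan, this Stanley--Reisner ring is the homogeneous coordinate ring of a union of (weighted) projective toric pieces glued along toric strata; in particular $\Proj$ of it, namely the central fiber $(\ocX_0, \cZ_0)$ over the torus-fixed point of $\Spec R$, is a reduced, Cohen--Macaulay, semi-log canonical (indeed toric, hence log canonical away from the gluing, and slc along it) pair with $K+\cZ_0$ trivial, by the standard theory of toric pairs and their degenerations (\`a la \cite{Gross_Mirror_symmetry_for_log_Calabi-Yau_surfaces_I_v1} in dimension two, and the general Stanley--Reisner computations). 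The flatness of $\ocX_\bbQ \to \Spec R_\bbQ$ and $\cZ_\bbQ\to\Spec R_\bbQ$ (already established from the freeness of $\tA$ and $\tA/(T\tA)$ as $R$-modules) lets us \emph{spread out}: the properties ``Cohen--Macaulay'', ``$Z$ reduced'', ``$(\oX,Z)$ slc'', ``$K_{\oX}+Z\sim 0$'' are open (or at least constructible-and-specialization-stable in the appropriate sense) in flat families over a reduced base, so verifying them on the central fiber propagates to all fibers, at least after a further base change; here one uses inversion of adjunction and the results of Koll\'ar--Shepherd-Barron / Koll\'ar on moduli of slc pairs to promote the central-fiber statement to a neighborhood.

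\textbf{Steps, in order.} First I would record that $\ocX_\bbQ$ and $\cZ_\bbQ$ are flat and proper over $\Spec R_\bbQ$ with $\cZ_\bbQ$ an ample Cartier divisor (cut out by $T\in H^0(\ocX,\cO(1))$), and that $\cX = \ocX\setminus\cZ$ is affine with coordinate ring $A$ of relative dimension $d = \dim U$ — the dimension count coming from \cref{prop:algebra_modulo_m}, since the Stanley--Reisner ring of the $d$-dimensional fan $\Sigma_\rt$ has Krull dimension $d$, and flatness preserves fiber dimension. Second, I would analyze the central fiber $(\ocX_0,\cZ_0)$ via \cref{lem:graded_algebra_modulo_m}: the Stanley--Reisner ring of the cone over $\mathscr P$ is Cohen--Macaulay (Reisner's criterion, using that $\mathscr P$ is a cone decomposition of a manifold-with-boundary region, i.e.\ shellable), the anticanonical class of a toric Stanley--Reisner scheme together with its ``boundary'' is trivial, and each irreducible component is a toric variety with at worst canonical (Gorenstein terminal in the smooth-fan case, up to the $F^\trop$-truncation introducing weighted-projective singularities which are still log canonical / klt). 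Third, I would invoke openness of the relevant singularity conditions in flat families: by \cite{Kollar_Rational_curves_on_algebraic_varieties}-style arguments and the deformation theory of slc pairs, Cohen--Macaulayness and the slc property of $(\oX,Z)$ with $K_{\oX}+Z$ $\bbQ$-trivial hold in an open neighborhood of the central fiber, and since $\Spec R_\bbQ = \TV(\Nef(Y))_\bbQ$ is irreducible with the central fiber in the closure of every stratum, this open set is all of $\Spec R_\bbQ$ provided we know it contains a dense open — which I'd get from the generic fiber analysis below. Fourth, restricting to $\cX = \ocX\setminus\cZ$: removing the ample Cartier divisor $\cZ$ preserves Cohen--Macaulay, and since $\cZ$ is $\bbQ$-Cartier and $K_{\oX}+Z\sim 0$, the open part $X$ has $K_X\sim 0$; affineness is \cref{thm:finite_generation} plus the identification $A\simeq \tilde A_{(T)}$. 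Fifth, for the generic fiber: I would show the generic fiber $X$ recovers $U$-like behavior — here the non-degeneracy \cref{thm:main}(\ref{thm:main:non-degeneracy}) and the torus action \cref{thm:torus_action} pin down enough structure, and one uses that over the generic point of $T^{N_1(Y)}$ the ``no scattering'' degeneration does not occur, so $(\oX,Z)$ is irreducible, normal (Serre's $R_1+S_2$: $S_2$ from Cohen--Macaulay, $R_1$ from a codimension-one local analysis showing the generic fiber has no non-normal locus), and $(\oX,Z)$ is a log canonical log Calabi-Yau pair with $X = \oX\setminus Z$ log Calabi-Yau of dimension $d$.

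\textbf{The main obstacle.} The hardest part is the passage from the central fiber to \emph{all} fibers, and in particular pinning down the precise singularity type. The Stanley--Reisner central fiber is very degenerate (reducible, non-normal), so one cannot naively invoke ``generic smoothness''; instead one needs a robust statement that the slc property with trivial log canonical class is preserved under the $\fm$-adic degeneration, which requires either inversion of adjunction in families (Koll\'ar's work on slc moduli) or a direct local analytic model for $\ocX$ along the $\cZ$-strata. Establishing \emph{normality} of the generic fiber is the subtlest point — the paper explicitly flags (\cref{rem:after_main_theorem}) that normality with canonical singularities is only proven in dimension two (with Hacking), so for the present statement one should aim only at normality of the \emph{generic} fiber, which should follow from irreducibility plus $R_1$, the latter proved by reducing to the two-dimensional slices handled by the scattering-diagram / broken-line analysis of \cref{sec:scattering}. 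I would expect roughly half the work to be bookkeeping about which properties are open and which are specialization-closed, and half to be the local model near $\cZ$; the Calabi-Yau conclusion itself is then formal from $K+\cZ\sim 0$ and $\cZ$ being the (reduced) boundary.
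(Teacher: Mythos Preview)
Your overall architecture---verify the properties on the central fiber via the Stanley--Reisner description (\cref{lem:graded_algebra_modulo_m}), then spread out by openness---matches the paper. But there is a genuine gap in your ``Third'' step. You say the good locus is open, contains a neighborhood of the central point $s$, and ``since $\Spec R_\bbQ$ is irreducible with the central fiber in the closure of every stratum, this open set is all of $\Spec R_\bbQ$ provided we know it contains a dense open.'' This does not follow: an open subset of an irreducible affine variety containing both the closed torus-fixed point and the generic point need not be everything (think of the complement of $\{xy=1\}$ in $\bbA^2$). The paper closes this gap with a trick you omit: the ample divisor $F$ determines a one-parameter subgroup $\Gm \subset T^D$ (via \cref{thm:torus_action}) whose action contracts every point of $\Spec R_\bbQ$ to $s$. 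Given any point $p$, the orbit closure gives a map $\bbA^1 \to \Spec R_\bbQ$ sending $0 \mapsto s$; pulling back the family to $\bbA^1$, openness at $0$ plus $\Gm$-equivariance forces the properties to hold at $p$. Without this contraction argument, your openness step is incomplete.

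Your approach to generic-fiber normality is also off track. You propose to get $R_1$ from a ``codimension-one local analysis'' and ``two-dimensional slices handled by the scattering-diagram analysis''---this is vague and not how the paper proceeds. The paper's argument is much cleaner: normality is open, so it suffices to exhibit a single normal fiber; by \cref{prop:flat_degeneration}, the family degenerates to the mirror family for the torus $T_M \subset U$, and in that case every fiber over $T^{N_1(Y)}$ is literally a projective toric variety (by \cref{lem:counts_in_toric_case}), hence normal. No scattering diagrams or local analysis are needed.
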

\begin{proof}
	Let $s\in\Spec R_\bbQ$ be the point associated to the maximal monomial ideal.
		Observe that the equivariant $T_D$-action on $\cX=\Spec(A)\to\Spec R$ of \cref{thm:torus_action} extends also to the compactified mirror family $\ocX\to\Spec R$.
	By \cite[p.\ 38 Claims 1-2]{Fulton_Introduction_to_toric_varieties}, the ample Cartier divisor $F$ determines a one-parameter subgroup $\Gm \subset T_D$ that pushes any point of $\Spec(R_\bbQ)$ towards the 0-stratum, i.e.\ the point $s$.
	The closure of the orbit of this one-parameter subgroup gives a map $\bbA^1 \to \Spec(R_\bbQ)$ sending $0 \mapsto s$.
	We pullback the compactified mirror family to $\bbA^1$ and prove properties (1-2) for the pullback family.
	By \cite[Lemma 8.33]{Gross_Canonical_bases}\footnote{In that lemma the central fiber is assumed normal, but the result extends, with the same proof, to the present Cohen-Macaulay case.
	}, we see that the two properties are open.
	So by the $\Gm$-action, it is enough to prove these properties for the central fiber $\oX_s$.
	Using the explicit description of $\oX_s$ by \cref{prop:algebra_modulo_m}, we deduce these properties from \cite[Theorem 1.2.14]{Alexeev_Complete_moduli} and \cite[Definition-Lemma 5.10]{Kollar_Singularities_of_the_minimal_model_program}.
		
	Next we prove the statement concerning the generic fiber.
	Recall that log canonical is equivalent to normal plus semi-log-canonical, so it is enough to prove that the generic fiber is normal.
	As the locus of normal fibers is open, it suffices to exhibit a single normal fiber.
	Hence by \cref{prop:flat_degeneration}, we reduce to the case $U = T_M$.
	In this case, by \cref{lem:counts_in_toric_case}, every fiber over $T^{N_1(Y)} \subset \Spec(R)$ is a projective toric variety (associated to the rational polytope $\{F^\trop\le 1\}$), which is in particular normal, completing the proof.
\end{proof}

\begin{lemma} \label{lem:ample_boundary}
	Let $U \subset Y$ be a compactification of an affine variety over a field of characteristic zero.
	Then there is a proper birational map $\tY \to Y$ which is an isomorphism over $U$, such that $U \subset \tY$ is an snc compactification, and that the boundary $\tY \setminus U$ supports an ample effective divisor.
\end{lemma}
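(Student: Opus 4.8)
\textbf{Proof plan for \cref{lem:ample_boundary}.}
The plan is to first reduce to the case where $U\subset Y$ is already an snc compactification, and then arrange ampleness of the boundary by a sequence of blowups supported on the boundary. For the first reduction, we take any log resolution of the pair $(Y, Y\setminus U)$ that is an isomorphism over the smooth locus; since $U$ is (in our applications) smooth and the complement $Y\setminus U$ is a proper closed subset, Hironaka resolution in characteristic zero gives a proper birational $\tY_0\to Y$, isomorphism over $U$, with $\tY_0$ smooth and $\tY_0\setminus U$ a simple normal crossings divisor. So from now on I may assume $U\subset Y$ is snc, with boundary $D = Y\setminus U = \bigcup D_i$.

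The heart of the argument is to show that after further blowups along boundary strata, the total transform of $D$ supports an ample effective divisor. The key input is that $U$ is affine: by \cref{lem:ample_divisor} (applied with $Y$ in place of the snc compactification there — its proof only uses that $U$ is affine and $Y$ projective normal, and it is stated for general snc $Y$) there exists an ample divisor $F$ on $Y$ with $-F|_U$ effective, i.e.\ $-F|_U$ is an effective divisor on $U$; taking closures in $Y$, we may write $F \sim -\overline{F|_U} + F_D$ where $F_D$ is an effective divisor supported on $D$ and $\overline{F|_U}$ is the closure of an effective divisor on $U$, hence also effective. The difficulty is that $F_D$ itself need not be ample, only $F_D - \overline{F|_U}$ is; and $\overline{F|_U}$ is an honest obstruction since it meets $U$. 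The standard fix is to blow up along the boundary strata meeting $\overline{F|_U}$: choose a sequence of blowups $\pi\colon \tY\to Y$ with centers in $D$ (and in codimension $\ge 2$, along the strata of $D$) so that the strict transform of $\overline{F|_U}$ no longer meets the exceptional locus in a problematic way — more precisely, so that $\pi^*F - \pi^*\overline{F|_U}$ becomes, after subtracting a small multiple of the exceptional divisors, an ample class whose class is represented by an effective divisor supported on $\pi^{-1}(D)$.

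Concretely, here is the step I would carry out: Fix $F = -\overline{F|_U} + F_D$ as above. Since $U$ is affine, $\overline{F|_U}$ can be moved inside $U$ to a general effective divisor, but its closure always meets $D$. Blow up $Y$ along all strata of $D$ repeatedly (a ``toric-type'' blowup of the boundary, as in the blowups used throughout the paper, e.g.\ \cref{ass:strata}) to get $\pi\colon\tY\to Y$ with $U\subset\tY$ still snc, such that $\pi^*\overline{F|_U} = \overline{\pi^*F|_U} + \sum a_i E_i$ with $a_i > 0$ for every $\pi$-exceptional divisor $E_i$ — this is achievable because each exceptional divisor of a boundary blowup acquires positive multiplicity in the pullback of any divisor meeting the center. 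Then
\[
\pi^*F = -\overline{\pi^*F|_U} - \sum a_i E_i + \pi^*F_D,
\]
so $\pi^*F_D - \sum a_i E_i \sim \pi^*F + \overline{\pi^*F|_U}$ is the sum of the nef class $\pi^*F$ and an effective class, hence big, and moreover $\pi^*F - \epsilon\sum E_i$ is ample for small $\epsilon > 0$ (standard: pullback of ample minus small exceptional is ample). Choosing $\epsilon$ small enough that $\epsilon < a_i$ for all $i$, the class $A \coloneqq \pi^*F_D - \epsilon\sum a_i E_i$ is ample (it equals the ample class $\pi^*F - \epsilon\sum a_i E_i$ plus the effective class $\overline{\pi^*F|_U} + (1-\epsilon)\sum a_i E_i$ — wait, I must recheck signs here; the correct bookkeeping is $A = (\pi^*F - \epsilon\sum E_i) + (\overline{\pi^*F|_U} + \sum(a_i-\epsilon)E_i)$, a sum of ample and effective, hence big, and one then perturbs once more to get genuinely ample), and $A$ is represented by an effective divisor supported on $\pi^{-1}(D) = \pi^*F_D$'s support together with the $E_i$, all of which lie in $\tY\setminus U$. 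This $A$ is the desired ample effective boundary divisor on $\tY$.

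\textbf{Main obstacle.} The genuinely delicate point is the sign/multiplicity bookkeeping: ensuring one can choose the boundary blowup $\pi$ so that every exceptional divisor appears with strictly positive coefficient in $\pi^*\overline{F|_U}$ (equivalently, that the blowup centers are chosen inside the base locus / the divisor $\overline{F|_U}\cap D$), and then choosing the perturbation parameter $\epsilon$ uniformly small enough to land in the ample cone while keeping the representing divisor effective and boundary-supported. This is the step where one must be careful rather than routine, though it is a well-known technique (it is essentially the same device used in \cref{thm:finite_generation}'s proof to preserve ampleness under toric blowup ``by adding a small multiple of the exceptional divisors''). Everything else — the initial resolution, Kleiman/Nakai openness of ampleness, and the fact that a nef-plus-effective class perturbs to ample — is standard.
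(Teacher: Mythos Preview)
Your approach has a real gap at exactly the step you flag as delicate, and the paper's proof avoids the issue by taking a different route. The paper does not try to produce an ample boundary on a blowup of the given $Y$. Instead, since $U$ is affine, it fixes a closed embedding $U \subset \bbA^n$, takes the closure $Z \subset \bbP^n$, and observes that the boundary $Z \setminus U$ is the hyperplane section, hence already the support of an effective ample Cartier divisor. Then it resolves $(Z, Z\setminus U)$ to snc and makes $Z \dasharrow Y$ regular by further blowups with centers in the complement of $U$. At each such blowup $\pi$ with exceptional $E$, if $F$ is ample effective supported on the boundary then $\pi^{-1}(F) - \epsilon E$ is again ample effective supported on the boundary for small $\epsilon$. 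The point is that you \emph{start} from a compactification where the boundary is ample and then preserve this under blowup; you never have to manufacture ampleness from a boundary that might not support it.

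Your argument, by contrast, tries to work on (a blowup of) the given $Y$, and breaks down in two places. First, the claim that every exceptional $E_i$ acquires $a_i > 0$ in $\pi^*\overline{G}$ requires each blowup center to lie in $\overline{G}$; but $\overline{G}$ is the closure of one particular divisor on $U$ and has no reason to contain an arbitrary boundary stratum, so blowing up a stratum disjoint from $\overline{G}$ gives $a_i = 0$. Second, even granting the multiplicities, your final class $A$ is only exhibited as ample plus effective, hence big; the phrase ``one then perturbs once more to get genuinely ample'' is not justified, since the ample cone can be strictly smaller than the big cone and you give no mechanism for perturbing \emph{within the span of the boundary components}. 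Both issues stem from the same underlying fact: on $Y$ itself the boundary need not support an ample divisor (this is Goodman's theorem, and its proof goes precisely through the projective embedding the paper uses).
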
 
\begin{proof}
	By assumption we have a closed embedding $U \subset \bbA^n$.
	Taking closure in the projective space $\bbP^n$ gives a compactification $U \subset Z$ with boundary the support of an ample Cartier divisor $A$.
	By the resolution of singularities (see \cite[Theorems 3.26-3.27]{Kollar_Lectures_on_resolution_of_singularities}), we can make $U \subset Z$ snc, and $Z \dasharrow Y$ regular, by an iterated sequence of blowups with centers in the complement of $U$.
	If $F \subset Z$ is an ample effective divisor with support $U^c$, then so is $\pi^{-1}(F) - \epsilon E$, for $E$ the exceptional divisor, and $\epsilon$ sufficiently small. 
\end{proof}

\section{Scattering diagram via infinitesimal analytic cylinders} \label{sec:scattering}

In this section, we give a direct geometric construction of a scattering diagram on the skeleton $\Sk(U)$ by counting infinitesimal analytic cylinders (see \cref{sec:intro:scattering} for a summary).
This differs and generalizes the combinatorial construction of scattering diagrams in \cite{Gross_Mirror_symmetry_for_log_Calabi-Yau_surfaces_I_v1,Gross_Canonical_bases}.
It will be the key to connecting our mirror algebra to cluster algebras in the next section.
The main results here are the wall-crossing homomorphism (\cref{thm:wall-crossing_homomorphism}), existence of wall-crossing function (\cref{prop:wall-crossing_function}), finite polyhedral approximation (\cref{prop:wall_decomposition}), theta function consistency (\cref{prop:theta_function_consistent}) and Kontsevich-Soibelman consistency (\cref{prop:KS_consistent}).

We define the counts of infinitesimal analytic cylinders in \cref{def:infinitesimal_spine}.
Using these counts, the wall-crossing transformations are constructed in Definitions \ref{def:wall-crossing_map}, \ref{def:wall-crossing_transformation},  and \ref{def:wall-crossing_transformation_extension}, via several extension steps, based on the results of adic convergence, wall-crossing homomorphism and existence of wall-crossing function.
The wall-crossing transformations give rise to a scattering diagram with finite polyhedral approximations.
In \cref{sec:consistency}, we introduce local theta functions, and prove the theta function consistency, i.e.\ the compatibility between the local theta functions and the scattering diagram.
We are obliged to separate the local theta functions into three parts in \cref{prop:theta_function_consistent}, because the wall-crossing functions are not invertible with curve classes.
In \cref{sec:setting_the_curve_classes_to_0}, we set all the curve classes to 0.
Then we show that the wall-crossing functions become invertible, and we obtain a consistent scattering diagram in the sense of Kontsevich-Soibelman.

\subsection{Construction} \label{sec:scattering:construction}

Let $T_M\subset U\subset Y$ be as in \cref{sec:log_CY}, and $N\coloneqq\Hom(M,\bbZ)$.

\begin{definition} \label{def:generic}
	Given $n\in N\setminus 0$, we say that a point $x\in n^\perp\subset M_\bbR$ is \emph{generic} if it does not lie in any other rational hyperplanes passing through 0.
\end{definition}

\begin{definition} \label{def:infinitesimal_spine}
	Given $n\in N\setminus 0$, $x\in n^\perp\subset M_\bbR$ generic, $v,w\in M\setminus n^\perp$, let $h\colon I^\epsilon\coloneqq[-\epsilon,\epsilon]\to M_\bbR$ be the continuous map, affine over $[-\epsilon,0]$ and $[0,\epsilon]$, with $h(0)=x$, $dh(-\epsilon)=-v$, $dh(\epsilon)=-w$.
	Given $\alpha\in\NE(Y)$, choose $A\in\bbN$ big with respect to $\alpha$.
	Shrink $\epsilon$ so that $h(I^\epsilon)$ does not meet $\Wall_A$ except at $x$.
	We obtain a spine $V^\epsilon_{x,v,w}\coloneqq[I^\epsilon,(-\epsilon,\epsilon),h]$ in $M_\bbR$ as in \cref{def:spine}.
	Define $N(V_{x,v,w},\alpha)\coloneqq N(V^\epsilon_{x,v,w},\alpha)$ as in \cref{def:naive_count_transverse}.
	It is independent of $\epsilon$ for $\epsilon$ sufficiently small.
\end{definition}

\begin{lemma} \label{lem:w-v}
	If $N(V_{x,v,w},\alpha)\neq 0$, then $w-v\in n^\perp$.
\end{lemma}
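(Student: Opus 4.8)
\textbf{Proof plan for Lemma \ref{lem:w-v}.}

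The plan is to use the balancing condition together with the curve-class formula to show that the ``net direction'' of the infinitesimal spine $V_{x,v,w}$ is constrained by the fact that $\alpha \in \NE(Y)$ is a genuine curve class, and then combine this with the genericity of $x$. First I would recall that, by \cref{def:naive_count_transverse} and \cref{thm:deformation_invariance_truncated}, the count $N(V_{x,v,w},\alpha)$ equals the length of the space $F_i(\hat V_{x,v,w}, \hat\alpha)$ attached to the extended spine obtained by attaching two infinite legs at $-\epsilon$ and $\epsilon$ in the directions $v$ and $w$ (with the toric tail condition imposed on each). If $N(V_{x,v,w},\alpha) \neq 0$, there exists at least one analytic stable map $f\colon[C,(p_1,p_2)]\to Y^\an$ whose spine $\Sp(f)$ equals this extended spine; in particular, via \cref{prop:tropicalization_of_stable_map}, we have a tropical curve $\Trop(f) = [\Gamma, (p_1,p_2), h]$ in $M_\bbR$ of the prescribed type, satisfying the balancing condition at every vertex whose image lies in $M_\bbR$.

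The key computation is then the following. The underlying extended spine has exactly two legs, with outward weight vectors $v$ and $w$ at the two $1$-valent vertices, and one bending vertex mapping to $x \in \Wall_A$ (the only place the spine meets $\Wall_A$). Consider the convex hull $\Gamma^s$ of the two marked points, an interval from $p_1$ to $p_2$. Away from the single bending vertex at $x$, the map $h$ is affine, and the twigs of $\Trop(f)$ are attached to $\Gamma^s$ and have total degree controlled by $\alpha$. Using \cref{prop:curve_class_formula} (or directly \cref{def:curve_class_via_varphi}), the class $\alpha$ is computed as $\delta_h$, the sum over domains of affineness of the kink contributions $\odelta_l$; the bending at $x$ contributes the difference of incoming and outgoing derivatives of $h$. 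Summing the balancing conditions over all vertices of $\Gamma$, the interior contributions telescope, and one is left with the identity
\[
w - v \;=\; (\text{sum of weight vectors of twigs pointing into } \Gamma^s) \;+\; (\text{bending vector at } x).
\]
Now the bending vector at $x$ lies in the tangent cone of $\Wall_A$ at $x$; since $x$ is generic in $n^\perp$ (hence lies in no rational hyperplane other than $n^\perp$), the only cell of $\Wall_A$ through $x$ whose linear span could contain this bending vector is contained in $n^\perp$, so the bending vector lies in $n^\perp$. Similarly, each twig maps into $E_\rt^\trop \subset \partial\oM_\bbR$, and its weight vector, by \cref{const:initial_walls} and the balancing of twigs, lies in $V_\sigma$ for the relevant cell; but for a \emph{transverse} spine the image $h(\Gamma)$ avoids $E_\rt^\trop$ entirely (\cref{def:transverse}(\ref{def:transverse:image})), so in fact there are no twigs at all, and $w - v$ equals the bending vector at $x$, which lies in $n^\perp$.

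The main obstacle I anticipate is making the ``genericity forces the bending vector into $n^\perp$'' step airtight: one must argue that the tangent cone of $\Wall_A$ at a generic point $x \in n^\perp$ is contained in $n^\perp$. This should follow because $\Wall_A$ is a finite union of rational polyhedral cones (\cref{const:walls_by_induction}), and a generic point of $n^\perp$ can lie only on those cells of $\Wall_A$ whose affine span is exactly $n^\perp$ — any cell spanning a smaller or a different rational subspace would force $x$ into another rational hyperplane. A secondary point to handle carefully is the reduction to the case where $h$ is actually transverse (so that \cref{def:transverse}(\ref{def:transverse:image}) applies and kills the twigs); this is exactly where \cref{prop:moving_w} and the deformation invariance of \cref{sec:deformation_invariance} enter, together with \cref{prop:transversality} to perturb into the transverse locus without changing the count or the constraint $w-v \in n^\perp$, which is a closed condition. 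Once these two points are settled, the conclusion $w - v \in n^\perp$ is immediate.
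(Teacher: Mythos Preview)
Your core idea is correct and matches the paper's: genericity of $x$ in $n^\perp$ forces any cell of $\Wall_A$ through $x$ to span exactly $n^\perp$, so anything constrained to the tangent cone of $\Wall_A$ at $x$ lies in $n^\perp$. But the execution contains a real error.

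The problem is your decomposition $w-v = (\text{sum of twig weight vectors}) + (\text{bending vector at }x)$ followed by the claim that transversality kills the twigs. These are not two separate contributions. In the balanced tropical curve $\Trop(f)$, the balancing condition at the vertex over $x$ reads exactly $w - v = \sum(\text{monomials of the twigs attached at that vertex})$; the bending of the spine \emph{is} the twig contribution, and there is no independent ``bending vector''. Your argument that ``for a transverse spine the image $h(\Gamma)$ avoids $E_\rt^\trop$, so there are no twigs'' misreads the definitions: the condition $h(\Gamma)\cap E_\rt^\trop=\emptyset$ in \cref{def:transverse} concerns the image of the \emph{spine} $\Gamma^s$, whereas twigs are by definition the branches of $\Trop(f)$ that leave the spine and go out to $E_\rt^\trop$. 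When $w\neq v$, balancing at the vertex over $x$ forces such twigs to exist --- this is precisely the first line of the paper's proof.

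The fix is immediate once you drop the spurious decomposition. Each twig attached at $x$ has image in $\Wall_A$, so its monomial lies in the tangent cone of $\Wall_A$ at $x$; by genericity this tangent cone is contained in $n^\perp$, hence each twig monomial lies in $n^\perp$, and summing gives $w-v\in n^\perp$. That is the entire argument. The curve-class formula (\cref{prop:curve_class_formula}), the deformation-invariance reduction, and the transversality perturbation you invoke are all unnecessary for this lemma.
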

\begin{proof}
	If $w-v\neq 0$, then the tropical curve associated to any analytic curve contributing to $N(V^\epsilon_{x,v,w},\alpha)$ has twigs attached to $0\in I^\epsilon$.
	So $x=h(0)$ lies in $\Wall_A$.
	Since $x$ is generic, if $x\in\sigma$ for some open cell $\sigma\subset\Wall_A$, then $\sigma$ is of codimension 1 in $M_\bbR$, and $\sigma$ is contained in $n^\perp$; in particular the monomial of any twig starting from $x$ lies in $n^\perp$.
	Since $w-v$ is the sum of the monomials of all twigs attached to $0\in I^\epsilon$, we conclude that $w-v\in n^\perp$.
\end{proof}

\begin{definition} \label{def:wall-crossing_map}
	Given $n\in N\setminus 0$, $x\in n^\perp\subset M_\bbR$ generic and $v\in M$ with $\braket{n,v}>0$, we define the following \emph{wall-crossing transformation}, the formal sum:	\begin{equation} \label{eq:wall-crossing}
	\Psi_{x,n}(z^v) \coloneqq \sum_{\substack{w \in M,\; \braket{n,w}>0\\ \alpha \in\NE(Y)}} N(V_{x,v,w},\alpha) z^{\alpha} z^{w}.
	\end{equation}
	\end{definition}

We show first that this converges in a natural adic topology.
Fix a strictly convex toric monoid $Q\subset N_1(Y)$ containing $\NE(Y)$.
Let $Q_1\coloneqq Q \setminus 0$.
For $k\ge 1$, let $Q_k \subset Q_1$ be the set of elements which are sums of $k$ elements in $Q_1$.
Let $I \subset \bbZ[Q\oplus M]$ be the maximal monoid ideal, i.e.\ the ideal  generated by monomials $z^{(q,m)}$, $q \in Q_1$, $m\in M$.
Let $\hR$ be the $I$-adic completion of $\bbZ[Q\oplus M]$.

\begin{lemma} \label{lem:bend_bound}
	Given $k>0$, there are at most finitely many pairs $(m,n)\in M\times N$ satisfying the following:
	\begin{enumerate}
	\item $m\in n^\perp\setminus 0$;
	\item $n$ is primitive;
	\item $N(V_{x,v,v+m},\alpha)\neq 0$ for some $x\in n^\perp$ generic, $v\in M\setminus n^\perp$ and $\alpha\in Q\setminus Q_k$.
	\end{enumerate}
	Moreover, there exists $A\in\bbN$, such that for any $N(V_{x,v,v+m},\alpha)\neq 0$ as above, $x\in\Wall_A$ and $n^\perp$ is the span of a polyhedral cell in $\Wall_A$.
\end{lemma}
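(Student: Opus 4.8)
\textbf{Proof plan for Lemma \ref{lem:bend_bound}.}

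The plan is to reduce everything to the finiteness statements already established for twigs and curve classes, and to use the convexity estimate of \cref{thm:Ftrop_structure_constants} to bound the discrete data. First I would fix $k$ and observe that the class $\alpha$ in condition (3) ranges over $Q \setminus Q_k$, which is the set of elements of $Q$ that are sums of at most $k-1$ elements of $Q_1$; intersecting with an ample class on $Y$ as in \cref{lem:Krull_intersection} (or directly using that $\NE(Y)$ meets $Q \setminus Q_k$ in a finite set once we also bound $F \cdot \alpha$), I would argue that only finitely many $\alpha \in \NE(Y)$ can actually contribute — but more efficiently, I would first bound the \emph{twig degree}, hence $A$, and then invoke \cref{lem:bound_on_class}(\ref{lem:bound_on_class:deg}) to get finiteness of $\alpha$, and only then finiteness of $(m,n)$.

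The key steps, in order, are as follows. (i) If $N(V_{x,v,v+m},\alpha) \neq 0$ with $m \neq 0$, then by \cref{lem:w-v} we have $m \in n^\perp$, and the analytic curves contributing to the count have nonempty twig part attached at $0 \in I^\epsilon$ (as in the proof of \cref{lem:w-v}); the sum of the monomials of those twigs is $m = w - v$. (ii) The twig degree of such a curve is therefore at least $|m|$ in the norm of \cref{def:norm_of_weight_vector} — more precisely, $\degtwig \Trop(f) = \beta \cdot \tE$ by \cref{prop:tropicalization_of_stable_map}, and this is $\geq 1$ whenever there is a twig, so the presence of a bending forces $\beta \cdot \tE \geq 1$; combined with $\delta_{V_{x,v,w}}$-type computations and \cref{def:A_big}, I get that $A$ can be taken to depend only on $\alpha$, and since $\alpha$ ranges in the finite-sum set $Q \setminus Q_k$, a uniform $A = A(k)$ works, proving the last sentence of the lemma once step (iv) bounds things. (iii) With $A = A(k)$ fixed, the image of $V_{x,v,v+m}$ lies in $\Wall_A$ at the bending point by \cref{def:spine}(\ref{def:spine:bending}), so $x \in \Wall_A$; since $x$ is generic (lies in no rational hyperplane through $0$ other than $n^\perp$), the cell $\sigma \subset \Wall_A$ containing $x$ must be codimension one and span $n^\perp$, which pins down $n^\perp$, hence the primitive $n$, to one of the finitely many hyperplanes spanned by codimension-one cells of $\Wall_A$. (iv) Finally, for each such $n$, the balancing/bending condition together with \cref{rem:twig_bound} bounds $|m|$ in terms of $A$: $m$ is the sum of monomials of a twig configuration of total degree $\le A$, and there are only finitely many combinatorial types of twigs of degree $\le A$ by \cref{rem:twig_bound}, so only finitely many values of $m$ occur. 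This yields finiteness of the pairs $(m,n)$.

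The main obstacle I expect is step (ii): making the dependence of $A$ on $\alpha$ genuinely \emph{uniform} over $\alpha \in Q \setminus Q_k$, rather than merely finite for each $\alpha$. The clean way is to note $\tE$ is a fixed effective divisor on $Y$ supported on $Y \setminus T_M$, and for $\alpha \in Q \setminus Q_k$ one has $\alpha = \alpha_1 + \dots + \alpha_{k-1}$ with $\alpha_i \in Q$, so $\alpha \cdot \tE$ is bounded in terms of $k$ and the finitely many extremal generators of $Q$ paired with $\tE$ — here I would use that $Q$ is a finitely generated toric monoid, so $\{q \cdot \tE : q \in Q_1, \ q \text{ a generator}\}$ is a finite set, and hence $\alpha \cdot \tE \le (k-1)\max_i(\text{gen}_i \cdot \tE) =: A(k)$. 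Then set $A = A(k)$, which is big with respect to every such $\alpha$, and the rest goes through. A secondary subtlety is checking that "$N(V_{x,v,v+m},\alpha) \neq 0$ with $m \neq 0$ implies a nonempty twig", which is exactly the argument already run in the proof of \cref{lem:w-v} and can be cited; I would spell out only that the $x$ generic hypothesis then forces $x \in n^\perp$ to be a codimension-one cell.
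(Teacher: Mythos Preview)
Your overall skeleton matches the paper's proof: bound the twig degree uniformly by some $A$, use \cref{rem:twig_bound} to get finitely many twig types (hence finitely many $m$), and use that $x$ generic forces $n^\perp$ to be the span of a codimension-one cell of $\Wall_A$ (hence finitely many $n$). Steps (i), (iii), (iv) are exactly what the paper does.

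The issue is step (ii). You work hard to bound $\alpha\cdot\tE$ by writing $\alpha=\alpha_1+\dots+\alpha_{k-1}$ and pairing with generators, but this is both unnecessary and not quite correct as stated: the generators $g_i$ of $Q$ need not satisfy $g_i\cdot\tE\ge 0$ (remember $Q$ is an arbitrary strictly convex toric monoid containing $\NE(Y)$, not $\NE(Y)$ itself), so your inequality $\alpha\cdot\tE\le(k-1)\max_i(g_i\cdot\tE)$ can fail. The paper's observation is much simpler: since $Q$ is a strictly convex toric monoid it is finitely generated, and any element not in $Q_k$ is a sum of at most $k-1$ generators, so $Q\setminus Q_k$ is a \emph{finite set}. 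Hence only finitely many $\alpha$ occur at all, and one takes $A$ to be the maximum of $\alpha\cdot\tE$ over this finite set (combined with \cref{lem:hgamma} to pass from $\alpha$ to the extended class $\hgamma$). Once you see that $Q\setminus Q_k$ is finite, the references to \cref{thm:Ftrop_structure_constants}, \cref{lem:Krull_intersection}, and \cref{lem:bound_on_class} in your plan become superfluous.
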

\begin{proof}
	We consider the twigs attached to the tropical curves associated to the analytic curves contributing to $N(V_{x,v,v+m},\alpha)$.
	Since $Q \setminus Q_k$ is finite, by \cref{lem:hgamma} and \cref{prop:tropicalization_of_stable_map}, the degrees of such twigs are bounded by some $A\in\bbN$.
	So by \cref{rem:twig_bound}, there are only finitely many combinatorial types for such twigs.
	This bounds $m$.
	Furthermore, by \cref{const:walls_by_induction}, $\Wall_A$ contains only finitely many cells, and the image of any twig of degree bounded by $A$ is contained in $\Wall_A$.
	In particular, $N(V_{x,v,v+m},\alpha)\neq 0$ implies that $x\in\Wall_A$.
	Since $x\in n^\perp$ is generic, we deduce that $n^\perp$ is the span of a polyhedral cell in $\Wall_A$.
	This also bounds $n$.
\end{proof}

\begin{lemma} \label{lem:wall-crossing_bound}
	Given $k>0$, there are at most finitely many $n\in N\setminus 0$ primitive such that $\Psi_{x,n}(z^v)\neq z^v$ modulo $I^k$ for some generic $x\in n^\perp$ and $v\in M\setminus n^\perp$.
	Given $k>0$, $n\in N\setminus 0$, $x\in n^\perp\subset M_\bbR$ generic, and $v\in M$ with $\braket{n,v}>0$, there are at most finitely many $w$ with $N(V_{x,v,w},\alpha)\neq 0$ for some $\alpha\in Q\setminus Q_k$.
	Hence the formal sum $\Psi_{x,n}(z^v)$ in \eqref{eq:wall-crossing} lies in $\hR$.
\end{lemma}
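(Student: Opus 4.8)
The plan is to reduce everything to the finiteness statement in \cref{lem:bend_bound}. The key observation is that $\Psi_{x,n}$ only modifies $z^v$ through the bending vectors $m = w - v$, which by \cref{lem:w-v} must lie in $n^\perp$, so that $w = v + m$ with $m \in n^\perp$. Thus the only data entering the non-triviality of $\Psi_{x,n}(z^v) \bmod I^k$ are pairs $(m,n)$ and classes $\alpha \in Q$ with $N(V_{x,v,v+m},\alpha) \neq 0$; and working modulo $I^k$ means we only see $\alpha \in Q \setminus Q_k$ (together with $m = 0$, $\alpha = 0$, which gives the trivial term $z^v$).

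First I would handle the first sentence. Suppose $\Psi_{x,n}(z^v) \neq z^v$ modulo $I^k$ for some generic $x \in n^\perp$ and $v \in M \setminus n^\perp$, with $n$ primitive. Then there is some $m \in n^\perp$ and some $\alpha \in Q \setminus Q_k$ with $N(V_{x,v,v+m},\alpha) \neq 0$ and $(m,\alpha) \neq (0,0)$. If $\alpha = 0$ then $m \neq 0$, and if $m = 0$ then $\alpha \neq 0$; in either case the relevant count is nonzero with $m \in n^\perp \setminus 0$ (in the first case directly, and in the second case: if $m = 0$ one notes the count $N(V_{x,v,v},\alpha)$ for $\alpha \neq 0$ also forces $x \in \Wall_A$ by the same twig argument as in \cref{lem:bend_bound}, since a nonzero class means there is a nontrivial twig). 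Either way, \cref{lem:bend_bound} (applied with this $k$) shows that $n^\perp$ must be the linear span of a polyhedral cell of $\Wall_A$ for a fixed $A = A(k)$, and since $\Wall_A$ has only finitely many cells (\cref{const:walls_by_induction}) and $n$ is primitive, there are only finitely many such $n$. I should double-check the degenerate subcase $m = 0$, $\alpha \neq 0$ carefully — this is the one place where the statement of \cref{lem:bend_bound} as written (which takes $w = v + m$ with $m \in n^\perp \setminus 0$) doesn't literally cover the situation, and I expect to need to rerun its short proof, or simply note that such an $\alpha$-contribution with trivial bending cannot occur because the count $N(V_{x,v,v},\alpha)$ over a straight (unbent) spine is $0$ unless $\alpha = 0$ by \cref{lem:count_balanced_spines} once $x \notin \Wall_A$, and is covered by \cref{lem:bend_bound}'s argument when $x \in \Wall_A$. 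This is the main technical point, though it is routine.

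Next I would handle the second sentence: fix $k > 0$, $n$, generic $x \in n^\perp$, and $v$ with $\braket{n,v} > 0$. If $N(V_{x,v,w},\alpha) \neq 0$ for some $\alpha \in Q \setminus Q_k$, then writing $m = w - v$ we have $m \in n^\perp$ by \cref{lem:w-v}, and \cref{lem:bend_bound} (again with the fixed $k$, bounding $\alpha \in Q \setminus Q_k$) gives finitely many possible $m$, hence finitely many possible $w = v + m$. Combined with \cref{lem:bound_on_class}(\ref{lem:bound_on_class:degtwig}) — or rather with the fact that $Q \setminus Q_k$ is finite so there are only finitely many admissible $\alpha$ — and with \cref{prop:finiteness_Q_gamma}-style finiteness of classes, each coefficient $N(V_{x,v,w},\alpha)$ is an integer and only finitely many $(w,\alpha)$ contribute below degree $k$. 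Therefore the truncation of the formal sum \eqref{eq:wall-crossing} modulo $I^k$ is a finite sum for every $k$, so the sum defines a well-defined element of the $I$-adic completion $\hR$. I would conclude by noting that $\Psi_{x,n}(z^v) \in \hR$, which is exactly the last assertion.

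Finally, a remark on expected obstacles: none of this is deep — the entire content has been isolated into \cref{lem:w-v}, \cref{lem:bend_bound}, and the finiteness lemmas of \cref{sec:wall_spines_and_tropical_curves} and \cref{sec:convexity}. The only place requiring genuine care is matching the precise hypotheses of \cref{lem:bend_bound} (which is phrased for $v + m$ with $m \in n^\perp \setminus 0$) to the cases actually arising, in particular the trivial-bending-but-nonzero-class term and the need to extract a \emph{single} $A$ depending only on $k$ (not on $x$, $v$, $n$) — but \cref{lem:bend_bound} already delivers exactly such a uniform $A$, so this is purely a matter of quoting it correctly.
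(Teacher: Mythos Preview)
Your proposal is correct and follows the same route as the paper, which simply records that the lemma ``follows directly from \cref{lem:bend_bound}.'' Your write-up is more careful than the paper in flagging the edge case $m=0$, $\alpha\neq 0$ (which the literal hypotheses of \cref{lem:bend_bound} exclude), and your suggested handling of it---either rerunning the twig-degree argument to force $x\in\Wall_A$, or invoking \cref{lem:count_balanced_spines} when $x\notin\Wall_A$---is exactly the right patch; the paper evidently regards this as absorbed into ``directly.''
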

\begin{proof}
	It follows directly from \cref{lem:bend_bound}.
\end{proof}

\begin{definition} \label{def:wall-crossing_transformation}
	Let $n\in N\setminus 0$, $x \in n^\perp\subset M_\bbR$ generic, and $v\in M$ with $\braket{n,v}\ge 0$.
	If $\braket{n,v}>0$, let $\Psi_{x,n}(z^v)$ be as in \eqref{eq:wall-crossing}.
	If $\braket{n,v}=0$, let $\Psi_{x,n}(z^v)=z^v$.
	And we further extend to $\Psi_{x,n}\colon\bbZ[\NE(Y)\oplus M_{\braket{n,\cdot}\ge 0}]\to\hR$ by linearity over $\bbZ[\NE(Y)]$.
\end{definition}

\begin{theorem}\label{thm:wall-crossing_homomorphism}
	The map $\Psi_{x,n}$ is a ring homomorphism.
\end{theorem}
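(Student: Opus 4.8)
The claim is that $\Psi_{x,n}$, as defined on the monoid ring $\bbZ[\NE(Y)\oplus M_{\braket{n,\cdot}\ge 0}]$ and valued in $\hR$, respects multiplication; since it is already $\bbZ[\NE(Y)]$-linear by construction, it suffices to check $\Psi_{x,n}(z^{v_1}z^{v_2})=\Psi_{x,n}(z^{v_1})\Psi_{x,n}(z^{v_2})$ for $v_1,v_2\in M$ with $\braket{n,v_i}\ge 0$. The case where one of the $\braket{n,v_i}$ vanishes is essentially formal: if, say, $\braket{n,v_1}=0$, then by \cref{lem:w-v} any nonzero $N(V_{x,v_2,w},\alpha)$ forces $w-v_2\in n^\perp$, hence $\braket{n,v_1+w}=\braket{n,v_2+w}$, and one checks that the bending data (and the associated curve classes) of the infinitesimal spine $V^\epsilon_{x,v_1+v_2,w'}$ differ from those of $V^\epsilon_{x,v_2,w}$ only by a global translation $z^{v_1}$, giving the identity directly. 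So the content is the case $\braket{n,v_1}>0$, $\braket{n,v_2}>0$.

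The main tool will be the gluing formula for concatenating spines, \cref{thm:gluing_concatenate} (together with deformation invariance, \cref{thm:deformation_invariance_truncated}, and the elementary count \cref{lem:count_balanced_spines}). Concretely, I would work modulo $I^k$ for each fixed $k$, which by \cref{lem:wall-crossing_bound} reduces everything to finite sums. Pick two nearby generic points $x_1,x_2\in n^\perp$ and form the transverse spine which goes straight in from direction $-v_1$, bends at $x_1$ into direction $-w_1$ for some intermediate $w_1$, then — after traveling along $n^\perp$ — bends at $x_2$ from an additional incoming $-v_2$ leg, producing an outgoing $-w_2$. Reading this configuration in two ways gives the product formula. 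More precisely: concatenating an infinitesimal spine bending $v_1\mapsto w_1$ at $x_1$ with one bending $w_1+v_2\mapsto w_2$ at $x_2$ computes, by \cref{thm:gluing_concatenate}, the coefficient of $z^{w_2}$ in $\Psi_{x_2,n}\bigl(z^{v_2}\,\Psi_{x_1,n}(z^{v_1})\bigr)$; on the other hand, deforming $x_1$ and $x_2$ together into a single generic point of $n^\perp$ (which is legitimate since all intermediate spines stay transverse, as $n^\perp$ is a codimension-one wall span by \cref{lem:bend_bound} and moving the two bending vertices along it changes nothing by \cref{cor:vary_lengths}) collapses the picture to a single infinitesimal spine bending $v_1+v_2\mapsto w_2$, i.e.\ to $\Psi_{x,n}(z^{v_1+v_2})$. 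One still needs to know $\Psi_{x_1,n}=\Psi_{x_2,n}=\Psi_{x,n}$ as maps, modulo $I^k$, for generic points on the same hyperplane $n^\perp$; this is again deformation invariance of the naive counts $N(V_{x,v,w},\alpha)$ among transverse spines (moving $x$ within the generic locus of $n^\perp$ is a transverse deformation), which is exactly \cref{thm:deformation_invariance_truncated}.

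The steps, in order: (1) reduce to $v_1,v_2\in M$, then to $\braket{n,v_i}>0$, handling the degenerate case by \cref{lem:w-v} and a translation argument; (2) fix $k$ and pass to $\hR/I^k$, invoking \cref{lem:wall-crossing_bound} so that only finitely many $w$, $n$, $\alpha$ contribute; (3) show $\Psi_{x,n}\bmod I^k$ is independent of the generic point $x\in n^\perp$, using \cref{thm:deformation_invariance_truncated}; (4) realize both $\Psi_{x,n}(z^{v_1}z^{v_2})$ and $\Psi_{x,n}(z^{v_1})\Psi_{x,n}(z^{v_2})$ as counts attached to the same two-vertex transverse spine via \cref{thm:gluing_concatenate}, and identify them by sliding the two bending vertices together along $n^\perp$ (\cref{cor:vary_lengths} plus \cref{thm:deformation_invariance_truncated}); (5) conclude associativity and commutativity of the sums from finiteness mod $I^k$, and take the limit over $k$. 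I expect step (4) to be the main obstacle: one must set up the concatenated spine carefully so that the class bookkeeping matches on both sides — the curve class $\alpha$ of a concatenation decomposing as $\alpha_1+\alpha_2$ (the hypothesis of \cref{thm:gluing_concatenate} on $A$ being big with respect to each summand must be arranged via \cref{def:A_big}) — and one must verify that the deformation collapsing $x_1,x_2$ to a single $x$ genuinely stays inside the transverse locus for every intermediate spine, including when an intermediate weight vector $w_1$ happens to be parallel to $n^\perp$. The bookkeeping is exactly the same as the associativity argument for the mirror algebra in \cref{sec:associativity}, so I would model the proof on \cref{thm:associativity}, replacing the three marked legs there by the two-bending-vertex configuration here.
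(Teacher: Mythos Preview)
Your proposal has a genuine geometric gap. The configuration you describe cannot exist: after bending $v_1\mapsto w_1$ at $x_1\in n^\perp$, \cref{lem:w-v} forces $\braket{n,w_1}=\braket{n,v_1}>0$, so the outgoing segment has direction transverse to $n^\perp$ and immediately leaves the hyperplane---it cannot ``travel along $n^\perp$'' to reach a second point $x_2\in n^\perp$. Relatedly, the concatenation you propose does not satisfy the hypothesis of \cref{thm:gluing_concatenate}: the first spine ends with outgoing weight $-w_1$, while your second spine begins with outgoing weight $w_1+v_2$, and these are not opposite. Finally, ``collapsing'' a three-legged tree (ends carrying $v_1$, $v_2$, $-w_2$) to a two-ended infinitesimal cylinder $V_{x,v_1+v_2,w_2}$ is not a deformation of spines---it changes the domain topology.

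The paper's argument is structurally different. One fixes the output monomial $z^e$ and works with a \emph{tripod} spine: three finite legs carrying $m_1,m_2,-e$ meeting at a trivalent vertex $z$, plus an interior marked leg. One then moves the image of $z$ \emph{transversally across} $n^\perp$ along a small path $\gamma$, rather than sliding bending vertices within $n^\perp$. The key invariance is \cref{prop:deformation_invariance_skeletal}, not merely \cref{thm:deformation_invariance_truncated}: at the moment the trivalent vertex sits on the wall the spine is \emph{not} transverse (it violates \cref{def:transverse}(\ref{def:transverse:bending})), and only the skeletal deformation invariance allows one to cross this moment. On the side where $h(z)$ lies in $\{\braket{n,\cdot}>0\}$, the $m_1$- and $m_2$-legs are short and unbent while the $e$-leg crosses the wall once; cutting near $z$ and applying \cref{thm:gluing_concatenate} with \cref{lem:count_balanced_spines} identifies the count with the $z^\alpha z^e$ coefficient of $\Psi_{x,n}(z^{m_1+m_2})$. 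On the other side, each of the $m_1$- and $m_2$-legs crosses the wall separately (with independent bends $b_1,b_2$ summing to $e-(m_1+m_2)$) while the $e$-leg is unbent; cutting twice near $z$ gives the coefficient of $\Psi_{x,n}(z^{m_1})\Psi_{x,n}(z^{m_2})$. Your step (3), the constancy of $\Psi_{x,n}$ in $x$ along the generic locus of $n^\perp$, and your finiteness reductions via \cref{lem:wall-crossing_bound}, are correct and do appear in the paper's proof---but they are auxiliary; the core mechanism is the wall-crossing of the tripod center, which your sketch does not capture.
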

\begin{proof}
	We pick $m_1,m_2 \in M_{\braket{n,\cdot}\ge 0}$ and show $\Psi_{x,n}(z^{m_1 + m_2}) = \Psi_{x,n}(z^{m_1}) \cdot \Psi_{x,n}(z^{m_2})$.
	It is obvious when both are in $n^\perp$, so we may assume $\braket{n,m_1} > 0$ and $\braket{n,m_2} \geq 0$.
	It is enough to fix $k$ and prove the equality modulo $I^k$.
	Let $A$ be as in \cref{lem:bend_bound}.
	Now we fix $e \in M$ and prove the equality of the $z^e$ coefficients (as elements of $\bbZ[\NE(Y)]$).
	
	Let $\Gamma_0$ be a metric tree with three 1-valent vertices $v_1,v_2,v_3$, one 3-valent vertex $z$, and all edges having a same length $\epsilon$.
	Let $w$ be the midpoint of $[z,v_3]$, and glue $[0,v_4\coloneqq+\infty]$ to $\Gamma_0$, along $0$ and $w$.
	We denote the resulting metric tree by $\Gamma$.
	
	Let $\bP=(m_1,m_2,-e,0)$, $J=\{1,2,3,4\}$, $I=\{4\}$, $F = B = \{1,2,3\}$.
	Let $\gamma\colon[-\delta,\delta]\to M_\bbR$ be a linear segment such that $\operatorname{Im}\gamma\cap\Wall_A=\emptyset$, $\gamma(0)+\frac{\epsilon}{2}e=x$ and $\braket{n,\gamma(t)+\frac{\epsilon}{2}e}\cdot t < 0$ for $t\neq 0$.
	
	Let $\SP(M_\bbR,\bP^F)_\Gamma\subset\SP(M_\bbR,\bP^F)$ be the subset of spines with domain $\Gamma$.
	Let $\ev\colon\SP(M_\bbR,\bP^F)_\Gamma\allowbreak\to M_\bbR$ be evaluation at $v_4$.	
	By \cref{rem:internal_leg_contracted}, if $[\Gamma,(v_1,\dots,v_4),h]\in\SP(M_\bbR,\bP^F)_\Gamma$ with $h(v_4)\in\operatorname{Im}\gamma$, then $h$ is constant on the leg incident to $v_4$; in particular $h(v_4)=h(w)$.
	If $x\in\Wall_A$, let $\sigma$ be the open cell of $\Wall_A$ containing $x$; otherwise let $\sigma\coloneqq\emptyset$.
	Shrinking $\epsilon$ and $\delta$, we may assume the image of any spine in $\ev\inv(\operatorname{Im}\gamma)$ does not meet $\Wall_A\setminus\sigma$.
	For $t\in[-\delta,\delta]$, let $\SP_{\gamma(t)}\coloneqq\ev\inv(\gamma(t))$.
	Apply \cref{prop:deformation_invariance_skeletal} to a sufficiently small open neighborhood of $\{\Gamma\}\times\operatorname{Im}\gamma$ inside $\NT_J^F\times M_\bbR$, we obtain:
	\begin{claim} \label{claim:independent_of_t}
		\[\sum_{S\in \SP_{\gamma(t)}} N(S,\alpha)\]
		is independent of $t$.
	\end{claim}
	
	Note the following combinatorial observation:
	\begin{claim}[see \cref{fig:wc_homomorphism}] \label{claim:SPgammat}
		For $t\in[-\delta,0]$, $\SP_{\gamma(t)}$ is a singleton, which we denote by $S$.
		For $t\in(0,\delta]$, $\SP_{\gamma(t)}$ is in bijection with decompositions of $b\coloneqq e - (m_1 + m_2)$ into $b_1 + b_2$ with $b_1,b_2 \in n^\perp \subset M$; in this case, we denote the elements of $\SP_{\gamma(t)}$ by $S_{b_1,b_2}$ via the bijection.
		\begin{figure}[!ht]
			\centering
			\setlength{\unitlength}{0.7\textwidth}
			\begin{picture} (1,0.45)
			\put(0,0){\includegraphics[width=\unitlength]{images/wc_homomorphism}}
			\put(0.26,0.02){$\sigma$}
			\put(0.76,0.02){$\sigma$}
			\put(0.07,0.32){$v_1$}
			\put(0.07,0.13){$v_2$}
			\put(0.34,0.145){$v_3$}
			\put(0.655,0.31){$v_1$}
			\put(0.63,0.13){$v_2$}
			\put(0.895,0.10){$v_3$}
			\end{picture}
			\caption{Left is an example of $S$; right is an example of $S_{b_1,b_2}$.}
			\label{fig:wc_homomorphism}
		\end{figure}
	\end{claim}

	For any $t\in[-\delta,0)$, by cutting $S$ at a point in $[z,v_3]\subset\Gamma$ very close to $z$, using the gluing formula (\cref{thm:gluing_concatenate}), \cref{lem:counts_in_toric_case} and deformation invariance (\cref{thm:deformation_invariance_truncated}), we see that $N(S,\alpha)$ gives the coefficient of $z^\alpha z^e$ in $\Psi_{x,n}(z^{m_1+m_2})$.
	
	For any $t\in(0,\delta]$, we cut $S_{b_1,b_2}$ at two points: one point in $[v_1,z]\subset\Gamma$ very close to $z$, another point in $[v_2,z]\subset\Gamma$ very close to $z$.
	Applying the gluing formula, \cref{lem:counts_in_toric_case} and deformation invariance again, we see that
	\[\sum_{b_1+b_2 = b} N(S_{b_1,b_2},\alpha)\]
	is exactly the coefficient of $z^{\alpha} z^e$ in $\Psi_{x,n}(z^{m_1}) \Psi_{x,n}(z^{m_2})$.
	So we conclude the proof by \cref{claim:independent_of_t}.
\end{proof}

Now we will use \cref{thm:wall-crossing_homomorphism} to further extend the domain of the wall-crossing transformation $\Psi_{x,n}$.

\begin{lemma} \label{lem:wall-crossing_function}
	Given $n\in N$ primitive, $x\in n^\perp\subset M_\bbR$ generic, and $v\in M$ with $\braket{n,v}=1$.
	Write
	\[\Psi_{x,n}(z^v) = z^v f_{x,n,v}\]
	with $f_{x,n,v}\in\hR$.
	The function $f_{x,n,v}$ does not depend on $v$.
	Moreover, $f_{x,n,v}=f_{x,-n,-v}$.
	So we can denote $f_x\coloneqq f_{x,n,v}$ for any choice of $n\in N$ primitive with $x\in n^\perp$ and any $v\in M$ with $\braket{n,v}=1$, which we will refer to as \emph{wall-crossing function}.
	For any $k > 0$, there are at most finitely many primitive $n\in N$ such that $f_x\neq 1$ modulo $I^k$ for some generic $x \in n^\perp$.
\end{lemma}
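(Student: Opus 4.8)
The plan is to reduce everything to the ring‑homomorphism property \cref{thm:wall-crossing_homomorphism} together with the finiteness statements \cref{lem:bend_bound} and \cref{lem:wall-crossing_bound}. By \cref{lem:w-v}, for $v$ with $\langle n,v\rangle=1$ we have $N(V_{x,v,w},\alpha)=0$ unless $w-v\in n^\perp$, so the constraint $\langle n,w\rangle>0$ in \eqref{eq:wall-crossing} is automatic whenever $N\neq0$, and writing $w=v+m$ gives
\[
f_{x,n,v}=z^{-v}\Psi_{x,n}(z^v)=\sum_{m\in n^\perp,\ \alpha\in\NE(Y)} N(V_{x,v,v+m},\alpha)\,z^\alpha z^m\in\hR,
\]
convergence being \cref{lem:wall-crossing_bound}. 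For independence of $v$: if $\langle n,v\rangle=\langle n,v'\rangle=1$ then $v-v'\in n^\perp$, so $\Psi_{x,n}(z^{v-v'})=z^{v-v'}$ by \cref{def:wall-crossing_transformation}; applying \cref{thm:wall-crossing_homomorphism} to $v=v'+(v-v')$ yields $z^vf_{x,n,v}=\Psi_{x,n}(z^{v'})\Psi_{x,n}(z^{v-v'})=z^vf_{x,n,v'}$, hence $f_{x,n,v}=f_{x,n,v'}=:f_{x,n}$. Using multiplicativity once more (write $v=(v-kv_0)+kv_0$ with $k=\langle n,v\rangle$ and $v_0$ any element with $\langle n,v_0\rangle=1$) gives $\Psi_{x,n}(z^v)=z^vf_{x,n}^{\langle n,v\rangle}$ for all $v$ with $\langle n,v\rangle\geq0$; in particular, for $a,b\in M$ with $\langle n,a\rangle=\langle n,b\rangle=1$, the count $N(V_{x,a,b},\alpha)$ equals the $z^\alpha z^{b-a}$‑coefficient of $f_{x,n}$, so it depends only on $b-a$, $\alpha$ and $x$.

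The main point is the independence of the co‑orientation, $f_{x,n,v}=f_{x,-n,-v}$. Running the previous paragraph with $-n$ in place of $n$ (again primitive, with $x\in(-n)^\perp=n^\perp$) produces a well‑defined $f_{x,-n}$, and unwinding the definitions shows that $f_{x,n,v}=f_{x,-n,-v}$ is equivalent to
\[
N(V_{x,v,v+m},\alpha)=N(V_{x,-v,-v+m},\alpha)\qquad(m\in n^\perp,\ \alpha\in\NE(Y)),
\]
i.e.\ to the assertion that the count of infinitesimal cylinders near $x$ is insensitive to which half‑space of $n^\perp$ is declared positive. I would prove this by the same mechanism used for \cref{thm:wall-crossing_homomorphism}: construct a family of spines, parametrized by a connected subset of $\NT^F_J\times M_\bbR$ that crosses the wall $n^\perp$ near $x$, whose two ends realize a wall‑crossing in the $+n$‑ and the $-n$‑direction respectively, and apply the deformation invariance of \cref{prop:deformation_invariance_skeletal} together with the toric evaluation \cref{lem:counts_in_toric_case} to identify the two sides. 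Combined with the ``depends only on $b-a$'' consequence of the previous paragraph applied to $-n$, this gives the displayed equality of counts, hence $f_{x,n}=f_{x,-n}=:f_x$.

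I expect this third step to be the genuine obstacle: everything else is formal consequence of the homomorphism property, but here one must exhibit the interpolating family of spines and verify the hypotheses of \cref{prop:deformation_invariance_skeletal} (transversality, the condition that the cells of $\Wall_A$ through the relevant points have the right linear span, and that the toric‑tail contributions remain trivial along the deformation) — and care is needed because the naive swap of the two ends of a cylinder is only a relabelling and does \emph{not} by itself equate the two crossing directions. Granting the three steps above, $f_x$ is well defined, and the final finiteness assertion is then immediate: $f_x\not\equiv1\pmod{I^k}$ forces $\Psi_{x,n}(z^v)\neq z^v\pmod{I^k}$ for $v$ with $\langle n,v\rangle=1$ (such a $v$ exists since $n$ is primitive), and \cref{lem:wall-crossing_bound} says exactly that this happens for only finitely many primitive $n$ with some generic $x\in n^\perp$; alternatively, a nonzero term of $f_x$ modulo $I^k$ has curve class in the finite set $Q\setminus Q_k$ and so comes from a cylinder carrying a twig, whence $x\in\Wall_A$ for the $A=A(k)$ of \cref{lem:bend_bound} and $n^\perp$ is the span of one of its finitely many cells, leaving finitely many conormals $n$.
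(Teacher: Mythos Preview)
Your treatment of the independence of $v$ and of the finiteness assertion matches the paper's and is correct. The discrepancy is entirely in the symmetry $f_{x,n}=f_{x,-n}$, where you propose a deformation argument in the style of \cref{thm:wall-crossing_homomorphism} and flag it as the genuine obstacle. In fact the paper dispatches this in two lines using only ingredients you already have, and you explicitly had the key step in hand but dismissed it.

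Here is the paper's argument. You have already shown (for $n$) that $f_{x,n,v}=f_{x,n,v'}$ whenever $\langle n,v\rangle=\langle n,v'\rangle=1$; in particular, the $z^\alpha z^m$-coefficient of $f_{x,n}$, computed with the choice $v$, is $N(V_{x,v,v+m},\alpha)$, and computed with the choice $v'=v-m$ (still with $\langle n,v'\rangle=1$ since $m\in n^\perp$) it is $N(V_{x,v-m,v},\alpha)$; these are equal. Now the end-swap you mention is exactly what is needed: the spine $V^{\epsilon}_{x,v-m,v}$ (for $n$) and the spine $V^{\epsilon}_{x,-v,-v+m}$ (for $-n$) are the \emph{same} map $h\colon[-\epsilon,\epsilon]\to M_\bbR$ up to the reparametrization $t\mapsto -t$, hence have the same count. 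Therefore
\[
N(V_{x,v,v+m},\alpha)=N(V_{x,v-m,v},\alpha)=N(V_{x,-v,-v+m},\alpha),
\]
which is precisely your displayed identity. No deformation argument is required.

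Your remark that ``the naive swap of the two ends \dots does not by itself equate the two crossing directions'' is correct as stated: the swap alone gives $N(V_{x,v,v+m},\alpha)=N(V_{x,-v-m,-v},\alpha)$, not $N(V_{x,-v,-v+m},\alpha)$. What you missed is that a single application of your own ``depends only on $b-a$'' (equivalently, independence of $v$), inserted \emph{before} the swap, turns $(v,v+m)$ into $(v-m,v)$, after which the swap lands exactly where you want. So the symmetry is a formal consequence of the homomorphism property plus a relabelling, not a separate geometric input.
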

\begin{proof}
	Say we have $v_1,v_2$ with $\braket{n,v_1}=\braket{n,v_2}=1$.
	Then $m\coloneqq v_1-v_2\in n^\perp$.
	By \cref{thm:wall-crossing_homomorphism}, we have $\Psi_{x,n}(z^{v_1})=\Psi_{x,n}(z^{v_2+m})=z^m \Psi_{x,n}(z^{v_2})$.
	Hence
	\[z^{v_1} f_{x,n,v_1} = \Psi_{x,n}(z^{v_1}) = z^m \Psi_{x,n}(z^{v_2}) = z^m z^{v_2} f_{x,n,v_2}=z^{v_1} f_{x,n,v_2},\]
	which gives $f_{x,n,v_1} = f_{x,n,v_2}$. 
	
	Next we compare $f_{x,n,v}$ with $f_{x,-n,-v}$.
	Take $\alpha\in\NE(Y)$ and $m \in n^\perp$, and consider the coefficient of $z^\alpha z^m$.
	The coefficient in question for $f_{x,n,v}$ is $N(V_{x,v,v+m},\alpha)$, which by $f_{x,n,v-m} = f_{x,n,v}$ is also $N(V_{x,v-m,v},\alpha)$.
	Note $N(V_{x,v-m,v},\alpha)=N(V_{x,-v,-v+m},\alpha)$, where the latter is defined as in \cref{def:infinitesimal_spine} by replacing $n$ with $-n$, which is exactly the $z^\alpha z^m$ coefficient of $f_{x,-n,-v}$.
	This shows the equality $f_{x,n,v}=f_{x,-n,-v}$.

	The last assertion follows from \cref{lem:wall-crossing_bound}.
\end{proof}

\begin{proposition} \label{prop:wall-crossing_function}
	For any $v \in M$ with $\braket{n,v} \geq 0$, we have
	\begin{equation} \label{eq:wall-crossing_function}
		\Psi_{x,n}(z^v) = z^v \cdot f_x^{\braket{n,v}}.
	\end{equation}
	Consequently, the counts $N(V_{x,v,w})$ in \cref{def:infinitesimal_spine} depends only on $\braket{n,v}$ and $w-v$.
\end{proposition}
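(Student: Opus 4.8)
The plan is to reduce \eqref{eq:wall-crossing_function} to the multiplicativity already established in \cref{thm:wall-crossing_homomorphism}, by induction on $\braket{n,v}$. We may assume $n\in N$ is primitive, since the general case follows by rescaling. First I would dispose of the case $\braket{n,v}=0$: then $\Psi_{x,n}(z^v)=z^v$ by \cref{def:wall-crossing_transformation}, and $f_x^0=1$, so \eqref{eq:wall-crossing_function} holds trivially. Next, the base case $\braket{n,v}=1$ is exactly the definition of $f_x=f_{x,n,v}$ in \cref{lem:wall-crossing_function}, together with the key content of that lemma, namely that $f_{x,n,v}$ is independent of the choice of $v$ with $\braket{n,v}=1$.

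For the inductive step, suppose $\braket{n,v}=r\ge 2$ and that \eqref{eq:wall-crossing_function} holds for all $v'\in M$ with $0\le\braket{n,v'}<r$. Pick any $v_1\in M$ with $\braket{n,v_1}=1$ (such $v_1$ exists as $n$ is primitive), and set $v_2\coloneqq v-v_1$, so $\braket{n,v_2}=r-1\ge 1$ and both $v_1,v_2\in M_{\braket{n,\cdot}\ge 0}$. By \cref{thm:wall-crossing_homomorphism}, $\Psi_{x,n}$ is a ring homomorphism on $\bbZ[\NE(Y)\oplus M_{\braket{n,\cdot}\ge 0}]$, hence
\[
\Psi_{x,n}(z^v)=\Psi_{x,n}(z^{v_1})\cdot\Psi_{x,n}(z^{v_2})=\big(z^{v_1}f_x\big)\cdot\big(z^{v_2}f_x^{\,r-1}\big)=z^{v_1+v_2}f_x^{\,r}=z^v f_x^{\,r},
\]
where the second equality uses the base case for $v_1$ and the inductive hypothesis for $v_2$. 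This completes the induction, and since $\Psi_{x,n}$ was already extended $\bbZ[\NE(Y)]$-linearly, \eqref{eq:wall-crossing_function} holds for all $v\in M$ with $\braket{n,v}\ge 0$.

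Finally, for the consequence about the counts: for fixed generic $x\in n^\perp$ with $n$ primitive, $v\in M$ with $\braket{n,v}>0$, and $w\in M$ with $\braket{n,w}>0$, the coefficient of $z^\alpha z^w$ in $\Psi_{x,n}(z^v)=z^v f_x^{\braket{n,v}}$ is $N(V_{x,v,w},\alpha)$ by \eqref{eq:wall-crossing}; but this coefficient equals the coefficient of $z^\alpha z^{w-v}$ in $f_x^{\braket{n,v}}$, which manifestly depends only on the integer $\braket{n,v}$ and on the vector $w-v$ (note $w-v\in n^\perp$ by \cref{lem:w-v}, consistent with $f_x\in\hR$). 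Taking the sum over $\alpha$ gives the claim for $N(V_{x,v,w})$. I do not anticipate a genuine obstacle here: the only subtlety is bookkeeping with the adic convergence, which is already guaranteed by \cref{lem:wall-crossing_bound} and the fact that $\Psi_{x,n}$ lands in the complete ring $\hR$, so all the manipulations above make sense modulo $I^k$ for every $k$.
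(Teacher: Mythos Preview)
Your proof is correct and follows essentially the same approach as the paper: both reduce to the multiplicativity in \cref{thm:wall-crossing_homomorphism} together with the base case from \cref{lem:wall-crossing_function}. The paper's argument is the one-line version of your induction: it writes $v=\braket{n,v}\,v_0+m$ with $\braket{n,v_0}=1$ and $m\in n^\perp$, then applies the homomorphism property once to get $\Psi_{x,n}(z^v)=\Psi_{x,n}(z^{v_0})^{\braket{n,v}}\cdot\Psi_{x,n}(z^m)=(z^{v_0}f_x)^{\braket{n,v}}\cdot z^m=z^v f_x^{\braket{n,v}}$; your induction simply unrolls this power. One small quibble: your aside that the non-primitive case ``follows by rescaling'' is not right as stated (if $n=kn_0$ with $n_0$ primitive, then $\Psi_{x,n}=\Psi_{x,n_0}$ but $\braket{n,v}=k\braket{n_0,v}$, so the exponent changes), but this is harmless since $f_x$ is only defined for primitive $n$ in \cref{lem:wall-crossing_function}, and that is the intended context of the proposition.
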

\begin{proof}
	The case $\braket{n,v}=0$ follows directly from \cref{def:wall-crossing_transformation}.
	Now assume $\braket{n,v}\neq 0$.
	Write $v = \braket{n,v} v_0 + m$ with $\braket{n,v_0}=1$ and $m \in n^\perp$.
	We conclude by \cref{thm:wall-crossing_homomorphism} and \cref{lem:wall-crossing_function}.
\end{proof}

\begin{definition} \label{def:wall-crossing_transformation_extension}
	By \cref{prop:wall-crossing_function}, we can extend the map $\Psi_{x,n}$ of \cref{def:wall-crossing_transformation} to an automorphism of the fraction field $\Frac(\hR)$.
\end{definition}

\begin{remark}
	This may not give an automorphism of $\hR$ because $f_x$ need not be invertible in $\hR$.
	It will become invertible when we set all the curve classes to 0 in \cref{sec:setting_the_curve_classes_to_0}.
\end{remark}

\begin{proposition} \label{prop:wall_decomposition}
	For any $k>0$, there is a finite set of pairs $\fD_k=\{(\fd,f_\fd)\}$, where $\fd$ is a closed codimension-one rational convex cone in $M_\bbR$, and $f_\fd\in\bbZ[Q\oplus M]/I^k$, such that the union of all $\fd$ is the closure of the set of $x$ (generic in some $n^\perp$) with $f_x\neq 1$ modulo $I^k$, and that for any $x\in\fd$ generic, we have $f_\fd=f_x$ modulo $I^k$.
\end{proposition}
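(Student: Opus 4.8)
\textbf{Proof plan for \cref{prop:wall_decomposition}.}
The plan is to combine the finiteness statement of \cref{lem:wall-crossing_function} with the polyhedral structure of $\Wall_A$ coming from \cref{const:walls_by_induction} and \cref{lem:bend_bound}. First I would fix $k$, and recall from \cref{lem:wall-crossing_function} that there are at most finitely many primitive $n\in N$ with $f_x\neq 1$ modulo $I^k$ for some generic $x\in n^\perp$; call this finite set $\cN_k\subset N$. For each such $n$, \cref{lem:bend_bound} produces an integer $A=A(n,k)$ with the property that $f_x\neq 1$ modulo $I^k$ forces $x\in\Wall_A$ with $n^\perp$ the linear span of a polyhedral cell of $\Wall_A$; taking $A_k$ to be the maximum of the finitely many $A(n,k)$, we may use a single $\Wall_{A_k}$ throughout. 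The candidate walls $\fd$ are then exactly the codimension-one cells $\sigma$ of $\Wall_{A_k}$ whose linear span is $n^\perp$ for some $n\in\cN_k$; there are finitely many of these since $\Wall_{A_k}$ has finitely many cells.

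Next I would show $f_x$ is constant modulo $I^k$ as $x$ ranges over the generic points of a fixed such cell $\fd$. The $z^\alpha z^m$-coefficient of $f_x$ (for $\alpha\in Q$, $m\in\fd^{\text{(span)}}=n^\perp$, $\alpha\notin Q_k$) is the count $N(V_{x,v,v+m},\alpha)$ for any $v$ with $\braket{n,v}=1$. By deformation invariance of transverse spines (\cref{thm:deformation_invariance_truncated}) together with the explicit combinatorial control in \cref{lem:bend_bound}, this count is locally constant in $x$ as long as $x$ stays generic inside $\fd$: shrinking $\epsilon$ so that the infinitesimal spine $V^\epsilon_{x,v,v+m}$ meets $\Wall_{A_k}$ only at $x$, any path of generic points within the relative interior of $\fd$ induces a deformation through transverse spines with fixed combinatorial type, hence $N(V_{x,v,v+m},\alpha)$ does not change. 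Since $\fd$ (being a rational polyhedral cone, hence connected) has connected relative interior, $f_x$ modulo $I^k$ is independent of the generic $x\in\fd$; define $f_\fd$ to be this common value, an element of $\bbZ[Q\oplus M]/I^k$ (a genuine finite sum since only $\alpha\in Q\setminus Q_k$ and, by \cref{lem:wall-crossing_bound}, only finitely many $m$ contribute modulo $I^k$).

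Finally I would verify that $\bigcup_\fd \fd$ is the closure of the set of generic $x$ with $f_x\neq 1$ modulo $I^k$. The inclusion $\supseteq$ is the content of \cref{lem:bend_bound} as recorded above: any such $x$ lies in a cell of $\Wall_{A_k}$ spanning some $n^\perp$ with $n\in\cN_k$, hence in one of the chosen $\fd$. For $\subseteq$, on a cell $\fd$ where $f_\fd=1$ modulo $I^k$ we simply discard $\fd$ from $\fD_k$ (it contributes nothing), and on the remaining cells $f_\fd\neq1$, so the generic points of $\fd$ already lie in the set whose closure we are describing, giving $\fd\subseteq$ that closure by taking closures. I expect the main obstacle to be the local-constancy argument for $N(V_{x,v,v+m},\alpha)$ in the relative interior of $\fd$: one must be careful that perturbing $x$ within $\fd$ genuinely keeps the relevant spine transverse and of fixed combinatorial type (so that \cref{thm:deformation_invariance_truncated} applies), using that $x$ meets no cell of $\Wall_{A_k}$ other than $\fd$ itself, and that the finitely many twig types contributing modulo $I^k$ are stable under such perturbation by \cref{lem:bend_bound}.
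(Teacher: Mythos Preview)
Your proposal is correct and follows essentially the same approach as the paper: choose $A$ via \cref{lem:bend_bound}, take the codimension-one cells of $\Wall_A$ as the walls $\fd$, and use deformation invariance (\cref{thm:deformation_invariance_truncated}) of the transverse spines $V^\epsilon_{x,v,w}$ to show $f_x$ is constant modulo $I^k$ on the generic locus of each $\fd$. The paper's proof is just a terser version of exactly this argument (it does not bother to invoke \cref{lem:wall-crossing_function} separately, since the finiteness of $n$ is already contained in \cref{lem:bend_bound}).
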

\begin{proof}
	Choose $A$ as in \cref{lem:bend_bound} and let $\{\fd\}$ be the union of $(d-1)$-dimensional closed polyhedral cells of $\Wall_A$.
	It follows from \cref{lem:bend_bound} that the union of all such $\fd$ contains the closure of the set of generic $x$ with $f_x\neq 1$ modulo $I^k$.
	Moreover, note that for any generic $x\in\fd$, the spine $V_{x,v,w}^\epsilon$ of \cref{def:infinitesimal_spine} is transverse with respect to $\Wall_A$ for $\epsilon$ sufficiently small.
	Then it follows from \cref{thm:deformation_invariance_truncated} that any two generic points in a given $\fd$ have the same $f_x$, so we conclude the proof.	
\end{proof}

\begin{definition}
	We denote
	\[\fD\coloneqq\Set{(x,f_x) | x\in n^\perp\subset M_\bbR\text{ generic for some } n\in N\setminus 0},\]
	and call it the \emph{scattering diagram} associated to $U\subset Y$ with respect to $T_M$.
	In view of \cref{prop:wall_decomposition}, we call $\fD_k$ a $k$-th order approximation of $\fD$.
\end{definition}

\begin{remark}
	Our scattering diagram here is slightly different from that of \cite[\S 3]{Gross_Mirror_symmetry_for_log_Calabi-Yau_surfaces_I_v1}, in the case when $U$ is two dimensional (the context of \cite{Gross_Mirror_symmetry_for_log_Calabi-Yau_surfaces_I_v1}), because of the presence of the piecewise-linear function $\varphi$ of \cite{Gross_Mirror_symmetry_for_log_Calabi-Yau_surfaces_I_v1} (see also \cite[\S 2.1]{Hacking_Secondary_fan} for a non-archimedean geometric interpretation of $\varphi$).
	\end{remark}

\subsection{Consistency} \label{sec:consistency}

Here we show that our scattering diagram $\fD$ is \emph{theta function consistent}, see \cref{prop:theta_function_consistent}.

\begin{definition} \label{def:local_theta_funcation}
	Given $x \in M_\bbR$ generic and $m \in M$, let $\SP_{x,m,e}$ be the set of spines in $M_\bbR$ with domain $[-\infty,0]$ such that $-\infty$ maps to $\partial\oM_\bbR$ with derivative $-m$ and $0$ maps to $x$ with derivative $-e$.
	we define the \emph{local theta function} $\theta_{x,m}$ to be the formal sum
	\[\theta_{x,m} \coloneqq \sum_{\substack{e\in M,\; S\in\SP_{x,m,e}\\ \alpha\in\NE(Y)}} N(S,\alpha) z^{\alpha} z^e.\]
\end{definition}

\begin{proposition}
	We have $\theta_{x,m} \in\hR$.
\end{proposition}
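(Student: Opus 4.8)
The statement $\theta_{x,m}\in\hR$ amounts to showing that for every $k>0$, only finitely many pairs $(e,\alpha)$ with $\alpha\in Q\setminus Q_k$ contribute a nonzero term $N(S,\alpha)$ for some $S\in\SP_{x,m,e}$. The strategy is to reduce this to the finiteness statements already established for twigs and for curve classes, just as was done in the proof of \cref{lem:wall-crossing_bound} for the wall-crossing transformation.

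First I would unwind \cref{def:local_theta_funcation}: a spine $S\in\SP_{x,m,e}$ has domain $[-\infty,0]$, is piecewise \Zaffine, and its single bending (at interior vertices of the underlying tropical curve) is recorded by twigs attached along the segment. By \cref{const:naive_counts_truncated}, $N(S,\alpha)$ is a count of skeletal curves whose tropicalization is a tropical curve in $M_\bbR$ with spine $S$ and with $\degtwig$ controlled by $\alpha$: indeed, by \cref{prop:tropicalization_of_stable_map}, $\degtwig\Trop(f)=\beta\cdot\tE$ where $\beta$ is the curve class of the underlying closed rational curve, and $\beta$ in turn is determined (up to finitely many choices for fixed boundary data) by $\alpha$ together with $m$ and $e$. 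Since $Q\setminus Q_k$ is finite, the set of relevant $\alpha$, and hence the possible values of $\beta\cdot\tE$, is bounded by some $A\in\bbN$; thus the degrees of all twigs that can occur are bounded by $A$. By \cref{rem:twig_bound}, there are only finitely many combinatorial types of twigs of degree at most $A$, and each such twig has a bounded monomial; summing the (boundedly many) monomials of twigs attached along $[-\infty,0]$ shows that $e-m$ ranges over a finite subset of $M$. This bounds $e$ for each fixed $m$.

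It then remains to bound $\alpha$ itself once $m$ and $e$ are fixed. Here I would invoke \cref{lem:bound_on_class}(\ref{lem:bound_on_class:degtwig}) (or equivalently \cref{prop:tropicalization_of_stable_map} together with \cref{lem:finitely_many_beta}): for a fixed type $\bP$ of the boundary/marked-point data — which is pinned down by $m$ and $e$ — the condition $\degtwig\Trop(f)\le A$, i.e.\ $\beta\cdot\tE\le A$, allows only finitely many $\beta\in\NE(Y)$, hence (via the relation between $\beta$ and $\alpha$ coming from the curve class of the extended leg, as in \cref{const:naive_counts_truncated} and \cref{def:curve_class_via_varphi}) only finitely many $\alpha\in\NE(Y)$ with $N(S,\alpha)\ne0$ for some $S\in\SP_{x,m,e}$. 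Combining the two finiteness statements, for each $k$ there are only finitely many pairs $(e,\alpha)$, $\alpha\in Q\setminus Q_k$, contributing a nonzero coefficient, so the formal sum defining $\theta_{x,m}$ is a well-defined element of the $I$-adic completion $\hR$.

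The main obstacle is purely bookkeeping: making precise the dictionary between the curve class $\beta$ of the underlying closed rational curve contributing to $N(S,\alpha)$ and the ``relative'' class $\alpha$, so that a bound on $\beta\cdot\tE$ (controlled by $\alpha\in Q\setminus Q_k$) genuinely translates into a bound on the combinatorics of the twig and then back into finiteness of $\alpha$. This is exactly the interplay used in \cref{lem:bend_bound} and \cref{lem:wall-crossing_bound}, and I expect the proof to be a short reduction to \cref{lem:bend_bound} (applied with the one-ended domain $[-\infty,0]$ in place of the two-ended $I^\epsilon$): the single ray at $-\infty$ carries monomial $m$, the finite end carries $e$, and everything else is governed by twigs of bounded degree. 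Once that reduction is spelled out, the proof is complete.
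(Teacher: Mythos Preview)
Your approach is essentially the same as the paper's: use the finiteness of $Q\setminus Q_k$ together with \cref{lem:hgamma} and \cref{prop:tropicalization_of_stable_map} to bound $\degtwig\Trop(f)$ by some $A$, then invoke \cref{rem:twig_bound} to get finitely many twig combinatorial types, hence finitely many possible $e$ (and spines $S$). Your second paragraph, bounding $\alpha$ via \cref{lem:bound_on_class}, is redundant: you already observed that $Q\setminus Q_k$ is finite, so once the $e$'s are bounded the pairs $(e,\alpha)$ with $\alpha\in Q\setminus Q_k$ are automatically finite, and the paper's proof stops there.
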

\begin{proof}
	It suffices to prove that for any $k>0$, there are only finitely many $e\in M$ such that $N(S,\alpha)\neq 0$ for some $S\in\SP_{x,m,e}$ and $\alpha\in Q\setminus Q_k$.
	Since $Q\setminus Q_k$ is finite, by \cref{lem:hgamma} and \cref{prop:tropicalization_of_stable_map}, there exists $A\in\bbN$ such that for any stable map $f$ contributing to some $N(S,\alpha)$ with $\alpha\in Q\setminus Q_k$, we have $\degtwig\Trop(f)\le A$.
	By \cref{rem:twig_bound}, there are only finitely many combinatorial types of twigs with degree bounded by $A$.
	Therefore, since $m$ is fixed, there are only finitely many combinatorial types of $\Trop(f)$, where $f$ is any stable map contributing to $N(S,\alpha)$ for some $e\in M$, $S\in\SP_{x,m,e}$ and $\alpha\in Q\setminus Q_k$.
	Consequently, there are only finitely many $e\in M$ and $S\in\SP_{x,m,e}$ such that $N(S,\alpha)\neq 0$ for some $\alpha\in Q\setminus Q_k$, completing the proof.
\end{proof}

\begin{proposition} \label{prop:theta_function_consistent}
	The scattering diagram $\fD$ is theta function consistent in the following sense:
	Given any $k>0$ and $(\fd,f_\fd)\in\fD_k$, choose $n\in N$ with $\fd\subset n^\perp$, and let $a,b\in M_\bbR$ be two general points near a general point $x\in\fd$ with $\braket{n,a}>0$, $\braket{n,b}<0$.
	Write $\theta_{a,m} = \theta_{a,m,+}+\theta_{a,m,0}+\theta_{a,m,-}$ and $\theta_{b,m} = \theta_{b,m,+}+\theta_{b,m,0}+\theta_{b,m,-}$, where we gather monomials $z^e$ according to the sign of the pairing $\braket{n,e}$, e.g.,
	\[\theta_{a,m,+} \coloneqq \sum_{\substack{e\in M,\; \braket{n,e}>0\\S\in\SP_{a,m,e},\; \alpha\in\NE(Y)}} N(S,\alpha) z^\alpha z^e.\]
	The following hold modulo $I^k$:
	\begin{align}
		\Psi_{x,n}(\theta_{a,m,+})=\theta_{b,m,+},\label{eq:theta_function_consistent_1}\\
		\Psi_{x,-n}(\theta_{b,m,-})=\theta_{a,m,-},\label{eq:theta_function_consistent_2}\\
		\Psi_{x,n}(\theta_{a,m,0})=\theta_{b,m,0},\label{eq:theta_function_consistent_3}\\
		\Psi_{x,-n}(\theta_{b,m,0})=\theta_{a,m,0}.\label{eq:theta_function_consistent_4}
	\end{align}
\end{proposition}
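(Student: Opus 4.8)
The plan is to reduce all four identities to a single gluing/deformation-invariance statement for the naive counts $N(S,\gamma)$ of spines, exactly as in the proof of \cref{thm:wall-crossing_homomorphism}. The local theta function $\theta_{a,m}$ is, by \cref{def:local_theta_funcation}, a generating series of counts of spines on the half-line $[-\infty,0]$ with prescribed outgoing derivative $-m$ at the infinite end and evaluation at $0$ equal to $a$ with outgoing derivative $-e$. The wall-crossing transformation $\Psi_{x,n}$ is, by \cref{def:wall-crossing_map} and \cref{prop:wall-crossing_function}, governed by counts of infinitesimal bending spines $V_{x,v,w}$. So the identity $\Psi_{x,n}(\theta_{a,m,+})=\theta_{b,m,+}$ should follow by concatenating a theta-ray spine with an infinitesimal bending spine at a point just across the wall $\fd$, using the gluing formula \cref{thm:gluing_concatenate}, and then applying deformation invariance (\cref{thm:deformation_invariance_truncated}) to move the evaluation point continuously from $a$ to $b$ through the wall.

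\textbf{Key steps.} First I would fix $k>0$ and work modulo $I^k$; by \cref{lem:bend_bound} there is a single $A\in\bbN$ controlling all relevant twigs, and by \cref{prop:wall_decomposition} the wall $\fd$ sits in a $(d-1)$-cell of $\Wall_A$ spanning $n^\perp$. Second, fix a general $x\in\fd$ and a short linear segment $\gamma\colon[-\delta,\delta]\to M_\bbR$ through $\gamma(0)=x$ (perpendicular to $\fd$, meeting $\Wall_A$ only at $x$), with $\gamma(t)$ on the $\braket{n,\cdot}>0$ side for $t<0$ and the $\braket{n,\cdot}<0$ side for $t>0$ (or the reverse; one orientation gives \eqref{eq:theta_function_consistent_1}, the other \eqref{eq:theta_function_consistent_2}). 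Set $a\coloneqq\gamma(-\delta)$, $b\coloneqq\gamma(\delta)$. Third, build a family of spines on the domain $[-\infty,0]$ with an extra leg glued in the interior near $0$ where the evaluation is $\gamma(t)$: by \cref{rem:internal_leg_contracted} the extra leg is contracted so this just tracks the image. Fourth, apply \cref{prop:deformation_invariance_skeletal} to the family over $\{\text{domain}\}\times\operatorname{Im}\gamma$ to conclude $\sum_{S} N(S,\gamma)$ is independent of $t$. Fifth, do the combinatorial analysis of $\SP_{\gamma(t),m,e}$: for $t<0$ (before the wall) a spine in $\SP_{a,m,e}$ with $\braket{n,e}>0$ is rigid and contributes exactly the $z^\alpha z^e$ coefficient of $\theta_{a,m,+}$; crossing the wall at $x$, by cutting the spine at a point in the ray just past $x$ and invoking the gluing formula \cref{thm:gluing_concatenate} together with \cref{lem:counts_in_toric_case}, the resulting broken spine decomposes into a $\theta_{b,m}$-type piece concatenated with an infinitesimal bending spine $V_{x,v,w}$; the weight vector hitting the wall lies on the $\braket{n,\cdot}>0$ side (so $\Psi_{x,n}$ is defined on it), and summing over bendings reproduces $\Psi_{x,n}(\theta_{a,m,+})$. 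Equating with the $t>0$ count $\theta_{b,m,+}$ gives \eqref{eq:theta_function_consistent_1}. The identity \eqref{eq:theta_function_consistent_2} is the same argument with $n$ replaced by $-n$ and $a,b$ swapped. For \eqref{eq:theta_function_consistent_3}--\eqref{eq:theta_function_consistent_4}: a monomial $z^e$ with $\braket{n,e}=0$ never picks up a bending across $\fd$ (by \cref{lem:w-v}, any bending changes the monomial by an element of $n^\perp$, but a direction vector with $\braket{n,\cdot}=0$ bending at a generic $x\in n^\perp$ would need a twig whose monomial is in $n^\perp$, and the balancing plus transversality forces the spine to pass straight through), so $\Psi_{x,n}$ acts trivially on $\theta_{a,m,0}$ by \cref{def:wall-crossing_transformation}, and deformation invariance directly identifies $\theta_{a,m,0}$ with $\theta_{b,m,0}$ modulo $I^k$.

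\textbf{Main obstacle.} The hard part will be the careful bookkeeping of the cutting-and-regluing near the wall $x$: one must verify that when a theta-ray crosses the generic wall point, the only new contributions are precisely the $V_{x,v,w}$ bendings with $\braket{n,v}>0$ (or $<0$), with the correct class decomposition $\alpha=\alpha'+\alpha''$ and that the monomial bookkeeping $z^e\mapsto z^e f_x^{\braket{n,e}}$ from \cref{prop:wall-crossing_function} matches exactly the gluing-formula sum over all bending patterns — i.e.\ that iterated bendings assemble into the $f_x^{\braket{n,v}}$ power. This is the analog of the $b=b_1+b_2$ decomposition in \cref{claim:SPgammat} but now with arbitrarily many bends (bounded by $A$ modulo $I^k$), so one needs the homomorphism property of $\Psi_{x,n}$ (\cref{thm:wall-crossing_homomorphism}) together with \cref{lem:wall-crossing_function} to package the bends correctly; I would phrase this via repeated application of \cref{thm:gluing_concatenate} to a spine that bends $\braket{n,e}$ many times at points approaching $x$, and then recognize the result as $z^e f_x^{\braket{n,e}}$. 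The transversality hypotheses of \cref{prop:deformation_invariance_skeletal} (conditions on open cells $\sigma_j^\circ$ and linear spans containing $P_j$) must be checked for the generic choice of $x$, $\gamma$, which follows from \cref{prop:transversality} after possibly shrinking $\delta$ and $\epsilon$; this is routine but needs stating.
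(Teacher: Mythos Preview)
Your approach is essentially the same as the paper's --- gluing formula plus deformation invariance for \eqref{eq:theta_function_consistent_1}--\eqref{eq:theta_function_consistent_2}, and \cref{prop:deformation_invariance_skeletal} for \eqref{eq:theta_function_consistent_3}--\eqref{eq:theta_function_consistent_4} --- but you have made \eqref{eq:theta_function_consistent_1} harder than it is, and the ``main obstacle'' you flag is a phantom.

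The point is that $\Psi_{x,n}(z^e)$ is \emph{defined} (\cref{def:wall-crossing_map}) as $\sum_{w,\beta} N(V_{x,e,w},\beta)\,z^\beta z^w$, a sum over \emph{single} infinitesimal bending spines. The formula $\Psi_{x,n}(z^e)=z^e f_x^{\braket{n,e}}$ is a derived consequence (\cref{prop:wall-crossing_function}), not the definition, and you never need it here. So there is no ``iterated bending'' to package: applying $\Psi_{x,n}$ to $\theta_{a,m,+}$ just produces the double sum
\[
\sum_{\substack{e,\,\braket{n,e}>0\\ S\in\SP_{a,m,e},\,\alpha}} N(S,\alpha)\,z^\alpha \sum_{\substack{w,\,\braket{n,w}>0\\ \beta}} N(V_{x,e,w},\beta)\,z^\beta z^w,
\]
and a \emph{single} application of \cref{thm:gluing_concatenate} (after a small deformation to align $S$ with $V^\epsilon_{x,e,w}$, via \cref{thm:deformation_invariance_truncated}) collapses this to $\sum_{w,S',\gamma} N(S',\gamma)\,z^\gamma z^w=\theta_{b,m,+}$. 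That is the paper's entire proof of \eqref{eq:theta_function_consistent_1}: three lines, no family over a transverse path, no invocation of \cref{prop:deformation_invariance_skeletal}, no homomorphism property. Your plan to track a family over $\gamma\colon[-\delta,\delta]\to M_\bbR$ and invoke \cref{prop:deformation_invariance_skeletal} for \eqref{eq:theta_function_consistent_1} would also work, but it duplicates the machinery already baked into the definition of $\Psi_{x,n}$. Reserve \cref{prop:deformation_invariance_skeletal} for \eqref{eq:theta_function_consistent_3}--\eqref{eq:theta_function_consistent_4}, where it is exactly what is needed (and where your reasoning is correct: $\Psi_{x,n}$ fixes $z^e$ with $\braket{n,e}=0$ by definition, and the content is $\theta_{a,m,0}=\theta_{b,m,0}$).
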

\begin{proof} 
	For \eqref{eq:theta_function_consistent_1}, we have
	\begin{align*}
	\Psi_{x,n}(\theta_{a,m,+})&=\Psi_{x,n}\Bigg(\sum_{\substack{e\in M,\; \braket{n,e}>0\\S\in\SP_{a,m,e},\; \alpha\in\NE(Y)}} N(S,\alpha) z^\alpha z^e\Bigg)\\
	&=\sum_{\substack{e\in M,\; \braket{n,e}>0\\S\in\SP_{a,m,e},\; \alpha\in\NE(Y)}} N(S,\alpha) z^\alpha \sum_{\substack{w \in M,\; \braket{n,w}>0\\ \beta \in\NE(Y)}} N(V_{x,e,w},\beta) z^\beta z^{w}\\
	&=\sum_{\substack{w\in M,\; \braket{n,w}>0\\S'\in\SP_{b,m,w},\; \gamma\in\NE(Y)}} N(S',\gamma) z^\gamma z^w\\
	&=\theta_{b,m,+},
	\end{align*}
	where the third equality above follows from \cref{thm:gluing_concatenate} and \cref{thm:deformation_invariance_truncated} by gluing $S$ and $V_{x,e,w}^\epsilon$ at $a$ after a small deformation of the latter for alignment.
	
	Equation \eqref{eq:theta_function_consistent_2} follows from \eqref{eq:theta_function_consistent_1} by replacing $n$ with $-n$.
	
	Equations \eqref{eq:theta_function_consistent_3} and \eqref{eq:theta_function_consistent_4} follow from \cref{prop:deformation_invariance_skeletal}.
\end{proof}

\subsection{Setting the curve classes to $0$}  \label{sec:setting_the_curve_classes_to_0}

Our construction of the wall-crossing transformations and the scattering diagram depends on the compactification $U\subset Y$ via the usage of curve classes in $\NE(Y)$.
We can remove this dependence by setting all the curve classes to $0$.
But for the adic convergence of wall-crossing transformations, we need to impose a condition on the bend of infinitesimal analytic cylinders.

Let $P \subset M$ be a strictly convex toric monoid, $P_1\coloneqq P \setminus 0$, $J\subset\bbZ[P]$ the monomial ideal associated to $P_1$, and $P_k \subset P_1$ the subset of elements which are sums of $k$ elements of $P_1$.
Let $\hL^0$ be the $J$-adic completion of $\bbZ[P]$ and $\hL \coloneqq \hL^0 \otimes_{\bbZ[P]} \bbZ[M]$.
Let $\hR$ be as in \cref{sec:scattering:construction}, and $\hR_P \subset \hR$ the closure of $\bbZ[Q \oplus P]$ inside $\hR$.

From now on until the end of this section, we assume the following:

\begin{assumption} \label{ass:bend_in_P}
	For any $N(V_{x,v,w},\alpha) \neq 0$, the bend $w - v \in P$.
\end{assumption}

\begin{remark} \label{rem:bend_in_P}
	\cref{ass:bend_in_P} implies that the wall-crossing function $f_x$ (as in \cref{lem:wall-crossing_function}) lies in the subring $\hR_P \subset \hR$.
\end{remark}

\begin{lemma} \label{lem:mod_J_bound}
	Given $v \in M$ and $k > 0$, there are at most finitely many $\alpha\in\NE(Y)$ such that $N(V_{x,v,w},\alpha) \neq 0$ for some $x,w$ with $w - v \in P\setminus P_k$.
\end{lemma}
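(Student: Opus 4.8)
\textbf{Proof plan for \cref{lem:mod_J_bound}.}

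The plan is to reduce this to a finiteness statement about curve classes together with the fact, established in \cref{lem:bend_bound}, that imposing a bound $w-v\in P\setminus P_k$ on the bend of the infinitesimal spine forces the bend to take only finitely many values and forces the relevant part of the spine into a fixed polyhedral set $\Wall_A$. The key observation is that, under \cref{ass:bend_in_P}, bounding the bend $w-v$ in $P\setminus P_k$ amounts to bounding $\degtwig$ of the tropical curves $\Trop(f)$ attached to the analytic curves $f$ contributing to $N(V_{x,v,w},\alpha)$, because the bend is the sum of the monomials of the twigs attached to the bending vertex, and each twig has strictly positive norm once it is nontrivial.

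First I would make this precise: if $N(V_{x,v,w},\alpha)\neq 0$ and $w-v\in P\setminus P_k$, then by \cref{lem:bend_bound} there is a uniform $A\in\bbN$ (independent of $x,v,w,\alpha$ within this range) such that $x\in\Wall_A$ and all the twigs attached to the bending vertex have degree at most $A$; moreover $v$ is fixed, so $w=v+(w-v)$ ranges over a finite set. Next, by \cref{const:naive_counts_truncated} (recall that the extended curve class $\hgamma$ differs from $\gamma$ by classes of legs, which are $\tE$-trivial by \cref{lem:hgamma}) together with \cref{prop:tropicalization_of_stable_map}, the condition that $A$ is big with respect to $\alpha$, i.e.\ $A\ge\alpha\cdot\tE$, is a necessary condition for a stable map of class $\alpha$ to contribute; more precisely, the twigs of $\Trop(f)$ have total degree $\degtwig\Trop(f)=\alpha\cdot\tE$ (in the notation of \cref{prop:tropicalization_of_stable_map}, applied to the relevant blowup), so $\alpha\cdot\tE$ is bounded by a quantity depending only on $v$ and $k$ via $A$.

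Finally I would invoke \cref{lem:bound_on_class}(\ref{lem:bound_on_class:degtwig}): for the fixed tuple $\bP$ underlying the infinitesimal spine $V_{x,v,w}$ (which is determined by $v$ and the finitely many possible $w$), there are only finitely many $\alpha\in\NE(Y)$ with $\degtwig\Trop(f)\le A$ for some contributing $f$. Since $w$ ranges over a finite set and for each $w$ the set of admissible $\alpha$ is finite, the union over all such $w$ is still finite, which is exactly the claim. The main obstacle, and the place requiring care, is bookkeeping the dependence of $A$ and of the underlying tuple $\bP$ on the data: one must check that the bound $A$ from \cref{lem:bend_bound} can indeed be taken uniform over the finitely many $w$ with $w-v\in P\setminus P_k$ and that the passage to a toric blowup needed to talk about $\tE$ does not reintroduce infinitely many classes — but this is handled by \cref{lem:N1_blowup} and \cref{prop:blowup_away_from_essential_boundary_strata}, which identify disk classes across blowups, so the finiteness is insensitive to the choice of model.
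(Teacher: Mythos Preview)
Your overall strategy is the same as the paper's: argue that $w-v\in P\setminus P_k$ forces a uniform bound on $\degtwig\Trop(f)$, note that $w$ then ranges over a finite set (since $v$ is fixed and $P\setminus P_k$ is finite), and conclude via \cref{lem:bound_on_class}(\ref{lem:bound_on_class:degtwig}). That is exactly what the paper does, in three lines.

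However, your execution has a genuine mis-citation. You invoke \cref{lem:bend_bound} to produce the uniform $A$, but that lemma has the hypothesis $\alpha\in Q\setminus Q_k$, not $w-v\in P\setminus P_k$; its logic runs in the \emph{opposite} direction (bounded curve class $\Rightarrow$ bounded twig degree $\Rightarrow$ bounded bend). What you actually need is the direct argument you already sketched in your opening paragraph: $w-v$ is the sum of the monomials of all twigs of $\Trop(f)$, and the balancing condition then bounds $\degtwig\Trop(f)$ by a constant depending only on the finite set $P\setminus P_k$. The paper simply takes $A$ to be the sum of the norms $|m|$ over all $m\in P\setminus P_k$ and invokes balancing; no appeal to \cref{lem:bend_bound} is needed or correct here.

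The second paragraph, where you pass through $\alpha\cdot\tE$ and the bigness condition, is redundant once $\degtwig$ is bounded: \cref{lem:bound_on_class}(\ref{lem:bound_on_class:degtwig}) takes the bound on $\degtwig$ and the (finitely many) $\bP$ as input and outputs finiteness of $\beta$ directly. Likewise, the closing discussion of blowups, \cref{lem:N1_blowup}, and \cref{prop:blowup_away_from_essential_boundary_strata} is unnecessary---no change of model is involved in the argument.
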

\begin{proof}
	Let $A$ be the sum of the norm (see \cref{def:norm_of_weight_vector}) of every vector in $P\setminus P_k$.
	Let $f$ be a stable map contributing to $N(V_{x,v,w},\alpha)\neq 0$.
	Then $w - v$ is the sum of monomials of all twigs of $\Trop(f)$.
	So by the balancing condition, if $w - v\in P\setminus P_k$, we have $\degtwig\Trop(f)\le A$;
	moreover, as there are only finitely many possibilities for $w-v$ and $v$ is fixed, there are only finitely many possibilities for $w$.
	Hence we conclude from \cref{lem:bound_on_class}(\ref{lem:bound_on_class:degtwig}).
\end{proof}

\begin{lemma} \label{lem:fx_mod_J}
	For any $k>0$, the image of the wall-crossing function $f_x$ in $\hR_P/(J^k)$ is a polynomial, i.e.\ it lies in the image of the inclusion $\bbZ[Q \oplus P] \hookrightarrow \hR_P/(J^k)$. 
\end{lemma}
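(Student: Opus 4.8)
The statement asserts that modulo $J^k$, the wall-crossing function $f_x$ involves only finitely many monomials, i.e.\ it is a genuine polynomial in $\bbZ[Q\oplus P]$ rather than merely an element of the completion $\hR_P$. By \cref{rem:bend_in_P}, we already know $f_x\in\hR_P$. Fix a primitive $n\in N$ with $x\in n^\perp$ generic, and a $v\in M$ with $\braket{n,v}=1$, so that $f_x=f_{x,n,v}$ with $\Psi_{x,n}(z^v)=z^v f_x$. Expanding the definition \eqref{eq:wall-crossing}, the $z^\alpha z^m$-coefficient of $f_x$ (for $\alpha\in\NE(Y)\subset Q$, $m\in n^\perp\subset M$) is $N(V_{x,v,v+m},\alpha)$, and by \cref{ass:bend_in_P} the only nonzero contributions have $m\in P$.

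The plan is to control both the $P$-grading and the $Q$-grading of the nonzero coefficients. First I would bound the $P$-part: working modulo $J^k$ means discarding all monomials $z^\alpha z^m$ with $m\in P_k$, so it remains to treat the finitely many values $m\in P\setminus P_k$ — this set is finite because $P$ is a strictly convex toric monoid (pairing with an element of the dual cone that is strictly positive on $P_1$ bounds the number of summands). So fix such an $m$; we must show that $N(V_{x,v,v+m},\alpha)\neq 0$ for only finitely many $\alpha\in\NE(Y)$, \emph{uniformly in $x$} (more precisely, for a given $v$ and the finite range of $m$). But this is exactly the content of \cref{lem:mod_J_bound}, applied with the fixed $v$ and the chosen $k$: there are at most finitely many $\alpha\in\NE(Y)$ with $N(V_{x,v,w},\alpha)\neq 0$ for some $x,w$ with $w-v\in P\setminus P_k$. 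Combining the finiteness of $P\setminus P_k$ with this finiteness of the curve classes $\alpha$, the image of $f_x$ in $\hR_P/(J^k)$ is supported on a finite set of monomials $z^\alpha z^m$, hence lies in the image of $\bbZ[Q\oplus P]\hookrightarrow\hR_P/(J^k)$.

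One technical point to verify is that the statement intends finiteness for a fixed/generic $x$ (which is immediate from the above), and that the argument does not secretly depend on $x$ in a way that breaks: indeed \cref{lem:mod_J_bound} is already phrased uniformly in $x$, so there is no issue, and in fact the same finite monomial support works for all generic $x$. A second point: one should note the $z^v$ factor is a unit-like shift and does not affect the monomial-support statement for $f_x$ itself, since $\Psi_{x,n}(z^v)=z^v f_x$ literally by definition of $f_{x,n,v}$. The main (and essentially only) obstacle is recognizing that the desired statement reduces cleanly to the already-established \cref{lem:mod_J_bound} together with the strict convexity of $P$; there is no hard new input, the lemma is a packaging step preparing for the construction of the curve-class-free scattering diagram $\fD_U$ in the next subsection.
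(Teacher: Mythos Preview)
Your proposal is correct and follows exactly the paper's approach: the paper's proof is the single line ``This follows from \cref{lem:mod_J_bound},'' and you have simply unpacked that reference by noting that $P\setminus P_k$ is finite (strict convexity) and that \cref{lem:mod_J_bound} then bounds the contributing $\alpha$'s uniformly.
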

\begin{proof}
	This follows from \cref{lem:mod_J_bound}.
\end{proof}

\begin{lemma} \label{lem:local_theta_function_mod_J}
	Fix $x \in M_\bbR$ generic and $m \in M$, let
	\[\theta_{x,m} = \sum_{\substack{e\in M,\; S\in\SP_{x,m,e}\\ \alpha\in\NE(Y)}} N(S,\alpha) z^{\alpha} z^e.\]
	be the local theta function as in \cref{def:local_theta_funcation}.
	For any $k>0$, there are at most finitely many $\alpha\in\NE(Y)$ such that $N(S,\alpha)\neq 0$ for some $e\in M$ with $e-m\in P\setminus P_k$ and $S\in\SP_{x,m,e}$.
	Consequently, the local theta function $\theta_{x,m}$, viewed as element of $z^m(\hR_P/(J^k))$, lies in the image of the inclusion $z^m(\bbZ[Q \oplus P])\hookrightarrow z^m(\hR_P/(J^k))$.
\end{lemma}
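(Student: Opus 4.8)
\textbf{Proof plan for Lemma \ref{lem:local_theta_function_mod_J}.} The strategy is to mimic closely the proof of \cref{lem:mod_J_bound}, using the balancing condition together with the finiteness results already established for twigs. First I would recall the tropicalization picture from \cref{prop:tropicalization_of_stable_map}: a stable map $f$ contributing to $N(S,\alpha)$ has an associated tropical curve $\Trop(f)$ whose spine is (a small extension of) $S$, and whose twigs satisfy $\degtwig\Trop(f)=\beta\cdot\tE$, where here $\beta$ is the extended class $\hgamma$ associated to $(S,\alpha)$ as in \cref{const:naive_counts_truncated}; by \cref{lem:hgamma} we have $\hgamma\cdot\tE=\alpha\cdot\tE$, so the twig degree is controlled by $\alpha$ alone.

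The key computation is to express the bend $e-m$ in terms of the twigs. The spine $S$ has domain $[-\infty,0]$, with $-\infty$ mapping to $\partial\oM_\bbR$ with derivative $-m$ and $0$ mapping with derivative $-e$. Running through the balancing condition vertex by vertex along this segment (as in the proof of \cref{lem:w-v} and \cref{lem:mod_J_bound}), the difference $e-m$ equals the sum of the monomials of all twigs of $\Trop(f)$ attached along $S$. Hence if $e-m\in P\setminus P_k$, then $e-m$ ranges over a \emph{finite} set (the elements of $P\setminus P_k$), so the total norm $|e-m|$ is bounded by some constant $A$; by the balancing identity this bounds $\degtwig\Trop(f)\le A$. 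Now \cref{lem:bound_on_class}(\ref{lem:bound_on_class:degtwig}) applies directly: given the fixed $\bP$ (here essentially governed by $m$ and the finitely many possible values of $e$), there are only finitely many $\alpha\in\NE(Y)$ with $\degtwig\Trop(f)\le A$, and hence only finitely many $\alpha$ with $N(S,\alpha)\neq 0$ for some $S\in\SP_{x,m,e}$, $e-m\in P\setminus P_k$.

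For the consequence, I would argue as follows. Write $\theta_{x,m}=\sum_e\big(\sum_{S\in\SP_{x,m,e},\,\alpha}N(S,\alpha)z^\alpha\big)z^e$. By \cref{ass:bend_in_P}, applied to the infinitesimal cylinders appearing as twigs (or more directly: since each contributing $e-m$ is a sum of twig monomials, each of which lies in $P$ by the balancing analysis combined with $E_\rt^\trop$ lying in the relevant boundary), every $e$ with a nonzero coefficient satisfies $e-m\in P$. Thus modulo $J^k$ only those $e$ with $e-m\in P\setminus P_k$ contribute a term not already in $z^m J^k$, and there are finitely many such $e$; for each, the preceding paragraph shows the $z^e$-coefficient is a polynomial in $\bbZ[Q\oplus P]$ (a finite $\bbZ$-linear combination of $z^\alpha$'s, using also the finiteness of $N(S,\alpha)$ as a nonnegative integer and the finiteness of the set of spines $\SP_{x,m,e}$, which follows from \cref{lem:restrict_to_skeleton}). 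Therefore $\theta_{x,m}\bmod J^k$ lies in the image of $z^m(\bbZ[Q\oplus P])\hookrightarrow z^m(\hR_P/(J^k))$.

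\textbf{Main obstacle.} The only genuinely delicate point is verifying that every $e$ with nonzero coefficient actually satisfies $e-m\in P$, i.e.\ that \cref{ass:bend_in_P} propagates from infinitesimal cylinders to the bends occurring in local theta functions. This requires unwinding the twig structure of $\Trop(f)$: each twig of $\Trop(f)$, suitably localized, is an infinitesimal spine of the type governed by \cref{ass:bend_in_P}, so its monomial contribution to $e-m$ lies in $P$; summing over twigs and invoking that $P$ is a submonoid closes the argument. Once this is in place, the finiteness statement is an immediate application of \cref{lem:bound_on_class}(\ref{lem:bound_on_class:degtwig}) exactly as in \cref{lem:mod_J_bound}, and the polynomiality consequence follows formally.
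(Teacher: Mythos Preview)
Your approach is correct and matches the paper's, which simply says the proof parallels that of \cref{lem:mod_J_bound}. Your flagged obstacle—that $e-m\in P$ for every contributing $e$—is a genuine subtlety the paper leaves implicit; the cleanest resolution is to cut the spine $S$ near each bending vertex via the gluing formula (\cref{thm:gluing_concatenate}) so that \cref{ass:bend_in_P} applies directly to each resulting infinitesimal piece (the decomposition is by bending vertex, not by individual twig as you phrase it, since \cref{ass:bend_in_P} controls the total bend at a vertex rather than a single twig monomial).
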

\begin{proof}
	The proof parallels that of \cref{lem:mod_J_bound}.
	\end{proof}

\begin{definition} \label{def:scattering_diagram_without_curve_classes}
	Fix $k>0$.
	For any $x$ generic in some hyperplane $n^\perp$, let $\of_{x,k}\in\bbZ[P]/J^k$ be the image of $f_x$ under the map induced by the projection $Q\oplus P\to P$, i.e.\ setting $z^\alpha = 1$ for all $\alpha \in Q$.
	This is well-defined by \cref{lem:fx_mod_J}.
	Now there is a unique $\of_x \in \hL^0\subset\hL$ such that $\of_x=\of_{x,k}$ modulo $J^k$ for all $k$.
	
	Similarly, for the local theta function $\theta_{x,m}$, by \cref{lem:local_theta_function_mod_J}, we define $\otheta_{x,m,k}\in z^m(\bbZ[P]/J^k)$ and $\otheta_{x,m}\in z^m(\hL^0)\subset\hL$.
	
	We denote
	\[\fD_U\coloneqq\Set{(x,\of_x) | x\in n^\perp\subset M_\bbR\text{ generic for some } n\in N\setminus 0},\]
	and call it the \emph{scattering diagram} associated to $U$ with respect to $T_M$.
	
	For every $(\fd,f_\fd)\in\fD_k$ in \cref{prop:wall_decomposition}, we have $f_{\fd} \in \bbZ[Q \oplus P]/(J^k)$ by \cref{lem:fx_mod_J}, and we define $\of_\fd\in\bbZ[P]/J^k$ by projection.
	We denote $\fD_{U,k}\coloneqq\set{(\fd,\of_\fd) | (\fd,f_\fd)\in\fD_k}$, and call it a $k$-th order approximation of $\fD_U$.
\end{definition}

\begin{proposition} \label{prop:invertible}
	The function $\of_x\in\hL^0$ is invertible in $\hL$, in other words, $\of_x=1$ modulo $J$.
	Consequently, replacing $f_x$ with $\of_x$ in the formula \eqref{eq:wall-crossing_function}, we obtain an automorphism $\oPsi_{x,n}$ of $\hL$.
\end{proposition}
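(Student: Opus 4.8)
\textbf{Proof proposal for \cref{prop:invertible}.}

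The plan is to show that $\of_x \equiv 1 \pmod{J}$, from which everything else follows: once $\of_x$ is a unit in $\hL$, the formula $\oPsi_{x,n}(z^v) = z^v \of_x^{\braket{n,v}}$ makes sense for all $v \in M$ (not just $\braket{n,v} \ge 0$), and the fact that it is a ring homomorphism — hence an automorphism, being invertible with inverse $\oPsi_{x,-n}$ by the $f_x = f_{x,-n,-v}$ symmetry of \cref{lem:wall-crossing_function} — is inherited from \cref{thm:wall-crossing_homomorphism} by applying the ring map $\bbZ[Q\oplus P] \to \bbZ[P]$, $z^\alpha \mapsto 1$, and passing to $J$-adic completions. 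So the entire content is the congruence $\of_x \equiv 1 \pmod J$.

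First I would reduce to understanding the constant term (the $z^0$-coefficient) of $f_x$, or more precisely of its image $\of_x$ after setting curve classes to $1$. By \cref{lem:wall-crossing_function} we may fix a primitive $n \in N$ with $x \in n^\perp$ and some $v \in M$ with $\braket{n,v} = 1$, and write $\Psi_{x,n}(z^v) = z^v f_{x,n,v}$. Unwinding \cref{def:wall-crossing_map}, the $z^v$-component of $\Psi_{x,n}(z^v)$ — i.e. the constant term of $f_{x,n,v}$ — is $\sum_{\alpha \in \NE(Y)} N(V_{x,v,v},\alpha) z^\alpha$, the contribution with bend $w - v = 0$. After projecting $z^\alpha \mapsto 1$, the constant term of $\of_x$ is therefore $\sum_\alpha N(V_{x,v,v},\alpha)$, a sum which is finite modulo each $J^k$ by \cref{lem:mod_J_bound} (with $w - v = 0 \in P \setminus P_k$ for every $k$, so in fact the whole sum is finite). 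Thus it suffices to prove this sum equals $1$, i.e. $N(V_{x,v,v},\alpha) = 0$ for $\alpha \ne 0$ and $N(V_{x,v,v},0) = 1$.

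The key step is that the infinitesimal spine $V^\epsilon_{x,v,v}$ — whose domain is $[-\epsilon,\epsilon]$ mapping affinely on each half with outward derivatives $-v$ and $-v$ — has $s = -\sum_{e \ni 0} w_{(0,e)} = v + (-v) \cdot(-1)\ldots$; let me instead reason directly: the balancing defect at the midpoint is $w - v = 0$, so the midpoint is \emph{not} a bending vertex, and after the harmless subdivision the spine $V^\epsilon_{x,v,v}$ is equivalent to a single straight segment with no bending vertices, which (for $\epsilon$ small, by \cref{def:infinitesimal_spine}) is transverse with respect to $\Wall_A$ and whose image avoids $\Wall_A$. Now \cref{lem:count_balanced_spines} applies verbatim — a spine without bending vertices whose image is disjoint from $\Wall_A \cup \Sigma_\rt^{d-1}$ (the latter achievable after shrinking $\epsilon$ and, if necessary, choosing $x$ generic away from $\Sigma_\rt^{d-1}$, or by the transversality of \cref{prop:transversality}) has $N(S,\gamma) = 1$ if $\gamma = 0$ and $0$ otherwise. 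This gives exactly $N(V_{x,v,v},0) = 1$ and $N(V_{x,v,v},\alpha) = 0$ for $\alpha \ne 0$, hence $\of_x \equiv 1 \pmod J$.

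The main obstacle I anticipate is bookkeeping rather than conceptual: one must make sure the straight segment $V^\epsilon_{x,v,v}$ really does satisfy the hypotheses of \cref{lem:count_balanced_spines} — in particular that its image can be taken disjoint from both $\Wall_A$ and the codimension-one skeleton $\Sigma_\rt^{d-1}$, which may require perturbing $x$ within its hyperplane $n^\perp$ (legitimate because $f_x$ is locally constant on the generic locus of $\fd$ by \cref{prop:wall_decomposition}) or invoking deformation invariance (\cref{thm:deformation_invariance_truncated}) to move to a transverse representative. Once that is in place, the computation that $\oPsi_{x,n}$ is a well-defined automorphism of $\hL$ is then purely formal, following from \cref{thm:wall-crossing_homomorphism} and \cref{prop:wall-crossing_function} by base change along $z^\alpha \mapsto 1$ and $J$-adic completion, together with the finiteness statement \cref{lem:wall-crossing_function} guaranteeing that only finitely many walls are nontrivial modulo each $J^k$.
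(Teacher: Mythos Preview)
Your proposal is correct and follows the same core strategy as the paper: reduce to showing that the $z^0$-coefficient of $\of_x$ equals $1$, i.e.\ that $\sum_\alpha N(V_{x,v,v},\alpha) = 1$, and then recognize this as a toric count. The difference is in how you execute the last step. You invoke \cref{lem:count_balanced_spines}, which requires the image of the spine to be disjoint from $\Wall_A\cup\Sigma_\rt^{d-1}$; since in the interesting case $x$ itself lies in $\Wall_A$, you correctly note that a perturbation or deformation-invariance argument is needed to move the straight segment off the wall before that lemma applies. The paper instead argues directly at $x$: under \cref{ass:bend_in_P}, a stable map contributing to $N(V_{x,v,v},\alpha)$ has total twig monomial $w-v=0$, hence (by strict convexity of $P$) no twigs at all, so its image lies in $W^\an$ and one concludes via \cref{lem:counts_in_toric_case}. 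This avoids the perturbation step entirely. Both routes are valid; yours is a clean reduction to an existing packaged lemma at the cost of a small deformation argument, while the paper's is slightly more direct.
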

\begin{proof} 
	Let $g$ be any stable map contributing to $\of_x$ modulo $J$, in other words, $g$ contributes to some $N(V_{x,v,v},\alpha)\neq 0$.
	By \cref{ass:bend_in_P}, $\Trop(g)$ does not have any twigs.
	It follows that $g$ has image in $W^\an\subset Y^\an$, $W$ as in \cref{lem:toric_model}.
	Hence by \cref{lem:counts_in_toric_case}, there is a unique $\alpha$ such that $N(V_{x,v,v},\alpha)\neq 0$, and for this $\alpha$ we have $N(V_{x,v,v},\alpha)=1$.
	Therefore, we have $\of_x=1$ modulo $J$, completing the proof.
\end{proof}

\begin{lemma} \label{lem:otheta}
		For any $x\in M_\bbR$ generic and $m\in M$, we have $\otheta_{x,m}=z^m(1+\eta)$ for some $\eta\in J$.
\end{lemma}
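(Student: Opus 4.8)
The statement $\otheta_{x,m} = z^m(1+\eta)$ with $\eta \in J$ is the local-theta-function analogue of \cref{prop:invertible}, and I would prove it by the same mechanism. First I would unwind the definition: by \cref{def:scattering_diagram_without_curve_classes}, $\otheta_{x,m}$ is obtained from the local theta function $\theta_{x,m}$ of \cref{def:local_theta_funcation} by setting all curve classes to $0$, and $\theta_{x,m} = \sum_{e,S,\alpha} N(S,\alpha)\, z^\alpha z^e$, the sum over $e \in M$, $S \in \SP_{x,m,e}$, $\alpha \in \NE(Y)$. So it suffices to understand which terms survive modulo $J$, i.e.\ those with $e - m \in P \setminus P_1 = \{0\}$, equivalently $e = m$.

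The key step is therefore to identify $\SP_{x,m,m}$ and the associated counts. A spine in $\SP_{x,m,m}$ has domain $[-\infty,0]$, maps $-\infty$ to $\partial\oM_\bbR$ with derivative $-m$, and $0$ to $x$ with derivative $-e = -m$. Since the bend $e - m = 0$, there are no twigs on the associated tropical curves by \cref{ass:bend_in_P} (compare the argument in the proof of \cref{prop:invertible}); hence any stable map $g$ contributing to $N(S,\alpha)$ for such an $S$ has image in $W^\an \subset Y^\an$ (as in \cref{lem:toric_model}), where the relevant computation reduces to the toric case. Applying \cref{lem:counts_in_toric_case}, the spine $S$ here has no bending vertices, so $N(S,\alpha) = 1$ if $\alpha = \delta_h$ (which will be $0$ because the map $h$ is affine without any codimension-one-wall crossings — one should check $\delta_h = 0$ via \cref{def:curve_class_via_varphi}, since $h$ restricted to $[-\infty,0]$ lands near a fixed maximal cone) and $N(S,\alpha) = 0$ otherwise. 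Moreover $\SP_{x,m,m}$ should be a singleton: the unique affine map $[-\infty,0] \to \oM_\bbR$ with the prescribed boundary behaviour and derivative. Thus the total $z^m$-coefficient of $\theta_{x,m}$, after setting curve classes to $0$, is exactly $1$, giving $\otheta_{x,m} = z^m + (\text{terms with } e - m \in P_1) = z^m(1 + \eta)$ with $\eta \in J$.

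The main obstacle I anticipate is the bookkeeping of the two degenerate cases: (i) confirming that $\SP_{x,m,m}$ is genuinely a singleton and not empty — one needs the genericity of $x$ and the affine-structure identification $\Sk(U) \simeq M_\bbR$ so that the straight segment from a point near $\partial\oM_\bbR$ in direction $m$ passes through $x$ for the right parametrization, and that no bending is forced; and (ii) verifying that the unique contributing curve class is $0$, which is where one invokes \cref{prop:curve_class_formula} / \cref{def:curve_class_via_varphi} together with the fact that $h$ meets no codimension-one cone of $\Sigma_\rt$ in its interior (this may require shrinking or a careful choice, analogous to the use of $V_Q$ and $V_R$ in \cref{sec:associativity}). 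Once these two points are settled, the conclusion is immediate: the only monomial of $\otheta_{x,m}$ not divisible by an element of $P_1$ is $z^m$ with coefficient $1$, which is precisely the assertion $\otheta_{x,m} = z^m(1+\eta)$, $\eta \in J$. I would write this up in two short paragraphs, citing \cref{lem:local_theta_function_mod_J} for the finiteness that makes $\otheta_{x,m}$ well-defined and \cref{lem:counts_in_toric_case}, \cref{ass:bend_in_P}, \cref{lem:toric_model} for the computation.
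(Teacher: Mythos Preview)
Your proposal is correct and follows exactly the approach the paper indicates: the paper's proof is the single line ``This follows from \cref{ass:bend_in_P} just as in the proof of \cref{prop:invertible},'' and you have essentially reconstructed that argument in the theta-function setting.

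One small point: your concern (ii) about showing $\delta_h = 0$ is unnecessary and in general false. The straight ray from $x$ in direction $m$ may well cross codimension-one cones of $\Sigma_\rt$, so $\delta_h$ can be nonzero. But this does not matter: you are computing $\otheta_{x,m}$, where all $z^\alpha$ have already been set to $1$, so the coefficient of $z^m$ is $\sum_\alpha N(S,\alpha)$, which equals $1$ by \cref{lem:counts_in_toric_case} regardless of which single $\alpha$ carries the count. You can therefore drop that verification entirely. Concern (i) is handled by the strict convexity of $P$: any spine $S\in\SP_{x,m,m}$ with $N(S,\alpha)\neq 0$ has each bend in $P\setminus 0$ (via gluing and \cref{ass:bend_in_P}), and these bends sum to $e-m=0$, forcing no bends at all; so the contributing spine is unique.
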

\begin{proof} This follows from \cref{ass:bend_in_P} just as in the proof of \cref{prop:invertible}.
\end{proof}

\begin{proposition} \label{prop:fD_U_consistent}
	The scattering diagram $\fD_U$ is theta function consistent in the following sense:
	Given any $k>0$ and $(\fd,f_\fd)\in\fD_k$, choose $n\in N$ with $\fd\subset n^\perp$, and let $a,b\in M_\bbR$ be two general points near a general point $x\in\fd$ with $\braket{n,a}>0$, $\braket{n,b}<0$.
	We have the following equalities in $z^m(\bbZ[P]/J^k)$
	\begin{align*}
	\oPsi_{x,n}(\otheta_{a,m})=\otheta_{b,m},\\
	\oPsi_{x,-n}(\otheta_{b,m})=\otheta_{a,m}.
	\end{align*}
\end{proposition}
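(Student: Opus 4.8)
\textbf{Proof plan for Proposition \ref{prop:fD_U_consistent}.}
The plan is to deduce this statement directly from its counterpart with curve classes, \cref{prop:theta_function_consistent}, by applying the ring homomorphism $\bbZ[Q\oplus P]\to\bbZ[P]$ that sets $z^\alpha=1$ for all $\alpha\in Q$, and then reorganizing the four equations \eqref{eq:theta_function_consistent_1}--\eqref{eq:theta_function_consistent_4} into the two asserted equations. First I would fix $k>0$ and work modulo $J^k$ throughout; by Lemmas \ref{lem:fx_mod_J} and \ref{lem:local_theta_function_mod_J}, all the objects in sight ($f_x$, $\theta_{a,m}$, $\theta_{b,m}$) have polynomial representatives in $\bbZ[Q\oplus P]/(J^k)$, so the specialization map $\pi\colon\bbZ[Q\oplus P]/(J^k)\to\bbZ[P]/(J^k)$ is defined on them and, by \cref{def:scattering_diagram_without_curve_classes}, sends $f_x\mapsto\of_{x,k}$, $\theta_{a,m}\mapsto\otheta_{a,m,k}$, $\theta_{b,m}\mapsto\otheta_{b,m,k}$.

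The key point to check is compatibility of $\pi$ with the wall-crossing transformations: I need that $\pi\circ\Psi_{x,n}=\oPsi_{x,n}\circ\pi$ on the relevant (finitely supported, modulo $J^k$) elements. This is immediate from the definitions once one observes that by \cref{prop:wall-crossing_function} the map $\Psi_{x,n}$ is multiplication-by-$f_x^{\braket{n,\cdot}}$ monomial-by-monomial, $\oPsi_{x,n}$ is multiplication-by-$\of_x^{\braket{n,\cdot}}$, and $\pi$ is a ring homomorphism fixing $z^v$ for $v\in M$ and sending $f_x$ to $\of_x$ (all modulo $J^k$; here I use \cref{prop:invertible} so that negative powers of $\of_x$ also make sense in $\hL$, hence $\oPsi_{x,n}$ is genuinely defined). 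Applying $\pi$ to each of \eqref{eq:theta_function_consistent_1}--\eqref{eq:theta_function_consistent_4} then yields
\begin{align*}
\oPsi_{x,n}(\otheta_{a,m,+})=\otheta_{b,m,+},\quad
\oPsi_{x,-n}(\otheta_{b,m,-})=\otheta_{a,m,-},\\
\oPsi_{x,n}(\otheta_{a,m,0})=\otheta_{b,m,0},\quad
\oPsi_{x,-n}(\otheta_{b,m,0})=\otheta_{a,m,0},
\end{align*}
where the decomposition $\otheta_{a,m}=\otheta_{a,m,+}+\otheta_{a,m,0}+\otheta_{a,m,-}$ is the image under $\pi$ of the decomposition of $\theta_{a,m}$ by the sign of $\braket{n,\cdot}$ (and note $\pi$ respects this decomposition since it fixes the $M$-grading). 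Since $\oPsi_{x,n}$ is the identity on $\braket{n,\cdot}=0$ monomials and, by \cref{prop:wall-crossing_function}, multiplies a monomial $z^e$ by $\of_x^{\braket{n,e}}$ (a power of $1+\text{(elements of }J)$ by \cref{prop:invertible}) without changing the sign of $\braket{n,e}$, summing the first, third and (for $\oPsi_{x,n}$) observing that $\oPsi_{x,n}(\otheta_{a,m,-})$ and $\otheta_{b,m,-}$ must agree — this last is the only part requiring a small argument — gives $\oPsi_{x,n}(\otheta_{a,m})=\otheta_{b,m}$; the symmetric computation with $-n$ gives the second equality.

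The main obstacle, though a mild one, is handling the minus-pieces: \cref{prop:theta_function_consistent} as stated only directly provides $\Psi_{x,n}$ applied to the $+$ and $0$ pieces, and $\Psi_{x,-n}$ applied to the $-$ and $0$ pieces. To assemble $\oPsi_{x,n}(\otheta_{a,m})=\otheta_{b,m}$ I must also control $\oPsi_{x,n}(\otheta_{a,m,-})$. Here I would use that $\oPsi_{x,n}$ and $\oPsi_{x,-n}$ are mutually inverse automorphisms of $\hL$ (by \cref{def:wall-crossing_transformation_extension}/\cref{prop:invertible}, since $f_{x,n,v}=f_{x,-n,-v}$ by \cref{lem:wall-crossing_function}), so applying $\oPsi_{x,n}$ to the specialized form of \eqref{eq:theta_function_consistent_2} yields $\otheta_{b,m,-}=\oPsi_{x,n}(\otheta_{a,m,-})$, which is exactly the missing piece; the sign-of-pairing sectors are preserved under $\oPsi_{x,\pm n}$, so the three sectors reassemble cleanly to the full equality. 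The second equation of the proposition is obtained by interchanging the roles of $a$ and $b$, $n$ and $-n$, throughout. Finally, since these identities hold modulo $J^k$ for every $k$, and all the objects are the reductions of well-defined elements of $\hL$ (Definitions \ref{def:scattering_diagram_without_curve_classes}), passing to the limit gives the stated equalities in $z^m\hL^0$, hence in particular modulo $J^k$ as asserted.
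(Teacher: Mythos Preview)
Your proposal is correct and follows essentially the same approach as the paper: the paper's proof is the one-line statement that the result follows from \cref{prop:theta_function_consistent} together with \cref{lem:fx_mod_J} and \cref{lem:local_theta_function_mod_J} via the projection $Q\oplus P\to P$. You have simply unpacked this, correctly identifying that the passage from the four sector-wise equations \eqref{eq:theta_function_consistent_1}--\eqref{eq:theta_function_consistent_4} to the two combined equations requires inverting $\oPsi_{x,-n}$ on the minus-sector (using \cref{prop:invertible} and $f_{x,n,v}=f_{x,-n,-v}$), a detail the paper leaves implicit.
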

\begin{proof}
	This follows from \cref{prop:theta_function_consistent} and \cref{lem:fx_mod_J,lem:local_theta_function_mod_J}, via the quotient induced by the projection $Q\oplus P\to P$.
\end{proof}

\begin{proposition} \label{prop:KS_consistent}
	The scattering diagram $\fD_U$ is consistent in sense of Kontsevich-Soibelman, i.e.\ for any general loop $l\colon [0,1] \to M_\bbR$ with $l(0) = l(1)$, the composition $\oPsi$ of wall-crossing automorphisms $\oPsi_{x,n}$ along $l$ is the identity on $\hL$.
\end{proposition}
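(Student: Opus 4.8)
\textbf{Proof plan for Proposition~\ref{prop:KS_consistent}.}
The plan is to deduce Kontsevich--Soibelman consistency from the theta-function consistency established in \cref{prop:fD_U_consistent}, using the fact that the local theta functions $\otheta_{x,m}$ form a "separating family'' for automorphisms of $\hL$ that fix the coefficient ring. First I would fix an order $k>0$ and work modulo $J^k$ throughout, since an automorphism of $\hL$ is the identity if and only if it is the identity modulo $J^k$ for every $k$, and since by \cref{prop:wall_decomposition} and \cref{lem:fx_mod_J} only finitely many walls $(\fd,\of_\fd)\in\fD_{U,k}$ are nontrivial modulo $J^k$. After a general perturbation of the loop $l$, I may assume $l$ crosses these finitely many codimension-one walls transversally, one at a time, avoiding all codimension-two strata; this makes the composition $\oPsi$ a well-defined finite product of wall-crossing automorphisms $\oPsi_{x_i,n_i}^{\pm 1}$.

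Next I would track how a local theta function transforms along the loop. Pick a base point $x_0 = l(0) = l(1)$ (general), and $m\in M$. For a point $x\in M_\bbR$ general, $\otheta_{x,m}\in z^m(\bbZ[P]/J^k)$ is the local theta function of \cref{def:scattering_diagram_without_curve_classes}; by \cref{lem:otheta} it has leading term $z^m$. The key claim is that $\oPsi(\otheta_{x_0,m}) = \otheta_{x_0,m}$ in $z^m(\bbZ[P]/J^k)$ for every $m$. To see this, I would write the loop as a concatenation of short arcs, each contained in a chamber of the wall-structure $\fD_{U,k}$ except for crossing a single wall. Along the $i$-th wall crossing at a general point $x_i\in\fd_i$, with $n_i\in N$ chosen so $\fd_i\subset n_i^\perp$ and oriented so the loop goes from the $\braket{n_i,\cdot}>0$ side to the $\braket{n_i,\cdot}<0$ side (or the reverse, handled by replacing $n_i$ with $-n_i$), \cref{prop:fD_U_consistent} gives precisely
\[
\oPsi_{x_i,n_i}(\otheta_{a_i,m}) = \otheta_{b_i,m},
\]
where $a_i, b_i$ are general points just before and just after the crossing. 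Since $\otheta_{x,m}$ is locally constant in $x$ within a chamber (this needs a brief justification: $N(S,\alpha)$ is deformation invariant among transverse spines by \cref{thm:deformation_invariance_truncated}, so the coefficients of $\otheta_{x,m}$ do not change as $x$ varies within a fixed chamber of $\fD_{U,k}$, because the combinatorial type of the relevant spines in $\SP_{x,m,e}$ is constant there), the transport of $\otheta_{\cdot,m}$ across each arc is the identity, and across each wall it is given by the formula above. Composing all the arc- and wall-transports around the full loop returns to $\otheta_{x_0,m}$, which shows $\oPsi(\otheta_{x_0,m}) = \otheta_{x_0,m}$.

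Finally I would argue that fixing all $\otheta_{x_0,m}$ forces $\oPsi = \id$. By \cref{lem:otheta}, $\otheta_{x_0,m} = z^m(1+\eta_m)$ with $\eta_m\in J$, so modulo $J^k$ the elements $\{\otheta_{x_0,m}\}_{m\in M}$ are obtained from the monomials $\{z^m\}$ by a unipotent (block-)triangular change of basis with respect to the $J$-adic filtration; hence they $\bbZ$-linearly span $\bbZ[P]/J^k$ and in fact multiplicatively generate $\hL/J^k\hL$ after inverting, by an easy induction on the $J$-adic order. Since $\oPsi$ is a $\bbZ[P]/J^k$-algebra automorphism (it is built from the $\oPsi_{x_i,n_i}$, each of which fixes $\bbZ[P]$ by \cref{def:wall-crossing_transformation}, \cref{def:scattering_diagram_without_curve_classes} and \cref{prop:invertible}) and it fixes this generating set, it is the identity modulo $J^k$. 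As $k$ was arbitrary, $\oPsi = \id$ on $\hL$. The main obstacle I anticipate is the bookkeeping at the wall crossings: making sure the orientation conventions in \cref{prop:fD_U_consistent} (which splits $\otheta$ by the sign of $\braket{n,\cdot}$ and only asserts consistency of the whole $\otheta$ via equations \eqref{eq:theta_function_consistent_1}--\eqref{eq:theta_function_consistent_4}) match up with the direction of the loop, and verifying that the "locally constant within a chamber'' claim is uniform enough that the finitely many arcs can be chosen to lie in single chambers modulo $J^k$ — both are routine but require care to state cleanly.
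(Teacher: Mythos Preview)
Your approach is exactly the paper's: deduce from \cref{prop:fD_U_consistent} that $\oPsi$ fixes every $\otheta_{y,m}$, then invoke \cref{lem:otheta} to see these topologically generate, forcing $\oPsi=\id$; the paper compresses all of your chamber-by-chamber transport into the single sentence ``By \cref{prop:fD_U_consistent}, $\oPsi(\otheta_{y,m})=\otheta_{y,m}$''. One correction: the $\oPsi_{x,n}$ do \emph{not} fix $\bbZ[P]$ (they send $z^v\mapsto z^v\of_x^{\braket{n,v}}$), so $\oPsi$ is not a $\bbZ[P]/J^k$-algebra map---but you don't need this, since you already argue the $\otheta_{y,m}$ generate as a ring (more precisely, they topologically generate $\hL^0$, which each $\oPsi_{x,n}$ preserves because $\of_x$ is a unit in $\hL^0$), and that suffices.
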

\begin{proof}
	Denote $y\coloneqq l(0) = l(1)$.
	By \cref{prop:fD_U_consistent}, we have $\oPsi(\otheta_{y,m})=\otheta_{y,m}$ for all $m\in M$.
	By \cref{lem:otheta}, the subring of $\hL$ generated by all $\otheta_{y,m}$ is $J$-adically dense.
	So $\oPsi$ is identity on $\hL$.
\end{proof}

\section{Cluster case: comparison with Gross-Hacking-Keel-Kontsevich} \label{sec:cluster_case} 

In this section, we compare our mirror algebra with the canonical algebra of Gross-Hacking-Keel-Kontsevich \cite{Gross_Canonical_bases} in the case of cluster varieties, for both the A-cluster case (\cref{thm:comparison_with_GHKK_A-type}) and the X-cluster case (\cref{cor:comparison_with_GHKK_X-type}).
We refer to the final remarks in \cref{sec:intro:scattering} for implications of the comparison.

The idea is the following:
Using the geometric construction of scattering diagrams in the previous section, it suffices to prove the comparison at the level of scattering diagrams (see \cref{thm:scattering_diagram_comparison}).
By the consistency, it suffices to identify all the incoming walls of our scattering diagram and compute the associated scattering functions.
The notions of twigs and walls introduced in \cref{sec:tropical} are too rough for this purpose.
Thus in \cref{sec:C-walls}, we refine the notions of walls and twigs for the cluster case.
They will helps us identify the incoming walls in \cref{lem:incoming_walls}, and control the monomials in the scattering functions in \cref{lem:scattering_function_cluster}.

\begin{notation} \label{nota:cluster_blowup}
Let $M$ be a lattice and $\braket{,}$ an integer valued non-degenerate skew-symmetric form on $M$.
Fix $S \subset M \setminus 0$ a finite set of elements such that for every $e\in S$, $\braket{e ,\cdot} \in M^*$ is primitive (in particular, nonzero);
moreover, we assume the submonoid $P \subset M$ generated by $S$ is strictly convex.

Let $\Sigma$ be the (incomplete) fan in $M_\bbR$ consisting of rays $\bbR_{\geq 0} e$ for all $e \in S$.
Let $\TV(\Sigma)$ be the associated toric variety.
For every $e\in S$, let $D_e\subset\TV(\Sigma)$ be the corresponding toric boundary divisor.
Fix $\lambda_e\in k^*$ for each $e \in S$. 
Note that $\braket{e,\cdot}$ vanishes on $e$, so gives an element in the dual of $M/(\bbZ e)$, hence the equation $z^{\braket{e,\cdot}}+\lambda_e=0$ gives a subvariety $Z_e\subset D_e$
(note $D_e$ is isomorphic to the algebraic torus $T_{M/(\bbZ e)}$ and $Z_e$ is a coset for the codimension-one subtorus $\ker z^{\braket{e,\cdot}}\colon D_e \to \bbG_m$).
Let $\oU' \to \TV(\Sigma)$ be the blowup along the (disjoint) union of all $Z_e$, $e \in S$.
Let $\partial\oU'$ be the strict transform of the toric boundary, $U'\subset \oU'$ the complement of $\partial\oU'$, and $U \coloneqq \Spec(H^0(U',\cO_{U'}))$.
\end{notation}

\begin{assumption} \label{ass:cluster_blowup}
We assume the following:
\begin{enumerate}
	\item \label{ass:cluster_blowup:fg} $H^0(U',\cO_{U'})$ is finitely generated;
	\item \label{ass:cluster_blowup:iso} The natural map $U' \to U$ is an isomorphism outside of closed subsets (of domain and range) of codimension at least two;
	\item \label{ass:cluster_blowup:torus} The induced map $T_M \to U$ is an open immersion.
	\item \label{ass:cluster_blowup:smooth} $U$ is smooth.
\end{enumerate}
Note that (\ref{ass:cluster_blowup:fg}), (\ref{ass:cluster_blowup:iso}) and (\ref{ass:cluster_blowup:smooth}) imply that $U$ is log Calabi-Yau.
\end{assumption}

\begin{remark}
In our application later in \cref{sec:comparison}, $U$ will be the spectrum of an upper A-cluster algebra, $\braket{,}$ will be assumed unimodular, $S'$ will be a seed (i.e.\ a basis of $M$), $S \subset S'$ will be the set of unfrozen basis elements, and $\oU'$ will be a \emph{toric model} determined by the seed, as in \cite[3.4]{Gross_Birational_geometry_of_cluster_algebras}.
\end{remark}

\subsection{C-twigs and C-walls} \label{sec:C-walls}

Here we introduce a more restrictive notion of twigs and walls than in \cref{sec:tropical}, that is better adapted to the case of cluster varieties.
We will call them \emph{C-twigs} and \emph{C-walls}, where ``C'' is short for ``cluster''.

\begin{construction} \label{const:LRS}
	Following \cref{nota:cluster_blowup}, we construct a partial compactification $M_\bbR^S$ of $M_\bbR$ as follows:
	We add more cells to the fan $\Sigma$ to make it into a complete fan $\Sigma'$.
	Let $\oSigma'$ denote the compactified fan associated to $\Sigma'$ (as in \cref{nota:toric_model}).
	For every $e\in S$, let $\sigma_e\subset\oSigma'$ denote the open cell of $\partial\oSigma'$ perpendicular to $\bbR_{\ge 0}e$.
	Note $\sigma_e$ is isomorphic to $(M/(\bbZ e))_\bbR$.
	Define
	\[M_\bbR^S\coloneqq M_\bbR \cup \bigcup_{e\in S} \sigma_e, \quad\partial M_\bbR^S\coloneqq M_\bbR^S\setminus M_\bbR.\]
	Note they are independent of the choice of $\Sigma'$.
	
	For every $e\in S$, let $\eta_e\subset\sigma_e$ be the limit of the hyperplane $e^\perp\subset M_\bbR$ in $\sigma_e$.
	We have a canonical retraction $\rho_\rt\colon\TV(\Sigma)^\an\to M_\bbR^S$, which induces a retraction $\rho\colon\oU'^\an\to M_\bbR^S$.
	For every $e\in S$, we have $\rho_\rt(D_e)=\sigma_e$ and $\rho_\rt(Z_e)=\eta_e$.
\end{construction}

\begin{definition} \label{def:C-twig}
	A \emph{C-twig} consists of a nodal metric tree $\Gamma$, a set of 1-valent vertices $(r,u_1,\dots,\allowbreak u_m)$, and a \Zaffine map $h\colon\Gamma\to M_\bbR^S$ (see \cref{def:pointedtree}(\ref{def:pointedtree:Zaffine})) satisfying the following conditions:
	\begin{enumerate}
		\item The vertex $r$ is a 1-valent finite vertex called \emph{root}; the vertices $u_1,\dots,u_m$ are different 1-valent infinite vertices.
		These are the only 1-valent vertices of $\Gamma$.
		\item We have $h\inv(\partial M_\bbR^S)=\{u_1,\dots,u_m\}$.
		\item \label{def:C-twig:leg} For $j=1,\dots,m$, let $l_j$ denote the leg incident to $u_j$; then $l_j\setminus u_j$ maps into a hyperplane $e^\perp\subset M_\bbR$ for some $e\in S$, with outward weight vector $k e$ for some positive integer $k$.
		\item \label{def:C-twig:balancing} (Balancing condition) The \Zaffine map $h$ is balanced at every vertex of $\Gamma$ of valency greater than 1.
	\end{enumerate}
	For every edge $e$ of $\Gamma$, let $\widetilde e$ denote $e$ minus its possible infinite endpoint.
	Let $e_r$ be the edge of $\Gamma$ incident to the root $r$.
	We call the weight vector $w_{(r,e_r)}$ the \emph{monomial} of the C-twig, and $-w_{(r,e_r)}$ the \emph{direction} of the C-twig.
\end{definition}

\begin{lemma} \label{lem:C-twig_weight_vector}
	Let $[\Gamma,(r,u_1,\dots,u_m),h]$ be a C-twig in $M_\bbR$.
	For every edge $e$ of $\Gamma$, let $w_e$ denote the weight vector of $e$ at the endpoint closer to the root.
	We have $w_e\in P\setminus 0$ and $h(\widetilde e)\subset w_e^\perp$.
	In particular, $h$ is an immersion and $\Gamma$ is irreducible.
\end{lemma}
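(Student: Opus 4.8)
The statement is a structural claim about C-twigs: starting from the balancing condition and the special form of the legs, one wants to show every weight vector points into $P$ and the map is locally flat along each edge perpendicular to that weight vector. My plan is to argue by a downward induction on the tree, working from the infinite leaves toward the root. For a leaf edge $l_j$, condition (\ref{def:C-twig:leg}) in \cref{def:C-twig} gives directly that the weight vector at its finite endpoint is $k_j e_j$ with $k_j>0$ and $e_j\in S$, hence in $P\setminus 0$; moreover $l_j\setminus u_j$ maps into $e_j^\perp$, which is the desired orthogonality $h(\widetilde{l_j})\subset w_{l_j}^\perp$. This establishes the base case.

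\textbf{Inductive step.} Fix an internal vertex $v$ of valency $\ge 2$ all of whose edges away from the root have already been treated, and let $e$ be the edge joining $v$ to its parent. By the balancing condition (\ref{def:C-twig:balancing}) at $v$, the weight vector $w_e$ (pointing toward the root) equals the sum of the weight vectors of the child edges at $v$ pointing \emph{away} from $v$, i.e.\ $w_e = \sum_{e'\ni v,\ e'\ne e} w_{e'}$ where each $w_{e'}$ is, by the inductive hypothesis, an element of $P\setminus 0$. Since $P$ is a submonoid of $M$ (and strictly convex, so in particular closed under addition and $0\notin P$ unless the sum is empty — which cannot happen here as $v$ has valency $\ge 2$), we get $w_e\in P\setminus 0$. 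For the orthogonality claim on $\widetilde e$: the restriction $h|_{\widetilde e}$ is affine with constant derivative $\pm w_e$, so it suffices to check that one point of $h(\widetilde e)$ lies in $w_e^\perp$, or better, to propagate the perpendicularity from the children. Here I would use that each child subtree maps into $w_{e'}^\perp$ near $v$, so $h(v)\in\bigcap_{e'} w_{e'}^\perp$; since $w_e$ is a nonnegative combination of the $w_{e'}$, we have $w_e^\perp\supset\bigcap w_{e'}^\perp\ni h(v)$, and then $h(\widetilde e)\subset w_e^\perp$ follows because the derivative $w_e$ of $h$ on $\widetilde e$ is tangent to $w_e^\perp$. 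One must take care at nodes, where balancing means both incident edges are contracted (as in the convention of \cref{def:twig}) — but then the weight vectors simply agree and the argument passes through trivially.

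\textbf{Conclusion.} Once every weight vector lies in $P\setminus 0$, in particular no weight vector vanishes, so $h$ has nowhere-zero derivative on the interior of every edge, hence is an immersion. Finally, an immersed nodal metric tree in $M_\bbR$ (a contractible space, with the legs going off to the boundary in fixed directions $ke$) cannot contain a node without violating injectivity of the tangent directions forced by the $w_e\in P$ all lying in a strictly convex cone; more directly, the definition of C-twig requires the legs to be the only places hitting $\partial M_\bbR^S$, and balancing at a node forces the two incident edges to be contracted, contradicting immersivity unless there is no node. Therefore $\Gamma$ is irreducible.

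\textbf{Main obstacle.} The genuinely delicate point is the orthogonality propagation $h(\widetilde e)\subset w_e^\perp$: it relies on the fact that $h(v)$ lies in the common perpendicular of the child weight vectors, which in turn requires that the claimed orthogonality for the children holds not just on $\widetilde{e'}$ in the open sense but up to and including the vertex $v$; I would need to be careful that the edges meeting at $v$ are genuinely half-open intervals whose closures contain $v$, so that the inductive statement $h(\widetilde{e'})\subset w_{e'}^\perp$ does give $h(v)\in w_{e'}^\perp$. The strict convexity of $P$ is exactly what is needed to rule out cancellation in the balancing sum and to exclude nodes, and keeping track of where it is used (as opposed to mere submonoid closure) is the subtle bookkeeping.
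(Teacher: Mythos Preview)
Your proposal is correct and follows essentially the same approach as the paper: induction from the legs toward the root, using the balancing condition to write $w_e$ as a sum of child weight vectors in $P\setminus 0$, and propagating the orthogonality via $h(v)\in\bigcap w_{e'}^\perp$. Your worry in the ``Main obstacle'' is unfounded: by definition $\widetilde{e'}$ is $e'$ minus its possible \emph{infinite} endpoint, so the finite vertex $v$ always lies in $\widetilde{e'}$, and the inductive hypothesis $h(\widetilde{e'})\subset w_{e'}^\perp$ directly gives $h(v)\in w_{e'}^\perp$; also, your parenthetical that at a node ``the argument passes through trivially'' is misleading---rather, a node would force the adjacent edge to be contracted, contradicting $w_e\in P\setminus 0$ already established by the induction, which is exactly how irreducibility falls out (as you correctly say later).
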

\begin{proof}
	It follows from \cref{def:C-twig}(\ref{def:C-twig:leg}) that the claim holds for all the legs of $\Gamma$.
	Hence by induction (from the legs towards the root), it suffices to prove the following:
	for any finite vertex $v$ of $\Gamma$, let $e_1,\dots,e_l,f$ denote the edges incident to $v$ where $f$ is the edge closer to $r$; if the statement holds for all $e_1,\dots,e_l$, then it also holds for $f$.
	So let us suppose $w_{e_i}\in P\setminus 0$ and $h(\widetilde e_i)\subset w_{e_i}^\perp$ for all $i$.
	Since $v$ is the intersection of all $\widetilde e_i$, we get $h(v)\in w_{e_i}^\perp$ for all $i$.
	By the balancing condition, we have $w_f=\sum w_{e_i}$.
	We obtain $w_f\in P\setminus 0$ and $h(v)\in w_f^\perp$.
	We deduce that
	\[h(f)\ \subset\ h(v)-\bbR_{\ge 0}\cdot w_f\ \subset\ w_f^\perp,\]
	completing the proof.
\end{proof}

\begin{lemma} \label{lem:C-twig_generic}
	Let $[\Gamma,(r,u_1,\dots,u_m),h]$ be a C-twig in $M_\bbR$.
	For every edge $e$ of $\Gamma$, we say that $f(\widetilde e)\subset M_\bbR$ is generic if it is contained in at most one rational hyperplane passing through 0.
	Let $e_r$ be the edge incident to $r$.
	If $h(e_r)$ is generic, then $h(e)$ is generic for every edge $e$ of $\Gamma$.
	In particular, this holds when $h(r)$ is generic (in the same sense as above).
\end{lemma}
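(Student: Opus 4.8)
The statement is a propagation-of-genericity claim for C-twigs, entirely analogous to the structural bookkeeping in \cref{lem:C-twig_weight_vector}, so the plan is to run the same induction from the root toward the legs, but tracking the property ``$h(\widetilde e)$ lies in at most one rational hyperplane through $0$'' instead of (or alongside) the property ``$h(\widetilde e)\subset w_e^\perp$''. First I would recall the key geometric input already supplied by \cref{lem:C-twig_weight_vector}: for every edge $e$ the image $h(\widetilde e)$ is contained in $w_e^\perp$, where $w_e\in P\setminus 0$ is the weight vector at the endpoint of $e$ nearer the root; and $h$ is an immersion, so each $h(\widetilde e)$ is a genuine (relatively open) line segment spanning a $1$-dimensional affine subspace. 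Being ``generic'' then means precisely that the unique line containing $h(\widetilde e)$ is not contained in two distinct rational hyperplanes through $0$, equivalently that its affine span, when translated to pass through $0$, meets the union of pairwise intersections of distinct rational hyperplanes only where forced — concretely, the segment is generic iff $h(\widetilde e)$ is not contained in any rational subspace of codimension $\ge 2$.

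Next I would set up the induction. The base case is the edge $e_r$ incident to the root, which is generic by hypothesis. For the inductive step, fix a finite vertex $v$, let $f$ be the edge at $v$ nearer the root and $e_1,\dots,e_l$ the edges at $v$ farther from the root, and assume $h(f)$ is generic; I must show each $h(e_i)$ is generic. Since $v$ is a common point of $\widetilde f$ and all $\widetilde e_i$, the point $h(v)$ lies on the line through $h(\widetilde f)$; because $h(\widetilde f)$ is generic, $h(v)$ lies in no rational subspace of codimension $\ge 2$ except those already containing the whole segment $h(\widetilde f)$ — in particular $h(v)$ is not contained in the span of $\widetilde e_i$ unless that span is forced to contain $h(\widetilde f)$ too. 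The crucial observation is that $h(\widetilde f)$ is generic and $h(\widetilde f)\subset w_f^\perp$ with $w_f=\sum_i w_{e_i}$ by the balancing condition, while each $h(\widetilde e_i)\subset w_{e_i}^\perp$; since $h(v)$ lies on all of $w_f^\perp, w_{e_1}^\perp,\dots,w_{e_l}^\perp$, genericity of the line through $h(v)$ in the direction of $\widetilde f$ already pins down $h(v)$ to lie in at most one rational hyperplane beyond $w_f^\perp$ — forcing the $w_{e_i}$ to be (non-negative rational) multiples of a single primitive vector, hence $w_{e_i}^\perp = w_f^\perp$ for all $i$, and therefore $h(\widetilde e_i)$, which is contained in $w_{e_i}^\perp = w_f^\perp$ and passes through the generic point $h(v)$, spans a line that is also generic. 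The final sentence of the lemma is then immediate: if $h(r)$ itself is generic, then since $r$ is $1$-valent the segment $h(e_r)$ emanates from the generic point $h(r)$ in the direction $-w_{(r,e_r)}$ and thus lies in at most one rational hyperplane ($w_{e_r}^\perp$), i.e.\ $h(e_r)$ is generic, and the induction applies.

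The step I expect to be the main obstacle is making rigorous the assertion that genericity of $h(\widetilde f)$ together with the containments $h(v)\in w_f^\perp\cap\bigcap_i w_{e_i}^\perp$ forces all $w_{e_i}^\perp$ to coincide with $w_f^\perp$. A priori $h(v)$ is a single point, and a single point lies in many rational hyperplanes, so one cannot conclude this from $h(v)$ alone; the argument must instead use that the \emph{whole segment} $h(\widetilde f)$ is generic and that $h(\widetilde e_i)$ is a segment (not a point) through $h(v)$ contained in $w_{e_i}^\perp$. I would phrase it as: if some $w_{e_i}^\perp\ne w_f^\perp$, then $h(v)\in w_{e_i}^\perp\cap w_f^\perp$, a rational subspace of codimension $2$; but the line through $h(\widetilde f)$ is contained in $w_f^\perp$ and, by genericity, in no codimension-$2$ rational subspace, hence it is transverse to $w_{e_i}^\perp$ at $h(v)$ — which is consistent, so the contradiction must instead be extracted from the segments $h(\widetilde e_i)$ themselves: each spans a rational line through $h(v)$ inside $w_{e_i}^\perp$, and by induction toward the legs (where genericity of leg images is automatic from \cref{def:C-twig}(\ref{def:C-twig:leg}), since $l_j\setminus u_j$ lies in the single hyperplane $e^\perp$) combined with \cref{lem:C-twig_weight_vector}, one checks that the only way for these constraints to be mutually compatible with a generic $h(\widetilde f)$ is the degenerate configuration $w_{e_i}^\perp=w_f^\perp$. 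I would isolate this as a short sub-lemma in linear algebra about balanced configurations of covectors constrained to share a generic segment, and then the rest of the proof is the routine induction sketched above.
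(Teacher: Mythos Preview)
Your inductive framework (propagating genericity from the root outward, one edge at a time) is correct and matches the paper. The gap is in the inductive step itself. You try to conclude that $w_{e_i}^\perp = w_f^\perp$ for all children $e_i$ of $v$, i.e.\ that all the $w_{e_i}$ are parallel to $w_f$. This is simply false: at a vertex $v$ a C-twig can have children with weights $e,e'\in S$ satisfying $\braket{e,e'}\neq 0$, so $e^\perp$, $e'^\perp$, and $(e+e')^\perp$ are three distinct hyperplanes. You correctly notice that the point $h(v)$ lying in $w_{e_i}^\perp\cap w_f^\perp$ gives no contradiction, but your fallback (``a short sub-lemma in linear algebra about balanced configurations'') has no content as stated, and your side remark that genericity of the leg images is automatic from \cref{def:C-twig}(\ref{def:C-twig:leg}) is also wrong: lying in $e^\perp$ does not prevent lying in a second hyperplane.

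What you are missing is that the argument must use the skew-symmetric form $\braket{,}$, not just raw linear algebra. The paper isolates the inductive step as the two-edge sub-lemma \cref{lem:edge_generic}: from $h(\widetilde f)\subset w_f^\perp$ generic and $h(\widetilde e_i)\subset w_{e_i}^\perp$, deduce $h(\widetilde e_i)$ generic. The key point is that if $h(\widetilde e_i)\subset (w_{e_i},a)^\perp$ with $\rank(w_{e_i},a)=2$, then because the \emph{direction} of $h(\widetilde e_i)$ is $w_{e_i}$, one gets $\braket{a,w_{e_i}}=0$, so $(w_{e_i},a)$ is isotropic. Now $h(v)\in(w_f,w_{e_i},a)^\perp$ and $h(\widetilde f)\subset h(v)+\bbR w_f$; genericity of $h(\widetilde f)$ forces $\rank(w_f,w_{e_i},a)=2$, hence $w_f\in\Span(w_{e_i},a)$, and isotropy then gives $w_f\in(w_f,w_{e_i},a)^\perp$, so $h(\widetilde f)$ itself lies in a codimension-two subspace, contradicting its genericity. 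The skew-symmetry is what makes the isotropy step work; without it the statement need not hold.
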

\begin{proof}
	This follows from the following more general lemma.
\end{proof}

\begin{lemma} \label{lem:edge_generic}
	Let $[\Gamma,(r,t),h]$ be a pointed tree in $M_\bbR$ where $\Gamma$ consists of only three vertices $r,s,t$, an edge $e$ connecting $r,s$, and another edge $f$ connecting $s,t$.
	Let $w_e$ be the weight vector of $e$ at $r$, and $w_f$ the weight vector of $f$ at $s$.
	Assume $h(e)\subset w_e^\perp$, $h(f)\subset w_f^\perp$, and $h(e)\subset M_\bbR$ is generic (in the same sense as in \cref{lem:C-twig_generic}).
	Then $h(f)\subset M_\bbR$ is also generic.
\end{lemma}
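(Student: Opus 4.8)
\textbf{Proof proposal for \cref{lem:edge_generic}.}

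The plan is to argue by contradiction: suppose $h(f) \subset M_\bbR$ lies in two distinct rational hyperplanes through $0$, say $H_1 = w_f^\perp$ and $H_2$, where $H_2$ is some other rational hyperplane. (Note $h(f) \subset w_f^\perp$ is given, so one of the two hyperplanes is automatically $w_f^\perp$; the second one $H_2 \neq w_f^\perp$ is what the failure of genericity provides.) Then $h(f)$ is contained in the rational linear subspace $H_1 \cap H_2$ of codimension two. First I would observe that the key point is to locate $h(s)$: since $s$ is the common endpoint of $e$ and $f$, we have $h(s) \in h(e) \cap h(f)$, so $h(s)$ lies in $H_1 \cap H_2$, and in particular $h(s)$ lies in $H_2$. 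Moreover $h(s) \in h(e)$, so the image $h(e)$ meets $H_2$.

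The next step is to exploit the genericity of $h(e)$ together with the fact that $h(e) \subset w_e^\perp$. I would distinguish whether $h(e)$ is actually contained in $H_2$ or merely meets it. If $h(e) \subset H_2$: since $h(e)$ is a nondegenerate segment (a one-dimensional affine piece, assuming $h$ is not constant on $e$; if $h$ is constant on $e$ one handles that trivially since then $h(e)$ is a point and its genericity forces it to lie on at most one hyperplane, pinning things down directly), it is contained in $H_2$ and also in $w_e^\perp$. If $H_2 \neq w_e^\perp$ this means $h(e)$ lies in the codimension-two subspace $w_e^\perp \cap H_2$, contradicting that $h(e)$ is generic (it would lie in at least two distinct rational hyperplanes $w_e^\perp$ and $H_2$). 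So we must have $H_2 = w_e^\perp$. But then $h(f) \subset H_1 \cap H_2 = w_f^\perp \cap w_e^\perp$, and since $h(s) \in h(e) \subset w_e^\perp = H_2$ we are consistent; the genericity of $h(e) \subset w_e^\perp$ says $w_e^\perp$ is the \emph{only} rational hyperplane containing $h(e)$, hence $h(s)$ — a general point of $h(e)$ once we also know $h(s)$ is not an endpoint, or more carefully: a point of $h(e)$ — we need $h(s) \notin$ any other rational hyperplane. This is the subtlety: $h(s)$ is an endpoint of $h(e)$, not an interior point, so genericity of the segment $h(e)$ does not immediately transfer to its endpoint $h(s)$.

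To handle this cleanly I would reformulate: genericity of $h(e)$ as a subset means precisely that the affine span of $h(e)$ is not contained in any rational linear subspace of codimension $\geq 2$; equivalently, the linear span of $h(e) - h(s)$ together with the ray direction recovers enough of $M_\bbR$. Since $h(e)$ is a segment from $h(r)$ with direction $\pm w_e$ (up to scaling), lying in $w_e^\perp$, genericity forces that $h(e)$ is not contained in any rational hyperplane other than $w_e^\perp$; in particular its endpoint $h(s)$, \emph{together with the direction $w_e$}, spans a line not lying in any codimension-two rational subspace unless that subspace contains $w_e^\perp$'s worth of data — this is getting delicate. The clean statement I expect to use is: the affine line $L$ through $h(e)$ satisfies that $L$ is contained in exactly one rational hyperplane, namely $w_e^\perp$ (translated appropriately — but $w_e^\perp$ passes through $0$, and $h(e) \subset w_e^\perp$, so $L \subset w_e^\perp$). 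Now $h(s) \in L$. If $h(s) \in H_2$ with $H_2 \ni 0$ rational and $H_2 \neq w_e^\perp$, then either $L \subset H_2$ (excluded, as $L$ lies in only one rational hyperplane $w_e^\perp \neq H_2$) or $L \cap H_2$ is a single point, so $L \cap H_2 = \{h(s)\}$. In the latter case $h(f)$, which contains $h(s)$ and lies in $H_1 = w_f^\perp$, we get $h(f) \subset w_f^\perp$; combined with $h(f) \ni h(s)$ and $h(f)$ starting at $h(s)$ going in direction $-w_f$, we have $h(f) \subset h(s) - \bbR_{\geq 0} w_f \subset w_f^\perp$. For $h(f)$ to lie in a second hyperplane $H_2'$ we'd need $h(f) \subset H_2'$, forcing $w_f \in$ the direction lattice of $H_2'$ and $h(s) \in H_2'$ — but then tracking back $h(e)$ would need to be consistent. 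The main obstacle is precisely pinning down the interaction between the endpoint $h(s)$ and the two hyperplanes without the hypothesis degenerating; I expect the resolution is a short linear-algebra argument showing $H_1 \cap H_2 \cap (h(s) + \bbR w_e)$ is a single point equal to $h(s)$, contradicting that $h(e) \subset w_e^\perp$ passes through that point and is generic. I would write this as: since $h(e)$ is generic and $h(e) \subset w_e^\perp$, the point $h(s)$ is not contained in any rational hyperplane $\neq w_e^\perp$ that also fails to contain the whole segment $h(e)$; since $h(s) \in h(e) \cap h(f) \subset w_f^\perp$, and if genericity of $h(f)$ fails then $h(f) \subset w_f^\perp \cap H_2$ for $H_2$ rational of codimension one with $w_f^\perp \neq H_2$, we get $h(s) \in H_2$, and then checking cases on whether $H_2 = w_e^\perp$ (impossible since $h(f) \subset w_e^\perp$ would then force, by the same codimension-two argument applied at the vertex using the balancing-type constraints or just genericity of $h(e)$, a contradiction) or $H_2 \neq w_e^\perp$ (then $h(s)$ is a point of $h(e)$ lying on $H_2 \neq w_e^\perp$, contradicting the genericity of the segment $h(e)$, provided $h(e)$ genuinely extends past $h(s)$, i.e. $h(r) \neq h(s)$). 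The degenerate case $h(r) = h(s)$ (h constant on $e$) I would dispatch separately: then $h(e) = \{h(r)\}$ is a point, its genericity means it lies on $\leq 1$ rational hyperplane, but $h(s) = h(r)$ lies on $w_e^\perp$, $w_f^\perp$, and (if genericity of $h(f)$ fails) on $H_2$, which are at least two distinct rational hyperplanes through this point unless $w_e^\perp = w_f^\perp = H_2$, and then $h(f) \subset w_f^\perp$ lies on at most the one hyperplane $w_f^\perp$, i.e. $h(f)$ is generic, contradiction. This disposes of all cases.

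\textbf{Main obstacle.} The delicate point, as indicated above, is the transfer of genericity from the \emph{open segment/span} of $h(e)$ to its \emph{endpoint} $h(s)$; the resolution should hinge on the observation that $h(e)$ is an affine segment whose genericity (lying in $\leq 1$ rational hyperplane) forces any rational hyperplane through the endpoint $h(s)$ to either contain all of $h(e)$ — hence equal $w_e^\perp$ by genericity — or meet $h(e)$ only at $h(s)$, which combined with the constraints $h(f) \subset w_f^\perp$ and $h(f) \ni h(s)$ yields the contradiction. I would make sure to state and use the elementary fact: if an affine line $L$ with $0 \notin L$ meets a linear hyperplane $H$ in more than one point then $L \subset H$; applied with $L$ the span of $h(e)$ (or of $h(f)$) this is what converts "meets" into "contained in", which is the engine of the whole argument.
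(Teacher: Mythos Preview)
Your argument has a genuine gap, and it is precisely the one you flag but do not resolve: genericity of the \emph{segment} $h(e)$ says only that the whole segment lies in at most one rational hyperplane through $0$; it says nothing about any single point of $h(e)$, endpoint or not. So in your Case~2, the statement ``$h(s)$ is a point of $h(e)$ lying on $H_2\neq w_e^\perp$, contradicting genericity'' is simply false in dimension $\ge 3$: any point of a generic segment lies on infinitely many other rational hyperplanes. The ``yields the contradiction'' at the end of your main-obstacle paragraph is exactly where the argument stops working, and your affine-line lemma about meeting vs.\ containment does not close this gap.

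What you are missing is the skew-symmetric form $\braket{,}$ on $M$ from \cref{nota:cluster_blowup}; in this section $w^\perp$ means $\{m:\braket{w,m}=0\}$, and the proof uses this in an essential way. Suppose $h(f)\subset (w_f,a)^\perp$ with $\rank(w_f,a)=2$. Since $w_f$ is the direction of $h(f)$ and $h(f)\subset a^\perp$, we get $\braket{a,w_f}=0$; together with $\braket{w_f,w_f}=\braket{a,a}=0$ (skew-symmetry), the $2$-plane $\Span(w_f,a)$ is \emph{isotropic}. Now $h(e)\subset h(s)+\bbR w_e\subset (w_e,w_f,a)^\perp+\bbR w_e$, and genericity forces $\rank(w_e,w_f,a)\le 2$, so $w_e\in\Span(w_f,a)$. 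Isotropy then gives $w_e\in(w_f,a)^\perp$, hence $h(e)\subset(w_f,a)^\perp$, a rational codimension-two subspace, contradicting genericity. The isotropy step is exactly what converts ``$h(e)\subset(w_f,a)^\perp+\bbR w_e$'' into ``$h(e)\subset(w_f,a)^\perp$'', and there is no purely affine/incidence substitute for it.
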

\begin{proof}
	Suppose to the contrary that $h(f)$ is not generic.
	Then there is $a\in M$ such that $h(f)\subset (w_f,a)^\perp$ with $\rank(w_f,a)=2$.
	Since $w_f$ is the direction of $h(f)$, we note that $(w_f,a)$ is isotropic with respect to the skew-symmetric form.
	
	Observe that
	\begin{equation} \label{eq:edge_generic}
	h(e) \subset h(s)+\bbR w_e \subset h(e)\cap h(f)+\bbR w_e \subset (w_e,w_f,a)^\perp + \bbR w_e.
	\end{equation}
	Since $h(e)$ is generic, $(w_e,w_f,a)$ has rank at most 2.
	As $(w_f,a)$ has rank 2, we deduce that $(w_e,w_f,a)$ has rank 2, and $w_e$ is a linear combination of $w_f$ and $a$.
	As $(w_f,a)$ is isotropic, we obtain $w_e\in (w_e,w_f,a)^\perp$.
	Then \eqref{eq:edge_generic} implies that $h(e)\subset(w_e,w_f,a)^\perp$, contradicting the genericity assumption on $h(e)$.
\end{proof}

\begin{definition} \label{def:C-wall}
	A \emph{C-wall} in $M_\bbR$ is a pair $(\fd,n)$ where $n \in P\setminus 0$ and $\fd \subset n^\perp$ is a closed convex rational polyhedral cone.
	We call a C-wall $(\fd,n)$ \emph{incoming} if $n \in \fd$; otherwise we call it \emph{outgoing}.
	We call $n$ the \emph{monomial} of the C-wall, and $-n$ the \emph{direction} of the C-wall.
\end{definition}

\begin{remark} \label{rem:C-wall}
	The notion of C-wall is more restrictive than the walls we considered in \cref{const:walls_by_induction}; in particular, the support of a C-wall determines its monomial up to an integer multiple.
	This more restrictive notation is well-adapted to the cluster case by Lemmas \ref{lem:C-twig_in_C-wall} and \ref{lem:C-twig_from_analytic_curve}.
	It helps us identify the incoming walls in \cref{lem:incoming_walls}, and control the monomials in the scattering functions in \cref{lem:scattering_function_cluster}.
\end{remark}

\begin{definition}
	For any $n\in P$, we define its \emph{degree} $d(n)$ to be the largest integer $k$ such that $n=n_1+\dots+n_k$ with every $n_i\in P\setminus 0$.
\end{definition}

\begin{construction} \label{const:C-wall}
	For any positive integer $d$, we define in the following a collection $W_d$ of C-walls whose monomial has degree at most $d$.
	For each $e\in S$, let $P_e \subset P$ denote the submonoid generated by $e$.
	Let $W_d^0$ be the collection of C-walls of form $(e^\perp, n)$ with $n \in P_e\setminus 0$ and $d(n) \leq d$.
	We call these the \emph{initial C-walls} of $W_d$.
	Having defined $W_d^0 \subset W_d^1 \dots \subset W_d^t$, we define $W_d^{t+1}$ as follows:
	
	Let $(\fd_1,n_1), (\fd_2,n_2) \in W_d^t$ and assume either:
	\begin{enumerate}
		\item $\braket{n_1,n_2} \neq 0$, or
		\item $n_1$ and $n_2$ are parallel (or equivalently, $n_1^\perp = n_2^\perp$).
	\end{enumerate}
	In both cases we define
	\begin{gather*}
	\fd_1 + \fd_2 \coloneqq \fd_1 \cap \fd_2 - \bbR_{\geq 0} (n_1 + n_2), \\
	(\fd_1,n_1) + (\fd_2,n_2) \coloneqq (\fd_1 + \fd_2,n_1 + n_2).
	\end{gather*}
	It is easy to check that $(\fd_1,n_1) + (\fd_2,n_2)$ is a C-wall.
	Let $W_d^{t+1}$ be obtained by adding to $W_d^t$ all such sums whose monomials have degree at most $d$.
	It follows from the definition of degree that $W_d^t$ becomes constant for sufficiently large $t$.
	We let $W_d$ be this constant set (which is thus the union of $W_d^t$ over all $t\in\bbN$).
\end{construction}

\begin{lemma} \label{lem:incoming_C-wall}
	The incoming C-walls of $W_d$ are exactly the initial C-walls, i.e.\ those in $W_d^0$.
\end{lemma}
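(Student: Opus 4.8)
The statement asserts that among all C-walls in $W_d$, the incoming ones (those $(\fd,n)$ with $n\in\fd$) are precisely the initial C-walls of the form $(e^\perp,n)$ with $n\in P_e\setminus 0$. One direction is immediate: for an initial C-wall $(e^\perp, n)$ with $n$ a positive multiple of $e$, we have $e\in e^\perp$ (since $\langle e,e\rangle = 0$ by skew-symmetry), so $n\in e^\perp = \fd$, hence it is incoming. The content is the converse: every C-wall produced at a later stage of the inductive \cref{const:C-wall} is outgoing. The plan is to prove this by induction on the stage $t$ at which a C-wall enters $W_d^t$, with the base case $t=0$ being the initial C-walls (which, as just noted, may be incoming — so the induction needs to be set up to show that anything built from two walls is outgoing, regardless of whether the inputs are incoming or outgoing).

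First I would reduce to the generating step: suppose $(\fd,n) = (\fd_1,n_1)+(\fd_2,n_2)$ with $n = n_1+n_2$, $\fd = \fd_1\cap\fd_2 - \bbR_{\ge 0} n$, and the two inputs satisfy either $\langle n_1,n_2\rangle\neq 0$ or $n_1^\perp = n_2^\perp$. I want to show $n\notin\fd$. By definition $\fd\subset n^\perp$, so it suffices to show $n\notin\fd_1\cap\fd_2 - \bbR_{\ge 0}n$, i.e. that writing any point of $\fd$ as $x - \lambda n$ with $x\in\fd_1\cap\fd_2$, $\lambda\ge 0$, we can never have $x-\lambda n = n$; equivalently $\fd_1\cap\fd_2$ contains no point of the ray $\bbR_{>0}n$ together with... — more precisely it suffices to show $(1+\lambda)n\notin\fd_1\cap\fd_2$ for all $\lambda\ge 0$, i.e. $n\notin\fd_1\cap\fd_2$ would already do if $\fd_1\cap\fd_2$ were a cone containing the ray, but since $\fd_1\cap\fd_2$ is a cone it is enough to show $n\notin\fd_1\cap\fd_2$. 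So the real claim is: $n=n_1+n_2\notin\fd_1\cap\fd_2$. In the case $n_1^\perp=n_2^\perp$: then $\fd_i\subset n_1^\perp$, so $\fd_1\cap\fd_2\subset n_1^\perp$, and $n=n_1+n_2$ is a positive multiple of $n_1$ (since $n_2$ is parallel to $n_1$ and both lie in $P$, a strictly convex monoid, they are nonnegative multiples of a common primitive vector), so $\langle n_1, n\rangle$... again $n\in n_1^\perp$ trivially, that's not the obstruction. I need to instead use that $\fd_i\subset n_i^\perp$ but that $n_i\notin\fd_i$ is \emph{not} assumed. Hmm — so in the parallel case I should argue directly: $\fd_1\cap\fd_2$ is a proper face structure, and one shows using strict convexity of $P$ that $n_1+n_2$, being "bigger" than either $n_i$ in the partial order induced by $P$, cannot lie in $\fd_1\cap\fd_2\subset\fd_1$ unless... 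This is the subtle point and I expect it to require the genericity/rank arguments from Lemmas \ref{lem:edge_generic} and \ref{lem:C-twig_generic}, or a direct linear-algebra argument exploiting that $\langle\cdot,\cdot\rangle$ is non-degenerate and skew-symmetric.

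In the case $\langle n_1,n_2\rangle\neq 0$: suppose for contradiction $n = n_1+n_2\in\fd_1\subset n_1^\perp$. Then $\langle n_1, n_1+n_2\rangle = \langle n_1,n_2\rangle \neq 0$, contradiction. Symmetrically $n\notin\fd_2$. Hence $n\notin\fd_1\cap\fd_2$, so $(\fd,n)$ is outgoing. This handles the main case cleanly. For the parallel case $n_1^\perp=n_2^\perp=:H$, I would argue: $n_1,n_2\in P\cap H$, and by strict convexity of $P$ and $H$ being a hyperplane, any point of $\fd_1\cap\fd_2\subset H$ of the form $n_1+n_2$ forces, via the structure of $\fd_1 = \fd_1'\cap H - \bbR_{\ge 0}n_1$ (unwinding its own inductive construction), a containment that is ruled out by an induction hypothesis stating that the "new" rays $-\bbR_{\ge 0}n_i$ introduced at each step point strictly away from the cones $\fd_i$; equivalently one tracks that $\fd_i\subset H_i^{\le 0}$ for the appropriate half-spaces and $n_i$ sits on the positive side. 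The cleanest formulation: set up the induction to prove the stronger statement that for every $(\fd,n)\in W_d\setminus W_d^0$ and every $e\in S$ with $\langle e,\cdot\rangle$ not vanishing on $\fd$, one has $\langle e, n\rangle$ of a controlled sign, and deduce incomingness fails. The main obstacle, as anticipated, is precisely this parallel-monomial case, where the skew-form gives no information and one must instead propagate a sign/half-space invariant through \cref{const:C-wall}; I would handle it by strengthening the inductive hypothesis to record the sign of pairing against the initial covectors $\langle e,\cdot\rangle$, $e\in S$, which are what generated everything.
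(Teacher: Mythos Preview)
Your treatment of case (1) (when $\langle n_1,n_2\rangle\neq 0$) is correct and matches the paper: if $n_1+n_2\in\fd_1\subset n_1^\perp$ then $\langle n_1,n_1+n_2\rangle=\langle n_1,n_2\rangle=0$, a contradiction. You have also correctly reduced the question ``is the sum incoming?'' to ``does $n_1+n_2$ lie in $\fd_1\cap\fd_2$?'' using that $\fd_1\cap\fd_2$ is a cone.

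The gap is in case (2), the parallel case $n_1^\perp=n_2^\perp$. You convince yourself that a direct argument fails and propose a strengthened induction tracking signs of pairings against the initial vectors $e\in S$. This is unnecessary, and the elaborate route you sketch is not carried out. The simple observation you are missing is this: since $n_1,n_2\in P\setminus 0$ with $n_1^\perp=n_2^\perp$, the vectors $n_1$, $n_2$, and $n_1+n_2$ are all \emph{positive} scalar multiples of one another (strict convexity of $P$ forces the same sign). Because each $\fd_i$ is a cone, $n_1+n_2\in\fd_i$ is equivalent to $n_i\in\fd_i$. Hence the sum is incoming if and only if \emph{both} summands are incoming. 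This is exactly the paper's argument.

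Note in particular that your target statement ``every wall produced at a later stage is outgoing'' is false as stated: the parallel sum of two initial walls $(e^\perp,n_1)$ and $(e^\perp,n_2)$ is $(e^\perp,n_1+n_2)$, which is again initial and incoming. The correct formulation is the paper's: if a sum is incoming then both inputs are incoming; by induction both inputs are then initial, and one checks the parallel sum of two initial walls is again initial. Equivalently, in your framework: a sum is \emph{new} (not already in $W_d^0$) only if at least one input is non-initial, hence by induction outgoing, hence the sum is outgoing by the equivalence above.
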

\begin{proof}
	Let $(\fd_1,n_1)$ and $(\fd_2,n_2)$ be any two C-walls in $W_d$.
	Assume $(\fd_1,n_1)+(\fd_2,n_2)$ is incoming.
	Suppose first we are in case (2).
	Then $\fd_1 + \fd_2 = \fd_1 \cap \fd_2$.
	So $n_1 + n_2 \in \fd_1 + \fd_2$ if and only if $n_i \in \fd_i$, $i = 1,2$.
	Hence both $(\fd_1,n_1)$ and $(\fd_2,n_2)$ are incoming.
	If both are initial C-walls, their sum is also an initial C-wall.
	
	Next suppose we are in case (1).
	Since $(\fd_1,n_1)+(\fd_2,n_2)$ is incoming, we have $n_1+n_2 = x - \lambda(n_1 + n_2)$ for some $x\in\fd_1\cap\fd_2$ and $\lambda\in\bbR_{\ge 0}$.
	Then $(1+\lambda)(n_1 + n_2) = x \in (n_1,n_2)^\perp$.
	This implies $\braket{n_1,n_2} = 0$, a contradiction.
	Therefore, the sums of walls in case (1) never produce incoming walls.
	This completes the proof.
\end{proof}

\begin{lemma} \label{lem:C-twig_in_C-wall}
	Let $[\Gamma,(r,u_1,\dots,u_m),h]$ be a C-twig in $M_\bbR$ whose monomial has degree bounded by $d$.
	Let $e_r$ be the edge incident to the root $r$.
	If $h(e_r)\subset M_\bbR$ is generic, then for every edge $e$ of $\Gamma$, there exists a C-wall $(\fd,n)\in W_d$ such that $h(\widetilde e)\subset\fd$, with derivative (pointing away from $r$) equal to $n$.
\end{lemma}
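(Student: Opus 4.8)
\textbf{Proof plan for \cref{lem:C-twig_in_C-wall}.}

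The plan is to argue by induction on the combinatorial structure of the C-twig, proceeding from the legs $u_1,\dots,u_m$ inward toward the root $r$, mirroring exactly the inductive proofs of \cref{lem:C-twig_weight_vector} and \cref{lem:C-twig_generic}. First I would handle the base case: each leg $l_j$ has, by \cref{def:C-twig}(\ref{def:C-twig:leg}), outward weight vector $k e$ for some $e \in S$ and $k \in \bbN_{>0}$, and $l_j \setminus u_j$ maps into $e^\perp$; since $\tilde l_j \subset e^\perp$ and $ke \in P_e \setminus 0$, I need only check $d(ke) \le d$, which follows because $ke$ is a summand of the monomial $w_{(r,e_r)}$ of the twig (by the balancing condition, the monomial is the sum of all the leg weight vectors, so $d(ke) \le d(w_{(r,e_r)}) \le d$), and hence $(e^\perp, ke) \in W_d^0 \subset W_d$. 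So the statement holds for every leg.

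Next I would carry out the inductive step. Fix a finite vertex $v$ of $\Gamma$ with incident edges $e_1,\dots,e_l$ (closer to the legs) and $f$ (closer to $r$), and suppose the conclusion holds for each $e_i$, say with C-wall $(\fd_i, w_{e_i}) \in W_d$ where $w_{e_i}$ is the weight vector of $e_i$ at its endpoint nearer the legs and $h(\tilde e_i) \subset \fd_i \subset w_{e_i}^\perp$. By the balancing condition, $w_f = \sum_i w_{e_i}$, and by \cref{lem:C-twig_weight_vector} each $w_{e_i} \in P \setminus 0$ and $w_f \in P \setminus 0$. The key point is to assemble the $(\fd_i, w_{e_i})$ into a single C-wall for $f$ using the wall-addition operation of \cref{const:C-wall}: I add the walls two at a time, $(\fd_1, w_{e_1}) + (\fd_2, w_{e_2}) + \cdots$, producing at each stage $(\bigcap_{i \le j} \fd_i - \bbR_{\ge 0}(w_{e_1} + \cdots + w_{e_j}),\ w_{e_1} + \cdots + w_{e_j})$. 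For each successive addition I must verify that one of the two cases of \cref{const:C-wall} applies — either $\braket{w_{e_i}, w_{e_1} + \cdots + w_{e_{i-1}}} \neq 0$ or the two monomials are parallel — and here the genericity hypothesis enters decisively: by \cref{lem:C-twig_generic}, $h(e_i)$ is generic for every edge, so $h(\tilde e_i) \subset M_\bbR$ lies in at most one rational hyperplane through $0$; combined with $h(\tilde e_i) \subset w_{e_i}^\perp$ and the fact that all the $w_{e_i}$ are isotropic and the images share the common point $h(v)$, the partial sums $w_{e_1} + \cdots + w_{e_{i-1}}$ cannot be orthogonal to $w_{e_i}$ unless they are parallel to it — essentially the argument of \cref{lem:edge_generic}. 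Degree control is automatic: $d(w_{e_1} + \cdots + w_{e_l}) = d(w_f) \le d$ since $w_f$ is a summand of $w_{(r,e_r)}$. The resulting wall $(\fd, w_f) \in W_d$ then satisfies $h(v) \in \fd$ (since $h(v) \in \bigcap_i \fd_i$), and because $h(\tilde f) \subset h(v) - \bbR_{\ge 0} w_f \subset \fd$ with derivative $w_f$ pointing away from $r$, the conclusion holds for $f$. Induction from the legs to $e_r$ completes the proof.

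The main obstacle I anticipate is precisely the bookkeeping in the inductive step when $v$ has valence greater than three, i.e. when $l \ge 3$ and I must add more than two walls: I need to check that the intermediate partial sums $w_{e_1} + \cdots + w_{e_j}$ always fall into one of the two admissible cases of \cref{const:C-wall}, and in the right order, so that the final wall genuinely lies in $W_d$ rather than merely being "a C-wall of the right support and monomial." The genericity of $h$ is exactly what makes this work — it forces enough non-degeneracy of the pairings among the $w_{e_i}$ and their partial sums — but spelling out the linear algebra (which pairs are parallel, which have nonzero pairing, and that this is insensitive to the order of summation) is the delicate part. A clean way to organize it is to note that all the $w_{e_i}$ lie in the $2$-dimensional isotropic-compatible configuration dictated by $h(v)$ and the genericity, reducing the verification to a statement about vectors in a rank-$\le 2$ sublattice, after which both cases of \cref{const:C-wall} become transparent. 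I would also remark at the end that the hypothesis "$h(r)$ generic" suffices, since by \cref{lem:C-twig_generic} it implies $h(e_r)$ generic and hence $h(e)$ generic for all $e$.
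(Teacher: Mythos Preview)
Your proposal is correct and follows essentially the same route as the paper's proof: induction from the legs toward the root, with the base case coming from \cref{def:C-twig}(\ref{def:C-twig:leg}) and the inductive step built by iterated wall-addition, using genericity (via \cref{lem:C-twig_generic} and \cref{lem:edge_generic}) to verify that each successive addition falls into one of the two admissible cases of \cref{const:C-wall}. One small slip: where you write ``$w_{e_i}$ is the weight vector of $e_i$ at its endpoint nearer the legs'' you mean the endpoint nearer the root (this is the convention of \cref{lem:C-twig_weight_vector}, and it is the one that makes the balancing identity $w_f = \sum_i w_{e_i}$ come out with the right signs); the rest of your argument already uses the correct convention, so this is purely a wording issue.
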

\begin{proof}
	It follows from \cref{def:C-twig}(\ref{def:C-twig:leg}) that the claim holds for all the infinite legs of $\Gamma$.
	Hence by induction (from the legs towards the root), it suffices to prove the following:
	for any finite vertex $v$ of $\Gamma$, let $e_1,\dots,e_l,f$ denote the edges incident to $v$ where $f$ is the edge closer to $r$; if the statement holds for all $e_1,\dots,e_l$, then it also holds for $f$.
	So let us assume that for $j=1,\dots,l$, there exists a wall $(\fd_j,n_j)\in W_d$ such that $h(\widetilde e_j)\subset\fd_j$, with derivative (pointing away from $r$) equal to $n_j$.
	Now it suffices to show that the sum
	\[(\fd,n)\coloneqq(\fd_1,n_1)+\dots+(\fd_l,n_l)\]
	makes sense; in other words, if we add successively from left to right, at each step, we are in the two allowed cases of \cref{const:C-wall}.
	Then it will follow from the balancing condition that $h(f)\subset\fd$, with derivative (point away from $r$) equal to $n$.
	
	For $j=1,\dots,l-1$, let $s_j\coloneqq n_1+\dots+n_j$.
	It remains to check that either $\braket{s_j,n_{j+1}}\neq 0$, or $s_j^\perp=n_{j+1}^\perp$.
	By \cref{lem:C-twig_weight_vector}, we have $h(\widetilde e_i)\subset n_i^\perp$ for all $i$, hence $h(v)\in n_i^\perp$ for all $i$.
	Suppose $\braket{s_j,n_{j+1}}=0$, then we obtain $h(\widetilde e_{j+1})\subset (s_j,n_{j+1})^\perp$.
	As $h(f)\subset M_\bbR$ is generic, by \cref{lem:edge_generic}, $h(\widetilde e_{j+1})$ is also generic, so $s_j^\perp=n_{j+1}^\perp$, completing the proof.
\end{proof}

\begin{lemma} \label{lem:C-twig_from_analytic_curve}
	Choose any snc compactification $U\subset Y$ satisfying \cref{ass:strata} with respect to $T_M\subset U$.
	Let $[C,(p_j)_{j\in J},f\colon C\to Y^\an]$ be a skeletal curve in $\cM(U^\an,\bP,\beta)$.
	Then the twigs of $\Trop(f)$ are C-twigs.
	(By \cref{lem:restriction_to_skeleton}(\ref{lem:restriction_to_skeleton:sm}), $f$ belongs to $\cM^\sm(U^\an,\bP,\beta)$, so $\Trop(f)$ is well-defined by \cref{prop:tropicalization_of_stable_map}.)
\end{lemma}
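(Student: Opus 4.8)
\textbf{Proof proposal for \cref{lem:C-twig_from_analytic_curve}.}
The plan is to verify directly that each twig $\Lambda$ of $\Trop(f)$ satisfies the four conditions in \cref{def:C-twig}, using on the one hand the general structure of twigs already established in \cref{def:twig} and \cref{prop:tropicalization_of_stable_map}, and on the other hand the specific cluster geometry of $U\subset Y$ encoded in \cref{nota:cluster_blowup}. Recall that by \cref{prop:tropicalization_of_stable_map} the twigs of $\Trop(f)$ are twigs in $M_\bbR$ in the sense of \cref{def:twig}, so Conditions (1) (root and infinite leaves) and (4) (balancing at vertices of valency $>1$) are automatic; the content is therefore to identify the boundary behaviour (Condition (2): $h\inv(\partial M_\bbR^S)$ is exactly the set of infinite $1$-valent vertices) and the leg condition (Condition (3): each infinite leg, away from its infinite endpoint, maps into some $e^\perp\subset M_\bbR$ with outward weight a positive multiple of $e$).

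First I would set up the comparison between the two compactifications of $M_\bbR$. The ambient data give us $\oU'\to\TV(\Sigma)$, the blowup along $\bigsqcup_{e\in S} Z_e$, with $Z_e\subset D_e$ cut out by $z^{\braket{e,\cdot}}+\lambda_e=0$; and \cref{const:LRS} produces the partial compactification $M_\bbR^S=M_\bbR\cup\bigcup_{e\in S}\sigma_e$ together with a retraction $\rho\colon\oU'^\an\to M_\bbR^S$ satisfying $\rho_\rt(D_e)=\sigma_e$ and $\rho_\rt(Z_e)=\eta_e$, where $\eta_e\subset\sigma_e$ is the limit of $e^\perp$. Since $f$ is a skeletal curve, by \cref{lem:restrict_to_skeleton}(\ref{lem:restriction_to_skeleton:sm}) it lies in $\cM^\sm(U^\an,\bP,\beta)$, so $\Trop(f)$ is defined via \cref{prop:tropicalization_of_stable_map}; moreover (again by \cref{lem:restrict_to_skeleton}(\ref{lem:restriction_to_skeleton:sm})) $f(C)$ avoids all codimension-two strata and meets $E$ transversally, which forces the preimage of the boundary to behave well. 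Concretely, $E=\overline{U\setminus T_M}$ is (by \cref{ass:strata}) the strict transform of the exceptional locus of $\oU'\to\TV(\Sigma)$, whose image in $M_\bbR^S$ is $\bigcup_e\eta_e$; and the twigs of $\Trop(f)$ are precisely the components of $\Gamma$ outside the convex hull of the marked points, which by \cref{prop:tropicalization_of_stable_map} map into $E_\rt^\trop$ at their infinite ends. The key point is that $E_\rt^\trop=\bigcup_{e\in S}\eta_e$ under the present hypotheses, because $E_\rt$ (as in \cref{nota:Et}) is exactly $\bigsqcup Z_e$-related data: $W=Y\setminus Y^\idt$ is the isomorphism locus of $T_M\hookrightarrow Y_\rt$, and $E_\rt^\trop$ is the $\tau_\rt$-image of the exceptional-divisor-analytification, namely $\bigcup_e\eta_e$. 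This identifies Condition (2): an infinite leaf $u_j$ of $\Lambda$ has $h(u_j)\in E_\rt^\trop=\partial M_\bbR^S\cap(\text{essential part})$, and no finite vertex maps to $\partial M_\bbR^S$.

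The heart of the argument — and I expect this to be the main obstacle — is Condition (3), namely that each infinite leg $l_j$ of $\Lambda$, punctured at $u_j$, maps into a single hyperplane $e^\perp\subset M_\bbR$ with outward weight vector $ke$ for $k\in\bbN_{>0}$ and the \emph{same} $e$ such that $h(u_j)\in\eta_e$. This is where the special shape of the center $Z_e=\{z^{\braket{e,\cdot}}+\lambda_e=0\}\subset D_e$ enters: because $Z_e$ is a coset of the codimension-one subtorus $\ker(z^{\braket{e,\cdot}}\colon D_e\to\Gm)$, its tropicalization $\eta_e$ is the $\sigma_e$-limit of the hyperplane $e^\perp$ (this is precisely the statement $\eta_e=$ limit of $e^\perp$ in \cref{const:LRS}), and an analytic curve component with a single intersection with the exceptional divisor over $Z_e$ must, near that intersection, tropicalize to a ray whose derivative points in the $e$-direction scaled by the intersection multiplicity $k=m_j$ — a local computation on the blowup $\oU'\to\TV(\Sigma)$ in standard toric-plus-blowup coordinates, essentially the invertible-functions-on-an-annulus description used in \cref{lem:curve_class_from_model}. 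I would carry this out as follows: take a semistable model of the component of $C$ carrying the leg $l_j$, map it into $\hY$-over-$\oU'$, read off the tropicalization of the leg from the valuations of the two toric coordinates transverse to $Z_e$ (one of which is the blown-up coordinate), and use that $f$ meets $E$ transversally (no multiplicities at the boundary, from \cref{lem:restrict_to_skeleton}(\ref{lem:restriction_to_skeleton:sm})) to conclude the outward weight is a positive multiple of $e$ and that $l_j\setminus u_j$ stays in $e^\perp$. Combined with \cref{lem:C-twig_weight_vector}'s inductive structure (which, once Conditions (1)–(4) are in place, is automatic) this finishes the verification that $\Lambda$ is a C-twig. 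The remaining bookkeeping — matching the index $e$ consistently across all legs of a single twig, and checking that finite vertices genuinely stay in $M_\bbR$ — follows from the balancing condition and the fact that $\partial M_\bbR^S$ contains no cones of positive codimension meeting the interior, exactly as in \cref{const:initial_walls}.
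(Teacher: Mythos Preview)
Your outline follows the paper's strategy in spirit, but there is a genuine gap: you never explain how to pass from the map $f\colon C\to Y^\an$ to a map into $\oU'^\an$, and without this the computations you propose cannot be carried out. The varieties $Y$ and $\oU'$ are only birational; similarly $Y_\rt$ (with its complete fan $\Sigma_\rt$) and $\TV(\Sigma)$ (with its incomplete fan of rays $\bbR_{\ge 0}e$, $e\in S$) are different toric varieties, so your claimed identification ``$E_\rt^\trop=\bigcup_{e\in S}\eta_e$'' and the assertion that $E$ is ``the strict transform of the exceptional locus of $\oU'\to\TV(\Sigma)$'' need justification. The paper handles this by first replacing $Y$ by a toric blowup (and enlarging $\Sigma$) so that the locus $Z\subset Y$ where $Y\dasharrow\oU'$ fails to be an isomorphism satisfies $\codim\big(Z\cap(U\cup D^\ess)\big)\ge 2$; this uses \cref{ass:cluster_blowup}(\ref{ass:cluster_blowup:iso}). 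Then \cref{lem:restrict_to_skeleton}(\ref{lem:restriction_to_skeleton:sm}) guarantees $f(C)\cap Z^\an=\emptyset$, producing an honest map $g\colon C\to\oU'^\an$, and a commutative square identifies $\Trop(f)$ with $\Trop(g)$.

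Once that bridge is built, Condition~(3) is considerably simpler than your semistable-model computation suggests. For an infinite leaf $u_j$ of a twig, one has $g(u_j)\notin\partial\oU'$ while $(b\circ g)(u_j)$ lies in the toric boundary of $\TV(\Sigma)$, hence $(b\circ g)(u_j)\in Z_e$ for some $e\in S$. Since $\rho_\rt(Z_e)=\eta_e$ by \cref{const:LRS}, we get $h(u_j)\in\eta_e\subset\sigma_e$. Now a \Zaffine ray in $M_\bbR$ whose limit lies in the boundary cell $\sigma_e$ must have direction a positive multiple of $e$, and if that limit lies in $\eta_e$ (the limit of $e^\perp$) the ray itself lies in $e^\perp$. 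No local blowup coordinates or annulus calculations are needed.
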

\begin{proof}
	We follow \cref{nota:cluster_blowup}.
	Let $Z\subset Y$ be the locus where $Y\dasharrow\oU'$ fails to be an isomorphism.
	Since $U \dasharrow U'$ is an isomorphism outside codimension two, after replacing $Y$ by a toric blowup and adding to $\Sigma$ extra rays (without changing $S$), we can assume that $Z\cap(U\cup D^\ess)$ has codimension at least two.
	We have a commutative diagram
	\[\begin{tikzcd}
	(Y\setminus Z)^\an \rar{\tau} \dar & \oM_\bbR \\
	\oU' \rar{\rho} & M_\bbR^S \uar,
	\end{tikzcd}\]
	where the upper arrow is defined as in \cref{nota:Et}, and the lower arrow as in \cref{const:LRS}.
	
	By \cref{lem:restriction_to_skeleton}(\ref{lem:restriction_to_skeleton:sm}), the image $f(C)\subset Y^\an$ is disjoint from $Z^\an$, thus we obtain a map $g\colon C\to\oU'$.
	The commutative diagram above identifies $\Trop(f)$ with $\Trop(g)$.
	
	Let $[\Gamma,(r,u_1,\dots,u_m),h]$ be a twig of $\Trop(g)$.
	The fact that it is also a twig of $\Trop(f)$ implies all the conditions in \cref{def:C-twig} except Condition (\ref{def:C-twig:leg}).
	So it remains to verity Condition (\ref{def:C-twig:leg}).
	Let $b\colon\oU' \to \TV(\Sigma)$ denote the blowup map.
	For each $j=1,\dots,m$, we have $g(u_j)\notin\partial\oU'$, and $(b\circ g)(u_j)\in\TV(\Sigma)$ lies in the toric boundary.
	Therefore, there exists $e\in S$ such that $(b\circ g)(u_j)\in Z_e\subset D_e$.
	Recall from \cref{const:LRS} that $\rho_\rt(Z_e)=\eta_e$, where $\eta_e$ is the limit of the hyperplane $e^\perp\subset M_\bbR$ in $\sigma_e$.
	So we have $h(u_j)=(\rho\circ g)(u_j)=(\rho_\rt\circ b\circ g)(u_j)\in\eta_e\subset M_\bbR^S$.
	Then the leg $l_j\setminus u_j$ must map into the hyperplane $e^\perp\subset M_\bbR$ with nonzero derivative parallel to $e$, completing the proof.
\end{proof}

\begin{lemma} \label{lem:w-v_cluster}
	For any $n\in N\setminus 0$, $x\in n^\perp$ generic, $v,w\in M$, if $N(V_{x,v,w},\alpha)\neq 0$, then we have
	\begin{enumerate}
		\item \label{lem:w-v_cluster:P} $w-v\in P\setminus 0$,
		\item \label{lem:w-v_cluster:parallel} $w-v$ is parallel (up to sign) to $n$.
	\end{enumerate}
\end{lemma}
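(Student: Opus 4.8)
The statement concerns the count $N(V_{x,v,w},\alpha)$ attached to an infinitesimal spine, which by \cref{def:infinitesimal_spine} is a naive count of analytic curves whose tropicalizations are realizable tropical curves. The starting point is \cref{lem:w-v}, which already gives that if $N(V_{x,v,w},\alpha)\neq 0$ then $w-v\in n^\perp$; we must upgrade this to the stronger two assertions, using the cluster structure. The key geometric input is the analysis of twigs. Choose any snc compactification $U\subset Y$ satisfying \cref{ass:strata} with respect to $T_M\subset U$, and take $A\in\bbN$ big with respect to $\alpha$. Any analytic curve $f$ contributing to $N(V_{x,v,w},\alpha)$ lies in $\cM^\sm(U^\an,\bP,\beta)$ with spine $V_{x,v,w}^\epsilon$, and by construction its spine, being a spine over the skeleton, makes $f$ a skeletal curve (via Lemmas \ref{lem:source_of_skeletal_curve}, \ref{lem:restrict_to_skeleton}). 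Hence \cref{lem:C-twig_from_analytic_curve} applies: the twigs of $\Trop(f)$ are C-twigs in the sense of \cref{def:C-twig}.

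First I would dispose of the trivial case: if $w=v$, there is nothing to prove for (\ref{lem:w-v_cluster:P}) or (\ref{lem:w-v_cluster:parallel}) in the sense that $0\in P$ and $0$ is parallel to $n$; however, as in the proof of \cref{prop:invertible}, when $w=v$ the associated tropical curve has no twigs and has image in $W^\an$, so actually one is in the toric situation. So assume $w\neq v$. Then $\Trop(f)$ has nontrivial twigs attached at $0\in I^\epsilon=[-\epsilon,\epsilon]$ (the only bending vertex of $V_{x,v,w}^\epsilon$), and $w-v$ is exactly the sum of the monomials of all these twigs (this is the balancing observation already used in the proof of \cref{lem:w-v}). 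By \cref{lem:C-twig_from_analytic_curve}, each such twig $[\Gamma,(r,u_1,\dots,u_m),h]$ is a C-twig with root $r$ mapping to $h(0)=x$. Since $x$ is generic, the edge $e_r$ incident to the root has $h(e_r)\subset M_\bbR$ generic in the sense of \cref{lem:C-twig_generic}. By \cref{lem:C-twig_weight_vector}, the monomial $w_{(r,e_r)}$ of each C-twig lies in $P\setminus 0$. Summing over all twigs attached at $0$ — and the sum of elements of $P\setminus 0$ stays in $P$, in fact in $P\setminus 0$ since each summand is nonzero and $P$ is a strictly convex (hence cancellative, pointed) monoid — we obtain $w-v\in P\setminus 0$. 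This proves (\ref{lem:w-v_cluster:P}).

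For (\ref{lem:w-v_cluster:parallel}), combine (\ref{lem:w-v_cluster:P}) with \cref{lem:w-v}: we have $w-v\in P\setminus 0$ and $w-v\in n^\perp$. I would now argue via the C-wall structure of \cref{const:C-wall}. Since $x$ is generic and lies in $n^\perp$, and since $N(V_{x,v,w},\alpha)\neq 0$ forces (as in the proof of \cref{lem:w-v}) that $x\in\Wall_A$ with $n^\perp$ the span of the codimension-one cell of $\Wall_A$ through $x$, \cref{lem:C-twig_in_C-wall} applies to each twig: the edge $e_r$ of each C-twig at the root has $h(\widetilde e_r)\subset\fd$ for some C-wall $(\fd,n')\in W_d$ (with $d$ bounding the twig degrees, finite by \cref{rem:twig_bound} since $A$ is fixed), with derivative pointing away from $r$ equal to $n'$. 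But $h(\widetilde e_r)$ passes through the generic point $x$, which lies only in the rational hyperplane $n^\perp$; since $\fd\subset n'^\perp$, the genericity forces $n'^\perp=n^\perp$, i.e. $n'$ is parallel (up to scalar, hence up to integer multiple as $n'$ is primitive when it is initial, or in general a nonnegative-degree element of $P$ with $n'^\perp=n^\perp$) to $n$. The monomial of the C-twig is $-$ no, the monomial is $w_{(r,e_r)}=-n'$ with respect to orientation conventions, and in any case it is $\pm n'$, hence parallel to $n$. Summing these parallel-to-$n$ monomials over all twigs attached at $0$ gives $w-v$ parallel to $n$, proving (\ref{lem:w-v_cluster:parallel}).

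\textbf{Main obstacle.} I expect the delicate point is the bookkeeping around C-walls: verifying that \cref{lem:C-twig_in_C-wall} really does produce, for the root edge of each twig through the generic point $x\in n^\perp$, a C-wall whose span is exactly $n^\perp$ (rather than some smaller-dimensional $\fd$ sitting inside a different hyperplane) — this requires that $x$ be generic \emph{as a point of} $n^\perp$ and that the C-twig's root edge be non-degenerate, both of which follow from \cref{lem:C-twig_generic} applied at the generic $x$. A secondary subtlety is making sure the degree bound $d$ on twig monomials is uniform: this is exactly \cref{rem:twig_bound} together with the fact that $A$ is big with respect to $\alpha$, so that \cref{lem:C-twig_in_C-wall} can legitimately be invoked with a fixed $W_d$. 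Once those points are secured, the summation arguments are routine because $P$ is a strictly convex toric monoid, so $P\setminus 0$ is closed under addition and all sums of parallel-to-$n$ vectors in $P$ are again parallel to $n$.
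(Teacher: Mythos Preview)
Your proposal is correct and follows essentially the same route as the paper: both express $w-v$ as the sum of the monomials of the twigs of $\Trop(f)$, invoke \cref{lem:C-twig_from_analytic_curve} to see that each twig is a C-twig, deduce (\ref{lem:w-v_cluster:P}) from \cref{lem:C-twig_weight_vector}, and deduce (\ref{lem:w-v_cluster:parallel}) from \cref{lem:C-twig_in_C-wall} together with the genericity of $x$. Your write-up is more explicit (checking $f$ is skeletal, handling $w=v$, tracking the degree bound $d$), but the paper's three-line proof is just a compressed version of the same argument.
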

\begin{proof}
	Let $f$ be a stable map contributing to $N(V_{x,v,w},\alpha)$.
	Then $w-v$ is equal to the sum of monomials of all twigs of $\Trop(f)$.
	By \cref{lem:C-twig_from_analytic_curve}, every twig of $\Trop(f)$ is a C-twig.
	So it follows from \cref{lem:C-twig_weight_vector} that $w-v\in P\setminus 0$.
	Moreover, it follows from \cref{lem:C-twig_in_C-wall} that the monomial of every twig of $\Trop(f)$ is parallel (up to sign) to $n$.
	Hence $w-v$ is parallel (up to sign) to $n$.
\end{proof}

\begin{construction} \label{const:DU}
	By \cref{lem:w-v_cluster}(\ref{lem:w-v_cluster:P}), \cref{ass:bend_in_P} is satisfied.
	So \cref{sec:setting_the_curve_classes_to_0} applies here, in other words, we can set curve classes to 0 and obtain a consistent scattering diagram $\fD_U$ as well as its finite-order approximations $\fD_{U,k}$ as in \cref{def:scattering_diagram_without_curve_classes}.
\end{construction}

\begin{lemma} \label{lem:scattering_function_cluster}
	For any $(x,\of_x)\in\fD_U$ with $x\in n^\perp$ and $\of_x\neq 1$, there exists $n_0\in P\setminus 0$, such that $x\in n_0^\perp$, and $\of_x$ has the form $1+\sum_{k=1}^\infty c_k z^{k n_0}\in\hL^0\subset\hL$.
\end{lemma}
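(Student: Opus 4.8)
The plan is to extract from the wall-crossing function $\of_x$ a single primitive direction $n_0 \in P$ by analyzing the twigs of the analytic curves that contribute to the counts $N(V_{x,v,w},\alpha)$. First I would fix a generic $x \in n^\perp$ (with $n \in N \setminus 0$ primitive) such that $\of_x \neq 1$. By \cref{def:scattering_diagram_without_curve_classes} and \cref{lem:wall-crossing_function}, some coefficient $N(V_{x,v,v+m},\alpha) \neq 0$ with $m \neq 0$ (else $\of_x = 1$ modulo every $J^k$, hence $\of_x = 1$). For any such nonzero count, \cref{lem:w-v_cluster}(\ref{lem:w-v_cluster:P}) gives $m = w - v \in P \setminus 0$, and \cref{lem:w-v_cluster}(\ref{lem:w-v_cluster:parallel}) gives that $m$ is parallel (up to sign) to $n$. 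Since $m \in P \setminus 0$ and $P$ is strictly convex, only one of $\pm m$ lies in $P$, so $m$ is in fact a positive multiple of a fixed primitive element; let $n_0 \in P \setminus 0$ be the primitive vector in $P$ proportional to $m$. Then $n_0 \in n^\perp$ (as $m \perp n$ by \cref{lem:w-v} or directly since $x \in m^\perp$ after the analysis), hence $x \in n_0^\perp$.

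The next step is to show that \emph{every} monomial appearing in $\of_x - 1$ is of the form $z^{k n_0}$ for $k \geq 1$. By \cref{prop:wall-crossing_function} and the definition of $f_x$ via $\Psi_{x,n}(z^v) = z^v f_x^{\braket{n,v}}$, it suffices to understand the monomials $z^w$ occurring in $\Psi_{x,n}(z^v)$ for $v$ with $\braket{n,v}=1$: they satisfy $w - v = $ (sum of twig monomials of the contributing curve). By \cref{lem:C-twig_from_analytic_curve} each such curve's twigs are C-twigs, and by \cref{lem:C-twig_in_C-wall} (applicable since $x$ is generic, hence the root edge image is generic by \cref{lem:C-twig_generic}) each twig's monomial lies in some C-wall $(\fd, n')$ of $W_d$ with $x \in \fd \subset n'^\perp$. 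Since $x$ is generic, $n'^\perp$ is the unique rational hyperplane through $0$ containing $x$, namely $n^\perp$; combined with $n' \in P \setminus 0$ and strict convexity of $P$, this forces $n'$ to be a positive integer multiple of $n_0$. Summing over twigs, $w - v = k n_0$ for some $k \geq 1$. Therefore every monomial of $f_x - 1$, and hence of $\of_x - 1$ after setting curve classes to $0$, is $z^{k n_0}$ with $k \geq 1$. Writing $\of_x = 1 + \sum_{k \geq 1} c_k z^{k n_0}$ with $c_k \in \bbZ$ (the coefficients being finite integers by the finiteness statements in \cref{lem:mod_J_bound} and \cref{lem:fx_mod_J}, so that the sum converges in $\hL^0 \subset \hL$) gives the claimed form.

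The main obstacle I anticipate is the bookkeeping around the genericity hypothesis: one must check that a generic $x \in n^\perp$ in the sense of \cref{def:generic} forces the root-edge image of every contributing C-twig to be generic in the sense of \cref{lem:C-twig_generic}, so that \cref{lem:C-twig_in_C-wall} applies. The subtlety is that the twigs are attached at the bending point $x$ of the infinitesimal spine $V^\epsilon_{x,v,w}$, so the ``root'' of each twig maps to $x$ itself; since $x$ lies in only the single rational hyperplane $n^\perp$, this genericity is immediate, but I would want to state it carefully using \cref{lem:C-twig_generic} applied at $h(r) = x$. A secondary point is to confirm that $n_0$ is independent of which nonzero coefficient we started from — this follows because all contributing twig monomials are forced into the same hyperplane $n^\perp$ and the same monoid ray of $P$, so there is genuinely only one candidate primitive direction. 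With these observations in place the argument is essentially a synthesis of Lemmas \ref{lem:w-v_cluster}, \ref{lem:C-twig_from_analytic_curve}, \ref{lem:C-twig_in_C-wall}, together with strict convexity of $P$.
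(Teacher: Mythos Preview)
Your proposal is correct and matches the paper's approach: the paper's proof is the one-line citation ``This follows from \cref{prop:invertible} and \cref{lem:w-v_cluster}'', and your argument is essentially an unpacking of \cref{lem:w-v_cluster} (your second paragraph re-derives it from \cref{lem:C-twig_from_analytic_curve} and \cref{lem:C-twig_in_C-wall}, which is redundant given that \cref{lem:w-v_cluster} already packages those). The only point you should make explicit is the appeal to \cref{prop:invertible} for the constant term being exactly $1$, which you assert without justification when you write $\of_x = 1 + \sum_{k\ge 1} c_k z^{kn_0}$.
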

\begin{proof}
	This follows from \cref{prop:invertible} and \cref{lem:w-v_cluster}.
\end{proof}

\subsection{Comparison theorem} \label{sec:comparison}

In this subsection, we compare the scattering diagram $\fD_U$ of \cref{const:DU} with the one in \cite{Gross_Canonical_bases}, and then we deduce the comparison theorem of the mirror algebras.

\begin{notation} \label{nota:cluster_data}
	Let $M$ be a lattice together with an integer valued skew-symmetric form $\braket{,}$.
	Let $S'$ be a basis of $M$ and $S\subset S'$ a subset.
	This constitutes a \emph{seed} for a skew-symmetric cluster algebra of geometric type.
	The subset $S$ corresponds to the \emph{unfrozen} variables, while $S'\setminus S$ corresponds to the \emph{frozen} variables.
	Let $\cA$ be the associated Fock-Goncharov A-cluster variety (see \cite{Fock_Cluster_ensembles}).
	Let $A^\mathrm{up}\coloneqq\Gamma(\cA,\cO_\cA)$, the \emph{upper cluster algebra}.
	Let $P\subset M$ denote the strictly convex submonoid generated by $S$.
\end{notation}

\begin{assumption} \label{ass:cluster_algebra}
	We assume the following
	\begin{enumerate}
		\item \label{ass:cluster_algebra:unimodular} The skew-symmetric form $\braket{,}$ is unimodular.
		\item The upper cluster algebra $A^\mathrm{up}$ is finitely generated.
		\item $\Spec(A^\mathrm{up})$ is smooth.
	\end{enumerate}
	Although (\ref{ass:cluster_algebra:unimodular}) does not hold in general, it does hold in the principle coefficient case.
	We will deduce our results for more general cluster algebras from the principle coefficient case.
	We assume (\ref{ass:cluster_algebra:unimodular}) for the following reasons:
	\begin{enumerate}[label=(\roman*), ref=\roman*]
		\item Unimodular implies non-degeneracy, which guarantees that the all mutation equivalent seeds are coprime (see \cite[Definition 1.4, Proposition 1.8]{Berenstein_Cluster_algebras_III}).
		\item We need non-degeneracy in order to apply \cref{sec:C-walls}.
		\item Unimodular allows us to identify $M$ with its dual $N\coloneqq M^*$, so as to have compatible notations with \cite[\S 1]{Gross_Canonical_bases}.
	\end{enumerate}
\end{assumption}

We construct $U'\subset\oU'$ and $U$ as in \cref{nota:cluster_blowup} where we set $\lambda_e=1$ for all $e\in S$.

\begin{lemma} \label{lem:isomorphic_outside_codim_2}
	The canonical map $\cA \to \Spec(A^\mathrm{up})$ is an open immersion.
	The varieties $\cA$, $\Spec(A^\mathrm{up})$ and $U'$ are isomorphic outside codimension two.
	So they have the same algebras of global functions; in particular, we have $U\simeq\Spec(A^\mathrm{up})$.
	Thus \cref{ass:cluster_algebra} implies \cref{ass:cluster_blowup}.
\end{lemma}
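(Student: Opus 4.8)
\textbf{Proof proposal for \cref{lem:isomorphic_outside_codim_2}.}
The plan is to compare three varieties: the Fock--Goncharov A-cluster variety $\cA$, the affine variety $\Spec(A^\mathrm{up})$, and the blowup $U'$ from \cref{nota:cluster_blowup}, and to show that all three agree outside a codimension-two locus. First I would recall the concrete description of $\cA$ as glued from the seed tori $T_M^{(s)}$, one for each seed $s$ in the mutation class, with the birational gluing maps given by the cluster mutations. The key input is that under \cref{ass:cluster_algebra}(1) the form $\braket{,}$ is unimodular, hence nondegenerate, so by \cite[Definition~1.4, Proposition~1.8]{Berenstein_Cluster_algebras_III} all the seeds in the mutation class are coprime; this is precisely the hypothesis needed to guarantee that $\cA$ is separated and that the seed tori are ``large'' in $\cA$, i.e.\ their complements have codimension $\ge 2$. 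In particular $\cA$ is a smooth (indeed log Calabi-Yau) scheme containing each $T_M^{(s)}$ as a Zariski open with complement of codimension $\ge 2$.

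Next I would identify $U'$ with an explicit open piece of $\cA$ associated to the single chosen seed $S' \supset S$. This is the content of the ``toric model'' picture of Gross--Hacking--Keel \cite[3.4]{Gross_Birational_geometry_of_cluster_algebras} (see also \cite{Gross_Canonical_bases}): for a fixed seed, mutating once in each unfrozen direction $e \in S$ produces, on the level of the ambient toric variety $\TV(\Sigma)$, exactly the blowup along the disjoint union of the subvarieties $Z_e = \{z^{\braket{e,\cdot}} + 1 = 0\} \subset D_e$, and the strict transform of the toric boundary is removed to obtain $U'$. Thus $U'$ is naturally an open subscheme of $\cA$ obtained by gluing $T_M^{(s_0)}$ with its $\abs{S}$ one-step mutations $T_M^{(\mu_e s_0)}$. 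Since $\cA \setminus \bigcup_s T_M^{(s)}$ has codimension $\ge 2$ and $U'$ already contains $T_M^{(s_0)}$ plus $\abs{S}$ neighbouring charts, the complement $\cA \setminus U'$ is contained in a finite union of boundary strata each of codimension $\ge 2$ in $\cA$; this gives that $\cA$ and $U'$ are isomorphic outside codimension two. Here I would use \cref{ass:cluster_blowup}(2), which is exactly the hypothesis that $U' \to U = \Spec(H^0(U',\cO_{U'}))$ is an isomorphism outside codimension two, together with the analogous statement for $\cA \to \Spec(A^\mathrm{up})$: a smooth variety that is large (complement of codimension $\ge 2$) in a normal affine variety has the same ring of global functions.

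From the codimension-two agreement I would conclude the equality of function rings: if $V_1 \supset V_0 \subset V_2$ are open immersions with $V_i \setminus V_0$ of codimension $\ge 2$ and the $V_i$ are normal, then $H^0(V_1,\cO) = H^0(V_0,\cO) = H^0(V_2,\cO)$ by normality (Hartogs / algebraic Hartogs extends functions across codimension-two subsets). Applying this to $\cA \supset T_M^{(s_0)} \subset U'$ and to $\Spec(A^\mathrm{up}) \supset \cA$, we get $A^\mathrm{up} = H^0(\cA,\cO_\cA) = H^0(U',\cO_{U'})$, hence $U = \Spec(H^0(U',\cO_{U'})) \simeq \Spec(A^\mathrm{up})$. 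The openness of $\cA \to \Spec(A^\mathrm{up})$ then follows because $\cA$ is smooth and large in the normal affine $\Spec(A^\mathrm{up})$, and a birational morphism from a normal variety to an affine variety that is an isomorphism onto a large open subset is an open immersion onto that subset. Finally, the promised implication \cref{ass:cluster_algebra} $\Rightarrow$ \cref{ass:cluster_blowup} is now immediate: finite generation of $A^\mathrm{up}$ gives \cref{ass:cluster_blowup}(1) via $U \simeq \Spec(A^\mathrm{up})$; the codimension-two comparison just established gives \cref{ass:cluster_blowup}(2); \cref{ass:cluster_blowup}(3) holds since $T_M \hookrightarrow \cA \hookrightarrow U$ is an open immersion (the seed torus is open in $\cA$, which is open in $U$); and \cref{ass:cluster_blowup}(4) is the smoothness hypothesis \cref{ass:cluster_algebra}(3) transported along $U \simeq \Spec(A^\mathrm{up})$.

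The main obstacle I anticipate is the careful bookkeeping in the second paragraph: precisely matching the GHK toric-model blowup description with the one-step-mutation charts of $\cA$, and verifying that $\cA \setminus U'$ really is covered by codimension-$\ge 2$ strata (this uses coprimality of seeds in an essential way, and one must be slightly careful about frozen directions, where no mutation and no blowup occurs). Everything else is a standard normality/Hartogs argument once the geometric picture is pinned down, so I would cite \cite{Gross_Birational_geometry_of_cluster_algebras} and \cite{Berenstein_Cluster_algebras_III} for the cluster-theoretic facts and keep the scheme-theoretic part brief.
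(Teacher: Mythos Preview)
Your overall picture is right and the sources you cite are the correct ones, but there are two logical slips that need fixing.

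First, in your second paragraph you write ``Here I would use \cref{ass:cluster_blowup}(2)'' to argue that $U' \to U$ is an isomorphism outside codimension two. But the whole point of the lemma is to \emph{derive} \cref{ass:cluster_blowup} from \cref{ass:cluster_algebra}, so invoking \cref{ass:cluster_blowup}(2) here is circular. You do later claim to deduce \cref{ass:cluster_blowup}(2) in your final paragraph, but by then you have already used it. The fix is to reorder: first establish that $\cA$ and $U'$ are isomorphic outside codimension two (this is \cite[Theorem 3.9(2)]{Gross_Birational_geometry_of_cluster_algebras}), then use Hartogs to get $H^0(U',\cO_{U'}) = H^0(\cA,\cO_\cA) = A^\mathrm{up}$, hence $U \simeq \Spec(A^\mathrm{up})$, and only \emph{then} read off \cref{ass:cluster_blowup}(2) from the chain $U' \leftrightsquigarrow \cA \hookrightarrow \Spec(A^\mathrm{up}) \simeq U$.

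Second, your argument for the open immersion $\cA \to \Spec(A^\mathrm{up})$ is backwards: saying ``$\cA$ is large in $\Spec(A^\mathrm{up})$'' already presupposes an open embedding. The actual mechanism (which is \cite[Theorem 3.14]{Gross_Birational_geometry_of_cluster_algebras}) is that each seed torus, via the Laurent phenomenon and coprimality of seeds, embeds as a \emph{principal} affine open in $\Spec(A^\mathrm{up})$; gluing these gives the open immersion of $\cA$. Once you have that, the codimension-two statement follows from equality of global functions, not the other way around.

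The paper's proof simply cites \cite[Theorems 3.14 and 3.9(2)]{Gross_Birational_geometry_of_cluster_algebras} for these two facts and is done in three lines. Your expansion of what those theorems say is essentially accurate, but you should either cite them directly or make sure your sketch of their content does not assume the conclusion.
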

\begin{proof}
	The canonical map $\cA \to \Spec(A^\mathrm{up})$ is an open immersion by \cite[Theorem 3.14]{Gross_Birational_geometry_of_cluster_algebras}.
	As the two varieties have the same algebras of global functions by construction, the
	complement of $\cA \subset \Spec(\Gamma(\cA,\cO_\cA))$ is of codimension at least two.
	The varieties $\cA$ and $U'$ are isomorphic outside codimension two by \cite[Theorem 3.9(2)]{Gross_Birational_geometry_of_cluster_algebras}.
	\end{proof}

\begin{remark}
	Thanks to \cref{lem:isomorphic_outside_codim_2}, readers unfamiliar with cluster algebras can take $\Gamma(U',\cO_{U'})$ as definition of the upper cluster algebra.
	Note that $U'$ has the simple blowup description, while $\cA$ requires gluing of infinitely many tori via cluster mutations.
\end{remark}

\begin{remark} \label{rem:terminology_scattering_diagram}
	As we will be comparing with \cite{Gross_Canonical_bases}, let us make a few remarks concerning the terminology of scattering diagram.
	\begin{enumerate}[leftmargin=*]
		\item For every $k>0$, we have a canonical set-theoretic inclusion $\iota_k\colon\bbZ[P]/J^k\hookrightarrow\bbZ[P]$ via the basis of monomials.
		Under this inclusion, our finite-order scattering diagram $\fD_{U,k}$ is a scattering diagram in the sense of \cite[Definition 1.6, Remark 1.5]{Gross_Canonical_bases}.
		On the other hand, our scattering diagram $\fD_U$ does not fit \cite[Definition 1.6]{Gross_Canonical_bases}.
		This is not problematic because it suffices to work at finite orders.
		Although it is possible to introduce an additional infinite polyhedral structure to make our $\fD_U$ fit \cite[Definition 1.6]{Gross_Canonical_bases}, it is artificial to do so.
		\item The notion of wall in a scattering diagram of \cite{Gross_Canonical_bases} includes the attached scattering function, which differs from our notions of wall (\cref{def:wall}) and C-wall (\cref{def:C-wall}).
		Given a wall $(\fd,g_\fd)$ in a scattering diagram,  assume $\fd\subset n^\perp$ with $n\in P\setminus 0$, and $f_\fd=1+\sum_{k=1}^\infty c_k z^{k n}\in\hL^0\subset\hL$;
		we obtain a C-wall $(\fd,n)$.
		Note that $(\fd,g_\fd)$ is incoming in the sense of \cite[Definition 1.11]{Gross_Canonical_bases} if and only if $(\fd,n)$ is incoming in the sense of \cref{def:C-wall}.
	\end{enumerate}
\end{remark}

Let $\fD_\init$ be the initial scattering diagram that consists of walls $(e^\perp,1 + z^e)$ for all $e\in S$.
Let $\fD_\GHKK\supset\fD_\init$ be the consistent scattering diagram produced by the Kontsevich-Soibelman algorithm from $\fD_\init$ as in \cite[Thereom 1.12]{Gross_Canonical_bases}.

\begin{lemma} \label{lem:incoming_walls}
	For every $k>0$, the incoming walls in $\fD_{U,k}$ are exactly the walls in $\fD_\init$.
\end{lemma}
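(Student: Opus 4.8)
The plan is to identify the incoming walls of $\fD_{U,k}$ by tracing them back to the C-twigs that produce the scattering functions, and then applying the combinatorial analysis of \cref{sec:C-walls}. First I would recall that, by \cref{const:DU} and \cref{lem:scattering_function_cluster}, every wall $(x,\of_x)\in\fD_{U,k}$ with $\of_x\neq 1$ has $x\in n_0^\perp$ for some $n_0\in P\setminus 0$, and $\of_x$ is a power series in $z^{n_0}$; the coefficients of $\of_x$ are the counts $N(V_{x,v,w},\alpha)$ of \cref{def:infinitesimal_spine}, which by \cref{lem:w-v_cluster} are supported on bends $w-v$ parallel to $n_0$ and lying in $P\setminus 0$. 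By \cref{lem:C-twig_from_analytic_curve} together with \cref{prop:tropicalization_of_stable_map}, each such count is computed from analytic curves whose tropicalizations have all twigs being C-twigs; the total bend of these twigs equals $w-v$, so the relevant twigs have monomial degree bounded by some $d$ depending only on $k$ (via \cref{lem:hgamma} and the finiteness in \cref{lem:bend_bound}), and by \cref{lem:C-twig_in_C-wall} each twig's image lies in a C-wall of $W_d$ from \cref{const:C-wall}.

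The key step is then to match the incoming/outgoing dichotomy for walls of $\fD_{U,k}$ with that for C-walls of $W_d$. A wall $(\fd,\of_\fd)$ of $\fD_{U,k}$ corresponds, in the sense of \cref{rem:terminology_scattering_diagram}(2), to the C-wall $(\fd,n)$ where $n$ is the primitive element with $\fd\subset n^\perp$ (and $\of_\fd$ a power series in $z^n$), and the wall is incoming precisely when $n\in\fd$, i.e.\ when the corresponding C-wall is incoming in the sense of \cref{def:C-wall}. By \cref{lem:incoming_C-wall}, the incoming C-walls of $W_d$ are exactly the initial C-walls of $W_d^0$, namely those of the form $(e^\perp, ke)$ with $e\in S$ and $k\geq 1$. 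So every incoming wall of $\fD_{U,k}$ must be supported on some hyperplane $e^\perp$ with monomial parallel to $e$; it remains to compute its scattering function and check it agrees (mod $J^k$) with $1+z^e$.

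For that computation I would analyze the infinitesimal counts $N(V_{x,e\cdot(\text{pos}),w},\alpha)$ for $x\in e^\perp$ generic. The relevant skeletal curves are, by \cref{lem:C-twig_from_analytic_curve} and \cref{lem:C-twig_weight_vector}, those whose only twigs are C-twigs supported in $e^\perp$; by the explicit blowup description in \cref{nota:cluster_blowup} (with $\lambda_e=1$) and \cref{lem:counts_in_toric_case}, the curves contributing to the incoming wall on $e^\perp$ are exactly those mapping into $W^\an$ with a single simple twig hitting $D_e$ at the point $Z_e$, which gives the wall function $1+z^e$; higher-order contributions along $e^\perp$ with monomial $ke$, $k\geq 2$, are outgoing rather than incoming (they arise only from sums of walls in case (1) of \cref{const:C-wall}, which by the parenthetical remark in the proof of \cref{lem:incoming_C-wall} never produce incoming walls, or from parallel sums, which would already be accounted for). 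Thus modulo $J^k$ the incoming part of $\fD_{U,k}$ along $e^\perp$ is precisely $(e^\perp, 1+z^e)$, i.e.\ the wall of $\fD_\init$.

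The main obstacle will be the last step: showing that there is no incoming contribution with monomial $ke$ for $k\geq 2$. One must rule out a skeletal curve whose spine bends at a generic $x\in e^\perp$ in the incoming direction $-e$ with a twig of monomial $ke$ that is itself ``straight'' — i.e.\ that the only balanced C-twig with root monomial $ke$ forcing the wall to be incoming is a formal sum of the basic twig, which is then outgoing. I expect this to follow cleanly from \cref{lem:C-twig_weight_vector} (every edge weight lies in $P$ and the image lies in that weight's perp) combined with the genericity propagation of \cref{lem:C-twig_generic,lem:edge_generic}, but the bookkeeping of which composite C-walls are incoming — essentially re-deriving \cref{lem:incoming_C-wall} in the presence of the curve-counting data — is where care is needed. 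Once this is in place, the equality $\fD_{U,k}^{\mathrm{in}} = \fD_\init$ (mod $J^k$) holds for all $k$, completing the proof.
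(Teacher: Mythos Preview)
Your first two paragraphs are essentially the paper's approach and are fine: via \cref{lem:C-twig_in_C-wall} and \cref{lem:incoming_C-wall}, any incoming wall of $\fD_{U,k}$ must be supported on some $e^\perp$ with $e\in S$, and what remains is to compute $\of_x$ for generic $x\in e^\perp$ and check it equals $1+z^e$.

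The gap is in your third paragraph. Your claim that ``higher-order contributions along $e^\perp$ with monomial $ke$, $k\ge 2$, are outgoing'' is false: the C-wall $(e^\perp,ke)$ lies in $W_d^0$ (it is an \emph{initial} C-wall, since $ke\in P_e$), hence is incoming by definition. So the incoming/outgoing dichotomy of \cref{lem:incoming_C-wall} gives you no information whatsoever about the $z^{ke}$ coefficients of $\of_x$; the combinatorics only pins down the support $e^\perp$, not the function. Your ``accounted for'' remark about parallel sums does not help either --- these are exactly the sums that could in principle contribute higher powers, and nothing in your argument computes them.

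The paper fills this gap by an explicit geometric reduction rather than by wall combinatorics. Using that $S$ is a partial basis of $M$, one shows every twig of a contributing stable map has all edge derivatives parallel to $e$, so its image is a straight ray $x+\bbR_{\ge 0}e$. This forces the curve (off the two boundary marked points) to land in the open piece $U_e\subset U'$ obtained by keeping only the exceptional divisor over $Z_e$. A product decomposition $\oU_e\simeq\Bl_{(0,-1)}(\bbA^1\times\Gm)\times\Gm^{d-2}$ then reduces the count to the two-dimensional case, which is computed directly in \cite[\S 7, Theorem~7.1 and Remark~7.2]{Yu_Enumeration_of_holomorphic_cylinders_I}, yielding $\of_x=1+z^e$. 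Your proposal does not reach this reduction, and without it (or an equivalent direct computation) the proof is incomplete.
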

\begin{proof}
	Let $(\fd,f_\fd)$ be an incoming wall in $\fD_{U,k}$.
	By \cref{lem:incoming_C-wall}, there exists a unique $e\in S$ such that $\fd\subset e^\perp$.
	Now it suffices to prove that for any $x\in e^\perp$ generic, we have $\of_x=1+z^e$, where $(x,\of_x)\in\fD_U$ as in \cref{const:DU}.
	So we compute $N(V_{x,v,w},\alpha)$ as in \cref{def:infinitesimal_spine}.
	Let $[C,(p_1,p_2,p_3),f\colon C\to Y^\an]$ be a stable map contributing to $N(V_{x,v,w},\alpha)$, with $f(p_1),f(p_2)\in D^\an$ and $f(p_3)\in U^\an$.
	
	\begin{claim} \label{claim:twig_straight}
		The image of any twig $T$ of $\Trop(f)$ in $M_\bbR$ is contained in $x+\bbR_{\ge 0} e$.
	\end{claim}
	
	By \cref{lem:C-twig_from_analytic_curve}, $T$ is a C-twig.
	Hence by \cref{lem:C-twig_in_C-wall}, the monomial of $T$ is a multiple of $e$.
	Recall that $S$ is a partial basis of $M$.
	Then by \cref{def:C-twig} Conditions (\ref{def:C-twig:leg}) and (\ref{def:C-twig:balancing}), the derivative of every edge of $T$ must be a multiple of $e$.
	So the claim holds.
	
	Let $\Sigma_e$ be the (incomplete) fan in $M_\bbR$ consisting of the single ray $\bbR_{\ge 0}e\subset M_\bbR$, and let $\TV(\Sigma_e)$ be the associated toric variety.
	Let $D_e\subset\TV(\Sigma_e)$ be the toric boundary, and $Z_e\subset D_e$ the subvariety given by the $z^{\braket{e,\cdot}}+1=0$.
	Let $\oU_e$ be the blowup of $\TV(\Sigma_e)$ along $Z_e$, $\partial\oU_e$ the strict transform of the toric boundary, and $U_e\subset\oU_e$ the complement of $\partial\oU_e$.
	Let $U'$ be as in \cref{nota:cluster_blowup}.
	Then $U_e\subset U'$ is exactly the complement of all exceptional divisors except the one corresponding to $e$.
	
	Let $d$ be the rank of the lattice $M$.
	Write $M\simeq\bbZ\times\bbZ\times\bbZ^{d-2}$ such that $e=(1,0,0)$ and that the projection of $v$ to the factor $\bbZ^{d-2}$ is zero.
	Since $w-v$ is equal to the sum of monomials of all twigs of $\Trop(f)$, $w-v$ is necessarily a multiple of $e$.
	Hence the projection of $w$ to the factor $\bbZ^{d-2}$ is also zero.
	
	The decomposition of $M$ induces a decomposition
	\[\TV(\Sigma_e)\simeq\bbA^1\times \Gm \times \Gm^{d-2}.\]
	Choose coordinates so that $Z_e\subset\TV(\Sigma_e)$ is given by $(0,-1)\times\Gm^{d-2}$.
	Then $\oU_e$ is isomorphic to the blowup $\Bl_{(0,-1)}(\bbA^1\times\Gm)\times\Gm^{d-2}$.
	Let $x$ be a general rigid point of $\Bl_{(0,-1)}(\bbA^1\times\Gm)^\an$, and $y$ a general rigid point of $(\Gm^{d-2})^\an$.
	
	Since $U$ and $U'$ are isomorphic outside codimension two, by \cref{lem:restriction_to_skeleton}(\ref{lem:restriction_to_skeleton:sm}), we can assume $f(C\setminus\{p_1,p_2\})\subset (U')^\an$.
	Now \cref{claim:twig_straight} implies that $f(C\setminus\{p_1,p_2\})\subset U_e^\an$.
	If we ask furthermore that $f(p_3)=(x,y)\in U_e^\an$, then the image $f(C\setminus\{p_1,p_2\})$ will lie completely in the slice $\Bl_{(0,-1)}(\bbA^1\times\Gm)^\an\times\{y\}\subset\oU_e^\an$.
	Therefore, we can reduce the computation of $N(V_{x,v,w},\alpha)$ to the two-dimensional case considered in \cite[\S 7]{Yu_Enumeration_of_holomorphic_cylinders_I}.
	We conclude from Theorem 7.1 and Remark 7.2 in loc.\ cit.\ that $\of_x=1+z^e$, completing the proof.
\end{proof}

\begin{theorem} \label{thm:scattering_diagram_comparison}
	The scattering diagrams $\fD_U$ and $\fD_\mathrm{GHKK}$ are equivalent in the sense of \cite[Definition 1.8]{Gross_Canonical_bases}.
\end{theorem}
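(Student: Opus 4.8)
The strategy is to invoke the uniqueness part of the Kontsevich–Soibelman construction: by \cite[Theorem 1.12]{Gross_Canonical_bases} there is, up to equivalence, a \emph{unique} consistent scattering diagram containing $\fD_\init$ whose only incoming walls are those of $\fD_\init$. So it suffices to verify that our $\fD_U$ (more precisely, each finite-order truncation $\fD_{U,k}$, viewed inside $\bbZ[P]$ via the inclusion $\iota_k$ of \cref{rem:terminology_scattering_diagram}) satisfies these three properties: (i) it contains $\fD_\init$; (ii) it is consistent in the sense of Kontsevich–Soibelman; (iii) its incoming walls are exactly those of $\fD_\init$. The last two are already in place: (ii) is \cref{prop:KS_consistent}, and (iii) is \cref{lem:incoming_walls}. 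For (i), combine \cref{lem:incoming_walls} — which already shows the initial walls $(e^\perp,1+z^e)$ occur in $\fD_{U,k}$ with the correct scattering function $\of_x = 1 + z^e$ for $x\in e^\perp$ generic — with the observation that each $(e^\perp,1+z^e)$ is an incoming wall, hence it literally appears in $\fD_U$; thus $\fD_\init \subset \fD_U$ as required.

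The remaining point is to make sure the hypotheses of \cite[Theorem 1.12]{Gross_Canonical_bases} are met in our setting. First one checks that $\fD_{U,k}$ is a \emph{scattering diagram} in the sense of \cite[Definition 1.6]{Gross_Canonical_bases}: this is exactly \cref{rem:terminology_scattering_diagram}(1), using that $\fD_{U,k}$ has finitely many walls by \cref{prop:wall_decomposition}, each with a rational polyhedral support of codimension one, and scattering functions of the correct shape $1 + \sum c_k z^{kn_0}$ with $n_0\in P\setminus 0$ lying in $\fd^\perp$ by \cref{lem:scattering_function_cluster}. Second, one needs the scattering functions to be the \emph{wall functions} attached to walls in GHKK's sense, i.e.\ of the form $1 + (\text{terms in the ideal } J)$; this is \cref{prop:invertible} together with \cref{lem:scattering_function_cluster}. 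Third — and this is where \cref{ass:cluster_algebra} enters — one needs the unimodularity/non-degeneracy hypothesis so that the notion of C-wall of \cref{sec:C-walls} applies and so that $\fD_\GHKK$ is itself well-defined from the seed $S'$; under \cref{lem:isomorphic_outside_codim_2} our $U$ is $\Spec(A^{\mathrm{up}})$, so the two constructions are attached to the same combinatorial data.

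I would then assemble the argument as follows. Fix $k$. By the above, $\fD_{U,k}$ is a consistent scattering diagram over $\bbZ[P]/J^k$ containing the order-$k$ truncation of $\fD_\init$, whose incoming walls are precisely those of $\fD_\init$ (by \cref{lem:incoming_walls}). By the uniqueness clause of \cite[Theorem 1.12]{Gross_Canonical_bases}, $\fD_{U,k}$ is equivalent to the order-$k$ truncation of $\fD_\GHKK$. Since this holds for every $k$, and equivalence of scattering diagrams is detected order by order (two scattering diagrams are equivalent iff all their finite-order truncations are, cf.\ \cite[Definition 1.8]{Gross_Canonical_bases}), we conclude $\fD_U$ and $\fD_\GHKK$ are equivalent.

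\textbf{Main obstacle.} The substantive content is not in the final assembly but in \cref{lem:incoming_walls} (already proved in the excerpt, reducing to the two-dimensional computation of \cite[\S 7]{Yu_Enumeration_of_holomorphic_cylinders_I}) and in checking \cref{prop:KS_consistent}. The only real care needed at this last step is bookkeeping: matching our conventions (C-walls, the monoid $P$, setting curve classes to zero in \cref{sec:setting_the_curve_classes_to_0}) with those of \cite{Gross_Canonical_bases}, in particular the identification $M \simeq N$ via unimodularity and the translation between our pair $(x,\of_x)$ and GHKK's walls-with-functions described in \cref{rem:terminology_scattering_diagram}(2). I expect no genuine difficulty beyond this dictionary, since all the analytic and tropical inputs have been isolated in the preceding lemmas.
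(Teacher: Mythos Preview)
Your proposal is correct and follows essentially the same approach as the paper: work at each finite order $k$, use \cref{lem:incoming_walls} to identify the incoming walls of $\fD_{U,k}$ with those of $\fD_\init$, then invoke the uniqueness clause of \cite[Theorem 1.12]{Gross_Canonical_bases} (together with the consistency from \cref{prop:KS_consistent}) to conclude $\fD_{U,k}$ is equivalent to the $k$-th order truncation of $\fD_\GHKK$. The paper's proof is simply a more compressed version of yours, phrasing the comparison via the Lie algebra $\mathfrak g^{\le k}$ of \cite[Construction C.1]{Gross_Canonical_bases} rather than spelling out the bookkeeping checks you list.
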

\begin{proof}
	For every $k>0$, let $\fD_{\mathrm{GHKK},k}$ be the $k$-th order approximation of $\fD_\mathrm{GHKK}$, which is a scattering diagram for the Lie algebra $\mathfrak g^{\le k}$ as in \cite[Construction C.1]{Gross_Canonical_bases}.
	By \cref{lem:incoming_walls}, the scattering diagrams $\fD_{U,k}$ and $\fD_{\mathrm{GHKK},k}$ have the same incoming walls.
	Applying \cite[Theorem 1.12]{Gross_Canonical_bases} for the Lie algebra $\mathfrak g^{\le k}$, we see that $\fD_{U,k}$ and $\fD_{\mathrm{GHKK},k}$ are equivalent.
	Hence $\fD_U$ and $\fD_\mathrm{GHKK}$ are equivalent.
\end{proof}

\begin{theorem} \label{thm:comparison_with_GHKK_A-type}
	Let $\cA$ and $U=\Spec(A^\mathrm{up})$ be as in \cref{nota:cluster_data}.
	Let $\cA^\vee$ be the Fock-Goncharov dual, and let $\can(\cA^\vee)$ be as in \cite[Theorem 0.3]{Gross_Canonical_bases}.
	Let $A_U$ be our mirror algebra as in \cref{rem:A_U}.
	The following hold:
	\begin{enumerate}
		\item The (combinatorially defined) structure constants of \cite[Theorem 0.3(1)]{Gross_Canonical_bases} are equal to our (geometrically defined) structure constants.
		Hence they give $\can(\cA^{\vee})$ an algebra structure, equal to our mirror algebra $A_U$.
		\item The mirror algebra $\can(\cA^{\vee}) \simeq A_U$, together with its theta function basis, is independent of the cluster structure; it is canonically determined by the variety $U$.
	\end{enumerate}
\end{theorem}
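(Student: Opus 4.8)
The plan is to deduce the theorem from the scattering-diagram comparison \cref{thm:scattering_diagram_comparison}, together with a dictionary — provided by our gluing formula — between naive counts of analytic curves (repackaged through spines) and the broken lines of \cite{Gross_Canonical_bases}. First I would reduce to the unimodular case, i.e.\ to the principle-coefficient $A$-cluster variety: both the combinatorial structure constants of \cite[Theorem 0.3(1)]{Gross_Canonical_bases} and our structure constants $\chi$ behave compatibly under the relevant specialization of coefficients, so the general case follows from the unimodular one, which is exactly the situation covered by \cref{nota:cluster_data} and \cref{ass:cluster_algebra}.

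Next I would match the two descriptions, first of theta functions and then of products. By \cref{thm:scattering_diagram_comparison} our scattering diagram $\fD_U$ is equivalent to $\fD_{\GHKK}$. Using the gluing formula \cref{thm:gluing_concatenate} to cut a spine at its bending vertices, and \cref{lem:counts_in_toric_case} to see that the straight pieces count $1$, one checks that each curve-class-free local theta function $\otheta_{x,m}$ of \cref{def:scattering_diagram_without_curve_classes} is computed by summing, over broken lines in $\fD_U$ with prescribed asymptotic monomial and endpoint $x$, the product of the bending coefficients read off from the wall-crossing functions $\of_{\fd}$ via \cref{def:wall-crossing_map} and \cref{prop:wall-crossing_function}; this is precisely the broken-line theta function of the scattering diagram $\fD_U$, hence (being invariant under equivalence of scattering diagrams) equals GHKK's theta function, under the natural identification of $\Sk(U,\bbZ)$ with the tropical integer points of $\cA^\vee$ and the sign convention implicit in \cref{sec:intro:structure_constants}. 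For products it suffices, by associativity (\cref{thm:associativity} on our side, and the corresponding closure property of GHKK's product once \cref{prop:finiteness_Q_gamma} and broken-line convexity are in hand, see below), to treat $n=2$: by \cref{lem:structure_constants_decomposition}, $\sum_\gamma \chi(P_1,P_2,Q,\gamma)$ is a sum of $N(S,0)$ over spines with two infinite legs of directions $P_1,P_2$ and one finite leg of direction $Z=-Q$ at a generic point near $Q$, and cutting such a spine at its unique generic $3$-valent merge vertex via \cref{thm:gluing_concatenate} exhibits it as a pair of broken lines satisfying GHKK's matching condition on monomials. Hence $\sum_\gamma \chi(P_1,P_2,Q,\gamma)$ equals the combinatorial structure constant of \cite[Theorem 0.3(1)]{Gross_Canonical_bases}, which proves part (1): the combinatorial multiplication agrees with the multiplication of our mirror algebra $A_U$, and in particular is associative and makes $\can(\cA^\vee)$ a commutative associative algebra equal to $A_U$.

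Two points then remain: finiteness/convexity, and independence. That the combinatorial product of \cite{Gross_Canonical_bases} lands inside $\can(\cA^\vee)$ — finitely many terms, no pathological broken lines — follows from \cref{prop:finiteness_Q_gamma} together with broken-line convexity, which is our \cref{lem:convexity} applied as in \cref{thm:Ftrop_structure_constants}; this verifies \cite[Conjecture 8.12]{Gross_Canonical_bases} in this setting and dispenses with the EGM hypothesis of \cite[Theorems 0.12(1), 0.17]{Gross_Canonical_bases}. For part (2), different cluster structures on $U$ amount to different choices of open split torus $T_M\subset U$; by \cref{prop:structure_constants_varying_tori} the structure constants, hence the multiplication on $A_U$, are independent of this choice, and by \cref{rem:A_U} the algebra $A_U$ depends only on $U$; since the theta basis $\{\theta_P\}$ is indexed by the intrinsic set $\Sk(U,\bbZ)$, it too is canonically attached to $U$. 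Combined with part (1), this gives part (2) and confirms \cite[Remark 0.16]{Gross_Canonical_bases}.

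The main obstacle is the translation in the middle step: matching the Berkovich-geometric data (spines of analytic curves counted with the toric-tail condition) with the purely combinatorial broken lines of \cite{Gross_Canonical_bases}. The scattering-function input is settled by \cref{thm:scattering_diagram_comparison}, but one still has to verify that the gluing formula \cref{thm:gluing_concatenate}, the triviality of straight-segment counts \cref{lem:counts_in_toric_case}, and the infinitesimal-cylinder definition of the wall-crossing transformations \cref{def:wall-crossing_map} assemble into exactly the broken-line recursion, with all conventions — orientations, the sign $m\leftrightarrow -m$, the auxiliary finite leg of direction $Z=-Q$, and the distinction between incoming and outgoing walls — lined up correctly; this bookkeeping, rather than any new idea, is where the work lies.
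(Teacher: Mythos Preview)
Your proposal is correct in outline and would work, but it takes a longer route than the paper and contains one redundant step. The paper's proof is three lines: \cref{thm:scattering_diagram_comparison} gives $\fD_U\simeq\fD_{\GHKK}$; then theta function consistency \cref{prop:fD_U_consistent} immediately identifies our local theta functions $\otheta_{x,m}$ with the broken-line theta functions of \cite[\S 3]{Gross_Canonical_bases} (a consistent family of theta functions is computed by following any path and applying wall-crossings, which is exactly the broken-line recursion); and since GHKK's structure constants are defined via broken lines \cite[6.2]{Gross_Canonical_bases}, the algebras coincide. Part (2) then follows from \cref{prop:structure_constants_varying_tori}, as you say. Your approach instead re-derives the identification of $\otheta_{x,m}$ with broken-line sums by cutting spines at bending vertices via \cref{thm:gluing_concatenate} and \cref{lem:counts_in_toric_case}; this is essentially reproving \cref{prop:fD_U_consistent} inline rather than citing it. Two further remarks: first, your initial ``reduction to the unimodular case'' is unnecessary, since the theorem is already stated under \cref{ass:cluster_algebra}(\ref{ass:cluster_algebra:unimodular}); second, the finiteness/convexity discussion, while correct and relevant to the broader consequences listed in \cref{sec:intro:scattering}, is not needed for the proof of this theorem itself, as finiteness of the structure constants on our side is already \cref{cor:R-algebra_structure}.
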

\begin{proof}
	\cref{thm:scattering_diagram_comparison} gives the equivalence of scattering diagrams.
	Thus by theta function consistency (\cref{prop:fD_U_consistent}), the coefficients of monomials attached to broken lines in \cite[\S 3]{Gross_Canonical_bases} are exactly the counts in \cref{def:local_theta_funcation} (after setting all curve classes to 0).
	Since the structure constants can be equivalently defined using broken lines (see \cite[6.2]{Gross_Canonical_bases}), we conclude that the canonical map between $\can(\cA^\vee)$ and $A_U$ (by identifying their bases as abelian groups) is an isomorphism of algebras.
	Statement (2) follows from (1) and \cref{prop:structure_constants_varying_tori}.
\end{proof}

\begin{corollary} \label{cor:comparison_with_GHKK_X-type}
	Let $\cX$ be a Fock-Goncharov skew-symmetric X-cluster variety (possibly with frozen variables).
	Assume $U \coloneqq \Spec(H^0(X,\cO_\cX))$ is smooth and that the canonical map $\cX \to U$ is an open immersion.
	Let $\cX^\vee$ be the Fock-Goncharov dual, and let $\can(\cX^\vee)$ be as in \cite[Theorem 0.3]{Gross_Canonical_bases}.
	Let $A_U$ be our mirror algebra as in \cref{rem:A_U}.
	The following hold:
	\begin{enumerate}
		\item The (combinatorially defined) structure constants of \cite[Theorem 0.3(1)]{Gross_Canonical_bases} are equal to our (geometrically defined) structure constants.
		Hence they give $\can(\cX^{\vee})$ an algebra structure, equal to our mirror algebra $A_U$.
		\item The mirror algebra $\can(\cX^{\vee}) \simeq A_U$, together with its theta function basis, is independent of the cluster structure; it is canonically determined by the variety $U$.
	\end{enumerate}
\end{corollary}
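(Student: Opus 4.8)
The plan is to deduce the X-cluster statement from the already-established A-cluster comparison theorem (\cref{thm:comparison_with_GHKK_A-type}) by exploiting the Fock-Goncharov ensemble structure. Recall that an X-cluster variety $\cX$ is not itself of the blowup form treated in \cref{nota:cluster_blowup}; rather, it becomes so after passing to the associated A-cluster variety with principal coefficients (or more precisely, an X-variety with a suitable coefficient datum is, up to quotient by a torus, the A-variety attached to a doubled/enlarged seed). So the first step is to recall from \cite{Gross_Canonical_bases} (and \cite{Fock_Cluster_ensembles}) the precise relationship: there is a cluster ensemble lattice map $p^*\colon N_{\cX}\to M_{\cA}$, and the X-variety $\cX$ is recovered from an A-variety $\cA'$ attached to an enlarged seed (adding frozen directions to record the coefficients) by taking the quotient by a torus $T_K$ with $K=\ker$ of the relevant map, or equivalently $\cX$ sits as a torus quotient / subvariety in the A-side picture. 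Correspondingly $U=\Spec H^0(\cX,\cO_\cX)$ is, up to the same torus, the spectrum of the upper A-cluster algebra for $\cA'$.

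Next I would check that the hypotheses (1)--(3) of \cref{cor:comparison_with_GHKK_X-type} on $\cX$ translate into the hypotheses of \cref{ass:cluster_algebra}/\cref{ass:cluster_blowup} for the auxiliary A-side object: finite generation of $H^0(\cX,\cO_\cX)$, smoothness of $U$, and the open immersion $\cX\hookrightarrow U$ give finite generation of the corresponding upper cluster algebra, smoothness of its spectrum, and the isomorphism-outside-codimension-two property, via \cref{lem:isomorphic_outside_codim_2}. In particular $U$ is a smooth affine log Calabi-Yau variety containing an open split algebraic torus, so \cref{thm:main} applies to it — this is exactly the first assertion implicit in the corollary. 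Then, since the scattering-diagram side of \cite{Gross_Canonical_bases} defines $\can(\cX^\vee)$ and its structure constants purely in terms of the same combinatorial scattering diagram $\fD_\GHKK$ attached to the seed (the X-case being obtained from the A-case by the ensemble map, cf.\ \cite[Theorem 0.3]{Gross_Canonical_bases}), the geometric comparison \cref{thm:scattering_diagram_comparison} — which identifies $\fD_U$ with $\fD_\GHKK$ — together with theta-function consistency (\cref{prop:fD_U_consistent}) and the broken-line description of structure constants, yields that the combinatorially defined structure constants of \cite[Theorem 0.3(1)]{Gross_Canonical_bases} agree with our geometric ones $\chi(P_1,\dots,P_n,Q,\gamma)$ with curve classes set to $0$. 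This proves part (1): the two algebra structures on $\can(\cX^\vee)\simeq A_U$ coincide.

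For part (2), independence of the cluster structure follows exactly as in \cref{thm:comparison_with_GHKK_A-type}(2): by part (1) the mirror algebra $A_U$ together with its theta basis is identified with $\can(\cX^\vee)$, and by \cref{prop:structure_constants_varying_tori} (combined with \cref{thm:count_independent}) the structure constants $\chi(P_1,\dots,P_n,Q,\gamma)$ — hence the whole algebra and its theta function basis — depend only on the variety $U$, not on the choice of open torus $T_M\subset U$ coming from a particular seed. Since any two cluster structures on $U$ give rise to two such tori, the theta basis they produce is the same canonical object. The main obstacle I expect is bookkeeping in the first step: carefully matching the X-side lattices, coefficient data, and the ensemble map $p^*$ to an honest A-side blowup presentation satisfying \cref{ass:cluster_blowup}, and tracking how frozen variables and the torus quotient interact with our $\Sk(U,\bbZ)\simeq M$ identification and with the passage ``set all curve classes to $0$'' of \cref{sec:setting_the_curve_classes_to_0}; once that dictionary is in place, the comparison of structure constants is formal given \cref{thm:scattering_diagram_comparison} and \cref{prop:fD_U_consistent}.
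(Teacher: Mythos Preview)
Your approach is essentially the same as the paper's: reduce to the A-type comparison theorem via the principal-coefficient A-variety and then descend by the torus action. The paper's execution is somewhat more direct than your sketch: it uses the $T_N$-principal bundle $\cA_{\prin}\to\cX$, observes that since $U\setminus\cX$ has codimension at least two this bundle extends canonically to a bundle $V\to U$ with $V\simeq\Spec(H^0(\cA_{\prin},\cO))$ affine and $\cA_{\prin}\subset V$ an open immersion with codimension-two complement, then applies \cref{thm:comparison_with_GHKK_A-type} to $\cA_{\prin}$ and takes $T_N$-invariants. Your anticipated ``bookkeeping obstacle'' with the ensemble map $p^*$ and lattice matching is largely bypassed by this principal-bundle formulation --- the torus-invariant passage is the whole dictionary.
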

\begin{proof}
	We follow the notation of \cite[\S 2]{Gross_Birational_geometry_of_cluster_algebras}.
	We have the $T_N$-principal bundle $\cA_{\prin} \to \cX$.
	Note $U \setminus \cX$ has codimension at least two, so this extends canonically to a bundle $V \to U$.
	Note $\cA_{\prin} \subset V$ is an open immersion with complement of codimension at least two, and $V\simeq\Spec(H^0(\cA_{\prin},\cO))$ is affine.
	So \cref{thm:comparison_with_GHKK_A-type} applies to $\cA_{\prin}$.
	The results for $\cX$ follow by taking $T_N$ invariants.
\end{proof}

\begin{example} \label{ex:mirror_algebra}
	We describe the mirror algebra $A$ for \cref{ex:surface}.
	It is generated by $\theta_{v_i}$, where $v_i \in \rho_i$ is the first lattice point (or equivalently, $v_i \in \Sk(U,\bbZ)$ is the divisorial valuation given by $D_i$).
	The multiplication is determined by
	\begin{align*}
		\theta_{v_i}^a \cdot \theta^b_{v_{i+1}}  &= \theta_{av_i + bv_{i+1}} \\
		\theta_{v_{i-1}} \cdot \theta_{v_{i+1}} &= z^{[D_i]}(\theta_{v_i} + z^{[E_i]})
	\end{align*}
	for $a,b \in \bbN$, and indices mod $5$, where the addition $a v_i+b v_{i+1}$ is carried out in the maximal cone of the cone complex $\Sigma_{(Y,D)}$ containing $v_i$ and $v_{i+1}$ (cf.\ \cite[Example 3.7]{Gross_Mirror_symmetry_for_log_Calabi-Yau_surfaces_I_v1}).
	
	These formulas can be deduced as follows:
	First consider the analogous formulas for the mirror algebra $A_U$ with all curve classes set to 0, as in \cref{rem:A_U}.
	Since $U$ is isomorphic to the $A_2$-cluster variety, the description of $A_U$ follows from \cref{thm:comparison_with_GHKK_A-type} and \cite[Example 1.15]{Gross_Canonical_bases}.	
	Since the nonzero structure constants in the analogous formulas for $A_U$ are either zero or one, each nonzero structure constants in the formulas for $A$ must be of the form $z^\alpha$ for some $\alpha\in\NE(Y,\bbZ)$.
	Such class $\alpha$ can be deduced by degeneration to the toric case:
	Following the notations in \cref{sec:non-degeneracy}, the monoid of definition $\DC(U\subset Y)\subset\NE(Y)$ is generated by the $D_i$, as these are exactly the classes with non-negative intersections with all the $E_i$.
	So the mirror algebra $A_\DC(U\subset Y)$ is defined over the polynomial ring $\bbZ[\DC(U \subset Y)] \simeq \bbZ[z^{[D_1]},\dots,z^{[D_5]}]$.
		Applying \cref{prop:flat_degeneration}(\ref{prop:flat_degeneration:algebra}) to $V\coloneqq T_M$, the degeneration $A_\DC(V\subset Y)$ is given by killing $[D_1]$ and $[D_2]$ (the classes with positive intersection with $E= E_1 + E_2$).
	Now the above formulas can be deduced from the toric case and the $\mu_5$-symmetry.
\end{example}

\bibliographystyle{plain}
\bibliography{dahema}

\def\cprime{$'$}
\begin{thebibliography}{10}

\bibitem{Abramovich_The_tropicalization_of_the_moduli_space_of_curves}
Dan Abramovich, Lucia Caporaso, and Sam Payne.
\newblock The tropicalization of the moduli space of curves.
\newblock {\em Ann. Sci. \'Ec. Norm. Sup\'er. (4)}, 48(4):765--809, 2015.

\bibitem{Abramovich_Punctured_logarithmic_maps}
Dan Abramovich, Qile Chen, Mark Gross, and Bernd Siebert.
\newblock Punctured logarithmic maps.
\newblock {\em arXiv preprint arXiv:2009.07720}, 2020.

\bibitem{Abramovich_Torification_and_factorization}
Dan Abramovich, Kalle Karu, Kenji Matsuki, and Jaros\l{}aw W\l{}odarczyk.
\newblock Torification and factorization of birational maps.
\newblock {\em J. Amer. Math. Soc.}, 15(3):531--572, 2002.

\bibitem{Alexeev_Complete_moduli}
Valery Alexeev.
\newblock Complete moduli in the presence of semiabelian group action.
\newblock {\em Ann. of Math. (2)}, 155(3):611--708, 2002.

\bibitem{Baker_Nonarchimedean_geometry}
Matthew Baker, Sam Payne, and Joseph Rabinoff.
\newblock Nonarchimedean geometry, tropicalization, and metrics on curves.
\newblock {\em arXiv preprint arXiv:1104.0320}, 2011.

\bibitem{Berenstein_Cluster_algebras_III}
Arkady Berenstein, Sergey Fomin, and Andrei Zelevinsky.
\newblock Cluster algebras. {III}. {U}pper bounds and double {B}ruhat cells.
\newblock {\em Duke Math. J.}, 126(1):1--52, 2005.

\bibitem{Berkovich_Spectral_theory}
Vladimir~G. Berkovich.
\newblock {\em Spectral theory and analytic geometry over non-{A}rchimedean
  fields}, volume~33 of {\em Mathematical Surveys and Monographs}.
\newblock American Mathematical Society, Providence, RI, 1990.

\bibitem{Berkovich_Smooth_p-adic_analytic_spaces_are_locally_contractible}
Vladimir~G. Berkovich.
\newblock Smooth {$p$}-adic analytic spaces are locally contractible.
\newblock {\em Invent. Math.}, 137(1):1--84, 1999.

\bibitem{Bosch_Non-Archimedean_analysis}
S.~Bosch, U.~G{\"u}ntzer, and R.~Remmert.
\newblock {\em Non-{A}rchimedean analysis}, volume 261 of {\em Grundlehren der
  Mathematischen Wissenschaften [Fundamental Principles of Mathematical
  Sciences]}.
\newblock Springer-Verlag, Berlin, 1984.
\newblock A systematic approach to rigid analytic geometry.

\bibitem{Brown_The_essential_skeleton_of_a_product}
Morgan Brown and Enrica Mazzon.
\newblock The essential skeleton of a product of degenerations.
\newblock {\em arXiv preprint arXiv:1712.07235}, 2017.

\bibitem{Fock_Cluster_ensembles}
Vladimir~V. Fock and Alexander~B. Goncharov.
\newblock Cluster ensembles, quantization and the dilogarithm.
\newblock {\em Ann. Sci. \'Ec. Norm. Sup\'er. (4)}, 42(6):865--930, 2009.

\bibitem{Fulton_Introduction_to_toric_varieties}
William Fulton.
\newblock {\em Introduction to toric varieties}, volume 131 of {\em Annals of
  Mathematics Studies}.
\newblock Princeton University Press, Princeton, NJ, 1993.
\newblock The William H. Roever Lectures in Geometry.

\bibitem{Fulton_Intersection_theory}
William Fulton.
\newblock {\em Intersection theory}, volume~2 of {\em Ergebnisse der Mathematik
  und ihrer Grenzgebiete. 3. Folge. A Series of Modern Surveys in Mathematics}.
\newblock Springer-Verlag, Berlin, second edition, 1998.

\bibitem{Gross_Mirror_symmetry_for_log_Calabi-Yau_surfaces_I_v1}
Mark Gross, Paul Hacking, and Sean Keel.
\newblock Mirror symmetry for log {C}alabi-{Y}au surfaces {I}.
\newblock {\em arXiv preprint arXiv:1106.4977v1}, 2011.

\bibitem{Gross_Birational_geometry_of_cluster_algebras}
Mark Gross, Paul Hacking, and Sean Keel.
\newblock Birational geometry of cluster algebras.
\newblock {\em Algebr. Geom.}, 2(2):137--175, 2015.

\bibitem{Gross_Canonical_bases}
Mark Gross, Paul Hacking, Sean Keel, and Maxim Kontsevich.
\newblock Canonical bases for cluster algebras.
\newblock {\em J. Amer. Math. Soc.}, 31(2):497--608, 2018.

\bibitem{Gross_From_real_affine_geometry}
Mark Gross and Bernd Siebert.
\newblock From real affine geometry to complex geometry.
\newblock {\em Ann. of Math. (2)}, 174(3):1301--1428, 2011.

\bibitem{Gross_Intrinsic_mirror_symmetry}
Mark Gross and Bernd Siebert.
\newblock Intrinsic mirror symmetry and punctured {G}romov-{W}itten invariants.
\newblock In {\em Algebraic geometry: {S}alt {L}ake {C}ity 2015}, volume~97 of
  {\em Proc. Sympos. Pure Math.}, pages 199--230. Amer. Math. Soc., Providence,
  RI, 2018.

\bibitem{Gross_The_canonical_wall_structure}
Mark Gross and Bernd Siebert.
\newblock The canonical wall structure and intrinsic mirror symmetry.
\newblock {\em arXiv preprint arXiv:2105.02502}, 2021.

\bibitem{Gubler_Skeletons_and_tropicalizations}
Walter Gubler, Joseph Rabinoff, and Annette Werner.
\newblock Skeletons and tropicalizations.
\newblock {\em Adv. Math.}, 294:150--215, 2016.

\bibitem{Hacking_Secondary_fan}
Paul Hacking, Sean Keel, and Tony~Yue Yu.
\newblock Secondary fan, theta functions and moduli of {C}alabi-{Y}au pairs.
\newblock {\em arXiv preprint arXiv:2008.02299}, 2020.

\bibitem{Hartshorne_Algebraic_geometry}
Robin Hartshorne.
\newblock {\em Algebraic geometry}.
\newblock Springer-Verlag, New York-Heidelberg, 1977.
\newblock Graduate Texts in Mathematics, No. 52.

\bibitem{Keel_Rational_curves}
Se{\'a}n Keel and James McKernan.
\newblock Rational curves on quasi-projective surfaces.
\newblock {\em Mem. Amer. Math. Soc.}, 140(669):viii+153, 1999.

\bibitem{Keel_Equations_for_M0n}
Sean Keel and Jenia Tevelev.
\newblock Equations for {$\overline M_{0,n}$}.
\newblock {\em Internat. J. Math.}, 20(9):1159--1184, 2009.

\bibitem{Kollar_Rational_curves_on_algebraic_varieties}
J{\'a}nos Koll{\'a}r.
\newblock {\em Rational curves on algebraic varieties}, volume~32 of {\em
  Ergebnisse der Mathematik und ihrer Grenzgebiete. 3. Folge. A Series of
  Modern Surveys in Mathematics [Results in Mathematics and Related Areas. 3rd
  Series. A Series of Modern Surveys in Mathematics]}.
\newblock Springer-Verlag, Berlin, 1996.

\bibitem{Kollar_Lectures_on_resolution_of_singularities}
J\'{a}nos Koll\'{a}r.
\newblock {\em Lectures on resolution of singularities}, volume 166 of {\em
  Annals of Mathematics Studies}.
\newblock Princeton University Press, Princeton, NJ, 2007.

\bibitem{Kollar_Singularities_of_the_minimal_model_program}
J\'{a}nos Koll\'{a}r.
\newblock {\em Singularities of the minimal model program}, volume 200 of {\em
  Cambridge Tracts in Mathematics}.
\newblock Cambridge University Press, Cambridge, 2013.
\newblock With a collaboration of S\'{a}ndor Kov\'{a}cs.

\bibitem{Kontsevich_Homological_mirror_symmetry}
Maxim Kontsevich and Yan Soibelman.
\newblock Homological mirror symmetry and torus fibrations.
\newblock In {\em Symplectic geometry and mirror symmetry ({S}eoul, 2000)},
  pages 203--263. World Sci. Publ., River Edge, NJ, 2001.

\bibitem{Kontsevich_Affine_structures}
Maxim Kontsevich and Yan Soibelman.
\newblock Affine structures and non-{A}rchimedean analytic spaces.
\newblock In {\em The unity of mathematics}, volume 244 of {\em Progr. Math.},
  pages 321--385. Birkh\"auser Boston, Boston, MA, 2006.

\bibitem{Kontsevich_Wall-crossing_structures}
Maxim Kontsevich and Yan Soibelman.
\newblock Wall-crossing structures in {D}onaldson-{T}homas invariants,
  integrable systems and mirror symmetry.
\newblock In {\em Homological mirror symmetry and tropical geometry}, volume~15
  of {\em Lect. Notes Unione Mat. Ital.}, pages 197--308. Springer, Cham, 2014.

\bibitem{Mandel_Theta_bases}
Travis Mandel.
\newblock Theta bases and log {G}romov-{W}itten invariants of cluster
  varieties.
\newblock {\em arXiv preprint arXiv:1903.03042}, 2019.

\bibitem{Mauri_Essential_skeletons_of_pairs}
Mirko Mauri, Enrica Mazzon, and Matthew Stevenson.
\newblock Essential skeletons of pairs and the geometric {P}={W} conjecture.
\newblock {\em arXiv preprint arXiv:1810.11837}, 2018.

\bibitem{Mustata_Weight_functions}
Mircea Mustata and Johannes Nicaise.
\newblock Weight functions on non-archimedean analytic spaces and the
  {K}ontsevich-{S}oibelman skeleton.
\newblock {\em arXiv preprint arXiv:1212.6328}, 2012.

\bibitem{Nicaise_The_essential_skeleton}
Johannes Nicaise and Chenyang Xu.
\newblock The essential skeleton of a degeneration of algebraic varieties.
\newblock {\em arXiv preprint arXiv:1307.4041}, 2013.

\bibitem{Nicaise_Xu_Yu_The_non-archimedean_SYZ_fibration}
Johannes Nicaise, Chenyang Xu, and Tony~Yue Yu.
\newblock The non-archimedean {SYZ} fibration.
\newblock {\em Compos. Math.}, 155(5):953--972, 2019.

\bibitem{Porta_Yu_Non-archimedean_quantum_K-invariants}
Mauro Porta and Tony~Yue Yu.
\newblock Non-archimedean quantum {K}-invariants.
\newblock {\em arXiv preprint arXiv:2001.05515}, 2020.

\bibitem{Ranganathan_Skeletons_of_stable_maps_I}
Dhruv Ranganathan.
\newblock {Skeletons of stable maps I: Rational curves in toric varieties}.
\newblock {\em arXiv preprint arXiv:1506.03754}, 2015.

\bibitem{Stacks_project}
The {Stacks Project Authors}.
\newblock {Stacks Project}.
\newblock \url{http://stacks.math.columbia.edu}, 2013.

\bibitem{Temkin_Metrization_of_differential_pluriforms}
Michael Temkin.
\newblock Metrization of differential pluriforms on berkovich analytic spaces.
\newblock {\em arXiv preprint arXiv:1410.3079}, 2014.

\bibitem{Thuillier_Geometrie_toroidale}
Amaury Thuillier.
\newblock G\'eom\'etrie toro\"\i dale et g\'eom\'etrie analytique non
  archim\'edienne. {A}pplication au type d'homotopie de certains sch\'emas
  formels.
\newblock {\em Manuscripta Math.}, 123(4):381--451, 2007.

\bibitem{Viterbo_Functors_and_computations}
C.~Viterbo.
\newblock Functors and computations in {F}loer homology with applications. {I}.
\newblock {\em Geom. Funct. Anal.}, 9(5):985--1033, 1999.

\bibitem{Yu_Tropicalization_of_the_moduli_space_of_stable_maps}
Tony~Yue Yu.
\newblock Tropicalization of the moduli space of stable maps.
\newblock {\em Mathematische Zeitschrift}, 281(3):1035--1059, 2015.

\bibitem{Yu_Enumeration_of_holomorphic_cylinders_I}
Tony~Yue Yu.
\newblock Enumeration of holomorphic cylinders in log {C}alabi--{Y}au surfaces.
  {I}.
\newblock {\em Math. Ann.}, 366(3-4):1649--1675, 2016.

\bibitem{Yu_Gromov_compactness}
Tony~Yue Yu.
\newblock Gromov compactness in non-archimedean analytic geometry.
\newblock {\em J. Reine Angew. Math.}, 741:179--210, 2018.

\bibitem{Yu_Enumeration_of_holomorphic_cylinders_II}
Tony~Yue Yu.
\newblock Enumeration of holomorphic cylinders in log {C}alabi--{Y}au surfaces,
  {II} : {P}ositivity, integrality and the gluing formula.
\newblock {\em Geom. Topol.}, 25(1):1--46, 2021.

\bibitem{Zhou_Cluster_structures}
Yan Zhou.
\newblock Cluster structures and subfans in scattering diagrams.
\newblock {\em arXiv preprint arXiv:1901.04166}, 2019.

\end{thebibliography}

\end{document}